\newcommand \del {\partial}
\newcommand \lal {\lambda^{(l)}}
\newcommand \hhl {h^{(l)}}
\newcommand \modk {{\cal M}_{\kappa}}
\newcommand \Oomega {{\overline \Omega}}
\newcommand \oomega {{\overline \omega}}
\newcommand \HH {{\cal H}}
\newcommand \la {\lambda}
\newcommand \R {{\cal R}}
\newcommand \A {{\mathbb A}}
 \newcommand \Prob {\mathbb P}
\newcommand \prob {\mathbb P}
\newcommand \ee {\mathbb E}
\newcommand \U {{\cal U}}
\newcommand \TT {\mathbf T}
\newcommand \Y {{\mathscr  Y}}
\newcommand \BB {\mathfrak B}
\newcommand \B {{\mathfrak B}}
\newcommand \F {{\cal F}}
\newcommand \C {{\mathfrak C}}
\newcommand \oo {{\mathfrak o}}
\newcommand \mm {{\mathfrak m}}
\newcommand \MM {{\mathfrak M}}
\newcommand \VV {{\mathfrak V}}
\newtheorem{theorem}{Theorem}
\newtheorem {lemma} {Lemma}[section]
\newtheorem{proposition}[lemma]{Proposition}
\newtheorem{corollary}[lemma]{Corollary}
\newtheorem{assumption} [lemma]{Assumption}
\title{Limit Theorems for Translation Flows.}
\author{Alexander I. Bufetov \footnote{Rice University, Steklov Institute of Mathematics, the Institute for Information Transmission Problems,
NRU-HSE, the Independent University of Moscow.}}
\date{}
\begin{document}
\maketitle

\begin{abstract}

The aim of this paper is to obtain an asymptotic expansion for ergodic integrals of
translation flows on flat surfaces of higher genus (Theorem \ref{multiplicmoduli})
and to give a limit theorem for these flows (Theorem \ref{limthmmoduli}).

\end{abstract}

\tableofcontents
\section{Introduction.}

\subsection{Outline of the main results.}

A compact Riemann surface endowed with an abelian differential admits two natural flows, called, respectively,
{\it horizontal} and {\it vertical}. One of the main objects of this paper is the space ${\mathfrak B}^+$
of H{\"o}lder cocycles over the vertical flow, invariant under the holonomy by the horizontal flow.
Equivalently, cocycles in ${\mathfrak B}^+$ can be viewed, in the spirit of F. Bonahon \cite{bonahon1},  \cite{bonahon2},
as finitely-additive transverse invariant measures for the horizontal foliation of our abelian differential.
Cocycles in ${\mathfrak B}^+$ are closely connected to the invariant distributions for translation flows in the sense
of G.Forni \cite{F2}.

The space ${\mathfrak B}^+$ is finite-dimensional, and for a generic abelian
differential the
dimension of ${\mathfrak B}^+$ is equal to the genus of the underlying surface.
Theorem \ref{multiplicmoduli}, which extends earlier work of A.Zorich \cite{Z} and G. Forni \cite{F2},
states that the  time integral of  a Lipschitz function under the vertical flow
can be uniformly approximated by a suitably chosen cocycle from ${\mathfrak B}^+$ up to an error that grows
more slowly than any power of time.
The renormalizing action of the Teichm{\"u}ller flow on the space of H{\"o}lder cocycles now allows one
to obtain limit theorems for translation flows on flat surfaces (Theorem \ref{limthmmoduli}).

The statement of Theorem \ref{limthmmoduli} can be informally summarized as follows.
Taking the leading term in the asymptotic expansion of Theorem \ref{multiplicmoduli},
to a generic abelian differential one assigns a compactly supported probability measure on
the space of continuous functions on the unit interval.  The normalized distribution
of the time integral of a Lipschitz function converges, with respect to weak topology, to the trajectory of the
corresponding ``asymptotic distribution'' under the action of the Teichm{\"u}ller flow.
Convergence is exponential with respect to both the L{\'e}vy-Prohorov and the Kantorovich-Rubinstein metric.

The cocycles in ${\mathfrak B}^+$ are constructed explicitly using a symbolic representation for translation flows as suspension flows over
Vershik's automorphisms. Vershik's Theorem \cite{Vershik1} states that every ergodic automorphism of a Lebesgue probability space can be represented as a Vershik's automorphism of a Markov compactum. For interval exchange transformations an explicit representation can be obtained using Rohlin towers
given by Rauzy-Veech induction. Passing to Veech's zippered rectangles and their
bi-infinite Rauzy-Veech expansions, one represents a minimal translation flow as a flow along the leaves of the asymptotic foliation of a bi-infinite Markov compactum. In this representation, cocycles in $\B^+$ become finitely-invariant measures on
the asymptotic foliations of a Markov compactum. For previous attemps in this direction, see \cite{ito}, \cite{dumontkamae}, \cite{kamae}.

Thus, after passage to a finite cover (namely, the Veech space of zippered rectangles),
the moduli space of abelian differentials is represented as a space of Markov compacta.
The Teichm{\"u}ller flow and the Kontsevich-Zorich cocycle admit a
simple description in terms of this symbolic representation, and the cocycles
in $\B^+$ are constructed explicitly.
Theorems  \ref{multiplicmoduli}, \ref{limthmmoduli} are then derived from their
symbolic counterparts, Corollary \ref{symbmultiplic} and Theorem \ref{limthmmarkcomp}.

\subsection{H{\"o}lder cocycles over translation flows.}
Let $\rho\geq 2$ be an integer, let $M$ be a compact orientable surface of genus $\rho$,
and let $\omega$ be a holomorphic one-form on $M$.
Denote by
$
{\bf  m}=i(\omega\wedge {\overline \omega})/2
$
the area form induced by $\omega$ and assume that ${\bf m}(M)=1$.

Let $h_t^+$ be the {\it vertical} flow on $M$ (i.e., the flow corresponding to $\Re(\omega)$);
let $h_t^-$ be the {\it horizontal} flow on $M$ (i.e., the flow corresponding to $\Im(\omega)$).
The flows $h_t^+$, $h_t^-$ preserve the area ${\bf m}$.

Take $x\in M$, $t_1, t_2\in {\mathbb R}_+$ and assume that the closure of the set
\begin{equation}
\label{admrectpsan}
\{h^+_{\tau_1} h^{-}_{\tau_2}x, 0\leq \tau_1< t_1, 0\leq \tau_2< t_2\}
\end{equation}
does not contain zeros of the form $\omega$.  The set (\ref{admrectpsan})
is then called {\it an admissible rectangle} and denoted $\Pi(x, t_1, t_2)$.
Let ${\overline {\mathfrak C}}$ be the semi-ring of admissible rectangles.

Consider the linear space ${\mathfrak B}^+$ of H{\"o}lder cocyles $\Phi^+(x,t)$ over the vertical
flow $h_t^+$ which are invariant under horizontal holonomy. More precisely, a function
$\Phi^+(x,t): M\times {\mathbb R}\to {\mathbb R}$ belongs to the space ${\mathfrak B}^+$
if it satisfies:
\begin{assumption}
\label{bplusx}
\begin{enumerate}
\item  $\Phi^+(x,t+s)=\Phi^+(x,t)+\Phi^+(h_t^+x, s)$;
\item  There exists $t_0>0$, $\theta>0$ such that
$|\Phi^+(x,t)|\leq t^{\theta}$ for all $x\in M$ and all $t\in {\mathbb R}$ satisfying $|t|<t_0$;
\item  If $\Pi(x, t_1, t_2)$ is an admissible rectangle, then
$\Phi^+(x, t_1)=\Phi^+(h_{t_2}^-x, t_1)$.
\end{enumerate}
\end{assumption}
For example, a cocycle $\Phi_1^+$  defined by $\Phi_1^+(x,t)=t$  belongs to ${\BB}^+$.

In the same way  define the space ${\mathfrak B}^-$ of H{\"o}lder cocyles $\Phi^-(x,t)$ over
the horizontal flow $h_t^-$  which are invariant under vertical holonomy, and set $\Phi_1^-(x,t)=t$.

Given $\Phi^+\in {\mathfrak B}^+$, $\Phi^-\in {\mathfrak B}^-$, a finitely additive
measure $\Phi^+\times \Phi^-$ on the semi-ring ${\overline {\mathfrak C}}$ of admissible rectangles
is introduced by the formula
\begin{equation}
\Phi^+\times \Phi^-(\Pi(x, t_1, t_2))=\Phi^+(x,t_1)\cdot \Phi^-(x, t_2).
\end{equation}

In particular, for $\Phi^-\in {\mathfrak B }^-$, set $m_{\Phi^-}=\Phi_1^+\times\Phi^-$:
\begin{equation}
\label{mphipsan}
m_{\Phi^-}(\Pi(x, t_1, t_2))=t_1\Phi^-(x, t_2).
\end{equation}
For any $\Phi^-\in {\mathfrak B}^-$ the measure
$m_{\Phi^-}$ satisfies $(h_t^+)_*m_{\Phi^-}=m_{\Phi^-}$ and is an invariant distribution in the
sense of G.~Forni \cite{F1}, \cite{F2}. For instance, $m_{\Phi_1^-}={\bf m}$.

An ${\mathbb R}$-linear pairing between ${\mathfrak B}^+$ and ${\mathfrak B^-}$ is given, for
$\Phi^+\in {\mathfrak B}^+$, $\Phi^-\in {\mathfrak B}^-$,
by the formula
\begin{equation}
\label{psanpairing}
\langle \Phi^+, \Phi^-\rangle=\Phi^+\times \Phi^-(M).
\end{equation}

\subsection{Characterization of cocycles.}

Let $\B^+_c({\bf X})$ be the space  of continuous holonomy-invariant cocycles:
more precisely, a function
$\Phi^+(x,t): M\times {\mathbb R}\to {\mathbb R}$ belongs to the space ${\mathfrak B}_c^+({\bf X})$
if it satisfies conditions 1 and 3 of Assumption \ref{bplusx}, while condition 2
is replaced by the following weaker version:

For any $\varepsilon>0$ there exists $\delta>0$  such that
$|\Phi^+(x,t)|\leq \varepsilon$ for all $x\in M$ and all $t\in {\mathbb R}$ satisfying $|t|<\delta$.

Given an abelian differential ${\bf X}=(M, \omega)$, we now construct, following Katok \cite{katok},
an explicit  mapping of $\BB_c^+(M, \omega)$ to $H^1(M, {\mathbb R})$.

A continuous closed curve $\gamma$ on $M$ is called {\it rectangular} if
$$
\gamma=\gamma_1^+\sqcup\dots \sqcup \gamma_{k_1}^+\bigsqcup \gamma_1^-\sqcup\dots \sqcup \gamma_{k_2}^-,
$$

where $\gamma_i^+$ are arcs of the flow $h_t^+$, $\gamma_i^-$ are arcs of the flow $h_t^-$.

For $\Phi^+\in\BB_c^+$ define
$$
\Phi^+(\gamma)=\sum_{i=1}^{k_1} \Phi^+(\gamma_i^+);
$$
similarly, for $\Phi^-\in\BB_c^-$ write
$$
\Phi^-(\gamma)=\sum_{i=1}^{k_2} \Phi^-(\gamma_i^-).
$$

Thus, a cocycle $\Phi^+\in\BB_c$ assigns a number $\Phi^+(\gamma)$ to every closed rectangular curve $\gamma$.
It is shown in Proposition \ref{acthomology} below that if $\gamma$ is homologous to $\gamma^{\prime}$, then
$\Phi^+(\gamma)=\Phi^+(\gamma^{\prime})$. For an abelian differential ${\bf X}=(M, \omega)$,
we thus obtain  maps
\begin{equation}
\label{maptocohomology}
{\check {\cal I}}_{\bf X}^+: \B_c^+({\bf X})\to H^1(M, {\mathbb R}), \ {\check {\cal I}}_{\bf X}^-: \B_c^-({\bf X})\to H^1(M, {\mathbb R}).
\end{equation}

For a generic abelian differential, the image of $\B^+$ under the map ${\check {\cal I}}_{\bf X}^+$ is the strictly unstable space of
the Kontsevich-Zorich cocycle over the Teichm{\"u}ller flow.

More precisely, let $\kappa=(\kappa_1, \dots, \kappa_{\sigma})$ be
a nonnegative integer vector such that  $\kappa_1+\dots+\kappa_{\sigma}=2\rho-2$.
Denote by $\modk$ the moduli space of pairs $(M, \omega)$, where $M$ is
a Riemann surface of genus $\rho$ and $\omega$ is a holomorphic differential
of area $1$ with singularities of orders $k_1, \dots, k_{\sigma}$.
The space $\modk$ is often called the {\it stratum} in the moduli space of abelian differentials.

The Teichm{\"u}ller flow ${\bf g}_s$ on $\modk$ sends the
modulus of a pair $(M, \omega)$ to the modulus of the pair
$(M, \omega^{\prime})$, where $\omega^{\prime}=e^s\Re(\omega)+ie^{-s}\Im(\omega)$;
the new complex structure on $M$ is uniquely determined by the requirement that the form $\omega^{\prime}$
 be holomorphic.
As shown by Veech, the space $\modk$ need not be connected;
let $\HH$ be a connected component of $\modk$.

Let ${\mathbb H}^1(\HH)$ be the fibre bundle over $\HH$ whose fibre at a point $(M, \omega)$ is the cohomology group
$H^1(M, {\mathbb R})$. The bundle ${\mathbb H}^1(\HH)$ carries the {\it Gauss-Manin connection} which declares continuous integer-valued sections of our bundle to be flat and is uniquely defined by that requirement. Parallel transport with respect to the Gauss-Manin connection
along the orbits of the Teichm{\"u}ller flow yields a cocycle over the Teichm{\"u}ller flow, called the {\it Kontsevich-Zorich cocycle}
and denoted  ${\mathbf A}={\mathbf A}_{KZ}$.

Let $\Prob$ be a ${\bf g}_s$-invariant ergodic probability
measure on $\HH$. For ${\bf X}\in\HH$, ${\bf X}=(M, \omega)$, let ${\mathfrak B}_{\bf X}^+$, ${\mathfrak B}_{\bf X}^-$ be the
corresponding spaces of H{\"o}lder cocycles.

Denote by $E_{\bf X}^u\subset H^1(M, {\mathbb R})$
the space spanned by vectors corresponding to the positive Lyapunov exponents of the Kontsevich-Zorich
cocycle, by $E_{\bf X}^s\subset H^1(M, {\mathbb R})$ the space spanned by vectors corresponding to
the negative exponents of the Kontsevich-Zorich cocycle.

\begin{proposition}
For $\Prob$-almost all ${\bf X}\in\HH$ the map
${\check {\cal I}}_{\bf X}^+$ takes $\BB_{\bf X}^+$ isomorphically onto $E_{\bf X}^u$, the map
${\check {\cal I}}_{\bf X}^-$ takes $\BB_{\bf X}^-$ isomorphically onto $E_{\bf X}^s$.

The pairing $\langle, \rangle$ is nondegenerate and is taken by the isomorphisms ${\cal I}_{\bf X}^+$, ${\cal I}_{\bf X}^-$ to the cup-product in the cohomology $H^1(M, {\mathbb R})$.
\end{proposition}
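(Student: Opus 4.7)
The plan proceeds in three stages, one for each of the three claims of the proposition, working on the $+$ side (the $-$ side is symmetric). Throughout I assume $\bf X$ lies in the full-measure subset of $\HH$ on which the Oseledets theorem for the Kontsevich--Zorich cocycle applies.

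\emph{Stage 1 --- image of ${\check {\cal I}}_{\bf X}^+$ lies in $E_{\bf X}^u$.} Homological invariance, granted by Proposition \ref{acthomology}, already makes ${\check {\cal I}}_{\bf X}^+$ a well-defined linear map into $H^1(M,{\mathbb R})$. To locate its image I would track how $\Phi^+$ transforms under the Teichm{\"u}ller flow: since ${\bf g}_s$ rescales the time parameter of $h^+$, an $h^+$-arc of ${\bf g}_s{\bf X}$-time $T$ corresponds to an arc of ${\bf X}$-time $e^sT$, so part 2 of Assumption \ref{bplusx} gives values at most $(e^sT)^\theta$. Combined with horizontal holonomy invariance (part 3), which forces the image class $c={\check {\cal I}}_{\bf X}^+(\Phi^+)$ to vanish on purely horizontal cycles, this shows that the Hodge norm of $c$ at ${\bf g}_s{\bf X}$ grows at a strictly positive exponential rate as $s\to+\infty$ and decays as $s\to-\infty$. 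Comparing with Oseledets for the Kontsevich--Zorich cocycle identifies $c$ as an element of $E_{\bf X}^u$.

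\emph{Stage 2 --- ${\check {\cal I}}_{\bf X}^+$ is an isomorphism onto $E_{\bf X}^u$.} Injectivity amounts to showing that a cocycle vanishing on every closed rectangular loop must vanish identically; this I would deduce from minimality of $h^-$, which lets any $h^+$-arc be closed into a rectangular loop with an arbitrarily small horizontal correction, combined with the H{\"o}lder bound controlling the resulting error. Surjectivity I would obtain by a dimension count using the symbolic machinery sketched in the introduction: Veech's zippered rectangles and their bi-infinite Rauzy--Veech expansion produce, for $\Prob$-a.e.\ ${\bf X}$, an explicit family of cocycles in $\BB_{\bf X}^+$ indexed by the positive Lyapunov exponents of the KZ cocycle, forcing $\dim \BB_{\bf X}^+=\dim E_{\bf X}^u$ and promoting the injection of Stage 1 to an isomorphism.

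\emph{Stage 3 --- matching with the cup product.} Partitioning $M$ (away from a small neighbourhood of the zeros of $\omega$) into admissible rectangles $\Pi(x_i,t_{1,i},t_{2,i})$ and applying (\ref{psanpairing}) gives
\begin{equation*}
\langle\Phi^+,\Phi^-\rangle=\sum_i \Phi^+(x_i,t_{1,i})\,\Phi^-(x_i,t_{2,i}),
\end{equation*}
and each summand is the product of the evaluation of $\Phi^+$ on a vertical side of $\Pi_i$ and of $\Phi^-$ on a horizontal side. This is precisely the cellular formula for the cup product $c^+\smile c^-$ (with $c^\pm={\check {\cal I}}_{\bf X}^\pm(\Phi^\pm)$) paired against $[M]\in H_2(M,{\mathbb R})$. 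Non-degeneracy of $\langle\cdot,\cdot\rangle$ then follows from non-degeneracy of $\smile$ restricted to the symplectically paired subspaces $E_{\bf X}^u\times E_{\bf X}^s\to{\mathbb R}$, a consequence of the symmetry of the KZ Lyapunov spectrum about zero.

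The hardest step is surjectivity in Stage 2: the existence of enough elements of $\BB_{\bf X}^+$ to fill $E_{\bf X}^u$ cannot be extracted from the intrinsic definition of the space and genuinely requires the global symbolic construction developed in the later sections of the paper. A secondary subtlety is handling the rectangular decomposition of Stage 3 near the zeros of $\omega$ and verifying that the merely H{\"o}lder regularity of the cocycles is compatible with the cellular interpretation of the cup product.
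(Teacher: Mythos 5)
The paper does not prove this proposition by an intrinsic geometric argument: it is deduced from the symbolic reduction of Sections~2--4, specifically from the Oseledets identification of $\mathfrak{B}^\pm$ with the strictly expanding subspaces of the renormalization cocycle and its transpose (Propositions \ref{lyapidentif}, \ref{lyapidentiftranspose}, \ref{oomegaidentif}), from Lemmas \ref{zipmarkovlemmaone}--\ref{zipmarkovlemmathree} translating this to zippered rectangles, and from Veech's explicit isomorphism in Section 4.2.5 which carries the antisymmetric form $\mathcal{L}_\pi$ to the cup product (Prop.~4.19 in \cite{V3}). You attempt to replace two of the three pieces by intrinsic arguments, and both attempts have real gaps. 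In Stage 1, passing from the pointwise H\"older bound $|\Phi^+(x,t)|\le t^\theta$ on short arcs to decay of the \emph{Hodge norm} of the class $c$ along ${\bf g}_s$ as $s\to-\infty$ is not automatic: to evaluate $c$ on a basis of homology cycles you must close vertical arcs into rectangular loops, and the number and length of the correcting horizontal pieces must be controlled uniformly along the Teichm\"uller orbit -- exactly the Denjoy-Koksma-type control that the Markovian (zippered rectangle / Rauzy-Veech) decomposition supplies and that the intrinsic definition alone does not. Worse, you assert that the Hodge norm ``grows at a strictly positive exponential rate as $s\to+\infty$''; an upper bound on a cocycle cannot produce a lower bound, so this clause is simply not derivable from what you have. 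The correct deduction only needs the decay as $s\to-\infty$, but even that requires the link to the Hodge norm which you skip.

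In Stage 3 you sketch a cellular computation of the cup product from a rectangle decomposition, acknowledging at the end the difficulty near the zeros of $\omega$ and the compatibility of mere H\"older regularity with the cellular model. These are exactly the points that make the intrinsic route non-trivial, and the paper does not attempt it: it instead quotes Veech's identification of $\mathcal{L}_\pi$ with the cup product for the explicit basis of cycles $\gamma_i(\lambda,\pi,\delta)$, and identifies the pairing $\langle\cdot,\cdot\rangle$ of (\ref{xdualitytriv}) with the Euclidean inner product of the vectors $v^{(0)}$, $\tilde v^{(0)}$, which the Veech correspondence recognizes as the duality $H^1\times H_1\to\mathbb{R}$ and hence, via Poincar\'e duality, as the cup product. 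Your Stage 2 is on firmer ground: you correctly identify that surjectivity, and in fact the dimension count, genuinely require the global symbolic construction, and this matches the paper's logic. In short, your Stage 2 is aligned with the paper; your Stages 1 and 3 pursue an intrinsic route that the paper deliberately avoids, and as written they leave the crucial estimates (uniform Denjoy-Koksma control, the rectangle decomposition near singular points) undone.
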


{\bf Remark.} In particular, if $\Prob$ is the Masur-Veech ``smooth" measure \cite{masur, veech}, then
$\dim {\BB}_{\bf X}^+=\dim {\BB}_{\bf X}^-=\rho$.

{\bf Remark.} The isomorphisms ${\check {\cal I}}_{\bf X}^+$, ${\check {\cal I}}_{\bf X}^-$
are analogues of G. Forni's isomorphism \cite{F2} between his space of invariant distributions and the
unstable space of the Kontsevich-Zorich cocycle.

Now recall that to every cocycle $\Phi^-\in \BB_{\bf X}^-$ we have assigned a
finitely-additive H{\"o}lder measure $m_{\Phi^-}$ invariant under the flow $h_t^+$.
Considering hese measures  as distributions in the sense of Sobolev and Schwartz,
we arrive at the following proposition.

\begin{proposition}
\label{forni-classif} Let $\mathbb{P}$ be an ergodic
$g_s-invariant$ probability measure on $\mathcal{H}.$ The for
$\mathbb{P}-almost$ every abelian differential $(M,\omega)$ the
space $\{m_{\Phi^-},\Phi^-\in\mathfrak{B}^-(M,\omega)\}$ coincides
with the space of $h^{+}_{t}-invariant$ distributions belonging to
the Sobolev space $H^{-1}.$
\end{proposition}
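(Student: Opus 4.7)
The cocycle-to-distribution assignment $\Phi^- \mapsto m_{\Phi^-}$ is already shown, in the excerpt preceding the statement, to take values in the space of $h_t^+$-invariant distributions on $M$. What remains is (a) to verify that each $m_{\Phi^-}$ has Sobolev regularity $H^{-1}$, and (b) that the assignment exhausts the space of $h_t^+$-invariant $H^{-1}$-distributions; for (b) my plan is to prove injectivity and then match dimensions using Forni's classification in \cite{F2}.

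For step (a), given $f \in C^\infty(M)$ I would tile $M$, away from the zeros of $\omega$, by admissible rectangles $\Pi(x_i, a_i, b_i)$ of small mesh and form Riemann-type sums $\sum_i f(x_i)\, a_i\, \Phi^-(x_i, b_i)$. Finite additivity of $m_{\Phi^-}$ together with the H\"older bound on $\Phi^-$ (condition 2 of Assumption \ref{bplusx}) ensures convergence of these sums to a value $\int_M f\, dm_{\Phi^-}$. To upgrade the resulting estimate from $\|f\|_\infty$ to $C\|f\|_{H^1}$, I would renormalize by the Teichm\"uller flow: after applying ${\bf g}_s$, the horizontal extent $b_i$ contracts to $e^{-s} b_i$ and thus $|\Phi^-(x_i, e^{-s} b_i)| = O(e^{-s\theta})$, while the vertical extent $a_i$ dilates to $e^s a_i$ and the longitudinal integral of $f$ gains an $H^1$-control along $h_t^+$-orbits through a Poincar\'e-type inequality. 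Balancing these contributions by choosing $s$ as a function of the rectangle scale yields the desired dual bound, which is essentially the dual form of the asymptotic expansion in Theorem \ref{multiplicmoduli} applied to the horizontal flow.

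For step (b), if $m_{\Phi^-} \equiv 0$ then $t_1 \Phi^-(x, t_2) = 0$ whenever $\Pi(x, t_1, t_2)$ is admissible, which forces $\Phi^-(x, t_2) = 0$ on a dense set of pairs $(x, t_2)$ and hence, by continuity of $\Phi^-$, $\Phi^- \equiv 0$. For surjectivity, I appeal to Forni's theorem from \cite{F2}: for $\Prob$-a.e.\ $(M, \omega)$, the space of $h_t^+$-invariant distributions in $H^{-1}$ has dimension equal to $\dim E_{\bf X}^s$, which by the preceding proposition equals $\dim \B_{\bf X}^-$. An injective linear map between finite-dimensional spaces of equal dimension is an isomorphism, so step (a) combined with the injectivity and the dimension count completes the argument.

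The main obstacle is clearly the Sobolev bound in step (a): the $H^{-1}$-norm pairs against $L^2$-derivatives in every direction on $M$, whereas the H\"older exponent $\theta$ of Assumption \ref{bplusx} controls $\Phi^-$ only along horizontal orbits. Bridging this anisotropy is precisely where the renormalization by ${\bf g}_s$ is indispensable; it cannot be replaced by elementary Fourier analysis on $M$, and relies essentially on the constructive approach via finitely-additive measures developed earlier in the paper.
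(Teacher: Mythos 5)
Your overall strategy coincides with the paper's: show $\Phi^- \mapsto m_{\Phi^-}$ lands in the $h_t^+$-invariant $H^{-1}$-distributions, then close the gap by a dimension count against Forni. Two points need tightening.

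Step (a) is over-engineered, and in a way that risks circularity. The $H^{-1}$-regularity of $m_{\Phi^-}$ follows from a direct integration-by-parts, using only boundedness (let alone the H\"older property) of the cocycle: for $f$ smooth and supported in an admissible rectangle parametrized by $(s,t)\mapsto h^+_s h^-_t x$,
$$
\int_{\Pi} f\,dm_{\Phi^-}=\int_0^{t_2}\left(\int_0^{t_1} f(h^+_s h^-_t x)\,ds\right) d\Phi^-(x,t)
=-\int_0^{t_2}\!\int_0^{t_1}\partial_t\bigl(f(h^+_s h^-_t x)\bigr)\Phi^-(x,t)\,ds\,dt,
$$
so $\bigl|\int f\,dm_{\Phi^-}\bigr|\le\|\Phi^-\|_\infty\|\partial^- f\|_{L^1}$, and on the compact surface Cauchy--Schwarz gives $\|\partial^- f\|_{L^1}\le C\|f\|_{H^1}$. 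No Teichm\"uller renormalization is needed; invoking "the dual of Theorem \ref{multiplicmoduli}" to bootstrap this bound is circular, since that theorem is itself built on the cocycle machinery being established.

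Step (b) has a genuine, if subtle, gap: you cite Forni's classification as giving an \emph{equality} between the dimension of the space of $h_t^+$-invariant $H^{-1}$-distributions and $\dim E^s_{\bf X}$. But the equality in Theorem 8.3 / Corollary $8.3'$ of \cite{F2} is proved under hypotheses on $\Prob$ \emph{beyond} ergodicity, whereas the proposition assumes only ergodicity. The paper uses Forni only for the \emph{upper} bound (dimension of invariant $H^{-1}$-distributions $\le$ dimension of the strictly expanding Oseledets subspace), which Forni's argument yields for every ergodic $g_s$-invariant measure. The matching lower bound then comes not from Forni but from your injectivity of $\Phi^-\mapsto m_{\Phi^-}$ together with the isomorphism $\B^-\cong E^s_{\bf X}$ and $\dim E^s_{\bf X}=\dim E^u_{\bf X}$ (symplecticity of the Kontsevich--Zorich cocycle). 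Your last sentence implicitly does this, but as phrased you are silently importing extra hypotheses on $\Prob$. Restating step (b) so that Forni supplies only the upper bound is both what the paper does and, in fact, yields the stronger unconditional statement.
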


\begin{proof} By definition for any
$\Phi^+\in\mathfrak{B}^+$ the distribution $m_{\Phi^+}$ is
$h^-_t$-invariant and belongs to the Sobolev space $H^{-1}$.
G.Forni has shown that for any $g_s$-invariant ergodic measure
$\mathbb{P}$ and $\mathbb{P}$-almost every abelian differential
$(M,\omega),$ the dimension of the space of $h_t^-$-invariant
distributions belonging to the Sobolev space $H^{-1}$ ${\it does\
not\ exceed}$ the dimension of the strictly expanding Oseledets
subspace of the Kontsevich-Zorich cocycle (under mild additional
assumption on the measure $\mathbb{P}$ G.Forni proved that these
dimensions are in fact equal, see Theorem 8.3  and Corollary
$8.3^{\prime}$ in \cite{F2}; note, however, that the proof of
the {\it upper} bound in Forni's Theorem only uses ergodicity of the measure).
Since the dimension of the space
$\{m_{\Phi^-},\Phi^-\in\mathfrak{B}^-\}$ equals that of the strictly expanding space for the
Kontsevich-Zorich cocycle for $\mathbb{P}$-almost all
$(M,\omega),$ the proposition is proved completely.
\end{proof}

Consider the inverse isomorphisms
$$
{\cal I}_{\bf X}^+=\left({\check {\cal I}}_{\bf X}^+\right)^{-1}; \ {\cal I}_{\bf X}^-=\left({\check {\cal I}}_{\bf X}^-\right)^{-1}.
$$
Let $1=\theta_1>\theta_2>\dots>\theta_{l}>0$
be the distinct positive Lyapunov exponents of the Kontsevich-Zorich cocycle ${\bf A}_{KZ}$, and let
$$
E_{\bf X}^u=\bigoplus\limits_{i=1}^l E_{{\bf X}, \theta_i}^u
$$
be the corresponding Oseledets decomposition at ${\bf X}$.
\begin{proposition}
\label{hoeldergrowth}
Let
$v\in E_{{\bf X}, \theta_i}^u$, $v\neq 0$, and denote $\Phi^+={\cal I}_{\bf X}^+(v)$.
Then for any $\varepsilon>0$ the cocycle $\Phi^+$ satisfies the
H{\"o}lder condition with exponent $\theta_i-\varepsilon$ and for any $x\in M({\bf X})$ such that
$h_t^+x$ is defined for all $t\in {\mathbb R}$
we have
$$
\limsup\limits_{T\to\infty} \frac{\log|\Phi^+(x,T)|}{\log T}=\theta_i; \  \limsup\limits_{T\to 0} \frac{\log|\Phi^+(x,T)|}{\log T}=\theta_i.
$$
\end{proposition}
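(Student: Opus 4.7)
The plan is to reduce both the H\"older estimate and the two $\limsup$ assertions to the Oseledets growth of ${\bf A}_{KZ}(s)v$ by renormalizing $\Phi^+$ along the Teichm\"uller flow. Writing ${\bf X}_s={\bf g}_s{\bf X}$, set, for $\Phi^+\in\B^+_{\bf X}$,
$$
\Phi^+_s(x,t):=\Phi^+(x,e^s t);
$$
the identity $h_t^{+,s}=h_{e^s t}^+$ expressing how ${\bf g}_s$ rescales the vertical flow shows that $\Phi^+_s\in\B^+_{{\bf X}_s}$ and $\check{\cal I}_{{\bf X}_s}^+(\Phi^+_s)={\bf A}_{KZ}(s)v$. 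Fix any continuous choice of norms on the bundle ${\mathbb H}^1(\HH)$ (for instance the Hodge norm); under $\check{\cal I}_{\bf Y}^+$, this norm is uniformly comparable on $E^u_{\bf Y}$ to the seminorm $\|\Psi\|_1:=\sup_{x\in M({\bf Y}),\,|t|\le 1}|\Psi(x,t)|$ on $\B^+_{\bf Y}$, with comparison constants continuous in ${\bf Y}$ and therefore tempered along a generic Teichm\"uller orbit by Poincar\'e recurrence.

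Applying the Oseledets theorem to $v\in E^u_{{\bf X},\theta_i}$ gives $\|{\bf A}_{KZ}(s)v\|_{{\bf X}_s}=e^{\theta_i s+o(s)}$ as $s\to\pm\infty$. Combining with the comparison above and the definition of $\Phi^+_s$, and setting $T=e^s$, one obtains
$$
\sup_{x\in M}|\Phi^+(x,T)|=T^{\theta_i+o(1)}\qquad\text{as $T\to\infty$ and as $T\to 0^+$.}
$$
The short-time half of this, being uniform in $x$, is precisely the H\"older condition with exponent $\theta_i-\varepsilon$; the upper bounds $\limsup_{T\to\infty}\log|\Phi^+(x,T)|/\log T\le\theta_i$ and $\limsup_{T\to 0}\log|\Phi^+(x,T)|/\log T\le\theta_i$ are then immediate, for every $x$.

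The matching pointwise lower bound is the genuine obstacle. The renormalization also supplies $\sup_x|\Phi^+(x,T)|\ge T^{\theta_i-o(1)}$, but only at some maximizer $x_T$ depending on $T$; transferring this bound to a prescribed regular $x$ calls for three ingredients. First, horizontal holonomy invariance (Assumption \ref{bplusx}(3)) lets one slide $x_T$ along its horizontal leaf without altering $\Phi^+(\cdot,T)$. Second, a quantitative recurrence of $h_t^+$, obtained through the Rauzy--Veech symbolic representation, produces a time $t=o(T)$ at which $h_t^+x$ lies in a small horizontal neighbourhood of some horizontal translate of $x_T$. Third, the H\"older estimate just proved implies $|\Phi^+(h_t^+x,T)|\ge\tfrac12 T^{\theta_i-\varepsilon}$ once this horizontal distance is sufficiently small, and the cocycle identity $\Phi^+(h_t^+x,T)=\Phi^+(x,t+T)-\Phi^+(x,t)$ then forces $\max(|\Phi^+(x,t)|,|\Phi^+(x,t+T)|)\ge\tfrac14 T^{\theta_i-\varepsilon}$; since $t=o(T)$ both arguments are of order $T$, so $\limsup\ge\theta_i-\varepsilon$, and letting $\varepsilon\to 0$ concludes. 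The $T\to 0$ asymptotic is proved identically by running ${\bf g}_{-s}$ in place of ${\bf g}_s$. The quantitative recurrence step is the real technical work, and is precisely the point at which the argument must pass to the symbolic model of Markov compacta developed later in the paper.
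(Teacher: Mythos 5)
Your renormalization argument for the \emph{upper} bound is sound and is a legitimate alternative to the paper's: the paper proves the H\"older estimate (Corollary \ref{hoeldprop} via Proposition \ref{hoelderupper}) by decomposing an arbitrary flow arc into sub-exponentially many Markovian arcs at each level and applying the exponential decay along the equivariant sequence, rather than by renormalizing along the Teichm\"uller flow. Both routes work; yours trades the explicit Markovian decomposition for Oseledets plus a tempered norm comparison, which is a fair exchange and arguably cleaner for the upper bound alone.

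The lower bound is where your plan diverges from the paper's and where it has a genuine gap. Your strategy — take a maximizer $x_T$, transport it to the prescribed orbit by ``quantitative recurrence'' plus horizontal holonomy, then telescope — is not what the paper does, and as written it is substantially incomplete. Holonomy invariance transfers $\Phi^+(\cdot,T)$ only across an \emph{admissible rectangle of height $T$}; the horizontal width of such a rectangle around a typical point is of order $1/T$ by the area constraint, so ``small horizontal neighbourhood'' means horizontal distance $\lesssim 1/T$, which is a very strong recurrence requirement, not a routine one. Moreover nothing you have proved controls $|\Phi^+(h_t^+x,T)-\Phi^+(x_T,T)|$ when the two points are merely close without a tall admissible rectangle joining them, so the step ``the H\"older estimate just proved implies $|\Phi^+(h_t^+x,T)|\ge\tfrac12T^{\theta_i-\varepsilon}$'' does not follow from what precedes it. The paper's proofs of Propositions \ref{convhoeldprop} and \ref{zerohoeldprop} avoid the maximizer and any recurrence entirely: for each $n$ the Oseledets estimate gives an index $i$ with $|v_i^{(n)}|\ge e^{(\theta_i-\varepsilon)n}$, and one simply moves forward from $x$ along the leaf $\gamma_\infty^+(x)$ to the \emph{first} level-$n$ Markovian arc $[x',x'')$ with floor label $i$; balancedness guarantees this arc is reached within $\nu^+$-distance $\lesssim e^{(\theta_1+\varepsilon)n}$, the cocycle value on it is exactly $v_i^{(n)}$, and the split $[x,x'']=[x,x']\sqcup[x',x'']$ forces one of the two pieces to carry half the large value. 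That construction is purely combinatorial in the Markov compactum and replaces the deferred recurrence step you flagged as the ``real technical work.'' Your proposal is therefore not wrong in spirit — the telescoping trick is the same — but it substitutes a harder and under-specified dynamical lemma for the paper's direct combinatorial one, and without supplying that lemma the proof is not complete.
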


\begin{proposition}
\label{cochyperb}
If the Kontsevich-Zorich cocycle does not have zero Lyapunov exponent with respect
to $\Prob$, then $\B^+_c({\bf X})=\B^+({\bf X})$.
\end{proposition}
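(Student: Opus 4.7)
The inclusion $\B^+\subset\B_c^+$ is immediate from the definitions, so the task is to show the reverse. I would use the Katok-type cohomology map ${\check {\cal I}}_{\bf X}^+:\B_c^+({\bf X})\to H^1(M,{\mathbb R})$ constructed just above together with the preceding proposition identifying $\B^+$ with $E_{\bf X}^u$ isomorphically under this map. The no-zero-exponents hypothesis yields the Oseledets splitting $H^1(M,{\mathbb R})=E_{\bf X}^u\oplus E_{\bf X}^s$, so it suffices to prove that (i) ${\check {\cal I}}_{\bf X}^+$ is injective on all of $\B_c^+$, and (ii) its image is contained in $E_{\bf X}^u$. Combined with the preceding proposition, these force $\B_c^+=\B^+$.

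For (i), suppose $\Phi^+\in\B_c^+$ has vanishing cohomology class. Fix a basepoint $x_0\in M$, and for any $y\in M$ reachable from $x_0$ by a rectangular path $\gamma=\gamma_1^+\sqcup\dots\sqcup\gamma_{k_1}^+\sqcup\gamma_1^-\sqcup\dots\sqcup\gamma_{k_2}^-$, set $\phi(y):=\sum_{i=1}^{k_1}\Phi^+(\gamma_i^+)$. Vanishing of the cohomology class makes the definition independent of the chosen path (since for any two such paths their concatenation is a closed rectangular curve on which $\Phi^+$ vanishes), and condition 2 in the definition of $\B_c^+$ makes $\phi$ continuous. By construction $\phi(h_t^+x)-\phi(x)=\Phi^+(x,t)$, and $\phi$ is constant along horizontal orbits since horizontal arcs contribute nothing to the defining sum. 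Ergodicity of $h_t^-$ (which holds for $\Prob$-a.e.\ ${\bf X}$) then forces $\phi$ to be constant, hence $\Phi^+\equiv 0$.

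For (ii), given $\Phi^+\in\B_c^+$ decompose ${\check {\cal I}}_{\bf X}^+(\Phi^+)=v^u+v^s$ and pick $\Phi_0^+\in\B^+$ with ${\check {\cal I}}_{\bf X}^+(\Phi_0^+)=v^u$ via the preceding proposition. Setting $\Psi=\Phi^+-\Phi_0^+\in\B_c^+$, its cohomology class equals $v^s\in E_{\bf X}^s$, and it suffices to show $\Psi\equiv 0$. I would proceed by Teichm{\"u}ller renormalization: on ${\bf g}_{-s}{\bf X}$ the transported cocycle $\Psi_{-s}(x,t)=\Psi(x,e^st)$ has cohomology class ${\mathbf A}_{KZ}(-s)v^s$, whose norm grows like $e^{s|\theta|}$ for the negative exponent $-|\theta|$ of $v^s$ by Oseledets. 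Poincar{\'e} recurrence of ${\bf g}_{-s}{\bf X}$ to a compact piece of $\HH$ along a suitable sequence $s_n\to\infty$ provides a uniform constant relating that cohomology norm to the transported cocycle's $C^0$-norm at bounded scale, which equals $\sup_x|\Psi(x,e^{s_n})|$ on the original surface.

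The main obstacle is precisely here: the naive a priori bound combining continuity of $\Psi$ with the cocycle identity yields only linear growth $|\Psi(x,T)|\le CT$, which does not beat $e^{s|\theta|}=T^{|\theta|}$. To close the argument one must establish a strictly sub-linear, indeed polynomial, bound on $|\Psi(x,T)|$ with exponent strictly smaller than $|\theta|$; achieving this requires exploiting the finer dynamical structure of the vertical flow, rather than continuity and holonomy invariance alone, so as to convert the condition 2 control on small scales into a polynomial control on large scales that is incompatible with the expansion rate of $v^s$. The resulting contradiction forces $v^s=0$, concluding (ii) and, combined with (i), the proof.
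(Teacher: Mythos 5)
Your approach is genuinely different from the paper's. The paper proves Proposition \ref{cochyperb} by passing entirely to the symbolic picture: under the identification of $\B_c^+$ with equivariant sequences $v^{(n)}$ satisfying $|v^{(n)}|\to 0$ as $n\to -\infty$, and of $\B^+$ with those for which $|v^{(n)}|$ decays exponentially, Oseledets regularity of the renormalization cocycle makes the inclusion immediate — a continuous equivariant sequence whose $v^{(0)}$ has a nonzero component in the stable or neutral Oseledets direction blows up (resp.\ fails to decay) as $n\to-\infty$. This is Proposition \ref{hyperboliccriterion}, stated to be ``clear from the definitions.'' Your geometric route via ${\check {\cal I}}_{\bf X}^+$ is a legitimate alternative: Part~(i) is a clean holonomy/ergodicity argument for injectivity, and nothing analogous has to be said separately in the symbolic picture because there the evaluation map is an isomorphism on all of $\mathfrak V^+$ by construction.

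The difficulty you flag in Part~(ii), however, is not a gap in the idea but an artifact of a sign error in your renormalization formula. Since $\omega'=e^s\Re\omega + ie^{-s}\Im\omega$ and the vertical flow is parametrized by $\Im\omega$, we have $h_t^{+,\mathbf g_s\mathbf X}=h_{e^st}^{+,\mathbf X}$, so under $\mathbf g_{-s}$ with $s>0$ a unit-time vertical arc on $\mathbf g_{-s}\mathbf X$ is a time-$e^{-s}$ arc on $\mathbf X$; thus the transported cocycle satisfies $\Psi_{-s}(x,t)=\Psi(x,e^{-s}t)$, not $\Psi(x,e^{s}t)$. With the correct sign the argument closes immediately and no sub-linear upper bound on $|\Psi(x,T)|$ for large $T$ is required. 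Indeed $\sup_x|\Psi_{-s}(x,1)|=\sup_x|\Psi(x,e^{-s})|\to 0$ as $s\to\infty$ by the continuity condition on $\B_c^+$, while Poincar\'e recurrence of $\mathbf g_{-s}\mathbf X$ to a compact set $K$ along $s_n\to\infty$, together with the bounded geometry of $K$ (a generating set of $H_1$ by rectangular loops of bounded vertical length, and uniform transversality of the Oseledets splitting on $K$), gives a lower bound $\sup_x|\Psi_{-s_n}(x,1)|\geq c(K)\,|\mathbf A_{KZ}(-s_n)v^s|\gtrsim c\,e^{s_n|\theta|}\to\infty$. The two conclusions are incompatible, so $v^s=0$; the same argument rules out a nonzero neutral component under the isometric-action hypothesis, and there is no neutral component at all under the no-zero-exponent hypothesis. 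So your mechanism is sound — you merely had the renormalization pointing the wrong way, which sent you hunting for a refined upper bound at large scale when the relevant regime is small scale, where the continuity hypothesis already hands you what you need.
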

{\bf Remark.} The condition of the absence of zero Lyapunov exponents can be weakened:
it suffices to require that the Kontsevich-Zorich cocycle act isometrically on the
neutral Oseledets subspace corresponding to the Lyapunov exponent zero.
Isometric action means here that there exists an inner product which
depends measurably on the point in the stratum and which is invariant under the
Kontsevich-Zorich cocycle.

{\bf Question.} Does  there exist a ${\bf g}_s$-invariant ergodic probability measure $\Prob^{\prime}$
 on $\HH$ such that the inclusion $\BB^+\subset \BB^+_c$ is proper almost surely with respect to $\Prob^{\prime}$?

{\bf Remark.} G.Forni has made the following
remark. To a cocycle $\Phi^+\in\mathfrak{B}^+$ assign a $1$-current
$\beta_{\Phi^+},$ defined, for a smooth 1-form $\eta$ on the
surface $M,$ by the formula
$$\beta_{\Phi^+}(\eta)=\int_M\Phi^+\wedge\eta,$$ where the
integral in the right hand side is defined as the limit of Riemann
sums. The resulting current $\beta_{\Phi^+}$ is a ${\it basic\
current}$ for the horizontal foliation.

The mapping of H\"{o}lder cocycles into the cohomology
$H^1(M,\mathbb{R})$ of the surface corresponds to G. Forni's map
that to each basic current assign it's cohomology class (the
latter is well-defined by the de Rham Theorem). In particular, it
follows that for any ergodic $g_s$-invariant probability measure
$\mathbb{P}$ on $\mathcal{H}$ and  $\mathbb{P}$-almost every
abelian differential $(M,\omega)$ every basic current from the
Sobolev space $H^{-1}$ is induced by a H\"{o}lder cocycle
$\Phi^+\in\mathfrak{B}^+(M,\omega).$

\subsection{Approximation of weakly Lipschitz functions.}
\subsubsection{The space of weakly Lipschitz functions.}

The space of Lipschitz functions is not invariant under $h_t^+$, and a larger
function space $Lip_w^+(M, \omega)$ of weakly Lipschitz functions is introduced as follows.
A bounded measurable function $f$ belongs to $Lip_w^+(M, \omega)$ if there exists a
constant $C$, depending only on $f$, such that for any admissible rectangle $\Pi(x, t_1, t_2)$ we
have
\begin{equation}
\label{weaklippsan}
\left| \int_0^{t_1} f(h_t^+x)dt -\int_0^{t_1}  f(h_t^+(h^-_{t_2}x)dt \right|\leq C.
\end{equation}
Let $C_f$ be the infimum of all $C$ satisfying (\ref{weaklippsan}). We norm $Lip_w^+(M, \omega)$ by setting
$$
||f||_{Lip_w^+}=\sup_M f+C_f.
$$

By definition, the space  $Lip_w^+(M, \omega)$ contains all Lipschitz functions on $M$ and is
invariant under $h_t^+$. If $\Pi$ is an admissible rectangle, then its characteristic function
$\chi_{\Pi}$  is weakly Lipschitz (I am grateful to C. Ulcigrai for this remark).

We denote by $Lip_{w,0}^+(M, \omega)$ the
subspace of  $Lip_w^+(M, \omega)$ of
functions whose integral with respect to ${\bf m}$ is $0$.

For any $f\in Lip_{w}^+(M, \omega)$ and any $\Phi^-\in \BB^-$ the
integral $\int_M f dm_{\Phi^-}$ can be defined as the limit of Riemann sums.

\subsubsection{The cocycle corresponding to a weakly Lipschitz function.}
If the pairing $\langle, \rangle$ induces an isomorphism between $\BB^+$ and the
dual $(\BB^-)^*$, then one can assign to a function $f\in Lip_{w}^+(M, \omega)$ the functional $\Phi_f^+$ by
the formula
\begin{equation}
\label{defphifteich}
\langle \Phi_f^+, \Phi^-\rangle =\int\limits_M f dm_{\Phi^-}, \Phi^-\in \BB^-.
\end{equation}

By definition,  $\Phi^+_{f\circ h_t^+}=\Phi_f^+$.
We are now ready to formulate our first main result, the Approximation Theorem \ref{multiplicmoduli}.
\begin{theorem}
\label{multiplicmoduli}
Let $\Prob$ be an ergodic probability ${\bf g}_s$-invariant measure on $\HH$.
For any $\varepsilon>0$ there exists
a constant $C_{\varepsilon}$ depending only on $\Prob$ such that for
$\Prob$-almost every ${\bf X}\in\HH$, any $f\in Lip_w^+({\bf X})$, any $x\in M$ and any $T>0$ we have
$$
\left| \int_0^T
f\circ h_t^+(x) dt -\Phi^+_f(x,T)\right|\leq
C_{\varepsilon}||f||_{Lip_w^+}(1+T^{\varepsilon}).
$$
\end{theorem}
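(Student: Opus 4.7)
The plan is to pass to the symbolic model and derive the theorem from its symbolic counterpart (the Corollary \ref{symbmultiplic} cited in the introduction), using Rauzy-Veech induction as the concrete renormalization device and the Oseledets theorem for the Kontsevich-Zorich cocycle $\A_{KZ}$ as the source of subdominant decay. After choosing a Veech zippered-rectangles chart at $\Prob$-a.e.~${\bf X}\in\HH$, the flow $h_t^+$ is realized as a suspension over an interval exchange $T_0$, and iterating the induction $n$ times produces an IET whose Rohlin towers tile $M$ into $O(1)$ admissible rectangles (the number of IET intervals is fixed by the stratum $\modk$) of vertical length roughly $e^n$. Since the top Lyapunov exponent of $\A_{KZ}$ equals $1$, time $T$ corresponds to induction level $n\asymp\log T$.

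First I would decompose $\int_0^T f\circ h_t^+(x)\,dt$ according to the tower tiling at level $n$: a sum of contributions, one per tower, of the form $\int_0^{h} f\circ h_t^+(y)\,dt$ where $h$ is the tower height and $y$ lies in its base, plus two truncation pieces at the ends of total ``width'' at most one tower base, hence of total integral at most $O(\Vert f\Vert_{Lip_w^+})$. By the weakly Lipschitz bound (\ref{weaklippsan}) applied to each tower as an admissible rectangle, the integral over a full tower depends on the horizontal position $y$ in its base only up to $C_f\le\Vert f\Vert_{Lip_w^+}$; this is what lets one replace the pointwise ergodic integral by a cocycle quantity depending only on the tower.

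The core step is then to show that the sum of these ``one per tower'' integrals is $\Phi_f^+(x,T)$ up to an error of order $T^\varepsilon\Vert f\Vert_{Lip_w^+}$. This is where $\Phi_f^+$, defined via the pairing (\ref{defphifteich}), enters through Proposition~\ref{forni-classif} and the isomorphism ${\cal I}_{\bf X}^+:E^u_{\bf X}\to\BB^+({\bf X})$: the tower contributions are, up to the $C_f$-error just described, linear functionals of the tower height vector, which under the Rauzy-Veech cocycle at level $n$ is exactly the Kontsevich-Zorich cocycle applied to a fixed reference vector. Decompose this reference vector in the Oseledets filtration at ${\bf X}$: the $E^u$-component, transported by $\A_{KZ}^{(n)}$ and paired with the tower heights, reproduces $\Phi_f^+(x,T)$ (by the defining identity (\ref{defphifteich}) unwound along the induction), while each non-expanding Oseledets component grows at rate at most $e^{n\varepsilon}=T^\varepsilon$ in the Oseledets norm.

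Finally, I would telescope over the induction levels $k=0,1,\dots,n$: at each refinement step one picks up an error of at most $\Vert f\Vert_{Lip_w^+}$ times the operator norm of $\A_{KZ}^{(k)}$ on the non-strictly-expanding subspace, so a geometric-type sum gives the total error $C_\varepsilon(1+T^\varepsilon)\Vert f\Vert_{Lip_w^+}$. The main obstacle is the middle step: reconciling the intrinsic, coordinate-free definition of $\Phi_f^+$ via the pairing with $\BB^-$ and the extrinsic, induction-level-$n$ Birkhoff quantity, i.e.~showing that the projection of the ``empirical'' vector onto $E^u_{\bf X}$ is exactly ${\check {\cal I}}_{\bf X}^+(\Phi_f^+)$. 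The constant $C_\varepsilon$ is then independent of $f$ and ${\bf X}$ because the Oseledets norms entering the estimate are measurable in ${\bf X}$ and, by the Borel-Cantelli / tempered-distortion part of the Oseledets theorem, can be absorbed into the $T^\varepsilon$ factor almost surely.
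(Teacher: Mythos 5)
Your overall plan --- symbolic coding via Rauzy--Veech, Oseledets decomposition of the renormalization cocycle, telescoping over induction levels --- is the route the paper takes, but the decomposition step contains a claim that, taken at face value, breaks the argument. If $n\asymp\log T$ so that tower heights are $\asymp T$, your ``two truncation pieces at the ends'' of the flow arc have vertical length up to $\max_i h_i\asymp T$ and therefore contribute $O(T\sup|f|)$, not $O(\|f\|_{Lip_w^+})$; if you instead shrink $n$ so the truncations are genuinely $O(1)$, you now sum $\asymp T$ full-tower errors of size $\le C_f$ each, again $O(T\|f\|_{Lip_w^+})$. No single level $n$ works. The paper's fix (Lemma \ref{arcdecomposition}, resting on the Denjoy--Koksma-type Proposition \ref{checkempty}) is genuinely multi-scale: the arc $[x,h_T^+x]$ is written as a disjoint union of Markovian arcs $\gamma_{l,k}\in\C_l^+$ over \emph{all} levels $l\le n^+(\gamma)\asymp\log T$, with the count $M_l$ at level $l$ bounded subexponentially in $|l|$; combined with the per-arc error $\le C_\varepsilon e^{\varepsilon l}\|f\|_{Lip_w^+}$ of the Preparatory Approximation Lemma \ref{prepapprox}, summing over levels gives the $T^{O(\varepsilon)}$ bound. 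The telescoping in your last paragraph concerns the spectral decomposition of the empirical vector (a different telescoping) and does not close this spatial gap.

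The step you yourself flag as the ``main obstacle'' is the paper's Lemma \ref{vngeneral}: from a sequence of per-tower vectors $v_n$ satisfying $|A_nv_n-v_{n+1}|=O(e^{\varepsilon n})$ (the weak-Lipschitz input), one constructs $v=u_1^++A_1^{-1}u_2^++(A_2A_1)^{-1}u_3^++\cdots\in E_1^u$, a convergent series exploiting the exponential backward contraction of the cocycle restricted to $E^u$ --- this is \emph{not} simply the $E^u$-component of $v_1$ --- and then matches $v$ to the pairing definition (\ref{defphifteich}) of $\Phi_f^+$ via the duality computation (\ref{dualprelim}). As written your proposal names this reconciliation but leaves it unresolved, which together with the decomposition issue makes the argument incomplete.
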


\subsubsection{Invariant measures with simple Lyapunov spectrum.}

Consider the case in which the
Lyapunov spectrum of
the Kontsevich-Zorich cocycle is simple in restriction
to the space $E^u$ (as, by the Avila-Viana theorem \cite{AV},
is the case with the Masur-Veech smooth measure).
Let  $l_0={\rm dim} E^u$ and let
\begin{equation}
1=\theta_1>\theta_2>\dots>\theta_{l_0}
\end{equation}
be the corresponding simple expanding Lyapunov exponents.

Let $\Phi_1^+$ be given by the formula $\Phi_1^+(x,t)=t$
and introduce a basis
\begin{equation}
\Phi_1^+, \Phi_2^+, \dots, \Phi_{l_0}^+
\end{equation}
in ${\mathfrak B}^+_{\bf X}$ in such a
way that ${\check {\cal I}}_{\bf X}^+(\Phi_i^+)$ lies in the Lyapunov subspace with exponent $\theta_i$.
By Proposition \ref{hoeldergrowth}, for any $\varepsilon>0$ the cocycle $\Phi_i^+$ satisfies the H{\"o}lder condition with
exponent $\theta_i-\varepsilon$, and for any $x\in M({\bf X})$ we have
$$
\limsup\limits_{T\to\infty} \frac{\log|\Phi_i^+(x,T)|}{\log T}=\theta_i; \ \limsup\limits_{T\to 0} \frac{\log|\Phi_i^+(x,T)|}{\log T}=\theta_i.
$$

Let $\Phi_1^-, \dots, \Phi_{l_0}^-$ be the dual basis in ${\mathfrak  B}^-_{\bf X}$.
Clearly, $\Phi_1^-(x,t)=t$.

By definition, we have
\begin{equation}
\Phi_f^+=\sum_{i=1}^{l_0} m_{\Phi_i^-}(f)\Phi_i^+.
\end{equation}

Noting that by definition we have
$$m_{\Phi_1^-}={\bf m},$$

we derive from
Theorem \ref{multiplicmoduli} the following corollary.

\begin{corollary}

Let $\Prob$ be an invariant ergodic probability measure for the Teichm{\"u}ller
flow such that with respect to $\Prob$ the
Lyapunov spectrum of
the Kontsevich-Zorich cocycle is simple in restriction to its strictly
expanding subspace.

Then for any $\varepsilon>0$ there exists
a constant $C_{\varepsilon}$ depending
only on $\Prob$ such that for
$\Prob$-almost every ${\bf X}\in\HH$, any
$f\in Lip_w^+({\bf X})$, any $x\in {\bf X}$
and any $T>0$ we have
$$
\left| \int_0^T f\circ h_t^+(x) dt - T(\int_M fd{\bf m})-
\sum\limits_{i=2}^{l_0} m_{\Phi_i^-}(f)\Phi_i^+(x,T)
\right|\leq C_{\varepsilon}||f||_{Lip_w^+}(1+T^{\varepsilon}).
$$
\end{corollary}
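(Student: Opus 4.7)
The plan is to obtain the corollary as an essentially direct substitution into Theorem \ref{multiplicmoduli}, once the basis expansion of $\Phi_f^+$ is made fully explicit. Under the simplicity hypothesis, each Lyapunov subspace of $E_{\bf X}^u$ is one-dimensional, and the isomorphism ${\check {\cal I}}_{\bf X}^+ : \BB_{\bf X}^+ \to E_{\bf X}^u$ supplied by the unnumbered Proposition preceding Proposition \ref{forni-classif} maps each such line back to a line in $\BB_{\bf X}^+$. This provides, for $\Prob$-almost every ${\bf X}$, the basis $\Phi_1^+, \dots, \Phi_{l_0}^+$ described in the statement, with the $i=1$ direction normalized to $\Phi_1^+(x,t)=t$. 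Using nondegeneracy of the pairing $\langle\cdot,\cdot\rangle$ (part of the same Proposition), I would then choose a dual basis $\Phi_1^-,\dots,\Phi_{l_0}^-$ of $\BB_{\bf X}^-$, with $\Phi_1^-(x,t)=t$.

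First I would expand $\Phi_f^+$ in the basis $\{\Phi_i^+\}$. Since $\langle \Phi_i^+, \Phi_j^-\rangle = \delta_{ij}$, definition (\ref{defphifteich}) forces the $i$-th coordinate of $\Phi_f^+$ to equal $\langle \Phi_f^+, \Phi_i^-\rangle = \int_M f\, dm_{\Phi_i^-}$, which gives
$$ \Phi_f^+(x,T) = \sum_{i=1}^{l_0} m_{\Phi_i^-}(f)\, \Phi_i^+(x,T). $$
Next I would separate the $i=1$ contribution. From $\Phi_1^-(x,t)=t$ and formula (\ref{mphipsan}) for $m_{\Phi^-}$, one has $m_{\Phi_1^-}(\Pi(x,t_1,t_2))=t_1 t_2$, so $m_{\Phi_1^-} = {\bf m}$; combined with $\Phi_1^+(x,T)=T$ this yields
$$ m_{\Phi_1^-}(f)\,\Phi_1^+(x,T) = T\int_M f\, d{\bf m}. $$

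Substituting these identities into the bound of Theorem \ref{multiplicmoduli} produces exactly the inequality of the corollary. The entire quantitative content is already carried by Theorem \ref{multiplicmoduli}, and no genuine obstacle is specific to the corollary: the only points to verify are that the bases above exist at $\Prob$-typical ${\bf X}$, which follows from simplicity together with the almost sure isomorphism statements identifying $\BB_{\bf X}^\pm$ with the expanding/contracting Oseledets subspaces of the Kontsevich--Zorich cocycle, and that the expansion of $\Phi_f^+$ coincides with the functional defined by (\ref{defphifteich}), which is automatic from the duality of the two bases.
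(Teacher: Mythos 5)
Your proposal is correct and follows essentially the same route as the paper: choose the Lyapunov-aligned basis $\Phi_1^+,\dots,\Phi_{l_0}^+$ of $\BB_{\bf X}^+$ and its dual basis in $\BB_{\bf X}^-$, use the duality (\ref{defphifteich}) to read off $\Phi_f^+=\sum_{i=1}^{l_0} m_{\Phi_i^-}(f)\Phi_i^+$, identify the $i=1$ term with $T\int_M f\,d{\bf m}$ via $m_{\Phi_1^-}={\bf m}$ and $\Phi_1^+(x,T)=T$, and substitute into Theorem \ref{multiplicmoduli}. No further commentary is needed.
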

For horocycle flows a related asymptotic expansion has been obtained by Flaminio and Forni \cite{flafo}.

{\bf Remark.}
If $\Prob$ is the Masur-Veech smooth measure on $\HH$, then
it follows from the work of G.Forni \cite{F1}, \cite{F2}, \cite{forniarxiv}
and S. Marmi, P. Moussa, J.-C. Yoccoz \cite{MMY}
that the left-hand side is bounded for any $f\in C^{1+\varepsilon}(M)$
(in fact, for any $f$ in the Sobolev space $H^{1+\varepsilon}$).
In particular, if $f\in C^{1+\varepsilon}(M)$ and $\Phi_f^+=0$,
then $f$ is a coboundary.

\subsection{Holonomy invariant transverse finitely-additive measures for oriented measured foliations.}

Holonomy-invariant cocycles assigned to an abelian differential can be interpreted
as transverse invariant measures for its foliations in the spirit of Bonahon
\cite{bonahon1}, \cite{bonahon2}.

Let $M$ be a compact oriented surface of genus at least two, and
let $\F$ be a minimal oriented measured foliation on $M$.
Denote by ${\bf m}_{\F}$ the transverse invariant measure of $\F$.
If $\gamma=\gamma(t), t\in [0,T]$ is a  smooth curve on $M$, and $s_1, s_2$ satisfy
$0\leq s_1< s_2\leq T$,  then we denote by ${\rm res}_{[s_1, s_2]}\gamma$ the curve
$\gamma(t), t\in [s_1, s_2]$.

Let  $\B_c(\F)$ be the space of uniformly continuous finitely-additive transverse invariant
measures for $\F$. In other words,  a map $\Phi$ which to every
smooth arc $\gamma$ transverse to $\F$ assigns a real number $\Phi(\gamma)$
belongs to the space ${\B}_c(\F)$
if it satisfies the following:
\begin{assumption}
\label{finaddmeasfoliation}
\begin{enumerate}
\item ( finite additivity) For $\gamma=\gamma(t), t\in [0,T]$ and any $s\in (0,T)$, we have
$$
\Phi(\gamma)=\Phi({\rm res}_{[0, s]}\gamma)+\Phi({\rm res}_{[s, T]}\gamma);
$$
\item ( uniform continuity) for any $\varepsilon>0$ there exists $\delta>0$ such that
for any transverse arc $\gamma$ satisfying ${\bf m}_{\F}(\gamma)<\delta$ we have $|\Phi(\gamma)|<\varepsilon$;
\item ( holonomy invariance)  the value $\Phi(\gamma)$ does not change if $\gamma$ is deformed in
such a way that it stays transverse to $\F$ while the endpoints of $\gamma$ stay on their respective leaves.
\end{enumerate}
\end{assumption}
A measure $\Phi\in \B_c(\F)$ is called H{\"o}lder with exponent $\theta$
if there exists $\varepsilon_0>0$ such that for any transverse arc $\gamma$
satisfying ${\bf m}_{\F}(\gamma)<\varepsilon_0$ we have
$$
|\Phi(\gamma)|\leq \left({\bf m}_{\F}(\gamma)\right)^{\theta}.
$$

Let $\B(\F)\subset \B_c(\F)$ be the subspace of H{\"o}lder transverse measures.

As before, we have a natural map
$$
{\cal I}_{\F}: \B_c(\F)\to H^1(M, {\mathbb R})
$$
defined as follows.  For a smooth closed curve $\gamma$ on
$M$, and a measure $\Phi\in \B_c(\F)$  the integral
$\int_{\gamma} d\Phi$ is well-defined as the limit of Riemann sums; by holonomy-invariance and continuity of $\Phi$, this operation
descends to homology and assigns to $\Phi$ an element of $H^1(M, {\mathbb R})$.

Now take an abelian differential ${\bf X}=(M, \omega)$ and let $\F^-_{\bf X}$ be its horizontal foliation.
We have a ``tautological'' isomorphism between $\BB_c(\F^-_{\bf X})$ and $\BB_c^+({\bf X})$: every
transverse measure for the horizontal foliation induces a cocycle for the vertical foliation and
vice versa; to a H{\"o}lder measure corresponds a H{\"o}lder cocycle.
For brevity, write ${\cal I}_{\bf X}={\cal I}_{\F^-_{\bf X}}$.
Denote by $E^u_{\bf X}\subset H^1(M, {\mathbb R})$ the unstable subspace of the
Kontsevich-Zorich cocycle of the abelian differential ${\bf X}=(M, \omega)$.

Theorem \ref{multiplicmoduli} and Proposition \ref{cochyperb} yield the following
\begin{corollary}
\label{finaddmeasfol}
Let $\Prob$ be an ergodic probability measure for the Teichm{\"u}ller flow ${\bf g}_t$ on $\HH$.
Then for almost every abelian differential ${\bf X}\in\HH$ the map ${\cal I}_{\bf X}$ takes $\BB(\F^-_{\bf X})$ isomorphically
onto $E^u_{\bf X}$.

If all the Lyapunov exponents of the Kontsevich-Zorich cocycle are nonzero with respect to $\Prob$,
then for almost all ${\bf X}\in\HH$ we have $\B_c(\F_{\bf X})=\B(\F_{\bf X})$.
\end{corollary}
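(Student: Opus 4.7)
The plan is to transport the statement across the tautological bijection between holonomy-invariant transverse measures for $\F_{\bf X}^-$ and holonomy-invariant cocycles over $h_t^+$, and then invoke the isomorphism ${\check{\cal I}}_{\bf X}^+ \colon \BB_{\bf X}^+ \to E_{\bf X}^u$ of the unnumbered proposition preceding Theorem \ref{multiplicmoduli}, together with Proposition \ref{cochyperb} for the second assertion.

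First I would spell out the bijection $\BB_c(\F_{\bf X}^-)\leftrightarrow\BB_c^+({\bf X})$ alluded to in the text. To a measure $\Phi\in\BB_c(\F_{\bf X}^-)$ I associate the function
\[
\Phi^+(x,t) := \Phi(\gamma_{x,t}),
\]
where $\gamma_{x,t}$ is the vertical arc from $x$ to $h_t^+x$; the three conditions of Assumption \ref{bplusx} then follow respectively from finite additivity, uniform continuity, and holonomy invariance of $\Phi$. Conversely, given $\Phi^+\in\BB_c^+({\bf X})$, any transverse arc can be cut into short pieces contained in flow boxes for $h_t^+$, each of which may be deformed by a horizontal holonomy into a vertical segment; this yields a well-defined value $\Phi(\gamma)$. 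Because ${\bf m}_{\F^-}(\gamma_{x,t})=|t|$, the H{\"o}lder bound $|\Phi(\gamma)|\leq {\bf m}_{\F^-}(\gamma)^{\theta}$ is equivalent to $|\Phi^+(x,t)|\leq |t|^{\theta}$, so the bijection restricts to $\BB(\F_{\bf X}^-)\leftrightarrow\BB^+({\bf X})$.

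Next I would check that the cohomology maps ${\cal I}_{\bf X}={\cal I}_{\F_{\bf X}^-}$ and ${\check{\cal I}}_{\bf X}^+$ coincide under this bijection. The former is defined via integration of $\Phi$ over smooth closed curves modulo homology, the latter via sums of $\Phi^+$ over the horizontal and vertical arcs of a rectangular closed curve; but every integer homology class on $M$ admits a rectangular representative obtained by following horizontal and vertical leaves, and on vertical arcs the two definitions agree by construction. Combined with the proposition that states ${\check{\cal I}}_{\bf X}^+$ maps $\BB_{\bf X}^+$ isomorphically onto $E_{\bf X}^u$ for $\Prob$-almost every ${\bf X}\in\HH$, this yields the first assertion. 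For the second, when no Lyapunov exponent of the Kontsevich-Zorich cocycle vanishes, Proposition \ref{cochyperb} gives $\BB_c^+({\bf X})=\BB^+({\bf X})$, and under the bijection this becomes $\BB_c(\F_{\bf X})=\BB(\F_{\bf X})$.

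The main obstacle is the careful construction of the inverse bijection: one must show that uniform continuity together with horizontal-holonomy invariance of a cocycle $\Phi^+$ on vertical arcs genuinely suffice to produce a finitely-additive transverse measure defined on \emph{all} smooth arcs transverse to $\F_{\bf X}^-$, and that the result is independent of the chosen subdivision into flow-box pieces. This requires a Poincar{\'e}-return-map argument using the absence of saddle connections in the horizontal direction (valid on a full-measure set by ergodicity of $\Prob$); once it is in place, matching of H{\"o}lder exponents and of cohomology classes is automatic because ${\bf m}_{\F^-}$ restricted to a vertical arc is exactly the $h_t^+$-time parameter.
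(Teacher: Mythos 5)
Your proposal is correct and follows essentially the same route the paper intends: the paper states the corollary without a written proof, relying on (i) the ``tautological'' identification of $\BB_c(\F^-_{\bf X})$ with $\BB_c^+({\bf X})$ (and $\BB(\F^-_{\bf X})$ with $\BB^+({\bf X})$) described in the sentence just above the corollary, (ii) the unnumbered proposition asserting ${\check{\cal I}}_{\bf X}^+\colon\BB^+_{\bf X}\to E^u_{\bf X}$ is an isomorphism almost surely, and (iii) Proposition~\ref{cochyperb}. You identify exactly these three ingredients and correctly add the one genuine check the paper leaves implicit, namely that ${\cal I}_{\F^-_{\bf X}}$ and ${\check{\cal I}}_{\bf X}^+$ compute the same cohomology class under the tautological bijection; your observation that rectangular cycles span $H_1(M,\mathbb{Z})$ (which the paper itself uses, via Veech's cycles $\gamma_i(\la,\pi,\delta)$ in Section~4) and that horizontal arcs contribute zero to the Riemann sums for $\int_\gamma d\Phi$ disposes of this. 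Your honest flag about the inverse direction of the tautological bijection (extending a holonomy-invariant cocycle on vertical arcs to all transverse arcs) is a fair point, but the paper treats it as routine as well, so you are not diverging from the paper's level of rigour.
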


In other words, in the absence of zero Lyapunov exponents
all continuous transverse finitely-additive invariant measures are in fact H{\"o}lder.

{\bf Remark.} As before, the condition of the absence of zero Lyapunov exponents can be weakened:
it suffices to require that the Kontsevich-Zorich cocycle act isometrically on the Oseledets
subspace corresponding to the Lyapunov exponent zero.

By definition, the space $\BB(\F^-_{\bf X})$ only depends on the horizontal foliation
of our abelian differential; so does $E^u_{\bf X}$.

\subsection{Finitely-additive invariant measures for interval exchange transformations.}
\subsubsection{The space of invariant continuous finitely-additive measures.}

Let $m\in {\mathbb N}$. Let $\Delta_{m-1}$ be the standard unit simplex
$$
\Delta_{m-1}=\{\la\in {\mathbb R}^m_+,\ \la=(\la_1, \dots,\la_m),\la_i>0, \sum\limits_{i=1}^m \la_i=1\}.
$$

Let $\pi$ be a permutation of $\{1, \dots, m\}$ satisfying the {\it irreducibility} condition:
we have $\pi\{1, \dots, k\}=\{1, \dots, k\}$ if and only if $k=m$.

On the half-open interval $I=[0, 1)$ consider
the points $\beta_1=0$, $\beta_i=\sum_{j<i}\la_j$, $\beta_1^{\pi}=0$,
$\beta_i^{\pi}=\sum_{j<i}\la_{\pi^{-1}j}$ and
denote $I_i=[\beta_i, \beta_{i+1})$, $I_i^{\pi}=[\beta_i^{\pi}, \beta_{i+1}^{\pi})$.
The length of $I_i$ is $\la_i$, while the length of $I_i^{\pi}$  is $\la_{\pi^{-1}i}$.
Set
$$
{\bf T}_{(\la,\pi)}(x)=x+\beta_{\pi i}^{\pi}-\beta_i {\rm \ for\ } x\in I_i.
$$
The map ${\bf T}_{(\la,\pi)}$ is called {\it an interval exchange transformation} corresponding to $(\la, \pi)$.
By definition, the map ${\bf T}_{(\la,\pi)}$ is invertible and preserves the Lebesgue measure on $I$.
By the theorem of Masur \cite{masur} and Veech \cite{veech}, for any irreducible permutation $\pi$ and
for Lebesgue-almost all $\la\in\Delta_{m-1}$, the corresponding interval exchange transformation ${\bf T}_{(\la,\pi)}$
is uniquely ergodic: the Lebesgue measure is the only invariant probability measure for ${\bf T}_{(\la,\pi)}$.

Consider the space of complex-valued continuous finitely-additive invariant measures for ${\bf T}_{(\la,\pi)}$.

More precisely, let $\BB_c({\bf T}_{(\la,\pi)})$ be the space of all continuous functions $\Phi: [0,1]\to {\mathbb R}$
satisfying
\begin{enumerate}
\item  $\Phi(0)=0$;
\item if $0\leq t_1\leq t_2<1$ and ${\bf T}_{(\la,\pi)}$ is continuous on $[t_1, t_2]$, then
$\Phi(t_1)-\Phi(t_2)=\Phi({\bf T}_{(\la,\pi)}(t_1))-\Phi({\bf T}_{(\la,\pi)}(t_2))$.
\end{enumerate}

Each function $\Phi$ induces a finitely-additive measure on $[0,1]$ defined on the semi-ring
of subintervals (for instance, the function $\Phi_1(t)=t$ yields the Lebesgue measure on $[0,1]$).

Let $\BB({\bf T}_{(\la,\pi)})$ be the subspace of H{\"o}lder functions $\Phi\in \BB_c({\bf T}_{(\la,\pi)})$.

The classification of H{\"o}lder cocycles over translation flows and the asymptotic formula of
Theorem \ref{multiplicmoduli} now
yield the classification of the space $\BB({\bf T}_{(\la,\pi)})$ and
an asymptotic expansion for time averages of almost all interval exchange maps.

\subsubsection{The approximation of ergodic sums}

Let ${\bf X} = (M,\omega)$ be an abelian differential, and let $I\subset M$ be a closed interval lying on a leaf of a horizontal foliation. The vertical flow $h^+_t$ induces an interval exchange map $\mathbf{T}_I$ on $I$, namely, the Poincar\'{e} first return map of the flow. By definition, again there is a natural tautological identification of the spaces $\mathfrak{B}_c(\mathbf{T}_I)$ and $\mathfrak{B}_c^-(\mathbf{X})$, as well as of the spaces $\mathfrak{B}(\mathbf{T}_I)$ and $\mathfrak{B}^-(\mathbf{X})$.

For $x\in M$, let $\tau_I(x)=\min\left\{t\geqslant0:h^+_{-t}x\in I\right\}$. Note that the function $\tau_I$ is uniformly bounded on $M$. Now take a Lipschitz function $f$ on $I$, and introduce a function $\widetilde{f}$ on $M$ by the formula
$$\widetilde{f}(x)=f(h^+_{-\tau_I(x)} x)$$
(setting $\widetilde{f}(x)=0$ for points at which $\tau_I$ is not defined).

By definition, the function $\widetilde{f}$ is weakly Lipschitz, and Theorem \ref{multiplicmoduli} is applicable to $\widetilde{f}$.

The ergodic integrals of $\widetilde{f}$ under $h^+_t$ are of course closely related to ergodic sums of $f$ under $\mathbf{T}_I$, and for any $N\in\mathbb{N}$, $x\in I$, there exists a time $t(x,N)\in\mathbb{R}$ such that
$$\int\limits^{t(x,N)}_0\widetilde{f}\circ h^+_s(x)\,ds=\sum\limits^{N-1}_{k=0}f\circ \mathbf{T}_I^k(x)\:.$$

By the Birkhoff--Khintchine Ergodic Theorem  we have
$$\lim\limits_{N\rightarrow\infty}\frac{t(x,N)}{N}=\frac{1}{Leb(I)}\:,$$
where $Leb(I)$ stands for the length of $I$.

Furthermore, Theorem \ref{multiplicmoduli} yields the existence of constants $C(I)>0$, $\theta\in (0,1)$, such that for all $x\in I$, $N\in\mathbb{N}$, we have
\begin{equation}\label{txn-est}
\left|t(x,N)-\frac{N}{Leb(I)}\right|\leqslant C(I)\cdot N^{\theta}.
\end{equation}

Indeed, the interval $I$ induces a decomposition of our surface into weakly admissible rectangles $\Pi_1,\ldots,\Pi_m$; denote by $h_i$ the height of the rectangle $\Pi_i$, and introduce a weakly Lipschitz function that assumes the constant value $\frac{1}{h_i}$ on each rectangle $\Pi_i$.
Applying Theorem \ref{multiplicmoduli}
 to this function we arrive at desired estimate.

In view of the estimate (\ref{txn-est}), Theorem \ref{multiplicmoduli} applied to the function $\widetilde{f}$ now yields the following Corollary.

\begin{corollary} \label{iet-approx}
Let $\mathbb{P}$ be a
${\bf{g}}_s-invariant$ ergodic probability measure on
${\mathcal{H}}.$ For any $\varepsilon>0$ there exists
$C_{\varepsilon}>0$ depending only on $\mathbb{P}$ such that the
following holds.

For almost every abelian differential
${\bf{X}}\in{\mathcal{H}},{\bf{X}}=(M,\omega),$ any horizontal
closed interval $I\subset M$, any Lipschitz function
$f:I\to\mathbb{R}$, any $x\in I$ and all $n\in\mathbb{N}$ we have
$$\left|\sum_{k=0}^{N-1}f\circ{\bf{T}}_I^k(x)-\Phi_{\widetilde{f}}^+(x,N)\right|
\leqslant C_{\varepsilon}||f||_{Lip}N^{\varepsilon}.$$
\end{corollary}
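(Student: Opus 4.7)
The plan is to deduce the corollary from Theorem \ref{multiplicmoduli} applied to the lifted function $\widetilde{f}$, via the identity
\[
\int_0^{t(x,N)} \widetilde{f}\circ h_s^+(x)\,ds \;=\; \sum_{k=0}^{N-1} f\circ {\bf T}_I^k(x)
\]
that is built into the definition of $t(x,N)$, together with the estimate (\ref{txn-est}) that bounds $t(x,N)$ by a linear function of $N$.

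The first step is to verify that $\widetilde{f} \in Lip_w^+({\bf X})$, with $\|\widetilde{f}\|_{Lip_w^+} \leq C(I)\|f\|_{Lip}$. The cross-section $I$ decomposes $M$ into finitely many admissible rectangles over which $\widetilde{f}(h_s^+ y) = f(y)$ for $y$ on the base, so $\widetilde{f}$ is constant along each vertical orbit segment up to the next return to $I$. For any admissible subrectangle $\Pi(x, t_1, t_2)$, the two vertical orbits $\{h_s^+ x\}$ and $\{h_s^+ h_{t_2}^- x\}$ are horizontally paired point-by-point, and their successive visits to $I$ differ horizontally by exactly $t_2$; the Lipschitz norm of $f$ together with the uniform boundedness of $\tau_I$ then yields the required bound on the difference of integrals in (\ref{weaklippsan}), with a constant depending only on $I$. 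The paper has already remarked, in the discussion surrounding (\ref{txn-est}), that piecewise-constant functions on admissible rectangles are weakly Lipschitz, and the same argument adapts to $\widetilde{f}$.

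The second step is to apply Theorem \ref{multiplicmoduli} to $\widetilde{f}$ at time $T = t(x,N)$. By (\ref{txn-est}), $t(x,N) \leq C_I \cdot N$, so the error term $C_\varepsilon \|\widetilde{f}\|_{Lip_w^+}(1 + t(x,N)^\varepsilon)$ is absorbed into $C_\varepsilon'\|f\|_{Lip}\,N^\varepsilon$. Combining this with the integral identity yields
\[
\left|\sum_{k=0}^{N-1} f\circ {\bf T}_I^k(x) - \Phi^+_{\widetilde{f}}(x, t(x,N))\right| \leq C_\varepsilon'\,\|f\|_{Lip}\,N^\varepsilon,
\]
which is the statement of the corollary once $\Phi^+_{\widetilde{f}}(x, N)$ is identified, via the tautological correspondence between $\mathfrak{B}({\bf T}_I)$ and $\mathfrak{B}^-({\bf X})$, with the value of the continuous cocycle $\Phi^+_{\widetilde{f}}$ at the Poincar\'e return time $t(x,N)$ (that is, the discrete cocycle over $N$ returns to $I$).

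The main obstacle is the weak-Lipschitz bound for $\widetilde{f}$ in the first step. Although $\widetilde{f}$ is Lipschitz along $I$ and constant along orbit segments between returns, it is genuinely discontinuous across the return-time boundaries, so the bound does not follow from a pointwise Lipschitz estimate. The admissibility hypothesis is precisely what rescues the argument: it prevents the horizontally paired vertical orbits of an admissible rectangle from encountering the discontinuity sets of $\tau_I$ in different orders, and keeps them in uniform point-to-point correspondence, so that a direct count of returns yields the bound. Once this is secured, Theorem \ref{multiplicmoduli} and the linear bound on $t(x,N)$ combine without further difficulty.
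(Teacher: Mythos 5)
Your proof follows the same route as the paper's own (very terse) argument: lift $f$ to $\widetilde{f}$, verify it is weakly Lipschitz, apply Theorem \ref{multiplicmoduli} at time $T = t(x,N)$, and use (\ref{txn-est}) to bound $t(x,N)$ linearly in $N$. A few remarks on the details. Your appeal to the correspondence $\mathfrak{B}({\bf T}_I)\cong\mathfrak{B}^-({\bf X})$ to identify $\Phi^+_{\widetilde{f}}(x,N)$ with $\Phi^+_{\widetilde{f}}(x,t(x,N))$ is misplaced: $\Phi^+_{\widetilde{f}}$ lies in $\mathfrak{B}^+({\bf X})$, not $\mathfrak{B}^-({\bf X})$, and the identification is simply the convention that an integer second argument means the orbit arc up to the $N$-th return. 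Your account of why $\widetilde{f}$ is weakly Lipschitz is also not quite right: admissibility does not keep the paired vertical sides of $\Pi(x,t_1,t_2)$ synchronized against the discontinuity set of $\tau_I$ --- mismatched crossings of $I$ do occur, but only at the (at most two) heights where an endpoint of $I$ lies in the interior of the embedded rectangle. The actual bound is obtained by combining this finite discrepancy, which contributes $O(\|f\|_\infty\max_i h_i)$, with the area constraint $t_1t_2\le{\bf m}(M)=1$ controlling the synchronized contribution $t_1t_2\|f\|_{Lip}$. Finally, note that your constant $C(I)$ in $\|\widetilde{f}\|_{Lip_w^+}\le C(I)\|f\|_{Lip}$ genuinely depends on $\max_i h_i$, so the final constant inherits an $I$-dependence; this is internally consistent with your argument, but it means the corollary's assertion that $C_\varepsilon$ depends only on $\mathbb{P}$ is somewhat stronger than what either you or the paper's own two-line proof actually establishes.
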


{\bf Proof.} Applying Theorem \ref{multiplicmoduli} to $\widetilde{f}$, using the estimate
$(\ref{txn-est})$ and noting that the weakly Lipschitz norm of
$\widetilde{f}$ is majorated by the Lipschitz norm of $f,$ we
obtain the desired Corollary.

Let $\theta_1>\theta_2>\ldots>\theta_{l_0}>0$ be the distinct positive
Lyapunov exponents of the measure $\mathbb{P},$ and let
$d_1=1,d_2,\ldots,d_{l_0}$ be the dimensions of the corresponding
subspaces. The tautological identification of
$\mathfrak{B}({\bf{T}}_{I})$ and $\mathfrak{B}^-({\bf{X}})$
together with the results of the previous Corollary now implies
Zorich-type estimates for the growth of ergodic sums of
${\bf{T}}_I.$ More precisely, we have the following

\begin{corollary} \label{iet-logasympt}
In the assumptions of the
preceding Corollary, the space $\mathfrak{B}({\bf{T}}_I)$ admits a
flag of subspaces
$$0=\mathfrak{B}_0\subset\mathfrak{B}_1=\mathbb{R}Leb_I\subset\mathfrak{B}_2
\subset\ldots\subset\mathfrak{B}_{l_0}=\mathfrak{B}({\bf{T}}_I)$$
such that any finitely-additive measure $\Phi\in\mathfrak{B}_i$ in
H\"{o}lder with exponent $\frac{\theta_i}{\theta_1}-\varepsilon$
for any $\varepsilon>0$ and that for any Lipschitz function
$f:I\to\mathbb{R}$ and for any $x\in I$  we have
$$\lim_{N\to\infty}\sup\frac{\log\left|\sum^{N-1}_{k=0}f{\bf{T}}_I^k(x)\right|}{\log
N}=\frac{\theta_{i(f)}}{\theta_1},$$ where $i(f)=1+\max\{j:\int_I fd\Phi=0$ for all
$\Phi\in\mathfrak{B}_j\}$ and by convention we set
$\theta_{l_0+1}=0.$

If with respect to the measure $\Prob$ the  Kontsevich-Zorich cocycle acts isometrically on its neutral
subspaces, then we also have
$\mathfrak{B}_c({\bf{T}}_I)=\mathfrak{B}({\bf{T}}_I).$
\end{corollary}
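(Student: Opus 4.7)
My plan is to transport the Oseledets flag of the contracting subspace $E^s_{\bf X}$ of the Kontsevich--Zorich cocycle over to $\mathfrak{B}({\bf T}_I)$ through the chain $\mathfrak{B}({\bf T}_I)\cong\mathfrak{B}^-({\bf X})\xrightarrow{\check{\cal I}^-_{\bf X}}E^s_{\bf X}$, and then to read off the growth rates by combining Corollary~\ref{iet-approx} with Proposition~\ref{hoeldergrowth}.

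Write $E^s_{\bf X}=\bigoplus_{j=1}^{l_0}E^s_{{\bf X},-\theta_j}$ and set $V_i=\bigoplus_{j=1}^{i}E^s_{{\bf X},-\theta_j}$; let $\mathfrak{B}_i\subset\mathfrak{B}({\bf T}_I)$ be the image of $(\check{\cal I}^-_{\bf X})^{-1}(V_i)$ under the tautological identification. Since the distinguished cocycle $\Phi^-_1(x,t)=t$ lies in the top Oseledets line, restricts to $Leb_I$, and $d_1=1$, the first step of the flag is indeed $\mathfrak{B}_1=\mathbb{R}\,Leb_I$. The analog of Proposition~\ref{hoeldergrowth} for $\mathfrak{B}^-$, obtained by reversing time in the Teichm\"uller flow (which swaps $E^u\leftrightarrow E^s$ and preserves the list of positive exponents), then yields the stated H\"older exponent $\theta_i/\theta_1-\varepsilon$ for every element of $\mathfrak{B}_i$, and H\"older regularity is preserved by restriction to $I$.

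For a Lipschitz $f$ on $I$ and $x\in I$, Corollary~\ref{iet-approx} gives $\bigl|\sum_{k=0}^{N-1}f\circ{\bf T}_I^k(x)-\Phi^+_{\widetilde f}(x,N)\bigr|\le C_\varepsilon\|f\|_{Lip}N^\varepsilon$. I decompose $\Phi^+_{\widetilde f}=\sum_j\Psi_j$ along the Oseledets splitting of $E^u_{\bf X}$; by Proposition~\ref{hoeldergrowth}, each nonzero $\Psi_j$ satisfies $\limsup_{N\to\infty}\log|\Psi_j(x,N)|/\log N=\theta_j$, so the exact rate equals $\theta_{j_*}/\theta_1$ with $j_*=\min\{j:\Psi_j\neq 0\}$. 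Nondegeneracy of the cup product on each paired block $E^u_{{\bf X},\theta_j}\times E^s_{{\bf X},-\theta_j}$, together with its vanishing across distinct blocks, gives the equivalence ``$\Psi_k=0$ for all $k\le j$'' $\Longleftrightarrow$ ``$\langle\Phi^+_{\widetilde f},\Phi^-\rangle=0$ for every $\Phi^-\in\mathfrak{B}_j$''; combined with the defining identity $\langle\Phi^+_{\widetilde f},\Phi^-\rangle=\int_M\widetilde f\,dm_{\Phi^-}$, which reduces along the flag to the integral of $f$ against $\Phi\in\mathfrak{B}_j$, this identifies $j_*$ with $i(f)$. Finally, the equality $\mathfrak{B}_c({\bf T}_I)=\mathfrak{B}({\bf T}_I)$ under the isometric-action hypothesis follows from Proposition~\ref{cochyperb} (and its remark) applied to $\mathfrak{B}^-$, transported through the tautological identification.

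The step I expect to be most delicate is the reduction of $\langle\Phi^+_{\widetilde f},\Phi^-\rangle=\int_M\widetilde f\,dm_{\Phi^-}$ to the unweighted integral $\int_I f\,d\Phi$: a direct computation yields $\int_I hf\,d\Phi$, with $h$ the piecewise-constant first-return time to $I$, and reconciling this weight with the flag $\mathfrak{B}_j$ is the substantive work. I would handle this by exploiting that each $\mathfrak{B}_j$ is, by construction, invariant under the renormalization associated with ${\bf T}_I$ which carries the cocycle of return times, so that the weighted and unweighted vanishing conditions cut out the same subspaces of Lipschitz functions at each level of the flag; equivalently, one may rephrase the expansion coefficients intrinsically through $m_{\Phi^-}$ without altering the flag interpretation.
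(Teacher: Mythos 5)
Your overall strategy is the right one and is essentially what the paper's terse derivation intends: transport the Oseledets flag from $E^s_{\bf X}$ through $\check{\cal I}^-_{\bf X}$ and the tautological identification, identify $\mathfrak{B}_1$ with $\mathbb{R}\,Leb_I$, get the H\"older exponents from the time-reversed Proposition~\ref{hoeldergrowth}, read off the growth rate from the leading nonzero Oseledets component of the approximating cocycle via Corollary~\ref{iet-approx}, use block-orthogonality of the pairing, and invoke Proposition~\ref{cochyperb} for the final claim.

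However, the ``delicate step'' you flag is a genuine gap, and your proposed resolution does not work. You correctly compute that with $\widetilde f(y)=f(h^+_{-\tau_I(y)}y)$ the pairing yields $\langle\Phi^+_{\widetilde f},\Phi^-\rangle=\int_M\widetilde f\,dm_{\Phi^-}=\int_I hf\,d\Phi$, where $h$ is the piecewise-constant return time. Your claim that ``the weighted and unweighted vanishing conditions cut out the same subspaces of Lipschitz functions at each level of the flag'' is false: already at level $j=1$ the conditions $\int_I f\,dLeb_I=0$ and $\int_I hf\,dLeb_I=0$ are distinct hyperplanes of $Lip(I)$ whenever $h$ is nonconstant, which is the generic situation. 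The appeal to renormalization invariance of the flag does not remove this discrepancy, since $h$ is a fixed function on $I$, not an operator one can conjugate away levelwise; and ``rephrasing the coefficients intrinsically through $m_{\Phi^-}$'' changes the statement being proved rather than proving the one given. The correct fix is that the lift of $f$ which linearizes the suspension should be the return-time--normalized function $\widetilde f(y)=f(h^+_{-\tau_I(y)}y)/h(h^+_{-\tau_I(y)}y)$: with this choice one has $\int_0^{T_N}\widetilde f\circ h^+_s(x)\,ds=\sum_{k=0}^{N-1}f\circ\mathbf{T}_I^k(x)$ exactly at the $N$-th return time $T_N$, and the pairing becomes $\int_M\widetilde f\,dm_{\Phi^-}=\sum_i h_i\int_{I_i}(f/h_i)\,d\Phi=\int_I f\,d\Phi$, so the factor $h$ cancels and $j_*=i(f)$ holds as the corollary requires. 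Without this normalization the unnormalized lift would even give linear growth of $\Phi^+_{\widetilde f}(x,N)$ for mean-zero $f$ with $\int_I hf\,dLeb_I\neq0$, which is impossible for the actual Birkhoff sums; this shows the discrepancy is not harmless.
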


{\bf{Remark}}.
Corollaries \ref{iet-approx}, \ref{iet-logasympt} thus yield the asymptotic expansion
in terms of H\"{o}lder cocycles as well as Zorich-type logarithmic
estimates for almost all interval exchange transformations with
respect to any conservative ergodic measure $\mu$ on the space of
interval exchange transformations, invariant under the Rauzy-Veech
induction map and such that the Kontsevich-Zorich cocycle is
log-integrable with respect to $\mu.$

In particular, for the Lebesgue measure, if we
let $\R$ be the Rauzy class of the permutation $\pi$, then, using the simplicity of
the Lyapunov spectrum given by the Avila-Viana theorem \cite{AV}, we obtain
\begin{corollary}
For any irreducible permutation $\pi$ and for Lebesgue-almost all $\la$
all continuous finitely-additive measures are H{\"o}lder: we have
 $$\BB({\bf T}_{(\la,\pi)})=\BB_c({\bf T}_{(\la,\pi)}).$$
For any irreducible permutation $\pi$ there exists
a natural number $\rho=\rho(\R)$
depending only on the Rauzy class of $\pi$ and
such that
\begin{enumerate}
\item for Lebesgue-almost all $\la$ we have $\dim \BB(\la,\pi)=\rho$;
\item  all the spaces ${\mathfrak B}_i$ are one-dimensional and $l_0=\rho$.
\end{enumerate}
\end{corollary}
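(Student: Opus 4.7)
The plan is to deduce this statement from Corollary \ref{iet-logasympt} applied to the Masur--Veech smooth probability measure $\Prob_{MV}$ on the stratum of abelian differentials whose associated Rauzy class equals $\R$. The first step is to pass from ``Lebesgue-almost every $\la \in \Delta_{m-1}$'' to ``$\Prob_{MV}$-almost every abelian differential ${\bf X}$''. For this I would invoke Veech's zippered rectangle construction, which realizes a full-measure subset of the stratum as a tower of suspensions over interval exchange transformations $({\bf T}_{(\la, \pi^{\prime})})$ with $\pi^{\prime} \in \R$; in these coordinates the Masur--Veech measure disintegrates as a product of Lebesgue measure on $\Delta_{m-1}$, Lebesgue measure on the parameter space of heights, and counting measure on $\R$. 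Consequently, any property holding for $\Prob_{MV}$-almost every ${\bf X}$ holds for Lebesgue-almost every $\la$, once one fixes the irreducible permutation $\pi \in \R$.

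Next I would verify the two hypotheses needed by Corollary \ref{iet-logasympt} for the measure $\Prob_{MV}$: simplicity of the Lyapunov spectrum of the Kontsevich--Zorich cocycle on its unstable subspace, and the absence of zero Lyapunov exponents. The former is precisely the Avila--Viana theorem \cite{AV}; the latter is the Forni non-uniform hyperbolicity theorem \cite{F1}. Together they guarantee that the strictly expanding space $E^u_{\bf X}$ has dimension equal to the genus $\rho$, and likewise for $E^s_{\bf X}$, with each Oseledets subspace one-dimensional. Setting $\rho = \rho(\R)$ to be the genus of the surface obtained by the zippered rectangle construction from any $(\la,\pi^{\prime})$ with $\pi^{\prime}\in\R$, one checks that $\rho$ is constant along the Rauzy class and depends only on $\R$.

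Now for Lebesgue-almost every $\la$, let ${\bf X} = (M, \omega)$ be the corresponding abelian differential and $I \subset M$ the horizontal interval such that ${\bf T}_I = {\bf T}_{(\la, \pi)}$. By the tautological identification between $\BB({\bf T}_I)$ and $\BB^-({\bf X})$, combined with the first proposition of the excerpt, which identifies $\BB^-({\bf X})$ isomorphically with $E^s_{\bf X}$, one obtains $\dim \BB({\bf T}_{(\la, \pi)}) = \dim E^s_{\bf X} = \rho$. Similarly the analogous tautological identification for continuous cocycles, together with Proposition \ref{cochyperb} (whose hypothesis on absence of zero Lyapunov exponents is satisfied by Forni's theorem), yields $\BB_c({\bf T}_{(\la,\pi)}) = \BB({\bf T}_{(\la,\pi)})$. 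Finally, the flag $\BB_0 \subset \BB_1 \subset \cdots \subset \BB_{l_0}$ furnished by Corollary \ref{iet-logasympt} has consecutive quotients of dimension $d_i = 1$, and $l_0 = \rho$, since the Oseledets subspaces for $\Prob_{MV}$ are all one-dimensional. This gives all three conclusions.

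The main obstacle is keeping careful track of the measure-theoretic translation in the first step: one must ensure that the exceptional set of ``bad'' $(\la, \pi)$ produced by the various almost-sure statements (Avila--Viana, Forni, Corollary \ref{iet-logasympt}) projects onto a Lebesgue-null subset of $\Delta_{m-1}$ after disintegration of the Masur--Veech measure in zippered rectangle coordinates, and that the permutation $\pi$ is left unconstrained within its Rauzy class. This is standard but requires invoking Veech's explicit description of the Masur--Veech measure in these coordinates and the fact that Rauzy--Veech induction is measurably conjugate (up to a finite cover) to the Teichm\"uller flow on the stratum; no genuinely new input beyond the cited theorems is required.
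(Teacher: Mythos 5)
Your proposal is correct and follows essentially the same route the paper indicates: apply Corollary \ref{iet-logasympt} to the Masur--Veech smooth measure, invoke the Avila--Viana theorem for simplicity of the Kontsevich--Zorich Lyapunov spectrum, and translate back to interval exchange transformations via the zippered-rectangle disintegration of the measure. One small redundancy: citing Forni for the absence of zero Lyapunov exponents is unnecessary, since Avila--Viana simplicity of the full spectrum already rules out zero exponents (and the relevant Forni hyperbolicity result is in \cite{F2}, not \cite{F1}).
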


The second statement of Corollary \ref{iet-logasympt} recovers, in the case of the Lebesgue measure on the space of interval exchange transformations, the  Zorich logarithmic asymptotics for ergodic sums \cite{Z}, \cite{zorichwind}.

{\bf Remark.} Objects related to finitely-additive measures for interval exchange
transformations have been studied by X. Bressaud, P. Hubert and A. Maass in \cite{bhm} and by
S. Marmi, P. Moussa and J.-C. Yoccoz in \cite{MMY2}. In particular, the ``limit shapes''
of \cite{MMY2} can be viewed as graphs of  the cocycles $\Phi^+(x,t)$ considered as  functions in $t$.

\subsection{Limit theorems for translation flows.}
\subsubsection{Time integrals as random variables.}
As before, $(M, \omega)$ is an abelian differential, and
$h_t^+$, $h_t^-$ are, respectively, its vertical and horizontal flows. Take
$\tau\in [0,1]$, $s\in {\mathbb R}$, a real-valued
$f\in Lip_{w,0}^+(M, \omega)$ and introduce the function
\begin{equation}
\label{sfstaux}
{\mathfrak S}[f,s;\tau, x]=\int_0^{\tau\exp(s)} f\circ h^{+}_t(x)dt.
\end{equation}

For fixed $f$, $s$ and $x$ the quantity ${\mathfrak S}[f,s;\tau, x]$
is a continuous function of $\tau\in [0,1]$; therefore, as $x$ varies
in the probability space $(M, {\bf m})$, we obtain a random element
of $C[0,1]$. In other words, we have a
 random variable
\begin{equation}
{\mathfrak S}[f,s]: (M, {\bf m})\to C[0,1]
\end{equation}
defined by the formula (\ref{sfstaux}).

For any fixed $\tau\in [0,1]$  the formula (\ref{sfstaux}) yields
a real-valued random variable
\begin{equation}
{\mathfrak S}[f,s; \tau]: (M, {\bf m})\to {\mathbb R},
\end{equation}
whose expectation, by definition, is zero.

Our first aim is to estimate the growth of its variance as $s\to\infty$.
Without losing generality, one may take $\tau=1$.

\subsubsection{The growth rate of the variance in the case of a simple second Lyapunov exponent.}

Let $\Prob$ be an invariant ergodic probability measure for the
Teichm{\"u}ller flow such that with respect to $\Prob$
the second Lyapunov exponent $\theta_2$ of
the Kontsevich-Zorich cocycle is positive and simple (recall that, as Veech and Forni
showed, the  first one, $\theta_1=1$, is always simple \cite{V2, F2} and that,
by the Avila-Viana
theorem \cite{AV}, the second one is simple for the Masur-Veech smooth measure).

For an abelian differential ${\bf X}=(M, \omega)$, denote by $E_{2,{\bf X}}^+$ the one-dimensional
subspace in $H^1(M, {\mathbb R})$ corresponding to the second Lyapunov exponent
$\theta_2$, and let ${\mathfrak B}_{2,{\bf X}}^+={\cal I}_{\bf X}^+(E_{2,{\bf X}}^+)$.
Similarly, denote by $E_{2,{\bf X}}^-$ the one-dimensional
subspace in $H^1(M, {\mathbb R})$ corresponding to the Lyapunov exponent
$-\theta_2$, and let ${\mathfrak B}_{2,{\bf X}}^-={\cal I}_{\bf X}^-(E_{2,{\bf X}}^-)$.

Recall that the space $H^1(M, {\mathbb R})$ is endowed with the Hodge
norm $|\cdot |_H$; the isomorphisms ${\cal I}_{\bf X}^{\pm}$ take the Hodge norm to a norm on ${\BB}_{\bf X}^{\pm}$;
slightly abusing notation, we denote the latter norm by the same symbol.

Introduce a multiplicative cocycle $H_2(s,{\bf X})$ over the Teichm{\"u}ller flow ${\bf g}_s$ by
taking $v\in E_{2,{\bf X}}^+$, $v\neq 0$, and setting
\begin{equation}
\label{h2sx}
H_2(s, {\bf X})=\frac{|{\bf A}(s,{\bf X})v|_H}{|v|_H}.
\end{equation}

The Hodge norm is chosen only for concreteness in (\ref{h2sx}); any other norm can be
used instead.

By definition, we have
\begin{equation}
\lim\limits_{s\to\infty} \frac{\log H_2(s,{\bf X})}{s}=\theta_2.
\end{equation}

Now take $\Phi^+_2\in {\mathfrak B}^+_{2,{\bf X}}$ $\Phi^-_2\in {\mathfrak B}^-_{2,{\bf X}}$ in such a way that
$\langle\Phi_2^+, \Phi_2^-\rangle=1$.

\begin{proposition}
\label{varestfmoduli}
There exists $\alpha>0$ depending only on $\Prob$ and positive measurable  functions
$$
C: \HH\times \HH\to {\mathbb R}_+, \
V: \HH\to {\mathbb R}_+,\
s_0:\HH\to{\mathbb R}_+
$$
such that the following is true for $\Prob$-almost all ${\bf X}\in\HH$.

If $f\in Lip_{w,0}^+({\bf X})$ satisfies $m_{\Phi_2^-}(f)\neq 0$, then for all $s\geq s_0({\bf X})$ we have                                                                                                                                 \begin{equation}\label{varestest}                                                                                                                     \left|\frac{Var_{{\bf m}} {\mathfrak S}(f,x,e^s)}{V({\bf g}_s{\bf X}) (m_{\Phi_2^-}(f)|\Phi_2^+|H_2(s, {\bf X}))^2}-1\right|\leq
C({\bf X}, {\bf g}_s{\bf X})\exp(-\alpha s).
\end{equation}
\end{proposition}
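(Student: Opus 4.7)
The plan is to combine Theorem \ref{multiplicmoduli} with Teichm\"uller renormalisation of the dominant cocycle $\Phi_2^+$, exploiting the Lyapunov gap $\theta_2-\theta_3>0$ to absorb all subdominant contributions. Throughout, I fix ${\bf X}\in\HH$ at which all cited results hold.

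First, applying Theorem \ref{multiplicmoduli} to $f\in Lip_{w,0}^+({\bf X})$ yields, for any $\varepsilon>0$,
$$
{\mathfrak S}[f,s;1,x] \;=\; \Phi_f^+(x,e^s) \;+\; O_\varepsilon(e^{s\varepsilon})
$$
uniformly in $x\in M$. Expanding $\Phi_f^+=\sum_{i=1}^{l_0}m_{\Phi_i^-}(f)\Phi_i^+$ and using $m_{\Phi_1^-}={\bf m}$ together with $\int f\,d{\bf m}=0$, the $i=1$ term vanishes, so I split off the leading contribution
$$
\Phi_f^+(x,e^s) \;=\; m_{\Phi_2^-}(f)\,\Phi_2^+(x,e^s) \;+\; R(x,e^s), \qquad R:=\sum_{i\geq 3}m_{\Phi_i^-}(f)\,\Phi_i^+.
$$

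Next, I invoke the renormalisation identity. Since the horizontal foliation, the area form ${\bf m}$, and the Kontsevich--Zorich transport are all compatible with ${\bf g}_s$, each $\Phi_i^+\in\BB_{i,{\bf X}}^+$, viewed as a finitely-additive transverse measure for the horizontal foliation, is identified with the corresponding element of $\BB_{i,{\bf g}_s{\bf X}}^+$ whose Hodge norm is $H_i(s,{\bf X})$ times that of $\Phi_i^+$; the vertical time at ${\bf X}$ rescales by $e^{-s}$ when passed to ${\bf g}_s{\bf X}$. Setting
$$
V({\bf Y}) \;:=\; \int_M \widetilde\Phi(y,1)^2\,d{\bf m}(y)
$$
for the unit-Hodge-norm generator $\widetilde\Phi$ of the one-dimensional space $\BB_{2,{\bf Y}}^+$, this yields the key identity
$$
\int_M \Phi_2^+(x,e^s)^2\,d{\bf m}(x) \;=\; H_2(s,{\bf X})^2\,|\Phi_2^+|^2\,V({\bf g}_s{\bf X}),
$$
matching the main term in the proposition. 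Parallel reasoning for $i\geq 3$, combined with Proposition \ref{hoeldergrowth}, gives $\|\Phi_i^+(\cdot,e^s)\|_{L^2({\bf m})}\leq C({\bf X},{\bf g}_s{\bf X})\,e^{s(\theta_i+\varepsilon)}$, hence $\|R(\cdot,e^s)\|_{L^2}\leq C({\bf X},{\bf g}_s{\bf X})\,e^{s(\theta_3+\varepsilon)}$.

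Since ${\mathfrak S}[f,s;1]$ has zero ${\bf m}$-mean, its variance equals its squared $L^2$-norm. Expanding the square of the pointwise decomposition, the cross-term between $\Phi_2^+$ and $R$ contributes $O\bigl(e^{s(\theta_2+\theta_3+\varepsilon)}\bigr)$, the $R^2$-term contributes $O\bigl(e^{2s(\theta_3+\varepsilon)}\bigr)$, and the approximation error contributes $O\bigl(e^{s(\theta_2+\varepsilon)}\bigr)$. Dividing by the main term $(m_{\Phi_2^-}(f))^2 H_2(s,{\bf X})^2|\Phi_2^+|^2 V({\bf g}_s{\bf X})\asymp e^{2s\theta_2}$ yields relative error $O\bigl(e^{-s(\theta_2-\theta_3-\varepsilon)}\bigr)$, so any $\alpha<\theta_2-\theta_3$ suffices, with all orbit-dependent fluctuations (including $V({\bf g}_s{\bf X})^{-1}$) absorbed into $C({\bf X},{\bf g}_s{\bf X})$. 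The main obstacle I anticipate is the renormalisation step itself: upgrading Proposition \ref{hoeldergrowth} from a pointwise log-asymptotic into a genuine $L^2$-bound with the correct exponent $\theta_i$, and carefully tracking the fluctuations of $\|\widetilde\Phi_i^+(\cdot,1)\|_{L^2({\bf m})}$ along the ${\bf g}_s$-orbit. The remaining computations are routine expansions, but this bookkeeping, together with the identification of $\Phi_i^+({\bf X})$ and $\Phi_i^+({\bf g}_s{\bf X})$ as the same transverse measure, is where the genuine technical work lies.
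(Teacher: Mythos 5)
Your proposal is correct and follows essentially the same route as the paper: approximate ${\mathfrak S}[f,s;1]$ pointwise by $m_{\Phi_2^-}(f)\Phi_2^+(x,e^s)$ using the expansion $\Phi_f^+=\sum_i m_{\Phi_i^-}(f)\Phi_i^+$ together with Theorem \ref{multiplicmoduli}, and then use the exact renormalisation identity $\int_M \Phi_2^+(x,e^s)^2\,d{\bf m}= H_2(s,{\bf X})^2|\Phi_2^+|^2V({\bf g}_s{\bf X})$ for the leading term.

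The only differences are stylistic. Where you expand $({\mathfrak S})^2$ term-by-term and bound cross-terms by Cauchy--Schwarz, the paper packages the comparison of variances via the single elementary inequality $|\ee(\xi_1^2)-\ee(\xi_2^2)|\leq \sup|\xi_1+\xi_2|\cdot\ee|\xi_1-\xi_2|$, applied with $\xi_1={\mathfrak S}[f,s;1]$ and $\xi_2=m_{\Phi_2^-}(f)\Phi_2^+(\cdot,e^s)$; this bypasses the $L^2$-bookkeeping entirely, since the key input is then the \emph{sup}-norm estimate $\sup_x|{\mathfrak S}[f,s;1,x]-m_{\Phi_2^-}(f)\Phi_2^+(x,e^s)|\le C\|f\|_{Lip}\,e^{s(\overline\theta_2-\alpha)}$ (equation (\ref{phitwoapprox}) in the symbolic section). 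The obstacle you anticipate --- upgrading Proposition \ref{hoeldergrowth} from a $\limsup$ statement to a uniform quantitative bound with controlled orbit-dependence --- is not a gap in the paper's framework: the symbolic version of the cocycle growth bound (Corollary \ref{hoeldprop}, resting on Lemma \ref{mainindmarkcomp} and the Oseledets--Pesin machinery) already gives $|\Phi_i^+(x,t)|\le C_\varepsilon|\Phi_i^+|\,|t|^{\theta_i/\theta_1\pm\varepsilon}$ uniformly in $x$, with the ${\bf X}$-dependence of $C_\varepsilon$ transported along the orbit by the commutative diagrams intertwining ${\mathfrak t}_s$ with the renormalisation cocycle. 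So your sup-norm (hence a fortiori $L^2$) bounds on the subdominant $\Phi_i^+$ follow directly, and the structure of $C({\bf X},{\bf g}_s{\bf X})$ absorbs exactly the fluctuations you were worried about.
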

{\bf Remark.} Observe that the quantity $(m_{\Phi_2^-}(f)|\Phi^+|)^2$ does not depend on the specific choice of $\Phi_2^+\in\BB^+_{2, {\bf X}}$,
$\Phi_2^-\in\BB^-_{2, {\bf X}}$ such that $\langle \Phi_2^+, \Phi_2^-\rangle=1$.

{\bf Remark.} Note that by theorems of Egorov and Luzin, the estimate (\ref{varestest})
holds {\it uniformly} on compact subsets of $\HH$ of probability arbitrarily close to $1$.

Proposition \ref{varestfmoduli} is based on
\begin{proposition}
There exists  a  positive measurable function $V: \HH\to {\mathbb R}_+$
such that for $\Prob$-almost all ${\bf X}\in\HH$, we have
\begin{equation}
{Var_{{\bf m}({\bf X})} {\Phi_2^+}(x,e^s)}=V({\bf g}_s{\bf X})|\Phi_2^+|^2(H_2(s, {\bf X}))^2.
\end{equation}
\end{proposition}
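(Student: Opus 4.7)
The strategy is to exploit the renormalization of H\"older cocycles under the Teichm\"uller flow. Writing $\Phi_2^+ = {\cal I}_{\bf X}^+(v)$ with $v\in E^u_{2,{\bf X}}$, the central claim is that for every $s\in {\mathbb R}$,
\begin{equation*}
\Phi_2^+(x, e^s t) \;=\; \Psi_s(x, t), \qquad x\in M,\ t\in{\mathbb R},
\end{equation*}
where $\Psi_s := {\cal I}_{{\bf g}_s{\bf X}}^+({\bf A}(s, {\bf X}) v) \in \BB^+({\bf g}_s{\bf X})$. To justify this, I would first observe that the left-hand side, viewed as a function of $(x, t)$, is itself a H\"older cocycle in $\BB^+({\bf g}_s{\bf X})$: it satisfies the cocycle relation over $h^{+, {\bf g}_s{\bf X}}_t = h^{+, {\bf X}}_{e^s t}$, retains a H\"older bound in $t$, and inherits horizontal holonomy invariance from $\Phi_2^+$ (the foliations of $\omega$ and $\omega'$ differ only by rescaling). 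Evaluated on any closed rectangular curve $\gamma$ of ${\bf g}_s{\bf X}$, both sides give $\langle v, [\gamma]\rangle$: the homology class of $\gamma$ on $M$ is intrinsic to the smooth surface, and ${\bf A}(s, {\bf X})$ is by construction the Gauss--Manin parallel transport. Since $\check{{\cal I}}_{{\bf g}_s{\bf X}}^+$ is injective on $\BB^+({\bf g}_s{\bf X})$ by the characterization of cocycles, the two sides coincide.

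Specializing to $t = 1$ and using that the area form ${\bf m}$ is ${\bf g}_s$-invariant, one has $\mathrm{Var}_{\bf m} \Phi_2^+(x, e^s) = \mathrm{Var}_{\bf m} \Psi_s(x, 1)$. By construction $\Psi_s$ corresponds to ${\bf A}(s, {\bf X}) v$, whose Hodge norm on ${\bf g}_s{\bf X}$ equals $H_2(s, {\bf X})|\Phi_2^+|$. Writing $\widehat{\Phi}_{2, {\bf Y}}^+ := {\cal I}_{\bf Y}^+(\hat v_{\bf Y})$ for any Hodge-unit vector $\hat v_{\bf Y}\in E^u_{2, {\bf Y}}$ (well defined up to sign) and defining
\begin{equation*}
V({\bf Y}) := \mathrm{Var}_{\bf m}\widehat{\Phi}_{2, {\bf Y}}^+(x, 1),
\end{equation*}
which is manifestly sign-invariant, the proposition follows from the factorization $\Psi_s = \pm H_2(s, {\bf X})|\Phi_2^+|\cdot\widehat{\Phi}_{2, {\bf g}_s{\bf X}}^+$ and the quadratic homogeneity of the variance. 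Measurability of $V$ comes from the measurable dependence of $E^u_{2, {\bf Y}}$ and ${\cal I}_{\bf Y}^+$ on ${\bf Y}$.

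The main obstacle is strict positivity of $V$. If $\widehat{\Phi}_{2, {\bf Y}}^+(\cdot, 1)$ were ${\bf m}$-almost surely equal to a constant $c$, the cocycle relation together with the $h^+_t$-invariance of ${\bf m}$ would force $\widehat{\Phi}_{2, {\bf Y}}^+(\cdot, n) = nc$ almost surely for every $n\in{\mathbb N}$, contradicting the growth estimate $|\widehat{\Phi}_{2, {\bf Y}}^+(x, T)| = O(T^{\theta_2 + \varepsilon})$ with $\theta_2 < 1$ from Proposition \ref{hoeldergrowth} unless $c = 0$. The residual case $c = 0$ can be eliminated by combining H\"older continuity in $t$ with horizontal holonomy invariance and concluding, via the non-degeneracy of $\langle, \rangle$, that $\widehat{\Phi}_{2, {\bf Y}}^+$ would have to vanish identically, contradicting $|\widehat{\Phi}_{2, {\bf Y}}^+| = 1$.
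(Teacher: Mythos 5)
Your proposal matches the paper's route: the identity $\Phi_2^+(x,e^st)=\Psi_s(x,t)$ with $\Psi_s={\cal I}_{{\bf g}_s{\bf X}}^+({\bf A}(s,{\bf X})v)$ is precisely the geometric form of the commutative diagram intertwining $h^+_{e^st,\oomega}$ with $h^+_{t,g_s\oomega}$ and the $\cal I$-identifications in Section 3, after which the Hodge-norm bookkeeping and quadratic scaling of variance give the formula, and the paper itself simply records $V({\bf X})=\mathrm{Var}_{{\bf m}({\bf X})}\Phi_2^+(x,1)/|\Phi_2^+|^2$ and defers the rest to the symbolic sections. One small imprecision: for the residual case $c=0$ the invocation of non-degeneracy of $\langle,\rangle$ is not the needed lever; the argument the paper actually uses (Proposition \ref{phivarnotzero}) is that if $\widehat\Phi_{2,{\bf Y}}^+(x,1)=0$ almost everywhere, then by the cocycle relation $\widehat\Phi_{2,{\bf Y}}^+(x,n)=0$ for all $n$ on a full-measure set, and combined with the H\"older bound at fractional times this makes $T\mapsto\widehat\Phi_{2,{\bf Y}}^+(x,T)$ bounded, contradicting $\limsup_{T\to\infty}\log|\widehat\Phi_{2,{\bf Y}}^+(x,T)|/\log T=\theta_2>0$ supplied by Proposition \ref{hoeldergrowth}.
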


In particular ${Var_{{\bf m}} {\Phi_2^+}(x,e^s)}\neq 0$ for any $s\in {\mathbb R}$.
The function $V({\bf X})$ is given by
$$
V({\bf X})=\frac{{Var_{{\bf m}({\bf X})} {\Phi_2^+}(x,1)}}{|\Phi^+_2|^2}.
$$
Observe that the right-hand side does not depend on  a particular choice of $\Phi_2^+\in {\BB}_{2, {\bf X}}^+$, $\Phi_2^+\neq 0$.

\subsubsection{The limit theorem in the case of a simple second Lyapunov exponent.}

Go back to the $C[0,1]$-valued random variable ${\mathfrak S}[f,s]$ and denote by
$\mm[f,s]$ the distribution of the normalized random variable
\begin{equation}
\frac{{\mathfrak S}[f,s]}{{\sqrt{Var_{{\bf m}} {\mathfrak S}[f,s;1]}}}.
\end{equation}
The measure $\mm[f,s]$ is thus a probability distribution on the space $C[0,1]$ of continuous functions on the unit interval.

For $\tau\in {\mathbb R}$, $\tau\neq 0$, we also let $\mm[f,s; \tau]$ be the distribution of the ${\mathbb R}$-valued random variable
\begin{equation}
\frac{{\mathfrak S}[f,s; \tau]}{{\sqrt{Var_{{\bf m}} {\mathfrak S}[f,s; \tau]}}}.
\end{equation}

If $f$ has zero average, then, by definition,  $\mm[f,s; \tau]$ is a measure on ${\mathbb R}$ of expectation $0$ and variance $1$.

By definition, $\mm[f,s]$ is a Borel probability measure on $C[0,1]$; furthermore, if
$\xi=\xi(\tau)\in C[0,1]$, then the following natural normalization requirements hold for $\xi$ with respect to  $\mm[f,s]$:
\begin{enumerate}
\item $\xi(0)=0$ almost surely with respect to $m[f,s]$;
\item $\ee_{\mm[f,s]}\xi(\tau)=0$ for all $\tau\in [0,1]$;
\item $Var_{\mm[f,s]}\xi(1)=1$.
\end{enumerate}

We are interested in the weak accumulation points of $\mm[f,s]$ as $s\to\infty$.

Consider the space $\HH^{\prime}$ given by
the formula
$$
\HH^{\prime}=\{{\bf X}^{\prime}=(M, \omega, v), v\in E_2^+(M, \omega), |v|_H=1\}.
$$
By definition, the space $\HH^{\prime}$ is a $\prob$-almost surely defined
two-to-one cover of the space $\HH$. The skew-product flow of the Kontsevich-Zorich cocycle
over the Teichm{\"u}ller
flow yields a flow ${\bf g}_s^{\prime}$ on $\HH^{\prime}$ given by the formula
$$
{\bf g}_s^{\prime}({\bf X}, v)=({\bf g}_s{\bf X}, \frac{{\bf A}(s,{\bf X})v}{|{\bf A}(s,{\bf X})v|_H}).
$$
Given ${\bf X}^{\prime}\in \HH^{\prime}$, set
$$
\Phi_{2,{\bf X}^{\prime}}^+={\cal I}^+(v).
$$

Take ${\tilde v}\in E_2^-(M, \omega)$ such that $\langle v, {\tilde v}\rangle=1$ and set
$$
\Phi_{2,{\bf X}^{\prime}}^-={\cal I}^-(v), \ m_{2,{\bf X}^{\prime}}^-=m^-_{\Phi_{2,{\bf X}^{\prime}}}.
$$

Let $\MM$ be the space of all probability distributions on $C[0,1]$ and
introduce a $\Prob$-almost surely defined map
$
{\cal D}_2^+: \HH^{\prime}\to \MM
$
by setting  ${\cal D}_2^+({\bf X}^{\prime})$ to be the distribution
of the $C[0,1]$-valued normalized random variable
$$
\frac{\Phi^+_{2,{\bf X}^{\prime}}(x, \tau)}{\sqrt{Var_{{\bf m}}\Phi^+_{2,{\bf X}^{\prime}}(x, 1)}}, \  \tau\in[0,1].
$$

By definition, ${\cal D}_2^+({\bf X}^{\prime})$ is a Borel
probability measure on the space $C[0,1]$; it is, besides,
a compactly supported measure as its support
consists of equibounded H{\"o}lder functions with exponent
$\theta_2/\theta_1-\varepsilon$.

Consider the set
$\MM_1$ of probability measures $\mm$ on $C[0,1]$ satisfying, for $\xi\in C[0,1]$,
$\xi=\xi(t)$,  the conditions:
\begin{enumerate}
\item the equality $\xi(0)=0$ holds $\mm$-almost surely;

\item for all $\tau$ we have $\ee_{\mm}\xi(\tau)=0$:

\item we have $Var_{\mm}\xi(1)=1$ and for any $\tau\neq 0$ we have $Var_{\mm}\xi(\tau)\neq 0$.
\end{enumerate}

It will be proved in what follows that ${\cal D}_2^+(\HH^{\prime})\subset \MM_1$.

Consider a semi-flow $J_s$ on the space $C[0,1]$
defined by the formula
$$
J_s\xi(t)=\xi(e^{-s}t), \ s\geq 0.
$$

Introduce a semi-flow $G_s$ on $\MM_1$ by the formula
\begin{equation}
G_s\mm=\frac{(J_s)_*\mm}{Var_{\mm}(\xi(\exp(-s))}, \mm\in \MM_1.
\end{equation}

By definition, the diagram
$$
\begin{CD}
\label{Dtwo}
\HH^{\prime}@ >{\cal D}_2^+>> {\MM_1} \\
@VV{\bf g}_sV           @AAG_sA   \\
\HH^{\prime}@ >{\cal D}_2^+>> {\MM_1} \\
\end{CD}
$$
is commutative.

Let $d_{LP}$ be the L{\'e}vy-Prohorov metric and let $d_{KR}$ be the Kantorovich-Rubinstein metric
on the space of probability measures on $C[0,1]$ (see \cite{billingsley}, \cite{bogachev} and the Appendix B).

We are now ready to formulate the main result of this subsection.

\begin{proposition}
\label{limthmmoduli-simple}
Let $\Prob$ be a ${\bf g}_s$-invariant ergodic probability measure on ${\HH}$
such that the second Lyapunov exponent of the Kontsevich-Zorich
cocycle is positive and simple with respect to $\Prob$.

There exists a positive measurable function $C: \HH\times \HH\to {\mathbb R}_+$
and a positive constant $\alpha$ depending only on $\Prob$
such that
for $\Prob$-almost every ${\bf X}^{\prime}\in\HH^{\prime}$, ${\bf X}^{\prime}=({\bf X}, v)$,
and any $f\in Lip_{w,0}^+({\bf X})$
satisfying
$m_{2, {\bf X}^{\prime}}^-(f)>0$ we have
\begin{equation}
\label{lpexpmoduli}
d_{LP}(\mm[f, s], {\cal D}_2^+({\bf g}_s^{\prime}{\bf X}^{\prime}))\leq
C({\bf X}, {\bf g}_s{\bf X})\exp(-\alpha s).
\end{equation}
\begin{equation}
\label{krexpmoduli}
d_{KR}(\mm[f, s], {\cal D}_2^+({\bf g}_s^{\prime}{\bf X}^{\prime}))\leq
C({\bf X}, {\bf g}_s{\bf X})\exp(-\alpha s).
\end{equation}
\end{proposition}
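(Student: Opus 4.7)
The plan is to show that the normalized $C[0,1]$-valued random variable $\mathfrak{S}[f,s;\cdot,x]/\sqrt{Var_{\bf m}\mathfrak{S}[f,s;1]}$ differs, in sup-norm over $\tau\in[0,1]$, from a variance-one normalization of $\Phi_2^+(x,\tau e^s)$ by at most $O(e^{-\alpha s})$ almost surely, and then to identify the law of the latter with ${\cal D}_2^+({\bf g}_s^{\prime}{\bf X}^{\prime})$ via the commutative diagram.

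First, I would apply Theorem \ref{multiplicmoduli} uniformly in $\tau\in[0,1]$ to get
$$\sup_{\tau\in[0,1],\,x\in M}\bigl|\mathfrak{S}[f,s;\tau,x]-\Phi_f^+(x,\tau e^s)\bigr|\leq C_\varepsilon\|f\|_{Lip_w^+}(1+e^{s\varepsilon}).$$
Next I would decompose $\Phi_f^+=\sum_{i=1}^{l_0}m_{\Phi_i^-}(f)\,\Phi_i^+$. Since $f\in Lip_{w,0}^+$, the Lebesgue coefficient $m_{\Phi_1^-}(f)={\bf m}(f)$ vanishes, and by Proposition \ref{hoeldergrowth} the subdominant terms $\Phi_i^+(x,\tau e^s)$ for $i\geq 3$ are bounded by $C({\bf X})e^{s(\theta_i+\varepsilon)}$ in sup norm. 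Simplicity of $\theta_2$ gives a spectral gap $\theta_2-\theta_3>0$, so the subdominant contribution is exponentially smaller than the $i=2$ term.

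Third, I would invoke Proposition \ref{varestfmoduli} to write
$$\sqrt{Var_{\bf m}\mathfrak{S}[f,s;1]}=m_{\Phi_2^-}(f)\,|\Phi_2^+|\,H_2(s,{\bf X})\sqrt{V({\bf g}_s{\bf X})}\bigl(1+O(e^{-\alpha s})\bigr),$$
using the hypothesis $m_{2,{\bf X}^{\prime}}^-(f)>0$. Since $H_2(s,{\bf X})\geq e^{s(\theta_2-\varepsilon)}$ almost surely, dividing the approximation by the normalization yields
$$\frac{\mathfrak{S}[f,s;\tau,x]}{\sqrt{Var_{\bf m}\mathfrak{S}[f,s;1]}}=\frac{\Phi_2^+(x,\tau e^s)}{|\Phi_2^+|\,H_2(s,{\bf X})\sqrt{V({\bf g}_s{\bf X})}}+O(e^{-\alpha s})$$
in sup norm over $\tau$, almost surely in $x$, for a sufficiently small $\alpha>0$ determined by $\min(\theta_2-\varepsilon,\theta_2-\theta_3-\varepsilon,\alpha_{\mathrm{var}})$ with $\alpha_{\mathrm{var}}$ the exponent from Proposition \ref{varestfmoduli}.

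Fourth, I would identify the distribution of the right-hand side with ${\cal D}_2^+({\bf g}_s^{\prime}{\bf X}^{\prime})$. The renormalization action of the Teichm\"uller flow rescales the vertical time by $e^s$ and transports $v\in E_{2,{\bf X}}^+$ to ${\bf A}(s,{\bf X})v$ of Hodge norm $H_2(s,{\bf X})$; consequently, the unit-vector cocycle $\Phi_{2,{\bf g}_s^{\prime}{\bf X}^{\prime}}^+$ on ${\bf g}_s{\bf X}$ corresponds, via the Gauss--Manin identification, to $\Phi_2^+(x,\tau e^s)/H_2(s,{\bf X})$, and dividing by $|\Phi_2^+|\sqrt{V({\bf g}_s{\bf X})}$ normalizes its variance at $\tau=1$ to one. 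This is precisely the assertion that the diagram relating ${\cal D}_2^+$, ${\bf g}_s^{\prime}$, and $G_s$ commutes. Finally, since any two $C[0,1]$-valued random elements whose pointwise sup-norm difference is at most $\delta$ almost surely produce laws at $d_{LP}$ and $d_{KR}$ distance at most $\delta$, we obtain both (\ref{lpexpmoduli}) and (\ref{krexpmoduli}).

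The main obstacle is the bookkeeping in step four: one must check that the renormalization of $\Phi_2^+$ under the Teichm\"uller flow, composed with the time rescaling $\tau\mapsto\tau e^s$ and the variance normalization, precisely matches the definition of ${\cal D}_2^+({\bf g}_s^{\prime}{\bf X}^{\prime})$ and the semi-flow $G_s$, including the correct factor of $H_2(s,{\bf X})$ via the Gauss--Manin connection. Combined with careful uniform control of the measurable constants $C({\bf X})$ from Propositions \ref{hoeldergrowth} and \ref{varestfmoduli} (which accounts for the constant $C({\bf X},{\bf g}_s{\bf X})$ depending on both endpoints of the trajectory), this gives the desired exponential estimate.
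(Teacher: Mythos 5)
Your proposal is correct and follows essentially the same route as the paper's own proof, which is given for the symbolic counterpart, Proposition \ref{limthmmarkcomp-simple}: the paper packages your steps one and two into the single pointwise bound (\ref{phitwoapprox}), uses the variance asymptotics of Proposition \ref{varf}, applies the algebraic inequality (\ref{ineqlimthm}) on $\sup|\xi_1/M_1-\xi_2/M_2|$ (which is the formal version of your ``dividing the approximation by the normalization''), and concludes with Lemma \ref{dist-images}, exactly as you do. The identification in your step four is the commutativity of the diagram relating $\mathcal{D}_2^+$, $\mathbf{g}_s^{\prime}$ and $G_s$, which the paper records as holding by definition.
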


Now fix $\tau\in {\mathbb R}$ and let ${\mm}_2({\bf X}^{\prime}, \tau)$ be the distribution
of the ${\mathbb R}$-valued random variable
$$
\frac{\Phi^+_{2,{\bf X}^{\prime}}(x, \tau)}{\sqrt{Var_{{\bf m}}\Phi^+_{2,{\bf X}^{\prime}}(x, \tau)}}.
$$
For brevity, write ${\mm}_2({\bf X}^{\prime}, 1)={\mm}_2({\bf X}^{\prime})$.
\begin{proposition}
For $\Prob$-almost any ${\bf X}^{\prime}\in\HH^{\prime}$, the measure
${\mm}_2({\bf X}^{\prime}, \tau)$ admits atoms for a dense set of times $\tau\in {\mathbb R}$.
\end{proposition}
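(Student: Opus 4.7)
The plan is to exhibit, for each level $l$ of the Rauzy--Veech / Vershik symbolic representation of $\mathbf{X}'$, finitely many times $\tau$ at which the distribution $\mm_2(\mathbf{X}',\tau)$ has an atom, and then promote this to a dense set of times via the Teichm\"uller-flow scaling of the cocycle $\Phi^+_{2,\mathbf{X}'}$.

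Step 1 (single-time atom). Fix a level $l$ of the Rauzy--Veech induction, with Rokhlin towers $\Pi^{(l)}_1,\ldots,\Pi^{(l)}_m$ of heights $h^{(l)}_i$ and horizontal bases $I^{(l)}_i$. By horizontal-holonomy invariance (Assumption \ref{bplusx}(3)), the function $y\mapsto \Phi^+_{2,\mathbf{X}'}(y,h^{(l)}_i)$ is constant on each base $I^{(l)}_i$ with some value $c^{(l)}_i$. For a point $x=h^+_s y$ with $y\in I^{(l)}_j$ and $s\in[0,h^{(l)}_j)$, the cocycle identity gives
$$
\Phi^+_{2,\mathbf{X}'}(x,h^{(l)}_i)=\Phi^+_{2,\mathbf{X}'}(y,s+h^{(l)}_i)-\Phi^+_{2,\mathbf{X}'}(y,s),
$$
and the right-hand side, as a function of $s$, is a telescoping sum of full-tower contributions $c^{(l)}_k$ plus one partial-tower increment. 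Selecting $h^{(l)}_i$ that represents a complete Vershik return loop at level $l$, equivalently an integer combination $\sum n_k h^{(l)}_k$ matching a height $h^{(l')}_{i'}$ for some $l'>l$, forces the partial increment to vanish on an $s$-interval of positive Lebesgue measure. Combined with the horizontal extent given by holonomy invariance, this yields a level set of positive 2D $\mathbf{m}$-measure and hence an atom of $\mm_2(\mathbf{X}',h^{(l)}_i)$.

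Step 2 (density). The Teichm\"uller flow acts on the cocycles by the exact scaling
$$
\Phi^+_{2,\mathbf{g}_s\mathbf{X}'}(\cdot,e^s\tau)=H_2(s,\mathbf{X}')\cdot\Phi^+_{2,\mathbf{X}'}(\cdot,\tau),
$$
and the diagram relating $\mathbf{g}_s$ on $\HH^{\prime}$ with the semiflow $G_s$ on $\MM_1$ commutes, so an atom of $\mm_2(\mathbf{g}_{-s}\mathbf{X}',\tau_0)$ transfers to an atom of $\mm_2(\mathbf{X}',e^s\tau_0)$. The Rauzy--Veech heights along a typical Teichm\"uller orbit $\{\mathbf{g}_{-s}\mathbf{X}'\}$ yield, upon rescaling, a set $\{e^s h^{(l)}_i(\mathbf{g}_{-s}\mathbf{X}')\}$ dense in $\mathbb{R}_+$ for $\Prob$-almost every $\mathbf{X}'$, so the atoms produced in Step~1 propagate back to $\mathbf{X}'$ at a dense set of positive times. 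Finally, the antisymmetry $\Phi^+_2(x,-t)=-\Phi^+_2(h^+_{-t}x,t)$ extends density from $\mathbb{R}_+$ to $\mathbb{R}$.

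The principal obstacle is Step 1. Horizontal-holonomy invariance alone yields only 1-dimensional (horizontal) level sets of $\Phi^+_{2,\mathbf{X}'}(\cdot,\tau)$; promoting these to 2-dimensional sets of positive $\mathbf{m}$-measure requires a genuine \emph{vertical plateau} of the function $s\mapsto \Phi^+_{2,\mathbf{X}'}(y,s+\tau)-\Phi^+_{2,\mathbf{X}'}(y,s)$ on an interval of positive Lebesgue measure. Such a plateau must be extracted from the step-function nature of the finitely-additive transverse measure underlying $\Phi^+_{2,\mathbf{X}'}$ at the level of the Markov compactum, and the delicate point is matching $\tau$ exactly to a renormalization scale where the Oseledets-governed increments in the Kontsevich--Zorich cocycle cancel across adjacent floors of the tower.
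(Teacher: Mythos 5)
You have correctly identified the mechanism --- horizontal holonomy invariance plus the cocycle identity give constancy of $\Phi^+_2(\cdot,\tau)$ on certain two-dimensional sets at specific ``return'' times --- but Step~1, as you yourself flag, contains a genuine gap: the assertion that choosing $\tau=\sum n_k h^{(l)}_k$ matching a height of a deeper level ``forces the partial increment to vanish on an $s$-interval of positive Lebesgue measure'' is not proved, and the verification that this yields a positive-measure level set is precisely the content of the statement. The paper resolves this by working directly in the Markov compactum and shifting the focus from specific Rauzy--Veech heights to \emph{homoclinic times}: times $t_0$ for which there exist $\tilde x\neq\hat x$ with $\hat x = h^+_{t_0}\tilde x$ and $\tilde x$, $\hat x$ lying simultaneously on the same leaf of $\mathcal{F}^+$ and of $\mathcal{F}^-$ (i.e.\ $\tilde x$, $\hat x$ differ only in finitely many coordinates of the path). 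For such a pair one takes $\Pi$ to be the set of paths $x'$ agreeing with $\tilde x$ on the coordinate window $(n_0,n_1)$ outside of which $\tilde x$ and $\hat x$ coincide, plus matching conditions at the endpoints; this $\Pi$ is a cylinder, hence automatically of positive measure $\nu(\Pi)\geq\la^{(n_0)}_i h^{(n_1-1)}_j$, and holonomy invariance gives $\Phi^+(x',t_0)=\Phi^+(\tilde x,t_0)$ for all $x'\in\Pi$. The ``vertical plateau'' you were trying to extract is thus supplied directly by the Markov cylinder structure, with no need to analyze the cancellation of Oseledets-governed increments across tower floors.

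Your Step~2 is more roundabout than necessary. In the symbolic picture the Teichm\"uller flow is a suspension over the shift on the space of Markov compacta, so flowing by $\mathbf{g}_{-s}$ and reading off a Rauzy--Veech height at a fixed level $l$ is the same thing as reading off a height at a different level of the same compactum; the homoclinic times for a fixed $\mathbf{X}'$ already include all these (and more, since one can choose $\hat x$ to be any finite modification of $\tilde x$, not only a Vershik successor). The paper simply observes that, for a fixed Markov compactum, the set of homoclinic times is countable and dense in $\mathbb{R}$, so no renormalization dynamics is needed for the density step. Thus your approach is pointing in the right direction, but it both leaves the key positive-measure estimate unproven and invokes the Teichm\"uller flow where a local observation about the Markov compactum suffices.
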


A more general proposition on the existence of atoms will be formulated in the following subsection.

Proposition \ref{limthmmoduli-simple} implies that the omega-limit set of the family $\mm[f, s]$
can generically assume at most two values. More precisely, the ergodic measure $\Prob$ on $\HH$ is
naturally lifted to its ``double cover''
on the space $\HH^{\prime}$: each point in the fibre is assigned equal weight;
the resulting measure is denoted  $\Prob^{\prime}$. By definition,
the measure $\Prob^{\prime}$  has no more than two ergodic components. We therefore arrive
at the following
\begin{corollary}\label{nonentwo}
Let $\Prob$ be a ${\bf g}_s$-invariant ergodic probability measure on ${\HH}$
such that the second Lyapunov exponent of the Kontsevich-Zorich
cocycle is positive and simple with respect to $\Prob$.

There exist two closed sets ${\mathfrak N}_1, {\mathfrak N}_2\subset {\mathfrak M}$ such that
for $\Prob$-almost every ${\bf X}\in {\mathcal H}$ and any $f\in Lip_{w,0}^+({\bf X})$
satisfying $\Phi_f^+\neq 0$ the omega-limit set of the family  $\mm[f, s]$ either
coincides with ${\mathfrak N}_1$ or with ${\mathfrak N}_2$.
If, additionally, the measure $\Prob^{\prime}$ is ergodic, then ${\mathfrak N}_1={\mathfrak N}_2$.
\end{corollary}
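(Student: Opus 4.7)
The plan is to reduce the corollary to Birkhoff's ergodic theorem applied to the flow ${\bf g}_s^{\prime}$ on the cover $(\HH^{\prime}, \Prob^{\prime})$, with Proposition \ref{limthmmoduli-simple} acting as the bridge between the family of measures $\mm[f,s]$ and the dynamics on $\HH^{\prime}$.

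Because $E_2^+$ is one-dimensional, the fibre of $\HH^{\prime}\to\HH$ over a generic point consists of the two unit vectors $\{v,-v\}$, and the deck involution $\sigma({\bf X},v)=({\bf X},-v)$ commutes with ${\bf g}_s^{\prime}$ and preserves $\Prob^{\prime}$. Hence $\sigma$ either preserves every ergodic component of $\Prob^{\prime}$, in which case $\Prob^{\prime}$ is itself ergodic, or swaps a pair of ergodic components of equal mass. In either case $\Prob^{\prime}$ has at most two ergodic components $C_1,C_2$ (with $C_1=C_2$ precisely when $\Prob^{\prime}$ is ergodic), and by Birkhoff's theorem the closure of the forward ${\bf g}_s^{\prime}$-orbit of $\Prob^{\prime}$-almost every ${\bf X}^{\prime}$ equals the support of $\Prob^{\prime}|_{C_{i({\bf X}^{\prime})}}$, a fixed compact subset of $\HH^{\prime}$ depending only on the component.

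Now fix ${\bf X}\in\HH$ and $f\in Lip_{w,0}^+({\bf X})$ with $\Phi_f^+\neq 0$; after choosing the sign of $v$ one has $m_{2,{\bf X}^{\prime}}^-(f)>0$ for ${\bf X}^{\prime}=({\bf X},v)$. Proposition \ref{limthmmoduli-simple} then gives
$$
d_{LP}\bigl(\mm[f,s],\,{\cal D}_2^+({\bf g}_s^{\prime}{\bf X}^{\prime})\bigr)\le C({\bf X},{\bf g}_s{\bf X})\exp(-\alpha s)\xrightarrow{s\to\infty}0,
$$
so the omega-limit set of $\{\mm[f,s]\}_{s\ge 0}$ in $(\MM,d_{LP})$ coincides with the closure of the forward $G_s$-orbit of ${\cal D}_2^+({\bf X}^{\prime})$. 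By commutativity of the diagram preceding Proposition \ref{limthmmoduli-simple}, that $G_s$-orbit is the ${\cal D}_2^+$-image of the ${\bf g}_s^{\prime}$-orbit of ${\bf X}^{\prime}$; combining with the previous paragraph, its closure depends only on the ergodic component $C_{i({\bf X}^{\prime})}$. Setting
$$
{\mathfrak N}_i\;=\;\overline{{\cal D}_2^+\bigl(\mathrm{supp}(\Prob^{\prime}|_{C_i})\bigr)}\subset\MM,\qquad i=1,2,
$$
one concludes that the omega-limit of $\mm[f,s]$ equals ${\mathfrak N}_{i({\bf X}^{\prime})}$, which yields the dichotomy stated in the corollary; if $\Prob^{\prime}$ is ergodic then $C_1=C_2$ and ${\mathfrak N}_1={\mathfrak N}_2$.

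The main obstacle is purely technical: the assignment ${\cal D}_2^+$ is defined only $\Prob^{\prime}$-almost everywhere and is not globally continuous, so swapping omega-limits with ${\cal D}_2^+$ requires justification. The standard remedy, already invoked in the remark following Proposition \ref{varestfmoduli}, is Luzin's theorem: for every $\eta>0$ one obtains a compact set $K_\eta\subset\HH^{\prime}$ with $\Prob^{\prime}(K_\eta)>1-\eta$ on which ${\cal D}_2^+$ is uniformly continuous, while Birkhoff's theorem guarantees that for $\Prob^{\prime}$-almost every starting point a positive-density subset of the forward orbit visits $K_\eta$. Letting $\eta\to 0$ identifies the omega-limit of ${\cal D}_2^+({\bf g}_s^{\prime}{\bf X}^{\prime})$ with ${\cal D}_2^+$ applied to the orbit closure on $\HH^{\prime}$, completing the argument.
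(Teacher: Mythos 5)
Your high-level plan is close to the paper's: both reduce the statement to the dynamics of $\mathbf{g}_s^{\prime}$ on $(\HH^{\prime},\Prob^{\prime})$, both use that $\Prob^{\prime}$ has at most two ergodic components (swapped or fixed by the deck involution), and both use Proposition~\ref{limthmmoduli-simple} as the bridge from $\mm[f,s]$ to ${\cal D}_2^+({\bf g}_s^{\prime}{\bf X}^{\prime})$. However, there are two genuine gaps in the execution.

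First, you assert that $C({\bf X},{\bf g}_s{\bf X})\exp(-\alpha s)\to 0$ as $s\to\infty$. This does not follow from the statement of Proposition~\ref{limthmmoduli-simple}: the constant is a measurable function of \emph{both} endpoints and in particular depends on ${\bf g}_s{\bf X}$, which does not stay bounded along the orbit. The paper is explicitly aware of this (see the remark after Proposition~\ref{varestfmoduli}, which only claims uniformity on compact subsets of large measure via Egorov/Luzin). Because the bound is only controlled along the set of ``good'' return times, one cannot directly conclude that the omega-limit of $\mm[f,s]$ is the omega-limit of ${\cal D}_2^+({\bf g}_s^{\prime}{\bf X}^{\prime})$, let alone the orbit closure.

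Second, and more fundamentally, your Luzin-type repair in the last paragraph only gives one inclusion. Choosing compact $K_\eta$ with $\Prob^{\prime}(K_\eta)>1-\eta$ on which ${\cal D}_2^+$ is continuous, plus Birkhoff, tells you that the orbit visits $K_\eta$ with positive density, hence the omega-limit set of ${\cal D}_2^+({\bf g}_s^{\prime}{\bf X}^{\prime})$ \emph{contains} $\overline{{\cal D}_2^+(\mathrm{supp}\,\Prob^{\prime}|_{C_i})}$. It does \emph{not} rule out extra accumulation points that arise at the (possibly unbounded) set of times when ${\bf g}_s^{\prime}{\bf X}^{\prime}\notin K_\eta$: on that complement ${\cal D}_2^+$ is uncontrolled and the curve can come arbitrarily close to measures outside your proposed ${\mathfrak N}_i$. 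What the paper uses to close this gap is that the curve $s\mapsto{\cal D}_2^+({\bf g}_s^{\prime}{\bf X}^{\prime})$ is \emph{genuinely continuous in $s$} for a.e.\ ${\bf X}^{\prime}$ --- this is the standing hypothesis of Proposition~\ref{omlimset}, and the paper establishes it separately from the H{\"o}lder property of $\Phi^+_2$ and the non-vanishing of $Var_{\bf m}\Phi^+_2(x,\tau)$ for $\tau\neq 0$ (Proposition~\ref{phivarnotzero}). That continuity, combined with the commutative diagram $G_s\circ{\cal D}_2^+={\cal D}_2^+\circ{\bf g}_s^{\prime}$, lets one argue directly about the curve without invoking Luzin and is the missing ingredient in your write-up. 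You should replace the Luzin step with the continuity of $\tau\mapsto\mm[\Phi^+_2,\tau]$ and then apply the paper's Proposition~\ref{omlimset} on each ergodic component of $\Prob^{\prime}$.
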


{\bf Question}. Do the sets ${\mathfrak N}_i$ contain measures with non-compact support?

For horocycle flows on compact surfaces of constant negative curvature, compactness of support 
 for all weak accumulation points of ergodic integrals has been obtained by Flaminio and Forni \cite{flafo}.

{\bf Question}. Is the measure $\Prob^{\prime}$ ergodic when ${\Prob}$  is the Masur-Veech measure?

As we shall see in the next subsection, in general, the omega-limit sets of the distributions of
the ${\mathbb R}$-valued random variables ${\mathfrak S}[f,s;1]$ contain the delta-measure at zero.
As a consequence, it will develop that, under certain assumptions on the measure $\Prob$, which are satisfied, in particular, for the Masur-Veech smooth measure, for a generic abelian differential the random variables ${\mathfrak S}[f,s;1]$
{\it do not converge} in distribution, as $s\to\infty$, for any
function $f\in Lip_{w,0}$ such that $\Phi_f^+\neq 0$.

\subsubsection{The general case}

While, by the Avila-Viana Theorem \cite{AV}, the Lyapunov spectrum of
the Masur-Veech measure is simple, there are also natural examples of invariant measures with non-simple positive second
Lyapunov  exponent due to Eskin-Kontsevich-Zorich \cite{EKZ}, G. Forni \cite{forni-nonunif},
C. Matheus (see Appendix A.1 in \cite{forni-nonunif}).
A slightly more elaborate, but similar, construction is needed to obtain limit theorems in this  general case.

Let $\mathbb{P}$ be an invariant ergodic probability measure for
the Teichm\"{u}ller flow, and let
$$\theta_1=1>\theta_2>\ldots>\theta_{l_0}>0$$ be
the distinct positive Lyapunov exponents of the Kontsevich-Zorich cocycle
with respect to $\mathbb{P}.$ We assume $l_0\geq 2$.

As before, for ${\bf X}\in {\mathcal H}$ and $i=2, \dots, l_0$, let
$E_i^u({\bf X})$ be the corresponding  Oseledets subspaces, and let
$\mathfrak{B}_{i}^+({\bf X})$ be the corresponding spaces of cocycles.
To make notation lighter, we omit the symbol ${\bf X}$ when the abelian
differential is held fixed.

For $f\in Lip_{w}^+({\bf{X}})$ we now write
$$\Phi_f^+=\Phi_{1,f}^++\Phi_{2,f}^++\ldots +\Phi_{l_0,f}^+,$$
with $\Phi_{i,f}^+\in\mathfrak{B}_{i}^+$ and,
of course, with
$$\Phi^+_{1,f}=(\int\limits_{M}fd{\bf m})\cdot\nu^+,$$
where $\nu^+$ is the Lebesgue measure on the vertical foliation.

For each $i=2,\ldots,l_0$ introduce a measurable fibre bundle
$${\bf{S}}^{(i)}{\mathcal H}=\{({\bf X},v):{\bf X}
\in{\mathcal H},v\in E_{i}^+,|v|=1\}.$$

The flow ${\bf g}_s$ is naturally lifted to the space
${\bf{S}}^{(i)}\HH$ by the formula
$${\bf g}_s^{{\bf{S}}^{(i)}}({\bf X},v)=\left({\bf g}_s {\bf X},
\frac{\mathbf{A}(s,{\bf X})v}
{|\mathbf{A}(s,{\bf X})v|}\right).$$ The growth of the norm of
vectors $v\in E_i^+$ is controlled by the multiplicative cocycle
$H_i$ over the flow ${\bf g}_s^{{\bf{S}}^{(i)}}$ defined by the formula
$$H_i(s,({\bf X},v))=\frac{\mathbf{A}(s,{\bf X})v}
{|v|}.$$

For
${\bf X}\in {\mathcal H}$ and $f\in
Lip_{w,0}^+(\bf{X})$ satisfying $\Phi_f^+\neq 0$, denote
$$
i(f)=\min\{j:\Phi_{f,j}^+\neq0\}.
$$

Define
$v_{f}\in E_{i(f)}^u$ by the formula
$$\mathcal{I}_{{\bf X}}^+(v_{f})=\frac{\Phi_{f,i(f)}^+}{|\Phi_{f,i(f)}^+|}.$$

The growth of the variance of the ergodic integral of a weakly Lipschitz function $f$ is also,
similarly to the case of the simple second Lyapunov exponent, described by the cocycle $H_{i(f)}$ in the following way.

\begin{proposition} There exists $\alpha>0$
depending only on $\mathbb{P}$ and, for any $i=2,\ldots,l_0,$
positive measurable functions
$$V^{(i)}:{\bf{S}}^{(i)}{\mathcal H}\to\mathbb{R}_+,C^{(i)}:{\mathcal H}
\times{\mathcal H}\to\mathbb{R}_+$$ such that for
$\mathbb{P}$-almost every ${\bf X}\in{\mathcal H}$
the following holds. Let $f\in
Lip_{w,0}^+(\bf{X})$ satisfy $\Phi_f^+\neq 0$.

Then for all $s>0$ we have
$$\left|\frac{Var_{{\bf m}}(\mathfrak{S}[f,s;1])}
{V^{(i(f))}(g_s^{{\bf{S}}^{(i(f))}}(\overline{\omega},v_{f}))(H_{i(f)}(s, ({\bf X},v_{f})
))^2}-1\right|\leqslant
C^{(i(f))}({\bf X},{\bf g}_s{\bf X})e^{-\alpha s}.$$
\end{proposition}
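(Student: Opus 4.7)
The argument parallels that of Proposition \ref{varestfmoduli}, with the role of the simple second exponent $\theta_2$ played by $\theta_{i(f)}$. Starting from Theorem \ref{multiplicmoduli} applied to $f$, for any $\varepsilon>0$ one has
$$\mathfrak{S}[f,s;1] = \Phi_f^+(x,e^s) + R(x,s), \qquad |R(x,s)| \leq C_\varepsilon \|f\|_{Lip_w^+}(1+e^{\varepsilon s}).$$
Decompose $\Phi_f^+ = \sum_{i=1}^{l_0}\Phi_{i,f}^+$ along the Oseledets splitting of the Kontsevich--Zorich cocycle. The zero-mean hypothesis $f\in Lip_{w,0}^+(\mathbf{X})$ forces $\Phi_{1,f}^+=0$, and by the definition of $i(f)$ the first surviving summand is $\Phi_{i(f),f}^+\neq 0$. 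Proposition \ref{hoeldergrowth} shows that $\Phi_{j,f}^+(x,e^s)$ grows like $e^{(\theta_j+o(1))s}$, so the term of index $i(f)$ is the one of largest order.

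The key ingredient is a self-similarity identity: a cocycle $\Phi\in\BB_i^+(\mathbf{X})$ with ${\check{\cal I}}_{\mathbf{X}}^+(\Phi)=v$ transforms under the Teichm\"uller flow into the cocycle at $\mathbf{g}_s\mathbf{X}$ corresponding to $\mathbf{A}(s,\mathbf{X})v$, while the vertical flow is rescaled by $e^{-s}$ and the area form $\mathbf{m}$ is preserved. This yields the exact identity
$$Var_{\mathbf{m}(\mathbf{X})}\Phi_{i(f),f}^+(x,e^s) = |\Phi_{f,i(f)}^+|^2\, H_{i(f)}(s,(\mathbf{X},v_f))^2\, V^{(i(f))}\bigl(\mathbf{g}_s^{\mathbf{S}^{(i(f))}}(\mathbf{X},v_f)\bigr),$$
where $V^{(i)}(\mathbf{Y},w) := Var_{\mathbf{m}(\mathbf{Y})}\Phi_w^+(\cdot,1)$ is the variance of the unit-norm cocycle at $\mathbf{Y}$ in the direction $w$. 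Positivity of $V^{(i)}$ almost everywhere follows from the nondegeneracy of the pairing $\langle\cdot,\cdot\rangle$ on $\BB^+\times\BB^-$.

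What remains is to show that all other contributions to $Var_{\mathbf{m}}\mathfrak{S}[f,s;1]$ are $O(e^{-\alpha s})$ times this leading term. For $j>i(f)$, $Var_{\mathbf{m}}\Phi_{j,f}^+(x,e^s)\le C\,|\Phi_{f,j}^+|^2 H_j(s,\cdot)^2$; dividing by $H_{i(f)}^2$ yields exponential decay at rate $2(\theta_{i(f)}-\theta_j)+o(1)>0$, and cross-covariances between distinct Oseledets components are estimated by Cauchy--Schwarz. The remainder $R$ contributes a variance of order $e^{2\varepsilon s}$, which is subdominant once $\varepsilon$ is chosen smaller than the spectral gap $\theta_{i(f)}-\theta_{i(f)+1}$ (with the convention $\theta_{l_0+1}:=0$). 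Combining these estimates produces $\alpha$ equal to a fixed fraction of the minimum spectral gap, a quantity depending only on $\mathbb{P}$. The main obstacle is to make the implicit constants uniform in $f$: the ratios $|\Phi_{f,j}^+|/|\Phi_{f,i(f)}^+|$ depend on $f$, and uniformity is recovered by absorbing their fluctuations, together with the pointwise H\"older constants of the renormalized cocycles at $\mathbf{g}_s\mathbf{X}$, into the measurable functions $V^{(i(f))}$ and $C^{(i(f))}$ via an Egorov--Lusin argument on compact subsets of $\HH$ of probability arbitrarily close to one.
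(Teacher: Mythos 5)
Your proof follows the paper's approach in all essentials: approximate the ergodic integral pointwise by the leading cocycle term $\Phi_{i(f),f}^+(x,e^s)$, compute the variance of that term exactly via the renormalization identity (which is precisely what defines $V^{(i)}$ in the paper: $V^{(i)}(\overline{\omega},v)=Var_{\nu(\overline{\omega})}\Phi_v^+(x,1)$), and argue that all other contributions are multiplicatively small. The one presentational difference is that the paper does not split $\Phi_f^+$ into its Oseledets components and estimate the cross-covariances by Cauchy--Schwarz term by term. It instead bundles the $j>i(f)$ components together with the remainder from Theorem \ref{multiplicmoduli} into a single pointwise bound of the form $|\mathfrak{S}[f,s;1]-\Phi_{i(f),f}^+(x,e^s)|\lesssim\|f\|\,e^{(\theta_{i(f)}-\alpha')s}$ (the analogue of (\ref{phitwoapprox})), and then applies once the elementary inequality $|\ee\xi_1^2-\ee\xi_2^2|\leq\sup|\xi_1+\xi_2|\cdot\ee|\xi_1-\xi_2|$. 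This gives the same conclusion in one step; your Cauchy--Schwarz decomposition is an equivalent but slightly more laborious packaging.

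Your concluding Egorov--Lusin paragraph, however, misidentifies where the $f$-dependence lives and what Egorov--Lusin can buy you. Egorov and Lusin give uniformity in ${\bf X}$ over compact subsets of ${\mathcal H}$ of nearly full measure; they do nothing about the dependence on $f$. And the error really does depend on $f$: your own exact identity reads
$Var_{{\bf m}}\Phi_{i(f),f}^+(x,e^s)=|\Phi_{f,i(f)}^+|^2 H_{i(f)}^2 V^{(i(f))}$,
while the sub-leading terms and the remainder $R$ are controlled by $\|f\|_{Lip_w^+}$; dividing out, the multiplicative error inevitably carries a factor $\|f\|_{Lip_w^+}/|\Phi_{f,i(f)}^+|$, which is unbounded as $f$ varies over $Lip_{w,0}^+({\bf X})$ for a fixed ${\bf X}$. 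So no $f$-independent $C^{(i)}:{\mathcal H}\times{\mathcal H}\to\mathbb{R}_+$ can work for all $f$ and all $s>0$ in the literal sense. (Incidentally, the same tension is present in the paper's formulation; Proposition \ref{varestfmoduli} carries the normalizing factor $(m_{\Phi_2^-}(f)|\Phi_2^+|)^2=|\Phi_{f,2}^+|^2$ in the denominator, which the general-case statement drops, and even with it the ratio $\|f\|_{Lip_w^+}/|\Phi_{f,i(f)}^+|$ remains in the error.) The correct remedy is to normalize $f$ so that $|\Phi_{f,i(f)}^+|=1$, or to let the constant depend on this ratio; this is a matter of bookkeeping in the statement and has nothing to do with Egorov--Lusin.
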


We proceed to the formulation and the proof of the limit theorem in
the general case. For $i=2,\ldots,l_0$, introduce the map
$$\mathcal{D}_i^+:{\bf{S}}^{(i)}\HH\to\mathfrak{M}_1$$
by setting $\mathcal{D}_i^+({\bf X},v)$ to be the
distribution of the $C[0,1]$-valued random variable
$$\frac{\Phi^+_v(x,\tau)}{\sqrt{Var_{\nu( \overline{\omega})}
(\Phi_v^+(x,1))}},\tau\in[0,1].$$

As before, we have a commutative diagram
$$
\begin{CD}
\label{Dtwo-diagr}
{{\bf S}^{(i)}}\HH@ >{{\mathcal D}_i^+}>> {\MM_1} \\
@VV{\bf g}_s^{{\bf S}^{(i)}}V           @AAG_sA   \\
{{\bf S}^{(i)}}\HH@ >{{\mathcal D}_i^+}>> {\MM_1} \\
\end{CD}
$$

The measure $\mathfrak{m}[f,s]$, as before, stands for the distribution of the
$C[0,1]$-valued random variable
$$\frac
        {\int\limits_0^ {\tau
\exp(s)} f\circ h_t^+(x)dt}
{\sqrt
      {Var_ {{\bf m}}(\int\limits_0^ {
\exp(s)}f\circ h_t^+(x)dt)}},\tau\in[0,1].$$

\begin{theorem}\label{limthmmoduli}
Let $\mathbb{P}$ be an invariant ergodic probability measure for
the Teichm\"{u}ller flow such that the Kontsevich-Zorich cocycle
admits at least two distinct positive Lyapunov exponents with respect to ${\mathbb P}$.
There exists a constant $\alpha>0$ depending only on $\mathbb{P}$
and a positive measurable map
$C:{\mathcal H}\times{\mathcal H}\to\mathbb{R}_+$ such
that for $\mathbb{P}-almost$ every
${\bf X}\in{\mathcal H}$ and any $f\in
Lip_w^+(\bf{X})$ we have
$$d_{LP}(\mathfrak{m}[f,s],{\mathcal D}_{i(f)}^+(g_s^{{\bf{S}}^{(i(f))}}({\bf X},v_f)))
\leqslant C({\bf X},{\bf g}_s {\bf X})e^{-\alpha
s},$$
$$d_{KR}(\mathfrak{m}[f,s],{\mathcal D}_{i(f)}^+(g_s^{{\bf{S}}^{(i(f))}}({\bf X},v_f)))
\leqslant C({\bf X},{\bf g}_s{\bf X})e^{-\alpha
s}.$$
\end{theorem}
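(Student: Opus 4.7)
The plan is to reduce the distributional convergence to a careful application of the approximation Theorem \ref{multiplicmoduli} in conjunction with the variance asymptotics recorded in the preceding proposition, after which the renormalizing action of the Teichmüller flow supplies the identification with $\mathcal{D}_{i(f)}^+$. The first move is to write the ergodic integral of $f$ along a vertical trajectory as
\[
\int_0^{\tau e^s} f\circ h_t^+(x)\,dt \;=\; \Phi_f^+(x,\tau e^s) \;+\; R(x,s,\tau),
\]
where by Theorem \ref{multiplicmoduli} one has $|R(x,s,\tau)|\le C_\varepsilon \|f\|_{Lip_w^+}(1+e^{\varepsilon s})$ uniformly in $x$ and $\tau\in[0,1]$. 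Next, decompose $\Phi_f^+ = \sum_{j=1}^{l_0}\Phi_{j,f}^+$ along the Oseledets flag. By Proposition \ref{hoeldergrowth} and the definition of $i(f)$, each cocycle $\Phi_{j,f}^+$ with $j>i(f)$ grows no faster than $T^{\theta_j+\varepsilon}$, while $\Phi_{i(f),f}^+(x,T)$ grows like $T^{\theta_{i(f)}-o(1)}$; combined with the variance estimate of the preceding proposition, which gives $\sqrt{Var_{\bf m}\mathfrak S[f,s;1]}\asymp H_{i(f)}(s,({\bf X},v_f))\sim e^{\theta_{i(f)} s - o(s)}$, all components other than $\Phi_{i(f),f}^+$, together with the remainder $R$, contribute to the normalized random variable an error that is $O(e^{-\alpha s})$ in the sup norm on $C[0,1]$ for some $\alpha>0$ depending only on the spectral gap between $\theta_{i(f)}$ and $\theta_{i(f)+1}$ (with the convention $\theta_{l_0+1}=0$) and on the approximation exponent $\varepsilon$.

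After this reduction, the question is to compare the distribution of $\Phi_{i(f),f}^+(x,\tau e^s)$, suitably normalized, with $\mathcal{D}_{i(f)}^+({\bf g}_s^{{\bf S}^{(i(f))}}({\bf X},v_f))$. Here the renormalization is purely algebraic: the tautological identification $\Phi_{i(f),f}^+ = |\Phi_{i(f),f}^+|\cdot \mathcal I_{{\bf X}}^+(v_f)$ together with the invariance of $\Phi_v^+$ under the Kontsevich--Zorich cocycle yields, for $\tau\in[0,1]$,
\[
\Phi_{v_f}^+(x,\tau e^s) \;=\; H_{i(f)}(s,({\bf X},v_f))\cdot \Phi_{{\bf A}(s,{\bf X})v_f/|\cdot|}^+(h^+_{\cdot}x,\tau),
\]
so that, after dividing by $\sqrt{Var_{\bf m}(\cdot)}$, the resulting law on $C[0,1]$ is precisely $\mathcal{D}_{i(f)}^+({\bf g}_s^{{\bf S}^{(i(f))}}({\bf X},v_f))$ up to pulling back by the measure-preserving map $h^+_\cdot$. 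This is exactly the commutativity of the diagram containing $\mathcal{D}_{i(f)}^+$ and $G_s$ that is recorded above.

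The final step is metric: both $d_{LP}$ and $d_{KR}$ are dominated (up to a universal constant) by the $L^\infty$ distance between two $C[0,1]$-valued random variables defined on the same probability space (for $d_{LP}$ through the standard Strassen-type comparison, for $d_{KR}$ directly from the Kantorovich duality), so the sup-norm error $O(e^{-\alpha s})$ produced above transfers immediately to the stated exponential rate. Collecting the measurable functions arising from the variance asymptotics on $({\bf X},{\bf g}_s{\bf X})$ into a single $C({\bf X},{\bf g}_s{\bf X})$ completes the proof.

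The main obstacle, in my view, is bookkeeping rather than a conceptual hurdle: one must verify that the remainder terms from all the lower-order Oseledets components, from the Theorem \ref{multiplicmoduli} error, and from the ratio between the actual variance and its $H_{i(f)}^2$ surrogate combine into a single exponential bound with a rate $\alpha$ that depends only on $\mathbb P$ and not on $f$ or on the particular Oseledets level $i(f)$. The cleanest way to arrange this is to choose $\varepsilon$ in Theorem \ref{multiplicmoduli} strictly smaller than $\theta_{i(f)}-\theta_{i(f)+1}$ and then subsume the $i(f)$-dependence of the resulting rates into the lower bound $\min_{i<l_0}(\theta_i-\theta_{i+1})>0$; the uniformity here is what ultimately allows a single $\alpha$ to be extracted.
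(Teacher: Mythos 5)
Your proposal mirrors the paper's proof: the ergodic integral is approximated by the holonomy cocycle via Theorem \ref{multiplicmoduli}, the leading Oseledets component $\Phi^+_{i(f),f}$ is isolated using the spectral gap together with the variance asymptotics, the elementary inequality~(\ref{ineqlimthm}) compares the two $C[0,1]$-valued random variables after dividing by their respective normalizations, and the resulting almost-sure $L^\infty$ bound is converted into a bound on $d_{LP}$ and $d_{KR}$ by Lemma~\ref{dist-images} in Appendix~B. The paper presents the argument in the symbolic setting (Theorem \ref{limthmmarkcomp}) and then transfers it to $\HH$ by the zippered-rectangle coding of Section~4, but the content is identical to your direct argument, including the need to choose $\varepsilon$ below the minimal spectral gap $\min_i(\theta_i-\theta_{i+1})$ and to absorb the Oseledets regularity functions into $C({\bf X},{\bf g}_s{\bf X})$.
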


\subsubsection{ Atoms of limit distributions.}

For $\Phi^+\in {\mathfrak B}^+({\bf X})$, let
$\mathfrak{m}[\Phi^+,\tau]$ be the distribution of the
$\mathbb{R}$-valued random variable
$$\frac{\Phi^{+}(x,\tau)}{\sqrt{Var_{\bf{m}}\Phi^{+}(x,\tau)}}\ .$$

\begin{proposition} For $\mathbb{P}-almost$ every
$\bf{X}\in\mathcal{H},$ there exists a dense set
$T_{atom}\subset\mathbb{R}$ such that if $\tau\in T_{atom},$
 then for any $\Phi^+\in {\mathfrak B}^+({\bf X})$, $\Phi^+\neq 0$, the measure
$\mathfrak{m}(\Phi^+,\tau)$ admits atoms.
\end{proposition}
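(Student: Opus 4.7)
I plan to exhibit $T_{\rm atom}$ explicitly using the Rauzy--Veech tower structure recalled in Section~1.1. At every level $n$ of the bi-infinite Rauzy--Veech expansion of $\mathbf{X}$, the surface decomposes (modulo $\mathbf{m}$-null sets) into Rohlin towers $\Pi_j^{(n)}$ of heights $h_j^{(n)}$ with horizontal bases $I_j^{(n)}$. By the horizontal holonomy invariance of Assumption~\ref{bplusx}(3), the quantities $\psi_j^{(n)}(\Phi^+):=\Phi^+(x_0,h_j^{(n)})$ for $x_0\in I_j^{(n)}$ depend only on the tower index $j$ and on $\Phi^+$, and any integral of $\Phi^+$ over a vertical arc of length equal to a prescribed combination of tower heights decomposes as a finite integer combination of the $\psi_j^{(n)}(\Phi^+)$'s plus boundary contributions from the initial and terminal partial traversals.

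The main construction is as follows: for each sufficiently deep $n$, I would select a time $\tau_n$ and a positive-$\mathbf{m}$-measure rectangle $R_n\subset M$ such that every $x\in R_n$ has the same vertical itinerary through the level-$n$ towers up to time $\tau_n$, and moreover the vertical arc $\{h_t^+ x:0\le t\le\tau_n\}$ begins and ends on horizontal arcs lying in the same horizontal holonomy class. The existence of such pairs $(R_n,\tau_n)$ is a combinatorial statement about the Rauzy--Veech graph: one seeks closed walks at level $n$ whose cumulative Rauzy--Veech cocycle matrix satisfies a symmetry forcing the initial and terminal ``partial'' boundary contributions to cancel across a positive range of starting heights. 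On such $R_n$, combining the cocycle identity with horizontal invariance then forces $\Phi^+(\cdot,\tau_n)$ to be constant on $R_n$ \emph{simultaneously for every} $\Phi^+\in\mathfrak{B}^+$; this produces an atom of $\mathfrak{m}(\Phi^+,\tau_n)$ of mass at least $\mathbf{m}(R_n)>0$ for every nonzero $\Phi^+$. Density of $\{\tau_n\}$ in $\mathbb{R}$ would then follow because the level-$n$ tower heights, together with their admissible integer combinations, become dense in $\mathbb{R}_+$ as $n\to\infty$, invoking the Masur--Veech recurrence of the Teichm\"uller flow implied by ergodicity of $\mathbb{P}$.

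The principal obstacle is establishing the holonomy-matching condition on a genuinely \emph{two}-dimensional, positive-$\mathbf{m}$-measure set $R_n$: constancy of $\Phi^+(\cdot,\tau_n)$ on the one-dimensional base slice $I_j^{(n)}$ is immediate from Assumption~\ref{bplusx}(3), but extending it to a positive range of vertical starting heights requires $h^+_{\tau_n}$ to actually preserve horizontal leaves throughout $R_n$, rather than only sending the base of the rectangle onto some horizontal arc. I expect this to be the technical heart of the proof: it should be achieved by exhibiting, at arbitrarily deep levels, combinatorial loops in the Rauzy graph whose cumulative Rauzy--Veech cocycle carries a block-permutation-type symmetry, whose almost-sure existence for $\mathbb{P}$-typical $\mathbf{X}$ would follow from the ergodicity of the Rauzy--Veech shift on the Veech space of zippered rectangles.
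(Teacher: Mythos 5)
Your overall strategy---find, for a dense set of times $\tau$, a positive-$\mathbf m$-measure set on which $\Phi^+(\cdot,\tau)$ is \emph{simultaneously} constant for all $\Phi^+\in\mathfrak B^+$, by combining the cocycle identity with horizontal holonomy invariance---is exactly the right one, and it is the strategy the paper uses. But the mechanism you propose for producing such sets is where your proposal departs from the paper, and it contains a genuine gap.

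You conjecture that the needed sets $R_n$ come from ``closed walks at level $n$ whose cumulative Rauzy--Veech cocycle matrix satisfies a symmetry forcing the initial and terminal partial boundary contributions to cancel,'' and later from ``combinatorial loops in the Rauzy graph whose cumulative Rauzy--Veech cocycle carries a block-permutation-type symmetry.'' No such algebraic symmetry condition is needed, and it is unclear what precise condition would do the job or why it would hold $\mathbb P$-a.s.\ at infinitely many levels. The paper's construction (Section~3.3.2) instead works purely in the symbolic language of the bi-infinite Markov compactum and produces the positive-measure sets from \emph{homoclinic pairs}: points $\tilde x,\hat x\in X$ on the same vertical leaf, $\hat x=h^+_{t_0}\tilde x$, whose symbolic sequences agree on $(-\infty,n_0]\cup[n_1,\infty)$. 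The set $\Pi$ is then the cylinder fixing the coordinates $x'_t=\tilde x_t$ for $t\in(n_0,n_1)$ together with $F(x'_{n_1-1})=F(\tilde x_{n_1-1})$ and $I(x'_{n_0})=I(\tilde x_{n_0})$, which manifestly has measure $\ge\lambda^{(n_0)}_i h^{(n_1-1)}_j>0$. Holonomy invariance on both asymptotic foliations---not a cocycle symmetry---then gives $\Phi^+(x',t_0)=\Phi^+(\tilde x,t_0)$ for every $x'\in\Pi$ and every $\Phi^+$. In the simplest instance (Proposition~\ref{bigatom}) the construction is even more elementary: when $\lambda_1^{(0)}>1/2$ the first-return tower over $I_1$ overlaps its own image under $h^+_{t_1}$, $t_1=h_1^{(0)}$, and on the overlap the cocycle identity $\Phi^+(\tilde x,t_1)=\Phi^+(\tilde x,t_1-\tau_1)+\Phi^+(h^+_{t_1-\tau_1}\tilde x,\tau_1)$ together with holonomy invariance of the two pieces (both starting on $I_1$, both staying in tower $1$) gives $\Phi^+(\tilde x,t_1)=\Phi^+(x_1,t_1)$ for $\tilde x=h^+_{\tau_1}x_1$.

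Two further points. First, the set of homoclinic times $t_0$ is automatically \emph{countable and dense} in $\mathbb R$ because there is one for each admissible choice of a middle block $(n_0,n_1)$ and of matching tails; your density argument---that ``tower heights and their admissible integer combinations become dense''---only shows that the set of candidate arc lengths is dense, not that the times at which your holonomy-matching condition holds are dense. Second, you correctly identify the ``technical heart'' (that constancy must hold over a two-dimensional positive-measure set, not just a transversal slice), but the resolution is not an extra algebraic condition on the Rauzy cocycle: the rectangle structure comes for free from the Markov property of the bi-infinite coding, because the cylinder in $X$ determined by a finite window of coordinates \emph{is} a positive-measure rectangle and the homoclinic condition on $\tilde x,\hat x$ encodes exactly the statement that $h^+_{t_0}$ preserves horizontal holonomy classes across all of $\Pi$. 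So: right target, right invariance principle, but the combinatorial device you propose would need to be replaced by the homoclinic-cylinder construction to close the argument.
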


\subsubsection{Nonconvergence in distribution of ergodic integrals.}

Our next aim is to show that along certain subsequences of times the ergodic integrals of translation flows
converge in distribution  to the measure $\delta_0$, the delta-mass at zero. Weak convergence
of probability measures will be denoted by the symbol $\Rightarrow$.

We need the following additional assumption on the measure $\Prob$.
\begin{assumption}
\label{aczero}
For any $\varepsilon>0$ the set of
abelian differentials ${\bf X}=(M, \omega)$ such that  there exists an
admissible rectangle $\Pi(x, t_1, t_2)\subset M$ with
$t_1>1-\varepsilon$, $t_2>1-\varepsilon$
has positive measure with respect to $\Prob$.
\end{assumption}
Of course, this assumption holds for the Masur-Veech smooth measure.

\begin{proposition} Let $\mathbb{P}$ be an ergodic
$\bf{g}_s$ invariant measure on $\mathcal{H}$ satisfying
Assumption \ref{aczero}. Then for $\mathbb{P}$-almost
every ${\bf{X}}\in\mathcal{H}$ there exists a sequence
$\tau_n\in\mathbb{R}_+$ such that for any
$\Phi^+\in{\mathfrak{B}}^+({\bf{X}})$, $\Phi^+\neq 0$,  we have
$$\mathfrak{m}[\Phi^+, \tau_n]\Rightarrow\delta_0\ \mathrm{in} \ {\mathfrak M}({\mathbb R}) \  \mathrm{as}\
n\to\infty.$$
\end{proposition}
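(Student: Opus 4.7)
By Assumption \ref{aczero}, for each $n\geq 1$ the set $A_n\subset\mathcal{H}$ of abelian differentials admitting an admissible rectangle with both side-lengths exceeding $1-1/n$ has positive $\mathbb{P}$-measure. By the Birkhoff ergodic theorem applied to $\{{\bf g}_s\}$ together with a diagonal argument over $n$, for $\mathbb{P}$-a.e.\ ${\bf X}$ there exist times $s_n\to+\infty$ with ${\bf g}_{s_n}{\bf X}\in A_n$. Hence ${\bf g}_{s_n}{\bf X}$ carries an admissible rectangle $\Pi(\overline{x}_n,a_n,b_n)$ with $a_n,b_n>1-1/n$; pulling back by ${\bf g}_{-s_n}$ yields an admissible rectangle $\Pi_n=\Pi(x_n^\ast,T_n,\varepsilon_n)\subset M({\bf X})$ of area $T_n\varepsilon_n=a_nb_n\to 1$, where by the standard Teichm\"uller convention one of the two side-lengths tends to $0$ and the other to $+\infty$. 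I set $\tau_n:=\min(T_n,\varepsilon_n)$, so that $\tau_n\to 0$ and ${\bf m}(M\setminus\Pi_n)=1-T_n\varepsilon_n\to 0$.

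Next, using the cocycle identity, horizontal holonomy invariance inside $\Pi_n$, and the generic admissibility of the upper extension of $\Pi_n$ by height $\tau_n$, I would verify that for $x=h^+_{\tau_1}h^-_{\tau_2}x_n^\ast\in\Pi_n$, outside a subset of small ${\bf m}$-measure,
\[
\Phi^+(x,\tau_n)=\Phi^+(h^+_{\tau_1}x_n^\ast,\tau_n),
\]
so that $\Phi^+(\cdot,\tau_n)$ restricted to $\Pi_n$ is essentially a function of the single coordinate $\tau_1$. Since $\tau_n\to 0$, Proposition \ref{hoeldergrowth} yields a uniform H\"older bound $|\Phi^+(x,\tau_n)|\leq C(\Phi^+)\tau_n^{\theta-\delta}$ on $\Pi_n$ for any $\delta>0$, whereas on the small complement $M\setminus\Pi_n$ the cocycle is controlled only by this same global bound.

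Combining the commutative diagram of Theorem \ref{limthmmoduli} with Proposition \ref{varestfmoduli} and its extension to arbitrary $\Phi^+\in\mathfrak{B}^+$, the distribution $\mathfrak{m}[\Phi^+,\tau_n]$ is exponentially close to $\mathcal{D}^+_{i(\Phi^+)}({\bf g}_{s_n}^{{\bf S}^{(i)}}({\bf X},v))$ evaluated at $\tau=1$, for an appropriate unit vector $v\in E^u_{i(\Phi^+),{\bf X}}$. The content of Assumption \ref{aczero} at ${\bf g}_{s_n}{\bf X}$ is that nearly all of $M({\bf g}_{s_n}{\bf X})$ is covered by a single admissible rectangle on which, via holonomy, $\Phi^+(\cdot,1)$ is pinned to a one-parameter family of values, forcing the limit to be a degenerate probability measure; a direct weak-limit computation then identifies this degenerate limit as $\delta_0$. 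Uniformity over $\Phi^+\in\mathfrak{B}^+({\bf X})\setminus\{0\}$ is automatic since $\mathfrak{m}[\Phi^+,\tau]=\mathfrak{m}[c\Phi^+,\tau]$ for any $c\neq 0$, which reduces the statement to the compact unit sphere of the finite-dimensional space $\mathfrak{B}^+({\bf X})$.

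The main obstacle is the last step: identifying the degenerate accumulation point as $\delta_0$ rather than, say, a symmetric two-point distribution carrying the variance. The key mechanism is that as $a_n,b_n\to 1$ the bulk of $M({\bf g}_{s_n}{\bf X})$ is pinned by holonomy to the above one-parameter curve of H\"older-small values, while the vanishing-measure complement carries essentially all of the $L^2$-mass of $\Phi^+(\cdot,\tau_n)$; after normalization by $\sigma_n:=\sqrt{Var_{\bf m}\Phi^+(\cdot,\tau_n)}$, the bulk values collapse to $0$ in distribution, which is precisely the weak convergence $\mathfrak{m}[\Phi^+,\tau_n]\Rightarrow\delta_0$.
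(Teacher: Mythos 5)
Your proposal has the right first move (ergodicity plus Assumption \ref{aczero} to find renormalization times $s_n\to\infty$ at which ${\bf g}_{s_n}{\bf X}$ carries a rectangle of area close to $1$), but after that it diverges from the paper in a way that does not close. I see three concrete gaps.

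First, the choice $\tau_n=\min(T_n,\varepsilon_n)$ has the wrong sign: since ${\bf g}_{s}$ expands the horizontal direction and contracts the vertical one, pulling back a near-unit rectangle from ${\bf g}_{s_n}{\bf X}$ gives on ${\bf X}$ a rectangle whose \emph{horizontal} side $\varepsilon_n\to 0$ and whose \emph{vertical} side $T_n\to\infty$. Your $\tau_n=\varepsilon_n\to 0$ is thus a horizontal length; but in $\Phi^+(x,\tau)$ the time parameter $\tau$ is a \emph{vertical} flow time, so the relevant scale is $T_n\to\infty$, not $\varepsilon_n$. The paper's $\tau_n$ (in the symbolic version, Proposition \ref{convtodelta}) indeed go to infinity: the atom is produced at the time equal to the height of the tall pulled-back rectangle.

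Second, holonomy invariance alone, as you invoke it, only makes $\Phi^+(x,\tau_n)$ a function of the vertical coordinate $\tau_1$ on the rectangle, which is a one-parameter \emph{continuous} family of values, not an atom. The paper's Proposition \ref{bigatom} obtains a genuine \emph{constant} value on a set of measure $(2\la_1^{(0)}-1)h_1^{(0)}$: the trick is to take $\tau$ equal to the full height $t_1$ and then use the cocycle identity to wrap around, $\Phi^+(\tilde x,t_1)=\Phi^+(\tilde x,t_1-\tau_1)+\Phi^+(h^+_{t_1-\tau_1}\tilde x,\tau_1)$, combined with constancy of $\Phi^+(\cdot,\tau_1)$ on the transversal $I_1$, to show $\Phi^+(\tilde x,t_1)=\Phi^+(x_1,t_1)$ independent of $\tau_1$. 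Without this step you have no atom, only a pinned curve of values.

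Third, even granted an atom of mass $\beta_n\to 1$, one still must show that its location is $o(1)$ after normalization, and your proposed mechanism does not deliver this. You invoke the H\"older bound $|\Phi^+(x,\tau_n)|\lesssim\tau_n^{\theta-\delta}$ at small $\tau_n$, but by Proposition \ref{hoeldergrowth} the variance decays at the same power of $\tau_n$, so after dividing by $\sigma_n$ the bulk values are of order $1$, not $o(1)$; a sequence of probability measures on a bounded interval with variance identically $1$ cannot converge weakly to $\delta_0$. The paper closes this with an elementary observation (Proposition \ref{atomatzero}): a measure of mean $0$ and variance $1$ carrying an atom of mass $\beta$ at $x_0$ satisfies $|x_0|^2\leq(1-\beta)/\beta^2$, so $\beta_n\to 1$ \emph{forces} $x_0^{(n)}\to 0$; the complementary mass $1-\beta_n\to 0$ then carries the variance but does not obstruct weak convergence to $\delta_0$. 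This mean-variance argument, not the H\"older bound, is what identifies the limit as $\delta_0$.
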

Theorem \ref{limthmmoduli} now implies the following
\begin{corollary} Let $\mathbb{P}$ be an ergodic
$\bf{g}_s$ invariant measure on $\mathcal{H}$ satisfying
Assumption \ref{aczero}. Then for $\mathbb{P}$-almost
every ${\bf{X}}\in\mathcal{H}$ there exists a sequence
$s_n\in\mathbb{R}_+$ such that for any $f\in
Lip_{w,0}^+({\bf{X}})$ satisfying $\Phi_f^+\neq 0$ we have
$$\mathfrak{m}[f,s_n; 1]\Rightarrow\delta_0\  \mathrm{in} \   {\mathfrak M}({\mathbb R}) \ \mathrm{as}\
n\to\infty.$$

Consequently, if $f\in Lip_{w,0}^+({\bf{X}})$ satisfies
$\Phi_f^+\neq0,$ then the family of measures
$\mathfrak{m}[f,\tau;1] $ does not converge in ${\mathfrak M}({\mathbb R})$
and the family of measures $\mathfrak{m}[f,\tau] $ does not converge in ${\mathfrak M}(C[0,1])$
as $\tau\to\infty.$
\end{corollary}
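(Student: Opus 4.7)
The plan is to transfer the preceding Proposition on cocycles to ergodic integrals by means of Theorem \ref{limthmmoduli}, and then exclude overall convergence by combining the resulting $\delta_0$-subsequential limit with a recurrence argument for the Teichmüller flow. First, I fix the sequence $\tau_n\to\infty$ supplied by the preceding Proposition, so that $\mathfrak{m}[\Phi^+,\tau_n]\Rightarrow\delta_0$ for every nonzero $\Phi^+\in\mathfrak{B}^+(\mathbf{X})$, and set $s_n=\log\tau_n$. Given $f\in Lip_{w,0}^+(\mathbf{X})$ with $\Phi_f^+\neq 0$, Theorem \ref{multiplicmoduli} applied to $f$, together with the Hölder growth estimate of Proposition \ref{hoeldergrowth}, shows that the integral $\int_0^{e^s}f\circ h_t^+(x)\,dt$ is dominated pointwise in $x$ by the leading Oseledets component $\Phi_{i(f),f}^+(x,e^s)$, with a relative error that decays exponentially in $s$ compared to the $e^{s\theta_{i(f)}}$ scale; Proposition \ref{varestfmoduli} yields the matching estimate for the variance. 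Combining these ingredients gives
$$
d_{LP}\bigl(\mathfrak{m}[f,s;1],\;\mathfrak{m}[\Phi_{i(f),f}^+,e^s]\bigr)\longrightarrow 0\qquad(s\to\infty).
$$
Evaluating at $s=s_n$ and invoking the preceding Proposition applied to the nonzero cocycle $\Phi_{i(f),f}^+$, one concludes $\mathfrak{m}[f,s_n;1]\Rightarrow\delta_0$. Since $\tau_n$ is chosen simultaneously for all nonzero cocycles, the sequence $s_n$ depends only on $\mathbf{X}$ and not on $f$.

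For the non-convergence statement, I argue by contradiction. If $\mathfrak{m}[f,\tau;1]$ had a weak limit in $\mathfrak{M}(\mathbb{R})$ as $\tau\to\infty$, the previous step would force this limit to be $\delta_0$. On the other hand, by Theorem \ref{limthmmoduli} the family $\mathfrak{m}[f,\tau;1]$ shadows the orbit $s\mapsto \mathcal{D}_{i(f)}^+(\mathbf{g}_s^{\mathbf{S}^{(i(f))}}(\mathbf{X},v_f))$; Poincaré recurrence applied to this orbit supplies times $s'_n\to\infty$ along which $\mathbf{g}_{s'_n}\mathbf{X}$ returns to a compact set of positive $\mathbb{P}$-measure and the Hölder supports of $\mathcal{D}_{i(f)}^+(\cdot)$ remain equibounded. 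Passing to a subsequence, one obtains a weak limit $\mu'\in\mathfrak{M}_1$; by definition of $\mathfrak{M}_1$ it satisfies $\mathrm{Var}_{\mu'}\xi(1)=1$, and the uniform boundedness of supports upgrades this identity to genuine second-moment convergence, so the $\tau=1$ pushforward of $\mu'$ has variance $1$ and is in particular not $\delta_0$. Theorem \ref{limthmmoduli} then forces $\mathfrak{m}[f,s'_n;1]\Rightarrow(\mathrm{ev}_1)_*\mu'\neq\delta_0$, contradicting convergence to $\delta_0$. Non-convergence of $\mathfrak{m}[f,\tau]$ in $\mathfrak{M}(C[0,1])$ is then automatic, since the evaluation $\xi\mapsto\xi(1)$ is continuous on $C[0,1]$ and weak convergence is preserved under continuous pushforward.

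The step I expect to be the main obstacle is the approximation $d_{LP}(\mathfrak{m}[f,s;1],\mathfrak{m}[\Phi_{i(f),f}^+,e^s])\to 0$: it requires simultaneously controlling the remainder of Theorem \ref{multiplicmoduli}, the contribution of the lower-order cocycles $\Phi_{j,f}^+$ with $j>i(f)$, and the variance-denominator asymptotics from Proposition \ref{varestfmoduli}, all uniformly in $x\in M$. A secondary technical point is verifying that the limiting measure $\mu'$ genuinely inherits the variance-$1$ normalisation along the recurrence subsequence; this rests on the fact that $\mathcal{D}_{i(f)}^+(\mathbf{Y},v)$ is supported on equibounded Hölder graphs when $(\mathbf{Y},v)$ varies in a compact subset, so that uniform square-integrability ensures continuity of the variance functional under the weak limit.
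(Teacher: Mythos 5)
Your argument follows the same two-part strategy as the paper's symbolic proof (Corollary \ref{nonconvergence}): combine the preceding accumulation-at-$\delta_0$ Proposition with the exponential shadowing of Theorem \ref{limthmmoduli} to get $\mathfrak{m}[f,s_n;1]\Rightarrow\delta_0$, then rule out overall convergence by showing the family also accumulates somewhere else. The underlying mechanism is the same in both cases.

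There is one organizational difference worth pointing out. To show the family does not converge to $\delta_0$, you extract a weak limit $\mu'$ along a Poincar\'e-recurrence subsequence and argue, using equiboundedness of the H\"older supports of $\mathcal{D}_{i(f)}^+(\cdot)$ on a compact recurrence set, that second moments pass to the limit so that $\mathrm{Var}_{\mu'}\xi(1)=1$, hence $\mu'\neq\delta_0$. The paper instead observes directly that the set $\{\mathfrak{m}[\Phi^+,1]:|\Phi^+|=1\}$ is compact and does not contain $\delta_0$ (continuity of $\Phi^+\mapsto\mathfrak{m}[\Phi^+,\tau]$ on the unit sphere follows from uniform convergence on spheres in the Oseledets theorem), so that $\kappa(\oomega)=\inf_{|\Phi^+|=1}d_{LP}(\mathfrak{m}[\Phi^+,1],\delta_0)$ is a positive measurable function, bounded below by some $\kappa_0>0$ on a positive-measure set which the orbit visits infinitely often; no weak-limit extraction or moment-passage argument is needed. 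The two routes are logically equivalent, both resting on the same compactness phenomenon, but the paper's packaging is slightly leaner.

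One minor imprecision: you assert $d_{LP}\bigl(\mathfrak{m}[f,s;1],\mathfrak{m}[\Phi_{i(f),f}^+,e^s]\bigr)\to 0$ unconditionally as $s\to\infty$, but the bound supplied by Theorem \ref{limthmmoduli} has the form $C({\bf X},{\bf g}_s{\bf X})e^{-\alpha s}$ with a measurable, not a priori bounded, prefactor. This does not invalidate the argument because you only need the estimate along $s_n$, and the sequence $\tau_n$ can be chosen so that ${\bf g}_{s_n}{\bf X}$ lands in a set of positive measure on which both the big-atom condition of Assumption \ref{aczero} and a uniform bound on $C({\bf X},\cdot)$ hold simultaneously (by Luzin the latter set can be taken of measure arbitrarily close to $1$, so the intersection has positive measure and is visited infinitely often by ergodicity). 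You in fact invoke exactly this recurrence mechanism in the second part, so the fix is implicit in your own argument; it would just be cleaner to state it once at the start and restrict the $\to 0$ claim to the recurrence times.
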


\subsection{A symbolic coding for  translation flows.}
\subsubsection{Interval exchange transformations as Vershik's automorphisms.}
Recall that, by Vershik's Theorem \cite{Vershik1}, every ergodic automorphism of a Lebesgue
probability space can be represented as a Vershik's automorphism.
The proof of Vershik's Theorem \cite{Vershik1} proceeds
by constructing an increasing sequence of Rohlin towers which intersect ``in a Markov way".
In the case of interval exchange transformations, such a sequence of towers is given, for instance,
by the Rauzy-Veech induction; as a result, one obtains an explicit representation of minimal
interval exchange transformations as Vershik's automorphisms (see \cite{gjerde}). In the next subsection,
a bi-infinite variant of this construction will give a symbolic representation for translation flows
on flat surfaces.

Let $\pi$ be an irreducible permutation on $m$ symbols and let
${\bf T}:[0,1)\to [0,1)$ be a minimal  interval exchange transformation
of $m$ intervals with
permutation $\pi$.

One can find
a sequence of intervals $I^{(n)}=[0, b^{(n)})$, $n=0, \dots$,
such that
\begin{enumerate}
\item $\lim\limits_{n\to\infty}b^{(n)}=0$;
\item $I^{(n+1)}\subset I^{(n)}$;
\item the induced map of ${\bf T}$ on $I^{(n)}$ is again an interval
exchange of $m$ subintervals.
\end{enumerate}

Denote by $\TT_n$ the induced map of $\TT$ on $I^{(n)}$, and let
$I^{(n)}_1, \dots, I^{(n)}_m$, $I_i^{(n)}=[a_i^{(n)}, b_i^{(n)})$ be the subintervals of
the interval exchange $\TT_n$. By definition, we have $a_1^{(n)}=0$,  $b_i^{(n)})=a_i^{(n+1)}$.

Now represent $I^{(n)}$ as a union of Rohlin towers over $I^{(n+1)}$ with respect to the map $\TT_{n}$:
\begin{equation}
I^{(n)}=\bigsqcup_{i=1}^m \bigsqcup_{k=0}^{N_i^{(n+1)}-1} \TT_{n}^k I_i^{(n+1)}.
\end{equation}

Here $N_i^{(n+1)}$, the height of the tower,
is the time of the first return of $I_i^{(n+1)}$ into $I^{(n)}$ under the map $\TT_{n}$.

Let
$$
{\cal E}_{n+1}=\{(i,k): i\in\{1, \dots, m\}, k\in \{0, \dots, N_i^{(n+1)}-1\}\}.
$$
and for $e\in {\cal E}_{n+1}$, $e=(i,k)$, denote
$$
J^{(n+1)}_e=\TT_{n+1}^kI_i^{(n+1)}.
$$
For any $e\in {\cal E}_{n+1}$ there exists a unique $j\in\{1, \dots, m\}$ such that
$$
J^{(n+1)}_e\subset I^{(n)}_j.
$$
We denote $j=F(e)$; we also write $i=I(e)$ if $e=(i,k)$.

We thus have
\begin{equation}
\label{ijne}
I_j^{(n)}=\bigsqcup_{e\in {\cal E}_{n+1}: F(e)=j} J^{(n+1)}_e.
\end{equation}

Now represent $I=[0,1)$ as a union of Rohlin towers over $I^{(n)}$ with respect to $\TT$:
\begin{equation}
\label{iin}
I=\bigsqcup_{i=1}^m \bigsqcup_{k=0}^{L_i^{(n+1)}-1} \TT^k I_i^{(n)}.
\end{equation}

 Substituting (\ref{ijne}) into (\ref{iin}), write
 \begin{equation}
\label{iinl}
I=\bigsqcup_{i=1}^m \bigsqcup_{k=0}^{L_i^{(n+1)}-1}  \bigsqcup_{e\in {\cal E}_{n+1}: F(e)=j}  \TT^k J^{(n+1)}_e.
\end{equation}

Denote the resulting partition of $I$ into subintervals by $\pi_{n+1}$.
By definition, the maximal length of an element of $\pi_n$ tends to $0$ as $n\to\infty$, so
the increasing sequence of partitions $\pi_n$ tends
(in the sense of Rohlin)  to the partition into points. As usual, for $x\in I$, let $\pi_n(x)$ be the element of $\pi_n$ containing $x$.

Introduce a function ${\mathfrak i}_n: I\to {\cal E}_n$ by setting
${\mathfrak i}_n(x)=e$ if $\pi_n(x)$ has the form $\TT^lJ^{(n)}_e$.
A finite string  $(e_1, \dots, e_n)$,
$e_i\in {\cal E}_i$, satisfying $F(e_i)=I(e_{i-1})$, will be called {\it admissible}.
\begin{proposition}
Let $(e_1, \dots, e_n)$, $e_i\in {\cal E}_i$, be an admissible string.
There exists a unique interval $J=J(e_1, \dots, e_n)$ such that
\begin{enumerate}
\item $J$ is an element of the partition $\pi_n$
\item for any $x\in J$ we have  ${\mathfrak i}_l(x)=e_l$, $l=1, \dots, n$.
\end{enumerate}
Conversely, any element of the partition $\pi_n$ has the form  $J(e_1, \dots, e_n)$
for a unique admissible string $(e_1, \dots, e_n)$.
\end{proposition}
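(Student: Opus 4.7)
The plan is to proceed by induction on $n$, the key input being the Markov refinement identity (\ref{ijne}): each subinterval $I_j^{(n)}$ decomposes as the disjoint union of the pieces $J^{(n+1)}_e$ picked out by $F(e)=j$. This is what makes the admissibility condition $F(e_l)=I(e_{l-1})$ the precise combinatorial shadow of the geometric nesting of $\pi_{n+1}$ inside $\pi_n$.

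For the base case $n=1$, only the ``trivial column'' of the $\TT$-tower over $I^{(1)}$ is needed in (\ref{iinl}) at level one, so $\pi_1$ is in natural bijection with ${\cal E}_1$ via $e\mapsto J^{(1)}_e$, and the map ${\mathfrak i}_1$ realises exactly this identification. There is no admissibility constraint on a one-letter string, so both halves of the statement are immediate.

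For the inductive step, fix an admissible string $(e_1,\ldots,e_n)$ and set $J:=J(e_1,\ldots,e_n)\in\pi_n$. By the induction hypothesis $J=\TT^l J^{(n)}_{e_n}$ for some integer $l$, and writing $e_n=(i_n,k_n)$ we have $J^{(n)}_{e_n}=\TT_{n-1}^{k_n}I^{(n)}_{i_n}$. Applying (\ref{ijne}) to the base $I^{(n)}_{i_n}$ splits $J$ as
$$
J \;=\; \bigsqcup_{e\in{\cal E}_{n+1},\,F(e)=i_n}\;\TT^l\,\TT_{n-1}^{k_n}\,J^{(n+1)}_e.
$$
Since $\TT_{n-1}$ is the first-return map of $\TT$ to $I^{(n-1)}$, each summand can be rewritten as $\TT^{l'(e)}J^{(n+1)}_e$ with $l'(e)$ in the tower-range appearing in (\ref{iinl}), so that $\TT^{l'(e)}J^{(n+1)}_e$ is genuinely a $\pi_{n+1}$-cell. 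The selection condition $F(e_{n+1})=i_n=I(e_n)$ is exactly the admissibility condition linking $e_{n+1}$ to $e_n$; setting $J(e_1,\ldots,e_{n+1}):=\TT^{l'(e_{n+1})}J^{(n+1)}_{e_{n+1}}$ extends the bijection from length $n$ to length $n+1$. The coding equalities follow at once: ${\mathfrak i}_{n+1}$ is by definition constant on $\pi_{n+1}$-cells with value $e_{n+1}$, while the earlier codes are inherited from the nesting $J(e_1,\ldots,e_{n+1})\subset J(e_1,\ldots,e_n)$ by the inductive hypothesis. The converse direction is automatic: every $\pi_{n+1}$-cell has a unique $\pi_n$-parent, which by induction supplies the unique admissible string encoding it.

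The only delicate point in this argument is the re-indexing $\TT^l\TT_{n-1}^{k_n}=\TT^{l'(e)}$ on each piece $J^{(n+1)}_e$, together with the verification that $l'(e)$ lies in the correct tower-range so that the resulting interval matches the partition (\ref{iinl}). This is a direct consequence of the Markov compatibility between the Rohlin towers for $\TT$ over $I^{(n-1)}$ and $I^{(n)}$ on the one hand, and for $\TT_n$ over $I^{(n+1)}$ on the other; carrying it out cleanly just requires a careful bookkeeping of how iterates of $\TT_{n-1}$ unfold into $\TT$-iterates, and this is the only nontrivial calculation the proof demands.
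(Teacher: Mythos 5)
Your inductive argument is correct and matches the approach the paper indicates --- the paper declares the proof ``immediate by induction'' --- with the Markov refinement identity~(\ref{ijne}) doing exactly the work you describe. One small slip: in the base case the trivial column is that of the $\TT$-tower over $I^{(0)}$, not $I^{(1)}$, which is also where the implicit convention $I^{(0)}=I$ enters so that $\pi_1$ is in bijection with ${\cal E}_1$; otherwise the re-indexing bookkeeping you flag as the delicate point is handled correctly.
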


The proof  is immediate by induction.

Introduce the Markov compactum
\begin{equation}
Y=\{y=y_1\dots y_n\dots: y_i\in {\cal E}_i, F(y_i)=I(y_{i-1})\}.
\end{equation}

We thus have a natural map $p: Y\to [0,1]$ which sends  $y\in Y$ to the point
\begin{equation}
\bigcap\limits_{n=1}^{\infty} {\overline J(y_1, \dots, y_n)}.
\end{equation}

(here ${\overline J}$ stands for the closure of $J$). The map $p$ is surjective
and is indeed a bijection except at the endpoints of the intervals $J=J(e_1, \dots, e_n)$,
all of which, except $0$ and $1$, have two preimages. In particular, the map $p$
is almost surely bijective with respect to the Lebesgue measure on $[0,1]$;
by definition, the image $\nu_Y$
of the Lebesgue measure on $[0,1]$ under $p^{-1}$ is a
Markov measure on $Y$.

The Markov compactum $Y$ also has  the following additional structure.
Each set ${\cal E}_n$ is partially ordered: for $e_1, e_2\in {\cal E}_n$, $e_1=(i_1, k_1)$, $e_2=(i_2, k_2)$,
we write $(i_1, k_1)<(i_2, k_2)$ if $i_1=i_2$, $k_1<k_2$.

This ordering induces a partial ordering ${\mathfrak o}$ on $Y$: we write $y<{\tilde y}$ if
there exists $n_0$ such that  $y_n={\tilde y}_n$ for $n>n_0$ while $y_{n_0}<{\tilde y}_{n_0}$.

The map $p^{-1}\circ \TT\circ p: Y\to Y$ is a Vershik's automorphism on $Y$ with respect to the partial ordering ${\mathfrak o}$ (see \cite{Vershik1, VL, solomyak}).

\subsubsection{Translation flows as symbolic flows.}

Let $(M, \omega)$ be an abelian differential such that both corresponding flows $h_t^+$ and $h_t^-$
are minimal. A rectangle  $\Pi(x, t_1, t_2)=\{h^+_{\tau_1} h^{-}_{\tau_2}x, 0\leq \tau_1< t_1, 0\leq \tau_2< t_2\}$
is called {\it weakly admissible} if for all sufficiently small  $\varepsilon>0$ the rectangle
$\Pi(h_{\varepsilon}^+h_{\varepsilon}^-x, t_1-\varepsilon, t_2-\varepsilon)$ is admissible
(in other words, the boundary of $\Pi$ may contain zeros of $\omega$ but the interior does not).

There exists a sequence of partitions
\begin{equation}
\label{recpart}
M=\Pi_1^{(n)}\sqcup \dots \Pi_m^{(n)}, \ n\in {\mathbb Z},
\end{equation}
where $\Pi_i^{(n)}$ are weakly admissible rectangles and for any $n_1, n_2\in {\mathbb Z}$, $i_1,i_2\in \{1,\dots, m\}$, the rectangles
$\Pi_{i_1}^{(n_1)}$ and $\Pi_{i_2}^{(n_2)}$ intersect in a Markov way in the following precise sense.

Take a weakly admissible rectangle $\Pi(x, t_1, t_2)$ and decompose its boundary into four parts:
$$
\partial_h^1(\Pi)={\overline{ \{h^+_{t_1} h^{-}_{\tau_2}x,  0\leq \tau_2< t_2\}}};
$$
$$
\partial_h^0(\Pi)={\overline {\{ h^{-}_{\tau_2}x,  0\leq \tau_2< t_2\}}};
$$
$$
\partial_v^1(\Pi)={\overline {\{h^-_{t_2} h^{+}_{\tau_1}x,  0\leq \tau_1< t_1\}}};
$$
$$
\partial_v^0(\Pi)={\overline {\{h^{+}_{\tau_1}x,  0\leq \tau_1< t_1\}}}.
$$
The {\it Markov condition} is then the requirement that for any $n\in {\mathbb Z}$ and $i\in \{1, \dots, m\}$
there exist $i_1, i_2, i_3, i_4\in \{1, \dots, m\}$
such that
$$
\partial_h^1(\Pi_i^{(n)})\subset \partial_h^{1}\Pi_{i_1}^{(n-1)};
$$
$$
\partial_h^0(\Pi_i^{(n)})\subset \partial_h^{0}\Pi_{i_2}^{(n-1)};
$$
$$
\partial_v^1(\Pi_i^{(n)})\subset \partial_v^{1}\Pi_{i_3}^{(n+1)};
$$
$$
\partial_v^0(\Pi_i^{(n)})\subset \partial_v^{0}\Pi_{i_4}^{(n+1)}.
$$

Furthermore, for a weakly admissible rectangle $\Pi=\Pi(x, t_1, t_2)$
we write $|\partial_h(\Pi)|=t_2$, $|\partial_v(\Pi)|=t_1$ and
require
\begin{equation}
\label{minimality}
\lim\limits_{n\to\infty} \max\limits_{i=1, \dots,m} |\partial_v\Pi_i^{(n)}|=0; \
\lim\limits_{n\to\infty} \max\limits_{i=1, \dots,m} |\partial_h\Pi_i^{(-n)}|=0.
\end{equation}

A sequence of partitions (\ref{recpart}) satisfying the Markov condition
and (\ref{minimality}) exists by the minimality of the vertical and horizontal foliations; this sequence allows us
to identify our surface $M$ with the space of paths of a non-autonomous topological Markov chain.

Indeed, for $n\in {\mathbb Z}$, let ${\cal E}_n$ be the set of connected components of intersections
$\Pi_i^{(n)}\cap \Pi_j^{(n-1)}$. If $e$ is such a connected component
then we write $i=I(e)$, $j=F(e)$ (see Figure \ref{fig:three}).

\begin{figure}
\begin{center}
\includegraphics{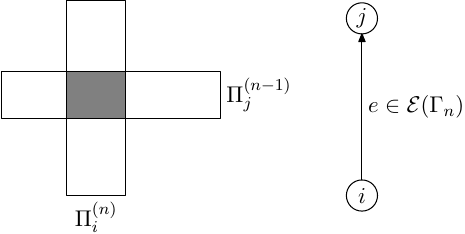}\\
\caption{Edges of graphs are connected components of intersections of rectangles}\label{fig:three}
\end{center}
\end{figure}

Now consider the {\it Markov compactum} of bi-infinite paths, that is, the set $X$ defined by the formula
$$
X=\{x=\dots x_{-n}\dots x_n\dots, x_n\in {\cal E}_n, F(x_n)=I(x_{n-1}), \ n\in {\mathbb Z}\}.
$$

Given a point $x\in X$, consider the intersection of closures of the corresponding connected components
\begin{equation}
\label{conncomp}
\pi(x)=\bigcap {\overline x_n}.
\end{equation}

Nonemptiness of this intersection follows from the Markov condition, while condition
(\ref{minimality}) implies that the intersection (\ref{conncomp}) is a point.

We therefore obtain a measurable  map $\pi: X\to M$.
It is immediate that ${\bf m}$-almost all points in $M$ have exactly one preimage,
and we thus obtain an almost sure identification of $M$ and $X$.
We illustrate the connection between $M$ and $X$ on Figure \ref{fig:four}.

\begin{figure}[ht]
\begin{center}
\includegraphics{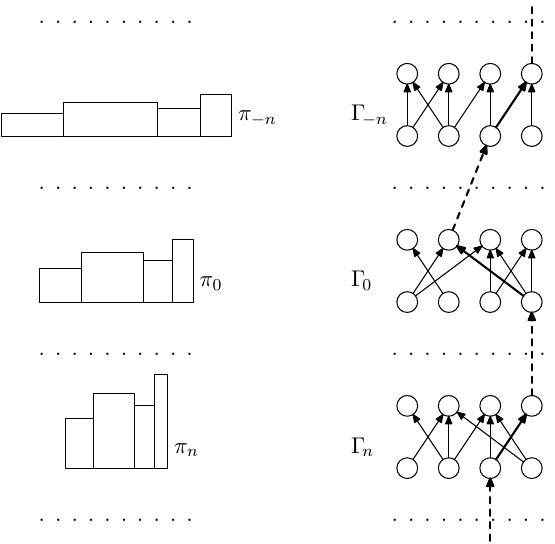}\\
\caption{From an abelian differential to a  Markov compactum}\label{fig:four}
\end{center}
\end{figure}

By construction, the measure ${\bf m}$ yields a Markov measure on $X$.
Furthermore, by construction we immediately obtain the following
\begin{proposition}
Let $n_0\in {\mathbb Z}$. If $x, x^{\prime}$ are such that $x_n=x^{\prime}_n$
for all $n\geq n_0$, then  $\pi(x)$ and $\pi(x^{\prime})$ lie on the same orbit of the flow $h_t^+$;
if   $x, x^{\prime}$ are such that $x_n=x^{\prime}_n$ for all $n\leq n_0$,
then  $\pi(x)$ and $\pi(x^{\prime})$ lie on the same orbit of the flow $h_t^-$.
\end{proposition}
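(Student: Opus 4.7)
The plan is to translate the symbolic hypothesis into a geometric statement about a nested family of weakly admissible rectangles, and then to exploit the Markov compatibility together with the minimality condition~(\ref{minimality}) to force the intersection to collapse onto a single flow arc.

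First I would observe that if $x_n=x'_n$ for all $n\ge n_0$, then, directly from the definition of $\pi$, both $\pi(x)$ and $\pi(x')$ belong to the closed set $\overline{x_n}$ for every $n\ge n_0$, hence to the nested intersection
$$R:=\bigcap_{n\ge n_0}\overline{x_n}.$$
Each $\overline{x_n}$ is the closure of a connected component of $\Pi_{I(x_n)}^{(n)}\cap\Pi_{F(x_n)}^{(n-1)}$, so it is itself a closed weakly admissible sub-rectangle of both $\Pi_{I(x_n)}^{(n)}$ and $\Pi_{F(x_n)}^{(n-1)}$.

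Next I would use the Markov inclusions to determine the \emph{shape} of the nesting $\overline{x_{n+1}}\subset\overline{x_n}$. The inclusions $\partial_v^{0}\Pi_i^{(n)}\subset\partial_v^{0}\Pi_{i_4}^{(n+1)}$ and $\partial_v^{1}\Pi_i^{(n)}\subset\partial_v^{1}\Pi_{i_3}^{(n+1)}$ force the refinement from level $n$ to level $n+1$ to act only transversally to the vertical flow: each $\overline{x_{n+1}}$ that sits in $\overline{x_n}$ spans a full arc of $h_t^+$ across $\overline{x_n}$ and subdivides it only in the horizontal direction. Iterating, $R$ is a weakly admissible sub-rectangle of $\overline{x_{n_0}}$ whose vertical extent equals that of $\overline{x_{n_0}}$ and whose horizontal extent is $\lim_{n\to\infty}|\partial_h\overline{x_n}|$.

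Applying (\ref{minimality}) I would conclude that this limiting horizontal extent is zero, so that $R$ is contained in a single orbit arc of the vertical flow $h_t^+$; in particular $\pi(x)$ and $\pi(x')$ lie on the same $h_t^+$-orbit. The second statement follows by the mirror argument: backward tail agreement gives a nested family $\bigcap_{n\le n_0}\overline{x_n}$, the Markov inclusions for $\partial_h^{0,1}$ now play the role of the horizontal-only refinement, and the condition $\lim_{n\to\infty}\max_i|\partial_h\Pi_i^{(-n)}|=0$ forces $R$ to collapse onto a single arc of $h_t^-$.

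The step I expect to require the most care is identifying each $\overline{x_{n+1}}$ as a vertical sub-strip of $\overline{x_n}$: the Markov inclusions are stated for the boundaries of the full level-$n$ and level-$(n+1)$ rectangles, and one has to trace them down to the connected components that form the symbolic alphabet. Concretely, I would check that a connected component of $\Pi_{I(x_{n+1})}^{(n+1)}\cap\Pi_{I(x_n)}^{(n)}$ inherits its two vertical sides from vertical sides of $\Pi_{I(x_{n+1})}^{(n+1)}$ which, by the Markov condition, are in turn contained in vertical sides of the containing level-$n$ rectangle, so that $\overline{x_{n+1}}$ really does stretch all the way vertically across $\overline{x_n}$. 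Once this refinement structure is in place, the remainder is a routine nested-compactness argument combined with (\ref{minimality}).
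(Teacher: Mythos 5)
Your overall strategy is the right one and matches what the paper means by ``by construction'': both $\pi(x)$ and $\pi(x')$ lie in $\bigcap_{n\ge n_0}\overline{x_n}$, each $\overline{x_n}$ is a weakly admissible rectangle, its horizontal extent collapses as $n\to\infty$, and hence the two projected points differ only by the vertical flow. One remark on the minimality condition: as printed, (\ref{minimality}) asserts $|\partial_v\Pi_i^{(n)}|\to 0$ as $n\to\infty$, not $|\partial_h|$; however, this is incompatible with the Markov inclusions $\partial_v^{0,1}\Pi_i^{(n)}\subset\partial_v^{0,1}\Pi_{\cdot}^{(n+1)}$, which force $|\partial_v\Pi^{(n)}|$ to be \emph{nondecreasing} in $n$, so (\ref{minimality}) as literally stated would imply the vertical extents are identically zero. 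The version your argument actually uses, namely $|\partial_h\Pi_i^{(n)}|\to 0$ as $n\to+\infty$ and $|\partial_v\Pi_i^{(-n)}|\to 0$, is the one consistent with the Markov conditions and with Assumption \ref{uniquergodic}, so your reading is the correct one and you should note the discrepancy explicitly rather than silently adopting the repaired statement.

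The genuine gap is the nesting claim. You write ``each $\overline{x_{n+1}}$ that sits in $\overline{x_n}$ spans a full arc of $h_t^+$ across $\overline{x_n}$,'' and you call $\bigcap_{n\ge n_0}\overline{x_n}$ a ``nested intersection.'' In fact the $\overline{x_n}$'s are not nested: $x_n$ is a connected component of $\Pi_{I(x_n)}^{(n)}\cap\Pi_{F(x_n)}^{(n-1)}$, while $x_{n+1}$ is a connected component of $\Pi_{I(x_{n+1})}^{(n+1)}\cap\Pi_{I(x_n)}^{(n)}$. Both live inside the same level-$n$ rectangle $\Pi_{I(x_n)}^{(n)}$, but they are cut out of it by intersecting with rectangles at different levels ($n-1$ versus $n+1$) and so run transversally to each other; with the corrected minimality, $\overline{x_{n+1}}$ is \emph{taller and thinner} than $\overline{x_n}$, hence neither contains the other, and consequently your claim that $R$ is a vertical sub-strip of $\overline{x_{n_0}}$ with full vertical extent does not follow. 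Fortunately, neither the nesting nor that claim about $R$ is needed: it suffices that $\pi(x),\pi(x')\in\overline{x_n}$ for every $n\ge n_0$ (which holds by the definition of $\pi$), that the horizontal extent of each such rectangle is bounded by $|\partial_h\Pi^{(n)}_{I(x_n)}|\to 0$, and that within each weakly admissible rectangle the horizontal displacement between two points is well defined and independent of which of the overlapping rectangles $\overline{x_n}$ you compute it in. The last point is where the Markov structure genuinely enters, and it deserves a sentence of justification (consistency of the flat charts across the rectangles containing $\pi(x)$). Once stated this way the argument closes cleanly, and the mirror statement for $n\le n_0$ follows as you describe.
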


In other words, the horizontal and the vertical flows of the abelian differential
$\omega$ correspond to flows along asymptotic foliations of a Markov compactum.
It will develop that H{\"o}lder cocycles
correspond to special finitely-additive measures  on the asymptotic foliations.

\subsection{Markov compacta.}
\subsubsection{Graphs and paths}
Let $m\in {\mathbb N}$ and let $\Gamma$ be an oriented graph with $m$ vertices $\{1,\dots,m\}$
and possibly multiple
edges. We assume that that for each vertex there is an edge starting
from it and an edge ending in it.

Let ${\cal E}(\Gamma)$ be the set of edges of $\Gamma$.
For $e\in {\cal E}(\Gamma)$ we denote by $I(e)$ its initial vertex and by $F(e)$ its terminal vertex.
Denote by
$A(\Gamma)$ the incidence matrix of $\Gamma$ given by the formula:
$$
A_{ij}(\Gamma)=\# \{e\in {\cal E}(\Gamma): I(e)=i, F(e)=j\}.
$$

Let ${\mathfrak G}$ be the set of all oriented graphs on $m$ vertices
such that there is an edge starting at
every vertex and an edge ending at every vertex.

Assume we are given a sequence $\{\Gamma_n\}$, $n\in {\mathbb Z}$, of graphs belonging to ${\mathfrak G}$.
To this sequence we assign the Markov compactum of paths in our sequence of graphs:
\begin{equation}
\label{markcomp}
X=\{x=\dots x_{-n}\dots x_n \dots, x_n\in {\cal E}(\Gamma_n), F(x_{n+1})=I(x_n)\}.
\end{equation}

Write $A_n(X)=A(\Gamma_n)$.

\subsubsection{Asymptotic foliations.}

For $x\in X$, $n\in {\mathbb Z}$, introduce the sets
$$
\gamma^+_n(x)=\{x^{\prime}\in X: x^{\prime}_t=x_t, t\geq n\};\
\gamma^-_n(x)=\{x^{\prime}\in X: x^{\prime}_t=x_t, t\leq n\};
$$
$$
\gamma^+_{\infty}(x)=\bigcup_{n\in {\mathbb Z}} \gamma_n^+(x);\
\gamma^-_{\infty}(x)=\bigcup_{n\in {\mathbb Z}} \gamma_n^-(x).
$$
The sets $\gamma^+_{\infty}(x)$ are leaves of the asymptotic foliation  ${\cal F}^+(X)$
on $X$ corresponding to the infinite future; the sets $\gamma^-_{\infty}(x)$
are leaves of the asymptotic foliation  ${\cal F}^-(X)$ on $X$ corresponding to the infinite past.

For $n\in {\mathbb Z}$ let ${\mathfrak C}^+_{n}(X)$ be
the collection of all subsets of $X$ of the
form
$\gamma^+_n(x)$,  $x\in X$; similarly,
${\mathfrak C}^-_{n}(X)$ is the collection of all subsets of the form  $\gamma^-_n(x)$.

By definition, the collections ${\mathfrak C}^+_{n}(X)$, ${\mathfrak C}^-_{n}(X)$ are semi-rings.
Introduce the collections
\begin{equation}
\label{cplusom}
{\mathfrak C}^+(X)=\bigcup\limits_{n\in {\mathbb Z}} {\mathfrak C}^+_{n};
{\mathfrak C}^-=\bigcup\limits_{n\in {\mathbb Z}} {\mathfrak C}^-_{n}.
\end{equation}

Since every element of ${\mathfrak C}^+_n$ is a disjoint union of elements of ${\mathfrak C}^+_{n+1}$,
the collection ${\mathfrak C}^+$
is a semi-ring as well. The same  statements hold for ${\mathfrak C}^-_n$ and  ${\mathfrak C}^-$.

Cylinders in $X$ are subsets of the form $\{x: x_{n+1}=e_1, \dots, x_{n+k}=e_{k}\}$, where
$n\in {\mathbb Z}$, $k\in {\mathbb N}$,
$e_1\in {\cal E}(\Gamma_{n+1}), \dots,  e_k\in {\cal E}(\Gamma_{n+k})$ and
$F(e_i)=I(e_{i+1})$.
The family of all cylinders forms a semi-ring which we denote by ${\mathfrak C}$.

\subsubsection{Finitely-additive measures}

Let $\mathfrak{V}^+(X)$ be the family of finitely-additive real-valued measures\, $\Phi^+$ defined on the semi-ring $\mathfrak{C}^+$ and such that if\: $x, x'\in X$ satisfy $F(x_n)=F(x_n\!\!\!'\,)$, then $\Phi^+\!\left(\gamma_n^+(x)\right)\!=\!\Phi^+\!\left(\gamma_n^+(x')\right)$.

Given a measure\, $\Phi^+\!\in\mathfrak{V}^+(X)$,\:take\;\,$l\in\mathbb{Z}$\, and choose $m$ points $x(1), x(2),\\\ldots, x(m)\in X$ such that
$$F\left({(x(i))}_{l+1}\right)=i,\quad i=1,\ldots,m.$$
Introduce a vector $v^{(l)}\!\in\mathbb{R}^m$ by the formula
$$\left({v^{(l)}}_i\right)=\Phi^+\!\left(\gamma_{l+1}^+(x(i))\right)$$
By definition, the vectors $v^{(l)}$ satisfy the relation
 \begin{equation}
 v^{(l+1)}=A_lv^{(l)},\;l\in\mathbb{Z}.
 \label{eq-seq-def}
 \end{equation}
A sequence\, $v^{(l)},\;l\in\mathbb{Z}$,\; of real vectors satisfying ({\ref{eq-seq-def}}) will be called an {\it equivariant sequence} (with respect to the sequence of matrices $A_l$ or, as we shall sometimes say, with respect to the Markov compactum $X$).

Every finitely-additive measure $\Phi^+\!\in\mathfrak{V}^+(X)$ thus defines a unique equivariant sequence $v^{(l)},\,l\in\mathbb{Z}$. Introduce the {\it evaluation map}\quad $$eval^+_0:\mathfrak{V}^+(X)\longrightarrow\mathbb{R}^m$$
by the formula
 \begin{equation}
 \label{evalplus-def}
 eval^+_0(\Phi^+)=v^{(0)}
 \end{equation}
Conversely, to any equivariant sequence ${\bf v}=v^{(l)},\,l\in\mathbb{Z}$, we assign a finitely-additive measure $\Phi^+_{\bf v}\in\mathfrak{V}^+(X)$ by the following formula, valid for any $x\in X$ and $n\in\mathbb{Z}$:
$$\Phi^+_{\bf v}\left(\gamma_{n+1}^+(x)\right)={\left(v^{(n)}\right)}_{F(x_{n+1})}.$$
Note that if all matrices $A_n,\,n\in\mathbb{Z}$, are invertible, then the map $eval^+_0$ is an isomorphism.

Similarly, let $\mathfrak{V}^-(X)$ be the family of finitely-additive real-valued measures $\Phi^-$ on the semi-ring $\mathfrak{C}$ such that if $I(x_n)=I(x_n\!\!\!'\,)$, then
$$\Phi^-\!\left(\gamma_n^-(x)\right)=\Phi^-\!\left(\gamma_n^-(x')\right)$$
Again, we take $l\in\mathbb{Z}$, choose $m$ points $x(1), x(2),\ldots, x(m)\in X$ such that
$$I\left({(x(i))}_l\right)=i,\quad i=1,\ldots,m,$$
and to a measure $\Phi^-\!\in\mathfrak{V}^-(X)$ assign the vector $\tilde{v}^{(l)}$ by the formula $${\left(\tilde{v}^{(l)}\right)}_i=\Phi^-\!\left(\gamma^-_l(x(i))\right).$$
By definition, the vectors $\tilde{v}^{(l)}$ satisfy the relation
 \begin{equation}
 \label{reveq-seq-def}
 \tilde{v}^{(l)}=A_l^t\,\tilde{v}^{(l+1)},\;\;l\in\mathbb{Z}.
 \end{equation}
A sequence $\tilde{v}^{(l)},\;l\in\mathbb{Z}$, of real vectors satisfying the relation (\ref{reveq-seq-def}) will be called {\it reverse equivariant} sequence of vectors (with respect to the sequence $A_l$ or the Markov compactum X).

Every finitely-additive measure $\Phi^-\!\in\mathfrak{V}^-(X)$ thus defines a unique reverse equivariant sequence\; $\tilde{v}^{(l)},\;l\in\mathbb{Z}$. Introduce the evaluation map
$$eval^-_0:\mathfrak{V}^-(X)\longrightarrow\mathbb{R}^m$$
by the formula
 \begin{equation}
 \label{evalminus-def}
 eval^-_0(\Phi^-)=\tilde{v}^{(0)}
 \end{equation}
Conversely, to every reverse equivariant sequence $\tilde{\bf v}=\left(\tilde{v}^{(l)}\right), \,l\in\mathbb{Z}$, we assign the corresponding measure $\Phi^-_{\tilde{\bf v}}$ by the following formula, valid for all $x\in X$ and all $n\in\mathbb{Z}$:
$$\Phi^-_{\tilde{\bf v}}\!\left(\gamma^-_n(x)\right)=\left(v^{(n)}\right)_{I(x_n)}.$$
Again, if all matrices $A_l,\,l\in\mathbb{Z}$, are invertible, then the map $eval^-_0$ is an isomorphism.

Similarly, let the $\mathfrak{B}^-_c\subset\mathfrak{V}^-(X)$ be the subspace of finitely-additive measures $\Phi^-$ satisfying
$$\lim\limits^{}_{n\rightarrow\infty}\max\limits^{}_{x\in X}\left|
\Phi^-\!\left(\gamma^-_n(x)\right)\right|=0.$$

We introduce two subspaces of finitely-additive measures in
$\mathfrak{V}^+({X})$ having additional continuity
properties.

Let

$$\mathfrak{B}^+_c({X})=\left\{\Phi^+\in\mathfrak{V}^+({X}):
\lim\limits_{n\rightarrow\infty}
\max\limits_{x\in{X}}\left|\Phi^+\left(\gamma^+_{-n}(x)\right)\right|=0\right\},$$

$$\mathfrak{B}^+ ({{X}})=\biggl\{\Phi^+\in\mathfrak{V}^+({{X}}): \text{there exists}\ \alpha>0,C>0$$

$$\left.\text{such that } \max\limits_{x\in{X}}\left|\Phi^+\left(\gamma^+_{-n}(x)\right)\right|\leq Ce^{-\alpha n} \text{ for all } n\geq0\right\}.$$

{ \bf Remark}.\: Note that by holonomy-invariance the maximum
in both definitions is taken over a finite set of values.

Similarly, let $\mathfrak{B}^-_c\subset\mathfrak{V}^-(X)$ be the subspace of finitely-additive measures $\Phi^-$ satisfying
$$\lim\limits^{}_{n\rightarrow\infty}\max\limits^{}_{x\in X}\left|
\Phi^-\!\left(\gamma^-_n(x)\right)\right|=0,$$

and set

$$\mathfrak{B}^- ({{X}})=\biggl\{\Phi^-\in\mathfrak{V}^-({{X}}): \text{there exists}\ \alpha>0,C>0$$

$$\left.\text{such that } \max\limits_{x\in\mathbf{X}}\left|\Phi^-\left(\gamma^-_{n}(x)\right)\right|\leq Ce^{-\alpha n} \text{ for all } n\geq0\right\}.$$

 \subsubsection{Product measures and duality}

 Given $\Phi^+\in {\mathfrak V}^+(X)$, $\Phi^-\in {\mathfrak V}^-(X)$,
 introduce a finitely-additive measure $\Phi^+\times \Phi^-$
on the semi-ring ${\mathfrak C}(X)$ of cylinders in $X$ as follows:
 for $C\in {\mathfrak C}$ and $x\in  C$, let ${\tilde \gamma}^+$ be the largest by inclusion set of the 
 semi-ring ${\mathfrak C}^+$ containing $x$ and contained in $C$, let ${\tilde \gamma}^-$ be the largest by inclusion set of the
 semi-ring ${\mathfrak C}^-$ containing $x$ and contained in $C$, and 
 set
\begin{equation}
\label{prodmeasx}
\Phi^+\times \Phi^-(C)=\Phi^+({\tilde \gamma}^+)\cdot \Phi^-({\tilde \gamma}^-).
\end{equation}

Observe that, by holonomy-invariance of $\Phi^+$ and $\Phi^-$, the right-hand side does not depend on
the specific choice of $x\in C$ and the left-hand side is thus well-defined.

Introduce a pairing $\langle,\rangle$ between the spaces ${\mathfrak V}^+(X)$ and ${\mathfrak V}^-(X)$ by writing
\begin{equation}
\label{xduality}
\langle \Phi^+, \Phi^-\rangle=\Phi^+\times \Phi^-(X).
\end{equation}

If ${\bf v}=v^{(n)}$ is the equivariant sequence corresponding to $\Phi^+$,
 ${\tilde {\bf v}}={\tilde v}^{(n)}$  the reverse equivariant sequence corresponding to $\Phi^-$,
then we clearly have
\begin{equation}
\label{xdualitytriv}
\langle \Phi^+, \Phi^-\rangle=\sum\limits_{i=1}^m v^{(0)}_i {\tilde v}^{(0)}_i.
\end{equation}

 In particular, if all matrices $A_n,\,n\in\mathbb{Z}$, are invertible, then the pairing (\ref{xdualitytriv}) is nondegenerate on the pair of subspaces $\mathfrak{V}^+(X), \mathfrak{V}^-(X)$.

\subsubsection{Unique ergodicity.}

Assume that each space ${\mathfrak V}^+(X)$, ${\mathfrak V}^-(X)$ contains a unique positive measure up to scaling.
Assume furthermore that the positive measure $\nu^+\in {\mathfrak V}^+(X)$ satisfies
\begin{equation}
\lim\limits_{n\to\infty}\max_{x\in X} \nu^+(\gamma_{-n}^+(x))=0; \  \lim\limits_{n\to\infty}\min_{x\in X} \nu^+(\gamma_{n}^+(x))=\infty,
\end{equation}
while the positive measure $\nu^-\in {\mathfrak V}^-(X)$ satisfies
\begin{equation}
\lim\limits_{n\to\infty}\max_{x\in X} \nu^-(\gamma_{n}^-(x))=0; \  \lim\limits_{n\to\infty}\min_{x\in X} \nu^-(\gamma_{-n}^-(x))=\infty.
\end{equation}
The Markov compactum $X$ will then be called  {\it uniquely ergodic}.
Unique ergodicity can be equivalently reformulated as the following condition going back to
H. Furstenberg (see, e.g., formula $(16.13)$ in Furstenberg \cite{furst}).
\begin{assumption}
\label{uniquergodic}
\begin{enumerate}
\item For any $l\in{\mathbb Z}$, there exists a vector
$\lal=(\lal_1, \dots, \lal_m)$, all whose coordinates are positive,
such that
$\lal=A_l^t\la^{(l+1)}$ and
$$
\bigcap_{n\in{\mathbb N}} A^t_{l+1}\dots A^t_{l+n}{\mathbb R}_+^m={\mathbb R}_+\la^{(l)}.
$$

\item For any $l\in{\mathbb Z}$, there exists a  vector
$\hhl=(\hhl_1, \dots, \hhl_m)$, all whose coordinates are positive, such
that
$\hhl=A_lh^{(l-1)}$ and
$$
\bigcap_{n\in{\mathbb N}} A_{l-1}\dots A_{l-n}{\mathbb R}_+^m={\mathbb R}_+\hhl.
$$

\item $|\lal|\to 0$ as $l\to\infty$,  $|\hhl|\to 0$ as $l\to-\infty$.
\item $\min\limits_i \lal_i\to\infty$ as $l\to-\infty$, $\min\limits_i \hhl\to\infty$ as $l\to\infty$.
\end{enumerate}
\end{assumption}

The sequences of vectors $\lal$ and $\hhl$ are defined up to a multiplicative constant
(independent of $l$). The sequence $(\hhl)$ is equivariant while the sequence
$(\lal)$ is reverse equivariant.

To normalize the vectors $(\lal)$, $(\hhl)$ given by Assumption \ref{uniquergodic},
we write
\begin{equation}
\label{lahnorm}
|\la^{(0)}|=1, \langle \la^{(0)}, h^{(0)} \rangle =1.
\end{equation}

By equivariance we have
\begin{equation}
\langle \lal, \hhl \rangle =1 \ {\rm for \ all} \ l\in {\mathbb Z}.
\end{equation}

Denote by $\nu^+_{X}$ the measure corresponding to the
equivariant sequence $\hhl$; observe that it is a positive
sigma-finite sigma-additive measure on the sigma-algebra generated by
the semi-ring ${\C}^+$. Similarly, denote
by $\nu^-_{X}$ the measure corresponding
to the equivariant sequence $\lal$.
Furthermore, we define a probability measure $\nu_X$ on $X$ by the formula
$$
\nu_X=\nu^+_{X}\times \nu^-_{X}.
$$

\subsubsection{Weakly Lipschitz functions.}

Given a uniquely ergodic compactum $X$, we
introduce a function space  $Lip_w^+(X)$ in the following way.
A bounded Borel-measurable function $f:X \to {\mathbb R}$ belongs to the space $Lip_w^+(X)$ if there exists a
constant $C>0$ such that for all $n\geq 0$ and any $x, x^{\prime} \in X$ satisfying
$F(x_{n+1})=F(x^{\prime}_{n+1})$, we have
\begin{equation}
\label{weaklip}
\left|\int_{\gamma_n^+(x)}fd\nu^+  - \int_{\gamma_n^+(x^{\prime})} fd\nu^+\right|\leq C.
\end{equation}
Let $C_f$ be the infimum of all $C$ satisfying (\ref{weaklip}) and norm the space $Lip_w^+(X)$ by setting
$$
||f||_{Lip_w^+}=\sup_X |f|+C_f.
$$
As before, let $Lip_{w,0}^+(X)$ be the subspace of $Lip_w^+(X)$ of functions
whose integral with respect to $\nu$ is zero.

 \subsubsection{Vershik's orderings.}
 The r{\^o}le of Vershik's orderings in the symbolic coding of translation flows can  informally be summarized as follows.

 The Markov compactum and its asymptotic foliations encode the translation surface and its vertical and horizontal foliations; in order, however, to recover the translations flows themselves, one needs a linear ordering on the leaves of the foliations. This linear ordering is induced by Vershik's orderings of the edges of graphs defining the Markov compactum. We proceed to formal definitions.

 Let $\Gamma\in {\mathfrak G}$. Following S.~Ito \cite{ito}, A.M.~Vershik \cite{Vershik1, Vershik2},
assume that for  any $i\in\{1,\dots, m\}$ a linear ordering is given on the set
$$
\{e\in {\cal E}(\Gamma): I(e)=i\}.
$$
Such an ordering will be called  a {\it Vershik's ordering} on $\Gamma$.

Let $X$ be a uniquely ergodic Markov compactum corresponding to the sequence of graphs $\Gamma_l$.

If a Vershik's ordering is given on each $\Gamma_l$, $l\in {\mathbb Z}$, then  a linear ordering is
induced on any leaf of the foliation ${\cal F}^+_X$.
Indeed, if $x^{\prime}\in\gamma_{\infty}^+(x)$, $x^{\prime}\neq x$, then there exists $n$ such that $x_t=x^{\prime}_t$
for $t>n$ but $x_n\neq x^{\prime}_n$. Since $I(x_n)=I(x^{\prime}_n)$, the edges $x_n$ and $x^{\prime}_n$ are
comparable with respect to our ordering; if $x_n<x^{\prime}_n$, then we write $x<x^{\prime}$.
This ordering will be called a Vershik's ordering  on a Markov compactum $X$ and denoted $\oo$.

An edge will be called {\it maximal} (with respect to $\oo$)
if there does not exist a greater edge; {\it minimal}, if there does not exist a smaller edge;
and an edge $e$ will be called {\it the successor} of $e^{\prime}$
if $e>e^{\prime}$ but there does not exist $e^{\prime\prime}$ such that
$e>e^{\prime\prime}>e^{\prime}$.
We denote by $[x,x^{\prime}]$ the (closed) interval of points $x^{\prime\prime}$ satisfying
$x\leq x^{\prime\prime}\leq x^{\prime}$;  by $(x,x^{\prime})$ the (open) interval of points $x^{\prime\prime}$ satisfying
$x< x^{\prime\prime}< x^{\prime}$.

\subsection{Random Markov compacta}
\subsubsection{The space of Markov compacta}

Recall that ${\mathfrak G}$ is the space of oriented graphs on $m$ vertices, possibly with multiple edges, and
such that for any vertex there is an edge coming into it and going out of it.

Now let $\Omega$ be the space of sequences of bi-infinite sequences of graphs $\Gamma_n\in {\mathfrak G}$.
We write
$$
\Omega=\{\omega=\dots \omega_{-n}\dots\omega_n\dots, \omega_i\in {\mathfrak G}, i\in {\mathbb Z} \}.
$$
For $\omega\in\Omega$, we denote by $X(\omega)$ the Markov compactum corresponding to $\omega$.

As in the previous sections, for any $\omega$ the Markov compactum $X(\omega)$ carries a pair of foliations
$\F^+$, $\F^-$, the semi-rings ${\mathfrak C}^+$ and ${\mathfrak C}^-$ and so forth; to underline
the dependence  on $\omega$ we shall write $\F_{\omega}^+$, ${\mathfrak C}^+_{\omega}$ and so forth.

The right shift $\sigma$ on the  space $\Omega$ is defined by the
formula $(\sigma\omega)_n=\omega_{n+1}$.

\subsubsection{The renormalization cocycle.}

We have a natural cocycle ${\mathbb A}$ over the dynamical system
$(\Omega, \sigma)$
defined, for $n>0$, by the formula
$$
{\mathbb A}(n,\omega)=A(\omega_{n})\dots A(\omega_1).
$$
The cocycle ${\mathbb A}$ will be called the {\it renormalization cocycle}.

Let $\Omega_{inv}\subset \Omega$ be the subset of all sequences $\omega$ such that all matrices
$A(\omega_n)$ are invertible.

For $\omega\in\Omega_{inv}$ and $n<0$ set
$$
{\mathbb A}(n,\omega)=A^{-1}(\omega_{-n})\dots A^{-1}(\omega_0);
$$
set ${\mathbb A}(0,\omega)$ to be the identity matrix.


Let $\mu$ be an ergodic $\sigma$-invariant probability measure on $\Omega$ satisfying the following
\begin{assumption}
\label{asos}
\begin{enumerate}
\item  There exists $\Gamma_0 \in {\mathfrak G}$ such that all entries of the
matrix $A(\Gamma_0)$ are positive and that
$$
\mu(\{\omega:\omega_0=\Gamma_0\})>0.
$$
\item The matrices $A(\omega_n)$ are almost surely invertible with respect to $\mu$.
\item The logarithm of the norm of the renormalization cocycle,
as well as that of its inverse, is integrable with respect to $\mu$.
\end{enumerate}
\end{assumption}

Our assumptions on the measure $\mu$ imply the existence of a matrix $Q$, all whose entries are positive, such that for $\mu$-almost every $\omega\in\Omega$, the sequence of matrices $A(\omega_n),\;n\in\mathbb{Z}$, contains infinitely many occurrences of the matrix $Q$ both in the past and in the future. It follows that for $\mu$-almost every $\omega\in\Omega$ the Markov compactum $X(\omega)$ is uniquely ergodic.

Furthermore, for $\mu$-almost every $\omega\in\Omega$, let $E^u_{\omega}$ be the strictly expanding Oseledets subspace of the renormalization cocycle $\mathbb{A}$ at the point $\omega$.

\begin{proposition}
For $\mu$-almost all $\omega\in\Omega$,\, the map $eval^+_0$ induces an isomorphism between the spaces $\mathfrak{B}^+(X(\omega))$ and $E^u_{\omega}$.
\end{proposition}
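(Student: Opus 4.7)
The plan is to reduce the statement to a direct application of Oseledets' multiplicative ergodic theorem to the renormalization cocycle $\mathbb{A}$, once the defining condition of $\mathfrak{B}^+(X(\omega))$ is expressed in the language of equivariant sequences.

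First I would exploit Assumption \ref{asos}(2): the matrices $A(\omega_n)$ are $\mu$-almost surely invertible, so the evaluation map $eval^+_0 \colon \mathfrak{V}^+(X(\omega)) \to \mathbb{R}^m$ is already a linear isomorphism. Every vector $v^{(0)}\in\mathbb{R}^m$ extends uniquely to a bilateral equivariant sequence $v^{(l)}$, $l \in \mathbb{Z}$, via $v^{(l+1)} = A_l v^{(l)}$ (and the invertibility of the $A_l$ for negative $l$), and this sequence determines a unique $\Phi^+ \in \mathfrak{V}^+(X(\omega))$. It therefore suffices to identify the image $eval^+_0(\mathfrak{B}^+(X(\omega))) \subset \mathbb{R}^m$.

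Second, I would translate the defining H\"older/exponential bound for $\mathfrak{B}^+$ into vector form. By holonomy invariance, together with the formula $\Phi^+(\gamma_{l+1}^+(x(i))) = v^{(l)}_i$, one has
$$
\max_{x \in X(\omega)} \bigl|\Phi^+\bigl(\gamma^+_{-n}(x)\bigr)\bigr| \;=\; \max_{1\leq i \leq m} |v^{(-n-1)}_i|.
$$
Hence $\Phi^+ \in \mathfrak{B}^+(X(\omega))$ if and only if there exist $\alpha, C>0$ such that $|\mathbb{A}(-n,\omega) v^{(0)}| \leq Ce^{-\alpha n}$ for all $n\geq 0$ (any off-by-one in the indexing being absorbed into $C$); that is, iff the backward iterates of $v^{(0)}$ under $\mathbb{A}$ decay exponentially.

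Third, I would invoke Oseledets' theorem for the invertible cocycle $\mathbb{A}$ over $(\Omega,\sigma,\mu)$; its hypotheses are furnished by Assumption \ref{asos}(3). This yields a measurable splitting $\mathbb{R}^m = E^u_\omega \oplus E^c_\omega \oplus E^s_\omega$ at $\mu$-a.e.\ $\omega$, together with the standard characterization: a nonzero $v$ lies in $E^u_\omega$ iff $\lim_{n \to \infty} \tfrac{1}{n}\log|\mathbb{A}(n,\omega)v|>0$, equivalently iff $\lim_{n \to \infty} \tfrac{1}{n}\log|\mathbb{A}(-n,\omega)v|<0$. Combining this with step two identifies $eval^+_0(\mathfrak{B}^+(X(\omega)))$ exactly with $E^u_\omega$, yielding the asserted isomorphism.

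The main obstacle is the subtlety concerning the neutral subspace $E^c_\omega$: one must rule out that some vector with zero Lyapunov exponent might, by accident, produce backward iterates that decay exponentially. This is precisely where the tempered (sub-exponential) estimates of Oseledets are needed in \emph{both} time directions, and they rely on the log-integrability of $\mathbb{A}^{-1}$ assumed in \ref{asos}(3). A secondary care-point is to check that the bound $|v^{(-n-1)}_i| \leq Ce^{-\alpha n}$ is really equivalent to $|\mathbb{A}(-n,\omega)v^{(0)}| \leq C'e^{-\alpha n}$ with comparable constants; this is a routine comparison of $\ell^\infty$- and Euclidean norms on $\mathbb{R}^m$.
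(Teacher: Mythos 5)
Your proposal is correct and follows essentially the same route as the paper, which disposes of the statement in one line as "a direct corollary of the Oseledets Multiplicative Ergodic Theorem." Your three-step argument — (i) $eval_0^+$ is an isomorphism onto $\mathbb{R}^m$ by invertibility of the adjacency matrices, so it suffices to identify $eval_0^+(\mathfrak{B}^+)$; (ii) the defining exponential-decay condition on $\Phi^+(\gamma^+_{-n}(x))$ is equivalent to exponential decay of $|\mathbb{A}(-n,\omega)\,v^{(0)}|$; (iii) Oseledets for the invertible, log-integrable cocycle $\mathbb{A}$ identifies the set of such vectors with $E^u_\omega$ — is exactly the content the paper leaves implicit. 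Your flagged "care-point" about the neutral subspace is the genuinely nontrivial part, and your appeal to tempered estimates in both time directions (relying on the log-integrability of $\mathbb{A}^{-1}$ in Assumption \ref{asos}(3)) is the right way to rule out accidental exponential backward decay for vectors with a nonzero component in $E^c_\omega \oplus E^s_\omega$: since the Oseledets splitting is invariant and the angles between the subspaces along the orbit are subexponential, the slowly varying component dominates. The remaining bookkeeping (that the max over $x\in X(\omega)$ of $|\Phi^+(\gamma^+_{-n}(x))|$ equals the $\ell^\infty$-norm of $v^{(-n-1)}$) uses that every vertex in $\mathfrak{G}$ has an incoming and outgoing edge, so every coordinate is realized by some path; this is fine. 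No gap.
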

Proof. This is  a direct corollary of the Oseledets Multiplicative Ergodic Theorem (see \cite{pesinbarreira} and Appendix A).

Now assume that for $\mu$-almost every $\omega\in\Omega$ a Vershik's ordering $\mathfrak{o}(\omega)$ is given on the edges of each graph $\omega_n,\;n\in\mathbb{Z}$; assume also that the ordering $\mathfrak{o}(\omega)$ is $\sigma$-invariant in the sense that the ordering $\mathfrak{o}(\omega)$ on the edges of the graph $\omega_{n+1}$ is the same as the ordering $\mathfrak{o}(\sigma\omega)$ on the edges of the graph $(\sigma\omega)_n=\omega_{n+1}$.

As before, we consider the induced linear ordering on the leaves of the asymptotic foliations $\mathfrak{F}^+_{\omega}$, and we use notation $[x,x']$ for closed intervals, with respect to this linear ordering, $(x',x")$ for open intervals, and so on.

\subsubsection{Approximation of weakly Lipschitz functions}

For $\omega\in\Omega$, let $\mathfrak{C}^+_{\omega}(\mathfrak{o})$ be the semi-ring of arcs of the form
$[x, x^{\prime})$, $(x, x^{\prime}]$, $[x, x^{\prime}]$, $(x, x^{\prime})$. By definition, $\mathfrak{C}^+_{\omega}(\mathfrak{o})\supset\mathfrak{C}^+_{\omega}$.

\begin{proposition}
For $\mu$-almost all $\omega\in\Omega$, and any $\Phi^+\in\mathfrak{B}^+(X(\omega))$, the finitely-additive measure $\Phi^+$ admits a unique continuation to the semi-ring $\mathfrak{C}^+(\mathfrak{o})_{\omega}$ such that the function $\Phi^+\left([x',x")\right)$ is continuous both in $x'$ and $x"$.
\end{proposition}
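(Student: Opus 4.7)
The plan is to extend $\Phi^+$ from $\mathfrak{C}^+_{\omega}$ to $\mathfrak{C}^+_{\omega}(\mathfrak{o})$ by giving an explicit countable disjoint decomposition of an arbitrary Vershik-interval into cylinders from $\mathfrak{C}^+_{\omega}$ and then summing. The exponential decay condition built into the definition of $\mathfrak{B}^+(X(\omega))$ will make the sum converge absolutely and the resulting set function continuous in the endpoints; uniqueness will then follow from continuity together with agreement on cylinders.

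I would start with a Vershik-interval $[x',x'')$ with $x'<x''$ on the same leaf of $\mathfrak{F}^+_{\omega}$ and isolate the largest index $n^{*}$ with $x'_{n^{*}}\neq x''_{n^{*}}$. Every $y\in[x',x'')$ then satisfies $y_t=x'_t=x''_t$ for $t>n^{*}$, while $y_{n^{*}}$ lies in the Vershik segment between $x'_{n^{*}}$ and $x''_{n^{*}}$ in the set of edges issuing from the common vertex $I(x'_{n^{*}})$. Partitioning by $y_{n^{*}}$ expresses $[x',x'')$ as finitely many full $\gamma^{+}_{n^{*}}$-cylinders (those corresponding to edges strictly between $x'_{n^{*}}$ and $x''_{n^{*}}$), together with two ``staircases'': points of $\{y_{n^{*}}=x'_{n^{*}}\}$ which are $\geq x'$, and points of $\{y_{n^{*}}=x''_{n^{*}}\}$ which are $<x''$. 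Each staircase I would decompose recursively, peeling off at level $n^{*}-1$ finitely many full $\gamma^{+}_{n^{*}-1}$-cylinders and leaving one further upper or lower half for level $n^{*}-2$, and so on. This yields a decomposition
\[
[x',x'')=\{x'\}\sqcup\bigsqcup_{j}D_{j},
\]
a countable disjoint union of cylinders $D_j\in\mathfrak{C}^+_{\omega}$.

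I would then set $\Phi^+([x',x'')):=\sum_j\Phi^+(D_j)$ and verify absolute convergence. A cylinder added at staircase level $n^{*}-\ell$, $\ell\geq 1$, is of the form $\gamma^+_{n^{*}-\ell}$, so as soon as $\ell>n^{*}$ the definition of $\mathfrak{B}^+(X(\omega))$ gives $|\Phi^+(\gamma^+_{n^{*}-\ell}(y))|\leq Ce^{-\alpha(\ell-n^{*})}$. The number of cylinders added at that level is bounded by a vertex degree of $\omega_{n^{*}-\ell}$, hence by $\|A(\omega_{n^{*}-\ell})\|$. By Assumption \ref{asos}(3) and a Borel--Cantelli argument, for $\mu$-almost every $\omega$ the norm $\|A(\omega_n)\|$ grows subexponentially in $|n|$, so the tail $\sum_{\ell>n^{*}}\|A(\omega_{n^{*}-\ell})\|\,e^{-\alpha(\ell-n^{*})}$ converges; the finitely many terms with $\ell\leq n^{*}$ contribute a finite sum. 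Finally, $\Phi^+(\{x'\})=\lim_n\Phi^+(\gamma^+_{-n}(x'))=0$ by the same exponential decay, so the interval types $[x',x'')$, $(x',x'')$, $(x',x'']$, $[x',x'']$ all receive the same value.

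The main step, and the main obstacle, is continuity in the endpoints. If $x'_k\to x'$ in the leaf, meaning that for any $N$ eventually both $x'_k$ and $x'$ lie in $\gamma^+_{-N}(x')$, then the symmetric difference of $[x'_k,x'')$ and $[x',x'')$ is a Vershik-interval contained in $\gamma^+_{-N}(x')$; applying the staircase decomposition to this small interval bounds its $\Phi^+$-measure by a constant multiple of $e^{-\alpha N}$, which tends to zero as $N\to\infty$; the argument in $x''$ is symmetric. For uniqueness, if $\widetilde{\Phi}^+$ is any other continuous extension that agrees with $\Phi^+$ on $\mathfrak{C}^+_{\omega}$, then by finite additivity every partial sum $\sum_{j\leq L}\Phi^+(D_j)$ equals $\widetilde{\Phi}^+$ evaluated on a Vershik-interval $[x'_L,x'')$ with $x'_L\to x'$, and continuity of $\widetilde{\Phi}^+$ in the endpoint forces $\widetilde{\Phi}^+([x',x''))=\Phi^+([x',x''))$. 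Throughout, the crucial technical balance is between the exponential decay of $\Phi^+$ and the possibly unbounded growth of the edge counts of $\omega_n$; controlling this balance $\mu$-almost surely is precisely the role of the log-integrability hypothesis in Assumption \ref{asos}.
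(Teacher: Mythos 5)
Your proposal is correct and takes essentially the same route as the paper: the staircase decomposition of a Vershik-interval into countably many Markovian cylinders, whose $\Phi^+$-values converge absolutely because the subexponential growth of edge counts (from log-integrability of the cocycle, Assumption \ref{asos}) is dominated by the exponential decay built into $\mathfrak{B}^+$. The paper phrases the same argument through the ring $\overline{\mathcal{R}}^+$ of well-approximable arcs and the Denjoy--Koksma count of Proposition \ref{checkempty}, but the underlying decomposition, tail estimate, and continuity argument are exactly yours.
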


We are now ready to formulate the symbolic analogue of the Approximation Theorem \ref{multiplicmoduli}.

\begin{proposition}
\label{prelim-multiplic}
For any $\varepsilon>0$ there exists a positive measurable function $C_{\varepsilon}:\Omega\rightarrow\mathbb{R}_{>0}$ such that the following holds. For $\mu$-almost every $\omega\in\Omega$ there exists a continuous mapping $\Xi^+_{\omega}:Lip^+_w(X(\omega))\rightarrow\mathfrak{B}^+(X(\omega))$ such that for any $f\in Lip^+_w(X(\omega))$, any $x',x"\in X(\omega)$ satisfying $x^{\prime}<x^{\prime\prime}$ we have
$$\left|\,\int\limits^{}_{[x^{\prime},x^{\prime\prime}]}\!\!f\,d\nu^+\,-\:\Xi^+_{\omega}\left(f;[x^{\prime},x^{\prime\prime}]\right)\right|\leq C_{\varepsilon}(\omega)\cdot{\|f\|}_{Lip^+_w}\cdot\left(1\!+\!\nu^+\!\left([x^{\prime},x^{\prime\prime}]\right)\right) ^{\varepsilon}.$$
\end{proposition}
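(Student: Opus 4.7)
The plan is to construct $\Xi^+_\omega$ by Oseledets-projecting the integrals of $f$ over cylinders of every level onto the strictly unstable subspace $E^u_\omega$ of the renormalization cocycle $\mathbb{A}$, and to bound the approximation error by decomposing the arc $[x',x'']$ into whole cylinder cells via the Vershik ordering.

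\textbf{Construction of $\Xi^+_\omega(f)$.} Applying the Oseledets Multiplicative Ergodic Theorem to $\mathbb{A}$ yields, for $\mu$-a.e.\ $\omega$, a measurable splitting $\mathbb{R}^m = E^u_\omega \oplus E^{cs}_\omega$, with $E^u_\omega$ strictly expanding; write $\pi^u_\omega$ for the associated projection. For each $n\in\mathbb{Z}$ and each vertex $i$, fix $x(i,n)\in X(\omega)$ with $F(x(i,n)_{n+1})=i$, and set $u^{(n)}_i(f) := \int_{\gamma^+_{n+1}(x(i,n))} f\,d\nu^+$. Decomposing $\gamma^+_{n+2}$ into its level-$(n+1)$ sub-cylinders and applying the weak Lipschitz condition gives the approximate recursion $u^{(n+1)}(f) = A_{n+1}\,u^{(n)}(f) + r^{(n+1)}(f)$ with $|r^{(n+1)}(f)|\leq \|f\|_{Lip_w^+}\cdot\|A_{n+1}\|_1$. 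Define the cohomologically-corrected sequence
\[
v^{(n)}_f := \pi^u_{\sigma^n\omega}\bigl(u^{(n)}(f)\bigr) + \sum_{k>n}\mathbb{A}(k-n,\sigma^n\omega)^{-1}\,\pi^u_{\sigma^k\omega}\bigl(r^{(k)}(f)\bigr).
\]
Since $\mathbb{A}^{-1}$ contracts $E^u$ at per-step rate at least $e^{-(\theta_{l_0}-\varepsilon)}$ while $\|A_k\|\leq e^{\varepsilon k}$ by Oseledets, the series converges for small $\varepsilon$. A direct telescoping computation using $A_{n+1}\circ\mathbb{A}(k-n,\sigma^n\omega)^{-1}=\mathbb{A}(k-n-1,\sigma^{n+1}\omega)^{-1}$ for $k>n+1$ (and $A_{n+1}\circ A_{n+1}^{-1}=I$ for $k=n+1$) shows $A_{n+1}v^{(n)}_f = v^{(n+1)}_f$. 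Thus $(v^{(n)}_f)_{n\in\mathbb{Z}}$ is an equivariant sequence in $E^u$, and via $eval^+_0$ it corresponds to a measure $\Xi^+_\omega(f)\in\mathfrak{B}^+(X(\omega))$, linear and continuous in $f$.

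\textbf{Vershik decomposition and the error bound.} Given $x'<x''$, let $N$ be the unique integer with $x'_t=x''_t$ for $t>N$ (so $x'_N<x''_N$). Inside the cylinder $\gamma^+_{N+1}(x'')$, the interval splits as
\[
[x',\max\gamma^+_N(x')]\,\sqcup\,\bigsqcup_{x'_N<e<x''_N}\gamma^+_N(y(e))\,\sqcup\,[\min\gamma^+_N(x''),x''],
\]
where $y(e)$ agrees with $x''$ above $N$ and has $y(e)_N=e$. Iterating this procedure on the two boundary subintervals (which sit inside cylinders of strictly lower level) exhibits $[x',x'']$ as a disjoint union of whole cylinder cells, with at most $2m$ cells at each level $n\leq N$; by unique ergodicity, $N=O(\log\nu^+([x',x'']))$. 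For each whole cell $\gamma^+_{n+1}(y)$, the defect $\int f\,d\nu^+ - \Xi^+_\omega(f;\gamma^+_{n+1}(y))$ equals the $F(y_{n+1})$-coordinate of $u^{(n)}(f)-v^{(n)}_f$ plus an $O(\|f\|_{Lip_w^+})$ weak Lipschitz term. Oseledets bounds on the central-stable part $\pi^{cs}(u^{(n)})$ (where the top exponent is $\leq 0$) combined with the convergent $E^u$-correction sum give $|u^{(n)}(f)-v^{(n)}_f|\leq C(\omega)\|f\|_{Lip_w^+}e^{\varepsilon n}\leq C(\omega)\|f\|_{Lip_w^+}\nu^+([x',x''])^{\varepsilon/\theta_1}$; summing over the $O(mN)$ cells yields the stated subpolynomial bound for any preassigned $\varepsilon>0$.

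\textbf{Main obstacle.} The principal difficulty is that each single-step error $r^{(n)}(f)$ may be as large as $\|A_n\|\cdot\|f\|_{Lip_w^+}$, comparable (in $n$) to a step of the top expanding cocycle norm, rather than the naive $O(\|f\|_{Lip_w^+})$ one might hope for. Keeping the cohomological correction summable and the total error $|u^{(n)}(f)-v^{(n)}_f|$ of order $e^{\varepsilon n}$ forces one to combine, uniformly in $k$, Pesin-type contraction estimates for $\mathbb{A}^{-1}$ on $E^u$ at the \emph{slowest} positive Lyapunov exponent $\theta_{l_0}$ with the subexponential growth of the individual matrix norms $\|A_k\|$. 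Selecting a full-$\mu$-measure set on which all Oseledets bounds hold with uniform constants, and tracking these constants carefully through both the construction of $v^{(n)}_f$ and the arc decomposition, is the technical heart of the argument.
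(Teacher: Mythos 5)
Your proposal reproduces the paper's proof in substance: the approximate recursion $u^{(n+1)}=A_{n+1}u^{(n)}+r^{(n+1)}$ for cylinder integrals, the cohomological correction that projects onto the unstable Oseledets subspace and sums the pulled-back residues (Lemma \ref{vngeneral} and Lemma \ref{thetaxapprox} in the paper), the passage from Markovian cylinders to general order-intervals via the Vershik decomposition, and the Oseledets subexponential bookkeeping that makes the correction series converge and the per-cell defect $O(e^{\varepsilon n})$ are all the same. One detail is wrong as stated: the arc $[x',x'']$ need \emph{not} decompose into at most $2m$ cylinder cells per level. The number of cells at level $l$ is governed by the outgoing degree of the relevant vertex, i.e.\ by the row sums of $A_l$, and Proposition \ref{checkempty} gives only the subexponential bound $N_l=O_\varepsilon(e^{\varepsilon|l|})$ coming from the log-integrability hypothesis in Assumption \ref{asos}; it is not bounded by $m$. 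This does not invalidate your argument — the per-cell error decays fast enough that the subexponential cell count is affordable — but the constant $2m$ is simply false for a general random Markov compactum, and the subexponential estimate is genuinely what is needed in the final summation over levels (including the geometric series over negative levels, which you use implicitly but should state).
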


\subsubsection{The transpose cocycle and duality}
The map $\Xi_{\omega}^+$ of Proposition \ref{prelim-multiplic} admits an
explicit description in terms of the ${\it duality}$ between
H\"{o}lder cocycles on the asymptotic foliations corresponding to
the past and to the future of our Markow compactum. This duality
in one of the central constructions of the paper.

\begin{proposition}
\label{dual-rmc}
For $\mu$-almost every
$\omega\in\Omega$ the pairing $\left< \right>$ of Section 1.9.4is nondegenerate on the pair of
subspaces $\mathfrak{B}^+({{X}}(\omega))$ and
$\mathfrak{B}^-({{X}}(\omega)).$
\end{proposition}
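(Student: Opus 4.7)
The plan is to reduce the proposition to a standard duality statement for Oseledets decompositions of dual multiplicative cocycles, via the evaluation maps $eval^+_0$ and $eval^-_0$. First, I would invoke the preceding proposition which identifies $\mathfrak{B}^+(X(\omega))$ with the strictly expanding Oseledets subspace $E^u_\omega \subset \mathbb{R}^m$ of the renormalization cocycle $\mathbb{A}$. The completely analogous argument, obtained by running Oseledets' theorem on the dual cocycle $\mathbb{A}^{-t}$ (which has Lyapunov spectrum equal to the negatives of that of $\mathbb{A}$ and is log-integrable by Assumption \ref{asos}), shows that $eval^-_0$ induces an isomorphism between $\mathfrak{B}^-(X(\omega))$ and the strictly contracting Oseledets subspace of $\mathbb{A}^{-t}$; this is because the reverse equivariant relation $\tilde v^{(l)} = A_l^t \tilde v^{(l+1)}$ forces $\tilde v^{(n)} = (\mathbb{A}(n,\omega)^t)^{-1} \tilde v^{(0)}$, and the exponential-decay condition defining $\mathfrak{B}^-$ is precisely the condition that $\tilde v^{(0)}$ project trivially onto the central and strictly expanding Oseledets subspaces of $\mathbb{A}^{-t}$.

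Next, by formula (\ref{xdualitytriv}), the pairing $\langle \Phi^+,\Phi^-\rangle$ transported through $eval^+_0$ and $eval^-_0$ is exactly the standard Euclidean pairing $\langle v^{(0)},\tilde v^{(0)}\rangle = \sum_{i=1}^m v^{(0)}_i \tilde v^{(0)}_i$ on $\mathbb{R}^m$. The heart of the argument is then the following duality, which is a standard consequence of the Oseledets theorem: for dual cocycles $\mathbb{A}$ and $\mathbb{A}^{-t}$ one has the pairing-invariance
\begin{equation*}
\bigl\langle \mathbb{A}(n,\omega)v,\; \mathbb{A}(n,\omega)^{-t}w\bigr\rangle = \langle v, w\rangle
\end{equation*}
for all $v,w\in\mathbb{R}^m$. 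Taking $v\in E^{\lambda_i}_\mathbb{A}$ and $w\in E^{-\lambda_j}_{\mathbb{A}^{-t}}$, the left-hand side is asymptotic to $e^{n(\lambda_i-\lambda_j)}$ (up to subexponential cosine factors), while the right-hand side is constant; hence $\langle v,w\rangle=0$ whenever $i\neq j$, and the pairing restricted to $E^{\lambda_i}_\mathbb{A}\times E^{-\lambda_i}_{\mathbb{A}^{-t}}$ is nondegenerate on each factor (both have dimension equal to the multiplicity of $\lambda_i$, and the existence of a null vector on one side would, combined with the $i\neq j$ vanishing, produce a nonzero vector in $\mathbb{R}^m$ orthogonal to the full Oseledets decomposition $\bigoplus_j E^{-\lambda_j}_{\mathbb{A}^{-t}} = \mathbb{R}^m$, a contradiction).

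Summing over the indices $i$ with $\lambda_i > 0$, the restriction of the Euclidean pairing to $E^u_\omega \times \bigoplus_{\lambda_i>0} E^{-\lambda_i}_{\mathbb{A}^{-t}}$ is a direct sum of nondegenerate pairings, hence itself nondegenerate. Transporting back by the inverses of $eval^+_0$ and $eval^-_0$, this is precisely the nondegeneracy of $\langle\cdot,\cdot\rangle$ on $\mathfrak{B}^+(X(\omega))\times\mathfrak{B}^-(X(\omega))$, as claimed.

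The principal technical point, and the only nonformal step, is the duality of Oseledets decompositions for $\mathbb{A}$ and $\mathbb{A}^{-t}$; this requires applying Oseledets' theorem to \emph{both} cocycles simultaneously on the same full-measure set in $\Omega$ (which is harmless since both cocycles are log-integrable), and then exploiting the pairing-invariance together with the exponential growth/decay rates. Aside from this, everything is a bookkeeping translation between finitely-additive measures, equivariant and reverse equivariant sequences of vectors, and the invariant subspaces of the dual pair of cocycles.
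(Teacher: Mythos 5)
Your proposal is correct and follows essentially the same route as the paper: identify $\mathfrak{B}^+(X(\omega))$ with the strictly expanding Oseledets space of $\mathbb{A}$ via $eval^+_0$, identify $\mathfrak{B}^-(X(\omega))$ with the corresponding dual Oseledets space via $eval^-_0$, and then invoke the Oseledets-type duality of the standard inner product on $\mathbb{R}^m$. The only cosmetic difference is that you phrase the dual object as the strictly contracting subspace of $\mathbb{A}^{-t}$ over $\sigma$, whereas the paper introduces the transpose cocycle $\mathbb{A}^t$ over $(\Omega,\sigma^{-1})$ and takes its strictly expanding subspace $\widetilde{E}^u_\omega$ --- these two descriptions coincide; you also spell out the orthogonality and dimension-count details that the paper delegates to the Oseledets--Pesin Reduction Theorem (item 5 of the theorem in Appendix A).
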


Using this duality, we now obtain

\begin{proposition}
\label{xi-identif-rmc}
 For $\mu-almost$ every
$\omega\in\Omega,$ any $f\in Lip_w^+({{X}}(\omega)),$ and any
$\Phi^-\in\mathfrak{B}^-(X(\omega)),$ we have
\begin{equation}
\label{dual-first}
\left
<\Xi_{\omega}^+(f),\Phi^-\right>=\int_{X(\omega)}fd\nu^+\times\Phi^-.
\end{equation}
\end{proposition}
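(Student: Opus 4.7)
The plan is to exhibit both sides of (\ref{dual-first}) as limits of the same sequence of Riemann sums over refining cylinder partitions of $X(\omega)$, with the discrepancy between the two sums controlled by the Approximation Theorem (Proposition \ref{prelim-multiplic}).

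For each $N\geq 1$, let $\Pi_N$ be the partition of $X(\omega)$ into cylinders obtained by fixing the coordinates $x_{-N+1},\dots,x_N$. For $C\in\Pi_N$ with representative $x\in C$, set $\tilde\gamma^+_C=\gamma^+_{-N+1}(x)$ and $\tilde\gamma^-_C=\gamma^-_N(x)$; these are the maximal-by-inclusion elements of $\mathfrak{C}^+_\omega$, respectively $\mathfrak{C}^-_\omega$, contained in $C$ and containing $x$. By the holonomy-invariance built into $\mathfrak{V}^+(X)$ and $\mathfrak{V}^-(X)$, the values $\nu^+(\tilde\gamma^+_C)$, $\Phi^-(\tilde\gamma^-_C)$, and $\Xi^+_\omega(f;\tilde\gamma^+_C)$ depend only on $C$. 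Applying (\ref{prodmeasx}) and the finite additivity of $\Xi^+_\omega(f)\times\Phi^-$ gives, for every $N$,
\[
\langle\Xi^+_\omega(f),\Phi^-\rangle=\sum_{C\in\Pi_N}\Xi^+_\omega(f;\tilde\gamma^+_C)\,\Phi^-(\tilde\gamma^-_C),
\]
while the right-hand side of (\ref{dual-first}), interpreted as a Riemann integral against the finitely additive product $\nu^+\times\Phi^-$, is
\[
\int_{X(\omega)}f\,d(\nu^+\times\Phi^-)=\lim_{N\to\infty}\sum_{C\in\Pi_N}\Big(\int_{\tilde\gamma^+_C}f\,d\nu^+\Big)\Phi^-(\tilde\gamma^-_C),
\]
convergence of the Riemann sums following from the weakly Lipschitz property of $f$, which bounds the transverse oscillation of $\int f\,d\nu^+$ along horizontal leaves.

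It then remains to show that
\[
\Delta_N:=\sum_{C\in\Pi_N}\Big[\int_{\tilde\gamma^+_C}f\,d\nu^+-\Xi^+_\omega(f;\tilde\gamma^+_C)\Big]\Phi^-(\tilde\gamma^-_C)\longrightarrow 0 \text{ as } N\to\infty.
\]
Proposition \ref{prelim-multiplic} bounds each bracket by $C_\varepsilon(\omega)\|f\|_{Lip^+_w}(1+\nu^+(\tilde\gamma^+_C))^\varepsilon$. To convert this pointwise bound into decay of $\Delta_N$, I would group cylinders by the pair of vertices $(F(x_{-N+1}),I(x_N))$---on which, by holonomy, $\nu^+(\tilde\gamma^+_C)$ and $\Phi^-(\tilde\gamma^-_C)$ respectively depend---and express the resulting sum as a bilinear form in the equivariant vector $h^{(-N+1)}$ for $\nu^+$ and the reverse-equivariant vector $\tilde v^{(N)}$ for $\Phi^-$, with path-counting coefficients supplied by the renormalization cocycle $\mathbb{A}$. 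The exponential contraction of $\tilde v^{(N)}$ ensured by $\Phi^-\in\mathfrak{B}^-(X(\omega))$, combined with the Oseledets growth rates of $\mathbb{A}$ provided by Assumption \ref{asos}, will dominate the $(1+\nu^+(\tilde\gamma^+_C))^\varepsilon$ factor provided $\varepsilon$ is chosen smaller than the relevant Oseledets gap.

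The principal obstacle is precisely this last error estimate: the per-cylinder bound from Proposition \ref{prelim-multiplic} does not itself shrink as cylinders shrink, so the smallness of $\Delta_N$ must be extracted entirely from the exponential contraction of $\Phi^-\in\mathfrak{B}^-(X(\omega))$, carefully balanced against the exponential proliferation of cylinders at level $N$. Once $\Delta_N\to 0$ is established, both sides of (\ref{dual-first}) emerge as the common limit of the Riemann sums, yielding the claimed identity.
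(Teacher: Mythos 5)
The central difficulty in your argument is genuine, and your proposed resolution of it does not work; the approach is in fact the reverse of the paper's.

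You decompose $X(\omega)$ into the two-sided cylinder partition $\Pi_N$ and try to compare both sides of (\ref{dual-first}) cylinder by cylinder, using arcs $\tilde\gamma^+_C=\gamma^+_{-N+1}(x_C)$ that \emph{shrink} as $N\to\infty$. You correctly identify the obstacle: the per-cylinder error $\bigl|\int_{\tilde\gamma^+_C}f\,d\nu^+-\Xi^+_\omega(f;\tilde\gamma^+_C)\bigr|$ supplied by Proposition \ref{prelim-multiplic} does not shrink as the cylinder shrinks. But your proposed fix --- that the exponential contraction of $\tilde v^{(N)}$ will absorb the proliferation of cylinders --- cannot succeed. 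Grouping by vertex pairs $(j,k)$ as you suggest, the number of cylinders in $\mathcal C_{jk}$ is $(A_N\cdots A_{-N+1})_{kj}$, a product of $2N$ matrices, so $|\Pi_N|\sim e^{2\theta_1 N}$. Meanwhile, for $\Phi^-\in\mathfrak B^-(X(\omega))$ one has $|\tilde v^{(N)}|\sim e^{-\theta''N}$ with $\theta''\le\theta_1$, and even the best available per-cylinder estimate (bounding $\int_{\tilde\gamma^+_C}f\,d\nu^+$ by $\|f\|_\infty h^{(-N)}_j$ and $|v^{(-N)}_j|$ separately) gives decay at most $e^{-\theta'N}$ with $\theta'\le\theta_1$. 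The resulting bound $e^{(2\theta_1-\theta'-\theta'')N}$ does not tend to zero --- indeed it diverges unless both $\Xi^+_\omega(f)$ and $\Phi^-$ lie in the top Lyapunov line, and even then it is only bounded. No choice of $\varepsilon$ in Proposition \ref{prelim-multiplic} changes this, because $\varepsilon$ does not appear in the count of cylinders. Moreover, your integral $\int_{\tilde\gamma^+_C}f\,d\nu^+$ depends on the chosen representative $x_C\in C$, and for negative levels the weakly Lipschitz property gives no control over this variation, so even the convergence of your right-hand Riemann sums is unclear for $\Phi^-\neq\nu^-$.

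The paper's argument, established as equation (\ref{dualprelim}) in the Duality subsection of Section 2, goes in the opposite direction. It defines $\langle\Theta,\Phi^-\rangle$, for $\Theta=f\,d\nu^+\in LipMeas^+(X)$, as the limit of Riemann sums
$$S_{Riem}=\sum_{i=1}^m\Theta\!\left(\gamma^+_n\!\left(x^{(n)}(i)\right)\right)\tilde v^{(n-1)}_i,$$
over a \emph{fixed} number $m$ of terms (one per vertex), with arcs $\gamma^+_n(x)$ that \emph{grow} as $n\to\infty$. For such arcs the Preparatory Approximation Lemma \ref{prepapprox} bounds the per-arc discrepancy by $C_\varepsilon\|\Theta\|e^{\varepsilon n}$; against $|\tilde v^{(n-1)}_i|\le Ce^{-\beta n}$ this gives
$$\bigl|\langle\Theta-\Xi^+(\Theta),\Phi^-\rangle\bigr|\le\sum_{i=1}^m\bigl|\Theta(\gamma^+_n(x(i)))-\Xi^+(\Theta;\gamma^+_n(x(i)))\bigr|\,\bigl|\tilde v^{(n-1)}_i\bigr|\le m\,C_\varepsilon\,e^{(\varepsilon-\beta)n}\longrightarrow 0$$
for $\varepsilon<\beta$. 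There is no competing exponential factor from a growing number of terms, which is exactly the feature your refinement loses. Proposition \ref{xi-identif-rmc} is then just this identity applied to $\Theta=f\,d\nu^+$, with the right side interpreted via these very Riemann sums. The repair to your argument is therefore not a sharper estimate but a change of partition: keep the vertex-level sums of fixed size $m$ with growing arcs, rather than the exponentially proliferating bilateral cylinder partition with shrinking arcs.
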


In view of the previous proposition, the relation (\ref{dual-first}) determines
$\Xi_{\omega}^+(f)$ uniquely.

The proof of Proposition \ref{dual-rmc}
proceeds as follows.

In the same way with the identification of the space
$\mathfrak{B}^+(X(\omega))$ with the strictly unstable
Lyapunov subspace of the renormalization cocycle $\mathbb{A},$ the
space $\mathfrak{B}^-({{X}}(\omega))$ is identified with the
strictly unstable Lyapunov subspace of the ${\it transpose}$
cocycle $\mathbb{A}^t$ defined in the following way.

{\bf Definition.} The {\it transpose} cocycle  ${\mathbb A}^t$ over the dynamical system $(\Omega, \sigma^{-1}, \Prob)$
is defined, for $n>0$, by the formula
$$
{\mathbb A}^t(n,\omega)=A^t(\omega_{1-n})\dots A^t(\omega_0).
$$

If $\omega\in\Omega_{inv}$, then for $n<0$ write
$$
{\mathbb A}^t(n,\omega)=(A^t)^{-1}(\omega_{-n})\dots (A^t)^{-1}(\omega_1).
$$
and set ${\mathbb A}^t(0,\omega)$ to be the identity matrix.

For $\mu$-almost every $\omega\in\Omega,$ let $\widetilde{E}^u_w$
be the strictly expanding  Oseledets subspace of the transpose
cocycle $\mathbb{A}^t$ at the point $\omega.$

The Oseledets Multiplicative Ergodic theorem again immediately implies the following
\begin{proposition} For $\mu$-almost all $\omega$
the map $eval_0^-$ induces an isomorphism between the spaces
$\mathfrak{B}^-({\bf{X}}(\omega))$ and $\widetilde{E}^u_{\omega}.$
\end{proposition}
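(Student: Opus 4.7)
The plan is to mimic the proof of the analogous statement for $eval_0^+$ and the regular cocycle $\mathbb{A}$, only with the transpose cocycle replacing $\mathbb{A}$, and with the reverse equivariance relation replacing the equivariance relation.

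First, I would observe that under Assumption \ref{asos}, the matrices $A(\omega_n)$ are $\mu$-almost surely invertible, so the map $eval_0^-$ is a linear bijection between $\mathfrak{V}^-(X(\omega))$ and $\mathbb{R}^m$: any vector $\tilde v \in \mathbb{R}^m$ extends uniquely to a reverse equivariant sequence $\tilde v^{(l)}$, $l \in \mathbb{Z}$, via $\tilde v^{(l)} = A_l^t \tilde v^{(l+1)}$ (with $A_l = A(\omega_l)$), and such a sequence in turn determines a unique finitely-additive measure on $\mathfrak{C}^-$ satisfying the holonomy-invariance condition. Thus the whole problem reduces to identifying, inside $\mathbb{R}^m$, the image of the subspace $\mathfrak{B}^-(X(\omega)) \subset \mathfrak{V}^-(X(\omega))$.

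Next I would unwind the defining inequality of $\mathfrak{B}^-(X(\omega))$. By definition, $\Phi^- \in \mathfrak{B}^-(X(\omega))$ iff there exist $\alpha, C > 0$ with $\max_{x\in X(\omega)}|\Phi^-(\gamma_n^-(x))| \le C e^{-\alpha n}$ for all $n \ge 0$, which (since $\Phi^-(\gamma_n^-(x)) = \tilde v^{(n)}_{I(x_n)}$) amounts to $|\tilde v^{(n)}| \le C' e^{-\alpha n}$ as $n \to +\infty$. Iterating the reverse equivariance forward, $\tilde v^{(n)} = (A_{n-1}^t)^{-1} \cdots (A_0^t)^{-1} \tilde v^{(0)}$, and comparing with the formula for the transpose cocycle one gets $\tilde v^{(n)} = \mathbb{A}^t(-n,\sigma^{-1}\omega) \tilde v^{(0)}$. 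Thus membership in $\mathfrak{B}^-(X(\omega))$ is equivalent to exponential decay of the backward iterates of $\tilde v^{(0)}$ under $\mathbb{A}^t$ at the base point $\sigma^{-1}\omega$.

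Now I invoke the Oseledets Multiplicative Ergodic Theorem (Appendix A and \cite{pesinbarreira}) for the cocycle $\mathbb{A}^t$ over the invertible ergodic system $(\Omega,\sigma^{-1},\mu)$. Part (3) of Assumption \ref{asos} provides the required log-integrability of the norm of $\mathbb{A}^t$ and its inverse (they are those of $\mathbb{A}$ transposed). Oseledets' theorem then produces, at $\mu$-almost every point, a splitting of $\mathbb{R}^m$ into the strictly expanding subspace $\widetilde E^u$, the central subspace, and the strictly contracting subspace, characterized by the exponential behaviour of forward iterates; moreover, on $\widetilde E^u$ the backward iterates decay exponentially with strictly negative upper Lyapunov exponent, while on the complementary subspaces they do not decay exponentially. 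Consequently the set of $\tilde v^{(0)}$ for which $|\mathbb{A}^t(-n,\sigma^{-1}\omega)\tilde v^{(0)}|$ decays exponentially is exactly $\widetilde E^u_{\sigma^{-1}\omega}$, and by the $\sigma$-invariance of the Oseledets decomposition this equals $\widetilde E^u_\omega$ modulo the identification carried by $\mathbb{A}^t(1,\omega)$, which is itself an isomorphism of the corresponding Oseledets subspaces. This establishes that $eval_0^-(\mathfrak{B}^-(X(\omega))) = \widetilde E^u_\omega$, and since $eval_0^-$ is already injective on $\mathfrak{V}^-(X(\omega))$, it is the desired isomorphism.

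The only delicate point is the trivial bookkeeping of the index shift between the reverse equivariance relation and the definition of $\mathbb{A}^t$ (the factor $\sigma^{-1}$ above); it is harmless because Oseledets subspaces are transported by the cocycle itself, but one has to check it in order to be sure that the "strictly expanding" subspace is identified, as claimed, rather than some other piece of the spectrum. Other than this, the argument is a direct application of Oseledets and parallels verbatim the earlier proposition identifying $\mathfrak{B}^+(X(\omega))$ with $E^u_\omega$.
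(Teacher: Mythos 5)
Your proof takes essentially the same route as the paper: the paper states the proposition as an immediate consequence of the Oseledets Multiplicative Ergodic Theorem, and you fill in precisely the identifications the paper leaves implicit (bijectivity of $eval_0^-$ onto $\mathbb{R}^m$ since all $A(\omega_n)$ are invertible; the defining inequality of $\mathfrak{B}^-$ translating into exponential decay of $\tilde v^{(n)}$ as $n\to\infty$; and Oseledets' characterization of this decay set as the strictly expanding Oseledets subspace of $\mathbb{A}^t$).

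One small imprecision you yourself half-flag: taking the paper's formulas for the reverse equivariance relation and for $\mathbb{A}^t$ at face value, your computation correctly yields $\tilde v^{(n)}=\mathbb{A}^t(-n,\sigma^{-1}\omega)\tilde v^{(0)}$, whose exponential-decay set is $\widetilde E^u_{\sigma^{-1}\omega}$ rather than $\widetilde E^u_\omega$; these two subspaces are not literally equal but differ by the isomorphism $\mathbb{A}^t(1,\omega)=A^t(\omega_0)$. Your last sentence slides over this shift. It is an artifact of the paper's slightly inconsistent index conventions for the renormalization cocycle (the analogous one-sided shift already appears in the $eval_0^+$ case), not a defect in the Oseledets argument, and it does not alter the content of the proposition — but to be rigorous you would want to either correct the index or explain that it is absorbed into the choice of which base point the Oseledets space is attached to. With that caveat, your reasoning is sound and matches the paper's intended proof.
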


Furthermore,  the Oseledets Multiplicative Ergodic Theorem implies that the
standard inner product on $\mathbb{R}^m$ induces a non-degenerate
pairing between the subspaces $E^u_{\omega}$ and $\widetilde{E}^u_{\omega}$ for
$\mu$-almost every $\omega\in\Omega.$

Proposition \ref{dual-rmc} is proved.
Proposition \ref{xi-identif-rmc} will be
proven in the following Section.

In the following Section, we shall construct a special flow $h_t^+$ on a uniquely ergodic
Markov compactum $X$ endowed with a Vershik's ordering ${\mathfrak o}$
in such a way that flow arcs for $h_t^+$ are precisely  intervals of the form $[x^{\prime},x^{\prime\prime}]$.
Proposition \ref{prelim-multiplic} will then yield an asymptotic expansion of ergodic integrals for such symbolic flows.

The right shift $\sigma$ on the space $\Omega$ of Markov compacta naturally intertwines the spaces $\BB^{\pm}_{\omega}$
and $\BB^{\pm}_{\sigma\omega}$; the action of the shift on the corresponding equivariant and reverse equivariant
sequences is then governed by the Oseledets Multiplicative Ergodic Theorem applied to the cocycles ${\mathbb A}$, ${\mathbb A}^t$. This renormalization action of the shift will play a key r{\^o}le in the proof of limit theorems for our symbolic flows and, consequently, for translation flows.

{\bf Acknowledgements.}
W. A.~Veech made the suggestion that G. Forni's invariant distributions for the vertical flow
should admit a description in terms of cocycles for the horizontal flow, and I am greatly
indebted to him. G.~Forni pointed out that cocycles are dual objects
to invariant distributions and suggested the analogy with F. Bonahon's work \cite{bonahon1};
I am deeply grateful to him.
I am deeply grateful to A.~Zorich for his kind explanations
of Kontsevich-Zorich theory and for discussions of the relation
between this paper and F.~Bonahon's work \cite{bonahon1}.
I am deeply grateful to H.~Nakada who pointed out the reference
to S.~Ito's work \cite{ito} and to B. ~Solomyak who pointed out the reference
to the work \cite{dumontkamae} of P.~Dumont, T.~Kamae and S.~Takahashi and the work \cite{kamae} of T.~Kamae.

I am deeply grateful to J.~Chaika, P.~Hubert, Yu.S.~Ilyashenko, H.~Kr{\"u}ger,
E.~Lanneau, S.~Mkrtchyan and R.~Ryham for many helpful suggestions on improving the presentation.
I am deeply grateful to A.~Avila, X.~Bressaud, B.M.~Gurevich, A.B.~Katok, A.V.~Klimenko, S.B. Kuksin, C.~McMullen,  V.I.~Oseledets,
Ya.G.~Sinai, I.V.~Vyugin, J.-C.~Yoccoz for useful discussions. I am deeply grateful to
N. Kozin, D.~Ong, S. Sharahov and R. Ulmaskulov for typesetting
parts of the manuscript. Part of this paper was written while I was visiting the
Max Planck Institute of Mathematics in Bonn.
During the work on this paper,
I was supported in part by the National
Science Foundation under grant DMS~0604386, by the Edgar Odell Lovett Fund at Rice University
and by the Programme on Mathematical Control Theory of the Presidium of the Russian Academy of Sciences.

\section{The Symbolic Approximation Theorem}
\subsection{The Preparatory Approximation Lemma}
\subsubsection{The case of one-sided Markov compacta}
Our first approximation lemma only requires the structure of a {\it
one-sided} Markov compactum.

In the same way as above, to a one-sided infinite sequence
$\Gamma_n\in\mathfrak{G},n\in\mathbb{N}$ we assign the
one-sided Markov compactum
$$Y=\{y=y_1{\dots}y_n{\dots},\ y_n\in\mathcal{E}(\Gamma_n),\
I(y_n)=F(y_{n+1})\}.$$

In the same way as above, for $y\in Y$ denote
$$\gamma_n^+(y)=\{y'\in Y:y_t'=y_t\ \mathrm{for}\ t \geqslant n\}$$ $$\gamma_{\infty}^+(y)=\bigcup^{\infty}_{n=0}\gamma^+_n(y).$$

The sets $\gamma^+_n(y)$ are finite, and the set $\gamma_{\infty}^+(y)$ is
countable.

Let $Lip_w^+(Y)$ be the space of functions
$\varphi:Y\rightarrow\mathbb{R}$ admitting a positive constant
$C>0$ such that for any $n\in\mathbb{N},$ and any $y',y''\in Y$
satisfying $F(y_n')=F(y_n'')$ we have
$$\left|\sum_{y\in\gamma^+_n(y')}\varphi(y)\ -\sum_{y\in\gamma^+_n(y'')}\varphi(y)\right|\leqslant C.$$

Let $C_{\varphi}$ be the smallest possible constant such that the
above formula is valid and norm $Lip_w^+(Y)$ by setting
$$||\varphi||_{Lip_w^+}=C_{\varphi}+\sup_{y\in Y}|\varphi(y)|.$$

A particular case of weakly Lipschitz functions is given by {\it
piecewise-constant functions} defined as follows.

Let $v\in\mathbb{R}^m,v=(v_1,\ldots,v_m),$ and set
$$\Phi_v(y)=v_{F(y_1)}.$$

The next lemma shows that sums of weakly Lipchitz functions can be
approximated, up to a subexponential error, by piecewise constant
functions. We shall need the following assumption on the matrices
$A_n,n\geqslant0.$

\begin{assumption}\label{asexpdec} There exists $\alpha>0$ and, for
every $n\geqslant1$, a direct-sum decomposition
$$\mathbb{R}^m=E^u_n\oplus E^n_{cs}$$ satisfying the following.
\begin{enumerate}
\item $A_nE_n^u=E^u_{n+1}$ and $A_n|_{E^u_n}$ is injective.

\item $A_nE_n^{cs}\subset E_{n+1}^{cs}.$

\item  For any $\varepsilon>0$ there exists $C_{\varepsilon}>0$ such
that for any $n\geqslant 1,\ k\geqslant0$ we have
$$\left|\left|(A_{n+k}\cdot\ldots\cdot A_n)^{-1}|_{E^u_{n+k+1}}\right|\right|\leqslant
C_{\varepsilon}e^{\varepsilon n-\alpha k}$$

$$\left|\left|A_{n+k}\cdot\ldots\cdot A_n|_{E^{cs}_{n}}\right|\right|\leqslant
C_{\varepsilon}e^{\varepsilon(n+k)}.$$

\end{enumerate}
\end{assumption}

\begin{lemma} \label{thetaxapprox}
Let $Y$  be a one-sided Markov
compactum whose sequence of adjacency matrices $A_n,n\geqslant 1,$
satisfies Assumption \ref{asexpdec}.

Then there exists a continuous map
$$\Xi^+:Lip_w^+(Y)\rightarrow E_0^u$$

and for any $\varepsilon>0,$ a constant
$\widetilde{C}_{\varepsilon}$ such that for any $\varphi\in
Lip_w^+(Y),$ any $n\in\mathbb{N}$ and any $y'\in Y$ we have
$$\left|\sum_{y\in\gamma^+_n(y')}(\varphi(y)-\Phi_{\Xi^+(\varphi)}(y))\right|\leqslant
{\tilde C}_{\varepsilon}\cdot||\varphi||_{Lip_w^+}\cdot e^{\varepsilon
n}.$$
\end{lemma}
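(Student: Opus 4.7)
The plan is to construct $\Xi^+(\varphi)$ as a Cauchy limit of vectors in $E_0^u$ obtained by averaging $\varphi$ over cylinders and pulling back through the cocycle along the unstable subspace, using the hyperbolic splitting of Assumption \ref{asexpdec} to control all errors.

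First, for each $n\geq 1$ and each vertex $i\in\{1,\ldots,m\}$, fix a representative $y(i,n)\in Y$ with $F(y(i,n)_n)=i$, and set
$$w^{(n)}_i=\sum_{y\in\gamma_n^+(y(i,n))}\varphi(y).$$
The weakly Lipschitz condition ensures that this value is well-defined up to an additive error bounded by $\|\varphi\|_{Lip_w^+}$, depending on the choice of representative. Decomposing $\gamma_{n+1}^+(y(j,n+1))$ as a disjoint union of sets $\gamma_n^+(\cdot)$ indexed by the edges $e\in\mathcal{E}(\Gamma_n)$ with $I(e)=j$, one obtains the approximate equivariance relation
$$w^{(n+1)}=A_nw^{(n)}+\delta^{(n)},\qquad |\delta^{(n)}|\leq C\|A_n\|\cdot\|\varphi\|_{Lip_w^+}\leq C_{\varepsilon} e^{\varepsilon n}\|\varphi\|_{Lip_w^+},$$
the last inequality following from the tempered bounds $\|A_n\|\leq C_{\varepsilon}e^{\varepsilon n}$ that are a standard consequence of Assumption \ref{asexpdec} (applied with $k=0$ together with the lower expansion rate on $E^u$).

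Next, let $\pi_u^{(n)}$, $\pi_{cs}^{(n)}$ be the projections associated with the splitting $\mathbb{R}^m=E_n^u\oplus E_n^{cs}$, and set $v^{(n)}=\pi_u^{(n)}w^{(n)}\in E_n^u$. Since $A_n E_n^u=E_{n+1}^u$ and $A_nE_n^{cs}\subset E_{n+1}^{cs}$, the map $A_{n-1}\cdots A_1|_{E_0^u}\colon E_0^u\to E_n^u$ is an isomorphism, and we can define $\tilde{v}^{(n)}\in E_0^u$ by $(A_{n-1}\cdots A_1)\tilde{v}^{(n)}=v^{(n)}$. The approximate equivariance yields $v^{(n+1)}-A_nv^{(n)}=\pi_u^{(n+1)}\delta^{(n)}$, whence
$$\tilde{v}^{(n+1)}-\tilde{v}^{(n)}=(A_n\cdots A_1|_{E_0^u})^{-1}\pi_u^{(n+1)}\delta^{(n)}.$$
Applying Assumption \ref{asexpdec}(3) to the inverse factor together with the tempered bound $\|\pi_u^{(n)}\|\leq C_{\varepsilon}e^{\varepsilon n}$, we obtain
$$|\tilde{v}^{(n+1)}-\tilde{v}^{(n)}|\leq C_{\varepsilon}e^{-(\alpha-\varepsilon)n}\|\varphi\|_{Lip_w^+},$$
so $\tilde{v}^{(n)}$ is Cauchy and converges to a vector $\Xi^+(\varphi)\in E_0^u$ that depends linearly and continuously on $\varphi$.

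Finally, since $\sum_{y\in\gamma_n^+(y')}\Phi_v(y)=(A_{n-1}\cdots A_1v)_{F(y'_n)}$ for any $v\in\mathbb{R}^m$ by a direct combinatorial count, the error to be estimated, at the coordinate $j=F(y'_n)$, equals
$$w^{(n)}_j-(A_{n-1}\cdots A_1\Xi^+(\varphi))_j = w^{(n)}_{cs,j}+\bigl((A_{n-1}\cdots A_1)(\tilde{v}^{(n)}-\Xi^+(\varphi))\bigr)_j.$$
For the first term, iterate the recursion $w^{(n+1)}_{cs}=A_nw^{(n)}_{cs}+\pi_{cs}^{(n+1)}\delta^{(n)}$ and apply Assumption \ref{asexpdec}(3) on $E^{cs}$ to each summand, giving $|w^{(n)}_{cs}|\leq C_{\varepsilon}e^{\varepsilon n}\|\varphi\|_{Lip_w^+}$. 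For the second term, telescope $\Xi^+(\varphi)-\tilde{v}^{(n)}=\sum_{k\geq n}(\tilde{v}^{(k+1)}-\tilde{v}^{(k)})$, apply $A_{n-1}\cdots A_1$ term by term, and use the identity
$$(A_{n-1}\cdots A_1)(A_k\cdots A_1|_{E_0^u})^{-1}=(A_k\cdots A_n|_{E_n^u})^{-1}$$
together with Assumption \ref{asexpdec}(3) to sum a geometric series and bound this piece by $C_{\varepsilon}e^{\varepsilon n}\|\varphi\|_{Lip_w^+}$. The main obstacle is the careful tracking of the subexponential factors $e^{\varepsilon n}$ from $\|A_n\|$, $\|\pi_u^{(n)}\|$, and tempered distortion: each of several inequalities must be applied with its own small $\varepsilon$ so that they can be absorbed into a single $\varepsilon$ in the final bound, but this is the standard Pesin-theoretic accounting.
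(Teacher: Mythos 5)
Your proof is correct and follows essentially the same route as the paper: define a vector sequence via cylinder sums of $\varphi$, derive the approximate equivariance $|A_n w^{(n)} - w^{(n+1)}| \leq C_\varepsilon e^{\varepsilon n}\|\varphi\|_{Lip_w^+}$ from the weak Lipschitz bound, and then use the splitting $E^u_n \oplus E^{cs}_n$ to pull back to $E^u_0$, show the resulting sequence is Cauchy, and control the error by telescoping and summing a geometric series --- this is precisely the content of the paper's auxiliary Lemma \ref{vngeneral}, which you have simply inlined. One small caveat, shared with the paper's own proof: the bound $\|A_n\| \leq C_\varepsilon e^{\varepsilon n}$ is not actually a consequence of Assumption \ref{asexpdec} (with $k=0$ one controls $A_n^{-1}$ on $E^u_{n+1}$ and $A_n$ on $E^{cs}_n$, but nothing bounds $A_n$ on $E^u_n$ from above), nor does Assumption \ref{asexpdec} state temperedness of the projections $\pi_u^{(n)}, \pi_{cs}^{(n)}$; these are additional implicit hypotheses (sub-exponential growth, a tempered splitting) that hold in the intended random setting by the Oseledets theorem, and the paper uses them just as tacitly as you do.
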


First we prepare
\begin{lemma}
\label{vngeneral}
Let $A_n$ be a sequence of matrices satisfying Assumption \ref{asexpdec}.
Let $v_1, \dots$ be a sequence of vectors such that for any $\varepsilon>0$ a constant
$C_{\varepsilon}$   can be chosen in such a way that for all $n$ we have
$$
|A_nv_n-v_{n+1}|\leq C_{\varepsilon}\exp(\varepsilon n).
$$
Then there exists a unique vector $v\in E_1^u$ such that
$$
|A_n\dots A_1v-v_{n+1}|\leq C^{\prime}_{\varepsilon}\exp(\varepsilon n).
$$
\end{lemma}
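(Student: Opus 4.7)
The plan is to decompose each $v_n$ according to $\mathbb{R}^m = E_n^u \oplus E_n^{cs}$, write $v_n = v_n^u + v_n^{cs}$, and treat the two components separately. Projecting the hypothesis $|A_n v_n - v_{n+1}| \leq C_\varepsilon e^{\varepsilon n}$ onto $E_{n+1}^u$ and $E_{n+1}^{cs}$ (and absorbing the subexponential growth of the projection norms into the slack $\varepsilon$) gives
\begin{equation*}
|v_{n+1}^u - A_n v_n^u| \leq C'_\varepsilon e^{\varepsilon n}, \qquad |v_{n+1}^{cs} - A_n v_n^{cs}| \leq C'_\varepsilon e^{\varepsilon n}.
\end{equation*}

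For the center-stable part, telescope
\begin{equation*}
v_{n+1}^{cs} = A_n \cdots A_1 v_1^{cs} + \sum_{k=1}^{n} A_n \cdots A_{k+1}(v_{k+1}^{cs} - A_k v_k^{cs})
\end{equation*}
and apply the bound on $\|A_n\cdots A_{k+1}|_{E_{k+1}^{cs}}\|$ from Assumption \ref{asexpdec}(3); summing yields $|v_{n+1}^{cs}| \leq C''_\varepsilon e^{2\varepsilon n}$, i.e.\ the center-stable part grows subexponentially. For the unstable part, set $w_n := (A_{n-1}\cdots A_1)^{-1}|_{E_n^u}\,v_n^u \in E_1^u$ (with $w_1 = v_1^u$). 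Then
\begin{equation*}
w_{n+1} - w_n = (A_n \cdots A_1)^{-1}|_{E_{n+1}^u}\bigl(v_{n+1}^u - A_n v_n^u\bigr),
\end{equation*}
and Assumption \ref{asexpdec}(3) gives $|w_{n+1} - w_n| \leq C_\varepsilon'' e^{(2\varepsilon - \alpha)n}$. For $\varepsilon < \alpha/2$ this is geometrically summable, so $w_n$ converges to some $v \in E_1^u$.

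To check that $v$ satisfies the conclusion, expand $v - w_{n+1} = \sum_{k \geq n+1}(w_{k+1} - w_k)$ and apply $A_n\cdots A_1$ termwise; for $k \geq n+1$ one has $A_n\cdots A_1 \cdot (A_k \cdots A_1)^{-1}|_{E_{k+1}^u} = (A_k \cdots A_{n+1})^{-1}|_{E_{k+1}^u}$, so
\begin{equation*}
A_n\cdots A_1 v - v_{n+1}^u = \sum_{k\geq n+1} (A_k \cdots A_{n+1})^{-1}|_{E_{k+1}^u}\bigl(v_{k+1}^u - A_k v_k^u\bigr),
\end{equation*}
bounded above using Assumption \ref{asexpdec}(3) by a geometric series with rate $e^{-\alpha}$, summing to $O(e^{2\varepsilon n})$. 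Combining with the center-stable estimate and relabeling $\varepsilon$ gives the required $|A_n\cdots A_1 v - v_{n+1}| \leq C_\varepsilon' e^{\varepsilon n}$. Uniqueness is immediate: if $v, v' \in E_1^u$ both work, then $v - v' \in E_1^u$ satisfies $|A_n\cdots A_1(v-v')| = O(e^{\varepsilon n})$, whereas the inverse bound in Assumption \ref{asexpdec}(3) forces $|A_n\cdots A_1 w| \geq C_\varepsilon^{-1} e^{\alpha(n-1) - \varepsilon}|w|$ for nonzero $w \in E_1^u$, a contradiction unless $v = v'$.

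The main obstacle is purely bookkeeping: one must choose $\varepsilon$ small enough relative to the hyperbolicity rate $\alpha$ so that all the $e^{\varepsilon n}$ error factors are dominated by the $e^{-\alpha k}$ contraction on $E^u$ when summing the telescoping series; since $\varepsilon$ is arbitrary, this is harmless, but the argument must be organized so that the final bound absorbs all the accumulated $\varepsilon$'s into a single (renamed) $\varepsilon$. A subsidiary point is the implicit use of subexponential growth of the projections $\pi_n^u, \pi_n^{cs}$, which follows in context from the direct-sum decomposition in Assumption \ref{asexpdec} and is again absorbed into $\varepsilon$.
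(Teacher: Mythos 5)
Your proof is correct and follows essentially the same route as the paper's. The paper works with the decomposition of the increments $u_{n+1}=v_{n+1}-A_nv_n$ into $u_{n+1}^+\in E_{n+1}^u$ and $u_{n+1}^-\in E_{n+1}^{cs}$ and defines $v$ directly as the convergent series $\sum_k (A_k\cdots A_1)^{-1}u_{k+1}^+$, whereas you decompose $v_n$ itself and present $v$ as the limit of the Cauchy sequence $w_n=(A_{n-1}\cdots A_1)^{-1}v_n^u$; by equivariance of the splitting these are the same objects, and the telescoping, geometric-series, and uniqueness arguments are identical. Your explicit flagging of the subexponential growth of the projections $\pi_n^u,\pi_n^{cs}$ is a step the paper leaves implicit, but both proofs use it in the same way.
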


Proof: Denote $u_{n+1}=v_{n+1}-A_nv_n$ and
decompose $u_{n+1}=u_{n+1}^+ + u_{n+1}^-$, where
$u_{n+1}^+\in E_{n+1}^u$, $u_{n+1}^-\in E_{n+1}^{cs}$.
Let
$$
v_{n+1}^+=u_{n+1}^+ + A_{n} u_n^+ +
A_{n}A_{n-1}u_{n-1}^+ + \dots + A_{n}\dots A_1u_1^+;
$$
$$
v_{n+1}^-=u_{n+1}^- + A_{n} u_n^- +
A_{n}A_{n-1}u_{n-1}^- + \dots + A_{n}\dots A_1u_1^-.
$$
We have $v_{n+1}\in E_{n+1}^u$, $v_{n+1}^-\in E_{n+1}^{cs}$,
$v_{n+1}=v_{n+1}^+ + v_{n+1}^-$.
Now introduce a vector
$$
v=u_1^+ + A_1^{-1}u_1^+ + \dots + (A_n\dots A_1)^{-1}u_{n+1}^+ +\dots
$$

By our assumptions, the series defining $v$ converges exponentially fast
and, moreover, we have
$$
|A_n\dots A_1v-v_{n+1}^+|\leq C^{\prime}_{\varepsilon}\exp(\varepsilon n)
$$
for some constant $C_{\varepsilon}^{\prime}$.

Since, by our assumptions we also have $|v_{n+1}^-|\leq
C_{\varepsilon}\exp(\varepsilon n)$, the Lemma is proved completely.

Uniqueness of the vector $v$ follows from the fact that, by our assumptions,
for any ${\tilde v}\neq 0$, ${\tilde v}\in E^{u}_0$ we have
$$
|A_n\dots A_1{\tilde v}|\geq C^{\prime\prime} \exp(\alpha n).
$$

We proceed to the proof of Lemma \ref{thetaxapprox}.

Let $i=1, \dots, m$ and take arbitrary points $y(i)\in Y$ in such a way
that $$
F(y(i)_1)=i.
$$
Introduce a sequence of vectors $v(n)\in {\mathbb R}^m$ by the formula
$$
(v(n))_i=\sum\limits_{y\in \gamma_{n+1}^+(y(i))} \varphi(y).
$$
By the Lipschitz property of $\varphi$, for any $y\in Y$ we have
$$
\left|\sum\limits_{{\tilde y}\in \gamma_{n+1}^+(y)} \varphi({\tilde y})-v(n)_{F(y_1)}\right|\leq C,
$$
whence by additivity it also follows that
$$
\left|A_nv(n)-v(n+1)\right|\leq C_{\varepsilon}\exp(\varepsilon n).
$$

Lemma \ref{thetaxapprox} follows now from Lemma \ref{vngeneral}.

\subsubsection{Duality}
We go back to two-sided Markov compacta. Let ${X}$ be a Markov
compactum corresponding to the bi-infinite sequence of graphs
$\Gamma_n,n\in\mathbb{Z},$ and let $Y$ be the one-sided Markov
compactum corresponding to the graphs $\Gamma_n,n\geqslant1.$

We have a natural forgetting map $\Pi_Y^{\bf{X}}$ which to a
bi-infinite path $(x_n),n\in\mathbb{Z},$ assigns the one-sided
path $(x_n),n\geqslant1.$

A finitely-additive measure $\Theta$ defined on the semi-ring
$\mathfrak{C}_0^+({X})$ will be called ${\it weakly\ Lipschitz}$
if the function $\varphi:Y\to\mathbb{R}$ defined by the formula
$$\varphi(y)=\Theta(\gamma_1^+(x)),\ \ \ x\in(\Pi_Y^{{X}})^{-1}y,$$ is
weakly Lipschitz.

Note that the function $\varphi$ is well-defined, since the set
$\gamma_1^+(x)$ does not depend on a specific choice of
$x\in(\Pi_Y^{{X}})^{-1}y.$ We denote by $LipMeas^+({{X}})$
the space of weakly Lipschitz measures on ${X}.$

The function $\varphi$ determines the measure $\Theta$ uniquely,
and we norm the space $LipMeas^+({{X}})$ by setting
$$||\Theta||_{LipMeas^+}=||\varphi||_{Lip_w^+}.$$

A pairing between the spaces $LipMeas^+({X})$ and
$\mathfrak{B}^- ({X})$ can be introduced as follows. Take
$\Theta\in LipMeas^+({X})$ and let
$\Phi^-\in\mathfrak{B}^- ({X})$ be defined by the reverse
equivariant sequence of vectors
$\tilde{v}^{(n)},\,n\in\mathbb{Z}$.

For any $n\in\mathbb{N}$, choose $m$ points
$x^{(n)}(1),\ldots,x^{(n)}(m)\in {X}$ satisfying
$$F\left(\left(x^{(n)}(i)\right)_n\right)=i,\,i=1,\ldots,m.$$

Consider the Riemann sum
$$S_{Riem}\left(\Theta,\Phi^-,x^{(n)}(1),\ldots,x^{(n)}(m)\right)=
\sum\limits^m_{i=1}\Theta\left(\gamma^+_n\left(x^{(n)}(i)\right)\right)\cdot \tilde{v}^{(n-1)}_i\,,$$
and set
$$\langle\Theta,\Phi^-\rangle=\lim\limits_{n\rightarrow\infty}S_{Riem}\left(\Theta,\Phi^-,x^{(n)}(1),\ldots,x^{(n)}(m)\right).$$

Existence of the limit and its independence of the particular
choice of the points $x^{(n)}(1),\ldots,x^{(n)}(m)$ are clear from
the Lipschitz property of $\Theta$ and the exponential decay of
norms of the vectors $\tilde{v}^{(n)}$. It is also clear that if
$\Theta\in\mathfrak{V}^+({X})$, then the new definition of
pairing is consistent with the previous one.

Now let ${X}$ be a bi-infinite Markov compactum whose
adjacency matrices $A_n,\,n\geqslant0$, satisfy Assumption \ref{asexpdec}.
For $\Theta\in LipMeas^+({X})$, let $v\in
E^u_0$ be given by the equality
$$\Phi_{{v}}=\Xi^+\left(\Theta\right)\,.$$

From the definitions it is immediate that for any
that $\Phi^-\in\mathfrak{B}^- $, we have

\begin{equation}
\label{dualprelim}
\langle\Theta,\Phi^-\rangle=\langle \Xi^+(\Theta),\Phi^-\rangle.
\end{equation}

Indeed, write $\Phi^-=\Phi^-_{\mathbf{\tilde{v}}}$, where $\tilde{\mathbf{v}}=\tilde{v}^{(n)}$ is a
reverse equivariant sequence. By definition,
$$\langle\Theta,\Phi^-\rangle=\sum\limits^m_{i=1}\Xi^+\left(\Theta, \gamma^+_n\left(x^{(n)}(i)\right)\right)\cdot \tilde{v}^{(n-1)}_i.$$

Consequently, since $\Phi^-\in\mathfrak{B}^- $, we have
$$\left|\langle\Theta -\Xi^+(\Theta),\Phi^-\rangle\right|\leq $$
$$\leq \sum\limits^m_{i=1}\left|\Theta\left(\gamma^+_n\left(x^{(n)}(i)\right)\right)-\Xi^+\left(\Theta, \gamma^+_n\left(x^{(n)}(i)\right)\right)\right|\cdot \left|\tilde{v}^{(n-1)}_i\right|\leq \exp(-\alpha n)$$
for some positive $\alpha$, and (\ref{dualprelim}) is established.

In the study of random Markov compacta we shall be mainly
concerned with the case when both the sequence of matrices
$A_n,\,n\in\mathbb{Z}$, and the sequence of transpose matrices
$A_n^t,\,n\in\mathbb{Z}$, admit a decomposition into the expanding
and the central part similar to that given by Assumption \ref{asexpdec}.
More precisely, we formulate the following
assumption on a sequence of $m\times m$-matrices
$A_n,\,n\in\mathbb{Z}$.

\begin{assumption}\label{asdualexp} For any $n\in\mathbb{Z}$ there
exist direct-sum decompositions
$$\mathbb{R}^m=E^u_n\oplus E^{cs}_n\,,$$
$$\mathbb{R}^m=\widetilde{E}^u_n\oplus \widetilde{E}^{cs}_n\,$$
such that the following holds.
\begin{enumerate}

\item \quad $A_n E^u_n = E^u_{n+1}$, \;$A_n^t \widetilde{E}^u_{n+1} =
\widetilde{E}^u_n$, and $\left.A_n\right|_{E^u_n}$ and
$\left.A_n^t\right|_{\widetilde{E}^u_{n+1}}$ are injective for all
$n\in\mathbb{Z}$;

\item \quad $A_n E^{cs}_n \subset E^{cs}_{n+1}$, \;$A_n^t
\widetilde{E}^{cs}_{n+1} \subset \widetilde{E}^{cs}_n$ for all
$n\in\mathbb{Z}$;

\item \quad $\widetilde{E}^{cs}_n=Ann\left(E^u_n\right),
\widetilde{E}^u_n=Ann\left(E^{cs}_n\right)$ for all
$n\in\mathbb{Z}$;

\item \quad there exists $\alpha>0$ and, for every $\varepsilon>0$, a
positive constant $C_{\varepsilon}$ such that for all
$k\in\mathbb{N},\,n\in\mathbb{Z}$ we have
$$\left\|\left.\left(A_{n+k}\ldots A_n\right)^{-1}\right|_{E^u_{n+k+1}}\right\|\leqslant C_{\varepsilon}e^{\varepsilon|n|-\alpha k}\,;$$
$$\left\|\left.A_{n+k}\ldots A_n\right|_{E^{cs}_n}\right\|\leqslant C_{\varepsilon}e^{\varepsilon(|n|+k)}\,;$$
$$\left\|\left.\left(A_{n}^t\ldots A_{n+k}^t\right)^{-1}\right|_{\widetilde{E}^u_n}\right\|\leqslant C_{\varepsilon}e^{\varepsilon|n|-\alpha k}\,;$$
$$\left\|\left.A_{n}^t\ldots A_{n+k}^t\right|_{\widetilde{E}^{cs}_{n+k+1}}\right\|\leqslant C_{\varepsilon}e^{\varepsilon(|n|+k)}\,;$$
\end{enumerate}
\end{assumption}

Assumption \ref{asdualexp} implies that to each vector $v\in
E^u_0$ there corresponds a unique equivariant sequence ${\bf v}$ and, consequently, a unique
finitely-additive measure $\Phi^+_{{\bf v}}\in\mathfrak{V}^+({X})$ which, furthermore,
satisfies $\Phi^+_{{\bf v}}\in\mathfrak{B}^+ ({X})$; while, similarly, to
each vector $\tilde{v}\in\widetilde{E}^u_0$ there corresponds a
unique finitely-additive measure
$\Phi^-_{\mathbf{\tilde{v}}}\in\mathfrak{V}^-({X})$ which,
furthermore, satisfies
$\Phi^-_{\mathbf{\tilde{v}}}\in\mathfrak{B}^- ({X})$.
Finally, Condition 3 of Assumption \ref{asdualexp} implies that
the pairing $\langle,\rangle$ is non-degenerate on the pair of
subspaces $\mathfrak{B}^+ ({X}),
\mathfrak{B}^- ({X})$. Consequently, we arrive at the
following Preparatory Approximation Lemma.

\begin{lemma} \label{prepapprox}
Let $X$ be a
bi-infinite Markov compactum whose sequence
$A_n,\,n\in\mathbb{Z}$, of a adjacency matrices satisfies
Assumption \ref{asdualexp}. Then there exists a continuous mapping
$$\Xi^+\:: LipMeas^+({X})\rightarrow\mathfrak{B}^+ ({X})$$
and, for any $\varepsilon>0$, a positive constant
$C_{\varepsilon}$ such that for any $x\in{X}$, any
$n\in\mathbb{N}$ and any $\Theta\in LipMeas^+({X})$ we have
$$\left|\Theta\left(\gamma^+_n(x)\right)-\Xi^+\left(\Theta;\gamma^+_n(x)\right)\right|\leq C_{\varepsilon}\cdot\|\Theta\|_{LipMeas^+}\cdot e^{\varepsilon n}$$

The map $\Xi^+$ is uniquely determined by the requirement that for
any $\Theta\in LipMeas^+({X})$ and any
$\Phi^-\in\mathfrak{B}^- ({X})$ we have
$$\langle\Theta,\Phi^-\rangle=\langle\Xi^+(\Theta),\Phi^-\rangle.$$
{\bf Remark} Above, we write
$\Xi^+\left(\Theta,\gamma^+_n(x)\right)$ instead of
$\Xi^+(\Theta)\left(\gamma^+_n(x)\right).$
\end{lemma}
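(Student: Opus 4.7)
The plan is to apply the one-sided Lemma \ref{thetaxapprox} to the forward part of $\mathbf{X}$. The restriction of Assumption \ref{asdualexp} to indices $n\geq 0$ is exactly Assumption \ref{asexpdec} (same subspaces $E^u_n,E^{cs}_n$ and same estimates), so this is legitimate. Given $\Theta\in LipMeas^+(\mathbf{X})$, let $\varphi=\varphi_\Theta\in Lip^+_w(Y)$ be the weakly Lipschitz function on the one-sided future Markov compactum $Y$ associated with $\Theta$ via the forgetting map $\Pi^{\mathbf{X}}_Y$; by definition $\|\varphi\|_{Lip^+_w}=\|\Theta\|_{LipMeas^+}$. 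Lemma \ref{thetaxapprox} then produces a vector $v=\Xi^+(\varphi)\in E^u_0$ and, for each $\varepsilon>0$, a constant $\widetilde{C}_\varepsilon$ such that
$$\left|\sum_{y\in\gamma^+_n(y')}\bigl(\varphi(y)-\Phi_v(y)\bigr)\right|\leq \widetilde{C}_\varepsilon\|\Theta\|_{LipMeas^+}e^{\varepsilon n}$$
for all $n\in\mathbb{N}$ and all $y'\in Y$.

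\textbf{Construction of $\Xi^+(\Theta)$.} I would extend $v$ to a bi-infinite equivariant sequence $v^{(l)}\in E^u_l$ by setting $v^{(l)}=A_{l-1}\cdots A_0 v$ for $l>0$ and pulling back via the isomorphisms $A_n|_{E^u_n}\colon E^u_n\to E^u_{n+1}$ (condition 1 of Assumption \ref{asdualexp}) for $l<0$. The decay estimate in condition 4, applied to $(A_{-1}\cdots A_{-n-1})^{-1}|_{E^u_0}$, yields $\|v^{(-n-1)}\|\leq C_\varepsilon e^{\varepsilon(n+1)-\alpha n}\|v\|$, which for any fixed $\varepsilon<\alpha$ decays exponentially in $n$. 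Hence the associated finitely-additive measure $\Phi^+_{\bf v}$ belongs to $\mathfrak{B}^+(\mathbf{X})$, not merely to $\mathfrak{B}^+_c(\mathbf{X})$, and we set $\Xi^+(\Theta):=\Phi^+_{\bf v}$. Continuity of the map $\Xi^+$ is automatic, since the construction factors through the bounded linear operations $\Theta\mapsto\varphi\mapsto v$.

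\textbf{Approximation estimate, duality and uniqueness.} Because $\Theta(\gamma^+_n(x))=\sum_{y\in\gamma^+_n(\Pi^{\mathbf{X}}_Y x)}\varphi(y)$ and the analogous identity holds for $\Phi^+_{\bf v}$ in place of $\Theta$, the one-sided inequality above transfers verbatim to the desired bound on $\gamma^+_n(x)$ for all $n\in\mathbb{N}$ and $x\in\mathbf{X}$. The duality identity $\langle\Theta,\Phi^-\rangle=\langle\Xi^+(\Theta),\Phi^-\rangle$ for every $\Phi^-\in\mathfrak{B}^-(\mathbf{X})$ is exactly formula (\ref{dualprelim}), already derived in the discussion via a telescoping Riemann sum that converges thanks to the exponential decay of the reverse equivariant sequence defining $\Phi^-$. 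Uniqueness of $\Xi^+(\Theta)$ among elements of $\mathfrak{B}^+(\mathbf{X})$ with this duality property then follows from the non-degeneracy of the pairing on $\mathfrak{B}^+(\mathbf{X})\times\mathfrak{B}^-(\mathbf{X})$, which rests on condition 3 of Assumption \ref{asdualexp} ($\widetilde{E}^{cs}_n=\mathrm{Ann}(E^u_n)$) making the standard inner product on $\mathbb{R}^m$ restrict to a perfect pairing between $E^u_0$ and $\widetilde{E}^u_0$. The delicate point throughout is to absorb the subexponential factors $e^{\varepsilon|n|}$ of Assumption \ref{asdualexp} into the uniform exponential decay $e^{-\alpha k}$, forcing $\varepsilon<\alpha$; this is precisely what promotes the output from continuous to H\"older and is the only genuinely nontrivial input of the argument beyond Lemma \ref{thetaxapprox}.
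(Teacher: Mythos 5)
Your proposal is correct and follows essentially the same route as the paper's (implicit) argument: apply the one-sided Lemma \ref{thetaxapprox} via the forgetting map, use the hyperbolicity estimates of Assumption \ref{asdualexp} to extend the resulting vector $v\in E^u_0$ to a bi-infinite equivariant sequence with exponential backward decay (hence landing in $\mathfrak{B}^+$ rather than merely $\mathfrak{B}^+_c$), and then invoke formula (\ref{dualprelim}) together with the non-degeneracy of the pairing supplied by condition 3 of Assumption \ref{asdualexp} for the uniqueness statement. You have simply made explicit the steps the paper leaves as a remark after establishing the ingredients, and your observation that choosing $\varepsilon<\alpha$ is what upgrades the output from continuous to H\"older is exactly the key point.
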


\subsection{Extension of measures.}

\subsubsection{The ring of well-approximable arcs.}
Let ${\R}_n^+$ be the ring generated by the semiring $\C_n^+$.
For an arbitrary subset $A$ of a leaf $\gamma_{\infty}^+$ of the foliation $\F^+$, let
${\hat \gamma}_n^+(A)$ be the minimal (by inclusion) element of the ring ${\R}_n^+$ containing $A$ and similarly
let ${\check \gamma}_n^+(A)$ be the maximal (by inclusion) element of the ring ${\R}_n^+$ contained in $A$.
The set difference ${\hat \gamma}_n^+(A)\setminus {\check \gamma}_n^+(A)$ can be represented in a unique way as
 a finite union of elements of the semi-ring $\C_n^+$ (that is, of arcs of the type $\gamma_n^+(x)$).
The number of these arcs is denoted $\delta_n^+(A)$.
We say that a subset $A$ of a leaf $\gamma_{\infty}^+$  of the foliation $\F^+$ belongs to
the family  ${\overline \R}^+$ if for any $\varepsilon>0$ there exists a constant $C_{\varepsilon}$
such that we have
$$
\delta_{-n}^+(A)\leq C_{\varepsilon} \exp(\varepsilon n).
$$

\begin{proposition}
The family  ${\overline \R}^+$ is a ring.
\end{proposition}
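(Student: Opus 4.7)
The plan is to interpret $\R_n^+$ concretely as the collection of unions of atoms of the partition $\pi_n$ whose atoms are the arcs $\gamma_n^+(x)\in\C_n^+$. With this interpretation, $\hat{\gamma}_n^+(A)$ is the union of those atoms that intersect $A$, while $\check{\gamma}_n^+(A)$ is the union of those atoms entirely contained in $A$. Consequently, $\hat{\gamma}_n^+(A)\setminus\check{\gamma}_n^+(A)$ is precisely the union of atoms meeting both $A$ and its complement (within the ambient leaf), and $\delta_n^+(A)$ counts exactly those ``boundary'' atoms.

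The main step is a set-theoretic subadditivity estimate. For any two subsets $A,B$ lying in a common leaf of $\F^+$, I would show
\[
\delta_{-n}^+(A\cup B) \le \delta_{-n}^+(A)+\delta_{-n}^+(B), \qquad
\delta_{-n}^+(A\setminus B)\le \delta_{-n}^+(A)+\delta_{-n}^+(B).
\]
For the union, an atom $T$ contributing to $\delta_{-n}^+(A\cup B)$ must contain a point $p\in A^c\cap B^c$ and a point $q\in A\cup B$; if $q\in A$ then $T$ witnesses a boundary atom of $A$ (via $p$ and $q$), and otherwise $q\in B$ and $T$ witnesses a boundary atom of $B$. For the difference, an atom $T$ contributing to $\delta_{-n}^+(A\setminus B)$ contains a point $q\in A\cap B^c$ and a point $p\in A^c\cup B$; if $p\in A^c$ then $T$ is a boundary atom of $A$, and if $p\in B$ then $T$ is a boundary atom of $B$ (since $q\in B^c$).

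Combining these inequalities with the defining estimate of $\overline{\R}^+$: if $A,B\in\overline{\R}^+$, then for any $\varepsilon>0$ there exist constants $C_\varepsilon,C'_\varepsilon$ with $\delta_{-n}^+(A)\le C_\varepsilon e^{\varepsilon n}$, $\delta_{-n}^+(B)\le C'_\varepsilon e^{\varepsilon n}$, whence $\delta_{-n}^+(A\cup B),\delta_{-n}^+(A\setminus B)\le (C_\varepsilon+C'_\varepsilon)e^{\varepsilon n}$. Since $\emptyset\in\overline{\R}^+$ trivially, closure under finite unions and set differences exhibits $\overline{\R}^+$ as a ring of sets.

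This argument is purely combinatorial and bookkeeping; no serious obstacle arises. The only point requiring some care is verifying the explicit description of $\hat{\gamma}_n^+$ and $\check{\gamma}_n^+$ as unions of atoms of $\pi_n$, which follows directly from the fact that $\C_n^+$ is a partition of $X$ and $\R_n^+$ is the generated ring.
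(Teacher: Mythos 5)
Your argument is correct and is essentially the paper's own proof: the paper simply records the inclusion
$\hat{\gamma}_n^+(A\cup B)\setminus \check{\gamma}_n^+(A\cup B)\subset (\hat{\gamma}_n^+(A)\setminus \check{\gamma}_n^+(A))\cup(\hat{\gamma}_n^+(B)\setminus \check{\gamma}_n^+(B))$ and the analogous ones for intersection and difference, and your atom-by-atom case analysis is just the detailed verification of those inclusions together with the resulting subadditivity of $\delta_{-n}^+$ and closure under union and set difference.
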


Indeed, this is clear from the inclusion
\begin{equation}
\label{hatcheckinclusion}
{\hat \gamma}_n^+(A\cup B)\setminus {\check \gamma}_n^+(A\cup B)\subset \left({\hat \gamma}_n^+(A)\setminus {\check \gamma}_n^+(A)
\right) \bigcup \left({\hat \gamma}_n^+(B)\setminus {\check \gamma}_n^+(B)
\right)
\end{equation}
and similar inclusions for the intersection and difference of sets.

\begin{proposition}
Let $X$ be a Markov compactum satisying Assumption \ref{asexpdec}.
Then any element $\Phi^+\in {\mathfrak B}^+(X)$ extends to a finitely-additive measure on the
ring ${\overline \R}^+$.
\end{proposition}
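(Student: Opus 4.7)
The plan is to use the Carath\'eodory-style approximation $A \approx \hat{\gamma}^+_{-n}(A)$ together with the exponential decay built into the definition of $\mathfrak{B}^+(X)$. Note that $\mathcal{R}^+_{-n}\subset\mathcal{R}^+_{-(n+1)}$ (each element of $\mathcal{C}^+_{-n}$ splits into elements of $\mathcal{C}^+_{-(n+1)}$ according to the value of $x_{-n}$), so the sequence $\hat{\gamma}^+_{-n}(A)$ is monotonically decreasing while $\check{\gamma}^+_{-n}(A)$ is monotonically increasing, and both sandwich $A$. For $A\in\overline{\mathcal{R}}^+$ I would therefore set
\[
\Phi^+(A):=\lim_{n\to\infty}\Phi^+\!\left(\hat{\gamma}^+_{-n}(A)\right),
\]
the right-hand side being well-defined by the finite additivity of $\Phi^+$ on the ring $\mathcal{R}^+_{-n}$.

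The first step is to verify that the limit exists. The difference $\hat{\gamma}^+_{-n}(A)\setminus\check{\gamma}^+_{-n}(A)$ is a disjoint union of $\delta^+_{-n}(A)$ cells of $\mathcal{C}^+_{-n}$, so by finite additivity
\[
\left|\Phi^+(\hat{\gamma}^+_{-n}(A))-\Phi^+(\check{\gamma}^+_{-n}(A))\right|
\;\leq\;\delta^+_{-n}(A)\cdot\max_{x\in X}\left|\Phi^+\!\left(\gamma^+_{-n}(x)\right)\right|.
\]
By definition of $\mathfrak{B}^+(X)$ the maximum is $\leq Ce^{-\alpha n}$, while membership of $A$ in $\overline{\mathcal{R}}^+$ gives $\delta^+_{-n}(A)\leq C_\varepsilon e^{\varepsilon n}$. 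Choosing $\varepsilon<\alpha$ the right-hand side tends to zero, which simultaneously shows (via monotonicity of $\hat{\gamma}^+_{-n}(A)$) that the defining sequence is Cauchy and that the alternative definition through $\check{\gamma}^+_{-n}(A)$ produces the same limit. For $A\in\mathcal{R}^+$ one has $\hat{\gamma}^+_{-n}(A)=A$ eventually, so the new value agrees with the old one; hence the extension is genuine.

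Next I would check finite additivity. Given disjoint $A,B\in\overline{\mathcal{R}}^+$, observe that $\hat{\gamma}^+_{-n}(A)\cap\check{\gamma}^+_{-n}(B)=\emptyset$ (since $A\subset X\setminus B$ and the left-hand factor is in the ring $\mathcal{R}^+_{-n}$), so
\[
\hat{\gamma}^+_{-n}(A)\cap\hat{\gamma}^+_{-n}(B)\;\subset\;\hat{\gamma}^+_{-n}(B)\setminus\check{\gamma}^+_{-n}(B),
\]
which carries $\Phi^+$-mass $O(e^{(\varepsilon-\alpha)n})$ by the same estimate as above. From the inclusion $(\ref{hatcheckinclusion})$ one likewise obtains that $\hat{\gamma}^+_{-n}(A)\cup\hat{\gamma}^+_{-n}(B)$ and $\hat{\gamma}^+_{-n}(A\cup B)$ differ on a set consisting of at most $\delta^+_{-n}(A)+\delta^+_{-n}(B)$ cells, again negligible. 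Inclusion-exclusion on $\mathcal{R}^+_{-n}$ together with these two estimates gives $\Phi^+(\hat{\gamma}^+_{-n}(A\cup B))=\Phi^+(\hat{\gamma}^+_{-n}(A))+\Phi^+(\hat{\gamma}^+_{-n}(B))+o(1)$, and passing to the limit yields finite additivity on $\overline{\mathcal{R}}^+$.

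The only delicate point is the interplay between the combinatorial estimate $\delta^+_{-n}(A)\leq C_\varepsilon e^{\varepsilon n}$ and the analytic estimate $\max_x|\Phi^+(\gamma^+_{-n}(x))|\leq Ce^{-\alpha n}$; once these are paired correctly the Cauchy property, the independence of the approximating sequence, and additivity all follow from the same one-line bound. A minor technical point is verifying that $\overline{\mathcal{R}}^+$ is indeed a ring (already noted in the excerpt via the inclusion $(\ref{hatcheckinclusion})$ applied also to intersections and differences), which ensures that the domain of the extended measure is closed under the operations for which additivity is being asserted.
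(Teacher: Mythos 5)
Your proof follows the same route as the paper's: sandwich $A$ between $\check\gamma^+_{-n}(A)$ and $\hat\gamma^+_{-n}(A)$, pair the subexponential count $\delta^+_{-n}(A)$ with the exponential decay of $\Phi^+$ on cells guaranteed by the definition of $\mathfrak{B}^+(X)$, define the extension as a limit, and verify finite additivity by comparing the $\hat\gamma$ and $\check\gamma$ approximations of $A$, $B$, and $A\sqcup B$. Your observation $\hat\gamma^+_{-n}(A)\cap\check\gamma^+_{-n}(B)=\emptyset$ is a tidy way to see that the overlap is supported inside $\hat\gamma^+_{-n}(B)\setminus\check\gamma^+_{-n}(B)$, so it carries the same estimate as the paper obtains from the inclusion $(\ref{hatcheckinclusion})$.

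One step is stated more strongly than it is justified. You claim that $\bigl|\Phi^+(\hat\gamma^+_{-n}(A))-\Phi^+(\check\gamma^+_{-n}(A))\bigr|\to 0$ together with the monotonicity $\hat\gamma^+_{-(n+1)}(A)\subset\hat\gamma^+_{-n}(A)$ already gives the Cauchy property. For a nonnegative finitely-additive measure this is immediate (squeeze between an increasing and a decreasing sequence), but $\Phi^+$ is signed: set inclusion does not control $|\Phi^+|$, and neither $\Phi^+(\hat\gamma^+_{-n}(A))$ nor $\Phi^+(\check\gamma^+_{-n}(A))$ need be monotone. To close the gap one must bound $\bigl|\Phi^+\bigl(\hat\gamma^+_{-n}(A)\setminus\hat\gamma^+_{-m}(A)\bigr)\bigr|$ for $m>n$ directly, e.g.\ by telescoping over $k=n,\dots,m-1$ and observing that $\hat\gamma^+_{-k}(A)\setminus\hat\gamma^+_{-(k+1)}(A)$ consists of cells of $\mathfrak{C}^+_{-(k+1)}$ all contained in the $\delta^+_{-k}(A)$ boundary cells of level $-k$, so the subexponential count propagates, provided one also controls how many $\mathfrak{C}^+_{-(k+1)}$-cells sit inside a single $\mathfrak{C}^+_{-k}$-cell. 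The paper's own proof is just as terse at this point (it asserts without detail that $\Phi^+$ of the successive differences decays exponentially), so the omission is inherited rather than new, but it is the one place where ``monotonicity'' cannot carry the weight you put on it.
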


Indeed, if $A\in {\overline \R}^+$, then the set
${\hat \gamma}_{-n}^+(A)\setminus {\check \gamma}_{-n}^+(A)$ is a union of elements of the
semi-ring ${\C}_{-n}^+$, whose number grows subexponentially as $n\to\infty$.

Thus, if the Markov compactum is Lyapunov regular and
$\Phi^+\in {\mathfrak B}^+$,
then the quantities
\begin{equation}
\Phi_{{\bf v}}^+({\hat \gamma}_{-n-1}^+(A)\setminus {\hat \gamma}_{-n}^+(A))
\end{equation}
\begin{equation}
\Phi_{{\bf v}}^+({\hat \gamma}_{-n}^+(A)\setminus {\check \gamma}_{-n}^+(A))
\end{equation}
decay exponentially fast as $n\to\infty$.

We therefore set
$$
\Phi^+(A)=\lim\limits_{n\to\infty} \Phi^+({\hat \gamma}_{-n}^+(A))=
\lim\limits_{n\to\infty} \Phi^+({\check \gamma}_{-n}^+(A)).
$$

The resulting extension is finitely additive: indeed, if $A, B\in {\overline {\cal R}}^+$ are disjoint,
then
$$
{\check \gamma}_n^+(A)\bigsqcup{\check \gamma}_n^+(B)\subset {\check \gamma}_n^+(A\bigsqcup B);
$$
$$
{\hat \gamma}_n^+(A\bigsqcup B)\subset {\hat \gamma}_n^+(A)\bigcup{\hat \gamma}_n^+(B).
$$

Moreover, the set-difference between the right-hand side and the left-hand side in
both the above inclusions
consists of a finite number of arcs, which, by definition of ${\overline \R}^+$,
grows subexponentially with $n$. Since, by definition,
the value of $\Phi^+$ on each arc decays exponentially, we obtain
that the quantity
$$
\Phi^+( {\hat \gamma}_n^+(A\bigsqcup B)\setminus {\hat \gamma}_n^+(A)\bigcup{\hat \gamma}_n^+(B)).
$$
decays exponentially, and finite additivity is
established.

The statement of the Preparatory Approximation Lemma \ref{prepapprox}  can
also be extended to arcs from the ring ${\overline \R}^+$. For $\gamma\in {\overline \R}^+$, let $n^+(\gamma)$
be the largest integer $n$ such that $\gamma$ contains a nonempty arc from ${\mathfrak C}_n^+$.
We also need the following

{\bf{Definition}}. A sequence of nonnegative
$m\times m$ matrices $A_n,n\in\mathbb{Z},$ is said to have
sub-exponential growth if for any $\varepsilon>0$ there exists a
constant $C_{\varepsilon}$ such that for all $n\in\mathbb{Z}$ we
have
$$\sum_{i=1}^m\sum_{j=1}^m(A_n)_{ij}<C_{\varepsilon}e^{\varepsilon
|n|}.$$
We are now ready to formulate the approximation of Lipschitz measures on arcs from the ring ${\overline \R}^+$.

\begin{corollary}
\label{approxrplus}
Let $X$ be a Markov compactum that satisfies Assumption \ref{asdualexp} and whose sequence of adjacency matrices has subexponential growth. Let $\Xi^+$ be the map given by Lemma \ref{prepapprox}. For any $\varepsilon>0$ there exists
a positive constant $C_{\varepsilon}$ such that for any  $\gamma\in {\overline \R}^+$ and $\Theta\in LipMeas^+(X)$
we have
$$
|\Theta(\gamma)-\Xi^+(\Theta; \gamma)|\leq C_{\varepsilon} ||\Theta||_{LipMeas^+}e^{\varepsilon n^+(\gamma)}.
$$
\end{corollary}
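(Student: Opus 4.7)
My plan is to prove the estimate by approximating the general arc $\gamma$ by a union of atoms of the semi-ring $\mathfrak{C}^+$, applying Lemma \ref{prepapprox} to each atom, and summing up the errors together with a boundary contribution.

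First, I would extend both $\Theta$ and the cocycle $\Xi^+(\Theta)$ from the semi-ring $\mathfrak{C}^+$ to the ring $\overline{\R}^+$ as finitely-additive measures. The extension of $\Xi^+(\Theta)\in\mathfrak{B}^+$ is already given by the preceding proposition, using the exponential decay $|\Xi^+(\Theta;\gamma_{-n}^+(x))|\leq Ce^{-\alpha n}$ together with the subexponential bound $\delta_{-n}^+(\gamma)\leq C_\varepsilon e^{\varepsilon n}$. For $\Theta$, the same Cauchy criterion works once the conclusion of Lemma \ref{prepapprox} is extended from $n\in\mathbb{N}$ to all $n\in\mathbb{Z}$. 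That extension follows from Condition 4 of Assumption \ref{asdualexp}, which lets us run the equivariant sequence for $\Theta$ backward via $A_n^{-1}|_{E^u_{n+1}}$; the resulting exponential contraction on the unstable subspace dominates the subexponential prefactor $e^{\varepsilon|n|}$, yielding the uniform estimate
$$
\bigl|\Theta(\gamma_n^+(x))-\Xi^+(\Theta;\gamma_n^+(x))\bigr|\leq C_\varepsilon\|\Theta\|_{LipMeas^+}\,e^{\varepsilon n}\qquad\text{for all }n\in\mathbb{Z},\ x\in X.
$$

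With these extensions in hand, fix $\gamma\in\overline{\R}^+$ with $N=n^+(\gamma)$ and approximate $\gamma$ from inside by $\check\gamma_N^+(\gamma)$, a finite disjoint union of level-$N$ atoms. Iterating the decomposition $\gamma=\check\gamma_n^+(\gamma)\sqcup(\gamma\setminus\check\gamma_n^+(\gamma))$ produces
$$
\gamma=\bigsqcup_{k\geq 0}B_k,\qquad B_k:=\check\gamma_{N-k}^+(\gamma)\setminus\check\gamma_{N-k+1}^+(\gamma),
$$
where each $B_k$ is a finite disjoint union of level-$(N-k)$ atoms lying in the boundary zone $\hat\gamma_{N-k+1}^+(\gamma)\setminus\check\gamma_{N-k+1}^+(\gamma)$. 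By $\gamma\in\overline{\R}^+$ combined with the subexponential growth of the adjacency matrices, the number of atoms in $B_k$ is controlled by $\delta_{N-k+1}^+(\gamma)$ multiplied by the subexponential number of level-$(N-k)$ sub-atoms of a single level-$(N-k+1)$ atom, both of which are subexponentially bounded. Summing the per-atom estimates furnished by the extended Lemma \ref{prepapprox} and readjusting $\varepsilon$ so that the geometric series in $k$ converges yields the desired bound $C_\varepsilon\|\Theta\|_{LipMeas^+}e^{\varepsilon N}$.

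The main obstacle will be establishing Lemma \ref{prepapprox} for $n\leq 0$, which is not written out in the text. The backward propagation depends on the delicate quantitative interplay between the $e^{\varepsilon|n|}$ prefactor and the $e^{-\alpha k}$ contraction factor in Condition 4 of Assumption \ref{asdualexp}, and on the fact that the errors incurred in passing between successive levels remain subexponential when one inverts the cocycle $\mathbb{A}$ on the expanding subspace. Once this uniform-in-$n$ approximation is secured, the atomic-plus-boundary decomposition together with the subexponential growth hypotheses reduces the Corollary to routine summation.
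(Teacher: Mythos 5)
Your high-level strategy --- decompose $\gamma$ into Markovian atoms at successive levels, apply the Preparatory Approximation Lemma per atom, and sum the resulting geometric series using the subexponential multiplicity bounds furnished by membership in $\overline{\R}^+$ --- is exactly what the paper's very terse proof does. You are also right that Lemma \ref{prepapprox} is stated only for $n\in\mathbb{N}$, so one must say something about the atoms at levels $l\leq 0$ appearing in the shell decomposition, and the paper indeed does not write this out.

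The mechanism you propose for the $l\leq 0$ case, however, is not correct. You speak of "running the equivariant sequence for $\Theta$ backward via $A_n^{-1}|_{E^u_{n+1}}$," but $\Theta\in LipMeas^+$ does not carry an equivariant sequence; only measures in $\mathfrak{V}^+$ do, and $\Theta$ is not assumed to lie there. In the proof of Lemma \ref{thetaxapprox} the relevant vectors are $v(n)_i=\Theta(\gamma_{n+1}^+(x(i)))$, and the weak Lipschitz property delivers the approximate equivariance $|A_nv(n)-v(n+1)|\leq C_\varepsilon e^{\varepsilon n}$ only for $n\geq 0$: at negative levels the Lipschitz hypothesis gives no constraint whatsoever, so there is nothing to propagate backward, and inverting $\mathbb{A}$ on $E^u$ does not help because the defect vectors need not lie in $E^u$. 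What actually makes the $l\leq 0$ atoms harmless is different and simpler: one bounds the two terms separately. The value $\Xi^+(\Theta;\gamma_l^+(x))$ decays exponentially in $|l|$ because $\Xi^+(\Theta)\in\mathfrak{B}^+$, and $\Theta(\gamma_l^+(x))$ decays because small arcs have exponentially small $\nu^+$-mass and $\Theta$ is absolutely continuous with bounded density in the intended application; the sum over $l\leq 0$ then converges as a constant since $M_l$ grows only subexponentially. This separate-term estimate is what your "Cauchy criterion" and "backward propagation" are implicitly standing in for, but as written they point to a mechanism that does not exist for $\Theta$.
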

Proof. For arcs from the semiring ${\mathfrak C}_n^+$ this is precisely the statement of Lemma \ref{prepapprox}.
Approximating an arc $\gamma\in {\overline \R}^+$ by arcs from ${\mathfrak C}_n^+$ and summing the resulting geometric
series, we arrive at the result of the corollary.

\subsubsection{Extension to arcs under Vershik's ordering.}

In particular, let $\bf {X}$ be a Markov compactum endowed with a
Vershik's ordering $\mathfrak{o}.$
Our aim is to extend
finitely-additive measures from the $\mathfrak{B}^+ $ to the
ring generated by the class intervals
$[x,x'],(x,x'],[x,x'),(x,x')$ with respect to the ordering
$\mathfrak{o}.$
Let ${\mathfrak C}({\mathfrak o})$ be the semi-ring of arcs of the form
$[x,x'],(x,x'],[x,x'),(x,x')$ and
${\mathcal R}({\mathfrak o})$ be the ring of sets generated by the
the semi-ring ${\mathfrak C}({\mathfrak o})$.

We shall now see that a sufficient condition for the existence of
an extension of the measures from ${\mathfrak B}^+$ is sub-exponential growth of the matrices $A_n.$

\begin{proposition} \label{measuretoorderarc}
Let $X$ be a Markov
compactum endowed with a Vershik's ordering $\mathfrak{o}.$ If
the sequence $A_n,n\in\mathbb{Z},$ of adjacency matrices of
$X$ has sub-exponential growth, then for any
$x\in{X},x'\in\gamma_{\infty}^+(x),$ we have
$$[x,x'],(x,x'),[x,x'),(x,x']\in \overline{\mathcal{R}^+}.$$
\end{proposition}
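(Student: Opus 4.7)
The core observation is that the Vershik ordering $\mathfrak{o}$ strictly orders any two distinct arcs at the same level: if $\gamma_{-n}^+(y_1)\neq\gamma_{-n}^+(y_2)$ are distinct level-$(-n)$ arcs in the same leaf, then every point of one arc is strictly less (in $\mathfrak{o}$) than every point of the other. Indeed, let $k\geq -n$ be the highest index at which the representatives $y_1$ and $y_2$ differ (such $k$ exists precisely because the arcs are distinct). All points of $\gamma_{-n}^+(y_i)$ share identical coordinates at every level $\geq -n$, so for any $z_1\in\gamma_{-n}^+(y_1)$ and $z_2\in\gamma_{-n}^+(y_2)$ the highest index at which $z_1$ and $z_2$ differ is also exactly $k$, and the comparison is determined by the fixed Vershik comparison of the two edges $(y_1)_k, (y_2)_k$, which share a common initial vertex by Markov compatibility.

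Using this, the plan is to decompose $[x,x']$ at each refinement level. Let $k$ denote the highest coordinate at which $x$ and $x'$ differ. For any $n$ with $-n\leq k$ the arcs $\gamma_{-n}^+(x)$ and $\gamma_{-n}^+(x')$ are distinct, and the preceding observation lets one write
$$
[x,x'] = \bigl(\gamma_{-n}^+(x)\cap[x,x']\bigr)\,\sqcup\,\bigsqcup_{\gamma\in\mathcal{M}_n}\gamma\,\sqcup\,\bigl(\gamma_{-n}^+(x')\cap[x,x']\bigr),
$$
where $\mathcal{M}_n$ is the finite collection of whole level-$(-n)$ arcs lying strictly between $\gamma_{-n}^+(x)$ and $\gamma_{-n}^+(x')$ in the induced linear order. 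Every arc in $\mathcal{M}_n$ lies entirely in $[x,x']$, and each of the two endpoint arcs is either contained in $[x,x']$ (the edge case when $x$ is Vershik-minimum of $\gamma_{-n}^+(x)$, resp.\ $x'$ is Vershik-maximum of $\gamma_{-n}^+(x')$) or else meets the complement of $[x,x']$. Hence ${\check \gamma}_{-n}^+([x,x'])$ is the union of $\mathcal{M}_n$ together with whichever endpoint arc happens to be fully inside, while ${\hat \gamma}_{-n}^+([x,x'])$ additionally absorbs the endpoint arcs; the symmetric difference contains at most the two arcs $\gamma_{-n}^+(x)$ and $\gamma_{-n}^+(x')$. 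Thus $\delta_{-n}^+([x,x'])\leq 2$ for all such $n$, while for $n<-k$ (when $x$ and $x'$ share a single level-$(-n)$ arc) the same reasoning gives $\delta_{-n}^+\leq 1$. The collection $\mathcal{M}_n$ is finite because the number of level-$(-n)$ arcs inside $\gamma_{k+1}^+(x)$ is bounded by the product of column sums of $A_t$ over $-n\leq t\leq k$; this product is finite and, under the sub-exponential growth hypothesis on $(A_n)$, also sub-exponential in $n$.

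The bound $\delta_{-n}^+\leq 2$ is a fortiori sub-exponential, so $[x,x']\in\overline{\mathcal{R}^+}$ by definition. The three remaining interval types $(x,x'), [x,x'), (x,x']$ differ from $[x,x']$ only in the two singletons $\{x\}$ and $\{x'\}$, neither of which belongs to any $\mathcal{R}_{-n}^+$; hence ${\hat \gamma}_{-n}^+$ and ${\check \gamma}_{-n}^+$ are insensitive to whether the endpoints are included, and $\delta_{-n}^+\leq 2$ carries over verbatim to all four interval types. The main delicate point in writing out the proof is the linear-ordering lemma for distinct arcs together with the case analysis of whether the endpoint arcs happen to be Vershik-extremal; the sub-exponential growth of the $A_n$ is not strictly needed for the bound on $\delta_{-n}^+$ itself but underpins the broader framework through the extension of finitely-additive measures in $\mathfrak{B}^+$ to $\overline{\mathcal{R}^+}$ established in the preceding discussion.
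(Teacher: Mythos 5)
Your argument is correct and rests on the same mechanism as the paper's Proposition~\ref{checkempty}: since the Vershik order totally orders the level-$(-n)$ arcs of a leaf, an order interval can meet without containing at most the two endpoint arcs $\gamma_{-n}^+(x)$, $\gamma_{-n}^+(x')$, so $\delta_{-n}^+\le 2$, a fortiori sub-exponential. One small slip worth noting: ${\hat \gamma}_{-n}^+$ and ${\check \gamma}_{-n}^+$ are in fact \emph{not} insensitive to whether the endpoints are included (if $x$ is the Vershik-maximum of $\gamma_{-n}^+(x)$ then that arc meets $[x,x']$ but misses $(x,x')$, so ${\hat \gamma}_{-n}^+$ changes), but the bound $\delta_{-n}^+\le 2$ survives because the set difference is still confined to $\gamma_{-n}^+(x)\cup\gamma_{-n}^+(x')$; also ``symmetric difference'' should read ``set difference,'' and the count of level-$(-n)$ arcs inside $\gamma_{k+1}^+(x)$ involves row rather than column sums and is not actually sub-exponential in $n$ (only its finiteness is used). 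Your observation that the sub-exponential growth hypothesis is not really invoked for $\delta_{-n}^+\le 2$ is accurate --- it is what drives Lemma~\ref{arcdecomposition} and the extension of measures from $\mathfrak{B}^+$ to $\overline{\mathcal{R}^+}$, whereas membership of Vershik order intervals in $\overline{\mathcal{R}^+}$ already follows from the uniform bound you give.
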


This proposition is based on a variant of the
Denjoy-Koksma argument, which in our context becomes the following simple observation on the decomposition
of arcs $[x,x'],(x,x'),[x,x'),(x,x']$.

\begin{proposition}
\label{checkempty}

For any $l\in {\mathbb Z}$ there exists an integer $N_l$
satisfying the inequality
\begin{equation}
\label{nlsubexp}
N_l\leq C_{\varepsilon} \exp(\varepsilon |l|)
\end{equation}
and such that the following holds.

Let $l\in {\mathbb Z}$ and let $\gamma$ be an arc from the semiring ${\mathfrak C}({\mathfrak o})$
such that ${\check \gamma}_l(\gamma)=\emptyset.$
Then
$$
\gamma=\gamma^{\prime} \sqcup \bigsqcup\limits_{k=1}^{N_l} \gamma_{l,k} \sqcup
\gamma^{\prime\prime},
$$
where $\gamma_{l,k}\in \C^+_{l-1}$, and
${\check \gamma}_{l-1}(\gamma^{\prime})={\check \gamma}_{l-1}(
\gamma^{\prime\prime})=\emptyset$ (some of the arcs may be empty).
\end{proposition}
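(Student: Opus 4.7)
The plan is to take the middle piece $\bigsqcup_{k=1}^{N_l}\gamma_{l,k}$ to be the collection of all level-$(l-1)$ cylinders contained entirely in $\gamma$, listed consecutively in the Vershik order $\mathfrak{o}$, and to define $\gamma'$ and $\gamma''$ as the portions of $\gamma$ lying respectively to the left and to the right of this collection. The structural fact underlying the construction is that each level-$(l-1)$ cylinder appears as a Vershik-order interval on every leaf it meets, with the level-$(l-1)$ sub-cylinders of a common level-$l$ cylinder forming a consecutive block of such intervals. This is because two points of a single level-$(l-1)$ cylinder share every edge at positions $\geq l-1$, so any Vershik comparison with a point outside the cylinder is resolved at a position $\geq l-1$ at which every interior point of the cylinder is indistinguishable.

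List the level-$(l-1)$ cylinders contained in $\gamma$ as $C_1<C_2<\ldots<C_N$. If some point of $\gamma$ were to lie strictly between $C_i$ and $C_{i+1}$, the level-$(l-1)$ cylinder through it would, being an interval disjoint from both $C_i$ and $C_{i+1}$, be forced into the open gap $(\max C_i,\min C_{i+1})\subset\gamma$, contradicting the enumeration. Hence the $C_i$ are consecutive inside $\gamma$ and yield the disjoint decomposition $\gamma=\gamma'\sqcup C_1\sqcup\ldots\sqcup C_N\sqcup\gamma''$; by maximality, neither $\gamma'$ nor $\gamma''$ can contain a full level-$(l-1)$ cylinder, which is precisely the condition ${\check \gamma}_{l-1}(\gamma')={\check \gamma}_{l-1}(\gamma'')=\emptyset$.

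For the upper bound on $N$, group the $C_i$ according to the level-$l$ cylinder containing each. If three or more distinct level-$l$ cylinders were represented, the $C_i$'s sitting inside any intermediate one would have to exhaust all of its level-$(l-1)$ sub-cylinders, since the $C_i$ are consecutive and entering or leaving a level-$l$ cylinder forces passage through its extremal level-$(l-1)$ sub-cylinders. That intermediate level-$l$ cylinder would then lie entirely in $\gamma$, contradicting ${\check \gamma}_l(\gamma)=\emptyset$. Consequently, at most two level-$l$ cylinders are touched by the $C_i$, so $N$ is bounded by twice the maximum row sum of $A_{l-1}$; the assumed sub-exponential growth of the sequence $A_n$ then delivers the required estimate (\ref{nlsubexp}).

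The main conceptual step is the Vershik-interval property used in the first paragraph; once this is in place the rest is a one-dimensional Denjoy--Koksma-style partitioning, and the only technical chore is to handle uniformly the four possible endpoint conventions ($[x,x']$, $(x,x']$, $[x,x')$, $(x,x')$) for the input arc $\gamma$.
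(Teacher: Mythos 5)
Your proof is correct and follows the same approach as the paper's: the paper takes $N_l$ to be (roughly) twice the maximal row sum of the adjacency matrix plus one and simply asserts as ``immediate'' that an arc from $\mathfrak{C}(\mathfrak{o})$ containing $N_l$ arcs of $\mathfrak{C}^+_{l-1}$ must contain a full arc of $\mathfrak{C}^+_{l}$. Your write-up supplies the details the paper leaves unstated --- the Vershik-interval property of cylinders, the consecutiveness of the $C_i$'s in $\gamma$, and the three-cylinder sandwich argument --- and correctly reduces the bound to the row sums of $A_{l-1}$, which sub-exponential growth controls.
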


In other words, if an arc from the semiring ${\mathfrak C}({\mathfrak o})$ does not contain arcs from
the semi-ring $\C_l^+$, then it cannot contain more than
$C_{\varepsilon} \exp(\varepsilon |l|)$ arcs of the semi-ring $\C_{l-1}^+$.

Proof. One may take
$$
N_l=2\max\limits_k \sum\limits_{i,k=1}^m (A_l)_{ik}+1.
$$

It is immediate that if an arc from the semiring ${\mathfrak C}({\mathfrak o})$  contains $N_l$ arcs from $\C_{l-1}^+$, then it also
contains an arc from $\C_l^+$.  The inequality (\ref{nlsubexp}) follows from the definition of subexponential growth.

Proposition \ref{checkempty} immediately
implies Proposition \ref{measuretoorderarc}.

\subsection {The H{\"o}lder  upper bound.}

Now let $X$ be a uniquely ergodic Markov compactum, let
$\nu^+\in{\mathfrak{V}}^+(X)$ be the positive measure and let
$h^{(n)}$ be the corresponding equivariant sequence, both
normalized in the usual way. We now check that if the components
of the vectors $h^{(n)}$ decay not faster than exponentially, then
each finitely-additive measure $\Phi^+\in\mathfrak{B}^+ $
satisfies a H\"{o}lder-type inequality with respect to $\nu ^+.$

\begin{assumption}\label{lowexpbnd} There exist $C>0,\gamma>0$ such
that the vectors $h^{(n)}$  satisfy
$$\min_{i\in\{1,\ldots,m\}}h_i^{(-n)}\geqslant C e^{-\gamma n}.$$
\end{assumption}

\begin{proposition}\label{hoeldersimple} Let $X$ be a uniquely
Markov compactum satisfying Assumption \ref{lowexpbnd}.
For any $\Phi^+\in\mathfrak{B} ^+$ there exist
$\varepsilon>0,\theta>0$ such that if $\gamma\in{\mathfrak{C}}^+$
satisfies $\nu^+(\gamma)\leqslant\varepsilon,$ then
$$|\Phi^+(\gamma)|\leqslant(\nu^+(\gamma))^{\theta}.$$
\end{proposition}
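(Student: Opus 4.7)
The plan is to relate the combinatorial depth $m$ of a cylinder $\gamma = \gamma^+_m(x)\in\mathfrak{C}^+$ to its $\nu^+$-measure via Assumption \ref{lowexpbnd}, and then substitute this relation into the defining exponential decay of elements of $\mathfrak{B}^+$. The key point is that a cylinder of small $\nu^+$-measure must lie at very negative depth, and at such depths both $\nu^+$ and $|\Phi^+|$ satisfy exponential bounds whose ratio furnishes the H\"older exponent.

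First I record the two ingredients. Since $\Phi^+\in\mathfrak{B}^+$, by definition there are constants $\alpha>0$ and $C_1>0$ with $\max_{x\in X}|\Phi^+(\gamma^+_{-n}(x))|\leq C_1 e^{-\alpha n}$ for every $n\geq 0$. On the other side, $\nu^+$ is the measure associated to the equivariant sequence $h^{(l)}$, so $\nu^+(\gamma^+_m(x)) = h^{(m-1)}_{F(x_m)}$ for every cylinder $\gamma_m^+(x)$.

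Next I argue that small $\nu^+$-measure forces $m\leq 0$. Since each graph $\Gamma_l\in\mathfrak{G}$ has at least one outgoing edge at every vertex, the row sums of $A_l$ are $\geq 1$; the equivariance relation $h^{(l+1)}=A_l h^{(l)}$ therefore yields $\min_i h^{(l+1)}_i\geq \min_j h^{(l)}_j$ for every $l$. Hence $l\mapsto \min_i h^{(l)}_i$ is non-decreasing, and is bounded below by $c_0:=\min_i h^{(0)}_i>0$ for all $l\geq 0$. Choosing $\varepsilon\leq c_0$ then forces any cylinder $\gamma$ with $\nu^+(\gamma)\leq\varepsilon$ to have $m\leq 0$. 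Writing $m=-n$ with $n\geq 0$ and applying Assumption \ref{lowexpbnd} gives $\nu^+(\gamma)\geq C\,e^{-\gamma(n+1)}$, so
$$n\geq \gamma^{-1}\log\bigl(1/\nu^+(\gamma)\bigr) - \mathrm{const}.$$

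Substituting this lower bound for $n$ into $|\Phi^+(\gamma)|\leq C_1 e^{-\alpha n}$ immediately produces $|\Phi^+(\gamma)|\leq C_2\,\nu^+(\gamma)^{\alpha/\gamma}$ for an absolute constant $C_2$. Picking any $\theta\in(0,\alpha/\gamma)$ and shrinking $\varepsilon$ further to absorb the constant $C_2$ yields the desired inequality $|\Phi^+(\gamma)|\leq \nu^+(\gamma)^\theta$. There is no serious obstacle: the only nontrivial input beyond unwinding definitions is the monotonicity of $\min_i h^{(l)}_i$ in $l$, which isolates the regime $l\leq 0$ where Assumption \ref{lowexpbnd} applies.
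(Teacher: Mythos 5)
Your proof is correct and takes essentially the same route as the paper: you combine the defining exponential decay $|\Phi^+(\gamma^+_{-n}(x))|\leq C_1 e^{-\alpha n}$ of elements of $\mathfrak{B}^+$ with the exponential lower bound on $\nu^+(\gamma^+_{-n}(x))$ supplied by Assumption \ref{lowexpbnd}, and read off $\theta<\alpha/\gamma$. The only place you go beyond the paper is in spelling out (via row sums $\geq 1$ and the resulting monotonicity of $\min_i h^{(l)}_i$) why small $\nu^+$-measure forces the cylinder depth to be nonpositive, a step the paper compresses into the phrase ``without losing generality we may assume $n>0$.''
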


Proof. Indeed, take
$\gamma\in{\mathfrak{C}}^+,\gamma=\gamma_{-n}^+(x).$ Without
losing generality we may assume $n>0.$

Then, by definition of the space $\mathfrak{B} ^+(X),$
there exist constants $\widetilde{C}>0,\alpha>0$ depending only on
$\Phi^+$ such that
$$|\Phi^+(\gamma)|\leqslant
\widetilde{C}e^{-\alpha n}.$$

Conversely, by Assumption \ref{lowexpbnd} we have
$$\nu^+(\gamma)=h_{F(x_{-n})}^{(-n-1)}\geqslant Ce^{-\gamma n}.$$

Combining these two bounds and choosing $\varepsilon$ small
enough, we arrive at the desired estimate with any $\theta$
satisfying $$0<\theta<\frac{\alpha}{\gamma}.$$

Proposition \ref{hoeldersimple} admits the following partial converse.
\begin{proposition}
\label{bpluscharacter}
Let $X$ be a uniquely ergodic Markov compactum such that $\nu^+\in {\mathfrak B}^+$.
Take $\Phi^+\in {\mathfrak V}^+(X)$ and assume that there exist
$\varepsilon>0,\theta>0$ such that if $\gamma\in{\mathfrak{C}}^+$
satisfies $\nu^+(\gamma)\leqslant\varepsilon,$ then
\begin{equation}
\label{hoelderest}
|\Phi^+(\gamma)|\leqslant(\nu^+(\gamma))^{\theta}\end{equation}
Then $\Phi^+\in {\mathfrak B}^+$.
\end{proposition}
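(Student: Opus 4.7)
The plan is to use the fact that $\nu^+\in\mathfrak{B}^+$ gives exponential decay of $\nu^+$ on shrinking cylinders and then bootstrap this decay through the given Hölder inequality.

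First I would unpack the definitions. By hypothesis $\nu^+\in\mathfrak{B}^+$, so there exist constants $C_0>0$ and $\alpha_0>0$ such that
$$\max_{x\in X}\,\nu^+\bigl(\gamma^+_{-n}(x)\bigr)\leq C_0 e^{-\alpha_0 n}\qquad\text{for all }n\geq 0.$$
In particular, there exists $n_0$ such that for $n\geq n_0$ and every $x\in X$ one has $\nu^+(\gamma^+_{-n}(x))\leq\varepsilon$, where $\varepsilon$ is the constant appearing in the hypothesis (\ref{hoelderest}).

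Next I would apply the Hölder bound (\ref{hoelderest}) directly to the cylinder $\gamma=\gamma^+_{-n}(x)$. For $n\geq n_0$, this yields
$$\bigl|\Phi^+(\gamma^+_{-n}(x))\bigr|\leq \bigl(\nu^+(\gamma^+_{-n}(x))\bigr)^{\theta}\leq C_0^{\theta}\,e^{-\alpha_0\theta n}.$$
Setting $\alpha=\alpha_0\theta>0$ and choosing the constant $C$ large enough to absorb also the finitely many values $n=0,1,\dots,n_0-1$ (on which $|\Phi^+(\gamma^+_{-n}(x))|$ ranges over a finite set of values by holonomy-invariance, hence is uniformly bounded), we obtain
$$\max_{x\in X}\bigl|\Phi^+(\gamma^+_{-n}(x))\bigr|\leq C e^{-\alpha n}\qquad\text{for all }n\geq 0,$$
which is precisely the defining property of $\mathfrak{B}^+(X)$.

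There is essentially no obstacle here: the whole point is that the Hölder comparison with $\nu^+$ transports the exponential decay of $\nu^+$ on shrinking cylinders to $\Phi^+$. The only detail worth double-checking is that the maximum in the definition of $\mathfrak{B}^+$ is indeed taken over a finite set of distinct values for each fixed $n$ (as noted in the Remark after the definition of $\mathfrak{B}^+$), so that the finitely many initial values of $n<n_0$ cause no trouble and may be absorbed into the constant.
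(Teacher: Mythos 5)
Your proof is correct and takes essentially the same approach as the paper: the hypothesis $\nu^+\in\mathfrak{B}^+$ gives exponential decay of $\nu^+(\gamma^+_{-n}(x))$, and the H\"older inequality transfers this to $\Phi^+$. The paper phrases it in terms of the equivariant sequence $h^{(n)}$, but the substance is identical; your handling of the finitely many small values of $n$ is a reasonable detail the paper leaves implicit.
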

Proof. The condition $\nu^+\in {\mathfrak B}^+$ implies that the norms of the vectors of the positive equivariant sequence $h^{(n)}$ decay exponentially as $n\to-\infty$ and (\ref{hoelderest}) now implies that the same is true
for $\Phi^+$, whence the claim.

{\bf Remark.} Exactly the same propositions are of course valid for the finitely-additive measures on the foliations ${\mathcal F}^-$.

\begin{proposition}
\label{hoelderupper}
Let $X$ be a uniquely
ergodic Markov compactum such that

\begin{enumerate}
\item $\nu^+\in\mathfrak{B} ^+(X).$

\item  Assumption \ref{lowexpbnd} holds for
$X;$

\item The sequence $A_n({X}),n\in\mathbb{Z},$ has
sub-exponential growth.
\end{enumerate}

Let $\mathfrak{o}$ be a Vershik's ordering on $X.$ For any
$\Phi^+\in\mathfrak{B} ^+(X)$ there exist
$\theta>0,\varepsilon>0$ such that for any
$x\in{X},x'\in\gamma^+_{\infty}(x)$ satisfying
$$\nu^+([x,x'])\leqslant\varepsilon$$

we have

$$\Phi^+([x,x'])\leqslant\nu^+([x,x'])^{\theta}.$$

\end{proposition}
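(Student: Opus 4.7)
The plan is to estimate $|\Phi^+([x,x'])|$ by approximating the Vershik interval $[x,x']$ with canonical arcs from $\mathfrak{C}^+$ at a single, carefully chosen level, rather than by recursively decomposing into sub-fringes.

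First, by sub-exponential growth of the matrices $A_n$ and Proposition \ref{measuretoorderarc}, the interval $[x,x']$ belongs to the ring $\overline{\mathcal R}^+$, so $\Phi^+$ is defined on $[x,x']$; by finite additivity on $\overline{\mathcal R}^+$ we may write, for every $n\in\mathbb{N}$,
\[
\Phi^+([x,x'])=\Phi^+(\hat\gamma_{-n}^+([x,x']))-\Phi^+\bigl(\hat\gamma_{-n}^+([x,x'])\setminus[x,x']\bigr).
\]
I will control both terms on the right-hand side at a scale $n=n^*$ determined by $\delta:=\nu^+([x,x'])$.

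Second, I would choose $n^*:=\lfloor\gamma^{-1}\log(1/\delta)\rfloor$ so that $e^{-\gamma n^*}\asymp\delta$. With this choice Assumption \ref{lowexpbnd} gives $\min_i h^{(-n^*-1)}_i\geq Ce^{-\gamma n^*}\asymp C\delta$, so the number of $\mathfrak{C}^+_{-n^*}$ arcs composing $\check\gamma_{-n^*}^+([x,x'])$ is at most $\nu^+([x,x'])/\min_i h^{(-n^*-1)}_i\leq O(1)$. Moreover, since $[x,x']\in\overline{\mathcal R}^+$, the number of $\mathfrak{C}^+_{-n^*}$ arcs sitting on the boundary of $[x,x']$, which is exactly $\delta_{-n^*}^+([x,x'])$, is at most $C_\varepsilon e^{\varepsilon n^*}=C_\varepsilon\delta^{-\varepsilon/\gamma}$. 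Each individual arc $\gamma\in\mathfrak{C}^+_{-n^*}$ satisfies $|\Phi^+(\gamma)|\leq Ce^{-\alpha n^*}=C\delta^{\alpha/\gamma}$ by the defining decay property of $\mathfrak{B}^+(X)$, where $\alpha>0$ is the exponent associated to $\Phi^+$.

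Third, multiplying the arc count by the per-arc bound yields both $|\Phi^+(\hat\gamma_{-n^*}^+([x,x']))|\leq C'\delta^{(\alpha-\varepsilon)/\gamma}$ and $|\Phi^+(\hat\gamma_{-n^*}^+([x,x'])\setminus[x,x'])|\leq C'\delta^{(\alpha-\varepsilon)/\gamma}$, so that $|\Phi^+([x,x'])|\leq 2C'\delta^{(\alpha-\varepsilon)/\gamma}$. Choosing $\theta$ strictly between $0$ and $(\alpha-\varepsilon)/\gamma$ and requiring $\delta$ to be small enough to absorb the constant $2C'$ into the slight weakening of the exponent yields $\Phi^+([x,x'])\leq\nu^+([x,x'])^\theta$; the threshold on $\delta$ is the $\varepsilon$ appearing in the statement.

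The main obstacle is to resist iterating Proposition \ref{checkempty} naively: a single application splits $[x,x']$ into full $\mathfrak{C}^+_{l-1}$ blocks plus two boundary fringes, and iterating produces $2^j$ sub-fringes after $j$ steps, so that the naive summation of $|\Phi^+|$ across the resulting decomposition converges only if $\alpha>\log 2$. The shortcut is to use the global sub-exponential estimate $\delta_{-n}^+([x,x'])\leq C_\varepsilon e^{\varepsilon n}$ from $[x,x']\in\overline{\mathcal R}^+$ \emph{directly} at a single well-tuned scale $n^*$; this bypasses the binary-tree blow-up and yields the H\"older exponent $\theta<\alpha/\gamma$, matching Proposition \ref{hoeldersimple}.
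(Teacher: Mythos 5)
The central gap is in your third step: you claim $|\Phi^+(\hat\gamma_{-n^*}^+([x,x'])\setminus[x,x'])|\leq C'\delta^{(\alpha-\varepsilon)/\gamma}$ by ``multiplying the arc count by the per-arc bound,'' but the set $\hat\gamma_{-n^*}^+([x,x'])\setminus[x,x']$ is \emph{not} a finite union of level-$(-n^*)$ Markovian arcs. It consists of two fragments --- a terminal piece of the leftmost level-$(-n^*)$ arc meeting $[x,x']$ and an initial piece of the rightmost --- and neither belongs to the semi-ring $\mathfrak{C}^+_{-n^*}$. The defining decay estimate $|\Phi^+(\gamma_{-n}^+(y))|\leq Ce^{-\alpha n}$ applies only to full Markovian arcs, so to control $\Phi^+$ on these two fragments you must decompose them into Markovian arcs at all finer scales $-m<-n^*$, which is precisely the multi-scale sum you set out to avoid. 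A single-level estimate gives no control on this error term; the computation for $\hat\gamma_{-n^*}^+([x,x'])$ itself is fine, but it is only the ``easy half'' of the bound.

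Your reason for avoiding the paper's route is also mistaken. Iterating Proposition \ref{checkempty} on the left and right fringes does not generate $2^j$ sub-fringes after $j$ steps. Each Markovian arc $\gamma_n^+(y)$ is order-convex for the Vershik ordering, and the level-$l$ arcs refine into level-$(l-1)$ arcs; hence the inner boundary of each fringe --- the endpoint facing the middle block of full level-$(l-1)$ arcs --- coincides with a Markovian endpoint at every subsequent finer level, so one of the two sub-fringes produced by each iteration is always empty. There are therefore never more than two fringes in total, and the per-level arc count $M_l$ in Lemma \ref{arcdecomposition} is bounded by $C_\varepsilon e^{\varepsilon|l|}$ with a constant $C_\varepsilon$ \emph{uniform over the arc} (it comes from the adjacency matrices via Proposition \ref{checkempty}, not from the definition of $\overline{\mathcal R}^+$, where the constant would depend on $[x,x']$). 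The series $\sum_l M_l\,\tilde C e^{-\alpha|l|}$ then converges geometrically for any $\varepsilon<\alpha$, with no constraint of the form $\alpha>\log 2$; combined with $\nu^+([x,x'])\geq Ce^{-\gamma|n^+([x,x'])|}$ from Assumption \ref{lowexpbnd}, this yields $\theta<(\alpha-\varepsilon)/\gamma$ exactly as in Proposition \ref{hoeldersimple}. The paper's argument is thus already the shortcut you were looking for.
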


To prove the proposition, we first reformulate  Proposition \ref{checkempty} in the following way.
\begin{lemma}
\label{arcdecomposition}
Let $X$ be a
Markov compactum whose sequence $A_n({X}),n\in\mathbb{Z},$ of adjacency matrices
has sub-exponential growth.

For any $l\in {\mathbb Z}$ there exists an integer $M_l$
satisfying the inequality
\begin{equation}
\label{nlsubexp2}
M_l\leq C_{\varepsilon} \exp(\varepsilon |l|)
\end{equation}
such that the following holds.

Let $\gamma$ be an arc of the   from the semiring ${\mathfrak C}({\mathfrak o})$
Then there is a decomposition
\begin{equation}
\label{arcdecom}
\gamma=\bigsqcup\limits_{l=n^+(\gamma)}^{-\infty} \bigsqcup\limits_{k=1}^{M_l} \gamma_{l,k}
\end{equation}
such that $\gamma_{l,k}\in \C_l^+$.
\end{lemma}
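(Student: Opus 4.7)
The plan is to iterate Proposition \ref{checkempty} starting at level $l = n^+(\gamma)+1$ (where $\check\gamma_{n^+(\gamma)+1}(\gamma)=\emptyset$ by definition of $n^+(\gamma)$) and descending to $l=-\infty$. At the first step this produces at most $N_{n^+(\gamma)+1}$ arcs of $\mathfrak{C}_{n^+(\gamma)}^+$ together with two leftover arcs $L_1, R_1$ at the left and right endpoints of $\gamma$, each satisfying the hypothesis of Proposition \ref{checkempty} at the next lower level. I then proceed inductively: at step $j$ I apply Proposition \ref{checkempty} separately to $L_j$ and $R_j$, extracting bulk arcs of $\mathfrak{C}_{n^+(\gamma)-j+1}^+$ and leaving new leftovers at the two ends of each.

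The crucial step will be to show that after each application of Proposition \ref{checkempty}, the ``inner'' new leftover (the one adjacent to bulk arcs already extracted at a previous step) can be taken to be empty, so that the undecomposed portion of $\gamma$ always consists of at most two arcs adjacent to the original endpoints of $\gamma$. This follows from the Markov refinement property: the right endpoint of $L_j$ coincides with the left endpoint of some previously extracted $\mathfrak{C}_{l+1}^+$ arc $B$, and $B$ is in turn the disjoint union of finitely many consecutive $\mathfrak{C}_l^+$ arcs in the ambient leaf partition. Letting $C$ be the $\mathfrak{C}_l^+$ arc immediately to the left of $B$ in this partition, either $C \subset L_j$ (so $C$ is itself a bulk arc at the current step whose right endpoint coincides with the right endpoint of $L_j$, leaving no inner leftover) or else $L_j \subsetneq C$ (in which case the bulk extracted from $L_j$ at this step is empty and all of $L_j$ may be placed as the outer leftover).

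Setting $M_{n^+(\gamma)} = N_{n^+(\gamma)+1}$ and $M_l = 2N_{l+1}$ for $l < n^+(\gamma)$, the subexponential bound (\ref{nlsubexp2}) then follows immediately from (\ref{nlsubexp}) up to adjusting the constant $C_\varepsilon$ by an absolute factor. The decomposition exhausts $\gamma$ as $l \to -\infty$ because arcs of $\mathfrak{C}_l^+$ shrink to points as $l \to -\infty$, so any interior point of $\gamma$ is eventually captured by a bulk arc at some sufficiently small level and the leftover end pieces contract to (subsets of) the two endpoints of $\gamma$.

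The main obstacle is the geometric bookkeeping behind the ``inner leftover is empty'' claim, which requires careful tracking of how the Markov refinement aligns the cylinder partitions of neighbouring levels; once this alignment is verified, the counting at each level and the convergence of the recursion reduce to direct applications of Proposition \ref{checkempty} and the subexponential growth hypothesis on the adjacency matrices.
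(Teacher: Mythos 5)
Your iteration of Proposition \ref{checkempty} is exactly the argument the paper treats as an immediate ``reformulation'' and leaves implicit, and the key point you supply — that Markov refinement forces the inner boundary of each leftover to coincide with a $\mathfrak{C}^+_{l}$-arc boundary, so only the two outer leftovers survive at each step and the count stays at $2N_{l+1}$ rather than blowing up like $2^j$ — is precisely the technical observation needed to make the iteration close. The accounting $M_{n^+(\gamma)}=N_{n^+(\gamma)+1}$, $M_l=2N_{l+1}$ for $l<n^+(\gamma)$, together with $N_{l+1}\le C_\varepsilon e^{\varepsilon|l+1|}\le C_\varepsilon e^\varepsilon e^{\varepsilon|l|}$, gives the claimed bound after enlarging the constant, and exhaustion holds because every interior point $y$ of $\gamma=[x,x']$ satisfies $\gamma^+_l(y)\subset\gamma$ once $l$ is below the level at which $y$ separates from both $x$ and $x'$. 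The proof is correct.
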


Informally, Lemma \ref{arcdecomposition} says that any arc of our
symbolic flow is approximable by ``Markovian'' arcs with sub-exponential error; we illustrate this by Figure \ref{fig:five} .

\begin{figure}
\begin{center}
\includegraphics{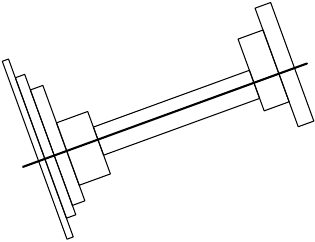}\\
\caption{The number of small arcs grows at most subexponentially}\label{fig:five}
\end{center}
\end{figure}
Proposition \ref{hoelderupper} is now clear from Proposition\ref{hoeldersimple} and Lemma
\ref{arcdecomposition}.

\subsection{Concatenation and aggregation.}

For two graphs $\Gamma, \Gamma^{\prime}\in\mathfrak G$, their concatenation $\Gamma\Gamma'\in\mathfrak G$ is defined as follows.
The set of edges ${\cal E} (\Gamma\Gamma')$ is given by the formula
$$
{\cal E} (\Gamma\Gamma')=\{(e,e^{\prime}),\ e\in{\cal E}(\Gamma),e\in\mathcal E(\Gamma^{\prime}), I(e)=F(e^{\prime})\},
$$
and we define $F(e,e^{\prime})=F(e), I(e,e^{\prime})=I(e^{\prime})$.
We clearly have $A(\Gamma\Gamma^{\prime})=A(\Gamma^{\prime})A(\Gamma)$.

Denote by $W({\mathfrak G})$ the set of all finite words over the alphabet ${\mathfrak G}$, and for
a word $w\in W({\mathfrak G})$, $w=w_0\dots w_n$, $w_i\in {\mathfrak G}$,
let $\Gamma(w)$ stand for the concatenation $w_0\dots w_n$ of the graphs $w_0, \dots, w_n$.

Now take a sequence $\Gamma_n\in {\mathfrak G}$, $n\in {\mathbb Z}$, and a strictly increasing
sequence of indices $i_n\in {\mathbb Z}$, $n\in {\mathbb Z}$.
Consider the concatenations
$$
{\check \Gamma}_n=\Gamma_{i_{n}}\dots \Gamma_{i_{n+1}-1}.
$$

The sequence ${\check \Gamma}_n$ will be called an {\it aggregation} of the sequence $\Gamma_n$, while the
sequence $\Gamma_n$ will be called a {\it refinement} of ${\check \Gamma}_n$.

Let $X$ be the Markov compactum corresponding to the sequence $\Gamma_n$,
${\check X}$ the Markov compactum corresponding to the sequence ${\check \Gamma}_n$.

We have a natural ``tautological'' homeomorphism
\begin{equation}
{\mathfrak t}_{Ag(i_n)}: X \to {\check X}.
\end{equation}

By definition, the homeomorphism  ${\mathfrak t}_{Ag(i_n)}$ sends foliations $\F^{\pm}_X$ to the respective
foliations $\F^{\pm}_{\check X}$ and identifies the spaces ${\mathfrak V}^{\pm}(X)$ and ${\mathfrak V}^{\pm}({\check X})$.

The Markov compactum $X$ is uniquely ergodic if and only if ${\check X}$ is, and
if $\Phi^+\in \BB^+(X)$, then $\left({\mathfrak t}_{Ag(i_n)}\right)_*\Phi^+\in \BB^+({\check X})$.

\subsection{Symbolic flows.}

Let $X$ be a uniquely ergodic Markov compactum corresponding to the sequence of graphs $\Gamma_l$ and assume that a Vershik's ordering is given on each $\Gamma_l$ and, thus, a linear ordering is induced on each leaf $\gamma_{\infty}^+(x)$.

Let $Max({\oo})$ be the set of points $x\in X$, $x=(x_n)_{n\in {\mathbb Z}}$, such that
each $x_n$ is a maximal edge.
Similarly,  $Min({\oo})$ denotes the set of points $x\in X$, $x=(x_n)_{n\in {\mathbb Z}}$, such that
each $x_n$ is a minimal edge. Since edges starting at a given vertex are ordered linearly,
the cardinalities of $Max({\oo})$ and $Min({\oo})$ do not exceed $m$.

If a leaf $\gamma^+_{\infty}$ does not intersect  $Max({\oo})$, then it does not have a maximal
element; similarly, if $\gamma^+_{\infty}$ does not intersect  $Min({\oo})$, then it does not have a
minimal element.

\begin{proposition}
\label{maxt}
Let $x\in X$.  If $\gamma^+_{\infty}(x)\cap Max(\oo)=\emptyset$, then for any $t\geq 0$ there exists a point $x^{\prime}\in
\gamma^+_{\infty}(x)$ such that
\begin{equation}
\label{hplust}
\nu_X^+([x, x^{\prime}])=t.
\end{equation}
\end{proposition}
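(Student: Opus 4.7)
The plan is to construct $x'$ by a greedy, top-down procedure: specifying the coordinates $x'_l$ starting from a high level $N$ and proceeding to lower levels, so that the measure $\nu^+_X([x,x'])$ converges to the prescribed value $t$. The whole argument is monotonicity plus an intermediate-value-type refinement, exactly as for ordinary interval exchange maps, but translated into the symbolic language.

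First I would establish the upper bound. By Assumption \ref{uniquergodic}, one has $\min_i h^{(l)}_i \to \infty$ as $l \to \infty$, and $\nu^+_X(\gamma^+_N(x)) = h^{(N-1)}_{F(x_N)}$, so choosing $N$ large enough makes this exceed $t$. The hypothesis $\gamma^+_\infty(x)\cap Max(\oo)=\emptyset$ guarantees that within $\gamma^+_N(x)$ there exists at least one point strictly greater than $x$ in the Vershik order (indeed, the ``rightmost'' point of $\gamma^+_N(x)$, which has maximal edges below level $N$, would lie in $Max(\oo)$ only if some forward iterate of $x$ did). Taking such a $y^{(N)} \geq x$ with $\nu^+_X([x,y^{(N)}]) \geq t$ initializes the induction.

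Next, given an approximation $y^{(l)}\geq x$ in $\gamma^+_\infty(x)$ with $\nu^+_X([x,y^{(l)}]) \geq t$, I would refine it to $y^{(l-1)}$ by replacing the coordinates at level $l-1$ (and re-minimizing below) with the largest Vershik-compatible edge for which the resulting interval still has measure $\geq t$, decreasing $y^{(l)}$ within its level-$l$ cylinder if necessary. At each refinement step the change to $\nu^+_X([x,y^{(l)}])$ is at most a single level-$(l-1)$ cylinder mass, which is bounded by $\max_i h^{(l-2)}_i \to 0$ as $l\to -\infty$ (again by Assumption \ref{uniquergodic}, since $|\lambda^{(l)}|\to 0$ as $l\to-\infty$ forces the dual heights at the opposite end to shrink in the sense needed here; more directly, $\nu^+_X$ is the positive measure, and by construction cylinders $\gamma^+_{-n}(\cdot)$ shrink in measure as $n\to\infty$). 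The coordinates $y^{(l)}_k$ stabilize for $k \geq N$ after finitely many steps, so the sequence $y^{(l)}$ converges in the product topology of $X$ to a limit $x'$, and by construction $x'_k = x_k$ for $k \geq N$, placing $x'$ in $\gamma^+_\infty(x)$.

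Finally I would conclude $\nu^+_X([x,x']) = t$ by passing to the limit: the partial approximations satisfy $|\nu^+_X([x,y^{(l)}]) - t| \leq \max_i h^{(l-2)}_i \to 0$, and the arc measure is continuous for such monotone limits. The main obstacle is the greedy step at each level: one must guarantee that the refined choice at level $l-1$ does not over- or undershoot beyond repair. This is precisely where $\gamma^+_\infty(x)\cap Max(\oo)=\emptyset$ is essential---it ensures that no forward coordinate of $x$ ever forms a ``maximal ceiling'' that would obstruct either moving the approximation strictly to the right of $x$ at a given level, or splitting a level-$l$ cylinder into strictly smaller pieces at level $l-1$. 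Without this hypothesis one would instead obtain only the inequality $\nu^+_X([x,x'])\leq t$ with a gap equal to the measure of the maximal ``boundary'' leaf.
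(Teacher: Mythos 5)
Your approach --- a top-down greedy construction of $x'$ coordinate by coordinate, together with a ``granularity $\to 0$'' argument to force convergence to the prescribed measure --- is close in spirit to the paper's, and uses the same ingredients: the behaviour of the positive equivariant sequence $h^{(l)}$ from Assumption \ref{uniquergodic}, and the hypothesis $\gamma^+_\infty(x)\cap Max(\oo)=\emptyset$. The paper organizes the same two estimates more softly: it sets $V(x)=\{t\geq 0: \exists\, x'\geq x \text{ with } \nu^+_X([x,x'])=t\}$ and shows $V(x)=[0,\infty)$ by proving that $V(x)$ is unbounded (Assumption \ref{uniquergodic}(4) plus the $Max(\oo)$ hypothesis), dense (Assumption \ref{uniquergodic}(3) --- this is precisely your observation that changing a single level-$(l-1)$ edge moves the measure by at most $\max_i h^{(l-2)}_i \to 0$), and closed (compactness of $X$), with no explicit induction needed.

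There is, however, a genuine gap in your initialization. You claim that $\gamma^+_\infty(x)\cap Max(\oo)=\emptyset$ guarantees a point $y\in\gamma^+_N(x)$ with $y>x$, and then conclude that some such $y^{(N)}$ satisfies $\nu^+_X([x,y^{(N)}])\geq t$. Neither step is warranted. If $x_k$ is maximal for all $k<N$, then $x$ is the rightmost point of $\gamma^+_N(x)$ and no point of that cylinder lies strictly to the right of $x$ --- and this can happen at any fixed $N$ even when the leaf avoids $Max(\oo)$. More importantly, the mere existence of some $y\in\gamma^+_N(x)$ with $y>x$ gives no lower bound on $\nu^+_X([x,y])$: that quantity may be arbitrarily small compared with $\nu^+_X(\gamma^+_N(x))=h^{(N-1)}_{F(x_N)}$, depending on how far to the right $x$ already sits inside the cylinder, so ``cylinder mass $\to\infty$'' does not by itself yield a point at distance $\geq t$. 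The hypothesis $\gamma^+_\infty(x)\cap Max(\oo)=\emptyset$ is equivalent to saying that $x_k$ is non-maximal for an unbounded set of indices $k$. The correct initialization, as in the paper, uses this directly: for any $n$, choose $k_0>n$ with $x_{k_0}$ non-maximal, replace $x_{k_0}$ by its successor and minimize below level $k_0$ to obtain $x''\in\gamma^+_\infty(x)$ whose whole cylinder $\gamma^+_n(x'')$ lies entirely to the right of $x$; then for any $y\in\gamma^+_n(x'')$ one has $\nu^+_X([x,y])\geq \nu^+_X(\gamma^+_n(x''))=h^{(n-1)}_{F(x''_n)}\geq\min_i h^{(n-1)}_i\to\infty$. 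With this fix, the rest of your greedy induction and limiting argument goes through.
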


Proof. Let $V(x)=\{t: \exists x^{\prime}\geq x: \nu_X^+([x, x^{\prime}])=t\}$.
Since  $\gamma^+_{\infty}(x)\cap Max(\oo)=\emptyset$, for any $n$ there exists
$x^{\prime\prime}\in \gamma^+_{\infty}(x)$ such that all points in $\gamma_n^+(x^{\prime\prime})$ are greater
than $x$. By Assumption \ref{uniquergodic}, the quantity
$\nu_X^+(\gamma_n^+(x^{\prime\prime}))$ goes to $\infty$
uniformly in $x^{\prime\prime}$,
as $n\to\infty$.
The set $V(x)$ is thus unbounded. Furthermore,  by Assumption \ref{uniquergodic}, the quantity $\nu_X^+(\gamma_n^+(x^{\prime\prime}))$ decays
to $0$, uniformly in
$x^{\prime\prime}$, as $n\to-\infty$, whence the set $V(x)$ is dense in ${\mathbb R}_+$.
Finally, by compactness of $X$, the
set $V(x)$ is closed, which concludes the proof of the Proposition.

A similar proposition, proved in the same way, holds for negative $t$.
\begin{proposition}
\label{mint}
Let $x\in X$.  If $\gamma^+_{\infty}(x)\cap Min(\oo)=\emptyset$,
then for any $t\geq 0$ there exists a point $x^{\prime}\in \gamma^+_{\infty}(x)$
such that
\begin{equation}
\label{hminust}
\nu^+_X([x^{\prime}, x])=t.
\end{equation}
\end{proposition}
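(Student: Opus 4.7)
The plan is to mirror the proof of Proposition \ref{maxt}, reversing the direction of the Vershik ordering $\oo$. I would define
\[V^-(x) = \{t\geq 0 : \exists\,x'\in\gamma^+_\infty(x)\text{ with }x'\leq x\text{ and }\nu^+_X([x',x]) = t\},\]
and then check that $V^-(x)$ is unbounded, dense in $\mathbb{R}_+$, and closed in $\mathbb{R}_+$, which together force $V^-(x)=\mathbb{R}_+$ and hence the proposition.

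For unboundedness, I would first observe that the hypothesis $\gamma^+_\infty(x)\cap Min(\oo)=\emptyset$ forces the existence of arbitrarily large indices $N$ at which $x_N$ is not minimal among edges starting at $I(x_N)$: if $x_N$ were minimal for every $N\geq N_0$, then the point obtained by keeping $x_k$ for $k>N_0$ and replacing $x_k$ for $k\leq N_0$ with the minimal available edge at each level would lie in $\gamma^+_\infty(x)\cap Min(\oo)$, a contradiction. Given $n$, I pick such $N\geq n$, let $x''$ agree with $x$ above index $N$, take $x''_N$ to be the predecessor of $x_N$ in $\oo$, and extend $x''$ arbitrarily for indices below $N$. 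Then $x''<x$, and by the definition of the Vershik ordering every point of $\gamma^+_n(x'')$ lies strictly below $x$. Consequently $\nu^+_X(\gamma^+_n(x''))\in V^-(x)$, and by Assumption \ref{uniquergodic}(4) this measure tends to $+\infty$ uniformly in $x''$ as $n\to\infty$, showing $V^-(x)$ is unbounded.

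Density of $V^-(x)$ in $\mathbb{R}_+$ follows from Assumption \ref{uniquergodic}(3): since $\max_y\nu^+_X(\gamma^+_n(y))\to 0$ as $n\to -\infty$, the measure $\nu^+_X([x',x])$ can be refined in arbitrarily small increments by adjusting $x'$ at deeper levels. Closedness of $V^-(x)$ in $\mathbb{R}_+$ is a direct consequence of the compactness of $X$, exactly as in Proposition \ref{maxt}.

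The only substantive asymmetry with Proposition \ref{maxt} lies in producing the witness $x''$ in the unboundedness step, and once one extracts from the hypothesis the non-minimality of $x_N$ at arbitrarily large $N$, the rest of the argument is a direct transcription of the proof of Proposition \ref{maxt}. The main obstacle is therefore essentially notational rather than substantive.
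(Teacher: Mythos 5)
Your proof is correct and matches the paper's intent, which dispatches Proposition \ref{mint} with the remark that it is "proved in the same way" as Proposition \ref{maxt}. Your explicit derivation that the hypothesis forces $x_N$ to be non-minimal at arbitrarily large $N$, and the construction of the witness $x''$ via the predecessor of $x_N$, supply precisely the details the paper leaves implicit even in its proof of Proposition \ref{maxt}; the density and closedness steps then transcribe directly.
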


Our next aim is to construct a flow $h_t^+$ such that for all
$t\geq 0$ we have $h_t^+x\in \gamma_{\infty}^+(x)$ and $\nu^+_X([x, h_tx])=t$.
Note, however, that the above conditions do not determine the point $h_t^+x$ uniquely.
We therefore modify the Markov compactum $X$ by gluing together the points $x,x^{\prime}$
such that $x<x^{\prime}$ but $(x,x^{\prime})=\emptyset$.

Define an equivalence relation $\sim$ on $X$ by writing $x\sim x^{\prime}$ if $x\in \gamma_{\infty}^+(x^{\prime})$
and $(x, x^{\prime})=(x^{\prime}, x)=\emptyset$. The equivalence classes admit the following explicit description, which is
clear from the definitions.
\begin{proposition}
\label{eqrel}
Let  $x, x^{\prime}\in X$ be such
that $x\in \gamma_{\infty}^+(x^{\prime})$, $x<x^{\prime}$ and
$\nu_X^+([x, x^{\prime}])=0$.
Then there exists $n\in {\mathbb Z}$ such that
\begin{enumerate}
\item $x^{\prime}_n$ is a successor of $x_n$;
\item $x$ is the maximal element in $\gamma_n(x)$;
\item  $x^{\prime}$ is the minimal element in $\gamma_n(x^{\prime})$.
\end{enumerate}
\end{proposition}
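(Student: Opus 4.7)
The plan is to take $n$ to be the largest index at which the two paths disagree and to verify the three asserted properties for this value of $n$. Since $x\in\gamma^+_\infty(x')$ with $x\neq x'$, such an integer exists; by construction $x_t=x'_t$ for all $t>n$ and $x_n\neq x'_n$. The Markov compatibility conditions yield $I(x_n)=F(x_{n+1})=F(x'_{n+1})=I(x'_n)$, so $x_n$ and $x'_n$ are comparable in the Vershik ordering, and the assumption $x<x'$ then forces $x_n<x'_n$.

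All three conclusions will be established by the same mechanism: if any one of them fails, I will produce a nonempty cylinder from $\mathfrak{C}^+$ lying in the open interval $(x,x')$. Because Assumption \ref{uniquergodic} guarantees that every component of each vector $h^{(l)}$ is strictly positive, such a cylinder must carry strictly positive $\nu^+_X$-mass, contradicting the hypothesis $\nu^+_X([x,x'])=0$.

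For the successor property, I would assume the existence of an edge $e$ of $\Gamma_n$ with $I(e)=I(x_n)$ and $x_n<e<x'_n$, then consider the cylinder of all points $z$ with $z_t=x_t$ for $t>n$ and $z_n=e$; a direct comparison shows every such $z$ lies in $(x,x')$, the required contradiction. For the maximality of $x$ in $\gamma^+_n(x)$, I would take a putative $y\in\gamma^+_n(x)$ with $y>x$, let $k<n$ be the largest index at which $y_k\neq x_k$, and observe that the whole cylinder $\gamma^+_k(y)$ is contained in $(x,x')$: for any $z\in\gamma^+_k(y)$ one has $z_t=x_t$ for $t>k$ with $z_k=y_k>x_k$ (so $z>x$), while $z_t=x'_t$ for $t>n$ with $z_n=x_n<x'_n$ (so $z<x'$). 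The minimality of $x'$ in $\gamma^+_n(x')$ follows by the symmetric argument applied to points $y\in\gamma^+_n(x')$ with $y<x'$.

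The argument amounts to purely combinatorial bookkeeping on the Vershik ordering, and the only step that demands real care is the verification that each of the three auxiliary cylinders genuinely lies inside the open interval $(x,x')$; once that is done, positivity of the coordinates of $h^{(l)}$ on nonempty cylinders closes the argument.
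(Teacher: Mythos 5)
Your argument is correct, and it supplies exactly the details that the paper omits when it labels this proposition ``clear from the definitions.'' The choice of $n$ as the largest index of disagreement is the right one; the Markov compatibility $F(x_{n+1})=I(x_n)$ makes $x_n$ and $x'_n$ comparable, and the strategy of deriving a contradiction from a nonempty cylinder of $\mathfrak{C}^+$ sitting inside $(x,x')$ is the natural way to exploit the hypothesis $\nu^+_X([x,x'])=0$, since $\nu^+(\gamma^+_{n+1}(w))=h^{(n)}_{F(w_{n+1})}>0$ by the strict positivity in Assumption \ref{uniquergodic}. One small point worth making explicit, because your argument leans on it without saying so: each auxiliary cylinder is nonempty because every graph in $\mathfrak{G}$ has, at every vertex, at least one incoming edge, so a finite future can always be extended to a full bi-infinite path. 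With that observation included, the verification that the three cylinders lie in $(x,x')$ is exactly the careful bookkeeping you describe, and the proof is complete.
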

In other words, $\nu_X^+([x, x^{\prime}])=0$ if and only if
$(x, x^{\prime})=\emptyset$.
In particular, equivalence classes consist at most of two points and,
$\nu$-almost surely, of only one point.

Denote $X_{\oo}=X/{\sim}$, let $\pi_{\oo}: X\to X_{\oo}$ be the projection map and set
$\nu_{\oo}=(\pi_{\oo})_*\nu$. The probability spaces $(X_{\oo}, \nu_{\oo})$ and $(X,\nu)$ are measurably isomorphic;
in what follows, we shall often omit the index $\oo$. The foliations ${\cal F}^+$ and ${\cal F}^-$ descend to the space $X_{\oo}$;
we shall denote their images on $X_{\oo}$ by the same letters and, as before, denote by
$\gamma^+_{\infty}(x)$, $\gamma_{\infty}^-(x)$ the leaves containing $x\in X_{\oo}$.

Now let $x\in X_{\oo}$ satisfy $\gamma^+_{\infty}(x)\cap Max(\oo)=\emptyset$.
By Proposition \ref{maxt}, for any $t\geq 0$ there exists a unique $x^{\prime}$ satisfying
(\ref{hplust}). Denote $h_t^+(x)=x^{\prime}$.
Similarly, if $x\in X_{\oo}$ satisfy $\gamma^+_{\infty}(x)\cap Min(\oo)=\emptyset$.
By Proposition \ref{mint}, for any $t\geq 0$ there exists a unique $x^{\prime}$ satisfying
(\ref{hminust}). Denote $h_{-t}^+(x)=x^{\prime}$.

We thus obtain a flow $h_t^+$, which is well-defined on the set
$$
X_{\oo}\setminus \Big(\bigcup\limits_{x\in Max(\oo)\cup Min(\oo)}\gamma_{\infty}^+(x)\Big),
$$
and, in particular, $\nu$-almost surely on $X_{\oo}$.
By definition the flow $h_t^+$ preserves the measure $\nu$.

The flow $h_t^+$ is a suspension flow over the Vershik's automorphism corresponding to
the one-sided Markov compactum $Y$ given by  the sequence $\Gamma_n$, $n\geq 1$.
The roof function is simply the piecewise constant function $h^{(1)}_{F(y_1)}$.

A Vershik's ordering on two graphs $\Gamma, \Gamma^{\prime}$ yields an ordering on their concatenation $\Gamma\Gamma^{\prime}$:
one sets $(e,e^{\prime})<({\tilde e}, {\tilde e}^{\prime})$ if $e^{\prime}<{\tilde e}^{\prime}$ or if
$e^{\prime}={\tilde e}^{\prime}$, $e<{\tilde e}$.

Thus, if a Markov compactum $X$ is endowed with a Vershik's ordering $\oo$, and the Markov compactum ${\check X}$ is obtained
from $X$ by concatenation with respect to a strictly increasing sequence $(i_n)$, then ${\check X}$ is automatically also
endowed with a Vershik's ordering ${\check \oo}$, and
the map ${\mathfrak t}_{Ag(i_n)}$ sends the flow $h_t^{+, \oo}$ on $X$ to the flow $h_t^{+, {\check \oo}}$ on ${\check X}$.

In a similar way, assume that for every graph $\Gamma_n$, $n\in {\mathbb Z}$,
a linear ordering ${\tilde o}$ is given on all the edges ending at a given vertex.
Such an ordering will be called a {\it reverse Vershik's ordering}.
In the same way as above, a  reverse Vershik's ordering induces a $\nu$-preserving
flow on the leaves of the foliation $\F^-$.

\subsection{H{\"o}lder cocycles.}

As before, we consider a uniquely ergodic Markov compactum $X$  endowed with a Vershik's
ordering ${\mathfrak o}$, and we denote by $h_t^+$ the resulting flow.
By an {\it arc} of the flow $h_t^+$ we mean a set of the
type
\begin{equation}
\label{gammaxt}
\gamma(x,t)=\{y\in \gamma^+(x), x\leq y < h_t^+(x)\}, x\in X, t\geq 0.
\end{equation}

In other words, an arc is the image, under the quotient map
by the equivalence relation $\sim_{\oo}$, of an interval
$[x, x^{\prime})=\{x^{\prime\prime}: x\leq x^{\prime\prime}< x^{\prime}\}$.

By Proposition \ref{measuretoorderarc}, if the adjacency matrices of our Markov compactum have subexponential growth, then all the finitely-additive measures from ${\mathfrak B}^+$ can be extended
to all arcs of the flow $h_t^+$, or, in other words, we have the following
\begin{proposition}
\label{measuretococycle}
Any arc of the flow $h_t^+$ belongs to the ring ${\overline \R}^+$.
\end{proposition}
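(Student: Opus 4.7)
The plan is to deduce the statement directly from Proposition \ref{measuretoorderarc}. By equation \eqref{gammaxt}, an arc $\gamma(x,t)$ of the flow $h_t^+$ is, by definition, the image under the quotient projection $\pi_\oo:X\to X_\oo$ of a half-open Vershik interval $[x,h_t^+(x))$. Such intervals are members of the semi-ring $\C(\oo)$, and the hypothesis of subexponential growth of the adjacency matrices is in force throughout this subsection; therefore Proposition \ref{measuretoorderarc} asserts precisely that every element of $\C(\oo)$ belongs to $\overline{\R}^+$. In particular the lift $[x,h_t^+(x))\subset X$ of our arc lies in $\overline{\R}^+$.

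It remains to transfer this membership to the quotient $X_\oo$. By Proposition \ref{eqrel}, each equivalence class of $\sim_\oo$ has cardinality at most two, and the identified pairs are exactly the maximal/successor-minimal pairs concentrated on the finitely many $h_t^+$-leaves that meet $Max(\oo)\cup Min(\oo)$. Consequently the projection alters the approximating sets $\hat\gamma_{-n}^+$ and $\check\gamma_{-n}^+$ by at most a uniformly bounded number of points per level, which does not affect the subexponential count $\delta_{-n}^+\leq C_\varepsilon\exp(\varepsilon n)$ entering the definition of $\overline{\R}^+$. Hence $\gamma(x,t)\in\overline{\R}^+$ on $X_\oo$ as well.

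The genuine analytical content has already been absorbed into Proposition \ref{measuretoorderarc}, which rests on the Denjoy--Koksma-type combinatorial bound of Proposition \ref{checkempty}: an interval of $\C(\oo)$ containing no arc from $\C_l^+$ can be covered by at most $N_l\leq C_\varepsilon\exp(\varepsilon|l|)$ arcs of $\C_{l-1}^+$, plus two ``boundary'' remainders that again contain no arc of $\C_{l-1}^+$. Iterating this decomposition level by level yields the required subexponential growth of $\delta_{-n}^+([x,x'))$. This iteration is the only substantive step, and modulo it the present proposition is essentially immediate; the only care needed is in checking that the passage to the quotient $X_\oo$ introduces no new contributions, as described above.
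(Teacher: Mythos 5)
Your proposal is correct and matches the paper's approach: the paper presents Proposition~\ref{measuretococycle} as an immediate reformulation of Proposition~\ref{measuretoorderarc} (under the standing subexponential-growth hypothesis), and your argument reduces it to exactly that result. The extra paragraph on passing to the quotient $X_{\mathfrak o}$ is a sound handling of a detail the paper leaves implicit (arcs are tacitly identified with their Vershik intervals $[x,x')$ upstairs in $X$), but it does not change the route.
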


Since every measure
$\Phi^+\in {\mathfrak B}^+$ is defined
on every arc of the flow $h_t^+$, it follows that
such a measure defines a  cocycle
on the orbits of the flow $h_t^+$
by the formula
$$
\Phi^+(x,t)=\Phi^+([x, h_tx]).
$$
Slightly abusing notation, we denote the measure and
the corresponding cocycle by the same letter; we identify the measure and the
cocycle and we speak, for instance,  of the norm of the cocycle, meaning the
norm  of the corresponding finitely-additive measure, etc.

\subsection{Balanced, Lyapunov regular and hyperbolic Markov compacta.}

More precise statements about the H{\"o}lder behaviour of the cocycles can be given under stronger
assumptions on adjacency matrices of our Markov compactum.  The assumptions that we give here can certainly be weakened; nonetheless,  they
hold for random Markov compacta and are thus sufficient for our
purposes.
We proceed to formal definitions.

A Markov compactum will be called
{\it balanced} if the following holds.
\begin{assumption}
\label{balanced}
There exists a  positive constant $C$, a
strictly increasing sequence of indices
$i_n\in {\mathbb Z}$, $n\in {\mathbb Z}$,
such that
\begin{equation}
\label{seqin}
\lim\limits_{n\to-\infty} i_n=-\infty, \  \lim\limits_{n\to\infty} i_n=\infty,
\end{equation}
such that
\begin{enumerate}
\item  The matrices
$A_{i_{n+1}}\dots A_{i_n+1}$ have sub-exponential growth in $n\in {\mathbb Z}$;
\item For any $n\in {\mathbb Z}$ all entries of the matrix
$A_{i_{n+1}}\dots A_{i_n+1}$ are positive, and for all
$j,k,l\in\{1,\dots,m\}$ we have
$$
\frac{(A_{i_{n+1}}\dots A_{{i_n}+1})_{jk}}
{(A_{i_{n+1}}\dots A_{{i_n}+1})_{lk}}\leq C.
$$
\end{enumerate}
\end{assumption}

A sufficient condition for the second requirement is that
there exist a matrix $Q$ all whose entries are positive
and a sequence $i_n$ satisfying (\ref{seqin}) such that
$A_{{i_n}}=Q$. We shall see that Markov compacta random with respect to a $\sigma$-invariant ergodic probability
measure $\mu$ on $\Omega$ satisfying Assumption \ref{asos} are automatically balanced.

Note also that a balanced Markov compactum is automatically uniquely ergodic.

A Markov compactum satisfying Assumption \ref{asexpdec}
will be called {\it Lyapunov regular} if the following additional assumption holds.

\begin{assumption}
\label{lyapreg}
There exist positive numbers
$\theta_1>\theta_2>\dots>\theta_{l_0}>0$ and, for any
$n\in {\mathbb Z}$, a direct sum decomposition
$$
E_n^u=E_n^1\oplus E_n^2\dots\oplus E_n^{l_0}
$$
such that $A_nE_n^i=E_{n+1}^i$ and for any nonzero $v\in E_n^i$
we have
\begin{equation}
\label{lyapone}
\lim\limits_{k\to\infty} \frac{\log|A_{n-k}\dots A_nv|}{k}=\theta_i;
\end{equation}
\begin{equation}
\label{lyaptwo}
\lim\limits_{k\to\infty} \frac{\log|(A_{n-k}\dots A_{n})^{-1}v|}{k}=-\theta_i.
\end{equation}
The convergence in (\ref{lyapone}), (\ref{lyaptwo}) is uniform
on the unit sphere $\{v\in E_n^i, |v|=1\}$.
\end{assumption}

Note that the positive equivariant sequence  $h^{(n)}$ of a
balanced Lyapunov regular Markov compactum
automatically satisfies Assumption \ref{lowexpbnd}.

Now let $X$ be uniquely ergodic, take $v\in E_0^u$ and consider the corresponding finitely additive measure
$\Phi^+\in {\mathfrak B}^+$. Decompose
$v=v^{(1)}+\dots +v^{(l_0)}$, $v^{(i)}\in E_0^i$
and let $j$ be the lowest index such that $v^{(j)}\neq 0$.
Then $\theta_j$ will be called the {\it Lyapunov exponent} of the
finitely-additive measure   $\Phi^+$.
For instance, the positive measure has exponent $\theta_1$.
Similarly, let $k$ be the highest index such that $v^{(k)}\neq 0$.
Then $\theta_k$ will be called the {\it lower Lyapunov exponent} of $\Phi^+$.
It will develop that the Lyapunov exponent of $\Phi^+$ governs its growth at infinity, while the lower Lyapunov exponent of $\Phi^+$ controls its H{\"o}lder behaviour at zero.

We shall often need the dual of Assumption \ref{lyapreg}.
Take a Markov compactum $X$ satisfying Assumptions \ref{asexpdec},
\ref{asdualexp}, \ref{lyapreg}.
For $i=1, \dots, l_0$, define
$$
{\tilde E}_n^i=Ann\left(\oplus_{j\neq i} E_n^j\right).
$$
We have then
$$
{\tilde E}_n^u={\tilde E}_n^1\oplus {\tilde E}_n^2\dots\oplus
{\tilde E}_n^{l_0}
$$
and $A^t_n{\tilde E}_{n+1}^i={\tilde E}_{n}^i$.

\begin{assumption}
\label{duallyapreg}
For any $n\in {\mathbb Z}$  and any nonzero $v\in {\tilde E}_n^i$
we have
\begin{equation}
\label{duallyapone}
\lim\limits_{k\to\infty} \frac{\log|A^t_{n-k}\dots A^t_{n}v|}{k}= {\theta}_i;
\end{equation}
\begin{equation}
\label{duallyaptwo}
\lim\limits_{k\to\infty} \frac{\log|(A^t_{n+k}\dots A^t_{n})^{-1}v|}{k}=-\theta_i.
\end{equation}
The convergence in (\ref{duallyapone}), (\ref{duallyaptwo})
is uniform on the unit sphere $\{v\in {\tilde E}_n^i, |v|=1\}$.
\end{assumption}

A Markov compactum satisfying Assumptions \ref{lyapreg}, \ref{duallyapreg} will be called {\it Lyapunov
bi-regular}.

{\bf Remark.} The {\it uniform} convergence in (\ref{duallyapone}), (\ref{duallyaptwo}) is guaranteed by the Oseledets Multiplicative Ergodic Theorem (see \cite{pesinbarreira} and the Appendix A).

A case of special interest for us will be when
all Lyapunov exponents are simple, i.e., when the following holds.
\begin{assumption}
We have $l_0={\rm dim} E_n^u={\rm dim} {\tilde E}_n^u$ and
$$
{\rm dim} E_n^i={\rm dim} {\tilde E}_n^i=1, \ i=1, \dots, l_0.
$$
\end{assumption}
In the latter case we shall say that the Markov
compactum $X$ has {\it simple Lyapunov spectrum}.

Finally, a Lyapunov regular Markov compactum $X$ will be called {\it hyperbolic} if $\BB_c^+(X)=\BB^+(X)$.

\subsection{The H{\"o}lder property for cocycles}

\subsubsection{The upper bound.}

Proposition \ref{hoelderupper} can now be reformulated as follows.

\begin{corollary}
\label{hoeldprop}
Let $X$ be a Markov compactum with top Lyapunov
exponent $\theta_1$. For any $\varepsilon>0$ there exists a positive constant
$C_{\varepsilon}$ depending only on $X$ such that the following is true.
Let $\Phi^+\in {\mathfrak B}^+$ have
Lyapunov exponent $\theta>0$. Then for any $x\in X$ and any $t\in {\mathbb R}$ we have
\begin{equation}
|\Phi^+(x,t)|\leq C_{\varepsilon}\cdot |\Phi^+| \cdot |t|^{\theta/\theta_1-\varepsilon}.
\end{equation}
\end{corollary}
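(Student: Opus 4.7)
The argument is essentially a rerun of Proposition \ref{hoelderupper} with the positive cocycle $\nu^+$ replaced by a general $\Phi^+\in\mathfrak{B}^+$ of Lyapunov exponent $\theta$; the conclusion is then translated from cylinder measure to flow time via the identity $\nu^+([x,h_t^+x])=t$. I would proceed in two steps: a cylinder-level H\"older bound, and then a propagation to arbitrary flow arcs through the subexponential decomposition of Lemma \ref{arcdecomposition}.

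The first step is the cylinder estimate: for every $\varepsilon>0$ there exists $C_\varepsilon>0$ with
\[
|\Phi^+(\gamma)|\leq C_\varepsilon\,|\Phi^+|\,\bigl(\nu^+(\gamma)\bigr)^{\theta/\theta_1-\varepsilon}
\]
for every $\gamma\in\mathfrak{C}^+$ of sufficiently small $\nu^+$-measure. To see this, write $\Phi^+=\Phi^+_{\mathbf{v}}$ with equivariant sequence $v^{(n)}$, decompose $v=v^{(0)}$ along the Lyapunov flag of Assumption \ref{lyapreg}, and track the largest-exponent component: by Lyapunov regularity it contracts like $e^{-(\theta-\varepsilon)n}$ under backward iteration of $A_n$, so $|\Phi^+(\gamma^+_{-n}(x))|\leq C_\varepsilon\,|\Phi^+|\,e^{-(\theta-\varepsilon)n}$. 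In parallel, Lyapunov regularity of $\nu^+$, whose top exponent is $\theta_1$, gives $\nu^+(\gamma^+_{-n}(x))\geq C_\varepsilon^{-1}e^{-(\theta_1+\varepsilon)n}$, strengthening Assumption \ref{lowexpbnd}. Eliminating $n$ between these two estimates yields the cylinder bound, in exact parallel with the proof of Proposition \ref{hoeldersimple}.

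The second step propagates the cylinder bound to an arbitrary flow arc $[x,h_t^+x]$ by the argument of Proposition \ref{hoelderupper}. By Lemma \ref{arcdecomposition} one has $[x,h_t^+x]=\bigsqcup_{l\leq n^+}\bigsqcup_{k=1}^{M_l}\gamma_{l,k}$ with $\gamma_{l,k}\in\mathfrak{C}^+_l$ and $M_l\leq C_\varepsilon e^{\varepsilon|l|}$. Since $\nu^+([x,h_t^+x])=t$ and any element of $\mathfrak{C}^+_l$ has $\nu^+$-measure of order $e^{\theta_1 l}$ (up to $\varepsilon$-losses), the top level satisfies $n^+\leq\theta_1^{-1}\log t+O(\varepsilon|\log t|)$. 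Applying the cylinder bound level by level and summing,
\[
|\Phi^+(x,t)|\leq\sum_{l\leq n^+}M_l\cdot C_\varepsilon\,|\Phi^+|\,e^{(\theta-\varepsilon)l}\leq C'_\varepsilon\,|\Phi^+|\sum_{l\leq n^+}e^{(\theta-2\varepsilon)l}.
\]
As $\theta>0$, the geometric sum is dominated by its top term $e^{(\theta-2\varepsilon)n^+}\leq C''_\varepsilon\,|t|^{\theta/\theta_1-O(\varepsilon)}$, and relabelling $\varepsilon$ delivers the claimed estimate for $t>0$; the case $t<0$ then follows from the cocycle identity $\Phi^+(x,-t)=-\Phi^+(h^+_{-t}x,t)$.

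The main technical point requiring care is the quantitative matching $n^+\asymp\theta_1^{-1}\log t$, which uses both the upper and the lower bound on the $\nu^+$-measure of Markov cylinders at level $l$ and must be performed with $\varepsilon$-losses small enough not to swamp the $(\theta-2\varepsilon)$-factor in the geometric summation. Once this matching is in place the remainder is a routine geometric series, and no essentially new idea beyond those already used in Propositions \ref{hoeldersimple} and \ref{hoelderupper} is needed.
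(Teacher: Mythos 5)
The overall plan — a cylinder-level estimate, then propagation to arbitrary flow arcs via the subexponential decomposition of Lemma \ref{arcdecomposition} and a geometric series — is precisely the route the paper takes through Propositions \ref{hoeldersimple}, \ref{hoelderupper} and Lemma \ref{arcdecomposition}. However, Step 1 of your proposal contains a genuine error, and it is not a cosmetic one.

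You obtain the cylinder bound $|\Phi^+(\gamma^+_{-n}(x))|\leq C_\varepsilon|\Phi^+|e^{-(\theta-\varepsilon)n}$ by ``tracking the largest-exponent component.'' This does not give a bound on $\Phi^+(\gamma^+_{-n}(x))$, only on one of its summands. Under backward iteration of the $A_n$, the component lying in the Lyapunov subspace with the \emph{largest} exponent contracts the \emph{fastest}; the overall decay rate of the equivariant sequence $v^{(-n)}$ is therefore governed by the slowest-decaying component, i.e.\ by the \emph{lower} Lyapunov exponent $\theta_{\mathrm{low}}$ of $\Phi^+$ in the paper's terminology. The correct backward bound is $|\Phi^+(\gamma^+_{-n}(x))|\leq C_\varepsilon|\Phi^+|e^{-(\theta_{\mathrm{low}}-\varepsilon)n}$, which is weaker whenever $\Phi^+$ has components in more than one Lyapunov subspace ($\theta_{\mathrm{low}}<\theta$). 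Compare the paper's own explicit remark that ``the Lyapunov exponent governs growth at infinity, while the lower Lyapunov exponent controls the H\"older behaviour at zero,'' and Proposition \ref{zerohoeldprop}, which gives $\limsup_{t\to 0}\log|\Phi^+(x,t)|/\log t=\theta_{\mathrm{low}}/\theta_1$, not $\theta/\theta_1$.

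There is a second, parallel slip for the levels $l>0$ that actually dominate when $t$ is large: the upper bound is $|\Phi^+(\gamma^+_l(x))|\leq C_\varepsilon|\Phi^+|e^{(\theta+\varepsilon)l}$, with a \emph{plus} sign, not $e^{(\theta-\varepsilon)l}$. After running the geometric summation correctly, one obtains $|\Phi^+(x,t)|\leq C_\varepsilon|\Phi^+|\bigl(1+|t|^{\theta/\theta_1+\varepsilon}\bigr)$. The fact that Proposition \ref{convhoeldprop} asserts $\limsup_{t\to\infty}\log|\Phi^+(x,t)|/\log t=\theta/\theta_1$ and calls the Corollary's bound ``precise'' is incompatible with an upper bound of exponent $\theta/\theta_1-\varepsilon$; the $-\varepsilon$ in the Corollary should be read as $+\varepsilon$ (with the behaviour for $|t|<1$ controlled separately by $\theta_{\mathrm{low}}$, as in Proposition \ref{zerohoeldprop}). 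Your two sign/exponent errors in Step 1 conspire to reproduce the printed exponent $\theta/\theta_1-\varepsilon$, which makes the proposal look like it closes, but the intermediate cylinder bound you write down is simply false in general. Once the cylinder bound is corrected, the argument goes through and yields the $+\varepsilon$ form, which is what the paper's machinery actually proves.
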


\subsubsection{The logarithmic asymptotics at infinity.}

The upper bound of Corollary \ref{hoeldprop} is precise, as is shown by the following

\begin{proposition}
\label{convhoeldprop}
Let $X$ be a Lyapunov regular balanced Markov compactum with top Lyapunov
exponent $\theta_1$, and
let $\Phi^+\in {\mathfrak B}^+$ have
Lyapunov exponent $\theta>0$. Then for any $x\in X$ we have
\begin{equation}
\limsup_{t\to\infty} \frac{\log |\Phi^+(x,t)|}{\log t}=\theta/\theta_1.
\end{equation}
\end{proposition}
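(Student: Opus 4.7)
The plan is to prove separately that $\limsup_{t\to\infty} \log|\Phi^+(x,t)|/\log t$ is bounded above and below by $\theta/\theta_1$.

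For the upper bound I would extend Corollary \ref{hoeldprop} from small arcs to large ones using Lemma \ref{arcdecomposition}. The flow arc $[x,h_t^+x]$ decomposes into subarcs drawn from the semi-rings $\mathfrak{C}^+_l$ for $l \leq n^+([x,h_t^+x])$; by Lyapunov regularity together with the balanced control on the positive equivariant sequence $h^{(l)}$, the top index satisfies $n^+([x,h_t^+x]) = \theta_1^{-1}\log t + O(1)$. At each level $l$ there are at most $M_l \leq C_\varepsilon e^{\varepsilon|l|}$ subarcs, and on each such subarc $|\Phi^+|$ is bounded by $|v^{(l-1)}| \leq C_\varepsilon e^{(\theta+\varepsilon)l}$. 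Summing the resulting geometric series yields $|\Phi^+(x,t)| \leq C'_\varepsilon |t|^{\theta/\theta_1 + O(\varepsilon)}$, and letting $\varepsilon \to 0$ gives the upper bound.

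For the lower bound I would exhibit, for each $\varepsilon>0$, a sequence $t_n \to \infty$ with $|\Phi^+(x,t_n)| \geq t_n^{\theta/\theta_1-\varepsilon}$. Lyapunov regularity forces $|v^{(l)}| \sim e^{\theta l}$, so some component $i^*_l \in \{1,\dots,m\}$ satisfies $|v^{(l)}_{i^*_l}| \geq m^{-1/2}|v^{(l)}| \geq c\, e^{(\theta-\varepsilon)l}$; by holonomy invariance this equals $|\Phi^+(\gamma^+_{l+1}(y))|$ for any $y$ with $F(y_{l+1}) = i^*_l$. The task is then to find such a $y$ inside the leaf $\gamma^+_\infty(x)$ at controlled $\nu^+$-distance from $x$. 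Restricting to the balanced subsequence $l+1 = i_n$ of Assumption \ref{balanced}, positivity of every entry of the block $A_{i_{n+1}}\cdots A_{i_n+1}$ allows one to modify the past of $x$ between levels $i_n$ and $i_{n+1}$ so as to prescribe $F(y_{l+1}) = i^*_l$ while keeping $y_k = x_k$ for $k \geq i_{n+1}$; the subexponential gap bound then yields $y = h^+_{\tau_l}x$ with $|\tau_l| \leq \nu^+(\gamma^+_{i_{n+1}}(x)) \leq C_\varepsilon e^{(\theta_1+\varepsilon)l}$.

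Next I would decompose $\gamma^+_{l+1}(y) = [y^-,y^+]$ in Vershik order, with $y^- = h^+_{-s^-}y$ and $y^+ = h^+_{s^+}y$, $s^\pm \geq 0$, $s^+ + s^- = \nu^+(\gamma^+_{l+1}(y)) \leq C_\varepsilon e^{(\theta_1+\varepsilon)l}$. The cocycle identity yields $\Phi^+(y,s^+) - \Phi^+(y,-s^-) = \Phi^+(\gamma^+_{l+1}(y)) = v^{(l)}_{i^*_l}$, so at least one of the two summands has modulus $\geq \tfrac12|v^{(l)}_{i^*_l}|$. Transferring to the base point via $\Phi^+(x,\tau_l+s) = \Phi^+(x,\tau_l) + \Phi^+(y,s)$ produces some time $T \in \{\tau_l, \tau_l + s^+, \tau_l - s^-\}$ with $|T| \leq C'_\varepsilon e^{(\theta_1+\varepsilon)l}$ and $|\Phi^+(x,T)| \geq \tfrac14 c\, e^{(\theta-\varepsilon)l}$; as $l\to\infty$ along this sequence $\log|\Phi^+(x,T)|/\log|T|$ exceeds $(\theta-\varepsilon)/(\theta_1+\varepsilon) + o(1)$, and sending $\varepsilon\to 0$ gives the lower bound.

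The main obstacle will be the construction of the auxiliary point $y$: one must verify that modifying $x$ in the window below level $i_{n+1}$ produces an admissible Markov path realising the prescribed vertex $i^*_l$, and that the resulting $\nu^+$-distance from $x$ stays of order $e^{(\theta_1+o(1))l}$. Positivity of the balanced block products supplies the combinatorial flexibility needed for the former, while subexponential growth of the adjacency matrices controls the block-length gaps $i_{n+1} - i_n$ and hence the distance.
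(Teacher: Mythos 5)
Your plan coincides with the paper's: the upper bound is delivered by the H\"older estimate (Corollary \ref{hoeldprop}, which you re-derive rather than cite), and the lower bound comes from locating, close to $x$, a level-$(n+1)$ Markov cylinder whose $\Phi^+$-mass is of order $e^{(\theta-\varepsilon)n}$ while its $\nu^+$-distance from $x$ is at most $e^{(\theta_1+\varepsilon)n}$, after which the cocycle identity forces one of two flow arcs based at $x$ to carry at least half that mass.

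Where you diverge from the paper is in how the auxiliary point is produced. The paper takes $x'$ to be the \emph{smallest element of} $\gamma_{\infty}^+(x)$ \emph{greater than} $x$ whose level-$(n+1)$ cylinder $[x',x'')$ has the prescribed $F$-vertex, and then $x''>x'$; both arcs $[x,x']$, $[x,x'']$ therefore correspond to strictly positive flow times. You instead build $y\in\gamma^+_{i_{n+1}}(x)$ by overwriting the path of $x$ in the window $[i_n,i_{n+1})$, which certainly prescribes the vertex at level $i_n$ via positivity of the balanced block, but gives no control over the position of $y$ relative to $x$ in the Vershik order. The resulting $\tau_l$, and hence the time $T\in\{\tau_l,\tau_l+s^+,\tau_l-s^-\}$ you extract, may well be negative; your estimates are stated in terms of $|T|$, but the proposition asks for $\limsup_{t\to\infty}$, i.e., positive $t$. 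Negative $T$ give no information about that limsup, and $\Phi^+(x,-|T|)=-\Phi^+(h^+_{-|T|}x,|T|)$ refers to a different base point. This is a genuine gap as written. It is repairable — insist on choosing, among the finitely many paths supplied by positivity of the balanced block, one whose resulting $y$ is $>x$ in the Vershik order, or, more simply, revert to the paper's choice of the nearest forward block start $x'$ — but the fix has to be made explicit, since the whole point of the one-sided arc decomposition $[x,x'']=[x,x']\sqcup[x',x'']$ in the paper is precisely that both pieces point forward. (The remaining implicit steps — that $T\to\infty$, and that the distance to the next good block is controlled — are present in both your write-up and the paper's and are of the same order of terseness.)
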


Proof. The upper bound is furnished by Proposition \ref{hoeldprop}, and we proceed to the proof of the lower bound.
Let $v^{(n)}$ be the equivariant sequence corresponding to
$\Phi^+.$ Take $\varepsilon>0.$ There exists $n_0$ such that for
every $n>n_0$ there exists $i\in\{1,\ldots,m\}$ such that
$|v_i^{(n)}|\geqslant e^{(\theta-\varepsilon)n}$.

Now take $x\in X$ and let $x'\in\gamma_{\infty}^+(x)$ be the
smallest element of $\gamma_{\infty}^+(x)$ satisfying the
requirements:
\begin{enumerate}
\item $F(x_{n+1}')=i.$

\item There exists $\widetilde{x}\in(x,x')$ such that
$F(\widetilde{x}_{n+1})\neq i .$

\end{enumerate}

Let $x''>x'$ be the smallest element of
$\gamma_{\infty}^+(x')=\gamma_{\infty}^+(x)$ such that
$F(x_{n+1}'')\neq  ~ i.$

By definition, we have $$\gamma_{n+1}(x')=[x',x'').$$

Now, by Lyapunov regularity of the Markov compactum $X$ we
have $$\nu^+([x,x'])\leqslant e^{(\theta_1+\varepsilon)n},$$
$$\nu^+([x',x''])\leqslant e^{(\theta_1+\varepsilon)n}.$$

We also have $$|\Phi^+([x',x''])|=|v_i^{(n)}|\geqslant
e^{(\theta-\varepsilon)n}.$$

Consequently, $$\max(|\Phi^+([x,x'])|,|\Phi^+([x,x''])|)\geqslant
\frac{1}{2}e^{(\theta-\varepsilon)n},$$ and, therefore, either
$$|\Phi^+([x,x'])|\geqslant\frac{1}{2}(\nu^+([x,x']))^{\frac{\theta-\varepsilon}{\theta_1+\varepsilon}}$$
or
$$|\Phi^+([x,x''])|\geqslant\frac{1}{2}(\nu^+([x,x'']))^{\frac{\theta-\varepsilon}{\theta_1+\varepsilon}}$$

The desired lower bound is established. We illustrate the argument in Figure \ref{fig:one}.

\begin{figure}
\begin{center}
\includegraphics{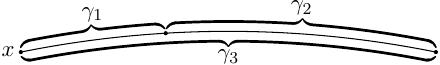}\\
\caption{Proof of the lower bound in Proposition \ref{convhoeldprop}: $\gamma_1=[x,x^{\prime}]$, $\gamma_2=[x^{\prime}, x^{\prime\prime}]$. Either $\gamma_1$ or $\gamma_3$ satisfies the lower bound. }\label{fig:one}
\end{center}
\end{figure}

The same argument yields precise H{\"o}lder behaviour at zero, and we have

\begin{proposition}
\label{zerohoeldprop}
Let $X$ be a Lyapunov regular balanced Markov compactum with top Lyapunov
exponent $\theta_1$, and
let $\Phi^+\in {\mathfrak B}^+$ have lower
Lyapunov exponent $\theta>0$. Then for any $x\in X$ we have
\begin{equation}
\limsup_{t\to 0} \frac{\log |\Phi^+(x,t)|}{\log t}=\theta/\theta_1.
\end{equation}
\end{proposition}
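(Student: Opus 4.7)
The proof parallels that of Proposition~\ref{convhoeldprop}, with all Markov levels reflected through zero: cylinders are now taken at levels $-n$ as $n\to\infty$, so flow arcs shrink to scale $e^{-\theta_1 n}$, and the role played there by the top Lyapunov exponent of $\Phi^+$ is now taken by its lower Lyapunov exponent $\theta$, which dominates the backward equivariant sequence $v^{(-n)}$. Let $k$ be the largest index such that the projection of $v^{(0)}$ onto $E_0^k$ is nonzero, so that $\theta_k=\theta$. By Lyapunov regularity (Assumption~\ref{lyapreg}) the $E_0^j$-component of $v^{(-n)}$ has norm of order $e^{-\theta_j n}$; since $\theta=\theta_k$ is the smallest exponent with a nonzero contribution, the sum is dominated by the $E_0^k$-piece, and $|v^{(-n)}|\leq C_\varepsilon e^{-(\theta-\varepsilon)n}$ for $n$ large. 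Consequently each cylinder value satisfies $|\Phi^+(\gamma^+_{-n}(x))|\leq C_\varepsilon e^{-(\theta-\varepsilon)n}$. Coupled with the lower bound $\min_i h^{(-n)}_i\geq c_\varepsilon e^{-(\theta_1+\varepsilon)n}$ on the positive top-exponent equivariant vector (from Lyapunov regularity applied to $h^{(0)}$, balance, and Assumption~\ref{lowexpbnd}), Proposition~\ref{hoeldersimple} gives the Hölder estimate $|\Phi^+(\gamma^+_{-n}(x))|\leq\nu^+(\gamma^+_{-n}(x))^{\theta/\theta_1-\varepsilon}$, which Proposition~\ref{hoelderupper} propagates via Lemma~\ref{arcdecomposition} to general flow arcs, yielding $|\Phi^+(x,t)|\leq C_\varepsilon t^{\theta/\theta_1-\varepsilon}$ for all sufficiently small $t$. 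In particular $\limsup_{t\to 0}\log|\Phi^+(x,t)|/\log t\geq\theta/\theta_1$.

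For the reverse inequality I mirror the explicit construction in the proof of Proposition~\ref{convhoeldprop}. Lyapunov regularity applied to the $E_0^k$-component of $v^{(0)}$ gives $|v^{(-n)}|\geq e^{-(\theta+\varepsilon)n}$ for all large $n$, and the balance condition (Assumption~\ref{balanced}) upgrades this to a single-coordinate estimate $|v^{(-n)}_{i(n)}|\geq c_\varepsilon e^{-(\theta+\varepsilon)n}$ at some index $i(n)\in\{1,\ldots,m\}$. Given $x\in X$, choose $x'>x$ to be the smallest element of $\gamma^+_\infty(x)$ with $F(x'_{-n+1})=i(n)$ subject to the existence of $\widetilde{x}\in(x,x')$ with $F(\widetilde{x}_{-n+1})\neq i(n)$, and let $x''>x'$ be the smallest element with $F(x''_{-n+1})\neq i(n)$. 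Then $\gamma^+_{-n+1}(x')=[x',x'')$ and $|\Phi^+([x',x''))|=|v^{(-n)}_{i(n)}|$, while Lyapunov regularity of the top positive vector bounds $\nu^+([x,x'])$ and $\nu^+([x',x''])$ above by $C_\varepsilon e^{-(\theta_1-\varepsilon)n}$. The triangle inequality then forces
\[
\max\bigl(|\Phi^+([x,x'])|,\;|\Phi^+([x,x''])|\bigr)\;\geq\;\tfrac{1}{2}\,|\Phi^+([x',x''))|\;\geq\;\tfrac{1}{2}c_\varepsilon\,e^{-(\theta+\varepsilon)n},
\]
so that at a time $t_n$ of order $e^{-\theta_1 n}$ one has $|\Phi^+(x,t_n)|\geq \tfrac{1}{2}c_\varepsilon\,t_n^{(\theta+\varepsilon)/(\theta_1-\varepsilon)}$. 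Letting $n\to\infty$ and then $\varepsilon\to 0$ along this sequence produces the matching bound on the limsup, completing the proof.

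The main technical obstacle is the passage from the norm estimate $|v^{(-n)}|\geq e^{-(\theta+\varepsilon)n}$ to the single-coordinate bound $|v^{(-n)}_{i(n)}|\geq c_\varepsilon e^{-(\theta+\varepsilon)n}$: without some uniform comparability of the coordinates of the equivariant sequence, the slow-decaying norm could in principle spread across all coordinates, and no individual coordinate would be visible in the arc construction. This is where the balance hypothesis enters essentially, via the uniform bound on entry ratios of the concatenated transition matrices $A_{i_{n+1}}\dots A_{i_n+1}$; it guarantees that the slowest Lyapunov component is faithfully represented in at least one coordinate of each block, which is precisely what delivers the witnessing cylinder $\gamma^+_{-n+1}(x')$ underlying the lower bound.
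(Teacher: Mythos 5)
Your argument mirrors the paper's proof step for step: the Hölder upper bound (tracking the lower Lyapunov exponent, which is the relevant rate for decay at small scales and is the subtle point behind Corollary~\ref{hoeldprop}) gives $\liminf\geq\theta/\theta_1$ and hence $\limsup\geq\theta/\theta_1$, and the arc construction from Proposition~\ref{convhoeldprop} run at levels $-n$, $n\to\infty$, supplies the matching subsequence — exactly the two ingredients the paper names in its one-line proof.

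One correction to your final paragraph: extracting a single-coordinate estimate $|v^{(-n)}_{i(n)}|\geq c_\varepsilon e^{-(\theta+\varepsilon)n}$ from the norm bound $|v^{(-n)}|\geq e^{-(\theta+\varepsilon)n}$ does \emph{not} require the balance hypothesis, and the worry that the slowly-decaying norm ``spreads across all coordinates with no individual coordinate visible'' is unfounded. With only $m$ coordinates, the pigeonhole principle produces a coordinate of absolute value at least $\frac{1}{m}|v^{(-n)}|$, and the factor $\frac{1}{m}$ is absorbed into the $\varepsilon$; this is exactly the step in the proof of Proposition~\ref{convhoeldprop} (``for every $n>n_0$ there exists $i$ such that $|v^{(n)}_i|\geq e^{(\theta-\varepsilon)n}$''), where no appeal to balance is made. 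The balance hypothesis is needed upstream — it guarantees unique ergodicity, Assumption~\ref{lowexpbnd}, $\nu^+\in\mathfrak{B}^+$, and the subexponential growth used in Lemma~\ref{arcdecomposition}, all of which are required for Propositions~\ref{hoeldersimple} and~\ref{hoelderupper} — but it plays no role in selecting the witnessing coordinate.
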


Proof. The upper bound follows from Corollary \ref{hoeldprop}. The lower bound is established by the same argument
as that used in  Proposition \ref{convhoeldprop} (except that now one must take $n\to-\infty$ instead of $n\to\infty$):
first one finds a Markovian arc $[x^{\prime}, x^{\prime\prime}]$ satisfying the lower bound, and then one notes that, in view of Lyapunov regularity, one of the arcs $[x,x^{\prime}]$, $[x, x^{\prime\prime}]$ must also satisfy the desired lower bound.

\subsubsection{Expectation and variance of H{\"o}lder cocycles}
\begin{proposition}
\label{phiexpzero}
For any $\Phi^+\in \BB^+$ and any $t_0\in {\mathbb R}$ we have
$$
\ee_{\nu}(\Phi^+(x,t_0))=\langle \Phi^+, \nu^-\rangle\cdot t_0.
$$
\end{proposition}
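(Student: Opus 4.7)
The plan is to compute the expectation by Fubini on flow boxes for the flow $h_t^+$ and then pass to the limit over a refining sequence of Markov partitions. The three key inputs are the $\F^-$-holonomy invariance built into the definition of $\BB^+$, the local product structure $d\nu=ds\otimes d\nu^-$ on flow boxes, and the H{\"o}lder bound supplied by Corollary \ref{hoeldprop}.

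First I would work in a single Markov flow box $R=\{h_s^+y:\,s\in[0,T],\,y\in\gamma^-\}$, where $\gamma^-$ is an $\F^-$-transversal of $\nu^-$-width $W=\nu^-(\gamma^-)$ and $T$ is much larger than $t_0$. For a reference point $y_0\in\gamma^-$ set $G(s)=\Phi^+([y_0,h_s^+y_0])$; by $\F^-$-holonomy invariance, for every $y\in\gamma^-$ one has $\Phi^+(h_s^+y,t_0)=G(s+t_0)-G(s)$, independently of $y$. Unfolding by Fubini yields
\begin{equation*}
\int_R\Phi^+(x,t_0)\,d\nu(x)=W\int_0^T\bigl(G(s+t_0)-G(s)\bigr)\,ds=W\left[\int_T^{T+t_0}G(s)\,ds-\int_0^{t_0}G(s)\,ds\right].
\end{equation*}
By Corollary \ref{hoeldprop}, $|G(s)|\leq C_{\varepsilon}|\Phi^+|s^{\theta/\theta_1-\varepsilon}$ near $s=0$ and $|G(s)-G(T)|\leq C_{\varepsilon}|\Phi^+||s-T|^{\theta/\theta_1-\varepsilon}$ on $[T,T+t_0]$, so both $\int_0^{t_0}G(s)\,ds$ and $\int_T^{T+t_0}(G(s)-G(T))\,ds$ are of order $t_0^{1+\theta/\theta_1-\varepsilon}$. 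Since $WG(T)=(\Phi^+\times\nu^-)(R)$ by the very definition of the product measure, this gives
\begin{equation*}
\int_R\Phi^+(x,t_0)\,d\nu(x)=t_0\,(\Phi^+\times\nu^-)(R)+O\bigl(W\,t_0^{1+\varepsilon'}\bigr).
\end{equation*}

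Next I would partition $X=\bigsqcup_jR_j$ into flow boxes of heights $T_j\geq T_{\min}$ and widths $W_j$; such partitions, with $T_{\min}\to\infty$, are furnished by the aggregation-of-Markov-levels construction from the subsection on Concatenation and aggregation, applied to the balanced Markov structure (Assumption \ref{balanced}). Since $\sum_jW_jT_j=\nu(X)=1$, we have $\sum_jW_j\leq T_{\min}^{-1}$, so summing the local identity over $j$ gives
\begin{equation*}
\int_X\Phi^+(x,t_0)\,d\nu(x)=t_0\,\langle\Phi^+,\nu^-\rangle+O\bigl(t_0^{1+\varepsilon'}/T_{\min}\bigr),
\end{equation*}
and letting $T_{\min}\to\infty$ concludes the proof.

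The principal obstacle is producing the refining family of flow-box partitions whose $\F^+$-heights diverge uniformly while each piece remains a genuine product of an $\F^+$-arc and an $\F^-$-arc; this is precisely what the aggregation construction supplies under the balanced hypothesis. An alternative route that sidesteps this step is first to establish, using the cocycle identity and $h_t^+$-invariance of $\nu$, that $I(t):=\int_X\Phi^+(x,t)\,d\nu$ is additive in $t$ and H{\"o}lder continuous, hence linear: $I(t)=ct$. One then identifies the slope $c$ either by performing the local Fubini computation in a single sufficiently tall rectangle, or by applying the Birkhoff Ergodic Theorem to the $h_1^+$-action on $\Phi^+(\cdot,1)$ after decomposing $\Phi^+$ along the Oseledets splitting, using the growth bound $|\Phi^+(x,N)|=O(N^{\theta/\theta_1+\varepsilon})$ together with the orthogonality of Lyapunov subspaces under the pairing $\langle\cdot,\cdot\rangle$.
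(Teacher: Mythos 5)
Your primary route is correct but genuinely different from the paper's. The paper's proof is much shorter: reduce by linearity to the case $\langle\Phi^+,\nu^-\rangle=0$ (subtracting off $\langle\Phi^+,\nu^-\rangle\,\nu^+$), then argue by contradiction that if $\ee_{\nu}(\Phi^+(x,t_0))\neq0$ the Birkhoff Ergodic Theorem would force $\limsup_{T\to\infty}\log|\Phi^+(x,T)|/\log T=1$, contradicting (via Proposition~\ref{convhoeldprop} and the orthogonality of Oseledets subspaces under the pairing) the assumption $\langle\Phi^+,\nu^-\rangle=0$. Your flow-box/Fubini computation identifies the constant directly from the local product structure $d\nu=ds\otimes d\nu^-$ and the $\mathcal{F}^-$-holonomy invariance, at the cost of producing the refining family of Markov flow-box partitions with uniformly diverging heights; this is genuinely heavier machinery, although it is available from the cylinder partitions at levels $n\to-\infty$. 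What the Fubini route buys you is that it never invokes the Ergodic Theorem or the Lyapunov decomposition and works with only the crude uniform bound $\sup_{x,|t|\le t_0}|\Phi^+(x,t)|<\infty$ in place of the sharp H{\"o}lder estimate; what the paper's route buys is brevity. Note that the second variant of your ``alternative route'' at the end --- linearity of $I(t)$ plus the Ergodic Theorem along the Oseledets splitting --- is precisely the paper's argument, so you did rediscover it.

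One small remark on your primary route: you invoke Corollary~\ref{hoeldprop} with the exponent $\theta/\theta_1-\varepsilon$, but the power-law tail is not actually needed. Since $t_0$ is fixed, $\int_0^{t_0}G(s)\,ds$ and $\int_T^{T+t_0}(G(s)-G(T))\,ds$ are bounded (uniformly in $T$ and in the rectangle) by $t_0\sup_{x\in X,\,|u|\le t_0}|\Phi^+(x,u)|$, which is finite by continuity and holonomy invariance of $\Phi^+$; the error after summing is then $O(t_0/T_{\min})\to0$, and the H{\"o}lder exponent plays no role. This is worth noticing because it shows the equality $\ee_\nu(\Phi^+(x,t_0))=\langle\Phi^+,\nu^-\rangle t_0$ actually holds for any holonomy-invariant cocycle that is merely bounded on bounded time intervals, not just for those in $\mathfrak{B}^+$.
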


Proof: Since the Proposition is clearly valid for $\Phi^+=\nu^+$, it suffices to prove it
in the case $\langle \Phi^+, \nu^-\rangle=0$.
But indeed, if $\ee_{\nu}(\Phi^+(x,t))\neq 0$, then the Ergodic  Theorem implies
$$
\limsup_{T\to\infty} \frac{\log |\Phi^+(x,T)|}{\log T}=1,
$$
and then $\langle \Phi^+, \nu^-\rangle\neq 0$.

\begin{proposition}
\label{phivarnotzero}
For any $\Phi^+\in \BB^+$ not proportional to $\nu^+$  and any $t_0\neq 0$ we have
$$
Var_{\nu} \Phi^+(x,t_0)\neq 0.
$$
\end{proposition}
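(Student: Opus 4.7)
The plan is to argue by contradiction, combining the cocycle identity with the logarithmic asymptotic of Proposition \ref{convhoeldprop}: if the variance vanishes at $t_0$, then the cocycle must be identically zero on an orbit through a full-measure set, and H\"older control in the time variable forces a uniform bound on $|\Phi^+(x,t)|$ that contradicts its prescribed polynomial growth.

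First I would reduce to the case $\langle\Phi^+,\nu^-\rangle=0$. Since the pairing $\langle\,,\,\rangle$ is nondegenerate and $\langle\nu^+,\nu^-\rangle=1$, every $\Phi^+\in\BB^+$ admits a unique decomposition $\Phi^+=c\,\nu^++\Psi^+$ with $c=\langle\Phi^+,\nu^-\rangle$ and $\langle\Psi^+,\nu^-\rangle=0$. The cocycle corresponding to $\nu^+$ is $t\mapsto t$, so the constant term $ct_0$ cancels out of the variance and $Var_{\nu}\Phi^+(\cdot,t_0)=Var_{\nu}\Psi^+(\cdot,t_0)$. The hypothesis that $\Phi^+$ is not proportional to $\nu^+$ translates into $\Psi^+\neq 0$, and Proposition \ref{phiexpzero} gives $\ee_{\nu}\Psi^+(x,t_0)=0$. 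It suffices to prove the statement for $\Psi^+$.

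Supposing $Var_{\nu}\Psi^+(x,t_0)=0$, one obtains $\Psi^+(x,t_0)=0$ for $\nu$-a.e.\ $x$. Iterating the cocycle identity $\Psi^+(x,(n+1)t_0)=\Psi^+(x,nt_0)+\Psi^+(h_{nt_0}^+x,t_0)$ and using the $h_{t_0}^+$-invariance of $\nu$, a countable intersection over $n\in\mathbb{Z}$ yields a full-measure set $B\subset X$ on which $\Psi^+(x,nt_0)=0$ for every $n\in\mathbb{Z}$. For arbitrary $t\in\mathbb{R}$, writing $t=nt_0+r$ with $r\in[0,t_0)$, the cocycle identity then gives $\Psi^+(x,t)=\Psi^+(h_{nt_0}^+x,r)$ for every $x\in B$. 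Since $\Psi^+\neq 0$ corresponds via $eval^+_0$ to a nonzero vector in the strictly expanding subspace $E^u_0$, it has some positive Lyapunov exponent $\theta>0$, so Corollary \ref{hoeldprop} supplies the uniform estimate $|\Psi^+(y,r)|\leq C_{\varepsilon}|\Psi^+|\,t_0^{\theta/\theta_1-\varepsilon}$. Consequently $|\Psi^+(x,t)|$ is bounded uniformly in $t\in\mathbb{R}$ for every $x\in B$, which directly contradicts Proposition \ref{convhoeldprop}: that proposition asserts $\limsup_{t\to\infty}\log|\Psi^+(x,t)|/\log t=\theta/\theta_1>0$ for every $x\in X$, in particular along points of $B$.

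The case $t_0<0$ reduces to the case $t_0>0$ via the cocycle relation $\Psi^+(x,t_0)=-\Psi^+(h_{t_0}^+x,-t_0)$ together with $\nu$-invariance of $h_{t_0}^+$. I do not anticipate a serious obstacle; the only point requiring care is the aggregation of null exceptional sets when passing from $\Psi^+(\cdot,t_0)=0$ a.s. to $\Psi^+(\cdot,nt_0)=0$ simultaneously for all $n\in\mathbb{Z}$, which is a routine countable-intersection argument. The real content of the argument is the tension between the uniform boundedness of $\Psi^+$ forced by vanishing at $t_0$ and the polynomial lower growth rate supplied by Lyapunov regularity.
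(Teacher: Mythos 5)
Your argument is correct and follows the same route as the paper's own proof: after subtracting the $\langle\Phi^+,\nu^-\rangle\nu^+$ component to reduce to the mean-zero case, the vanishing of the variance forces the cocycle to vanish at time $t_0$, hence (via the cocycle identity and H\"older control) to stay uniformly bounded in $t$, contradicting the logarithmic asymptotic $\limsup_{t\to\infty}\log|\Phi^+(x,t)|/\log t=\theta/\theta_1>0$ of Proposition \ref{convhoeldprop}. The paper states this in three terse sentences; you have merely filled in the routine details (iterating the cocycle identity, bounding the remainder via Corollary \ref{hoeldprop}, and aggregating the null sets), which is sound.
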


Taking $\Phi^+-\langle \Phi^+, \nu^-\rangle\cdot\nu^+$ instead of $\Phi^+$, we may
assume $\ee_{\nu}(\Phi^+(x,t_0))=0$. If $Var_{\nu} \Phi^+(x,t_0)=0$, then $\Phi^+(x,t_0)=0$
identically, but then
 $$
\limsup_{T\to\infty} \frac{\log |\Phi^+(x,T)|}{\log T}=0,
$$
whence $\Phi^+=0$, and the Proposition is proved.

{\bf Remark.} In the context of substitutions,
related cocycles have been studied by P.~Dumont, T.~Kamae and
S.~Takahashi in \cite{dumontkamae}
as well as by T.~Kamae in \cite{kamae}.

\subsection{Approximation of weakly Lipschitz functions.}

The weak Lipshitz property of a function $f$ implies uniform estimates on difference of its integrals
along any two arcs of the flow $h_t^+$ that stay within a fixed Markovian rectangle (see Figure \ref{fig:arcsinonerect}).

\begin{proposition}
There exists a constant $C>0$ such that for any  $f\in Lip_w^+(X)$, any $T>0$ and any pair of
points $x, x^{\prime}$ the following is true. If there exists $i\in \{1, \dots, m\}$ and  $n\in {\mathbb N}$ such that
for all $t: 0\le t\le T$ we have $F((h_tx)_n)=F((h_tx^{\prime})_n)=i$, then we have
\begin{equation}
\left|\int_0^T f\circ h_t^+(x)dt-\int_0^T f\circ h_t^+(x^{\prime})dt\right|\leq C ||f||_{Lip_w^+}.
\end{equation}
\end{proposition}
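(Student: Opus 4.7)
The plan is to translate the hypothesis $F((h_tx)_n)=F((h_tx')_n)=i$ into the geometric statement that both forward orbits are contained in the single Markovian rectangle $\Pi_i^{(n-1)}$, and then exploit that rectangle structure. First I would write $x=h_\tau^+\xi$ and $x'=h_{\tau'}^+\xi'$ with $\xi,\xi'$ lying on the common base horizontal leaf of $\Pi_i^{(n-1)}$, so that $\xi'=h_s^-\xi$ for an appropriate horizontal displacement $s$; the assumption that both orbits of length $T$ stay in the rectangle forces $\tau,\tau'\in[0,H-T]$ for $H=|\partial_v\Pi_i^{(n-1)}|$.

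Next I would split
\[
\int_0^T f\circ h_t^+(x)\,dt-\int_0^T f\circ h_t^+(x')\,dt=A+B,
\]
where $A=\int_\tau^{\tau+T}\bigl(f\circ h_u^+(\xi)-f\circ h_u^+(\xi')\bigr)\,du$ and $B=\int_\tau^{\tau+T}f\circ h_u^+(\xi')\,du-\int_{\tau'}^{\tau'+T}f\circ h_u^+(\xi')\,du$. Rewriting $A=\bigl(\int_0^{\tau+T}-\int_0^{\tau}\bigr)\bigl(f\circ h_u^+(\xi)-f\circ h_u^+(\xi')\bigr)\,du$ and applying the defining weak-Lipschitz inequality to the two admissible sub-rectangles $\Pi(\xi,\tau+T,s)$ and $\Pi(\xi,\tau,s)$, both contained in $\Pi_i^{(n-1)}$, gives $|A|\leq 2C_f\leq 2\|f\|_{Lip_w^+}$. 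The term $B$ is a difference of integrals of the same bounded function over two intervals of length $T$ translated by $\tau'-\tau$, so the crude estimate $|f|\leq\|f\|_{Lip_w^+}$ yields
\[
|B|\leq 2\min(|\tau'-\tau|,T)\cdot\|f\|_{Lip_w^+}\leq 2H\cdot\|f\|_{Lip_w^+}.
\]

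The uniform constant $C$ is produced by the minimality condition (\ref{minimality}): since $\max_i|\partial_v\Pi_i^{(m)}|\to 0$ as $m\to\infty$ and the hypothesis $n\in\mathbb{N}$ forces $n-1\geq 0$, the quantity $H_{\max}:=\sup_{m\geq 0}\max_i|\partial_v\Pi_i^{(m)}|$ is finite, and one may take $C=2+2H_{\max}$. The main piece of bookkeeping is that the sub-rectangles $\Pi(\xi,a,s)$ entering the weak-Lipschitz step are a priori only weakly admissible (since $\Pi_i^{(n-1)}$ is); this is resolved by the routine density of admissible parameters among weakly admissible ones together with the continuity of the integrals in $a$ and $s$.
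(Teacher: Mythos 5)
Your $A+B$ decomposition and the holonomy estimate for $A$ are a natural line of attack (the paper itself offers no proof of this statement beyond a pointer to Figure~\ref{fig:arcsinonerect}, so there is nothing in the text to compare against directly). The difficulty is concentrated in the $B$-estimate, and I think there is a genuine gap there.

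You bound $|B|\leq 2\min(|\tau'-\tau|,T)\,\|f\|_{Lip_w^+}$ and then claim this is uniform because $H_{\max}:=\sup_{m\geq 0}\max_i|\partial_v\Pi_i^{(m)}|$ is finite, quoting~(\ref{minimality}). But the proposition lives in the symbolic framework of Section~2, where the hypothesis ``$F\bigl((h_t x)_n\bigr)=i$ for all $t\in[0,T]$'' means that the flow arc is confined to a Markov arc $\gamma_n^+(\cdot)$ whose $\nu^+$-length is $h^{(n-1)}_{F(\cdot_n)}$. Assumption~\ref{uniquergodic}(4) gives $\min_i h^{(l)}_i\to\infty$ as $l\to+\infty$, and the Markov condition of Section~1.7.2 (namely $\partial_v^k(\Pi_i^{(n)})\subset\partial_v^k\Pi^{(n+1)}$) likewise forces $|\partial_v\Pi_i^{(n)}|$ to be non-decreasing in $n$. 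So the tower height $H=h^{(n-1)}_i$ is \emph{unbounded} over $n\in\mathbb N$, and your $H_{\max}$ is infinite; the reading of~(\ref{minimality}) you use contradicts the rest of the formalism (one can check that~(\ref{minimality}) is only internally consistent with $\partial_v$ and $\partial_h$ interchanged). Concretely, with $T\approx H/2$, $\tau=0$, $\tau'\approx H/2$ one has $\min(|\tau'-\tau|,T)\approx H/2$, and your $B$-bound blows up with $n$. The crude $\sup|f|$ estimate cannot close this term: the vertical-shift contribution must itself be controlled via the weak-Lipschitz structure (i.e.\ the cancellation coming from comparing the base-anchored integrals $\int_0^\tau$ and $\int_0^{\tau'}$ against the full column $\gamma_n^+$), which the proposal does not do.

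A secondary point: bounding $A$ by applying the weak-Lipschitz inequality to sub-rectangles $\Pi(\xi,\tau+T,s)$ of \emph{arbitrary} height $\tau+T$ uses the geometric definition~(\ref{weaklippsan}). The symbolic definition~(\ref{weaklip}) governing $Lip_w^+(X)$ only compares integrals over whole Markov arcs $\gamma_n^+$, so the ``partial-column'' comparison you invoke needs an additional argument in the symbolic framework in which the proposition is stated.
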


\begin{figure}
\begin{center}
\includegraphics{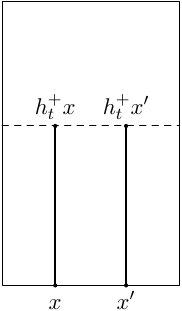}\\
\caption{Flow arcs staying in the same Markovian rectangle}\label{fig:arcsinonerect}
\end{center}
\end{figure}

We are now ready to prove the main result of this subsection:
\begin{lemma}
\label{mainindmarkcomp}
Let $X$ be a uniquely ergodic Lyapunov biregular  balanced Markov compactum.
There exists a continuous mapping $\Xi^+: Lip_w^+(X)\to \BB^+(X)$
such that the following holds.
For any $\varepsilon>0$ there exists
a constant $C_{\varepsilon}$ such that for any
$f\in Lip_w^+(X)$, any $x\in X$
and any $T>0$ we have
\begin{equation}
\label{indmarkapprox}
\left| \int_0^T
f\circ h_t^+(x) dt -\Xi^+(f; x,T)\right|\leq
C_{\varepsilon}||f||_{Lip_w^+}(1+T^{\varepsilon}).
\end{equation}

The map $\Xi^+$ is uniquely defined by the requirement that for any $\Phi^-\in \BB^-(X)$
we have
\begin{equation}
\langle \Xi^+(f), \Phi^- \rangle =\int\limits_X fd\nu^+\times \Phi^-.
\end{equation}

\end{lemma}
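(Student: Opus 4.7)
The plan is to deduce the lemma from the Preparatory Approximation Lemma~\ref{prepapprox} and its extension to $\overline{\mathcal R}^+$ in Corollary~\ref{approxrplus}, combined with the fact (Proposition~\ref{measuretococycle}) that every flow arc lies in $\overline{\mathcal R}^+$. The only analytic task is then to translate the sub-exponential bound, indexed by the Markovian scale $n^+(\gamma)$, into a polynomial bound in the flow time $T$.

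First I would attach to $f\in Lip_w^+(X)$ the finitely-additive measure $\Theta_f$ on $\mathfrak{C}^+$ defined by $\Theta_f(\gamma)=\int_\gamma f\,d\nu^+$. The very definition of $Lip_w^+(X)$ says exactly that $\Theta_f\in LipMeas^+(X)$ with $\|\Theta_f\|_{LipMeas^+}\leq\|f\|_{Lip_w^+}$, so applying Lemma~\ref{prepapprox} I would set $\Xi^+(f):=\Xi^+(\Theta_f)\in\mathfrak{B}^+(X)$; the resulting map $f\mapsto\Xi^+(f)$ is continuous because $\Xi^+$ of Lemma~\ref{prepapprox} is. Since a balanced Markov compactum has sub-exponentially growing adjacency matrices, Corollary~\ref{approxrplus} then provides, for every $\varepsilon>0$, a constant $C_\varepsilon$ with
\[\bigl|\Theta_f(\gamma)-\Xi^+(f;\gamma)\bigr|\leq C_\varepsilon\,\|f\|_{Lip_w^+}\,e^{\varepsilon n^+(\gamma)}\quad\text{for every }\gamma\in\overline{\mathcal R}^+.\]

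By Proposition~\ref{measuretococycle} the flow arc $\gamma(x,T)$ belongs to $\overline{\mathcal R}^+$ and, by construction of $h_t^+$, $\Theta_f(\gamma(x,T))=\int_0^T f\circ h_t^+(x)\,dt$ while $\nu^+(\gamma(x,T))=T$, so it remains only to bound $e^{\varepsilon n^+(\gamma(x,T))}$ by $T^\varepsilon$. Here I would invoke Lyapunov regularity together with the balanced assumption: the components of the positive equivariant sequence satisfy $h_i^{(n)}\asymp e^{\theta_1 n}$, so any arc from $\mathfrak{C}_n^+$ contained in $\gamma(x,T)$ has $\nu^+$-measure at least $c\,e^{(\theta_1-\varepsilon)n}$; since $\nu^+(\gamma(x,T))=T$, this forces $n^+(\gamma(x,T))\leq\log T/(\theta_1-\varepsilon)+O(1)$ and hence $e^{\varepsilon n^+(\gamma(x,T))}\leq C_\varepsilon\,T^{\varepsilon/\theta_1}$. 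Absorbing $\theta_1$ into $\varepsilon$ and handling the case $T\leq 1$ by the constant term in $C_\varepsilon(1+T^\varepsilon)$ yields the claimed estimate.

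For uniqueness of $\Xi^+$, the duality identity~(\ref{dualprelim}) (established in the construction of the Preparatory Approximation Lemma) gives $\langle\Xi^+(f),\Phi^-\rangle=\langle\Theta_f,\Phi^-\rangle=\int_X f\,d\nu^+\times\Phi^-$ for every $\Phi^-\in\mathfrak{B}^-(X)$; since the Markov compactum is Lyapunov biregular, the pairing between $\mathfrak{B}^+$ and $\mathfrak{B}^-$ is non-degenerate (Proposition~\ref{dual-rmc}), so this identity determines $\Xi^+(f)\in\mathfrak{B}^+$ uniquely. The main obstacle in the whole argument is the passage from the Markovian scale $n^+(\gamma)$ to the flow time $T$: it rests on the two-sided estimate $h_i^{(n)}\asymp e^{\theta_1 n}$, which requires both Lyapunov regularity (to obtain the exponential rate $\theta_1$ for $|h^{(n)}|$) and the balanced condition (to ensure comparability of the components of $h^{(n)}$).
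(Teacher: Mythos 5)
Your proposal follows the paper's proof exactly: represent $f$ by the weakly Lipschitz measure $f\,d\nu^+$, apply the Preparatory Approximation Lemma \ref{prepapprox} to get $\Xi^+$, handle Markovian arcs directly from that lemma, and extend to flow arcs via Corollary \ref{approxrplus} together with the fact that flow arcs lie in $\overline{\mathcal R}^+$ (Propositions \ref{measuretoorderarc}, \ref{measuretococycle}). The only thing you add is the explicit conversion from the Markovian-scale bound $e^{\varepsilon n^+(\gamma)}$ to the time bound $T^\varepsilon$, which the paper leaves implicit; your argument for it via $h^{(n)}_i\asymp e^{\theta_1 n}$ (Lyapunov regularity for the rate, balancedness for comparability of coordinates) is correct, and the citation of Proposition \ref{dual-rmc} for non-degeneracy of the pairing should more properly be the remark preceding Lemma \ref{prepapprox} (Condition 3 of Assumption \ref{asdualexp}), but the content is identical.
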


{\bf Remark.}  We write $\Xi^+(f; x,T)$ instead of $\Xi^+(f)( x,T)$.

Proof: If $f\in Lip_w^+(X)$, then the measure $fd\nu^+$ is a weakly Lipschitz measure on the foliation $\F^+$.
Let $\Xi^+$ be the continuous map given by the Preparatory Approximation Lemma \ref{prepapprox} and, slightly abusing notation, write
$\Xi^+(f)=\Xi^+(fd\nu^+)$. The bound (\ref{indmarkapprox}) for ``Markovian'' arcs of the form $\gamma_n^+(x)$ is contained in Lemma \ref{prepapprox}, while for general arcs of the flow $h_t^+$ the claim follows from the approximation bound of
Corollary \ref{approxrplus} which is applicable to flow arcs by  Proposition \ref{measuretoorderarc}.
Lemma \ref{mainindmarkcomp} is proved completely.

\subsection{The Approximation Theorem for random Markov compacta.}
\subsubsection{Skew-products associated to the shift and to the renormalization cocycle}

We now derive the Approximation Theorem for random Markov compacta and study the action
that the right shift $\sigma$ on the space $\Omega$ of Markov compacta induces on the spaces
${\mathfrak B}^+$ of H{\"o}lder cocycles of individual Markov compacta. This action will
play a key r{\^o}le in the proof of limit theorems.

For $\omega\in\Omega$ we have a natural map ${\mathfrak t}_{\sigma}: X(\omega)\to X(\sigma\omega)$ which
to
a point $x\in X$ assigns the point ${\tilde x}\in X(\sigma \omega)$
given by ${\tilde x}_n=x_{n+1}$.

The map ${\mathfrak t}_{\sigma}$ sends the foliations
$\F^+_{\omega}$, $\F^-_{\omega}$ to $\F^+_{\sigma\omega}$, $\F^-_{\sigma\omega}$;
the semirings $\C^+_{\omega}$, $\C^-_{\omega}$ to $\C^+_{\sigma\omega}$, $\C^-_{\sigma\omega}$;
and induces an isomorphism $({\mathfrak t}_{\sigma})_*: \VV^+(X_{\omega})\to \VV^+(X_{\sigma\omega})$
given by the usual formula
$$
({\mathfrak t}_{\sigma})_*\Phi^+(\gamma)=\Phi^+(({\mathfrak t}_{\sigma})^{-1}\gamma), \ \gamma\in \C^+_{\sigma\omega}.
$$

Introduce the space
$$
\mathfrak X\Omega=\{ (\omega,x),x\in X(\omega)\}
$$
and endow it with a skew-product map ${\sigma}^{{\mathfrak X}}$
given by the formula $$
{\sigma}^{{\mathfrak X}}(\omega,x)=(\sigma\omega, \mathfrak t_{\sigma}x).
$$

Next, to the renormalization cocycle ${\mathbb A}$
assign the corresponding skew-product transformation
$\sigma^{{\mathbb A}}: \Omega\times {\mathbb R}^m\to \Omega\times {\mathbb R}^m$
defined by the formula
$$
\sigma^{{\mathbb A}}(\omega, v)=(\sigma\omega, {\mathbb A}(1, \omega)v).
$$

Consider the space
$$
\VV^+\Omega=\{(\omega, \Phi^+): \omega\in\Omega_{inv},  \Phi^+\in \VV^+(X(\omega))\},
$$
and endow $\VV^+\Omega$ with the automorphism ${\mathfrak T}_{\sigma}$ given by the formula
$$
{\mathfrak T}_{\sigma}(\omega, \Phi^+)=(\sigma \omega, ({\mathfrak t}_{\sigma})_*\Phi^+).
$$

Let $\omega\in\Omega_{inv}$, $v\in {\mathbb R}^m$.
Consider the equivariant sequence ${\bf v}=v^{(n)}$ such that $v^{(0)}=v$.
Let $\Phi_{{\bf v}}^+\in \VV^+(X(\omega))$ be the corresponding finitely-additive measure.
Since $v^{(n)}$ is uniquely determined
by $v^{(0)}$, we obtain the isomorphism
$$
{\cal I}^+_{\omega}: {\mathbb R}^m \to {\mathfrak V}^+(X(\omega))
$$
given by ${\cal I}^+_{\omega}v=\Phi^+_{{\bf v}}$, ${\bf v}=v^{(n)}$,  $v^{(0)}=v$.

For any $\omega\in\Omega_{inv}$ the diagram

$$
\begin{CD}
 {\mathbb R}^m@>{\cal I}^+_{\omega}>> {\mathfrak V}^+(X(\omega)) \\
@VV{\mathbb A}(1, \omega)V           @VV({\mathfrak t}_{\sigma})_*V   \\
 {\mathbb R}^m@>{\cal I}^+_{\sigma\omega}>> {\mathfrak V}^+(X(\sigma\omega)) \\
\end{CD}
$$
is commutative.

\subsubsection{Properties of random Markov compacta}

\begin{proposition}
If the measure $\mu$ satisfies Assumption \ref{asos}, then for almost
every $\omega$ the Markov compactum $X(\omega)$ is uniquely ergodic and
Lyapunov bi-regular.
\end{proposition}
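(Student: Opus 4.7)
The plan is to verify, for $\mu$-almost every $\omega$, the unique-ergodicity conditions of Assumption \ref{uniquergodic} and the Lyapunov-regularity conditions of Assumptions \ref{lyapreg} and \ref{duallyapreg}; the three parts of Assumption \ref{asos} will each play a distinct role.

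For unique ergodicity I would start from the observation already recorded in the paragraph just after Assumption \ref{asos}: by Assumption \ref{asos}(1) and the Poincar\'e recurrence theorem applied to the ergodic measure $\mu$, the positive matrix $Q := A(\Gamma_0)$ occurs infinitely often in the sequence $(A(\omega_n))$ both as $n \to +\infty$ and as $n \to -\infty$. Since $Q$ is strictly positive, it contracts the Hilbert projective metric on $\mathbb{R}^m_+$ by a uniform factor strictly less than one. Consequently any long enough product $A_{l-1} \cdots A_{l-n}$ that contains sufficiently many factors of $Q$ is a uniform Hilbert contraction of $\mathbb{R}^m_+$, and by Birkhoff's theorem this happens with positive frequency. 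The nested cones $A_{l-1} \cdots A_{l-n}\,\mathbb{R}^m_+$ therefore collapse to a single positive ray spanned by a vector $h^{(l)}$ (Furstenberg's formula $(16.13)$ in \cite{furst}), giving Assumption \ref{uniquergodic}(2); applying the same argument to the transpose cocycle $\mathbb{A}^t$ produces the positive reverse-equivariant $\lambda^{(l)}$ of Assumption \ref{uniquergodic}(1).

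For the norm conditions \ref{uniquergodic}(3)--(4) I would combine Assumption \ref{asos}(3) with Kingman's subadditive ergodic theorem to read off the top Lyapunov exponent $\theta_1$ of $\mathbb{A}$; strict positivity of $\theta_1$ follows from Furstenberg's criterion for products of non-negative matrices containing recurrent strictly positive factors. Equivariance then forces $|h^{(l)}|$ to grow (resp.\ decay) at exponential rate $\theta_1$ as $l\to+\infty$ (resp.\ $l\to-\infty$), and the uniform Hilbert contraction by recurrent $Q$'s implies $\min_i h^{(l)}_i \asymp |h^{(l)}|$, so the minimum coordinate grows at the same rate. The same story transposed yields the behaviour of $\lambda^{(l)}$. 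Lyapunov bi-regularity is then a direct application of the Oseledets Multiplicative Ergodic Theorem to both $\mathbb{A}$ and $\mathbb{A}^t$ over $(\Omega, \sigma, \mu)$: by Assumption \ref{asos}(2)--(3) both cocycles are almost surely invertible and log-integrable in both time directions, so Oseledets (see \cite{pesinbarreira} and Appendix A) supplies, for $\mu$-almost every $\omega$, the equivariant direct-sum decompositions, the distinct positive exponents $\theta_1 > \cdots > \theta_{l_0} > 0$, and the uniform convergence on unit spheres demanded by Assumptions \ref{lyapreg} and \ref{duallyapreg}. The annihilator compatibility $\widetilde{E}^u_n = \mathrm{Ann}(E^{cs}_n)$ and $\widetilde{E}^{cs}_n = \mathrm{Ann}(E^u_n)$ is a standard by-product of applying Oseledets to the cocycle and its transpose simultaneously.

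The main obstacle will be the coordinate-wise control in Assumption \ref{uniquergodic}(4): the Oseledets theorem alone controls only the norm $|h^{(l)}|$, not each individual component, and one must combine the Oseledets growth rate with the uniform Hilbert-metric contraction afforded by the recurrent positive matrix $Q$ to ensure that every coordinate of $h^{(l)}$ grows (and every coordinate of $\lambda^{(-l)}$) at the same exponential speed as the norm.
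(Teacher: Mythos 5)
Your proposal is correct and follows essentially the same route as the paper: Assumption \ref{asos}(1) together with ergodicity yields a recurrent strictly positive matrix $Q$, which by the Hilbert-metric (Furstenberg) contraction argument gives unique ergodicity, while Assumption \ref{asos}(2)--(3) makes the Oseledets theorem applicable to $\mathbb{A}$ and $\mathbb{A}^t$, giving bi-regularity. The paper states this compressed into a single sentence and a preceding paragraph about $Q$; you have simply unpacked the same two ingredients in more detail, and your final worry about coordinate-wise control of $h^{(l)}$ is indeed resolved exactly as you say, by combining the Oseledets norm growth with the uniform projective contraction afforded by the recurrent positive block.
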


Indeed, unique ergodicity is clear by the first condition, while,
in view of the second and the third, the Oseledets theorem
implies bi-regularity.

Given $\omega\in\Omega$,
let $E^u_{\omega}$ be the Lyapunov subspace at the point $\omega$ corresponding to
positive Lyapunov exponents of ${\mathbb A}$.
The first condition in Assumption \ref{asos} implies the nontriviality of the subspace $E^u_{\omega}$:
indeed, we have $h^{(0)}_{\omega}\in E^u_{\omega}$.

\begin{proposition}
\label{lyapidentif}
For $\mu$-almost every $\omega\in\Omega$ the transformation ${\cal I}^+_{\omega}$
maps  the subspace $E^u_{\omega}$ isomorphically onto
${\mathfrak B}^+(X(\omega))$.
\end{proposition}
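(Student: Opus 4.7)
\textbf{The plan} is to reduce Proposition~\ref{lyapidentif} to the Oseledets Multiplicative Ergodic Theorem applied to the renormalization cocycle $\mathbb{A}$. The key step is to translate the defining exponential-decay property of $\mathfrak{B}^+(X(\omega))$ into an exponential-decay condition on the equivariant sequence representing a cocycle, after which the identification with $E^u_\omega$ becomes a direct consequence of Oseledets regularity.

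\textbf{First I would} unpack the correspondence between ${\cal I}^+_\omega$ and the cocycle $\mathbb{A}$. Given $v \in \mathbb{R}^m$, let $\mathbf v = (v^{(n)})_{n \in \mathbb{Z}}$ be the unique equivariant sequence with $v^{(0)} = v$; the recursion $v^{(n+1)} = A(\omega_n)\, v^{(n)}$ together with almost-sure invertibility of the $A(\omega_n)$ (Assumption~\ref{asos}.(2)) shows that, up to the indexing shift between $v^{(n)}$ and $\mathbb{A}(n,\omega)$, the norm $|v^{(-n)}|$ grows at the same exponential rate as $|\mathbb{A}(-n,\omega) v|$. From the formula $\Phi^+_{\mathbf v}(\gamma^+_{n+1}(x)) = (v^{(n)})_{F(x_{n+1})}$ defining $\Phi^+_{\mathbf v}$, membership $\Phi^+_{\mathbf v} \in \mathfrak{B}^+(X(\omega))$ is therefore equivalent to the existence of constants $\alpha > 0$, $C > 0$ with $|\mathbb{A}(-n,\omega) v| \leq C e^{-\alpha n}$ for all $n \geq 0$.

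\textbf{Next I would} apply Oseledets. Assumption~\ref{asos}.(3) furnishes log-integrability of both $\mathbb{A}$ and its inverse, and $\mu$ is ergodic, so the Multiplicative Ergodic Theorem supplies, for $\mu$-almost every $\omega$, an Oseledets decomposition $\mathbb{R}^m = \bigoplus_\chi E^\chi_\omega$ with $\tfrac{1}{n} \log |\mathbb{A}(-n,\omega) w| \to -\chi$ for every nonzero $w \in E^\chi_\omega$; by definition $E^u_\omega = \bigoplus_{\chi > 0} E^\chi_\omega$. Decomposing $v = v^+ + v^-$ with $v^+ \in E^u_\omega$ and $v^- \in \bigoplus_{\chi \leq 0} E^\chi_\omega$, exponential decay of $|\mathbb{A}(-n,\omega) v|$ forces $v^- = 0$: a nonzero $v^-$ would produce backward growth rate $-\chi \geq 0$, incompatible with exponential decay. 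Conversely, any $v \in E^u_\omega$ satisfies $|\mathbb{A}(-n,\omega) v| \leq C e^{-\alpha n}$ for any $\alpha$ strictly less than the smallest positive exponent $\chi$ appearing in the Oseledets decomposition of $v$.

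\textbf{To conclude,} since the $A(\omega_n)$ are almost surely invertible, ${\cal I}^+_\omega$ is already a linear isomorphism $\mathbb{R}^m \to \mathfrak{V}^+(X(\omega))$, so the two steps above identify its restriction to $E^u_\omega$ as an isomorphism onto $\mathfrak{B}^+(X(\omega))$. \textbf{The most delicate point} is the treatment of a possible neutral Oseledets subspace: the definition of $\mathfrak{B}^+$ demands strictly exponential decay (in contrast to the weaker decay-to-zero condition defining $\mathfrak{B}^+_c$), so vectors of Lyapunov exponent zero — which grow subexponentially in both time directions — correctly fail both to lie in $E^u_\omega$ and to produce cocycles in $\mathfrak{B}^+$, making the identification clean. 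This distinction is precisely the reason why in general $\mathfrak{B}^+ \subsetneq \mathfrak{B}^+_c$, an inclusion whose strictness is raised as an open question earlier in the paper.
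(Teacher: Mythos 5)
Your proof is correct and follows exactly the route the paper intends: the paper's own proof consists of the single sentence ``This is immediate from the Oseledets Multiplicative Ergodic Theorem,'' and your proposal supplies precisely the details that sentence suppresses --- the translation of the exponential-decay condition defining $\mathfrak{B}^+$ into decay of $|\mathbb{A}(-n,\omega)v|$ via the equivariant sequence, the use of Assumption~\ref{asos} (invertibility and log-integrability) to invoke Oseledets, and the observation that a nonzero component in a neutral or stable Oseledets subspace would destroy exponential decay while any $v\in E^u_\omega$ produces it.
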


This is immediate from the Oseledets Multiplicative Ergodic Theorem.

For the transpose cocycle, in the same way as above, for $\omega\in\Omega_{inv}$, $v\in {\mathbb R}^m$,
we have
the isomorphism
$$
{\cal I}^-_{\omega}: {\mathbb R}^m \to {\mathfrak V}^-(X(\omega))
$$
given by ${\cal I}^-_{\omega}{\tilde v}=\Phi^+_{{\tilde {\bf v}}}$,
where ${\tilde {\bf v}}={\tilde v}^{(n)}$  is a reverse equivariant subsequence satisfying ${\tilde v}^{(0)}={\tilde v}$.

As before, for any $\omega\in\Omega_{inv}$ the diagram
$$
\begin{CD}
 {\mathbb R}^m@>{\cal I}^-_{\omega}>> {\mathfrak V}^-(X(\omega)) \\
@AA{\mathbb A}^t(1, \sigma\omega)A           @VV({\mathfrak t}_{\sigma})_*V   \\
 {\mathbb R}^m@>{\cal I}^-_{\sigma\omega}>> {\mathfrak V}^-(X(\omega)) \\
\end{CD}
$$
is commutative.

Continuing, again we let $\mu$ satisfy Assumption \ref{asos}, and, for $\omega\in\Omega$,
set ${\tilde E}^u_{\omega}$ to be the Lyapunov subspace at the point $\omega$ corresponding to
positive Lyapunov exponents of the cocycle ${\mathbb A}^t$. By definition of the spaces ${\mathfrak B}^-(X(\omega))$ and the Oseledets Theorem,
we have
\begin{proposition}
\label{lyapidentiftranspose}
For $\mu$-almost every $\omega\in\Omega$ the transformation ${\cal I}^-_{\omega}$
maps  the subspace ${\tilde E}^u_{\omega}$ isomorphically onto
${\mathfrak B}^-(X(\omega))$.
\end{proposition}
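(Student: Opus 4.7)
The plan is to mirror the proof of Proposition \ref{lyapidentif} using the transpose cocycle $\mathbb{A}^t$ in place of $\mathbb{A}$, relying on the structural duality already captured by the commutative diagram for ${\cal I}^-$ together with the characterization of $\mathfrak{B}^-(X(\omega))$ by exponential decay of $\Phi^-(\gamma_n^-(x))$ as $n\to\infty$.

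First, I would fix $\omega\in\Omega_{inv}$ (a $\mu$-full measure set by Assumption \ref{asos}) and recall that ${\cal I}^-_\omega:\mathbb{R}^m\to\mathfrak{V}^-(X(\omega))$ is already known to be a linear isomorphism: an element $\Phi^-\in\mathfrak{V}^-(X(\omega))$ is uniquely encoded by its reverse equivariant sequence $\tilde v^{(l)}$, and the initial vector $\tilde v^{(0)}=\tilde v$ determines the rest of the sequence through $\tilde v^{(l)}=A_l^t\tilde v^{(l+1)}$. It thus suffices to identify, inside $\mathbb{R}^m$, the subset of initial vectors $\tilde v$ for which the corresponding $\Phi^-={\cal I}^-_\omega \tilde v$ lies in $\mathfrak{B}^-(X(\omega))$, and to show that this subset is precisely $\tilde E^u_\omega$.

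Next I would use the defining estimate of $\mathfrak{B}^-(X(\omega))$: $\Phi^-\in\mathfrak{B}^-$ iff there exist $C,\alpha>0$ with $\max_{x\in X(\omega)}|\Phi^-(\gamma_n^-(x))|\leq Ce^{-\alpha n}$ for $n\geq 0$. Since $\Phi^-(\gamma_n^-(x))=\tilde v^{(n)}_{I(x_n)}$ and all $m$ coordinates are realized as $x$ ranges over $X(\omega)$, this condition is equivalent to $|\tilde v^{(n)}|\leq C'e^{-\alpha n}$ for $n\geq 0$. Translating via the commutative diagram for ${\cal I}^-$ (and iterating it), the sequence $\tilde v^{(n)}$ is obtained from $\tilde v^{(0)}=\tilde v$ by successive inverse applications of $A_l^t$, which is exactly the orbit of $\tilde v$ under the inverse of the transpose cocycle $\mathbb{A}^t$ viewed in its natural direction over $(\Omega,\sigma^{-1})$. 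Exponential decay of $\tilde v^{(n)}$ as $n\to\infty$ is therefore equivalent to exponential growth of $\mathbb{A}^t(n,\omega)\tilde v$ in the forward direction of the base dynamics of $\mathbb{A}^t$.

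At this point the Oseledets Multiplicative Ergodic Theorem (applicable because the third condition of Assumption \ref{asos} gives log-integrability of $\log\|\mathbb{A}^t\|$ and $\log\|(\mathbb{A}^t)^{-1}\|$) takes over: at $\mu$-a.e.\ $\omega$, the set of $\tilde v\in\mathbb{R}^m$ whose forward $\mathbb{A}^t$-orbit grows exponentially is exactly the strictly expanding Oseledets subspace $\tilde E^u_\omega$, so ${\cal I}^-_\omega(\tilde E^u_\omega)=\mathfrak{B}^-(X(\omega))$ and the restriction of ${\cal I}^-_\omega$ to $\tilde E^u_\omega$ is a linear isomorphism onto $\mathfrak{B}^-(X(\omega))$.

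I do not expect any serious obstacle here; the statement is completely parallel to Proposition \ref{lyapidentif} and the only bookkeeping to be careful about is the direction of time, namely that positive Lyapunov exponents of $\mathbb{A}^t$ over $(\Omega,\sigma^{-1})$ produce decay of the reverse equivariant sequence as $l\to+\infty$ in the $\omega$-indexing, which is exactly the decay condition defining $\mathfrak{B}^-(X(\omega))$. The only potential pitfall is checking sign/direction conventions in the identification with $\tilde E^u_\omega$, but once this matching is made explicit via the commutative diagram, the conclusion is an immediate corollary of Oseledets.
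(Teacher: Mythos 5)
Your proof follows the same approach the paper takes: both reduce the statement to the Oseledets Multiplicative Ergodic Theorem applied to the transpose cocycle (the paper's own ``proof'' is essentially the one-line remark preceding the proposition). Your reduction, from the defining exponential-decay condition on $\Phi^-(\gamma_n^-(x))$ through the reverse-equivariant sequence to a statement about the cocycle $\mathbb{A}^t$, is correct and fills in bookkeeping the paper omits.

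One intermediate step is misstated, however. You write that exponential decay of $\tilde{v}^{(n)}$ as $n\to\infty$ is ``equivalent to exponential growth of $\mathbb{A}^t(n,\omega)\tilde{v}$ in the forward direction,'' and then that the set of $\tilde{v}$ whose forward $\mathbb{A}^t$-orbit grows exponentially is exactly $\tilde{E}^u_\omega$. Neither claim is correct as stated: forward exponential growth holds for any vector with a nonzero component along $\tilde{E}^u_\omega$, even if it also has a nonzero component in the stable or central subspaces, and such a vector does not belong to $\tilde{E}^u_\omega$. The strictly expanding Oseledets subspace is characterized by exponential decay of the \emph{backward} orbit, not by forward growth. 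What saves your argument is that, as you observe immediately before, $\tilde{v}^{(n)}$ for increasing $n$ is obtained by successive applications of $(A^t_l)^{-1}$, so ``decay of $\tilde{v}^{(n)}$ as $n\to\infty$'' already \emph{is} the backward-decay condition, which does characterize $\tilde{E}^u_\omega$ by Oseledets. The conclusion and the overall structure are therefore right; the detour through forward growth should simply be deleted rather than repaired, since the correct characterization is the one you already have directly from the decay of $\tilde{v}^{(n)}$.
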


\subsubsection{Duality}

Take $\omega\in \Omega_{inv}$. For $v, {\tilde v}\in {\mathbb R}^m$ we clearly have
$$
\langle {\cal I}_{\omega}^+(v), {\cal I}_{\omega}^-({\tilde v})\rangle=\sum\limits_{i=1}^m v_i {\tilde v}_i.
$$

If $\mu$ satisfies Assumption \ref{asos}, then,
by the Oseledets Theorem, for almost every $\omega\in\Omega$ the Markov compactum $X(\omega)$ 
satisfies Assumption \ref{asdualexp}.
In particular,
the standard Euclidean inner product yields a nondegenerate pairing of the subspaces
$E^u_{\omega}$ and ${\tilde E}^u_{\omega}$. Consequently, we have the following
\begin{corollary} If a probability $\sigma$-invariant ergodic  measure $\mu$ satisfies Assumption \ref{asos},
then for $\mu$-almost every $\omega\in\Omega$ the pairing $\langle, \rangle$ is nondegenerate  on the pair of subspaces
$\BB^+(X(\omega))$, $\BB^-(X(\omega))$.
\end{corollary}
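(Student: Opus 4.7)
The plan is to reduce the non-degeneracy of the cocycle pairing $\langle \cdot, \cdot \rangle$ to the non-degeneracy of the standard Euclidean inner product on $\mathbb{R}^m$ restricted to the pair of strictly expanding Oseledets subspaces $E^u_\omega$ and $\tilde E^u_\omega$ of the cocycles $\mathbb{A}$ and $\mathbb{A}^t$, respectively.

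First I would invoke Propositions \ref{lyapidentif} and \ref{lyapidentiftranspose}: for $\mu$-almost every $\omega \in \Omega$, the evaluation maps give linear isomorphisms
$$
{\cal I}^+_\omega: E^u_\omega \xrightarrow{\ \sim\ } \mathfrak{B}^+(X(\omega)), \qquad
{\cal I}^-_\omega: \tilde E^u_\omega \xrightarrow{\ \sim\ } \mathfrak{B}^-(X(\omega)).
$$
Moreover, as recorded in the excerpt just before the statement, these isomorphisms intertwine the cocycle pairing with the standard Euclidean inner product:
$$
\langle {\cal I}^+_\omega(v), {\cal I}^-_\omega(\tilde v) \rangle = \sum_{i=1}^m v_i \tilde v_i, \quad v \in E^u_\omega,\ \tilde v \in \tilde E^u_\omega.
$$
Thus it suffices to show that this inner product is non-degenerate on $E^u_\omega \times \tilde E^u_\omega$.

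For this I would apply the Oseledets Multiplicative Ergodic Theorem to both $\mathbb{A}$ and $\mathbb{A}^t$. Assumption \ref{asos}, together with log-integrability, ensures that $\mu$-almost every $\omega$ is Oseledets-regular for both cocycles and yields direct-sum decompositions $\mathbb{R}^m = E^u_\omega \oplus E^{cs}_\omega$ and $\mathbb{R}^m = \tilde E^u_\omega \oplus \tilde E^{cs}_\omega$. The crucial point, which is the duality built into the theorem, is that
$$
\tilde E^u_\omega = \mathrm{Ann}\bigl(E^{cs}_\omega\bigr), \qquad \tilde E^{cs}_\omega = \mathrm{Ann}\bigl(E^u_\omega\bigr),
$$
so that Assumption \ref{asdualexp} holds almost surely. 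Granted this, non-degeneracy is immediate: for any nonzero $v \in E^u_\omega$ one has $v \notin E^{cs}_\omega$, so $v$ is not annihilated by every element of $\mathrm{Ann}(E^{cs}_\omega) = \tilde E^u_\omega$; hence some $\tilde v \in \tilde E^u_\omega$ satisfies $\langle v, \tilde v \rangle \neq 0$, and symmetrically on the other factor.

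The only delicate ingredient is the Oseledets duality between $\mathbb{A}$ and $\mathbb{A}^t$. This is a general feature of the Multiplicative Ergodic Theorem for invertible ergodic systems: a linear cocycle and its transpose (taken over the reverse dynamics) share the same Lyapunov spectrum, and their Oseledets subspaces for a given exponent are mutual annihilators after collecting the complementary exponents. In the paper's setup, $\mathbb{A}^t$ is defined precisely over $(\Omega, \sigma^{-1})$ so that its strictly expanding subspace is dual to the strictly expanding subspace of $\mathbb{A}$; once this bookkeeping is consistently done, the corollary follows with no further work.
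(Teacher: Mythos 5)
Your proposal is correct and takes essentially the same route as the paper: reduce via the formula $\langle {\cal I}^+_\omega(v), {\cal I}^-_\omega({\tilde v})\rangle = \sum_i v_i {\tilde v}_i$ to nondegeneracy of the Euclidean pairing on $E^u_\omega\times{\tilde E}^u_\omega$, then invoke the Oseledets duality (item 5 of the theorem in Appendix~A) which asserts that ${\tilde E}^u_\omega$ is the annihilator of $E^{cs}_\omega$ and conversely. You merely spell out the final annihilator argument that the paper compresses into ``in particular, the standard Euclidean inner product yields a nondegenerate pairing.''
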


\subsubsection{Balanced random Markov compacta and the Approximation Theorem.}

\begin{proposition}
\label{uebalanced}
If the measure $\mu$ satisfies Assumption \ref{asos}, then for almost all $\omega$ the Markov compactum
$X(\omega)$ is balanced.

\end{proposition}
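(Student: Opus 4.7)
My plan is to construct the sequence $i_n$ as the successive return times of $\omega$ to the positive-measure set $E_0 := \{\omega \in \Omega : \omega_0 = \Gamma_0\}$, and then verify the two requirements of Assumption \ref{balanced} using, respectively, the positivity of $A(\Gamma_0)$ (for the bounded ratio condition) and Birkhoff's ergodic theorem applied to the first-return map (for sub-exponential growth).

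First, I would apply Poincaré recurrence and ergodicity of $(\Omega,\sigma,\mu)$: since $\mu(E_0)>0$ by Assumption \ref{asos}(1), and since $\sigma$ and $\sigma^{-1}$ are both $\mu$-ergodic, almost every $\omega$ has its $\sigma$-orbit intersecting $E_0$ for infinitely many positive and infinitely many negative times. Let $(i_n(\omega))_{n\in\mathbb{Z}}$ be the strictly increasing enumeration of these times, normalized by $i_0 \leq 0 < i_1$. Birkhoff's theorem gives $i_n/n \to 1/\mu(E_0)$ almost surely, so in particular $i_n \to \pm\infty$ as $n\to\pm\infty$, verifying condition (\ref{seqin}).

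Next, I would check the positivity/bounded-ratio condition. Setting
$$B_n := A(\omega_{i_{n+1}}) A(\omega_{i_{n+1}-1}) \cdots A(\omega_{i_n+1}),$$
the leftmost factor equals $A(\Gamma_0)$ by construction. Factor $B_n = A(\Gamma_0) M_n$ where $M_n$ is a (possibly empty) product of nonnegative integer matrices drawn from $\mathfrak{G}$. Since each graph in $\mathfrak{G}$ has an edge ending at every vertex, each such matrix has strictly positive column sums, and hence so does $M_n$; in particular every column of $M_n$ is a nonzero nonnegative vector. Multiplying on the left by the strictly positive matrix $A(\Gamma_0)$ then yields $(B_n)_{jk} > 0$ for all $j,k$ together with the uniform estimate
$$\frac{(B_n)_{jk}}{(B_n)_{lk}} \;\leq\; \frac{\max_{p,q} A(\Gamma_0)_{pq}}{\min_{p,q} A(\Gamma_0)_{pq}} \;=:\; C,$$
with $C$ independent of $n$. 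This is exactly condition 2 of Assumption \ref{balanced}.

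For the sub-exponential growth, I would apply Birkhoff's theorem to the first-return map $\widetilde{\sigma}: E_0 \to E_0$, $\widetilde{\sigma}(\omega)=\sigma^{R(\omega)}\omega$, where $R(\omega)=\min\{k\geq 1:\sigma^k\omega\in E_0\}$, together with the observable
$$\Psi(\omega) := \sum_{k=1}^{R(\omega)} \log \|A(\omega_k)\|.$$
By Assumption \ref{asos}(3) and Kac's formula, $\Psi \in L^1(E_0, \mu_{E_0})$, where $\mu_{E_0}=\mu|_{E_0}/\mu(E_0)$ is $\widetilde{\sigma}$-invariant and ergodic. Birkhoff's theorem then yields $\frac{1}{N}\sum_{n=1}^N \Psi(\widetilde{\sigma}^n\omega) \to \int\Psi\,d\mu_{E_0}$ a.s., and the elementary fact that convergence of $S_N/N$ forces $S_N-S_{N-1}=o(N)$ implies that the per-block sums $Y_n := \Psi(\widetilde{\sigma}^n\omega) = \sum_{k=i_n+1}^{i_{n+1}} \log \|A(\omega_k)\|$ satisfy $Y_n=o(|n|)$ almost surely. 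Since $\log\|B_n\|\leq Y_n$ and the norm of a nonnegative matrix is comparable to the sum of its entries, condition 1 of Assumption \ref{balanced} follows. The backward direction $n\to-\infty$ is handled identically, replacing $\sigma$ by $\sigma^{-1}$.

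The only subtle point — and the one I would be most careful about — is notational: I must ensure that $i_n$ is indexed so that the block $B_n$ ends (on the left) at a visit to $\Gamma_0$ rather than beginning there, since this is what makes the factorization $B_n=A(\Gamma_0)M_n$ work and delivers uniform positivity. Once this convention is fixed, the proof is a routine synthesis of Poincaré recurrence, Kac's formula, and Birkhoff's theorem for the induced system.
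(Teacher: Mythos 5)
Your proposal follows exactly the same strategy as the paper's proof: take $i_n$ to be the successive visit times of $\omega$ to the positive-measure cylinder $\{\omega : \omega_0 = \Gamma_0\}$, deduce the bounded-ratio condition from the positivity of $A(\Gamma_0)$, and derive sub-exponential growth from the integrability of the log-norm of the cocycle induced on that cylinder (the paper invokes the induced cocycle, which is precisely your $\Psi$ together with Kac's formula and Birkhoff). You merely spell out the two steps the paper labels ``immediate,'' namely the ratio estimate $\frac{(B_n)_{jk}}{(B_n)_{lk}} \le \frac{\max A(\Gamma_0)}{\min A(\Gamma_0)}$ from the factorization $B_n = A(\Gamma_0) M_n$ and the elementary fact that $S_N/N \to c$ forces $Y_N = S_N - S_{N-1} = o(N)$, so the argument is the same.
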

Proof: We verify the requirements of Assumption \ref{balanced} one by one.
Let $\Gamma_0\in {\mathfrak G}_+$ have positive probability with respect to $\mu$ (the existence of such $\Gamma_0$ is given by
 the first condition of Assumption \ref{asos}). Let $i_n$, $n\in
{\mathbb Z}$ be consecutive moments of time
such that $\omega_{i_n}=\Gamma_0$ (the sequence $i_n$ is almost surely unbounded both in the positive and
in the negative direction). The positivity of $A(\Gamma_0)$ immediately implies the second
requirement of Assumption \ref{balanced}, and it remains to verify the first requirement.
Denote by $\sigma_{\Gamma_0}$  the induced map of $\sigma$ on the set $\{\omega:\omega_0=\Gamma_0\}$. The
renormalization cocycle ${\mathbb A}$ naturally yields the induced cocycle ${\mathbb A}_{\Gamma_0}$
over $\sigma_{\Gamma_0}$ (to obtain the matrix of ${\mathbb A}_{\Gamma_0}$ one needs to multiply all the
matrices
of ${\mathbb A}$ occurring between two consecutive appearances of $\Gamma_0$). If the logarithm of
of the norm of ${\mathbb A}$ is integrable, then the same is true of ${\mathbb A}_{\Gamma}$, whence
the first requirement    of Assumption \ref{balanced} follows immediately.

Summing up, we see that under Assumption \ref{asos}
for almost every $\omega$ the Markov compactum $X(\omega)$ is uniquely ergodic, Lyapunov
biregular and balanced; Lemma \ref{mainindmarkcomp} is thus applicable and yields
\begin{corollary}[The Approximation Theorem for Random Markov Compacta]
\label{symbmultdiscrtime}
Let $\mu$ be an  ergodic, $\sigma$-invariant probability measure on $\Omega$
satisfying Assumption \ref{asos}. For any $\varepsilon>0$ there exists
a constant $C_{\varepsilon}$ depending only on $\mu$
such that the following holds for almost every $\omega\in\Omega$.
There exists a continuous mapping $\Xi^+_{\omega}: Lip_{w}^+(X(\omega))\to \BB^+(X(\omega))$
such that
\begin{enumerate}
\item for any $t_0\in {\mathbb R}$ we have $\Xi^+_{\oomega}(f\circ h_{t_0}^+)=\Xi^+_{\oomega}(f)$;
\item the diagram
$$
\begin{CD}
Lip_{w}^+(X(\omega))@ >\Xi^+_{\oomega}>> \BB^+(X(\omega)) \\
@AA({\mathfrak t}_{\sigma})^*A           @VV({\mathfrak t}_{\sigma})_*V   \\
Lip_{w}^+(X(\sigma\omega))@ >\Xi^+_{\sigma\omega}>>
\BB^+(X(\sigma\omega)) \\
\end{CD}
$$
is commutative
\item
 for any
$f\in Lip_w^+(X)$, any $x\in X(\omega)$ and any $T>0$ we have
\begin{equation}
\label{transflowapprox}
\left| \int_0^T
f\circ h_t^+(x) dt -\Xi^+(f; x,T)\right|\leq
C_{\varepsilon}||f||_{Lip_w^+}(1+T^{\varepsilon}).
\end{equation}
\end{enumerate}
\end{corollary}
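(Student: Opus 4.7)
The plan is to deduce the Corollary by collecting the ergodic-theoretic consequences of Assumption~\ref{asos} for the random Markov compactum $X(\omega)$ and then invoking Lemma~\ref{mainindmarkcomp} fibrewise. First I would check that for $\mu$-a.e.~$\omega$ the Markov compactum $X(\omega)$ is uniquely ergodic, Lyapunov biregular and balanced. Unique ergodicity follows from the first clause of Assumption~\ref{asos}, since the positive matrix $A(\Gamma_0)$ appears infinitely often in both time directions along $\mu$-a.e.~$\sigma$-orbit and pinches the forward and backward cones defining the equivariant sequences $h^{(n)}$ and $\lambda^{(n)}$. The Oseledets Multiplicative Ergodic Theorem, applied to $\mathbb{A}$ and to $\mathbb{A}^t$ with the log-integrability of Assumption~\ref{asos}, yields the decompositions demanded by Assumptions~\ref{asexpdec}, \ref{asdualexp}, \ref{lyapreg}, \ref{duallyapreg}, i.e.~Lyapunov biregularity, while Proposition~\ref{uebalanced} gives the balanced property.

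With these three properties in hand, Lemma~\ref{mainindmarkcomp} produces, for each such $\omega$, a continuous map $\Xi^+_\omega : Lip_w^+(X(\omega)) \to \BB^+(X(\omega))$ satisfying the pointwise estimate (\ref{transflowapprox}) and uniquely determined by the duality identity
\[
\langle \Xi^+_\omega(f), \Phi^- \rangle \;=\; \int_{X(\omega)} f \, d(\nu^+ \times \Phi^-), \qquad \Phi^- \in \BB^-(X(\omega)).
\]
From this characterization both items (1) and (2) are immediate. For (1): the finitely-additive product measure $\nu^+ \times \Phi^-$ is $h^+_t$-invariant because $\nu^+$ is the positive $h^+_t$-invariant measure along the leaves of $\F^+$ and $\Phi^-$ is holonomy-invariant under $h^+_t$; hence the right-hand side is unaffected when $f$ is replaced by $f \circ h^+_{t_0}$, and uniqueness forces $\Xi^+_\omega(f \circ h^+_{t_0}) = \Xi^+_\omega(f)$. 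For (2): the tautological homeomorphism ${\mathfrak t}_\sigma : X(\omega) \to X(\sigma\omega)$ sends the semirings $\C^\pm_\omega$ to $\C^\pm_{\sigma\omega}$, intertwines $\nu^+_\omega$ and $\nu^+_{\sigma\omega}$ via the shift action on the equivariant sequence $h^{(n)}$, and induces isomorphisms $({\mathfrak t}_\sigma)_*$ on $\BB^\pm$ which preserve the pairing $\langle,\rangle$; pushing the defining identity for $\Xi^+_\omega$ forward under ${\mathfrak t}_\sigma$ and comparing with the defining identity for $\Xi^+_{\sigma\omega}$ gives the commutative square by uniqueness.

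The main obstacle is upgrading the constant produced by Lemma~\ref{mainindmarkcomp}, which a priori depends on the Markov compactum through its Oseledets constants, to a constant $C_\varepsilon$ depending only on $\mu$. I would argue in two steps. First, the relevant Oseledets quantities vary measurably in $\omega$ and, by the Birkhoff Ergodic Theorem applied to their logarithms, grow at most subexponentially along orbits of $\sigma$; since the right-hand side of (\ref{transflowapprox}) already tolerates an arbitrary power $T^\varepsilon$, this growth is absorbed upon passing from $\varepsilon$ to $\varepsilon/2$. Second, an Egorov/Luzin reduction — of the type invoked after Proposition~\ref{varestfmoduli} — yields uniform bounds on compact subsets of $\Omega$ of $\mu$-measure arbitrarily close to $1$, which is precisely the form in which the estimate is used in the sequel to derive Theorems~\ref{multiplicmoduli} and~\ref{limthmmoduli}.
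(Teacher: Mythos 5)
Your proposal follows the same route as the paper: verify that under Assumption~\ref{asos} the Markov compactum $X(\omega)$ is, for $\mu$-a.e.\ $\omega$, uniquely ergodic, Lyapunov biregular (Oseledets applied to $\mathbb{A}$ and $\mathbb{A}^t$) and balanced (Proposition~\ref{uebalanced}), and then apply Lemma~\ref{mainindmarkcomp} fibrewise. The derivation of items (1) and (2) from the duality characterization of $\Xi^+_\omega$ together with nondegeneracy of the pairing and the naturality of the construction under ${\mathfrak t}_\sigma$ is also the intended argument, which the paper leaves implicit.

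One caveat on the final paragraph. You correctly identify that Lemma~\ref{mainindmarkcomp} produces an $\omega$-dependent constant, and that the Corollary states a constant depending only on $\mu$; this is a genuine mismatch, but your proposed Birkhoff repair does not close it. Tempering of Oseledets constants gives subexponential growth of $C_\varepsilon(\sigma^n\omega)$ in $n$ along an orbit, whereas the estimate~(\ref{transflowapprox}) at a fixed base point $\omega$ needs a bound involving $C_\varepsilon(\omega)$ itself, and that remains an unbounded measurable function of $\omega$; there is nothing to absorb by replacing $\varepsilon$ with $\varepsilon/2$. The Egorov/Luzin step gives uniformity only on compact sets of measure close to $1$, which changes the statement rather than proves it. What the paper actually establishes is the version with a measurable function $C_\varepsilon:\Omega\to\mathbb{R}_{>0}$, as made explicit in Proposition~\ref{prelim-multiplic}; the word ``constant depending only on $\mu$'' in the Corollary should be read in that weaker sense, and this is the form used downstream in the proof of Theorem~\ref{multiplicmoduli}.
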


\subsubsection{Hyperbolic random Markov compacta}

We now give a sufficient condition for hyperbolicity of random Markov compacta.
Let $({\mathcal X}, \mu)$ be a probability space endowed with a $\mu$-preserving transformation $T$ or flow $g_s$ and
an integrable linear cocycle $A$ over $g_s$  with values in ${\mathrm GL}(m, {\mathbb R})$.

For $p\in{\mathcal X}$ let $E_{0, x}$ be the the {\it neutral} subspace of $A$ at $p$, i.e., the
Lyapunov subspace of the cocycle ${A}$ corresponding to the
Lyapunov exponent $0$.
We say  that ${A}$ {\it acts isometrically on its neutral subspaces}
if for almost any $p$ there exists
an inner product $\langle\cdot\rangle_{p}$ on ${\mathbb R}^m$ which depends on
$p$ measurably and satisfies
$$
\langle { A}(1, p)v,{ A}(1, p)v\rangle_{g_sp}=\langle v, v\rangle_{p}, \ v\in E_{0, p}
$$
for all $s\in {\mathbb R}$ (again, in the case of a transformation, $g_s$ should be replaced by $T$ in this formula).

The following proposition is clear from the definitions.
\begin{proposition}
\label{hyperboliccriterion}
Let $\nu$ be a $\sigma$-invariant ergodic probability measure on $\Omega$ satisfying Assumption \ref{asos}
and such that the renormalization cocycle ${\mathbb A}$ acts isometrically on its neutral subspaces
with respect to $\nu$. Then for almost all $\omega$ the Markov compactum $X(\omega)$ is hyperbolic.
\end{proposition}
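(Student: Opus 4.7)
The plan is to use Proposition \ref{lyapidentif}, which identifies $\BB^+(X(\omega))$ with the strictly unstable Oseledets subspace $E^u_\omega\subset\mathbb{R}^m$ of the renormalization cocycle $\mathbb{A}$. Since $\BB^+(X(\omega))\subset\BB_c^+(X(\omega))$ holds trivially, the task reduces to proving the reverse inclusion: if $\Phi^+\in\BB_c^+(X(\omega))$ corresponds under $eval_0^+$ to a vector $v\in\mathbb{R}^m$, then $v\in E^u_\omega$. For $\mu$-almost every $\omega$ the Oseledets theorem supplies the splitting $\mathbb{R}^m=E^u_\omega\oplus E^0_\omega\oplus E^s_\omega$ into the strictly expanding, neutral, and strictly contracting subspaces of $\mathbb{A}$; decompose $v=v^u+v^0+v^s$ accordingly. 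Via holonomy invariance and the definition of the equivariant sequence $v^{(l)}$, the continuity condition defining $\BB_c^+$ translates into $\|\mathbb{A}(-n,\omega)v\|\to 0$ as $n\to\infty$ in any fixed norm on $\mathbb{R}^m$.

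First I would eliminate the stable component. If $v^s\neq 0$, the Oseledets theorem applied to the inverse cocycle gives $\|\mathbb{A}(-n,\omega)v^s\|$ growing exponentially in $n$. By Luzin's theorem the projections $\pi^u_p,\pi^0_p,\pi^s_p$ onto the Oseledets subspaces are continuous, hence uniformly bounded, on some compact set $K\subset\Omega$ of large measure, and by the Birkhoff theorem $\sigma^{-n}\omega\in K$ along a subsequence $n_k\to\infty$. Using equivariance of the splitting, for such $n_k$
$$\|\mathbb{A}(-n_k,\omega)v\|\;\geq\; c\,\|\pi^s_{\sigma^{-n_k}\omega}\mathbb{A}(-n_k,\omega)v\|\;=\;c\,\|\mathbb{A}(-n_k,\omega)v^s\|\;\longrightarrow\;\infty,$$
contradicting $\|\mathbb{A}(-n,\omega)v\|\to 0$. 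Hence $v^s=0$.

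The main step is to show $v^0=0$. By the isometric action hypothesis there exists a measurable family of inner products $\langle\cdot,\cdot\rangle_p$ on $E^0_p$ satisfying $|\mathbb{A}(n,p)w|_{\sigma^n p}=|w|_p$ for every $w\in E^0_p$ and every $n\in\mathbb{Z}$. Applying Luzin once more, on a compact set $K'\subset\Omega$ of large measure this inner product depends continuously on $p$, hence is uniformly equivalent to the Euclidean norm on $E^0_p$, and the projection $\pi^0_p$ is uniformly bounded. By Birkhoff, $\sigma^{-n}\omega\in K'$ along a subsequence $n_k\to\infty$, and for such $n_k$
$$\|\mathbb{A}(-n_k,\omega)v\|\;\geq\; c\,\|\mathbb{A}(-n_k,\omega)v^0\|\;\geq\; c'\,|\mathbb{A}(-n_k,\omega)v^0|_{\sigma^{-n_k}\omega}\;=\;c'\,|v^0|_\omega,$$
a positive constant if $v^0\neq 0$, once again contradicting $\|\mathbb{A}(-n,\omega)v\|\to 0$. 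Hence $v^0=0$ and $v=v^u\in E^u_\omega$, giving $\BB_c^+(X(\omega))=\BB^+(X(\omega))$ and so hyperbolicity of $X(\omega)$.

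The main obstacle is the neutral step: Oseledets alone provides neither decay nor growth rate on $E^0_\omega$, so ruling out a nonzero neutral component cannot be done by recurrence alone. Converting the abstract invariance of the measurable inner product into a genuine uniform lower bound on the standard Euclidean norm, via Luzin plus Poincaré recurrence to the compact set where the inner product depends continuously on the base point, is exactly what makes the neutral component disappear.
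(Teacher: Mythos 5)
The paper offers no argument for this statement --- it is introduced with ``The following proposition is clear from the definitions,'' and no proof is given --- so there is no written proof to compare against. Your proposal correctly supplies what that remark presumes: you translate membership in $\mathfrak B_c^+$ and $\mathfrak B^+$ into, respectively, decay and exponential decay of the equivariant sequence $v^{(-n)}$, use Proposition \ref{lyapidentif} to identify $\mathfrak B^+$ with the unstable Oseledets space $E^u_\omega$, decompose $v=v^u+v^0+v^s$, rule out $v^s$ by the backward exponential growth on the stable subspace, and --- the substantive step --- rule out $v^0$ by noting that Oseledets gives no decay on the neutral subspace and therefore the isometric-action hypothesis must be invoked: Luzin plus Birkhoff recurrence to a compact set on which both the Oseledets projections and the invariant inner product are continuous converts invariance of $|\cdot|_p$ into a uniform Euclidean lower bound along a subsequence, contradicting $\|v^{(-n)}\|\to0$. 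This is the natural and, as far as I can tell, correct argument. (Minor streamlining: the two applications of Luzin/recurrence, one for $v^s$ and one for $v^0$, could be merged by choosing a single compact $K$ carrying continuity of all the relevant measurable data.)
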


In other words, any continuous finitely-additive measure must in fact be H{\"o}lder.
Note that the assumptions of the proposition are verified,
in particular, for the symbolic counterpart of the Masur-Veech
smooth measure on the moduli space of abelian differentials.

\section{The Renormalization Flow on the Space of Measured Markov Compacta.}
\subsection{The space of measured Markov compacta.}

Let $\Omega_{ue}\subset \Omega$ be the subset of $\omega$ such that the Markov compactum $X(\omega)$ is
uniquely ergodic.
For $\omega\in\Omega_{ue}$, $r\in {\mathbb R}_+$ define
\begin{equation}
\label{nur}
\nu^+_{(\omega, r)}=\frac{\nu^+_{\omega}}{r};\
\nu^-_{(\omega, r)}=r{\nu^-_{\omega}}.
\end{equation}

It is clear that for any $r>0$ we have
\begin{equation}
\nu_{\omega}=\nu^+_{(\omega, r)}\times\nu^-_{(\omega, r)}.
\end{equation}

Furthermore, from the definitions it is clear that for $l\in {\mathbb Z}$ we have
\begin{equation}
({\mathfrak t}_{\sigma})^l_*\nu^+_{(\omega,r)}=\nu^+_{(\sigma^l\omega, r|\la^{(l)}_{\omega}|)}.
\end{equation}

We now introduce an equivalence relation $\sim$ on the set of pairs
$(\omega,r)\in\Omega_{ue}\times {\mathbb R}_+$ in the following way:
$$
(\omega, r)\sim (\omega^{\prime}, r^{\prime})
$$
if there exists $l\in {\mathbb Z}$ such that
\begin{equation}
\omega^{\prime}=\sigma^l\omega, \
r^{\prime}/r=|\la^{(l)}(\omega)|.
\end{equation}

Since, by definition,
$$
\la^{(n)}_{\sigma^l\omega}=\frac{\la_{\omega}^{(n+l)}}{|\la_{\omega}^{(l)}|},
$$
we have
$$
|\la^{(l)}_{\omega}|=\frac 1{|\la^{(-l)}_{\sigma^l\omega}|},
$$
and the fact that $\sim$ is an equivalence relation is clear.

\subsubsection{The renormalization flow and the renormalization cocycle.}

Let ${\overline \Omega}$ be the set of equivalence classes of $\sim$.
Introduce a flow $g_s$ on ${\overline \Omega}$  by the formula
$$
g_s(\omega,r)=(\omega, e^{s}r).
$$
The flow $g_s$ will be called the {\it renormalization} flow.

An explicit fundamental domain for $\sim$ is given by the set
\begin{equation}
\label{funddomain}
\Omega_0=\{(\omega, r): \omega\in\Omega_{ue},  1\leq r< |\la^{(1)}(\omega)|^{-1}\}.
\end{equation}

To every pair $(\omega, r)\in\Omega_0$ we assign
\begin{enumerate}
\item the Markov compactum $X(\omega,r)=X(\omega)$;
\item the foliations $\F_{(\omega,r)}^+=\F_{\omega}^+$, $\F_{(\omega,r)}^-=\F_{\omega}^-$;
\item the measures $\nu^+_{(\omega, r)}=\frac{\nu^+}{r}$, $\nu^-_{(\omega, r)}=r{\nu^+}$;
\item the spaces of finitely-additive measures $\BB^+_{(\omega,r)}=\BB^+_{\omega}$,
$\BB^-_{(\omega,r)}=\BB^-_{\omega}$.
\end{enumerate}

We identify ${\overline \Omega}$ with $\Omega_0$ and for ${\overline \omega}\in {\overline \Omega}$ we
speak of the corresponding Markov compactum $X({\overline \omega})$, the foliations,
the measures etc., meaning those objects for the corresponding point $(\omega,r)\in \Omega_0$.

The identification of ${\overline \Omega}$ with $\Omega_0$ yields a representation of
$g_s$ as a suspension flow over the shift $\sigma$ with roof function
\begin{equation}
\label{rooffunction}
\tau^1(\omega)=-\log{|\la^{(1)}(\omega)|}.
\end{equation}
It is important to note that the roof function $\tau^1(\omega)$ given by (\ref{rooffunction})
only depends on the {\it future} of the sequence $\omega$.

Given
$(\omega,r)\in\Omega_0$ and $s\in {\mathbb R}$, define an integer ${\tilde n}(\omega,r,s)$
by the formula
\begin{equation}
\label{ntilde}
(\omega, e^sr)\sim (\sigma^{{\tilde n}(\omega,r,s)}\omega, r^{\prime}), \ (\sigma^{{\tilde
n}(\omega,r,s)}\omega, r^{\prime})\in\Omega_0
\end{equation}

For every $s\in{\mathbb R}$, we have a natural map
$$
{\mathfrak t}_s: X({\overline\omega})\to X(g_s{\overline\omega})
$$
given, for ${\overline \omega}=(\omega,r)$, $(\omega,r)\in\Omega_0$, by the formula
$$
{\mathfrak t}_s({\overline \omega})={\mathfrak t}_{\sigma}^{{\tilde n}(\omega,r,s)}.
$$

Introduce the space
$$
\mathfrak X{\overline \Omega}=\{ (\oomega,x),x\in X(\oomega)\}
$$
and endow it with the skew-product flow ${g}_{s}^{{\mathfrak X}}$
given by the formula $$
{g}_{s}^{{\mathfrak X}}(\oomega,x)=({g}_s^{{\mathfrak X}}\oomega, {\mathfrak t}_{s}x).
$$

It is immediate from the definitions that if
$(\omega, r)\sim (\omega^{\prime}, r^{\prime})$ and
$\omega^{\prime}=\sigma^l\omega$, then
\begin{equation}
{\mathfrak t}_{\sigma}^l\nu^+_{(\omega, r)}=\nu^+_{(\omega^{\prime}, r^{\prime})}; \
{\mathfrak t}_{\sigma}^l\nu^-_{(\omega, r)}=\nu^-_{(\omega^{\prime}, r^{\prime})},
\end{equation}
and we thus have
\begin{equation}
({\mathfrak t}_{s})_*\nu^+_{{\overline \omega}}=\nu^+_{g_s{\overline\omega}}; \
({\mathfrak t}_s)_*\nu^-_{{\overline\omega}}=\nu^-_{g_s{\overline\omega}}.
\end{equation}

For ${\overline \omega}=(\omega,r)$, $(\omega,r)\in\Omega_0$, write
\begin{equation}
\label{overlinea}
{\overline {\mathbb A}}(s, {\overline \omega})={\mathbb A}({\tilde n}(\omega,r,s), \omega).
\end{equation}

We thus obtain a matrix cocycle ${\overline {\mathbb A}}$ over the  flow $g_s$.

Let $g_s^{{\overline {\mathbb A}}}: \Oomega\times {\mathbb R}^m\to \Oomega\times {\mathbb R}^m$ be the skew-product
transformation corresponding to the cocycle ${\overline {\mathbb A}}$ by the formula
$$
g_s^{{\overline {\mathbb A}}}(\oomega, v)=(g_s\oomega, {\overline {\mathbb A}}v).
$$

As before, for any $\oomega=(\omega, r)$, we have the isomorphism
$$
{\cal I}^+_{\oomega}: {\mathbb R}^m \to {\mathfrak V}^+(X(\oomega))
$$
given by ${\cal I}^+_{\oomega}v=\Phi^+_{{\bf v}}$, ${\bf v}=v^{(n)}$,  $v^{(0)}=v$ (recall that, by definition, $\omega\in\Omega_{inv}$).

For any $s\in {\mathbb R}$
the diagram
$$
\begin{CD}
 {\mathbb R}^m@>{\cal I}^+_{\oomega}>> {\mathfrak V}^+(X(\oomega)) \\
@VV{\mathbb A}(s, \oomega)V           @VV({\mathfrak t}_{s})_*V   \\
 {\mathbb R}^m@>{\cal I}^+_{g_s\oomega}>> {\mathfrak V}^+(X(g_s\oomega)) \\
\end{CD}
$$
is commutative.

\subsubsection{Characterization of finitely-additive measures.}

Introduce the space ${\mathfrak V}^+{\overline\Omega}$ by the formula
$$
{\mathfrak V}^+{\overline\Omega}=\{(\oomega, \Phi^+), \Phi^+\in {\mathfrak V}^+_{\oomega}\}
$$
and a flow ${\mathfrak T}_s$ on  ${\mathfrak V}^+{\overline\Omega}$ by the formula
$$
{\mathfrak T}_s(\oomega, \Phi^+)=(g_s\oomega, \left({\mathfrak t}_s\right)_*\Phi^+).
$$

The trivialization map
$$
{\rm Triv}: {\mathfrak V}^+{\overline\Omega}\to \Oomega\times {\mathbb R}^m
$$
is given by the formula
$$
{\rm Triv}(\oomega, \Phi^+)=(\oomega, ({\cal I}^+_{\oomega})^{-1}\Phi^+).
$$
The diagram
$$
\begin{CD}
 {\mathfrak V}^+{\overline\Omega}@>{\rm Triv}>> \Oomega\times {\mathbb R}^m \\
@VV{\mathfrak T}_sV           @VVg_s^{{\overline {\mathbb A}}}V   \\
 {\mathfrak V}^+{\overline\Omega}@>{\rm Triv}>> \Oomega\times {\mathbb R}^m\\
\end{CD}
$$
is commutative.

Recall that $W({\mathfrak G})$ stands for the set of all finite words over the alphabet ${\mathfrak G}$, and that
for a word $w\in W({\mathfrak G})$, $w=w_0\dots w_n$, $w_i\in {\mathfrak G}$,
we write $\Gamma(w)$ for the concatenation $w_0\dots w_n$.

Let ${\mathfrak G}_+$ be the set of all $\Gamma\in {\mathfrak G}$ such that all entries of the matrix $A(\Gamma)$ are positive.
Take $\Gamma\in {\mathfrak G}_+$ and a word $w\in W({\mathfrak G})$, $w=w_0\dots w_n$,
satisfying $\Gamma(w)=\Gamma$.
Let ${\mathscr M}(w, \infty)$ be the family of
Borel ergodic $\sigma$-invariant measures $\mu$ on $\Omega$ (finite or infinite)
such that
\begin{enumerate}
\item
$
\mu(\{\omega:\omega_0=w_0, \dots, \omega_n=w_n\})>0;
$
\item $
\int_{\Omega} \tau^1(\omega) d\mu(\omega)=1.
$
\end{enumerate}

The first condition together with ergodicity of $\mu$ implies that $\mu(\Omega\setminus\Omega_{ue})=0$.
The second condition yields that the measure
$$
\Prob_{\mu}=\mu\times \frac{dr}{r}
$$
is a well-defined probability $g_s$-invariant measure on ${\overline \Omega}$.

Denote
$$
{\mathscr M}(\Gamma, \infty)=\bigcup\limits_{w\in W({\mathfrak G}), \Gamma(w)=\Gamma} {\mathscr M}(w, \infty),
$$
and let
${\mathscr P}^+$ be the space of ergodic probability $g_s$-invariant measures $\Prob$ on ${\overline \Omega}$
having the form $\Prob=\Prob_{\mu}$, $\mu\in {\mathscr M}(\Gamma, \infty)$, $\Gamma\in {\mathfrak  G}_+$.

By definition, for any $\Prob\in{\mathscr P}^+$
the cocycle ${\overline {\mathbb A}}$ is integrable with respect
to $\Prob$, and the Oseledets Theorem is applicable to ${\overline {\mathbb A}}$.
For ${\overline \omega}\in \Oomega$, let  $E^u_{\oomega}$ be
the strictly unstable space of the cocycle ${\overline {\mathbb A}}$ at the point $\oomega$.

\begin{proposition}
\label{oomegaidentif}
For almost every $\oomega$ the map ${\cal I}^+_{\oomega}$ induces an isomorphism from
the strictly unstable space $E^u_{\oomega}$ of the cocycle ${\overline {\mathbb A}}$ at the point $\oomega$ to the space
${\BB}^+(X({\oomega}))$.
\end{proposition}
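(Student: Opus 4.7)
The plan is to reduce the claim to Proposition~\ref{lyapidentif}, which already provides the analogous isomorphism for the discrete-time cocycle $\A$ over $(\Omega,\sigma,\mu)$, by using that $g_s$ is a suspension over $\sigma$ with roof $\tau^1$ and that the continuous-time cocycle $\overline{\A}$ is built in a piecewise-constant way from $\A$.

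First I would unwind the definitions in the fundamental domain $\Omega_0$ of (\ref{funddomain}). Identifying $\oomega\in\Oomega$ with its unique representative $(\omega,r)\in\Omega_0$, one has, directly from the definitions preceding the proposition, $X(\oomega)=X(\omega)$, $\BB^+(X(\oomega))=\BB^+(X(\omega))$, and ${\cal I}^+_{\oomega}={\cal I}^+_{\omega}$ (the identification map was defined purely in terms of the equivariant sequence attached to the Markov compactum and hence is insensitive to the scaling parameter $r$). Thus both the source and the target of the map in the statement depend only on the base coordinate $\omega$, and the isomorphism claim becomes a statement about a $\mu$-almost every $\omega$.

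Next I would compare the Oseledets decompositions of $\A$ and $\overline{\A}$. The measure $\Prob_\mu=\mu\times dr/r$ is $g_s$-invariant, and the paper has already pointed out that, for any $\Prob\in{\mathscr P}^+$, the cocycle $\overline{\A}$ is $\Prob$-integrable and the Multiplicative Ergodic Theorem applies. By the defining formula (\ref{overlinea}), $\overline{\A}(s,(\omega,r))=\A(\tilde n(\omega,r,s),\omega)$, i.e.\ $\overline{\A}$ acts through the integer shifts of $\A$ determined by the suspension structure. The standard Abramov-type relationship between the Lyapunov spectra of a base transformation and its suspension, together with the normalization $\int\tau^1\,d\mu=1$ built into ${\mathscr P}^+$, shows that the Lyapunov exponents of $\overline{\A}$ with respect to $\Prob_\mu$ coincide with those of $\A$ with respect to $\mu$, and that the Oseledets subspaces are constant along the flow direction of the suspension: for $\oomega=(\omega,r)\in\Omega_0$ one has $E^u_{\oomega}=E^u_{\omega}$.

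Finally, invoking Proposition~\ref{lyapidentif}, the map ${\cal I}^+_{\omega}$ sends $E^u_{\omega}$ isomorphically onto $\BB^+(X(\omega))$ for $\mu$-almost every $\omega$. Combined with the identifications of the first paragraph, this yields the isomorphism ${\cal I}^+_{\oomega}\colon E^u_{\oomega}\to\BB^+(X(\oomega))$ for $\Prob_\mu$-almost every $\oomega$, as required. The only real content of the argument is the identification $E^u_{\oomega}=E^u_{\omega}$, i.e.\ the matching of Oseledets data between base and suspension; everything else is bookkeeping built into the construction of $\Oomega$, $g_s$, and $\overline{\A}$ (including the commutative diagram for the trivialization map ${\rm Triv}$, which makes this matching transparent).
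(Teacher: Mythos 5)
Your argument correctly captures the key idea --- that $E^u_{\oomega}$ depends only on the base coordinate $\omega$, so that the statement reduces to an isomorphism $E^u_{\omega}\cong\mathfrak{B}^+(X(\omega))$ --- and this is exactly the paper's first step when the base measure $\mu$ is finite. But the proof as written leaves a genuine gap when $\mu\in\mathscr{M}(w,\infty)$ is \emph{infinite}, a case the paper must and does address separately (and which is the relevant one: the Veech measure on zippered rectangles is infinite). For infinite $\mu$, the phrase ``Lyapunov exponents of $\A$ with respect to $\mu$'' has no meaning via the Multiplicative Ergodic Theorem, so the Abramov-type comparison you invoke is not available; more importantly, Proposition~\ref{lyapidentif} is stated and proved only for $\mu$ an ergodic $\sigma$-invariant \emph{probability} measure satisfying Assumption~\ref{asos}, so you cannot invoke it.

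The paper closes this gap using the aggregation machinery of Section~2.4: it passes to the induced shift on the cylinder $\{\omega_0=w_0,\dots,\omega_n=w_n\}$, concatenates the graphs between consecutive occurrences of $w$ via $Ag^w$, and checks that $\mu^w=(Ag^w)_*\mu$ is \emph{finite} on $\Omega_{\Gamma}$ because the roof $\tau^1$ is bounded away from zero there. Observe that the concatenation step is also needed in the finite case: a generic $\mu\in\mathscr{M}(w,\infty)$ only satisfies the word condition $\mu(\{\omega_0=w_0,\dots,\omega_n=w_n\})>0$ with $\Gamma(w)\in\mathfrak{G}_+$, which does not directly give Assumption~\ref{asos}'s single positive graph $\Gamma_0$ at the zeroth coordinate, so Proposition~\ref{lyapidentif} is again not directly applicable. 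An alternative repair that would avoid the aggregation is to apply the Oseledets theorem \emph{directly} to the flow cocycle $\overline{\A}$ over $(\Oomega,g_s,\Prob_\mu)$ --- which is always a probability-measure-preserving system --- observe that $E^u_{\oomega}$ is $r$-independent, and then re-run the (short) argument of Proposition~\ref{lyapidentif} with $\overline{\A}$ in place of $\A$; the commutative diagram for the trivialization map makes this plausible, but neither the aggregation nor this substitute appears in your text, so as written the proof does not close.
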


We start with the case when the measure $\mu$ is finite.
In this case, for $\oomega\in\Oomega$, $\oomega=(\omega, r)$,
the unstable space $E^u_{\oomega}$ of the cocycle ${\overline {\mathbb A}}$ at $\oomega$
coincides with the unstable space $E^u_{\omega}$ of the cocycle ${{\mathbb A}}$ at $\omega$.
Concatenating the graphs between consecutive occurrences of the word $w$ and considering the induced
map of the shift $\sigma$
on the cylinder $\{\omega:\omega_0=w_0, \dots, \omega_n=w_n\}$, we obtain
Proposition \ref{oomegaidentif} as an immediate corollary of Proposition \ref{lyapidentif}.

In this case the Lyapunov exponents of  ${\mathbb A}$ and ${\overline {\mathbb A}}$ are related by the following
\begin{proposition}
If the positive Lyapunov exponents of ${\mathbb A}$ are
\begin{equation}
\theta_1> \theta_2\dots > \theta_{l_0},
\end{equation}
then the positive Lyapunov exponents ${\overline \theta}_i$ of ${\overline {\mathbb A}}$ are
\begin{equation}
{\overline \theta}_i=\theta_{i}/\theta_1.
\end{equation}
\end{proposition}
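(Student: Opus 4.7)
The plan is to exhibit $\overline{\mathbb{A}}$ as a time-change of $\mathbb{A}$ induced by the suspension structure of $g_s$, and then combine Birkhoff's theorem with Oseledets. The only nontrivial input beyond this is the identification $\int_\Omega\tau^1\,d\mu=\theta_1$.

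\emph{Step 1 (suspension time-change).} By the definition (\ref{overlinea}), for $\overline\omega=(\omega,r)\in\Omega_0$ we have $\overline{\mathbb{A}}(s,\overline\omega)=\mathbb{A}(\tilde n(\omega,r,s),\omega)$, with $\tilde n(\omega,r,s)$ characterized by the Birkhoff inequality
$$\sum_{k=0}^{\tilde n-1}\tau^1(\sigma^k\omega)\;\le\;s-\log r\;<\;\sum_{k=0}^{\tilde n}\tau^1(\sigma^k\omega).$$
The Birkhoff ergodic theorem for the $\sigma$-invariant probability $\mu$ gives $\tilde n/s\to 1/\int_\Omega\tau^1\,d\mu$ for $\mathbb{P}_\mu$-a.e. $\overline\omega$. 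Combined with Oseledets for $\mathbb{A}$ applied to a nonzero vector $v$ in the $i$-th Oseledets subspace at $\omega$, this yields
$$\frac{1}{s}\log\bigl|\overline{\mathbb{A}}(s,\overline\omega)v\bigr|=\frac{\tilde n}{s}\cdot\frac{1}{\tilde n}\log\bigl|\mathbb{A}(\tilde n,\omega)v\bigr|\longrightarrow \frac{\theta_i}{\int_\Omega\tau^1\,d\mu}.$$

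\emph{Step 2 (identification $\int\tau^1\,d\mu=\theta_1$).} The shift-covariance relation $\lambda^{(n)}_{\sigma^l\omega}=\lambda^{(n+l)}_\omega/|\lambda^{(l)}_\omega|$ recorded earlier yields $\tau^1(\sigma^k\omega)=\log|\lambda^{(k)}_\omega|-\log|\lambda^{(k+1)}_\omega|$, so a telescoping sum followed by Birkhoff gives
$$-\log|\lambda^{(n)}_\omega|=\sum_{k=0}^{n-1}\tau^1(\sigma^k\omega)=n\int_\Omega\tau^1\,d\mu+o(n).$$
On the other hand, $h^{(n)}_\omega=\mathbb{A}(n,\omega)h^{(0)}_\omega$ is a positive vector, and Assumption~\ref{asos}.1 guarantees that the positive matrix $\Gamma_0\in\mathfrak{G}_+$ appears in the sequence $\sigma^k\omega$ infinitely often; a Perron--Frobenius/cone-contraction argument places $h^{(0)}_\omega$ in the top Oseledets subspace of $\mathbb{A}$, whence $\log|h^{(n)}_\omega|/n\to\theta_1$. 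Finally, the pairing identity $\langle\lambda^{(n)}_\omega,h^{(n)}_\omega\rangle=1$, together with the Oseledets-regular subexponential decay of the angle between the top expanding direction and its Oseledets complement, forces $\log(|\lambda^{(n)}_\omega|\cdot|h^{(n)}_\omega|)/n\to 0$. Combining the three limits gives $\int_\Omega\tau^1\,d\mu=\theta_1$, and substituting into the display of Step 1 produces $\overline\theta_i=\theta_i/\theta_1$.

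The main obstacle is Step 2, namely pinning down the exact exponential rate of decay of $|\lambda^{(n)}_\omega|$. Birkhoff immediately delivers this rate as $\int\tau^1\,d\mu$, but its identification with $-\theta_1$ requires both the Perron cone argument showing that the positive vector $h^{(0)}_\omega$ genuinely lies in the top expanding Oseledets subspace of $\mathbb{A}$, and the Oseledets-regular subexponential bound on the angle between $\lambda^{(n)}_\omega$ and $h^{(n)}_\omega$; both inputs ultimately rest on the positivity hypothesis of Assumption~\ref{asos}.1.
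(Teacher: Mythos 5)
Your proof is correct, and it supplies the argument that the paper leaves implicit (the proposition is stated without proof, treated as an evident consequence of the suspension structure). Step 1 is the standard time-change computation for a cocycle over a suspension flow, and the characterization of $\tilde n(\omega,r,s)$ via the Birkhoff sums of $\tau^1$ is right (up to an immaterial sign: from $|\lambda^{(\tilde n)}_\omega|=e^{-\sum_{k<\tilde n}\tau^1(\sigma^k\omega)}$ and the fundamental domain $1\le r'<e^{\tau^1(\sigma^{\tilde n}\omega)}$, the correct bracketing is in terms of $s+\log r$ rather than $s-\log r$; since $\log r$ is bounded on $\Omega_0$, this does not affect the limit). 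Step 2 is the genuinely substantive point, and your identification $\int_\Omega\tau^1\,d\mu=\theta_1$ is exactly what makes the formula $\overline\theta_i=\theta_i/\theta_1$ hold: the telescoping identity $-\log|\lambda^{(n)}_\omega|=\sum_{k<n}\tau^1(\sigma^k\omega)$ together with Birkhoff, versus the Oseledets rate of $\lambda^{(n)}_\omega$. Two small remarks on your phrasing of Step 2. First, the cleanest way to pin the decay rate of $|\lambda^{(n)}_\omega|$ is to note that $\lambda^{(0)}_\omega$ is, by the cone condition of Assumption \ref{uniquergodic}, the Perron direction of the products $A^t_{1}\cdots A^t_{n}$, so that $|\lambda^{(n)}_\omega|=|(A^t_1\cdots A^t_n)^{-1}\lambda^{(0)}_\omega|$ decays precisely at rate $\theta_1$; this bypasses the pairing argument entirely. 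Second, if one does go through the pairing $\langle\lambda^{(n)}_\omega,h^{(n)}_\omega\rangle=1$, the phrase ``angle between the top expanding direction and its Oseledets complement'' should be read as the angle between $E^1_{\sigma^n\omega}$ and $E^{cs}_{\sigma^n\omega}$: since $\lambda^{(n)}_\omega$ spans $\tilde E^1_{\sigma^n\omega}=\mathrm{Ann}(E^{cs}_{\sigma^n\omega})$, the quantity $|\lambda^{(n)}_\omega|\cdot|h^{(n)}_\omega|$ is the reciprocal of $\sin\angle(E^1_{\sigma^n\omega},E^{cs}_{\sigma^n\omega})$, whose subexponential lower bound is part of Lyapunov regularity. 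With either version of Step 2, the argument is complete, and it also explains the Remark $\overline\theta_1=1$: under the normalization $\int\tau^1\,d\mu=1$ imposed in the definition of $\mathscr{M}(w,\infty)$, your Step 2 forces $\theta_1=1$.
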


{\bf Remark.} In particular, we always have ${\overline \theta}_1=1$.

We proceed to the case when $\mu\in {\mathscr M}(w, \infty)$ is infinite.
We will again consider the induced
map of the shift $\sigma$ on the cylinder $\{\omega:\omega_0=w_0, \dots, \omega_n=w_n\}$,
and check that the induced measure is finite.

More precisely, let ${\mathfrak G}_{\Gamma}\subset  {\mathfrak G}$ be defined by the formula
$$
{\mathfrak G}_{\Gamma}=\{\Gamma^{\prime}\in {\mathfrak G}: \Gamma^{\prime}=\Gamma\Gamma^{\prime\prime} \ {\rm for \ some } \ \Gamma^{\prime\prime}\in {\mathfrak G}\}.
$$

Let $\Omega_{\Gamma}\subset \Omega$ be the subspace of sequences such that all their symbols lie in ${\mathfrak G}_{\Gamma}$, and let ${\overline \Omega}_{\Gamma}\subset {\overline \Omega}$ be the set of all pairs $(\omega, r)\in{\overline \Omega}$ satisfying $\omega\in\Omega_{\Gamma}$.

Let ${\mathscr M}_{\Gamma}$ be the space of Borel ergodic $\sigma$-invariant measures $\mu$
on  $\Omega_{\Gamma}$ such that
$$
\int_{{\Omega_{\Gamma}}} \tau^1(\omega) d\mu(\omega)=1.
$$

If $\Gamma\in {\mathfrak G}_+$, then
any measure in ${\mathscr M}_{\Gamma}$ is necessarily finite since the function $\tau(\omega)$ is
bounded away from $0$ on $\Omega_{\Gamma}$.

Let $\Omega^{\prime}(w, \infty)\subset \Omega$ be the subset of
bi-infinite sequences containing infinitely many occurrences of $w$, both in the past and in the future, and let
$\Omega(w, \infty)\subset\Omega^{\prime}(w, \infty)$ be the subset of
$\omega\in \Omega^{\prime}(w, \infty)$ satisfying the additional condition $\omega_0=w_0, \dots, \omega_n=w_n$.
Let $\sigma(w, \infty)$ be the induced map of the shift $\sigma$ to $\Omega(w, \infty)$.

Write
$$
{\overline \Omega}(w, \infty)=\{(\omega, r)\in {\overline \Omega}, \omega\in\Omega^{\prime}(w, \infty)\}.
$$

For a measure $\mu\in {\mathscr M}(w, \infty)$, let $\mu(w, \infty)$ be its restriction
to $\Omega(w, \infty)$.

Concatenating the graphs between consecutive occurrences of $w$, we obtain a natural aggregating surjection
$$
Ag^{w}: \Omega(w, \infty)\to \Omega_{\Gamma}
$$
such that the diagram
$$
\begin{CD}
\Omega(w, \infty)@ >\sigma(w, \infty)>> \Omega(w, \infty) \\
@VV Ag^{w} V           @VV Ag^{w} V   \\
\Omega_{\Gamma}@ >\sigma>>\Omega_{\Gamma} \\
\end{CD}
$$
is commutative.

The map $Ag^{w}$ lifts to a map ${\overline {Ag^w}}: {\overline \Omega}(w, \infty)\to {\overline \Omega}_{\Gamma}$
such that the diagram
$$
\begin{CD}
{\overline \Omega}(w, \infty)@ >g_s>> {\overline \Omega}(w, \infty) \\
@VV {\overline {Ag^w}} V           @VV {\overline {Ag^w}} V   \\
{\overline \Omega}_{\Gamma}@ >g_s>>{\overline \Omega}_{\Gamma} \\
\end{CD}
$$
is commutative.

The map ${\overline {Ag^w}}$ preserves the cocycle ${\overline {\mathbb A}}$  in the following sense:
if the map ${\overline {Ag^w}}\times {\rm Id}: {\overline \Omega}(w, \infty)\times {\mathbb R}^m\to
{\overline \Omega}_{\Gamma}\times {\mathbb R}^m$  is given by
$$
{\overline {Ag^w}}\times {\rm Id}(\oomega, v)=({\overline {Ag^w}}\oomega, v),
$$
then the diagram
$$
\begin{CD}
{\overline \Omega}(w, \infty)\times {\mathbb R}^m@ >g_s^{{\overline {\mathbb A}}}>> {\overline \Omega}(w, \infty)\times {\mathbb R}^m \\
@VV {\overline {Ag^w}}\times {\rm Id} V           @VV {\overline {Ag^w}}\times {\rm Id} V   \\
{\overline \Omega}_{\Gamma}\times {\mathbb R}^m@ >g_s^{{\overline {\mathbb A}}}>>{\overline \Omega}_{\Gamma}\times {\mathbb R}^m \\
\end{CD}
$$
is commutative.

For $\mu\in {\mathscr M}(w, \infty)$, set
$$
\mu^{w}=\left(Ag^w\right)_*\mu.
$$

The correspondence
\begin{equation}
\mu\rightarrow \mu^{w}
\end{equation}
induces an affine map from ${\mathscr M}(w, \infty)$ to ${\mathscr M}_{\Gamma}$.
From the definitions we clearly have
\begin{proposition}
For any $\mu\in{\mathscr M}(w, \infty)$ the dynamical systems $({\overline \Omega}, \Prob_{\mu}, g_s)$ and
$({\overline \Omega}_{\Gamma}, \Prob_{\mu^{w}}, g_s)$ are measurably isomorphic.
\end{proposition}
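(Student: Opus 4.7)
The plan is to realize both $g_s$-flows as suspension flows over their respective base shifts and then to identify them, via a Kakutani-type induction argument, with the lift $\overline{Ag^w}$ providing the explicit intertwining that the statement requires.

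First, I would use the fundamental domain $\Omega_0$ of the equivalence relation $\sim$ (formula (\ref{funddomain})) to present $(\overline{\Omega}, \mathbb{P}_\mu, g_s)$ as the suspension flow over $(\Omega, \sigma, \mu)$ with roof function $\tau^1(\omega) = -\log|\lambda^{(1)}(\omega)|$; similarly, $(\overline{\Omega}_\Gamma, \mathbb{P}_{\mu^w}, g_s)$ is the suspension over $(\Omega_\Gamma, \sigma, \mu^w)$ with the same roof function. The normalization $\int_\Omega \tau^1 \, d\mu = 1$, which is part of the definition of $\mathscr{M}(w,\infty)$, guarantees $\mathbb{P}_\mu$ has total mass one.

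Second, I would take as cross-section the Borel set $S := \{(\omega, 1) : \omega \in \Omega(w, \infty)\} \subset \overline{\Omega}$. The hypothesis $\mu(\{\omega_0 = w_0, \ldots, \omega_n = w_n\}) > 0$ together with ergodicity of $\sigma$ implies conservativity of $\mu$, so Hopf/Poincaré recurrence yields that the subset $\Omega(w,\infty)$ carries the full $\mu$-mass of the cylinder $\{\omega_0=w_0,\ldots,\omega_n=w_n\}$; consequently $S$ meets $\mathbb{P}_\mu$-almost every $g_s$-orbit infinitely often in both directions, and is a genuine cross-section.

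Third, I would compute the first-return map of $g_s$ to $S$: starting at $(\omega, 1) \in S$ and following the suspension, the orbit returns to $S$ at the smallest integer $N(\omega) \geq 1$ such that $\sigma^{N(\omega)} \omega$ begins with the word $w$, with total flow time
\[
\tau_w(\omega) = \sum_{k=0}^{N(\omega)-1} \tau^1(\sigma^k \omega) = -\log|\lambda^{(N(\omega))}(\omega)|.
\]
By the very definition of graph concatenation and the reverse-equivariance of $(\lambda^{(l)})$, we have $\lambda^{(N(\omega))}(\omega) = \lambda^{(1)}(Ag^w(\omega))$, so $\tau_w = \tau^1 \circ Ag^w$. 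Combined with the commutativity $Ag^w \circ \sigma(w,\infty) = \sigma \circ Ag^w$ and the definition $\mu^w = (Ag^w)_* \mu$, this exhibits the induced quadruple $(\Omega(w,\infty), \sigma(w,\infty), \mu, \tau_w)$ as measurably isomorphic to $(\Omega_\Gamma, \sigma, \mu^w, \tau^1)$ via the Borel bijection $Ag^w$ (injectivity: the block structure of an element of $\Omega_\Gamma$ reconstructs the original blocks of $\omega$; surjectivity: any $\omega' \in \Omega_\Gamma$ admits a canonical disaggregation lying in $\Omega(w,\infty)$). Kakutani's theorem on suspensions over induced transformations then produces a measurable $g_s$-equivariant bijection between the two suspension flows, which on $\overline{\Omega}(w,\infty)$ agrees with $\overline{Ag^w}$ and is prolonged to a full-measure set by transporting along orbits; this is the desired isomorphism. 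Finally, Kac's formula gives
\[
\int_{\Omega_\Gamma} \tau^1 \, d\mu^w = \int_{\Omega(w,\infty)} \tau_w \, d\mu = \int_\Omega \tau^1 \, d\mu = 1,
\]
confirming $\mu^w \in \mathscr{M}_\Gamma$ and that $\mathbb{P}_{\mu^w}$ is a probability measure.

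The main obstacle is the second step in the case that $\mu$ is genuinely $\sigma$-finite rather than finite: one must verify Hopf conservativity of $(\Omega, \sigma, \mu)$ and almost-sure recurrence to the cylinder, so that $\mu(\Omega^{\prime}(w,\infty)^c) = 0$ and the induced transformation $\sigma(w,\infty)$ together with the return-time function $\tau_w$ is defined $\mu$-almost everywhere. Once this recurrence is established, the roof-function identification and the Kakutani invocation are routine, and the conclusion follows.
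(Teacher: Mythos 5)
Your proof is correct and takes essentially the same approach the paper has in mind: the paper establishes the commutative diagrams intertwining $Ag^w$ (and its lift $\overline{Ag^w}$) with $\sigma$, $g_s$, and the cocycle, and then asserts the proposition follows ``from the definitions''; you have spelled out what that shorthand hides by passing through the suspension-flow presentation, the cross-section at $r=1$ over the cylinder, the telescoping identity for the return-time roof function, and Kakutani/Kac. You are right to flag recurrence in the infinite-measure case as the one substantive point; in the paper this is implicitly supplied by the observation that the first condition of Assumption \ref{asos} together with ergodicity of $\mu$ forces $\mu$-almost every $\omega$ to contain infinitely many occurrences of $\Gamma_0$ (hence of $w$) in both directions, which is what gives $\mu(\Omega\setminus\Omega^{\prime}(w,\infty))=0$ and makes the induced map and return time well-defined almost everywhere. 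Your appeal to ``Hopf conservativity'' should be replaced by, or tied to, that argument, since conservativity of an ergodic $\sigma$-finite infinite measure is not automatic in general; here it can also be extracted from the finiteness of $\mathbb{P}_\mu$, since a dissipative base would produce a positive-measure wandering set under the suspension, contradicting the invariance of a finite measure.
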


We have thus reduced the case of infinite measures to the case of finite measures, in which Proposition \ref{lyapidentif} is applicable.
Proposition \ref{oomegaidentif} is proved completely.

\subsubsection{The space of translation flows.}

Now assume that for almost all $\omega$ there is a
Vershik's ordering $\oo(\omega)$
on the edges of each graph $\omega_n$, $n\in {\mathbb Z}$. Furthermore,
assume that   the ordering is shift-invariant in the following
sense: the ordering $\oo(\omega)$ on the edges of the graph
$\omega_{n+1}$ is the same as the ordering $\oo(\sigma\omega)$
on the edges of the graph $(\sigma\omega)_n=\omega_{n+1}$.

In this case for almost every $\omega\in\Omega$ we obtain
a flow $h_t^{+, \omega}$ on $X(\omega)$.
Given $(\omega, r)\in \Omega_0$, set $h_t^{+, (\omega,r)}=h^{+, \omega}_{t/r}$.

Identifying, as before, the spaces $\Omega_0$ and ${\overline \Omega}$
we obtain, for almost every ${\overline \omega}$, a flow $h_t^{+, {\overline \omega}}$
on $X({\overline \omega})$ in such a way that
the following diagram is
commutative:

$$
\begin{CD}
X({\overline\omega})@ >h^+_{\exp(s)t,{\overline\omega}}>> X({\overline\omega}) \\
@VV{\mathfrak t}_{s}V           @VV{\mathfrak t}_{s}V   \\
X(g_s{\overline\omega})@ >h^+_{t, {\overline\omega}}>>
X(g_s{\overline\omega}) \\
\end{CD}
$$

Recall that ${\mathscr P}^+$ is the space of ergodic probability $g_s$-invariant measures $\Prob$ on ${\overline \Omega}$
having the form $\Prob=\Prob_{\mu}$, $\mu\in {\mathscr M}(\Gamma, \infty)$, $\Gamma\in {\mathfrak  G}_+$.
Corollary \ref{symbmultdiscrtime} now  implies
\begin{corollary}
\label{symbmultiplic}
Let $\Prob\in {\mathscr P}^+$. For any $\varepsilon>0$ there exists
a constant $C_{\varepsilon}$ depending only on $\Prob$
such that the following holds for almost every $\oomega\in\Oomega$.
There exists a continuous mapping $\Xi^+_{\oomega}: Lip_{w}^+(X(\oomega))\to \BB^+(X(\oomega))$
such that
\begin{enumerate}
\item for any $t_0\in {\mathbb R}$ we have $\Xi^+_{\oomega}(f\circ h_{t_0}^+)=\Xi^+_{\oomega}(f)$;
\item the diagram
$$
\begin{CD}
Lip_{w}^+(X({\overline\omega}))@ >\Xi^+_{\oomega}>> \BB^+(X({\overline\omega})) \\
@AA({\mathfrak t}_{s})^*A           @VV({\mathfrak t}_{s})_*V   \\
Lip_{w}^+(X(g_s{\overline\omega}))@ >\Xi^+_{g_s\oomega}>>
\BB^+(X(g_s{\overline\omega})) \\
\end{CD}
$$
is commutative
\item
 for any
$f\in Lip_w^+(X)$, any $x\in X(\oomega)$ and any $T>0$ we have
\begin{equation}
\label{symbflowapprox}
\left| \int_0^T
f\circ h_t^+(x) dt -\Xi^+(f; x,T)\right|\leq
C_{\varepsilon}||f||_{Lip_w^+}(1+T^{\varepsilon}).
\end{equation}
\end{enumerate}
\end{corollary}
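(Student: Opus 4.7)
The plan is to deduce Corollary \ref{symbmultiplic} from its discrete-time counterpart, Corollary \ref{symbmultdiscrtime}, by exploiting the presentation of the renormalization flow $g_s$ as a suspension over the shift $\sigma$ with roof function $\tau^1=-\log|\la^{(1)}|$. The geometric and analytic content is already in the discrete case; what remains is a change of variables and bookkeeping of the measure rescaling $\nu^+_{(\omega,r)}=\nu^+_\omega/r$. If $\Prob=\Prob_\mu$ with $\mu\in\mathscr{M}(w,\infty)$ infinite, I would first reduce to a finite base measure by invoking the aggregating surjection $\overline{Ag^w}:\Oomega(w,\infty)\to\Oomega_\Gamma$ constructed above: it intertwines $g_s$ with itself, preserves the cocycle $\overline{\A}$, identifies each Markov compactum $X(\oomega)$ with $X(\overline{Ag^w}(\oomega))$ together with its flow $h_t^+$, foliations, and spaces $\BB^\pm$, and pushes $\Prob$ forward to the finite invariant probability $\Prob_{\mu^w}$ on $\Oomega_\Gamma$.

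With $\mu$ finite, for $\oomega=(\omega,r)\in\Omega_0$ I would set $\Xi^+_{\oomega}(f):=r\cdot\Xi^+_\omega(f)$ as an element of $\BB^+(X(\omega))=\BB^+(X(\oomega))$, where $\Xi^+_\omega$ is the map furnished by Corollary \ref{symbmultdiscrtime} (note that any $f\in Lip_w^+(X(\oomega))$ is also weakly Lipschitz with respect to $\nu^+_\omega$). Property (1), invariance under $h_{t_0}^+$, is inherited from the corresponding invariance of $\Xi^+_\omega$. Property (2), commutativity of the diagram, follows from the identity ${\mathfrak t}_s=({\mathfrak t}_\sigma)^{\tilde n(\omega,r,s)}$ combined with the commutative diagram of Corollary \ref{symbmultdiscrtime} iterated $\tilde n(\omega,r,s)$ times. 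For property (3), the substitution $s=t/r$ applied to $h_t^{+,(\omega,r)}=h_{t/r}^{+,\omega}$ yields
\begin{equation*}
\int_0^T f\circ h_t^{+,\oomega}(x)\,dt \,=\, r\int_0^{T/r} f\circ h_s^{+,\omega}(x)\,ds,
\end{equation*}
while by construction $\Xi^+_{\oomega}(f;x,T)=r\,\Xi^+_\omega(f;x,T/r)$; invoking the discrete-time estimate at $S=T/r$ then produces the desired continuous-time bound.

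The main obstacle I anticipate is uniformity of the constant $C_\varepsilon$ across the fundamental domain $\Omega_0$: the factor $r\in[1,|\la^{(1)}(\omega)|^{-1})$ is $\Prob$-almost surely finite but not globally bounded, and appears multiplicatively both in the approximation bound and in the norm conversion. This is handled by the inequality $\|f\|_{Lip_w^+(X(\omega))}\le r\|f\|_{Lip_w^+(X(\oomega))}$ (valid for $r\ge 1$, since the weakly Lipschitz constant defined via $\nu^+_\omega$ is exactly $r$ times the one defined via $\nu^+_{(\omega,r)}$), which absorbs one factor of $r$ into the norm on $X(\oomega)$, while the remaining factor combines with the discrete-time constant to produce a measurable function on $\Oomega$ that is $\Prob$-a.s.\ finite. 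This parallels the fact that the constant in Corollary \ref{symbmultdiscrtime} is itself a measurable function of $\omega$ depending only on $\mu$, so that in the same (measurable, a.s.\ finite) sense the continuous-time constant depends only on $\Prob$.
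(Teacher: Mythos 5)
Your argument follows exactly the (implicit) route in the paper, which asserts that Corollary~\ref{symbmultdiscrtime} implies Corollary~\ref{symbmultiplic} via the suspension-flow presentation of $g_s$ over $\sigma$ with roof function $\tau^1$ and, for an infinite base measure, the aggregating surjection $\overline{Ag^w}$ reducing to a finite one. Your rescaling of $\Xi^+$ by $r$, the change of variables $t\mapsto t/r$, and the inequality $\|f\|_{Lip_w^+(X(\omega))}\le r\|f\|_{Lip_w^+(X(\oomega))}$ supply precisely the bookkeeping the paper leaves unstated, and the resulting constant is $\Prob$-a.s.\ finite, as required.
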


\subsection{Limit theorems in the case of the simple second Lyapunov exponent.}
\subsubsection{The leading term in the asymptotic for the ergodic integral.}
We assume that the first and the second Lyapunov exponents of the cocycle ${\overline {\mathbb A}}$ are simple,
and we consider the corresponding  subspaces $E_{1, \oomega}^u={\mathbb R}h_{\oomega}$ and $E_{2, \oomega}^u$.
Furthermore, let $E_{\geq 3, \oomega}^u$ be the subspace corresponding to the remaining Lyapunov exponents.

We have then the decomposition
$$
E_{\oomega}^u=E_{1, \oomega}^u\oplus E_{2, \oomega}^u\oplus E_{\geq 3, \oomega}^u.
$$

A similar decomposition holds for ${\tilde E}^u$:
$$
{\tilde E}_{\oomega}^u={\tilde E}_{1, \oomega}^u\oplus {\tilde E}_{2, \oomega}^u\oplus {\tilde E}_{\geq 3, \oomega}^u.
$$

Choose $\Phi_2^+\in {\cal I}_{\oomega}(E_{2, \oomega}^u)$, $\Phi_2^-\in
{\cal I}_{\oomega}({\tilde E}_{2, \oomega}^u)$ in such a way that
$$
\langle \Phi_2^+, \Phi_2^-\rangle =1.
$$
Take $f\in Lip_{w}^+(X)$, $x\in X$, $T\in {\mathbb R}$  and observe that the expression
\begin{equation}
\label{mphitwo}
m_{\Phi_2^-}(f)\Phi_2^+(x,T)
\end{equation}
does not depend on the precise choice of $\Phi_2^{\pm}$ (we have the freedom of multiplying $\Phi_2^+$
by an arbitrary scalar, but then $\Phi_2^-$ is divided by the same scalar).

Now for $f\in Lip_{w}^+(X)$ write
$$
\Phi_f^+(x,T)=(\int_X fd\nu)\cdot T + m_{\Phi_2^-}(f)\Phi_2^+(x,T)+\Phi_{3, f}^+(x,T),
$$
where $\Phi_{3, f}^+\in {\cal I}_{\oomega}({\tilde E}_{\geq 3, \oomega}^u)$.

In particular, there exist two positive constants  $C$ and $\alpha$ depending only on $\Prob$ such that
for any  $f\in Lip_{w}^+(X)$, $\int_X fd\nu=0$, we have
\begin{equation}
\label{phitwoapprox}
\left|\int_0^T f\circ h_t^+(x) dt - m_{\Phi_2^-}(f)\Phi_2^+(x,T)\right|\leq C||f||_{Lip} T^{{\overline \theta_2}-\alpha}.
\end{equation}

\subsubsection{The growth of the variance.}

In order to estimate the variance of the random
variable $\int_0^T f\circ h_t^+(x) dt$, we start by studying the growth of
the variance of the random variable $\Phi_{2, \oomega}^+(x,T)$ as $T\to\infty$.

Recall that $\ee_{\nu(\oomega)}\Phi_{2, \oomega}^+(x,T)=0$ for all $T$, while
$Var_{\nu(\oomega)}\Phi_{2, \oomega}^+(x,T)\neq 0$ for $T\neq 0$.
Recall that for a cocycle $\Phi^+\in\BB_{\oomega}^+$,
$\Phi^+={\cal I}^+_{\oomega}(v)$,
we have defined its norm $|\Phi^+|$ by the formula $|\Phi^+|=|v|$.
Introduce a multiplicative cocycle $H_2(s, \oomega)$ over the flow $g_s$ by the formula
\begin{equation}
H_2(s, \oomega)=\frac{|{\overline \A}(s, \oomega)v|}{|v|}, \ v\in E_{2, \oomega}^u,\  v\neq 0.
\end{equation}
 Observe that the right-hand side does not depend on the specific choice of $v\neq 0$.

By definition, we now have
\begin{equation}
\lim_{s\to\infty}\frac{\log H_2(s, \oomega)}{s}={\overline \theta}_2.
\end{equation}

\begin{proposition}
There exists a positive measurable
function $V:\Oomega\to {\mathbb R}_+$ such that
the following holds for $\Prob$-almost all $\oomega\in\Oomega$.
\begin{equation}
{Var_{\nu} {\Phi_2^+}(x,T)}=V(g_s\oomega)|\Phi_2^+|^2(H_2(s, \oomega))^2.
\end{equation}
\end{proposition}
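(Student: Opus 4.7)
I would define
\[
V(\oomega) := \frac{Var_{\nu_{\oomega}}\Phi_2^+(x,1)}{|\Phi_2^+|^2}
\]
for an arbitrary nonzero $\Phi_2^+\in\BB^+_{2,\oomega}$. Since the subspace $\BB^+_{2,\oomega}$ is one-dimensional and the numerator is quadratic in $\Phi_2^+$, this ratio is independent of the choice. Proposition \ref{phivarnotzero} ensures the numerator is strictly positive: $\Phi_2^+$ lies in the Lyapunov subspace corresponding to $\theta_2<\theta_1$, so it is not proportional to $\nu^+$, and the variance at time $1\neq 0$ is nonzero. Measurability of $V$ follows from the measurability of the Oseledets subbundle associated with $\theta_2$.

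The formula is then a direct consequence of the renormalization symmetry. From the commutative diagram
\[
\mathfrak t_s\circ h^+_{e^s t,\oomega}=h^+_{t,g_s\oomega}\circ\mathfrak t_s
\]
(specialised to $t=1$), the arc $[x,h^+_{e^s,\oomega}x]$ in $X(\oomega)$ is carried by $\mathfrak t_s$ onto the arc $[\mathfrak t_s x,h^+_{1,g_s\oomega}\mathfrak t_s x]$ in $X(g_s\oomega)$; hence, by the very definition of $(\mathfrak t_s)_*$,
\[
\Phi_2^+(x,e^s)\;=\;\bigl((\mathfrak t_s)_*\Phi_2^+\bigr)(\mathfrak t_s x,1).
\]
Combined with $(\mathfrak t_s)_*\nu_{\oomega}=\nu_{g_s\oomega}$ (an immediate consequence of the analogous statements for $\nu^+$ and $\nu^-$ recorded earlier), the change-of-variables formula yields
\[
Var_{\nu_{\oomega}}\Phi_2^+(x,e^s)\;=\;Var_{\nu_{g_s\oomega}}\bigl((\mathfrak t_s)_*\Phi_2^+\bigr)(y,1).
\]

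To conclude, I would invoke the commutative diagram that identifies $(\mathfrak t_s)_*$ on $\mathfrak V^+$ with the renormalization cocycle $\overline{\mathbb A}(s,\oomega)$ under the trivialisation $\mathcal I^+_{\oomega}$. Since $(\mathfrak t_s)_*\Phi_2^+\in\BB^+_{2,g_s\oomega}$, its norm equals $H_2(s,\oomega)\cdot|\Phi_2^+|$ by the very definition of the multiplicative cocycle $H_2$. Inserting this norm into the definition of $V(g_s\oomega)$---using once more that $\BB^+_{2,g_s\oomega}$ is one-dimensional so that the variance at time $1$ is a quadratic form in the cocycle---converts the preceding display into
\[
Var_{\nu_{\oomega}}\Phi_2^+(x,e^s)\;=\;V(g_s\oomega)\,|\Phi_2^+|^2\,(H_2(s,\oomega))^2,
\]
which is the claim with $T=e^s$. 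There is no genuine obstacle in this argument: each ingredient---the pushforward action of $\mathfrak t_s$ on cocycles, the invariance of $\nu$ under the pushforward, the cocycle identification $(\mathfrak t_s)_*\leftrightarrow\overline{\mathbb A}(s,\oomega)$, and the one-dimensionality of $\BB^+_{2,\oomega}$---has been established in the preceding subsections; the entire proof is organizational, and the only point meriting care is the verification that the chosen $V$ is well-defined, which requires the one-dimensionality of the second Lyapunov subspace.
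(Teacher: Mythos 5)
Your proposal is correct and follows the same approach as the paper: define $V(\oomega)=Var_{\nu}\Phi_2^+(x,1)/|\Phi_2^+|^2$ and verify via the renormalization identity $\mathfrak t_s\circ h^+_{e^s t,\oomega}=h^+_{t,g_s\oomega}\circ\mathfrak t_s$, the measure equivariance $(\mathfrak t_s)_*\nu_{\oomega}=\nu_{g_s\oomega}$, and the identification of $(\mathfrak t_s)_*$ with $\overline{\mathbb A}(s,\oomega)$. The paper's proof is limited to stating the definition of $V$ and noting its independence of the choice of $\Phi_2^+$; you supply the intermediate renormalization steps that the paper treats as immediate.
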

Indeed, the function $V(\oomega)$ is given by
$$
V(\oomega)=\frac{{Var_{\nu} {\Phi_2^+}(x,1)}}{|\Phi^+_2|^2}.
$$
Observe that the right-hand side does not depend on  a particular choice of $\Phi_2^+\in {\BB}_{2, \oomega}^+$,
$\Phi_2^+\neq 0$.

Using (\ref{phitwoapprox}), we now proceed to
estimating the growth of the variance of the ergodic integral  $$\int_0^T f\circ h_t^+(x) dt.$$

We use the same notation as in the Introduction: for
$\tau\in [0,1]$, $s\in {\mathbb R}$, a real-valued
$f\in Lip_{w,0}^+(X)$ we write
\begin{equation}
\label{sfstaux-symb}
{\mathfrak S}[f,s;\tau, x]=\int_0^{\tau\exp(s)} f\circ h^{+}_t(x)dt.
\end{equation}

As before, as $x$ varies
in the probability space $(X, \nu)$, we obtain a random element
of $C[0,1]$. In other words, we have a
 random variable
\begin{equation}
{\mathfrak S}[f,s]: (X, \nu)\to C[0,1]
\end{equation}
defined by the formula (\ref{sfstaux-symb}).

For any fixed $\tau\in [0,1]$  the formula (\ref{sfstaux-symb}) yields
a real-valued random variable
\begin{equation}
{\mathfrak S}[f,s; \tau]: (X, \nu)\to {\mathbb R},
\end{equation}
whose expectation, by definition, is zero.

\begin{proposition}
\label{varf}
There exists $\alpha>0$ depending only on $\Prob$ and a positive measurable
function $C:\Oomega\times\Oomega\to {\mathbb R}_+$ such that
the following holds for $\Prob$-almost all $\oomega\in\Oomega$.
Let $\Phi_{2, \oomega}^+\in\BB^+$, $\Phi_{2, \oomega}^-\in\BB^-$ be such that
$\langle \Phi^+_{2, \oomega}, \Phi^-_{2, \oomega}\rangle=1$.
Let $f\in Lip_{w,\oomega}^+$ be such that
$$
\int_{X(\oomega)} f d\nu(\oomega)=0,\  m_{\Phi_{2, \oomega}^-}(f)\neq 0.
$$
Then
\begin{equation}
\left|\frac{Var_{\nu(\oomega)} {\mathfrak S}[f,s;1]}{V(g_s\oomega) (m_{\Phi_2^-}(f)|\Phi^+_2|H_2(s, \oomega))^2}-1\right|\leq
C(\oomega, g_s\oomega)\exp(-\alpha s).
\end{equation}
\end{proposition}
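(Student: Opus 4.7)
The strategy is to compare $\mathfrak{S}[f,s;1]$ in $L^2(\nu(\oomega))$ with its leading-order cocycle approximation $A(x) := m_{\Phi_2^-}(f)\,\Phi_2^+(x, e^s)$, then apply the variance identity for $\Phi_2^+$ from the preceding proposition. Set $B := \mathfrak{S}[f,s;1] - A$, so $\mathfrak{S}[f,s;1] = A + B$. Both summands have mean zero with respect to $\nu$ (the first by Proposition \ref{phiexpzero}, the second by $h_t^+$-invariance of $\nu$ together with $\int f\,d\nu = 0$), hence so does $B$.

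The key pointwise control on $B$ is supplied by (\ref{phitwoapprox}), which combines the symbolic Approximation Theorem (Corollary \ref{symbmultiplic}) with the H\"older growth bound of Corollary \ref{hoeldprop} applied to the higher-Lyapunov-exponent component $\Phi_{3,f}^+$: for any $\alpha_0<\overline{\theta}_2 - \overline{\theta}_3$ one has
$$\sup_{x\in X(\oomega)} |B(x)| \leq C_0(\oomega)\,\|f\|_{Lip_w^+}\,e^{(\overline{\theta}_2 - \alpha_0)s},$$
where $C_0$ absorbs the subexponential Oseledets factor. The reverse triangle inequality in $L^2(\nu)$ then gives
$$\bigl|\sqrt{Var_\nu\,\mathfrak{S}} - \sqrt{Var_\nu\,A}\bigr| \leq \|B\|_{L^2(\nu)} \leq \sup_x |B(x)|,$$
and substituting the identity $\sqrt{Var_\nu(A)} = |m_{\Phi_2^-}(f)|\,|\Phi_2^+|\,H_2(s,\oomega)\,\sqrt{V(g_s\oomega)}$ from the preceding proposition, dividing and squaring produces
$$\left|\frac{Var_\nu\,\mathfrak{S}}{V(g_s\oomega)(m_{\Phi_2^-}(f)|\Phi_2^+|H_2(s,\oomega))^2} - 1\right|\leq 2\,r(s) + r(s)^2,$$
with $r(s) = \sup_x|B(x)|/\sqrt{Var_\nu(A)}$.

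It remains to control $r(s)$ by $C(\oomega, g_s\oomega)\,e^{-\alpha s}$ for some $\alpha>0$ depending only on $\Prob$. The numerator is of order $e^{(\overline{\theta}_2 - \alpha_0)s}$, while the Oseledets Multiplicative Ergodic Theorem applied to the restriction of $\overline{\mathbb{A}}$ to the one-dimensional subspace $E^u_{2,\oomega}$ yields, for every $\varepsilon>0$, a measurable $c(\oomega)>0$ with $H_2(s,\oomega)\geq c(\oomega)\,e^{(\overline{\theta}_2 - \varepsilon)s}$. Choosing $\varepsilon<\alpha_0$ and then $\alpha<\alpha_0-\varepsilon$ delivers the stated exponential decay. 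The main obstacle is to package the various a.s.\ finite but potentially unbounded prefactors --- the Oseledets constant $c(\oomega)^{-1}$, the normalizer $V(g_s\oomega)^{-1/2}$, and the factor $\|f\|_{Lip_w^+}/(|m_{\Phi_2^-}(f)|\,|\Phi_2^+|)$ coming from the $L^\infty$-bound on $B$ --- into a single measurable function of the pair $(\oomega, g_s\oomega)$; this requires Egorov/Lusin-type arguments on compact subsets of $\Oomega$ where Oseledets convergence is uniform, together with the observation that $\Phi_{3,f}^+$ depends linearly on $f$ and takes values in a finite-dimensional Oseledets subspace, so that the $f$-dependent prefactor can be controlled using norm equivalence on that subspace.
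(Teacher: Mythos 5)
Your proof is correct and follows the same strategy as the paper's: decompose $\mathfrak{S}[f,s;1]$ as the leading cocycle term $m_{\Phi_2^-}(f)\Phi_2^+(x,e^s)$ plus a uniformly small remainder, invoke the exact variance identity $Var_\nu\Phi_2^+(x,e^s)=V(g_s\oomega)|\Phi_2^+|^2H_2(s,\oomega)^2$, and transfer the pointwise error to a relative variance bound by an elementary $L^2$ manipulation, with the Oseledets lower bound $H_2(s,\oomega)\geq c(\oomega)e^{(\overline\theta_2-\varepsilon)s}$ absorbing the remaining exponential slack. The only cosmetic difference is the choice of elementary inequality: the paper uses $|\ee\xi_1^2-\ee\xi_2^2|\leq\sup|\xi_1+\xi_2|\cdot\ee|\xi_1-\xi_2|$ together with a companion proposition identifying $\max|\Phi_2^+(x,e^s)|$ with $V'(g_s\oomega)H_2(s,\oomega)$, whereas you use the reverse triangle inequality $|\sqrt{Var\,\xi_1}-\sqrt{Var\,\xi_2}|\leq\|\xi_1-\xi_2\|_{L^2}$ and then square, which is arguably a hair cleaner since it requires only a lower bound on $\sqrt{Var\,A}$ and not a sup-to-variance comparison for $\Phi_2^+$; both routes land in the same place.
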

{\bf Remark.} Observe that the quantity $(m_{\Phi_2^-}(f)|\Phi^+_2|)^2$ does
not depend on the specific choice of $\Phi_2^+\in\BB^+_2$, $\Phi_2^-\in\BB^-_2$ such that $\langle \Phi^+_2, \Phi^-_2\rangle=1$.

Indeed, the proposition is immediate from the inequality
$$
|\ee(\xi_1^2)-\ee(\xi_2^2)|\leq {\rm sup}|\xi_1+\xi_2| \cdot \ee|\xi_1-\xi_2|,
$$
which holds for any two bounded random variables $\xi_1, \xi_2$ on any probability space,
and the following clear Proposition.
\begin{proposition}
There exists a constant  $\alpha>0$ depending only on $\Prob$, a positive measurable
function $C:\Oomega\times\Oomega\to {\mathbb R}_+$ and a positive measurable function
$V^{\prime}:\Oomega\to {\mathbb R}_+$ such that
\begin{equation}
\max |\Phi_2^+(x,e^s)|=V^{\prime}(g_s\oomega)H_2(s, \oomega);
\end{equation}
\begin{equation}
 \left|\frac{\max {\mathfrak S}[f,s;1]}{V^{\prime}(g_s\oomega) (m_{\Phi_2^-}(f)|\Phi^+|H_2(s, \oomega))^2}-1\right|\leq
C(\oomega, g_s\oomega)\exp(-\alpha s).
\end{equation}

\end{proposition}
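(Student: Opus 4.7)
The plan is to derive both identities from two independent ingredients: the renormalization-equivariance of $\Phi_2^+$ under the map ${\mathfrak t}_s$ (which handles the first equality exactly), together with the asymptotic expansion of ergodic integrals given by Corollary \ref{symbmultiplic} combined with the Oseledets growth gap $\bar\theta_2>\bar\theta_3$ (which handles the second inequality up to exponential error).

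First I would establish the exact identity $\max|\Phi_2^+(x,e^s)|=V'(g_s\oomega)H_2(s,\oomega)$. Since the second Lyapunov exponent is simple, the subspace $E_{2,\oomega}^u$ is one-dimensional; pick a measurable choice $v(\oomega)\in E_{2,\oomega}^u$ with $|v(\oomega)|=1$ so that $\overline{\mathbb A}(s,\oomega)v(\oomega)=\pm H_2(s,\oomega)\,v(g_s\oomega)$. The commutative diagram intertwining ${\cal I}^+_{\cdot}$ with $({\mathfrak t}_s)_*$ and $\overline{\mathbb A}(s,\cdot)$ then yields $({\mathfrak t}_s)_*\Phi_2^{+,\oomega}=\pm H_2(s,\oomega)\Phi_2^{+,g_s\oomega}$. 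Combining this with the semiconjugacy ${\mathfrak t}_s\circ h^+_{e^s t,\oomega}=h^+_{t,g_s\oomega}\circ{\mathfrak t}_s$ gives $\Phi_2^{+,\oomega}(x,e^s)=\pm H_2(s,\oomega)\Phi_2^{+,g_s\oomega}({\mathfrak t}_s x,1)$. Taking the maximum over $x\in X(\oomega)$ and using bijectivity of ${\mathfrak t}_s$, I set
$$V'(\oomega):=\max_{y\in X(\oomega)}|\Phi_2^{+,\oomega}(y,1)|,$$
which is measurable by measurability of the Oseledets section and continuity of $\Phi_2^{+,\oomega}(\cdot,1)$ in the base point; this proves the first identity.

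For the second estimate, I would decompose the ergodic integral using the asymptotic expansion of Corollary \ref{symbmultiplic}. Under the zero-mean hypothesis on $f$ (inherited from the setting of the preceding variance proposition),
$$\Xi^+_{\oomega}(f)=m_{\Phi_2^-}(f)\,\Phi_2^{+,\oomega}+\Phi_{3,f}^{+,\oomega},\qquad \Phi_{3,f}^{+,\oomega}\in{\cal I}^+_{\oomega}(E_{\geq 3,\oomega}^u),$$
and ${\mathfrak S}[f,s;1](x)=\Xi^+_{\oomega}(f;x,e^s)+O_\varepsilon(\|f\|_{Lip_w^+}\,e^{\varepsilon s})$. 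The term $\Phi_{3,f}^{+,\oomega}(x,e^s)$ lies in the Oseledets subspace of exponent strictly less than $\bar\theta_2$; by the same renormalization argument used in Step 1 applied to the restriction of $\overline{\mathbb A}$ to $E_{\geq 3,\oomega}^u$, one has $\max_x|\Phi_{3,f}^{+,\oomega}(x,e^s)|\le C'(\oomega,g_s\oomega)\,\|f\|_{Lip_w^+}\,H_2(s,\oomega)\,e^{-\alpha_0 s}$ for any $\alpha_0<\bar\theta_2-\bar\theta_3$. Similarly, the Corollary \ref{symbmultiplic} error, after division by $H_2(s,\oomega)\sim e^{\bar\theta_2 s}$, is $O(e^{-(\bar\theta_2-\varepsilon)s})$.

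Finally, I would take the maximum in $x$, invoke the first identity to replace $\max_x|\Phi_2^{+,\oomega}(x,e^s)|$ by $V'(g_s\oomega)H_2(s,\oomega)$, and assemble the two error contributions into a single bound of the form $C(\oomega,g_s\oomega)e^{-\alpha s}$ with $\alpha=\min(\alpha_0,\bar\theta_2-\varepsilon)>0$ (choosing $\varepsilon<\bar\theta_2$ and $\alpha_0<\bar\theta_2-\bar\theta_3$). The main obstacle I anticipate is packaging the a priori only measurably tempered constants arising from (i) the Oseledets decomposition at the two endpoints $\oomega$ and $g_s\oomega$, (ii) the constants $C_\varepsilon$ of the Approximation Theorem, and (iii) the lower-order growth of $\Phi_{3,f}^{+}$, into a single symmetric measurable function $C:\Oomega\times\Oomega\to\mathbb R_+$ evaluated at $(\oomega,g_s\oomega)$, rather than a constant that degenerates along almost every orbit. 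This is achieved by the standard trick of absorbing the orbit-dependent tempered constants into evaluations at the two endpoints of the orbit segment, exactly as in the formulation of Proposition \ref{varestfmoduli}; verifying that this bookkeeping is compatible with the division by $H_2(s,\oomega)$ is the technical crux.
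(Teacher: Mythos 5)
Your proof is correct, and it fills in the argument that the paper itself leaves implicit: the statement is introduced only as ``the following clear Proposition'' with no proof given, but the intended route is exactly the one you take --- the first identity from the renormalization-equivariance $({\mathfrak t}_s)_*\Phi_2^{+,\oomega}=\pm H_2(s,\oomega)\,\Phi_2^{+,g_s\oomega}$ combined with ${\mathfrak t}_s\circ h^+_{e^s t,\oomega}=h^+_{t,g_s\oomega}\circ{\mathfrak t}_s$, and the second from the Approximation Theorem together with the Oseledets gap $\bar\theta_2>\bar\theta_3$, with tempered constants absorbed into a measurable function of the two orbit endpoints. One thing worth flagging explicitly: the printed denominator $V^{\prime}(g_s\oomega)\bigl(m_{\Phi_2^-}(f)|\Phi^+|H_2(s,\oomega)\bigr)^2$ in the second display is a misprint --- a maximum scales linearly in $H_2$, not quadratically, and it should read $V^{\prime}(g_s\oomega)\,|m_{\Phi_2^-}(f)|\,|\Phi_2^+|\,H_2(s,\oomega)$ (this is also the form required to deduce Proposition \ref{varf} via $|\ee(\xi_1^2)-\ee(\xi_2^2)|\leq\sup|\xi_1+\xi_2|\cdot\ee|\xi_1-\xi_2|$); your argument correctly establishes that corrected version.
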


\subsubsection{Formulation and proof of the limit theorem.}

We now turn to the asymptotic behaviour of the distribution of the
random variable ${\mathfrak S}[f,s]$ as $s\to\infty$.

Again, we will use the notation
$\mm[f,s]$ for the distribution of the normalized random variable
\begin{equation}
\frac{{\mathfrak S}[f,s]}{{\sqrt{Var_{{\bf m}} {\mathfrak S}[f,s;1]}}}.
\end{equation}
The measure $\mm[f,s]$ is thus a probability distribution on the space $C[0,1]$ of continuous functions on the unit interval.

For $\tau\in {\mathbb R}$, $\tau\neq 0$, we again let $\mm[f,s; \tau]$ be the distribution of the ${\mathbb R}$-valued random variable
\begin{equation}
\frac{{\mathfrak S}[f,s; \tau]}{{\sqrt{Var_{{\bf m}} {\mathfrak S}[f,s; \tau]}}}.
\end{equation}
If $f$ has zero average, then, by definition,  $\mm[f,s; \tau]$ is a measure on ${\mathbb R}$ of expectation $0$ and variance $1$.
Again, as in the Introduction, we take the space $C[0,1]$
of continuous functions on the unit interval endowed with
the Tchebyshev topology, we let $\MM$ be the space of Borel probability
measures on the space $C[0,1]$
endowed with the weak topology (see \cite{bogachev} or the Appendix B).

Consider the space $\HH^{\prime}$ given by
the formula
$$
\Oomega^{\prime}=\{\oomega^{\prime}=(\oomega, v), v\in E_{2, \oomega}^+, |v|=1\}.
$$
The  flow $g_s$ is lifted to $\Oomega^{\prime}$ by the formula
$$
g_s^{\prime}(X, v)=\left(g_sX, \frac{{\overline {\mathbb A}}(s,\oomega)v}{|({\overline {\mathbb A}}(s,\oomega)v|}\right).
$$

Given $\oomega^{\prime}\in {\Oomega}^{\prime}$, $\oomega^{\prime}=(\oomega, v)$,
write
$$
\Phi_{2, \oomega^{\prime}}^+={\cal I}_{\oomega}(v).
$$
As before, write
$$
V(\oomega^{\prime})=Var_{\nu(\oomega)}   \Phi_{2, \oomega^{\prime}}^+(x,1).
$$

Now introduce the map
$$
{\cal D}_2^+: \Oomega^{\prime}\to \MM
$$

by setting    ${\cal D}_2^+(\oomega^{\prime})$ to be the distribution
of the $C[0,1]$-valued normalized random variable
$$
\frac{\Phi^+_{2,\oomega^{\prime}}(x, \tau)}{\sqrt{V(\oomega^{\prime})}}, \  \tau\in[0,1].
$$

Note here that
for any $\tau_0\neq 0$ we have $Var_{\nu(\oomega)} \Phi_{2, \oomega}(x, \tau_0)\neq 0$,
so, by definition, we have
${\cal D}_2^+(\oomega^{\prime})\in\MM_1$.

Now, as before, we take a function  $f\in Lip_{w,\oomega}^+$ such that
 $$
 \int_{X(\oomega)} f d\nu(\oomega)=0,\  m_{\Phi_{2, \oomega}^-}(f)\neq 0
 $$
As before, $d_{LP}$  stands for the L{\'e}vy-Prohorov metric on $\MM$, $d_{KR}$  for the Kantorovich-Rubinstein
metric on $\MM$.
\begin{proposition}
\label{limthmmarkcomp-simple}
Let $\Prob\in {\mathscr P}^+$ be
such that both the first and the second Lyapunov exponents of the renormalization
cocycle ${\overline {\mathbb A}}$ are positive and simple with respect to $\Prob$.
There exists a positive measurable function $C: \Oomega\times \Oomega\to {\mathbb R}_+$
and a positive constant $\alpha$ depending only on $\Prob$
such that
for $\Prob$-almost every $\oomega^{\prime}\in\Oomega^{\prime}$, $\oomega^{\prime}=(\oomega, v)$,
and any $f\in Lip_{w,0}^+(X)$
satisfying
$m_{2, X^{\prime}}^-(f)>0$ we have
\begin{equation}
d_{LP}(\mm[f,s], {\cal D}_2^+(g_s^{\prime}\oomega^{\prime}))\leq C(\oomega, g_s\oomega)\exp(-\alpha s).
\end{equation}
\begin{equation}
d_{KR}(\mm[f,s], {\cal D}_2^+(g_s^{\prime}\oomega^{\prime}))\leq C(\oomega, g_s\oomega)\exp(-\alpha s).
\end{equation}
\end{proposition}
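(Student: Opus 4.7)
The plan is to reduce Proposition \ref{limthmmarkcomp-simple} to a pointwise uniform bound on $X(\oomega)\times[0,1]$ between two $C[0,1]$-valued random variables defined on the common probability space $(X(\oomega),\nu_{\oomega})$, and then to invoke standard bounds relating uniform closeness of random variables to the L\'evy--Prohorov and Kantorovich--Rubinstein distances of their distributions.

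The first step combines the approximation estimate (\ref{phitwoapprox}) with the variance asymptotics of Proposition \ref{varf}. Applied with $T=\tau e^s$, $\tau\in[0,1]$, the former yields, uniformly in $(x,\tau)\in X(\oomega)\times[0,1]$,
$$
\bigl|\,{\mathfrak S}[f,s;\tau,x]-m_{\Phi_{2,\oomega}^-}(f)\,\Phi_{2,\oomega}^+(x,\tau e^s)\,\bigr|\le C\|f\|_{Lip_w^+}\,e^{s({\overline \theta}_2-\alpha)}.
$$
On the other hand, Proposition \ref{varf} expresses $\sqrt{Var_{\nu(\oomega)}{\mathfrak S}[f,s;1]}$ as $|m_{\Phi_{2,\oomega}^-}(f)|\cdot|\Phi_2^+|\cdot H_2(s,\oomega)\sqrt{V(g_s\oomega)}$ up to a multiplicative factor $1+O(e^{-\alpha s})$. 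Since $H_2(s,\oomega)$ grows like $e^{s{\overline \theta}_2}$ up to the subexponential Oseledets factor, dividing the previous display by this normalization produces an additive error of size $C(\oomega,g_s\oomega)e^{-\alpha' s}$ for some $\alpha'>0$.

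The second step uses the renormalization action. The commutation $\mathfrak{t}_s\circ h_{e^st,\oomega}^+=h_{t,g_s\oomega}^+\circ\mathfrak{t}_s$ together with the ${\overline{\mathbb A}}$-equivariance of ${\cal I}^+$ yields the cocycle identity
$$
\Phi_{2,\oomega}^+(x,\tau e^s)=H_2(s,\oomega)\,\Phi_{2,\,g_s^{\prime}\oomega^{\prime}}^+\!\bigl(\mathfrak{t}_s x,\tau\bigr),
$$
where $\oomega^{\prime}=(\oomega,v)$ with $|v|=1$ and the point $g_s^{\prime}\oomega^{\prime}=(g_s\oomega,{\overline{\mathbb A}}(s,\oomega)v/H_2(s,\oomega))$. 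Substituting, and using the hypothesis $m_{\Phi_{2,\oomega}^-}(f)>0$ to fix the sign, one sees that ${\mathfrak S}[f,s;\tau,x]/\sqrt{Var_{\nu(\oomega)}{\mathfrak S}[f,s;1]}$ differs uniformly by at most $C(\oomega,g_s\oomega)e^{-\alpha' s}$ from $x\mapsto \Phi_{2,g_s^{\prime}\oomega^{\prime}}^+(\mathfrak{t}_s x,\cdot)/\sqrt{V(g_s\oomega)}$. Because $(\mathfrak{t}_s)_*\nu_{\oomega}=\nu_{g_s\oomega}$, the pushforward distribution of this latter $C[0,1]$-valued variable is by definition ${\cal D}_2^+(g_s^{\prime}\oomega^{\prime})$.

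Finally, to convert this almost-sure uniform bound $\varepsilon_s:=C(\oomega,g_s\oomega)e^{-\alpha's}$ into the metric inequalities, I use the elementary fact that if $\xi_1,\xi_2$ are $C[0,1]$-valued random variables on a common probability space with $\|\xi_1-\xi_2\|_{C[0,1]}\le\varepsilon_s$ almost surely, then the identity coupling immediately yields $d_{LP}(\mathrm{Law}(\xi_1),\mathrm{Law}(\xi_2))\le\varepsilon_s$ (directly from the definition of $d_{LP}$ via $\varepsilon$-enlargements) and $d_{KR}(\mathrm{Law}(\xi_1),\mathrm{Law}(\xi_2))\le\varepsilon_s$ (by Kantorovich duality and the trivial bound $W_1\le{\mathbb E}\|\xi_1-\xi_2\|$). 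The principal technical obstacle is the careful propagation of the measurable cocycle-dependent constants through the division by $\sqrt{Var_{\nu(\oomega)}{\mathfrak S}[f,s;1]}$: the quantities $|\Phi_2^+|$, $m_{\Phi_{2,\oomega}^-}(f)$ and $V(g_s\oomega)$ cancel only up to the multiplicative residue supplied by Proposition \ref{varf}, which must be absorbed, uniformly in $f$, into the measurable function $C(\oomega,g_s\oomega)$.
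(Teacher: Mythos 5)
Your proposal is correct and takes essentially the same route as the paper: approximate the ergodic integral pointwise by the cocycle via (\ref{phitwoapprox}), compare normalizing variances via Proposition \ref{varf}, and invoke Lemma \ref{dist-images} to pass from a uniform bound on the $C[0,1]$-valued variables to the L\'evy--Prohorov and Kantorovich--Rubinstein bounds. The only difference is one of exposition: you spell out the cocycle identity $\Phi^+_{2,\oomega}(x,\tau e^s)=H_2(s,\oomega)\,\Phi^+_{2,\,g_s^{\prime}\oomega^{\prime}}(\mathfrak{t}_s x,\tau)$ and the pushforward identification $(\mathfrak{t}_s)_*\nu_{\oomega}=\nu_{g_s\oomega}$ explicitly, while the paper compresses the same bookkeeping into the elementary inequality (\ref{ineqlimthm}) controlling $\sup|\xi_1/M_1-\xi_2/M_2|$.
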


Proof: We start with the simple inequality
$$
\left|\frac{a}{b}-\frac{c}{d}\right| \leq
|a|\cdot\left|\frac{b-d}{bd}\right|+\frac{|a-c|}{d}
$$
valid for any real numbers $a,b,c,d$.
For any pair of random variables $\xi_1, \xi_2$  taking values in an arbitrary Banach space
and any positive real numbers $M_1, M_2$
we consequently have
\begin{equation}
\label{ineqlimthm}
\sup \left| \frac{\xi_1}{M_1} - \frac{\xi_2}{M_2}\right|\leq
\sup|\xi_1|\cdot\left|\frac{M_1-M_2}{M_1M_2}\right|+\frac{\sup|\xi_1-\xi_2|}{M_2}.
\end{equation}

We apply the inequality (\ref{ineqlimthm}) to the $C[0,1]$-valued random variables
$$
\xi_1={\mathfrak S}[f,s], \ \xi_2=\Phi_{2, g_s\oomega}^+(x,\tau\cdot e^s),
$$
letting $M_1$, $M_2$ be the corresponding normalizing variances: $M_1=Var_{\nu(\oomega)}{\mathfrak S}[f,s; 1]$,
$M_2=Var_{\nu(\oomega)}\mm[f,s;1]$.

Now take $\varepsilon>0$ and  let ${\tilde \xi}_1, {\tilde \xi}_2$ be
two random variables on an arbitrary probability space $(\Omega, \Prob)$ taking values in a complete metric space
and such that the distance between their values does not exceed $\varepsilon$.
In this case both the L{\'e}vy-Prohorov and the Kantorovich-Rubinstein distance between their distributions $({\tilde \xi}_1)_*\Prob$, $({\tilde \xi}_2)_*\Prob$ also does not exceed $\varepsilon$ (see Lemma \ref{dist-images} in Appendix B).

Proposition \ref{limthmmarkcomp-simple} is now immediate from Equation (\ref{phitwoapprox}) and Proposition \ref{varf}.

\subsubsection{Proof of Corollary \ref{nonentwo}.}

For $\oomega^{\prime}\in\Oomega^{\prime}$, $\Phi^+\in\BB^+_{\oomega}$
let ${\mathfrak m}[\Phi^+, \tau]$ be the distribution
of the normalized ${\mathbb R}$-valued random variable
$$
\frac{\Phi^+(x, \tau)}{\sqrt{Var_{{\nu}}\Phi^+(x, \tau)}}.
$$

\begin{proposition}
 Let $ \mathbb{P}\in\mathscr{P}^+.$
For $\Prob$-almost every $\oomega$ and any $\Phi^+\in \BB^+_{\oomega}$, $\Phi^+\neq 0$,  the correspondence
$$
\tau\rightarrow \mm[\Phi^+, \tau]
$$
yields a continuous map from ${\mathbb R}\setminus 0$ to $\MM({\mathbb R})$.
\end{proposition}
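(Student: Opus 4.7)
The plan is to exploit the cocycle property of $\Phi^+$ together with the Hölder bound of Corollary \ref{hoeldprop} to obtain uniform continuity of the map $\tau \mapsto \Phi^+(\cdot,\tau)$ as a function into $L^\infty(\nu)$, and then to promote this to weak continuity of the normalized distributions. Throughout we assume $\Phi^+$ is not proportional to $\nu^+$; in the degenerate case $\Phi^+ \in \mathbb{R}\nu^+$ the random variable $\Phi^+(x,\tau)$ is constant in $x$ and there is nothing to prove.

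First, for any $\tau, \tau_0 \in \mathbb{R}$ with $\tau > \tau_0$, the cocycle identity gives
$$\Phi^+(x,\tau) - \Phi^+(x,\tau_0) = \Phi^+(h^+_{\tau_0} x,\, \tau-\tau_0).$$
Let $\theta$ be the lower Lyapunov exponent of $\Phi^+$. By Corollary \ref{hoeldprop}, for every $\varepsilon > 0$ there exists $C_\varepsilon$ (depending only on $\oomega$ and $\Phi^+$) such that
$$\sup_{x \in X(\oomega)} |\Phi^+(x,\tau) - \Phi^+(x,\tau_0)| \leq C_\varepsilon |\Phi^+| \cdot |\tau - \tau_0|^{\theta/\theta_1 - \varepsilon}.$$
The symmetric bound holds for $\tau < \tau_0$. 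Consequently, as $\tau \to \tau_0$, the random variable $\Phi^+(\cdot,\tau)$ converges to $\Phi^+(\cdot,\tau_0)$ uniformly on $X(\oomega)$, and in particular in $L^2(\nu)$.

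From $L^2$ convergence we immediately conclude that the variance function $\tau \mapsto Var_{\nu} \Phi^+(x,\tau)$ is continuous on $\mathbb{R}$. By Proposition \ref{phivarnotzero}, this function is strictly positive on $\mathbb{R} \setminus \{0\}$. Hence on any compact subinterval of $\mathbb{R} \setminus \{0\}$ both the variance and its square root are bounded away from zero.

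Finally, to conclude continuity of $\tau \mapsto \mm[\Phi^+,\tau]$, apply the elementary inequality
$$\left|\frac{a}{b} - \frac{c}{d}\right| \leq \frac{|a-c|}{|d|} + |a| \cdot \frac{|b-d|}{|bd|},$$
to $a = \Phi^+(x,\tau)$, $c = \Phi^+(x,\tau_0)$ and $b,d$ the corresponding square roots of variances. The uniform bound on the numerator difference combined with the lower bound on the denominators shows that the normalized random variables converge uniformly in $x$ as $\tau \to \tau_0$, with rate controlled by a power of $|\tau-\tau_0|$. Uniform convergence of random variables implies weak convergence of their distributions (via Lemma \ref{dist-images} in Appendix B for the Lévy-Prohorov and Kantorovich-Rubinstein metrics), which gives the claimed continuity. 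The main point requiring care is the non-vanishing of the variance on $\mathbb{R} \setminus \{0\}$, but this is precisely the content of Proposition \ref{phivarnotzero}; the rest is a direct chain of estimates from the Hölder property.
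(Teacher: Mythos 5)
Your proof is correct and follows exactly the route the paper sketches: the paper's own proof is a one-line appeal to ``the H\"older property of the cocycle $\Phi^+$ and the nonvanishing of the variance $Var_{\nu}\Phi^+(x,\tau)$ for $\tau\neq 0$ (Proposition \ref{phivarnotzero}),'' and you have filled in precisely these two steps, using the cocycle identity together with Corollary \ref{hoeldprop} for uniform H\"older continuity of $\tau\mapsto\Phi^+(\cdot,\tau)$ and Lemma \ref{dist-images} to pass from uniform closeness of the normalized random variables to weak closeness of their laws. The only minor remark is that the case $\Phi^+\in\mathbb{R}\nu^+$ is not merely trivial but must be excluded outright (the variance vanishes and $\mm[\Phi^+,\tau]$ is undefined), exactly as Proposition \ref{phivarnotzero} anticipates by restricting to cocycles not proportional to $\nu^+$.
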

Proof. This is immediate from the H{\"o}lder property of the cocycle $\Phi^+$ and the nonvanishing of the
variance  $Var_{\nu} \Phi^+(x, \tau)$ for $\tau\neq 0$, which is  guaranteed by Proposition \ref{phivarnotzero}.

As usual, by the {\it {omega-limit set}} of a parametrized curve $p(s)$, $s\in {\mathbb R}$, taking values
in a  metric space, we mean the set of all accumulation points of our curve as $s\to\infty$.

We now use the following general statement.
\begin{proposition}
\label{omlimset}
Let $(\Omega, \mathcal{B})$ be a standard Borel space, and let $g_s$ be a measurable
flow on $\Omega$ preserving an ergodic Borel probability measure $\mu$. Let $Z$ be a separable metric space, and
let $\varphi: \Omega\rightarrow Z$\:be a measurable map such that for $\mu$-almost every $\omega\in \Omega$
the curve $\varphi(g_s\omega)$ is continuous in $s\in\mathbb{R}$. Then there exists a closed set
$\mathfrak{N}\subset Z$ such that for $\mu$-almost every $\omega\in \Omega$ the set $\mathfrak{N}$
is the omega-limit set of the curve $\varphi(g_s\omega)$, $s\in\mathbb{R}$.
\end{proposition}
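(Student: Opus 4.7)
The plan is to exploit ergodicity of $g_s$ together with second-countability of $Z$ to reduce an uncountable family of events (``$z$ is an accumulation point of the orbit'') to countably many $\{0,1\}$-valued events, then assemble the invariant closed set $\mathfrak{N}$ from them.

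\medskip

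First I would fix, using separability of $Z$, a countable base $\{U_n\}_{n\in\mathbb{N}}$ of open sets of $Z$, and for each open $U\subset Z$ introduce
\[
B_U=\{\omega\in\Omega:\forall T>0,\ \exists s>T\text{ with }\varphi(g_s\omega)\in U\}
=\bigcap_{T\in\mathbb{Q}_+}\bigcup_{s\in\mathbb{Q},\,s>T}\{\omega:\varphi(g_s\omega)\in U\}.
\]
For $\omega$ in the full-measure set where $s\mapsto\varphi(g_s\omega)$ is continuous, $B_U$ is exactly the set of $\omega$ whose omega-limit set $\Lambda(\omega)$ meets $U$, and the displayed expression shows $B_U$ is measurable (each constituent set is measurable by joint measurability of the flow and of $\varphi$). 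Moreover $\omega\in B_U\Leftrightarrow g_t\omega\in B_U$ for every $t$, so by ergodicity $\mu(B_U)\in\{0,1\}$.

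\medskip

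Next I would define
\[
\mathfrak{N}\;=\;Z\setminus\bigcup_{n:\,\mu(B_{U_n})=0}U_n,
\]
a closed subset of $Z$, and verify that $\Lambda(\omega)=\mathfrak{N}$ for $\mu$-a.e.~$\omega$. For the inclusion $\Lambda(\omega)\subset\mathfrak{N}$, let $\Omega_0=\bigcap_{n:\mu(B_{U_n})=0}(\Omega\setminus B_{U_n})$, a countable intersection of full-measure sets; for $\omega\in\Omega_0$, no basic open set $U_n$ with $\mu(B_{U_n})=0$ meets $\Lambda(\omega)$, whence $\Lambda(\omega)\subset\mathfrak{N}$. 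For the reverse inclusion, pick a countable dense subset $D\subset\mathfrak{N}$; for each $z\in D$, any $U_n\in\{U_n\}$ containing $z$ must satisfy $\mu(B_{U_n})=1$, and since these $U_n$ form a countable neighbourhood base at $z$, the set $\Omega_z:=\bigcap_{n:\,z\in U_n}B_{U_n}$ has full measure and consists of $\omega$ with $z\in\Lambda(\omega)$. Then $\Omega_\ast:=\Omega_0\cap\bigcap_{z\in D}\Omega_z$ has full measure, and for $\omega\in\Omega_\ast$ one has $D\subset\Lambda(\omega)$; since $\Lambda(\omega)$ is closed, this gives $\mathfrak{N}=\overline{D}\subset\Lambda(\omega)$, completing the equality.

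\medskip

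The only delicate step is the measurability and $g_s$-invariance of $B_U$, which requires using the almost-sure continuity of $s\mapsto\varphi(g_s\omega)$ to restrict to rational times. Everything else is a standard second-countability argument to upgrade ``$\mu$-a.s.\ constant for each fixed $z$'' to ``$\mu$-a.s.\ the whole closed set $\Lambda(\omega)$ is constant.'' No further input is needed from the rest of the paper.
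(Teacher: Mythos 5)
Your proof is correct and follows essentially the same route as the paper: choose a countable base, partition the basic open sets by a $\{0,1\}$-valued invariant, and assemble $\mathfrak{N}$ as the closed set of points all of whose basic neighbourhoods carry the full-measure label. The one variation is that you work directly with the invariant sets $B_U$ (orbit enters $U$ at arbitrarily large times) and the $0$--$1$ law, whereas the paper's implicit dichotomy is phrased in terms of the pushforward measure $(\varphi_*\mu)(U)$; these classify the $U_n$ identically (Birkhoff gives $(\varphi_*\mu)(U)>0\Rightarrow\mu(B_U)=1$, and Fubini together with continuity and openness of $U$ gives $(\varphi_*\mu)(U)=0\Rightarrow\mu(B_U)=0$), so your version has the small advantage of not needing to invoke the ergodic theorem or the Fubini argument explicitly. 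One small imprecision: your statement that $B_U$ ``is exactly the set of $\omega$ whose omega-limit set meets $U$'' is only one implication in general --- $\Lambda(\omega)\cap U\neq\emptyset$ does imply $\omega\in B_U$, but the converse can fail when $Z$ is not locally compact, since the orbit may enter $U$ at arbitrarily large times without accumulating anywhere in $U$. Fortunately you only use the true implication for the inclusion $\Lambda(\omega)\subset\mathfrak{N}$, and for the reverse inclusion you instead use the shrinking-neighbourhood-base argument at each $z\in D$, which is exactly what is needed; so the slip does not affect the validity of the proof.
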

The proof of Proposition \ref{omlimset} is routine.
We choose a countable base ${\mathscr U}=\{U_n\}_{n\in\mathbb{N}}$  of open sets in $Z$.
By ergodicity of $g_s$, continuity of the curves  $\varphi(g_s\omega)$ and countability of the family ${\mathscr U}$,
there exists a subset of full measure $\Omega^{\prime}\subset \Omega$, $\mu(\Omega^{\prime})=1$,
such that for any $U\in {\mathscr U}$ and any $\omega\in\Omega^{\prime}$
 the following conditions are satisfied:
\begin{enumerate}
\item if $\mu(U)>0$, then there exists an infinite sequence $s_n\rightarrow\infty$ such that $\varphi(g_{s_n}\omega)\in U$;
 \item if $\mu(U)=0$, then there exists $s_0>0$ such that $\varphi(g_s\omega)\notin U$ for all $s>s_0$.
\end{enumerate}

Now let $\mathfrak{N}$  be the set of all points $z\in Z$ such that $\mu(U)>0$ for any open set
$U\in {\mathscr U}$ containing the point $z$.
By construction, for any $\omega\in\Omega^{\prime}$, the set $\mathfrak{N}$
is precisely the omega-limit set of the curve $\varphi(g_s\omega)$.
The Proposition is proved.

Proposition \ref{omlimset} with $\Omega=\HH^{\prime}$, $\varphi={\mathcal D}_2^+$ and
$\mu$ an ergodic component of $\Prob^{\prime}$ together with the Limit Theorem given by Propositions
\ref{limthmmoduli-simple}, \ref{limthmmarkcomp-simple} immediately  implies Corollary \ref{nonentwo}.

\subsection{Limit theorems in the general case}
\subsubsection{Formulation and proof of the limit theorem}
We now proceed to the general case.
Let $\mathbb{P}\in {\mathscr P}^+$ be an ergodic $g_s$-invariant measure on
${\overline \Omega}$ and let $$\theta_1=1>\theta_2>\dots>\theta_{l_0}>0$$ be
the distinct positive Lyapunov exponents of $\overline{\mathbb{A}}$ with
respect to $\mathbb{P}$. We assume $l_0\geq 2$.

For $\overline{\omega}\in\overline{\Omega},$ let
$$E_{\overline{\omega}}^u=\mathbb{R}h_{\overline{\omega}}^{(0)}
+E_{2,\overline{\omega}}\oplus\dots\oplus
E_{l_0,\overline{\omega}}$$ be the corresponding direct-sum
decomposition into Oseledets subspaces, and let
$$\mathfrak{B}_{\overline{\omega}}^+=\mathbb{R}\nu_{\overline{\omega}}^+\oplus
\mathfrak{B}_{2,\overline{\omega}}^+\oplus\ldots\oplus
\mathfrak{B}_{l_0,\overline{\omega}}^+$$ be the corresponding
direct sum decomposition of the space
$\mathfrak{B}_{\overline{\omega}}^+.$

For $f\in Lip_{w}^+({{X}}(\overline{\omega}))$ we now write
$$\Phi_f^+=\Phi_{1,f}^++\Phi_{2,f}^++\ldots +\Phi_{l_0,f}^+,$$
where $\Phi_{i,f}^+\in\mathfrak{B}_{i,\overline{\omega}}^+$ and,
of course,
$$\Phi^+_{1,f}=(\int_{{{X}(\overline{\omega})}}fd\nu_{\overline{\omega}}
)\cdot\nu_{\overline{\omega}}^+.$$

For each $i=2,\ldots,l_0$ introduce a measurable fibre bundle
$${\bf{S}}^{(i)}\overline{\Omega}=\{(\overline{\omega},v):\overline{\omega}
\in\overline{\Omega},v\in E_{i,\overline{\omega}}^+,|v|=1\}.$$

The flow $g_s$ is naturally lifted to the space
${\bf{S}}^{(i)}\overline{\Omega}$ by the formula
$$
g_s^{{\bf{S}}^{(i)}}(\overline{\omega},v)=\left(g_s \overline{\omega},
\frac{\mathbb{A}(s,\overline{\omega})v}
{|\mathbb{A}(s,\overline{\omega})v|}\right).
$$

The growth of the norm of
vectors $v\in E_i^+$ is controlled by the multiplicative cocycle
$H_i$ over the flow $g_s^{\bf{S}^{(i)}}$ defined by the formula
$$
H_i(s, (\overline{\omega},v))=\frac{\mathbb{A}(s,\overline{\omega})v}
{|v|}.
$$
The growth of the variance of ergodic integrals is also,
similarly to the previous case, described by the cocycle $H_i.$

For
$\overline{\omega}\in\overline{\Omega}$ and $f\in
Lip_{w,0}^+({X}(\overline{\omega}))$ we write
\begin{equation}
\label{defif}
i(f)=\min\{j:\Phi_{f,j}^+\neq0\}.
\end{equation}
We now define a vector $v_f\in E_{i(f), {{\overline\omega}}}^u$ by the formula
\begin{equation}
\label{defvf}
\mathcal{I}_{{\overline\omega}}^+(v_f)=\frac{\Phi_{f,i(f)}^+}{|\Phi_{f,i(f)}^+|}.
\end{equation}

\begin{proposition} There exists $\alpha>0$
depending only on $\mathbb{P}$ and, for any $i=2,\ldots,l_0,$
positive measurable functions
$$V^{(i)}:{\bf{S}}^{(i)}\overline{\Omega}\to\mathbb{R}_+,C^{(i)}:\overline{\Omega}
\times\overline{\Omega}\to\mathbb{R}_+$$ such that for
$\mathbb{P}-almost$ every $\overline{\omega}\in\overline{\Omega}$, any  $f\in
Lip_{w,0}^+({X}(\overline{\omega}))$
and all $s>0$ we have
$$\left|\frac{Var_{\nu(\overline{\omega})}(\mathfrak{S}[f,e^s;1])}
{V^{(i(f))}(g_s^{{\bf{S}}^{(i)}}(\overline{\omega},v_{f}))(H_i(s, (\overline{\omega},v_{f})
))^2}-1\right|\leqslant
C^{(i)}(\overline{\omega},g_s\overline{\omega})e^{-\alpha s}.$$
\end{proposition}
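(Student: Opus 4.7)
The plan is to follow the template of Proposition \ref{varf} (the simple second exponent case), generalizing the role played there by the $i=2$ component to an arbitrary leading index $i(f)$. First, I apply the Approximation Theorem for Random Markov Compacta (Corollary \ref{symbmultiplic}) to replace the ergodic integral $\mathfrak{S}[f,e^s;1](x)$ by the cocycle $\Phi_f^+(x,e^s)$ with a uniform subexponential pointwise error $C_\varepsilon\|f\|_{Lip_w^+} e^{\varepsilon s}$. Using the Oseledets decomposition $\mathfrak{B}^+_{\overline{\omega}} = \mathbb{R}\nu^+_{\overline{\omega}} \oplus \mathfrak{B}^+_{2,\overline{\omega}} \oplus \cdots \oplus \mathfrak{B}^+_{l_0,\overline{\omega}}$ I decompose $\Phi_f^+ = \sum_{j\geq 1} \Phi_{j,f}^+$. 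The mean-zero hypothesis $\int f\, d\nu = 0$ kills $\Phi_{1,f}^+$, so only the terms $j \geq i(f)$ survive.

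The second step is to extract the dominant term. For each $j > i(f)$ the cocycle $\Phi_{j,f}^+$ corresponds under $\mathcal{I}^+_{\overline{\omega}}$ to a vector in the Oseledets subspace $E^u_{j,\overline{\omega}}$, and by the H\"{o}lder upper bound (Corollary \ref{hoeldprop}) together with the Oseledets growth,
\[
\sup_{x} |\Phi_{j,f}^+(x,e^s)| \;\leq\; C_\varepsilon\, H_{i(f)}(s,(\overline{\omega},v_f))\, e^{-(\theta_{i(f)}-\theta_j - \varepsilon)s} \cdot |\Phi^+_{f,i(f)}|,
\]
so all $j > i(f)$ contributions are exponentially smaller than the dominant one provided $\varepsilon < \min_{j}(\theta_{i(f)}-\theta_{j+1})$. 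Combined with the approximation error, this gives a pointwise bound
\[
\sup_{x}\bigl|\mathfrak{S}[f,e^s;1](x) - \Phi_{i(f),f}^+(x,e^s)\bigr| \;\leq\; C(\overline{\omega},g_s\overline{\omega})\,\|f\|_{Lip_w^+}\,|\Phi^+_{f,i(f)}|\,H_{i(f)}(s,(\overline{\omega},v_f))\, e^{-\alpha s}
\]
for a uniform $\alpha>0$ depending only on $\mathbb{P}$. I then pass from pointwise to variance approximation using the elementary identity $|\mathbb{E}\xi_1^2 - \mathbb{E}\xi_2^2| \leq \sup|\xi_1+\xi_2|\cdot\mathbb{E}|\xi_1-\xi_2|$, exactly as in the proof of Proposition \ref{varf}.

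The third step identifies $Var_{\nu_{\overline{\omega}}}\bigl(\Phi_{i(f),f}^+(x,e^s)\bigr)$ with the renormalized expression $V^{(i(f))}(g_s^{\mathbf{S}^{(i(f))}}(\overline{\omega},v_f))\, H_{i(f)}(s,(\overline{\omega},v_f))^2$. This uses the commutative diagram from Corollary \ref{symbmultiplic} linking $\mathfrak{t}_s$, $g_s$, and the cocycle $\overline{\mathbb{A}}$: the tautological map $\mathfrak{t}_s$ is measure-preserving, $(\mathfrak{t}_s)_*\nu_{\overline{\omega}} = \nu_{g_s\overline{\omega}}$, and it rescales time by $e^s$, so $\Phi_{i(f),f}^+(x,e^s)$ on $(X(\overline{\omega}),\nu_{\overline{\omega}})$ becomes $\Phi_{w}^+(y,1)$ on $(X(g_s\overline{\omega}),\nu_{g_s\overline{\omega}})$, where $w = \overline{\mathbb{A}}(s,\overline{\omega})v_f$ has $|w| = H_{i(f)}(s,(\overline{\omega},v_f))$. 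Setting
\[
V^{(i)}(\overline{\omega},v) \;:=\; Var_{\nu_{\overline{\omega}}}\bigl(\mathcal{I}^+_{\overline{\omega}}(v)(x,1)\bigr) \qquad \bigl((\overline{\omega},v)\in\mathbf{S}^{(i)}\overline{\Omega}\bigr),
\]
the function $V^{(i)}$ is measurable (Oseledets measurability) and strictly positive (Proposition \ref{phivarnotzero}); bilinearity in $v$ of the cocycle absorbs the scalar factor $|\Phi^+_{f,i(f)}|^2$ arising from the leading term. Dividing and using positivity of $V^{(i)}$ bounded below on compact sets of $\mathbf{S}^{(i)}\overline{\Omega}$ yields the stated exponential estimate.

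The main obstacle I anticipate is choosing $\alpha>0$ uniformly in $f$: the Lyapunov gap controlling the subdominant terms depends on $i(f)$, which varies with $f$. This is handled by taking $\alpha$ smaller than $\min_{1\leq j<l_0}(\theta_j - \theta_{j+1})$, a quantity determined solely by $\mathbb{P}$. A secondary subtlety is that the factor $|\Phi^+_{f,i(f)}|^2$ must disappear from the final ratio; it does so because it appears with the same power in the numerator (from the leading-term variance) and in the error estimate, and because the normalization of $v_f$ to unit length means the cocycle $H_{i(f)}$ captures only the Oseledets growth direction, not the initial scale. The Egorov-Luzin argument supplies the measurable functions $C^{(i)}(\overline{\omega},g_s\overline{\omega})$ from the almost-sure exponential gain.
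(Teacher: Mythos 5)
Your proof follows the paper's route exactly: approximate the ergodic integral by $\Phi_f^+$ via the Approximation Theorem, decompose into Oseledets components, kill $\Phi_{1,f}^+$ by the mean-zero hypothesis, bound the subdominant terms $\Phi_{j,f}^+$ ($j>i(f)$) using the Lyapunov gap, pass from pointwise to variance estimates via $|\mathbb{E}\xi_1^2-\mathbb{E}\xi_2^2|\leq\sup|\xi_1+\xi_2|\cdot\mathbb{E}|\xi_1-\xi_2|$, and identify the leading variance via the commutative diagram for $\mathfrak{t}_s$ and $\overline{\mathbb{A}}$. This is precisely the argument the paper gestures at by ``follows in the same way as in the case of the simple second Lyapunov exponent.''

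Two minor points are worth flagging. Your displayed pointwise bound carries both $\|f\|_{Lip_w^+}$ and $|\Phi^+_{f,i(f)}|$ on the right-hand side; that makes the estimate quadratic in $f$ against a linear left-hand side, so the factor $|\Phi^+_{f,i(f)}|$ there is spurious and should be dropped. Relatedly, your claim that $|\Phi^+_{f,i(f)}|^2$ ``disappears'' from the final ratio is not the correct accounting: since $v_f$ is normalized to unit length while $\Phi^+_{i(f),f}=|\Phi^+_{f,i(f)}|\cdot\mathcal{I}^+_{\overline{\omega}}(v_f)$, the variance of the dominant term equals $|\Phi^+_{f,i(f)}|^2\,V^{(i(f))}\,H_{i(f)}^2$. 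Comparison with Proposition \ref{varf}, whose denominator reads $V\cdot(m_{\Phi_2^-}(f)|\Phi_2^+|H_2)^2$, shows that the denominator in the general statement should likewise carry the factor $|\Phi^+_{f,i(f)}|^2$; with that correction, your argument closes cleanly.
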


Indeed, similarly to the case of a simple Lyapunov exponent, for $v\in E_{\oomega}^i$ we
write $\Phi_v^+=\mathcal{I}_{\overline{\omega}}^+(v)$ and set
$$V^{(i)}(\overline{\omega},v)=Var_{\nu(\overline{\omega})}
\Phi_v^+(x,1).$$

The Proposition follows now in the same way as in the case of the simple second Lyapunov exponent:
the pointwise approximation of the ergodic integral by the corresponding H{\"o}lder cocycle implies also
that the variances of these random variables are exponentially close.

We proceed to the formulation and the proof the limit theorem in
the general case. For $i=2,\ldots,l_0,$
introduce a map
$$\mathcal{D}_i^+:{\bf{S}}^{(i)}\overline{\Omega}\to\mathfrak{M}$$
by setting $\mathcal{D}_i^+(\overline{\omega},v)$ to be the
distribution of the $C[0,1]$-valued random variable
$$\frac{\Phi^+_v(x,\tau)}{\sqrt{Var_{\nu( \overline{\omega})}
(\Phi_v^+(x,1))}},\tau\in[0,1].$$

As before, by definition we have
$\mathcal{D}_i^+(\overline{\omega},v)\in\mathfrak{M}_1.$ The measure
$\mathfrak{m}[f,s]\in \MM$ is, as before, the distribution of the
$C[0,1]$-valued random variable
$$\frac
        {\int\limits_0^ {\tau
\exp(s)} f\circ h_t^+(x)dt}
{\sqrt
      {Var_ {\nu(\overline{\omega})}(\int\limits_0^ {
\exp(s)}f\circ h_t^+(x)dt)}},\ \tau\in[0,1].$$

Recall that $\mathscr{P}^+$ is the space of ergodic
$g_s$-invariant probability measures $\mathbb{P}$ on
$\overline{\Omega}$ having the form
$\mathbb{P}=\mathbb{P}_{\mu},\mu\in\mathcal{M}(\Gamma,\infty),\Gamma\in\mathfrak{G}.$

As before, for $\mathbb{P}\in\mathscr{P}^+,$ we let
$l_0=l_0(\mathbb{P})$ be the number of distinct positive Lyapunov
exponents of the measure $\mathbb{P}$. For $f\in
Lip_{w,0}^+({X}(\overline{\omega}))$ we define the number $i(f)$ by (\ref{defif})
and the vector $v_f$ by (\ref{defvf}).

\begin{theorem}\label{limthmmarkcomp} Let $ \mathbb{P}\in\mathscr{P}^+.$
There exists a constant $\alpha>0$ depending only on $\mathbb{P}$
and a positive measurable map
$C:\overline{\Omega}\times\overline{\Omega}\to\mathbb{R}_+$ such
that for $\mathbb{P}-almost$ every
$\overline{\omega}\in\overline{\Omega}$ and any $f\in
Lip_{w,0}^+({X}(\overline{\omega}))$ we have
$$d_{LP}(\mathfrak{m}[f,s],D_{i(f)}^+(g_s^{{\bf{S}}^{(i(f))}}(\overline{\omega},v_f)))
\leqslant C(\overline{\omega},g_s\overline{\omega})e^{-\alpha
s},$$
$$d_{KR}(\mathfrak{m}[f,s],D_{i(f)}^+(g_s^{{\bf{S}}^{(i(f))}}(\overline{\omega},v_f)))
\leqslant C(\overline{\omega},g_s\overline{\omega})e^{-\alpha
s}.
$$
\end{theorem}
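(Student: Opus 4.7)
The plan is to extend the argument of Proposition \ref{limthmmarkcomp-simple} by replacing the privileged role of the second Lyapunov exponent by that of the index $i(f)$. The starting point is the Approximation Theorem (Corollary \ref{symbmultiplic}), which gives, for any $\varepsilon>0$, a constant $C_\varepsilon$ such that
$$\left|\int_0^T f\circ h_t^+(x)\,dt-\Phi_f^+(x,T)\right|\leq C_\varepsilon\|f\|_{Lip_w^+}(1+T^{\varepsilon})$$
almost surely in $\overline{\omega}$, uniformly in $x$ and $T$. Using the Oseledets decomposition
$\Phi_f^+=\Phi_{2,f}^++\cdots+\Phi_{l_0,f}^+$ (the term $\Phi_{1,f}^+$ vanishes since $\int f\,d\nu=0$) and applying the H\"older upper bound of Corollary \ref{hoeldprop} together with the Lyapunov biregularity of $X(\overline{\omega})$, each component $\Phi_{j,f}^+(x,T)$ with $j>i(f)$ grows at most like $T^{\theta_j/\theta_1+\varepsilon}$. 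Taking $\varepsilon$ smaller than half the spectral gap $(\theta_{i(f)}-\theta_{i(f)+1})/\theta_1$, this yields
$$\int_0^T f\circ h_t^+(x)\,dt=\Phi_{i(f),f}^+(x,T)+R(x,T), \qquad |R(x,T)|\leq C'_\varepsilon\|f\|_{Lip_w^+}\,T^{\theta_{i(f)}/\theta_1-\alpha_0}$$
for some $\alpha_0>0$ depending only on $\mathbb{P}$ (through the minimum, over $i\in\{2,\ldots,l_0\}$, of the gaps $\theta_i-\theta_{i+1}$, with the convention $\theta_{l_0+1}=0$).

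Next, the preceding Proposition controls the variance of $\mathfrak{S}[f,s;1]$: it is exponentially close, as $s\to\infty$, to $V^{(i(f))}(g_s^{\mathbf{S}^{(i(f))}}(\overline{\omega},v_f))\cdot|\Phi_{f,i(f)}^+|^2\cdot H_{i(f)}(s,(\overline{\omega},v_f))^2$, which is precisely the variance $|\Phi_{f,i(f)}^+|^2\cdot Var_{\nu(\overline{\omega})}\Phi_{v_f}^+(x, e^s)$ attached to the cocycle chosen to define $\mathcal{D}_{i(f)}^+$. In particular the two normalizing constants that appear on the two sides of the desired inequality agree up to a multiplicative factor that is $1+O(e^{-\alpha s})$. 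Combining this with the pointwise estimate $|R(x,T)|\leq C'_\varepsilon T^{\theta_{i(f)}/\theta_1-\alpha_0}$ and with the lower bound $\sqrt{Var_{\nu(\overline{\omega})}\mathfrak{S}[f,s;1]}\gtrsim e^{s\theta_{i(f)}/\theta_1}\cdot(\text{slow factor})$, the elementary inequality
$$\sup\left|\frac{\xi_1}{M_1}-\frac{\xi_2}{M_2}\right|\leq\sup|\xi_1|\cdot\left|\frac{M_1-M_2}{M_1M_2}\right|+\frac{\sup|\xi_1-\xi_2|}{M_2}$$
applied to $\xi_1=\mathfrak{S}[f,s]$, $\xi_2=\Phi_{i(f),f}^+(x,\tau e^s)$ (as $C[0,1]$-valued random variables) with the appropriate variances as $M_1,M_2$ shows that the normalized random variables corresponding to $\mathfrak{m}[f,s]$ and to $\mathcal{D}_{i(f)}^+(g_s^{\mathbf{S}^{(i(f))}}(\overline{\omega},v_f))$ are $C(\overline{\omega},g_s\overline{\omega})e^{-\alpha s}$-close in sup norm, pointwise in $x$.

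The final step is to transfer this pointwise closeness to closeness of the induced distributions on $C[0,1]$: by Lemma \ref{dist-images} in Appendix B, uniform closeness of two random variables forces the same closeness in both the L\'evy--Prohorov and the Kantorovich--Rubinstein metrics, yielding the two inequalities in the theorem with the same constant $\alpha$ and a suitable measurable $C$. The main obstacle I anticipate is the third step's bookkeeping: one must verify that $\alpha$ can be chosen to depend only on $\mathbb{P}$ uniformly over the possible values of $i(f)\in\{2,\ldots,l_0\}$ (which forces taking the minimum gap), and that the subdominant components $\Phi_{j,f}^+$ for $j>i(f)$ are controlled by $\|f\|_{Lip_w^+}$ via a tempered projection constant on the (measurable, not continuous) Oseledets splitting -- an issue already implicit in the Approximation Theorem, and handled there by incorporating a subexponential factor $C_\varepsilon(\overline{\omega})$ into the final constant $C(\overline{\omega},g_s\overline{\omega})$.
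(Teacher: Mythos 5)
Your proposal is correct and follows the same route as the paper's own (terse) argument: pointwise approximation of the ergodic integral by the dominant Oseledets component $\Phi^+_{f,i(f)}$ via the Approximation Theorem and a spectral-gap estimate, variance comparison via the preceding Proposition, the elementary inequality (\ref{ineqlimthm}) with $\xi_1=\mathfrak{S}[f,s]$ and $\xi_2=\Phi^+_{f,i(f)}(x,\tau e^s)$, and finally Lemma \ref{dist-images} to pass from uniform pointwise closeness of the normalized random variables to closeness of their laws in $d_{LP}$ and $d_{KR}$. Your write-up is in fact somewhat more explicit than the paper's, which simply refers back to Proposition \ref{limthmmarkcomp-simple}; in particular your remarks about taking $\alpha_0$ below half the minimal spectral gap over $i\in\{2,\dots,l_0\}$ and absorbing the tempered Oseledets-projection constants into $C(\overline{\omega},g_s\overline{\omega})$ are exactly the bookkeeping the paper leaves implicit.
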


The proof is similar to the proof of Proposition \ref{limthmmarkcomp-simple}.
Again, the ergodic integral is uniformly approximated by the
corresponding cocycle; the uniform bound on the difference yields
the uniform bound on the difference and the ratio of variances of
the ergodic integral and the cocycle considered as random
variables; we proceed, as before, by using the inequality
(\ref{ineqlimthm})
with $\xi_1=\mathfrak{m}[f,s],\ \xi_2=\Phi^+_{f,i(f)}(x, \tau)$, and
$M_1,M_2$ the corresponding normalizing variances. We conclude, again, by
noting that a uniform bound on the difference between two random
variables implies the same bound on the L{\'e}vy-Prohorov or
Kantorovich-Rubinstein distance between the distributions of the
random variables (using Lemma \ref{dist-images} in Appendix B).

\subsubsection{Atoms of limit distributions.}

\begin{proposition}
\label{bigatom}
Let $\omega\in\Omega$ satisfy $\la_{1}^{(0, \omega)}>1/2$. Then there exists a set ${\Pi}\subset X(\omega)$
such that
\begin{enumerate}
\item $\nu_{\omega}({\Pi})\geq (2\la_1^{(0, \omega)}-1)h_1^{(0, \omega)}$;
\item for any $\Phi^+\in {\mathfrak B}^+(X(\omega))$, the function $\Phi^+(x, h_1^{(0, \omega)})$ is constant
on ${\Pi}$.
\end{enumerate}
\end{proposition}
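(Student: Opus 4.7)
The plan is to represent the flow $h_t^+$ on $X(\omega)$ as a suspension over a Vershik automorphism and to take $\Pi$ to consist of those $x$ whose current and next Vershik fibers are both of type $1$ (i.e., of length $h_1^{(0)}$). Let $Y$ be the one-sided Markov compactum built from $\omega_n$, $n\ge 1$, with its Vershik automorphism $T$ and the $T$-invariant Markov measure $\nu_Y$ determined by $\nu_Y(\{y_1=e\})=\la^{(1)}_{I(e)}$. Each $x\in X(\omega)$ is parametrized by a pair $(y,s)$, where $y$ is the image of $x$ under the projection $X(\omega)\to Y$ (forgetting the coordinates at nonpositive indices) and $s\in[0,\tau(y))$, with $\tau(y)=h^{(0)}_{F(y_1)}$ the $\nu^+$-length of the fiber $\gamma_1^+$ above $y$. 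Set
\[
\Pi=\bigl\{x=(y,s)\in X(\omega)\ :\ F(y_1)=1\ \text{and}\ F((Ty)_1)=1\bigr\}.
\]

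The measure estimate is then immediate: the identity $\la^{(0)}=A_1^t\la^{(1)}$ yields $\nu_Y(\{F(y_1)=1\})=\la^{(0)}_1$, and $T$-invariance of $\nu_Y$ yields the same value for $\{F((Ty)_1)=1\}$. Inclusion-exclusion gives $\nu_Y(\{F(y_1)=F((Ty)_1)=1\})\ge 2\la^{(0)}_1-1$, and since the roof is identically $h_1^{(0)}$ on this set, integration produces $\nu_\omega(\Pi)\ge(2\la^{(0)}_1-1)h_1^{(0)}$.

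For the constancy claim, fix $x=(y,s)\in\Pi$ and $\Phi^+\in\mathfrak{B}^+(X(\omega))$ with associated equivariant sequence $v^{(\cdot)}$. Since $\tau(Ty)=h_1^{(0)}>s$, flowing $x$ for time $h_1^{(0)}$ lands at $(Ty,s)$, and the arc $[x,h^+_{h_1^{(0)}}x]$ splits as the top $[s,h_1^{(0)})$ of the $y$-fiber followed by the bottom $[0,s)$ of the $Ty$-fiber. Writing $B_z(s)$ for the bottom-to-$s$ sub-arc of the $z$-fiber and using $\Phi^+(\gamma_1^+(x_y))=v^{(0)}_{F(y_1)}=v^{(0)}_1$, additivity gives
\[
\Phi^+(x,h_1^{(0)})=v^{(0)}_1-\Phi^+(B_y(s))+\Phi^+(B_{Ty}(s)).
\]
The main obstacle is the identity $\Phi^+(B_y(s))=\Phi^+(B_{Ty}(s))$, which I would prove by iterating the sub-cylinder refinement of a type-$j$ fiber: at each level $k\ge 1$ the sub-cylinders are indexed by edges of $\omega_{1-k}$ whose initial vertex is the target vertex from the previous level, arranged in Vershik order, with lengths $h^{(-k)}_{F(\cdot)}$ and $\Phi^+$-values $v^{(-k)}_{F(\cdot)}$ depending only on the edges' terminal vertices. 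The entire refinement is therefore intrinsic to $j=F(y_1)$ and independent of the remaining coordinates of $y$; since $F(y_1)=F((Ty)_1)=1$, the refinements of $B_y(s)$ and $B_{Ty}(s)$ agree level by level, with convergence of the resulting series ensured by the exponential decay on small cylinders built into the definition of $\mathfrak{B}^+$. Hence $\Phi^+(x,h_1^{(0)})=v^{(0)}_1$ is constant on $\Pi$.
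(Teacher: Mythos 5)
Your proposal is correct and follows essentially the same route as the paper's proof: both describe $\Pi$ as the set of points in the suspension whose current and next Vershik columns are labeled $1$, derive the measure bound from the stationary vector $\la^{(0)}$ and inclusion--exclusion, and reduce the constancy of $\Phi^+(x,h_1^{(0)})$ to the holonomy statement that $\Phi^+$ on the bottom-to-height-$s$ sub-arc of a type-$1$ column depends only on the label $1$ and on $s$ (the paper phrases this as ``$\Phi^+(x,\tau)$ is constant as $x$ varies in $I_1$,'' your $\Phi^+(B_y(s))=\Phi^+(B_{Ty}(s))$, and then applies the cocycle identity). The only cosmetic difference is that the paper states the holonomy step as an observation and closes with a one-line cocycle computation, whereas you spell out the sub-cylinder refinement that justifies it; the substance is the same.
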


Proof. We consider $\omega$ fixed and omit it from notation.
As before, consider the flow transversal
$$
I=\{x\in X: x_n=\min\{e: I(e)=F(x_{n+1}) \ {\rm for \ all} \ n\leq 0\}.
$$
and decompose it into ``subintervals" $I_k=\{x\in I: I(x_0)=k\}$, $k=1, \dots, m$.

The transversal $I$ carries a natural conditional measure $\nu_I$ invariant under the first-return map of the flow $h_t^+$ on $I$.
The measure $\nu_I$ is given by the formula
$$
\nu_I(\{x\in I: x_1=e_1, \dots, x_n=e_n\})=\la^{(n+1)}_{I(e_n)},
$$
provided $I(e_k)=F(e_{k+1})$, $k=1, \dots, n$. In particular, $\nu_I(I_k)=\la^{(0)}_k$.
For brevity, denote $t_1=h_1^{(0)}$. By definition,  $h^+_{t_1}I_1\subset I$ and we have
$$
\nu_I(I_1\bigcap h^+_{t_1}I_1)\geq 2\la_1^{(0)}-1>0.
$$
Introduce the set
$$
\Pi=\{h^+_{\tau}x, 0<\tau<t_1, x\in I_1, h_{t_1}x\in I_1\}.
$$
The first statement of the Proposition is clear, and we proceed to the proof of the second.
Note first that for any $\Phi^+\in {\mathfrak B}^+(X(\omega))$ and any $\tau, 0\leq\tau\leq t_1$
the quantity $\Phi^+(x, \tau)$
is constant as long as $x$ varies in $I_1$.

Fix $\Phi^+\in {\mathfrak B}^+(X(\omega))$ and take an arbitrary ${\tilde x}\in \Pi$.
Write ${\tilde x}=h^+_{\tau_1}x_1$, where $x_1\in I_1$,
$0<\tau_1<t_1$. We have $h^+_{t_1-\tau_1}{\tilde x}\in I_1$, whence
$$
\Phi^+(h^+_{t_1-\tau_1}{\tilde x}, \tau_1)=\Phi^+(x_1, \tau_1)
$$
and
$$
\Phi^+({\tilde x}, t_1)=\Phi^+({\tilde x}, t_1-\tau_1)+\Phi^+(h^+_{t_1-\tau_1}{\tilde x}, \tau_1)=
\Phi^+(h^+_{\tau_1}x_1, t_1-\tau_1)+\Phi^+(x_1, \tau_1)=\Phi^+(x_1, t_1),
$$
and the Proposition is proved. We illustrate the proof by Figure \ref{fig:two}.
\begin{figure}
\begin{center}
\includegraphics{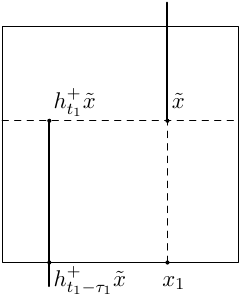}\\
\caption{Atoms of limit distributions}\label{fig:two}
\end{center}
\end{figure}

\begin{proposition}
Let ${\tilde x}, {\hat x}\in X$ be distinct and satisfy ${\tilde x}\in \gamma_{\infty}^+({\hat x})$,
${\tilde x}\in \gamma_{\infty}^-({\hat x})$. Let $t_0$ be such that ${\hat x}=h_{t_0}^+{\tilde x}$.
Then there exists a set $\Pi$ of positive measure such that
for any $x\in\Pi$ and any $\Phi^+\in\B^+(X)$ we have
$$
\Phi^+(x,t_0)=\Phi^+({\tilde x}, t_0).
$$
\end{proposition}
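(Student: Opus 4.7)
The plan is to imitate the proof of Proposition \ref{bigatom}, with a small horizontal arc through $\tilde x$ playing the role of the transversal segment $I_1$. I would pick $\delta > 0$ small enough that the rectangle $\Pi := \Pi(\tilde x, t_0, \delta)$ is admissible (possible since the closed vertical arc from $\tilde x$ to $\hat x = h^+_{t_0}\tilde x$ avoids zeros of $\omega$) and take this $\Pi$ as the desired set; clearly $\nu_X(\Pi) = t_0\,\delta > 0$.

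For $x \in \Pi$, write $x = h^+_s h^-_\tau \tilde x$ with $s \in [0,t_0)$, $\tau \in [0,\delta)$. Using commutativity of $h^+$ and $h^-$ away from zeros (so that $h^+_{t_0}h^-_\tau \tilde x = h^-_\tau \hat x$) together with the cocycle identity, one obtains
\[
\Phi^+(x, t_0) = \Phi^+(h^-_\tau \tilde x, t_0) - \Phi^+(h^-_\tau \tilde x, s) + \Phi^+(h^-_\tau \hat x, s).
\]
The first two terms lie entirely in $\Pi$; applying the horizontal-holonomy invariance of Assumption \ref{bplusx}(3) to the admissible sub-rectangles $\Pi(\tilde x, t_0, \tau)$ and $\Pi(\tilde x, s, \tau)$ reduces their combination to $\Phi^+(\tilde x, t_0) - \Phi^+(\tilde x, s)$. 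For the third term, holonomy applied to $\Pi(\hat x, s, \tau)$ (admissible for $\delta$ small, since $\hat x$ has a regular vertical segment of length $t_0 \geq s$ above it) yields $\Phi^+(h^-_\tau \hat x, s) = \Phi^+(\hat x, s)$. Hence
\[
\Phi^+(x, t_0) - \Phi^+(\tilde x, t_0) = \Phi^+(\hat x, s) - \Phi^+(\tilde x, s).
\]

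The main obstacle, and the step I would spend most care on, is proving that the right-hand side vanishes for every $s \in [0, t_0]$. The assumption $\tilde x \in \gamma_\infty^-(\hat x)$ means $\tilde x$ and $\hat x$ lie on a common horizontal leaf, so $\hat x = h^-_{s_0}\tilde x$ for some $s_0$. The rectangle $\Pi(\tilde x, s, s_0)$ is in general not admissible (zeros of $\omega$ may sit in its interior), so holonomy cannot be applied in a single step. The remedy is to cover the horizontal arc from $\tilde x$ to $\hat x$ by a finite chain of admissible sub-rectangles each of vertical height $s$ and to iterate holonomy invariance along the chain. Such a covering exists by compactness of the horizontal arc together with the fact that every point on it admits a regular vertical neighborhood of height at least $s$ (this follows from regularity at the endpoints $\tilde x$ and $\hat x$ combined with local product structure of the two foliations). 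Putting the three steps together shows $\Phi^+(x, t_0) = \Phi^+(\tilde x, t_0)$ uniformly on $\Pi$, completing the proof.
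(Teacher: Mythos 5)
Your proof works in the geometric (abelian differential) language, whereas the paper's proof is entirely symbolic: the paper takes for $\Pi$ the cylinder set of bi-infinite paths $x'$ agreeing with $\tilde x$ on the finite window $(n_0, n_1)$ where $\tilde x$ and $\hat x$ differ (together with the boundary vertex conditions $I(x'_{n_0})=I(\tilde x_{n_0})$, $F(x'_{n_1-1})=F(\tilde x_{n_1-1})$), and the invariance $\Phi^+(x', t_0) = \Phi^+(\tilde x, t_0)$ is then immediate from the holonomy built into the definition of $\mathfrak{B}^+(X)$, with the measure bound $\nu(\Pi) \geq \lambda^{(n_0)}_{i} h^{(n_1-1)}_{j}$ explicit. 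The algebraic reduction in your argument, which brings the question down to showing $\Phi^+(\hat x, s) = \Phi^+(\tilde x, s)$ for all $s \in [0, t_0]$, is correct, and that identity is in fact true.

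However, the justification you give for the identity does not hold up. You claim that the horizontal arc from $\tilde x$ to $\hat x$ can be covered by a chain of admissible rectangles of vertical height $s$, grounding this on the assertion that every point on the arc admits a regular vertical segment of height at least $s$, as a consequence of regularity at the endpoints plus local product structure. Regularity at the endpoints says nothing about the interior of the arc: a vertical separatrix of a zero of $\omega$ may meet the horizontal arc at an intermediate point $y$ at height $h(y) < s$, and then $\Pi(y, s, \epsilon)$ is inadmissible for every $\epsilon > 0$, so the chain is broken at $y$. Since the loop bounded by the vertical arc $\tilde x \to \hat x$ and the horizontal arc $\hat x \to \tilde x$ can perfectly well enclose a zero, this is not a degenerate exception. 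The identity $\Phi^+(\hat x, s) = \Phi^+(\tilde x, s)$ is best obtained symbolically: since $\tilde x$ and $\hat x$ agree on all coordinates $t \leq n_0$, the map that keeps these coordinates and replaces the future of $\tilde x$ by that of $\hat x$ is an order-preserving, $\Phi^+$-measure-preserving bijection from $\gamma^+_{n_0+1}(\tilde x)$ onto $\gamma^+_{n_0+1}(\hat x)$ carrying $\tilde x$ to $\hat x$, and the arcs $[\tilde x, h^+_s\tilde x)$ for $s\leq t_0$ all lie in $\gamma^+_{n_0+1}(\tilde x)\supset\gamma^+_{n_1}(\tilde x)\supset[\tilde x,\hat x)$. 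This is precisely the holonomy the paper invokes, and it does not require chaining along a horizontal segment; to salvage your geometric version you would have to route the holonomy around the zeros rather than straight across.
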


Proof. There exist $n_0, n_1\in {\mathbb Z}$ such that
$$
{\tilde x}_t={\hat x}_t, \ t\in (-\infty, n_0]\cup [n_1, \infty).
$$
Let $\Pi$ be the set of all $x^{\prime}$ satisfying the equality $x^{\prime}_t={\tilde x}_t$ for $t\in (n_0, n_1)$,
and, additionally, the equalities
$$
F(x^{\prime}_{n_1-1})=F({\tilde x}_{n_1-1}), \ I(x^{\prime}_{n_0})=I({\tilde x}_{n_0}),
$$
By holonomy-invariance, for any $x^{\prime}\in\Pi$ and any $\Phi^+\in\B^+(X)$ we have
$$
\Phi^+(x^{\prime},t_0)=\Phi^+({\tilde x}, t_0).
$$
To estimate from below the measure of the set $\Pi$, set $I({\tilde x}_{n_0})=i$, $F({\tilde x}_{n_1-1})=j$ and note that
by definition we have
$$
\nu(\Pi)\geq \la^{(n_0)}_{i}h^{(n_1-1)}_j.
$$
For a fixed $\oomega$, the set of ``homoclinic times'' $t_0$ for which there exist ${\tilde x}, {\hat x}\in X$  satisfying
${\tilde x}\in \gamma_{\infty}^+({\hat x})$,
${\tilde x}\in \gamma_{\infty}^-({\hat x})$, ${\hat x}=h_{t_0}^+{\tilde x}$, is countable and dense in ${\mathbb R}$.
\begin{corollary}
For almost every $\oomega\in\Oomega$, there exists a dense set of times $t_0\in {\mathbb R}$ such that for
any $\Phi^+\in\B^+$  the distribution of the random variable $\Phi^+(x, t_0)$ has an atom.
\end{corollary}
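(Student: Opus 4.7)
The plan is to deduce the corollary directly from the immediately preceding proposition together with the density assertion for homoclinic times. First I would observe that for any ``homoclinic time'' $t_0\in\mathbb{R}$, i.e., any $t_0$ for which there exist distinct ${\tilde x},{\hat x}\in X(\oomega)$ with ${\tilde x}\in\gamma_{\infty}^+({\hat x})\cap\gamma_{\infty}^-({\hat x})$ and ${\hat x}=h_{t_0}^+{\tilde x}$, the preceding proposition produces a set $\Pi\subset X(\oomega)$ of positive measure $\nu(\Pi)\geq \la^{(n_0)}_i h^{(n_1-1)}_j>0$ on which, crucially, $\Phi^+(x,t_0)=\Phi^+({\tilde x},t_0)$ holds \emph{simultaneously} for every $\Phi^+\in\B^+(X(\oomega))$. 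The value $\Phi^+({\tilde x},t_0)$ of course depends on $\Phi^+$, but the set $\Pi$ and its lower measure bound do not; hence the distribution of the random variable $\Phi^+(\cdot,t_0)$ admits an atom of mass at least $\nu(\Pi)$, and this atom exists uniformly in the choice of $\Phi^+$.

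It remains to establish that the set $T_{hom}(\oomega)$ of such homoclinic times is dense in $\mathbb{R}$ for $\Prob$-almost every $\oomega$. The construction is combinatorial: fix a generic ${\tilde x}\in X(\oomega)$ and indices $n_0<n_1$; any admissible path ${\hat x}$ which agrees with ${\tilde x}$ on $(-\infty,n_0]\cup[n_1,\infty)$ but differs on the central block $(n_0,n_1)$ gives rise to a homoclinic time $t_0=\nu^+_X([{\tilde x},{\hat x}])$, provided a Vershik ordering comparison is arranged. By balancedness and unique ergodicity (Assumption~\ref{uniquergodic}), for $|n_0|,n_1$ large one has $|\la^{(n_0)}|,|h^{(n_1)}|$ as small as desired, so the central block can be altered at arbitrarily small $\nu^+$-cost; varying both the position and the content of the block yields a countable family of times whose closure contains arbitrarily small positive and negative values. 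Translating any given homoclinic time by these small increments (by gluing such small central alterations into a fixed homoclinic pair at scales $n_0'\ll n_0$) produces a dense subset of $\mathbb{R}$, which is the desired $T_{atom}\subset T_{hom}(\oomega)$.

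The main obstacle is purely in the second paragraph: verifying rigorously that shifts produced by altering distant central blocks generate a dense subgroup-like set of translations of a single homoclinic time. However, this follows routinely from the fact that the increments $\pm\nu^+_X(\gamma_n^+(\cdot))$ tend to $0$ as $n\to-\infty$ uniformly (by Assumption~\ref{uniquergodic}, part 3) together with the positivity of the aggregated incidence matrices in Assumption~\ref{balanced}, which guarantees enough admissible central blocks to realize all sufficiently small displacements. Once density is in hand, the corollary is immediate from the preceding proposition applied to each $t_0$ in the dense set.
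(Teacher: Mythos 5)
Your overall plan matches the paper's: deduce the corollary from the proposition immediately preceding it (the set $\Pi$ of positive measure on which $\Phi^+(\cdot,t_0)$ is constant for \emph{all} $\Phi^+$ simultaneously, with a uniform lower bound $\nu(\Pi)\geq\la^{(n_0)}_i h^{(n_1-1)}_j$) together with density of the set of homoclinic times, which the paper itself merely asserts without proof. Your first paragraph is correct and is exactly what the paper intends.

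The density argument in your second paragraph, however, does not quite close as stated. You establish that homoclinic times accumulate at $0$ (by altering blocks at far-past scales), and then claim that ``translating any given homoclinic time by these small increments \dots\ produces a dense subset of $\mathbb{R}$.'' But this only produces homoclinic times accumulating near whichever homoclinic times you already have; to conclude density in all of $\mathbb{R}$ you would additionally need to know that the existing homoclinic times are already spread out (e.g.\ relatively dense), which you have not established, and the resulting circularity is not resolved by the appeal to Assumption 1.9 alone. Moreover ``realize all sufficiently small displacements'' is an overclaim: the achievable increments form a countable set accumulating at $0$, not an interval. A cleaner closing move using exactly your ingredients avoids the circularity entirely: given any target $T>0$ and $\varepsilon>0$, take a generic $\tilde x$ and set $\hat x_0=h_T^+\tilde x$, which lies on the same leaf of $\mathcal{F}^+$ but need not be homoclinic. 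Choose $n_0'$ so far in the past that $\nu^+(\gamma_{n_0'}^+(\hat x_0))<\varepsilon$, and, using primitivity of the aggregated incidence matrices (Assumption \ref{balanced}), pick $n_0''<n_0'$ and a connecting segment so that the resulting path $\hat x'$ agrees with $\hat x_0$ for $n\geq n_0'$ and with $\tilde x$ for $n\leq n_0''$. Then $\hat x'\in\gamma_\infty^+(\tilde x)\cap\gamma_\infty^-(\tilde x)$, $\hat x'>\tilde x$, and $\left|\nu^+\left([\tilde x,\hat x']\right)-T\right|\leq\nu^+(\gamma_{n_0'}^+(\hat x_0))<\varepsilon$, giving a homoclinic time in $(T-\varepsilon,T+\varepsilon)$ directly, with no reliance on any pre-existing supply of homoclinic times near $T$.
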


\subsubsection{Accumulation at zero for limit distributions}

Recall that for $\oomega^{\prime}\in\Oomega^{\prime}$, $\Phi^+\in\BB^+_{\oomega}$, $\Phi^+\neq 0$,
and $\tau\in {\mathbb R}$, $\tau\neq 0$,
the measure ${\mathfrak m}[\Phi^+, \tau]$ is the distribution
of the normalized ${\mathbb R}$-valued random variable
$$
\frac{\Phi^+(x, \tau)}{\sqrt{Var_{{\nu}}\Phi^+(x, \tau)}}.
$$
As before, let $\MM({\mathbb R})$ be the space of probability measures on ${\mathbb R}$ endowed with the weak topology,
and let $\delta_0\in\MM({\mathbb R})$ stand for the delta-measure at zero.
Similarly to the Introduction, we need the following additional assumption on the measure
$\Prob\in {\mathscr P}^+$.
\begin{assumption}
\label{fatzip}
For any $\varepsilon>0$ we have
$$
\Prob(\{\oomega: \la_1^{(\oomega)}>1-\varepsilon, h_1^{(\oomega)}>1-\varepsilon \})>0.
$$
\end{assumption}

By Proposition \ref{bigatom}, in view of the ergodicity of $\Prob$, for almost every $\oomega\in\Oomega$
and every $\Phi^+\in\BB^+_{\oomega}$, $\Phi^+\neq 0$, the sequence of measures ${\mathfrak m}[\Phi^+, \tau]$
admits atoms of size arbitrarily close to $1$.
The next simple Proposition shows that the corresponding measures must then accumulate at {\it zero}
(rather than at any other point of the real line).

\begin{proposition} \label{atomatzero}
Let $\mu_0$ be a probability
measure on $\mathbb{R}$ such that
$$\int_{\mathbb{R}}xd\mu_0(x)=0,\ \ \int_{\mathbb{R}}x^2d\mu_0(x)=1.$$
Let $x_0\in\mathbb{R}$ and assume that
$$\mu_0(\{x_0\})=\beta.$$
Then $$|x_0|^2\leqslant\frac{1-\beta}{\beta^2}.$$
\end{proposition}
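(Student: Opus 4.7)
The plan is to decompose the measure into its atom at $x_0$ and the remaining mass, and then exploit the mean-zero and unit-variance conditions simultaneously, using Jensen's inequality on the second moment of the remainder.

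First I would write $\mu_0 = \beta\,\delta_{x_0} + (1-\beta)\,\mu_1$, where $\mu_1$ is a probability measure supported on $\mathbb{R}\setminus\{x_0\}$ (if $\beta=1$ the conclusion is trivial since mean zero forces $x_0=0$, so I may assume $\beta<1$). Denote $m_1=\int_{\mathbb R} x\,d\mu_1$ and $q_1=\int_{\mathbb R} x^2\,d\mu_1$. The hypothesis $\int x\,d\mu_0=0$ gives the relation $\beta x_0+(1-\beta)m_1=0$, hence $m_1=-\beta x_0/(1-\beta)$; the hypothesis $\int x^2\,d\mu_0=1$ gives $\beta x_0^2+(1-\beta)q_1=1$.

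Next I would invoke the Cauchy--Schwarz inequality (or equivalently Jensen applied to $x\mapsto x^2$), which yields $q_1\geq m_1^2=\beta^2 x_0^2/(1-\beta)^2$. Substituting into the variance identity gives
\[
1 \;\geq\; \beta x_0^2+(1-\beta)\cdot\frac{\beta^2 x_0^2}{(1-\beta)^2} \;=\; x_0^2\cdot\frac{\beta}{1-\beta},
\]
so that $|x_0|^2\leq (1-\beta)/\beta$. Since $\beta\in(0,1]$ we have $(1-\beta)/\beta\leq (1-\beta)/\beta^2$, and the stated bound follows immediately.

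There is no serious obstacle: the argument is essentially an elementary two-line computation. The only mild point to watch is the degenerate case $\beta=1$, which must be handled separately (it forces $x_0=0$ and makes the right-hand side of the inequality vanish as well, with the convention $0/0$ interpreted as the trivial bound). The inequality we obtain is in fact strictly sharper than the one stated, which is why there is slack in the estimate.
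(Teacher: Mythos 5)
Your proof is correct and yields a strictly stronger inequality than the one stated. Where the paper fixes $x_0>0$, restricts attention to the negative half-line $(-\infty,0)$, applies Cauchy--Schwarz to the conditional measure there, and then uses the crude estimate $\mu_0((-\infty,0))\le 1-\beta$ to deduce $1-\beta\ge \beta^2 x_0^2$, you instead decompose $\mu_0=\beta\,\delta_{x_0}+(1-\beta)\,\mu_1$ globally, express the first and second moments in terms of those of $\mu_1$, and apply Jensen to $\mu_1$. This symmetric decomposition wastes no mass and produces the sharp bound $x_0^2\le(1-\beta)/\beta$, which dominates $(1-\beta)/\beta^2$ since $\beta\le 1$; equality holds exactly when $\mu_1$ is a point mass, i.e.\ when $\mu_0$ is a two-point distribution, so your bound is optimal whereas the paper's is not. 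The paper's route is slightly more local (it never needs the explicit measure decomposition), but your version is cleaner, handles the sign of $x_0$ without a case split, and makes the extremal case transparent. The only remark worth adding is that $\beta=1$ cannot actually occur under the hypotheses (mean zero would force $x_0=0$, contradicting unit variance), so the degenerate case you flag is vacuous rather than merely trivial.
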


Proof. If $x_0=0$, then there is nothing to
prove, so assume $x_0>0$ (the remaining case $x_0<0$ follows by
symmetry). We have $$\int_0^{+\infty}xd\mu_0(x)\geqslant\beta
x_0,$$ and, consequently,
$$\int_{-\infty}^0xd\mu_0(x)\leqslant-\beta x_0.$$

Using the Cauchy-Bunyakovsky-Schwarz inequality, write
$$\frac{1}{\mu_0((-\infty,0))}\int_{-\infty}^0x^2d\mu_0(x)\geqslant\left(\frac{1}{\mu_0((-\infty,0))}\int_{-\infty}^0xd\mu_0(x)\right)^2,$$
whence, recalling that the variance of $\mu_0$ is equal to $1$, we obtain
$$\mu_0((-\infty,0))\geqslant\left(\int_{-\infty}^0xd\mu_0(x)\right)^2$$
and, finally, $$1-\beta\geqslant\beta^2x_0^2,$$ which is what we
had to prove.

As before, the
symbol $\Rightarrow$ denotes weak convergence of probability
measures.

\begin{proposition}\label{convtodelta} Let $\mathbb{P}\in {\mathscr P}$ be an ergodic
${g}_s$ invariant measure on $\Oomega$ satisfying
Assumption \ref{fatzip}. Then for $\mathbb{P}-almost$
every ${{X}}\in\Oomega$ there exists a sequence
$\tau_n\in\mathbb{R}_+$ such that for any
$\Phi^+\in{\mathfrak{B}}^+({{X}})$ we have
$$\mathfrak{m}[\Phi^+,\tau_n]\Rightarrow\delta_0\ \mathrm{as}\
n\to\infty.$$
\end{proposition}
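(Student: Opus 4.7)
My plan is to combine the large-atom mechanism of Proposition \ref{bigatom} with the renormalization dynamics in order to produce times $\tau_n$ at which the random variable $\Phi^+(x,\tau_n)$ is constant on a set whose measure tends to $1$, and then conclude by the variance bound of Proposition \ref{atomatzero}.

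First, by ergodicity of $g_s$ with respect to $\Prob$ together with Assumption \ref{fatzip}, for $\Prob$-almost every $\oomega$ the forward $g_s$-orbit visits every set $U_\varepsilon := \{\oomega' : \la_1^{(\oomega')}>1-\varepsilon,\ h_1^{(\oomega')}>1-\varepsilon\}$, $\varepsilon>0$, infinitely often. Fixing such $\oomega$, I would extract a sequence $s_n\to\infty$ together with $\varepsilon_n\to 0$ such that $g_{s_n}\oomega\in U_{\varepsilon_n}$, and set
$$\tau_n:=e^{s_n}\cdot h_1^{(g_{s_n}\oomega)}.$$

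Next, I would transport Proposition \ref{bigatom} back to $X(\oomega)$ using the map ${\mathfrak t}_{s_n}\colon X(\oomega)\to X(g_{s_n}\oomega)$. This map preserves the probability measure, pushes each $\Phi^+\in\BB^+(X(\oomega))$ to a cocycle in $\BB^+(X(g_{s_n}\oomega))$, and intertwines the flows via $h^+_{t,g_{s_n}\oomega}\circ{\mathfrak t}_{s_n}={\mathfrak t}_{s_n}\circ h^+_{e^{s_n}t,\oomega}$. Applying Proposition \ref{bigatom} to $({\mathfrak t}_{s_n})_*\Phi^+$ at time $h_1^{(g_{s_n}\oomega)}$ and pulling back yields a set $\Pi_n\subset X(\oomega)$ with $\nu_\oomega(\Pi_n)\geq (2\la_1^{(g_{s_n}\oomega)}-1)\,h_1^{(g_{s_n}\oomega)}\to 1$, and on $\Pi_n$ the function $x\mapsto\Phi^+(x,\tau_n)$ equals a constant $c_n$.

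Finally, to conclude the weak convergence I would invoke Proposition \ref{atomatzero}. Assuming $\Phi^+\neq0$ and $\langle\Phi^+,\nu^-\rangle=0$ (the case of interest, in which $\Phi^+$ automatically has nonzero variance by Proposition \ref{phivarnotzero} and zero expectation by Proposition \ref{phiexpzero}), the measure $\mathfrak{m}[\Phi^+,\tau_n]$ is a probability distribution on $\mathbb{R}$ of mean $0$ and variance $1$ with an atom of mass $\beta_n:=\nu_\oomega(\Pi_n)\to 1$ at the point $c_n/\sigma_n$, where $\sigma_n:=\sqrt{Var_\nu \Phi^+(x,\tau_n)}$. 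Proposition \ref{atomatzero} then forces
$$\Bigl|\frac{c_n}{\sigma_n}\Bigr|^{2}\leq \frac{1-\beta_n}{\beta_n^{2}}\xrightarrow[n\to\infty]{}0,$$
so the location of the atom tends to $0$ while its mass tends to $1$; equivalently, $\Phi^+(x,\tau_n)/\sigma_n$ converges to $0$ in probability, which is precisely $\mathfrak{m}[\Phi^+,\tau_n]\Rightarrow\delta_0$.

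The main difficulty to verify is the bookkeeping in the second step: one must confirm that the time $h_1^{(g_{s_n}\oomega)}$ in $X(g_{s_n}\oomega)$, which Proposition \ref{bigatom} selects, corresponds under ${\mathfrak t}_{s_n}^{-1}$ to the time $e^{s_n}h_1^{(g_{s_n}\oomega)}$ in $X(\oomega)$, and that $({\mathfrak t}_{s_n})_*\Phi^+$ does indeed lie in $\BB^+(X(g_{s_n}\oomega))$ so that Proposition \ref{bigatom} applies. Once this commutativity between the renormalization and the atom construction is established, the measure estimate transfers cleanly from $g_{s_n}\oomega$ back to $\oomega$, and the remaining steps are elementary.
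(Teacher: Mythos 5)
Your proof is correct and fills in, with the right amount of care, exactly the argument the paper compresses into ``immediate from Propositions \ref{bigatom} and \ref{atomatzero}.'' The three ingredients --- Birkhoff recurrence of $g_s\oomega$ into the sets $U_{\varepsilon_n}$, transport of the atom set of Proposition \ref{bigatom} back to $X(\oomega)$ via $\mathfrak{t}_{s_n}$ with the time rescaled to $\tau_n=e^{s_n}h_1^{(g_{s_n}\oomega)}$, and the variance inequality of Proposition \ref{atomatzero} forcing the atom's location to $0$ as its mass tends to $1$ --- are precisely the intended steps, and your bookkeeping check (that $\mathfrak{t}_{s_n}$ preserves $\nu$, sends $\mathfrak{B}^+$ to $\mathfrak{B}^+$, and conjugates $h^+_{e^{s_n}t}$ to $h^+_t$) is exactly what needs to be, and is, verified earlier in the paper. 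Your parenthetical restriction to $\langle\Phi^+,\nu^-\rangle=0$ is also the correct reading: Proposition \ref{atomatzero} requires mean zero, and the statement of Proposition \ref{convtodelta} is only used (in Corollary \ref{nonconvergence}) for cocycles of the form $\Phi^+_f$ with $f\in Lip_{w,0}^+$, which automatically satisfy this, while for $\Phi^+$ with $\langle\Phi^+,\nu^-\rangle\neq 0$ the normalized distribution escapes to infinity rather than converging to $\delta_0$; you are right to flag that the hypothesis must be understood in this way.
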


This is immediate from Proposition \ref{bigatom} and Proposition \ref{atomatzero}.

\begin{corollary}\label{nonconvergence}
Let $\mathbb{P}\in {\mathscr P}$ be an ergodic
${g}_s$-invariant measure on $\Oomega$ satisfying
Assumption \ref{fatzip}. Then for $\mathbb{P}$-almost
every ${{X}}\in\Oomega$ there exists a sequence
$s_n\in\mathbb{R}_+$ such that for any $f\in
Lip_{w,0}^+({{X}})$ satisfying $\Phi_f^+\neq0$ we have
$$\mathfrak{m}[f,s_n;1]\Rightarrow\delta_0\ \mathrm{as}\
n\to\infty.$$

Consequently, if $f\in Lip_{w,0}^+({{X}})$ satisfies
$\Phi_f^+\neq0,$ then the family of measures
$\mathfrak{m}[f,s;1]$ does not converge in the weak topology on $\MM({\mathbb R})$ as
$s\to\infty$ and the family of measures
$\mathfrak{m}[f,s]$ does not converge in the weak topology on $\MM(C[0,1])$ as
$s\to\infty$.
\end{corollary}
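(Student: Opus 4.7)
The plan is to combine the cocycle-level concentration at $\delta_0$ from Proposition \ref{convtodelta} with the asymptotic identification of $\mathfrak{m}[f,s]$ provided by Theorem \ref{limthmmarkcomp}. For $\mathbb{P}$-almost every $\oomega\in\Oomega$, Proposition \ref{convtodelta} supplies a sequence $\tau_n\in\mathbb{R}_+$ such that $\mathfrak{m}[\Phi^+,\tau_n]\Rightarrow\delta_0$ for every nonzero $\Phi^+\in\BB^+(X(\oomega))$. The construction behind Proposition \ref{bigatom} produces these $\tau_n$ from visits of the $g_s$-orbit of $\oomega$ to the sets $\{\lambda_1^{(0)}>1-\varepsilon_n,\ h_1^{(0)}>1-\varepsilon_n\}$ with $\varepsilon_n\to 0$; by ergodicity such visits occur at arbitrarily large times, so $\tau_n\to\infty$. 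Set $s_n=\log\tau_n$; I claim this single sequence witnesses the first assertion for every $f$ at once.

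Fix $f\in Lip_{w,0}^+(X(\oomega))$ with $\Phi_f^+\neq 0$, and let $i=i(f)$ and $v_f$ be as in (\ref{defif})--(\ref{defvf}), so that $\mathcal{I}_{\oomega}^+(v_f)$ is a nonzero element of $\BB^+(X(\oomega))$. Theorem \ref{limthmmarkcomp} yields
$$
d_{LP}\bigl(\mathfrak{m}[f,s_n],\;\mathcal{D}_{i}^+(g_{s_n}^{{\bf S}^{(i)}}(\oomega,v_f))\bigr)\;\leq\;C(\oomega,g_{s_n}\oomega)\,e^{-\alpha s_n}\;\longrightarrow\;0,
$$
and projecting along the $1$-Lipschitz evaluation $\xi\mapsto\xi(1)$ on $C[0,1]$ transfers this bound to the $\tau=1$ marginals. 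By the commutative diagram tying $\mathcal{D}_i^+$ to the semi-flow $G_s$, the $\tau=1$ marginal of $\mathcal{D}_i^+(g_{s_n}^{{\bf S}^{(i)}}(\oomega,v_f))$ is precisely the normalized distribution of $\mathcal{I}_{\oomega}^+(v_f)(x,e^{s_n})=\mathcal{I}_{\oomega}^+(v_f)(x,\tau_n)$, namely $\mathfrak{m}[\mathcal{I}_{\oomega}^+(v_f),\tau_n]$, which converges to $\delta_0$ by the choice of $\tau_n$. Hence $\mathfrak{m}[f,s_n;1]\Rightarrow\delta_0$.

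For the consequent non-convergence, suppose $\mathfrak{m}[f,s;1]$ converged weakly as $s\to\infty$; by the first part its limit would then be $\delta_0$, and the same projection argument combined with Theorem \ref{limthmmarkcomp} would force $\mathcal{D}_i^+(g_s^{{\bf S}^{(i)}}(\oomega,v_f))|_{\tau=1}\Rightarrow\delta_0$ as $s\to\infty$ for $\mathbb{P}$-almost every $\oomega$. The event ``this limit equals $\delta_0$'' is invariant under $g_s^{{\bf S}^{(i)}}$, so a standard orbit-limit argument for the ergodic lift of $\mathbb{P}$ to ${\bf S}^{(i)}\Oomega$ forces $\mathcal{D}_i^+(\oomega')|_{\tau=1}=\delta_0$ for almost every $\oomega'$, contradicting the fact that $\mathcal{D}_i^+(\oomega')|_{\tau=1}$ is a probability measure of variance $1$ by construction. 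Non-convergence in $\MM(C[0,1])$ follows at once, because the continuous evaluation $\xi\mapsto\xi(1)$ would send any $\MM(C[0,1])$-limit of $\mathfrak{m}[f,s]$ to a $\MM(\mathbb{R})$-limit of $\mathfrak{m}[f,s;1]$. The delicate step is the commutative-diagram identification in the previous paragraph: tracking how the normalizing variance and the time rescaling $e^s$ interact under $g_s^{{\bf S}^{(i)}}$ and $G_s$ is what reduces the $\tau=1$ marginal to $\mathfrak{m}[\mathcal{I}_{\oomega}^+(v_f),\tau_n]$, after which Proposition \ref{convtodelta} takes over.
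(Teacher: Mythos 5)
Your treatment of the first claim is essentially correct and parallels the paper's one-line derivation: Proposition \ref{convtodelta} supplies the times, the commutative diagram relating $\mathcal{D}_i^+$, $g_s^{\mathbf{S}^{(i)}}$ and $G_s$ identifies the $\tau=1$ marginal of $\mathcal{D}_i^+(g_s^{\mathbf{S}^{(i)}}(\oomega,v_f))$ with $\mathfrak{m}[\mathcal{I}^+_{\oomega}(v_f),e^s]$, and Theorem \ref{limthmmarkcomp} transfers the convergence.

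The non-convergence step, however, has a genuine gap. You invoke ``a standard orbit-limit argument for the ergodic lift of $\mathbb{P}$ to ${\bf S}^{(i)}\Oomega$'' to pass from ``$\mathcal{D}_i^+(g_s^{{\bf S}^{(i)}}(\oomega,v_f))|_{\tau=1}\Rightarrow\delta_0$ along the single orbit through $(\oomega,v_f)$'' to ``$\mathcal{D}_i^+(\oomega')|_{\tau=1}=\delta_0$ for a.e.\ $\oomega'$''. This fails in two ways when $\dim E_i^u>1$, which the paper explicitly allows (see the discussion of non-simple exponents citing Eskin-Kontsevich-Zorich and Forni). First, there is no canonical $g_s^{{\bf S}^{(i)}}$-invariant ergodic probability lift of $\mathbb{P}$ to the sphere bundle ${\bf S}^{(i)}\Oomega$; the ``double cover'' of Corollary \ref{nonentwo} is specific to a one-dimensional Oseledets subspace. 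Second, and more fundamentally, even granting such a lift, the set $\{(\oomega,v_f(\oomega))\}$ is a measurable \emph{section} of the sphere bundle and therefore a null set for any fibre-Lebesgue lift; knowing that the invariant event $\{\oomega':\lim_s\varphi(g_s^{\mathbf{S}^{(i)}}\oomega')=\delta_0\}$ contains one point of each fibre does not give it positive measure, so the conclusion $\varphi\equiv\delta_0$ a.e.\ does not follow. The variance-$1$ contradiction you aim for thus is never reached. Note also that convergence to $\delta_0$ in the weak topology is \emph{not} itself in tension with variance $1$ (that is precisely what Proposition \ref{convtodelta} produces), so the contradiction really needs the pointwise equality $\mathcal{D}_i^+(\oomega')|_{\tau=1}=\delta_0$, which is what the orbit-limit step was supposed to deliver and cannot.

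The paper sidesteps both difficulties by never lifting to the sphere bundle: it defines
$$
\kappa(\oomega)=\inf_{\Phi^+\colon |\Phi^+|=1}d_{LP}\bigl(\mathfrak{m}[\Phi^+,1],\delta_0\bigr),
$$
a function on $\Oomega$ itself, and uses the compactness of $\{\mathfrak{m}[\Phi^+,1]:|\Phi^+|=1\}$ (which follows from the uniformity of convergence on Lyapunov spheres in the Oseledets Theorem) to conclude $\kappa>0$. Then $\Prob(\kappa>\kappa_0)>0$ for some $\kappa_0$, ergodicity of $\Prob$ on $\Oomega$ gives recurrent times $s$ with $\kappa(g_s\oomega)>\kappa_0$, and at those times Theorem \ref{limthmmarkcomp} forces $d_{LP}(\mathfrak{m}[f,s;1],\delta_0)\geq\kappa_0-o(1)$ regardless of which unit vector $v_f$ one is tracking. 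Combined with the $\delta_0$-accumulation from the first part, this rules out convergence. To repair your argument, you would need to replace the sphere-bundle equidistribution step by this infimum-over-the-sphere quantity (or an equivalent uniform lower bound valid simultaneously for all unit cocycles on $X(g_s\oomega)$).
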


Proof. The first claim is clear from
Proposition \ref{convtodelta} and the Limit Theorem \ref{limthmmarkcomp}.
The second claim is obtained from the Limit
Theorem \ref{limthmmarkcomp}  in the following way.

First note that the set
\begin{equation}
\label{phitausphere}
\{\mathfrak{m}[\Phi^+,1], \Phi^+\in\mathfrak{B}^+({{X}}),|\Phi^+|=1\}
\end{equation}
is compact in the weak topology (indeed, it is clear from the {\it uniform}
convergence on spheres in the Oseledets Theorem
that the map $$\Phi^+\rightarrow\mathfrak{m}[\Phi^+,\tau]$$ is
continuous in restriction to the set $\{\Phi^+: |\Phi^+|=1\}$ whose image is therefore compact).
In particular, the set (\ref{phitausphere}) is bounded away from $\delta_0$, and the function
$$
\kappa(\oomega)=\inf\limits_{{\Phi^+: |\Phi^+|=1}} d_{LP}(\mm[\Phi^+, 1], \delta_0)
$$
is a positive measurable function on $\Oomega$. Consequently, there exists $\kappa_0>0$ such that
$$
\Prob(\{\oomega: \kappa(\oomega)>\kappa_0\})>0.
$$
From ergodicity of the measure $\Prob$ and the Limit  Theorem \ref{limthmmarkcomp} it follows that
the family  $\mathfrak{m}[f,s;1]$, $s\in {\mathbb R}$, does not converge to $\delta_0$.
On the other hand, as we have seen, the measure $\delta_0$
is an accumulation point for the family. It follows that the measures  $\mathfrak{m}[f,s;1]$ do
not converge in $\MM({\mathbb R})$ as $s\to\infty$, and, a fortiori,
that the measures $\mathfrak{m}[f,s]$ do not converge in $\MM(C[0,1])$ as $s\to\infty$.

\subsection{Ergodic averages of Vershik's automorphisms.}
\subsubsection{The space of one-sided Markov compacta.}
Recall that to a sequence $\{\Gamma_n\}$, $n\in {\mathbb N}$, of graphs belonging to ${\mathfrak G}$, we assign the one-sided Markov compactum of infinite one-sided paths in our sequence of graphs:
\begin{equation}
\label{markcomponeside}
Y=\{y= y_1\dots y_n \dots,\  y_n\in {\cal E}(\Gamma_n), F(y_{n+1})=I(y_n)\}.
\end{equation}

As before, we write $A_n(Y)=A(\Gamma_n)$.

The Markov compactum $Y$ is endowed with a natural  tail equivalence relation: $y\sim y^{\prime}$ if
there exists $n_0$ such that $y_n=y^{\prime}_n$ for all $n>n_0$.

Cylinders in $Y$ are subsets of the form $\{y: y_{n+1}=e_1, \dots, y_{n+k}=e_{k}\}$, where
$n\in {\mathbb N}$, $k\in {\mathbb N}$,
$e_1\in {\cal E}(\Gamma_{n+1}), \dots,  e_k\in {\cal E}(\Gamma_{n+k})$ and
$F(e_i)=I(e_{i+1})$.
The family of all cylinders forms a semi-ring which we denote by ${\mathfrak C}(Y)$.

Let $\VV(Y)$ be the vector space of all real-valued finitely-additive measures $\Phi$ defined on
the semi-ring ${\mathfrak C}(Y)$ and invariant under the tail equivalence relation in the following precise sense:  if $e_1\in {\cal E}(\Gamma_{1}), \dots,  e_k\in {\cal E}(\Gamma_{k})$ and
$F(e_i)=I(e_{i+1})$, then the measure
$$
\Phi(\{y: y_1=e_1, \dots, y_k=e_k\})
$$
only depends on $I(e_k)$.

As before, a sequence of  vectors ${\bf v}=v^{(l)}$, $v^{(l)}\in {\mathbb R}^m$, $l\in {\mathbb N}$ satisfying
$$
v^{(l)}=A^t_lv^{(l+1)},
$$
will be called {\it reverse equivariant}.
By definition, the vector space of all reverse equivariant sequences is isomorphic to $\VV(Y)$.

Now, in analogy to the bi-infinite case,  let $\Omega_+$ be the space of sequences of one-sided
infinite sequences of graphs $\Gamma_n\in {\mathfrak G}$. As before, we write
$$
\Omega_+=\{\omega=\omega_{1}\dots\omega_n\dots, \omega_i\in {\mathfrak G}, i\in {\mathbb N} \},
$$
and, for $\omega\in\Omega$, we denote by $Y(\omega)$ the Markov compactum corresponding to $\omega$.
The right shift $\sigma$ on the  space $\Omega$ is defined
by the formula $(\sigma\omega)_n=\omega_{n+1}$.

Let $\mu$ be an ergodic $\sigma$-invariant probability measure on $\Omega_+$
whose natural extension to the space $\Omega$ satisfies Assumption \ref{asos}.
Again we have two natural cocycles over the system
$(\Omega_+, \sigma, \mu)$:
\begin{enumerate}
\item the renormalization cocycle ${\mathbb A}$
defined, for $n>0$, by the formula
$$
{\mathbb A}(n,\omega)=A(\omega_{n})\dots A(\omega_1).
$$
\item the reverse transpose cocycle ${\mathbb A}^{-t}$ defined, for $n>0$, by the formula
$$
{\mathbb A}^{-t}(n,\omega)=(A^t)^{-1}(\omega_{n})\dots (A^t)^{-1}(\omega_1).
$$
\end{enumerate}

Our assumptions imply that both these cocycles satisfy the Oseledets Theorem.

For $\omega\in\Omega_+$, let ${\check E}_{\omega}$ be the strictly stable Lyapunov subspace of the
cocycle ${\mathbb A}^{-t}$. Each $v\in {\check E}_{\omega}$ gives rise to a reverse-equivariant sequence of vectors and thus to a measure $\Phi_v\in\VV(Y)$; denote
$$
{\mathfrak B}(\omega)={\mathfrak B}(Y(\omega))=\{\Phi_v,\  v\in {\check E}_{\omega}\}.
$$

Let $-\theta_{l_0}>-\theta_{l_0-1}>\dots>-\theta_1$ be the distinct
negative Lyapunov exponents of the inverse transpose cocycle ${\mathbb A}^{-t}$.
By the Oseledets Theorem, for almost every $\omega$ we have the corresponding  flag of subspaces
$$
{\check E}_{\theta_1}\subset {\check E}_{\theta_2}\subset \dots \subset {\check E}_{\theta_{l_0}},
$$
where for any $v\in {\check E}_{\theta_i}\setminus {\check E}_{\theta_{i-1}}$ we have
$$
\lim\limits_{n\to\infty} \frac{\log |{\mathbb A}^{-t}(n, \omega)v|}{n}=-\theta_i.
$$
Set
$$
{\mathfrak B}_{\theta_i}(\omega)={\mathfrak B}_{\theta_i}(Y(\omega))=\{\Phi_v,\  v\in {\check E}_{\theta_i, \omega}\}.
$$

For instance, our assumptions imply that  almost every $\omega$ admits a unique (up to scaling)
positive reverse equivariant sequence; the space $\VV(Y(\omega))$ then contains a unique positive countably additive probability measure $\nu_{\omega}$; we have then $\nu_{\omega}\in {\mathfrak B}_{\theta_1}(Y(\omega))$.

As before, a bounded measurable function $f:Y\to {\mathbb R}$ will be called {\it weakly Lipschitz}
if there exists a constant $C>0$ such that for any cylinder ${\cal C}\in {\mathfrak C}(Y)$ and any
points $y(1), y(2)\in {\cal C}$ we have
\begin{equation}
\label{weakliponeside}
|f(y(1))-f(y(2))|\leq C\nu_{\omega}({\cal C}).
\end{equation}
If $C_f$ is the infimum of all constants in the right-hand side of (\ref{weakliponeside}), then
the norm of $f$ is given by
$$
||f||_{Lip_w}=C_f+\sup_Y |f|.
$$

For almost all $\omega$, all $f\in Lip_w(Y(\omega))$ and all
$\Phi\in {\mathfrak B}(Y(\omega))$, the Riemann-Stieltjes integral $\int_Y fd\Phi$ is well-defined.
Given $f\in Lip_w(Y(\omega))$, set

$$
\theta(f)=\max \{\theta_i: \ {\rm there \ exists} \ \Phi\in {\mathfrak B}_{\theta_i}(Y(\omega)) \  {\rm such \ that} \
\int_Y fd\Phi\neq 0\}
$$
(if $\int fd\Phi=0$ for all $\Phi\in {\mathfrak B}(Y(\omega))$, then we set $\theta(f)=0$).

\subsubsection{Vershik's automorphisms.}

As before, assume that for almost every $\omega\in\Omega_+$ there is a
Vershik's ordering $\oo(\omega)$
on the edges of each graph $\omega_n$, $n\in {\mathbb N}$. Furthermore,
assume that   the ordering is shift-invariant in the following
sense: the ordering $\oo(\omega)$ on the edges of the graph
$\omega_{n+1}$ is the same as the ordering $\oo(\sigma\omega)$
on the edges of the graph $(\sigma\omega)_n=\omega_{n+1}$.
Almost every Markov compactum $Y(\omega)$ is now endowed with a Vershik's
automorphism $T_Y$ with respect to the ordering $\oo(\omega)$, and
Theorem \ref{symbmultiplic} implies
\begin{corollary}
Let $\mu$ be an ergodic $\sigma$-invariant probability measure on $\Omega_+$
whose natural extension to the space $\Omega$ satisfies Assumption \ref{asos}.
Then for $\mu$-almost any $\omega\in\Omega_+$, any $f\in Lip_w(Y(\omega))$ and any $y\in Y(\omega)$
we have
\begin{equation}
\limsup\limits_{N\to\infty} \frac{\log\left|\sum\limits_{k=0}^{N-1} f(T_Y^ky)\right|}{\log N}=\theta(f).
\end{equation}
\end{corollary}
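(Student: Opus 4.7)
The plan is to reduce the corollary to the Symbolic Approximation Theorem (Corollary \ref{symbmultiplic}) by realizing $T_Y$ as the first-return map of the suspension flow $h_t^+$ to a natural cross-section. The argument closely parallels the passage from interval exchange transformations to translation flows carried out in Corollaries \ref{iet-approx} and \ref{iet-logasympt}.

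First, I extend $\mu$ on $\Omega_+$ to its natural extension $\hat\mu$ on the space $\Omega$ of bi-infinite sequences; by hypothesis $\hat\mu$ satisfies Assumption \ref{asos}, so Corollary \ref{symbmultiplic} applies to $\hat\mu$-almost every two-sided $\hat\omega$ projecting onto $\omega$. The flow $h_t^+$ on $X(\hat\omega)$ is then a suspension over the Vershik automorphism $T_Y$ of $Y(\omega)$ with piecewise-constant roof function $h_{F(y_1)}^{(1)}$. Given $f\in Lip_w(Y(\omega))$, I lift it to a function $\tilde f$ on $X(\hat\omega)$ which is constant along each flow segment between two consecutive hits of the cross-section, equal to $f$ at the hitting point. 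The Lipschitz bound (\ref{weakliponeside}) for $f$ translates directly into the weak Lipschitz bound required by Corollary \ref{symbmultiplic} for $\tilde f$, with $\|\tilde f\|_{Lip_w^+}\le C\|f\|_{Lip_w}$.

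Let $t(y,N)$ denote the time of the $N$-th return of the $h_t^+$-orbit of $y$ to the cross-section. By construction,
\[
\sum_{k=0}^{N-1} f(T_Y^k y) \;=\; \int_0^{t(y,N)} \tilde f\circ h_s^+(y)\,ds.
\]
Applying Corollary \ref{symbmultiplic} to the piecewise-constant weakly Lipschitz function assuming the value $1/h_i^{(1)}$ on the $i$-th Markovian rectangle (exactly as in the derivation of (\ref{txn-est})) yields the sub-linear estimate $|t(y,N)-cN|\le C_\varepsilon N^{\theta'}$ for some $c>0$ and some $\theta'<1$. Corollary \ref{symbmultiplic} applied to $\tilde f$ itself gives
\[
\int_0^T \tilde f\circ h_s^+(y)\,ds \;=\; \Xi^+_{\hat\omega}(\tilde f)(y,T) + O(T^\varepsilon).
\]
Decomposing $\Xi^+_{\hat\omega}(\tilde f)=\sum_i \Phi_i^+$ along the Oseledets subspaces of the renormalization cocycle $\mathbb{A}$ and invoking Proposition \ref{convhoeldprop}, each nonzero component satisfies $\limsup \log|\Phi_i^+(y,T)|/\log T = \theta_i/\theta_1$, so the growth of $\Xi^+_{\hat\omega}(\tilde f)(y,T)$ is governed by its topmost nonvanishing component.

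The concluding step identifies, for each $i$, the vanishing of $\Phi_i^+$ in terms of the one-sided data. Using the duality identity of Proposition \ref{xi-identif-rmc},
\[
\langle \Xi^+_{\hat\omega}(\tilde f), \Phi^-\rangle \;=\; \int_{X(\hat\omega)} \tilde f\,d(\nu^+\!\times\!\Phi^-),
\]
and the fact that $\tilde f$ is constant on the vertical flow arcs used to build this product measure, the right-hand side reduces to $\int_Y f\,d\Phi$, where $\Phi\in\mathfrak{B}(Y(\omega))$ is the reverse-equivariant measure obtained by restricting $\Phi^-$ to the transversal $Y(\omega)$. Under the tautological bijection between Oseledets subspaces of $\mathbb{A}^t$ and those of $\mathbb{A}^{-t}$, the component $\Phi_i^+$ vanishes if and only if $\int_Y f\,d\Phi=0$ for every $\Phi\in\mathfrak{B}_{\theta_i}(Y(\omega))$. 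Hence the topmost surviving index is precisely the $i(f)$ for which $\theta_{i(f)}=\theta(f)$, and combining with the linear control on $t(y,N)$ yields the claimed $\limsup$ equal to $\theta(f)/\theta_1$, which is $\theta(f)$ under the normalization $\theta_1=1$ implicit in this setting. The main obstacle is the duality bookkeeping at this last step: reconciling one-sided finitely-additive measures on $Y(\omega)$ with restrictions of $\mathfrak{B}^-$-cocycles on the two-sided compactum, and verifying that this identification preserves both the Oseledets stratification and the pairing with $f$.
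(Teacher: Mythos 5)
Your reduction to the Symbolic Approximation Theorem is exactly what the paper intends: the paper's entire ``proof'' of this corollary is the one-line assertion that Corollary~\ref{symbmultiplic} implies it, and your suspension-flow route mirrors the passage from interval exchanges to translation flows in Corollaries~\ref{iet-approx} and~\ref{iet-logasympt}. The backbone of the argument — realize $T_Y$ as the first-return map of $h_t^+$, control $t(y,N)$ by applying the approximation theorem to the reciprocal-roof function, decompose $\Xi^+$ along Oseledets subspaces, and read off the growth exponent from the topmost nonvanishing component via Proposition~\ref{convhoeldprop} — is all correct. You also rightly notice the normalization wrinkle: the derivation naturally produces $\theta(f)/\theta_1$, and the paper's statement matches the analogue in Corollary~\ref{iet-logasympt} only under the implicit convention $\theta_1=1$.

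The one genuine gap is in the duality bookkeeping you flag at the end, and it is slightly worse than you suggest. The lift $\widetilde{f}(x)=f(h^+_{-\tau_I(x)}x)$ is constant on each flow segment, so the time integral of $\widetilde{f}$ over the arc between consecutive hits of the cross-section at $y$ equals $f(y)\cdot h^{(1)}_{F(y_1)}$, \emph{not} $f(y)$; consequently $\int_{X}\widetilde{f}\,d\bigl(\nu^+\times\Phi^-\bigr)$ reduces to $\int_Y f\cdot h^{(1)}\,d\Phi$, carrying a roof-function factor, rather than to $\int_Y f\,d\Phi$ as stated. Since $\theta(f)$ is defined through vanishing of $\int_Y f\,d\Phi$ over the flag $\mathfrak{B}_{\theta_i}(Y(\omega))$, this factor cannot simply be waved away. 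Two standard fixes: either replace $\widetilde{f}$ by $\widetilde{f}(x)=f(y)/h^{(1)}_{F(y_1)}$ so that each arc contributes exactly $f(y)$ to the time integral and the roof-function factors cancel in the pairing; or, more cleanly, bypass the flow altogether and apply the one-sided Preparatory Approximation Lemma~\ref{thetaxapprox} directly to the function $f$ on $Y(\omega)$. In the latter route the pairing $\langle\Theta_f,\Phi^-\rangle$ is, from the definition in Section~2.1.2, a limit of Riemann sums of $f$ against the reverse-equivariant sequence of $\Phi^-$, which is precisely $\int_Y f\,d\Phi$ with no roof-function factor; the Markov-depth parameter $n$ then replaces $\log T$ as the renormalization variable, and passage from Markovian towers to arbitrary $N$ is handled by the Denjoy--Koksma decomposition (Lemma~\ref{arcdecomposition}). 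The paper's explicit construction in Section~3.4 of the one-sided flag $\check{E}_{\theta_i}$ and the one-sided weakly Lipschitz condition~(\ref{weakliponeside}) strongly suggests that this discrete route is the intended one.
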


Note that for any $\omega\in\Omega$ the automorphism $T_{Y(\sigma\omega)}$ on $Y(\sigma\omega)$ can be realized as
an induced automorphism of $T_{Y(\omega)}$ on $Y(\omega)$ in the following way.
Take $\Gamma(\omega_1)$ and let ${\cal E}_{min}(\Gamma(\omega_1))$ be the set of minimal
edges with respect to the ordering $\oo$. Consider the subset $Y^{\prime}(\omega)\subset Y(\omega)$ given by
$$
Y^{\prime}(\omega)=\{x\in Y(\omega): x_1\in {\cal E}_{min}(\Gamma(\omega_1))\}.
$$
The shift $\sigma$ maps $Y^{\prime}(\omega)$ bijectively onto $Y(\sigma\omega)$; the induced map
of $T_{Y(\omega)}$ on $Y^{\prime}(\omega)$ is isomorphic to $T_{Y(\sigma\omega)}$.
For $\Phi\in {\mathfrak V}(Y(\omega))$, consider its restriction $\Phi|_{Y^{\prime}(\omega)}$
; we have $\sigma_*\left(\Phi|_{Y^{\prime}(\omega)}\right)\in {\mathfrak V}(Y(\sigma\omega))$, and if
$\Phi\in {\mathfrak B}(Y(\omega))$, then we have $\sigma_*\left(\Phi|_{Y^{\prime}(\omega)}\right)\in {\mathfrak B}(Y(\sigma\omega))$.

\section{Markov Compacta and  Abelian Differentials.}
\subsection{The mapping into cohomology.}

First we show that for an arbitrary  abelian differential ${\bf X}=(M, \omega)$ the  map
$$
{\check {\cal I}}_{\bf X}:\BB_c^+(M, \omega)\to H^1(M, {\mathbb R})
$$
given by (\ref{maptocohomology})  is well-defined.
Then, in the following subsections we show that for almost all $(M, \omega)$ the image of $\BB^+({\bf X})$ under this mapping
is the unstable space of the Kontsevich-Zorich cocycle.

\begin{proposition}
\label{acthomology}
Let $\gamma_i$, $i=1, \dots, k$,  be rectangular closed curves such that the cycle $\sum_{i=1}^k \gamma_i$
is homologous to $0$. Then for any $\Phi^+\in\BB^+$ we have
$$
\sum_{i=1}^{k} \Phi^+(\gamma_i)=0.
$$
\end{proposition}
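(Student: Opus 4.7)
The plan is to reduce the proposition to the following base case: for any admissible rectangle $\Pi = \Pi(x, t_1, t_2)$, its boundary $\partial \Pi$, viewed as a rectangular closed curve, satisfies $\Phi^+(\partial \Pi) = 0$. Indeed, traversed counterclockwise, $\partial \Pi$ consists of two vertical arcs — one from $x$ of length $t_1$ oriented upward and the reverse of the vertical arc from $h_{t_2}^- x$ of length $t_1$ — together with two horizontal arcs. By the definition of $\Phi^+$ on rectangular curves, and the sign conventions for orientation, $\Phi^+(\partial \Pi) = \pm\bigl(\Phi^+(x, t_1) - \Phi^+(h_{t_2}^- x, t_1)\bigr)$, which vanishes by condition 3 of Assumption \ref{bplusx}.

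Next I would invoke a Markov partition $M = \Pi^{(1)} \sqcup \dots \sqcup \Pi^{(N)}$ of $M$ into admissible rectangles, whose existence is established in Section 1.8. Given rectangular closed curves $\gamma_1, \dots, \gamma_k$ with $\sum_i \gamma_i$ homologous to zero, I would subdivide each vertical arc of each $\gamma_i$ at every point where it crosses from one Markov rectangle into another; by the cocycle property in Assumption \ref{bplusx}(1), this refinement leaves $\sum_i \Phi^+(\gamma_i)$ unchanged. Taking as $0$-cells the zeros of $\omega$, the corners of the Markov rectangles, the endpoints of the $\gamma_i$, and all the new subdivision points, I obtain a finite CW decomposition $\mathcal{K}$ of $M$ whose $1$-skeleton contains the support of $\sum_i \gamma_i$ and whose $2$-cells are the connected components of the complement of the $1$-skeleton; by construction, each $2$-cell is the interior of an admissible sub-rectangle $\tilde{\Pi}_\beta$ of some $\Pi^{(j)}$.

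Now the refined cycle $\sum_i \gamma_i$ is a cellular $1$-cycle in $\mathcal{K}$, and since it is null-homologous in $M$ it equals $\partial c$ for some cellular $2$-chain $c = \sum_\beta n_\beta \tilde{\Pi}_\beta$. Applying $\Phi^+$ and using the base case on each admissible sub-rectangle $\tilde{\Pi}_\beta$ yields
\[
\sum_i \Phi^+(\gamma_i) \;=\; \sum_\beta n_\beta \,\Phi^+(\partial \tilde{\Pi}_\beta) \;=\; 0.
\]
The main technical obstacle is verifying that $\mathcal{K}$ genuinely forms a CW structure on $M$ whose $2$-cells are admissible rectangles. This reduces to a finiteness statement — that each $\gamma_i$ meets the boundaries of the Markov partition in finitely many points, which holds because each $\gamma_i$ is a finite concatenation of flow arcs — together with the observation that an admissible rectangle cut by a finite collection of vertical and horizontal segments decomposes into finitely many admissible sub-rectangles.
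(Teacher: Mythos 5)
Your proof takes a genuinely different route from the paper's. The paper constructs a fundamental polygon $\Pi$ for $M$ whose sides are simple closed rectangular curves, notes that $\Phi^+(\partial\Pi)=0$ because each side appears twice with opposite orientation, shows via the Jordan curve theorem that any simple rectangular closed curve in $\Pi$ bounds a region which it then decomposes into admissible rectangles together with small $4(\kappa_i+1)$-gons around each zero $p_i$ of $\omega$ (the total contribution of the small polygons is $O(\varepsilon)$ by continuity, and vanishes in the limit), and finally deforms an arbitrary rectangular curve homologous to zero to $\partial\Pi$. You instead build a CW structure out of a partition of $M$ into rectangles and invoke cellular homology to write the null-homologous cycle as the boundary of a $2$-chain. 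Both strategies reduce to the vanishing of $\Phi^+$ on boundaries of rectangles, but yours is a cellular argument while the paper's is a deformation argument; yours is cleaner in outline, whereas the paper's handles the singular set more directly.

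There are, however, gaps in your version precisely at the points where the paper takes extra care. First, the Markov rectangles $\Pi^{(j)}$ (and hence the $2$-cells of your complex that meet a zero at a corner) are only \emph{weakly} admissible: the zeros of $\omega$ lie at the $0$-cells of $\mathcal{K}$, so the closure of an adjacent $2$-cell contains a zero. The holonomy invariance that underlies your base case is stated only for admissible rectangles, and its extension to weakly admissible ones requires a limiting argument using the continuity hypothesis on $\Phi^+$; you do not supply this. The paper sidesteps the issue by excising a small polygon around each zero and estimating its boundary contribution directly. Second, the $2$-cells as you describe them need not be rectangles: if a vertical arc and a horizontal arc of some $\gamma_i$ meet at a corner in the interior of a Markov rectangle, the complementary components are L-shaped. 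To restore the rectangular cell structure you must extend each interior arc across the Markov rectangle to its boundary, adding $1$-cells that carry coefficient zero in the cycle $\sum_i\gamma_i$; this is harmless but needs to be said. Finally, the existence of the Markov partition you invoke is asserted in the paper under minimality of both foliations, while the proposition is used for arbitrary abelian differentials; a single (non-Markov) decomposition of $M$ into weakly admissible rectangles suffices for your argument and is easier to justify in general, so it would be cleaner not to invoke the Markov structure at all.
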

Informally, Proposition \ref{acthomology} states that the {\it relative} homology of the surface with respect
to zeros of the form is not needed for the description of cocycles.
Arguments of this type for invariant measures
of translation flows go back to Katok's work \cite{katok}.

We proceed to the formal proof.
Take a fundamental polygon $\Pi$ for $M$ such that all its sides are
simple
closed rectangular curves on $M$. Let $\del\Pi$ be
the boundary of $\Pi$, oriented counterclockwise. By definition,
\begin{equation}
\label{deltapizeroo}
\Phi^+(\del\Pi)=0,
\end{equation}
since each curve of the boundary enters $\del\Pi$ twice and with
opposite signs.

We now deform the curves $\gamma_i$ to the boundary $\del\Pi$ of our fundamental polygon.
\begin{proposition}
\label{simplezero}
Let $\gamma\subset \Pi$ be a simple rectangular closed curve.
Then
$$
\Phi^+(\gamma)=0.
$$
\end{proposition}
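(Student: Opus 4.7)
The plan is to show that $\gamma$ bounds a topological disk $R \subset \Pi$ which can be partitioned into admissible rectangles, and then to exploit the horizontal holonomy invariance (condition 3 of Assumption \ref{bplusx}) to cancel contributions rectangle by rectangle. Since $\Phi^+(\gamma)$ by definition only sees the vertical arcs of $\gamma$ (horizontal arcs contribute nothing), the key is to arrange the decomposition so that every internal vertical segment is traversed twice with opposite orientations in the sum over the rectangles.

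First, because $\Pi$ is a topological disk and $\gamma \subset \Pi$ is simple closed, by the Jordan curve theorem $\gamma$ bounds a disk $R$. The set $Z$ of zeros of $\omega$ lying in the interior of $R$ is finite. I will construct a finite partition of $R$ into weakly admissible rectangles $\Pi_1,\dots,\Pi_N$ by drawing horizontal segments emanating from: (i) each corner point of $\gamma$ (where a vertical arc meets a horizontal arc), and (ii) each point of $Z$, extending each such segment in both horizontal directions until it first meets $\gamma$ or another cut. This yields a finite rectangular cell decomposition of $R$, with each cell having closure disjoint from $Z$, hence admissible.

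For each admissible rectangle $\Pi_k = \Pi(x_k, t_{1,k}, t_{2,k})$, condition 3 of Assumption \ref{bplusx} asserts $\Phi^+(x_k, t_{1,k}) = \Phi^+(h^-_{t_{2,k}} x_k, t_{1,k})$. Orienting $\partial \Pi_k$ counterclockwise, the two vertical sides are traversed in opposite directions, so their signed $\Phi^+$-values cancel; the horizontal sides contribute $0$ by the definition of $\Phi^+$ on rectangular curves. Thus $\Phi^+(\partial \Pi_k) = 0$ for every $k$. Now sum over $k$: every internal vertical edge of the decomposition is shared by two adjacent rectangles and appears with opposite orientations in the two boundaries, so it cancels; every internal horizontal cut contributes zero regardless; and what remains on the outer boundary is precisely $\Phi^+(\gamma)$ (with the induced orientation). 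Therefore
\[
0 \;=\; \sum_{k=1}^{N} \Phi^+(\partial \Pi_k) \;=\; \Phi^+(\gamma),
\]
which is the claim.

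The main obstacle I expect is rigorously producing the rectangular cell decomposition in step one, specifically verifying that the horizontal cuts drawn from interior zeros and from corners of $\gamma$ do terminate (at $\gamma$ or at an earlier cut) and together with the vertical arcs of $\gamma$ cut $R$ into genuine admissible rectangles whose closures avoid $Z$. This is a purely combinatorial/topological argument inside the disk $R$ — the horizontal leaves are straight in the flat structure and can only exit $R$ through $\gamma$, so termination is automatic — but some care is needed to check that, after all cuts, each cell is indeed rectangular (bounded by two vertical and two horizontal sides with no zero in the closure), rather than a more complicated rectilinear polygon. Once the decomposition is in hand, the holonomy cancellation is essentially formal.
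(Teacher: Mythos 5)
Your decomposition fails precisely at the zeros of $\omega$, and this is a genuine gap, not a technicality. You claim that, after drawing horizontal cuts emanating from each $p\in Z$, "each cell [has] closure disjoint from $Z$, hence admissible." But this is the opposite of what your construction produces: any cell whose boundary lies along a cut emanating from $p$ has $p$ in its \emph{closure}, so it is not an admissible rectangle in the sense of (\ref{admrectpsan}), and condition~3 of Assumption~\ref{bplusx} does not apply to it. Worse, near a zero of order $\kappa\geq 1$ the horizontal direction is not a single line but $2(\kappa+1)$ separatrix rays, so cutting "horizontally through $p$" already produces wedge-shaped cells adjoining $p$ rather than rectangles. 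There is no way to tile a punctured neighbourhood of a cone point by admissible rectangles in finitely many pieces; this is exactly the obstruction your step one cannot surmount, and "termination of the cuts" was never the issue.

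There is also a structural omission: your argument, if it worked, would use only conditions~1 and~3 of Assumption~\ref{bplusx} and would therefore prove the statement for arbitrary holonomy-invariant finitely-additive cocycles, with no continuity hypothesis. The paper's proof, by contrast, makes essential use of the continuity of $\Phi^+\in\BB_c^+$ (the $\varepsilon$--$\delta$ condition replacing condition~2). It decomposes the region $N$ bounded by $\gamma$ into admissible rectangles $\Pi_i^{(\varepsilon)}$ together with small $4(\kappa_i+1)$-gons $\tilde\Pi_i^{(\varepsilon)}$ isolating each zero $p_i$, with all sides of the $\tilde\Pi_i^{(\varepsilon)}$ shorter than $\delta$. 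The admissible rectangles contribute zero to $\Phi^+(\gamma)$ exactly by the cancellation you describe, while the contribution of each $\tilde\Pi_i^{(\varepsilon)}$ is bounded by $C(\kappa_i)\varepsilon$ via continuity; letting $\varepsilon\to 0$ finishes the proof. The correct fix to your argument is therefore to stop the horizontal cuts short of each zero, carve out a small polygonal neighbourhood around it, and invoke continuity to control the leftover term — which is precisely the paper's argument.
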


Proof of Proposition \ref{simplezero}.

We may assume that $\gamma$ is oriented counterclockwise and does not
contain zeros of the form $\omega$. By Jordan's theorem, $\gamma$ is the boundary of a domain
$N\subset\Pi$. Let $p_1, \dots, p_r$ be zeros of $\omega$ lying inside
$N$; let $\kappa_i$ be the order of $p_i$. Choose an arbitrary
$\varepsilon>0$, take $\delta>0$ such that $|\Phi^+(\gamma)|\leq\varepsilon$ as soon as the length of $\gamma$ does not exceed $\delta$
 and consider a partition of $N$ given by
\begin{equation}
N=\Pi_1^{(\varepsilon)}\bigsqcup\dots\bigsqcup
\Pi_n^{(\varepsilon)}\bigsqcup
{\tilde \Pi}_1^{(\varepsilon)}\bigsqcup\dots\bigsqcup
{\tilde \Pi}_{r}^{(\varepsilon)},
 \end{equation}
where all
$\Pi_i^{(\varepsilon)}$ are admissible rectangles and
${\tilde \Pi}_i^{(\varepsilon)}$ is a $4(\kappa_i+1)$-gon
containing $p_i$ and no other zeros and satisfying the additional
assumption that all its sides are no longer than $\delta$.
Let $\del\Pi_i^{(\varepsilon)}$, $\del{\tilde \Pi}_i^{(\varepsilon)}$
 stand for the boundaries of our polygons oriented counterclockwise.

We have
$$
\Phi^+(\gamma)=\sum \Phi^+(\del\Pi_i^{(\varepsilon)})+
\sum\Phi^+(\del{\tilde \Pi}_i^{(\varepsilon)}).
$$
In the first sum, each term is equal to $0$ by definition of $\Phi^+$,
whereas the second sum does not exceed, in absolute value, the quantity
$$
C(\kappa_1, \dots, \kappa_r)\varepsilon,
$$
where $C(\kappa_1, \dots, \kappa_r)$ is a positive constant depending only
on $\kappa_1, \dots, \kappa_r$.
Since $\varepsilon$ may be chosen arbitrarily small, we have
$$
\Phi^+(\gamma)=0,
$$
which is what we had to prove.

For $A,B\in\del\Pi$, let $\del\Pi_A^B$ be the part of $\del\Pi$ going
counterclockwise from $A$ to $B$.
\begin{proposition}
Let $A,B\in\del\Pi$ and let $\gamma\subset\Pi$ be an
arbitrary rectangular curve going from $A$ to $B$.
Then
$$
\Phi^+(\del\Pi_A^B)=\Phi^+(\gamma).
$$
\end{proposition}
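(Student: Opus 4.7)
The plan is to reduce to Proposition \ref{simplezero} by concatenating $\gamma$ with the reverse of $\del\Pi_A^B$ to form a closed rectangular curve inside the fundamental polygon, and to show that $\Phi^+$ vanishes on such a closed curve. Concretely, let $\eta$ be the closed rectangular curve obtained by traversing $\gamma$ from $A$ to $B$ and then $(\del\Pi_A^B)^{-1}$ from $B$ back to $A$. By the additivity property (condition 1 of Assumption \ref{bplusx}), evaluation of $\Phi^+$ on concatenations of rectangular arcs is additive, so
\[
\Phi^+(\eta)=\Phi^+(\gamma)-\Phi^+(\del\Pi_A^B),
\]
and the proposition follows once we establish $\Phi^+(\eta)=0$.

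If $\eta$ happens to be a simple closed curve in $\Pi$, this is immediate from Proposition \ref{simplezero}. In general $\gamma$ may have self-intersections and may cross $\del\Pi_A^B$, so $\eta$ need not be simple. The key step is thus to extend the argument of Proposition \ref{simplezero} to arbitrary closed rectangular curves lying in $\Pi$. I would perturb $\eta$ slightly (within the same class of rectangular curves, by sliding horizontal/vertical sub-arcs by amounts smaller than any prescribed $\delta$) so that all self-intersections become transverse double points; the cocycle $\Phi^+$ changes by at most $O(\varepsilon)$ under such a perturbation by the continuity property, so it suffices to treat the transverse case.

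For a transverse closed rectangular curve $\eta\subset\Pi$ the standard Jordan-type argument in the disk $\Pi$ allows one to write the region(s) enclosed by $\eta$ (with multiplicity, i.e. as a planar $2$-chain with integer coefficients supplied by the winding number of $\eta$) as a finite union of open cells $N_1,\dots,N_r$, each bounded by a simple rectangular Jordan curve $\eta_j$. Finite additivity of the winding-number decomposition gives
\[
\Phi^+(\eta)=\sum_{j=1}^r n_j\,\Phi^+(\eta_j),
\]
for integers $n_j$; and each $\Phi^+(\eta_j)=0$ by Proposition \ref{simplezero}, so $\Phi^+(\eta)=0$.

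The main obstacle is the last step: justifying the decomposition into simple rectangular Jordan subcurves while preserving the rectangular structure, and handling zeros of $\omega$ that may lie on the perturbed curve. Rather than decomposing $\eta$ itself, it is cleaner to repeat the tiling argument in the proof of Proposition \ref{simplezero} directly: for each small $\varepsilon>0$, choose $\delta>0$ so that $|\Phi^+(\gamma')|<\varepsilon$ on any rectangular arc of length $<\delta$, and tile the (possibly multiply-covered) interior of $\eta$ by admissible rectangles $\Pi_i^{(\varepsilon)}$ together with small $4(\kappa_i+1)$-gons around each zero $p_i$ with sides of length $<\delta$. Each rectangle contributes $0$ to $\Phi^+$ by the Markov-rectangle condition, each zero-neighborhood contributes at most $C(\kappa_1,\dots,\kappa_r)\varepsilon$, and after the internal edges cancel by orientation one obtains $|\Phi^+(\eta)|\leq C(\kappa_1,\dots,\kappa_r)\varepsilon$. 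Letting $\varepsilon\to 0$ gives $\Phi^+(\eta)=0$, completing the proof.
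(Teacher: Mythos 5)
Your reduction to showing $\Phi^+(\eta)=0$ for the concatenated closed curve $\eta=\gamma\ast(\del\Pi_A^B)^{-1}$ is the right idea, and you correctly identify that the difficulty is that $\eta$ need not be simple. But from there you take a much heavier route than the paper, and your route has real gaps. The winding-number identity
$\Phi^+(\eta)=\sum_j n_j\,\Phi^+(\eta_j)$
is stated, not proved: $\Phi^+$ is a priori only finitely additive on rectangular arcs, and the passage from this to an integral-over-the-interior statement needs a combinatorial justification you do not supply. Your alternative (re-running the tiling argument over a ``multiply-covered interior'') has the same problem in different clothing: the multiply-covered interior is defined precisely via winding numbers, so the ``internal edges cancel'' step again amounts to the formula you have not justified. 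Moreover, rectangular curves do not have generic transverse self-intersections --- two horizontal sub-arcs can overlap along a segment --- so the perturbation you invoke is itself a step that needs care, and it is only a reduction, not a proof.

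The paper's proof avoids all of this by working in the opposite order. Rather than closing up $\gamma$ first and then confronting a non-simple closed curve, it first makes the arc $\gamma$ simple. The observation is elementary: if $\gamma(t_1)=\gamma(t_2)$ with $t_1<t_2$, then by cocycle additivity $\Phi^+(\gamma)=\Phi^+(\gamma')+\Phi^+(\text{loop})$, where $\gamma'$ is the arc with $\gamma|_{[t_1,t_2]}$ excised and the loop is $\gamma|_{[t_1,t_2]}$, a closed rectangular curve in $\Pi$. The loop decomposes recursively (again by additivity at self-intersections) into finitely many simple closed rectangular curves, each of which contributes zero by Proposition~\ref{simplezero}. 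Thus self-intersections can be removed one by one without changing $\Phi^+(\gamma)$. Once $\gamma$ is simple, $\gamma\ast\del\Pi_B^A$ is a simple closed rectangular curve in $\Pi$, and Proposition~\ref{simplezero} applies directly. This decomposition-at-self-intersections argument is exactly the content hidden in your winding-number formula, but it requires no perturbation, no transversality, no winding numbers, and no notion of a multiply-covered interior; you should use it in place of the machinery you propose.
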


We may assume that $\gamma$ is simple in $\Pi$, since,
by Proposition \ref{simplezero}, self-intersections of $\gamma$ (whose
number is finite) do not change the value of $\Phi^+(\gamma)$. If $\gamma$
is simple, then $\gamma$ and $\Phi^+(\del\Pi_B^A)$ together form a simple
closed curve, and the proposition follows from Proposition
\ref{simplezero}.

\begin{corollary}
If $\gamma\subset \Pi$ is a rectangular curve which yields a closed curve
in $M$
homologous to zero in $M$, then
$$
\Phi^+(\gamma)=0.
$$
\end{corollary}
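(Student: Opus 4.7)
My plan is first to apply the preceding proposition to reduce to the case $\gamma = \partial\Pi_A^B$, a subarc of the boundary of the fundamental polygon, with $A \sim B$ in $M$ and $[\gamma] = 0$ in $H_1(M,\mathbb{R})$. It will then suffice to show that $\Phi^+(\partial\Pi_A^B) = 0$ under these hypotheses.

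The next step would be to reduce to the situation in which $A$ and $B$ are vertices of $\Pi$. If $A$ lies in the interior of a side $s$, let $v$ be a vertex of $s$ and extend $\gamma$ along $\partial\Pi$ backward to start at $v$. Since the identification of paired sides of $\Pi$ is a translation of the plane, $B$ lies on the paired side $s^{-1}$ at the parameter corresponding to $A$ under the identification; consequently, enlarging $\partial\Pi_A^B$ to $\partial\Pi_v^{v'}$ (where $v'$ is the partner of $v$ on $s^{-1}$) appends exactly the translate of $[v,A]$ on $s^{-1}$. Because $\partial\Pi$ is traversed counterclockwise, this appended arc has orientation opposite to that of the prepended arc $[v,A]$. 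By horizontal holonomy invariance (item 3 of Assumption \ref{bplusx}), the two contributions to $\Phi^+$ are equal in magnitude and opposite in sign and hence cancel. The extended curve $\gamma^*$ therefore satisfies $\Phi^+(\gamma^*) = \Phi^+(\gamma)$; it represents the same homology class (the two added subarcs together form a null-homologous loop in $M$, being a curve and its horizontal-holonomy inverse); and its endpoints are vertices of $\Pi$.

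With vertex endpoints, $\gamma^* = \partial\Pi_A^B$ is a concatenation of full sides of $\Pi$, say $s_{i_1}^{\epsilon_1}\cdots s_{i_k}^{\epsilon_k}$ with $\epsilon_j \in \{\pm 1\}$. For a standard $4g$-gon fundamental polygon with boundary relation $\prod_{i=1}^g [a_i,b_i]$, the sides $a_1,b_1,\ldots,a_g,b_g$ project to closed rectangular curves in $M$ whose homology classes form a basis of $H_1(M,\mathbb{Z}) \cong \mathbb{Z}^{2g}$. Letting $n_s$ denote the signed count of side $s$ (with $s$ contributing $+1$ and $s^{-1}$ contributing $-1$) in the concatenation, one obtains simultaneously $[\gamma^*] = \sum_s n_s[s]$ in $H_1(M,\mathbb{Z})$ and $\Phi^+(\gamma^*) = \sum_s n_s \Phi^+(s)$ by additivity of the cocycle along consecutive arcs. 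The assumption $[\gamma^*]=0$ together with the basis property forces every $n_s$ to vanish, and hence $\Phi^+(\gamma) = \Phi^+(\gamma^*) = 0$.

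The main obstacle will be the reduction to vertex endpoints in the middle paragraph: one must carefully track the orientation of $\partial\Pi$ relative to the intrinsic orientations of paired sides, so as to be sure the prepended and appended subarcs are traversed with opposite orientations under the counterclockwise convention. This done, horizontal holonomy invariance delivers the required cancellation of $\Phi^+$, and the remainder of the argument is a clean bookkeeping in the free abelian group $H_1(M,\mathbb{Z})$.
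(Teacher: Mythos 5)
Your proof is correct and, in fact, is more careful than the paper's own argument, which it fills out rather than merely reproducing. The first step (reducing to $\gamma=\del\Pi_A^B$ via the preceding proposition) matches the paper. After that, the paper simply asserts that the null-homologous cycle $\gamma\subset\del\Pi$ must be ``a multiple of the cycle $\del\Pi$,'' and concludes from $\Phi^+(\del\Pi)=0$; as stated, that assertion is suspect --- for instance, in a genus-two octagon with boundary word $a_1b_1a_1^{-1}b_1^{-1}a_2b_2a_2^{-1}b_2^{-1}$, the proper subarc $a_1b_1a_1^{-1}b_1^{-1}$ is null-homologous in $M$ without being a multiple of $\del\Pi$ --- though the corollary's conclusion of course still holds there. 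Your two additional steps (snapping the endpoints to vertices via a pair of cancelling subarcs, then reading off $\Phi^+(\gamma^*)=\sum_s n_s\Phi^+(s)$ and using that $\{[s]\}$ is a basis of $H_1(M,\mathbb{Z})$ to force all $n_s=0$) circumvent that assertion entirely and give a cleaner, fully watertight argument. Two small remarks: in the endpoint-snapping step, the cancellation of $\Phi^+([v,A])$ and $\Phi^+([B,v'])$ does not really need horizontal holonomy invariance, since the two arcs have literally the same image in $M$ traversed with opposite orientations, so cancellation follows from the cocycle property alone; and your appeal to the ``standard $4g$-gon'' relation is slightly more than is needed --- what matters is only that all vertices of $\Pi$ are identified to a single point (which follows because every side is a closed curve in $M$), so that the side classes form a basis of $H_1(M,\mathbb{Z})\cong\mathbb{Z}^{2g}$, and this holds for any fundamental polygon of the type the paper fixes, not only the standard one.
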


Indeed, by the previous proposition we need only consider the case when $\gamma\subset
\del\Pi$. Since $\gamma$ is homologous to $0$ by assumption, the cycle $\gamma$ is in fact
a multiple of the cycle $\del\Pi$, for which the statement follows from (\ref{deltapizeroo}).

\subsection{Veech's space of zippered rectangles}

\subsubsection{Rauzy-Veech induction}

To establish the link between Markov compacta and abelian differntials, we use the expansions of interval exchange
transformations given by the Rauzy-Veech induction and the Veech representation of abelian differentials by zippered rectangles.
For a different presentation of the Rauzy-Veech formalism, see Marmi-Moussa-Yoccoz \cite{MMY}.

Let $\pi$ be a permutation of $m$ symbols, which will always be
assumed irreducible in the sense that $\pi\{1,\dots,k\}=\{1,\dots,k\}$ implies $k=m$.
The Rauzy operations $a$ and $b$ are defined by the formulas
$$
a\pi(j)=\begin{cases}
\pi j,&\text{if $j\leq\pi^{-1}m$,}\\
\pi m,&\text{if $j=\pi^{-1}m+1$,}\\
\pi(j-1),&\text{if $\pi^{-1}m+1<j\le m$;}
\end{cases}
$$
$$
b\pi(j)=\begin{cases}
\pi j,&\text{if $\pi j\leq \pi m$,}\\
\pi j+1,&\text{if $\pi m<\pi j<m$,}\\
\pi m+1,&\text{ if $\pi j=m$.}
\end{cases}
$$

These operations preserve irreducibility. The {\it Rauzy class}
$\mathcal R(\pi)$ is defined as the set of all permutations that can
be obtained from $\pi$ by application of the transformation group
generated by $a$ and $b$. From now on we fix a Rauzy class $\R$ and
assume that it consists of irreducible permutations.

For $i,j=1,\dots,m$, denote by $E^{ij}$ the $m\times m$ matrix whose
$(i,j)th$ entry is $1$, while all others are zeros. Let $E$ be the
identity $m\times m$-matrix. Following Veech \cite{veech}, introduce
the unimodular matrices
\begin{equation} \label{mat_a}
{\cal A}(a,\pi)=\sum_{i=1}^{\pi^{-1}m}E^{ii}+E^{m,\pi^{-1}m+1}+
\sum_{i=\pi^{-1}m}^{m-1}E^{i,i+1},
\end{equation}
\begin{equation} \label{mat_b}
{\cal A}(b,\pi)=E+E^{m,\pi^{-1}m}.
\end{equation}
For a vector $\la=(\la_1,\dots,\la_m)\in{\mathbb R}^m$, we write
$$
|\la|=\sum_{i=1}^m\la_i.
$$
Let
$$
\Delta_{m-1}=\{\la\in {\mathbb R}^m:|\la|=1,\ \la_i>0 \text{ for }
i=1,\dots,m\}.
$$

One can identify each pair $(\lambda,\pi)$,
$\lambda\in\Delta_{m-1}$, with the {\it interval exchange map} of
the interval $I:=[0,1)$ as follows. Divide $I$ into the
sub-intervals $I_k:=[\beta_{k-1},\beta_k)$, where $\beta_0=0$,
$\beta_k=\sum_{i=1}^k\lambda_i$, $1\le k\le m$, and then place the
intervals $I_k$ in $I$ in the following order (from left to write):
$I_{\pi^{-1}1},\dots, I_{\pi^{-1}m}$. We obtain a piecewise linear
transformation of $I$ that preserves the Lebesgue measure.

The space $\Delta(\R)$ of interval exchange maps corresponding to
$\R$ is defined by
$$
\Delta(\R)=\Delta_{m-1}\times\R.
$$
Denote
$$
\Delta_{\pi}^+=\{\la\in\Delta_{m-1}| \ \la_{\pi^{-1}m}>\la_m\},\ \
\Delta_{\pi}^-=\{\la\in\Delta_{m-1}| \ \la_m>\la_{\pi^{-1}m}\},
$$
$$
\Delta^+(\R)=\cup_{\pi\in{\cal R}}\{(\pi,\la)|\
\la\in\Delta_{\pi}^+\},$$
$$
\Delta^-(\R)=\cup_{\pi\in{\cal R}}\{(\pi,\la)|\
\la\in\Delta_{\pi}^-\},$$
$$
\Delta^\pm(\R)=\Delta^+(\R)\cup\Delta^-(\R).
$$
The {\it Rauzy-Veech induction map} ${\mathscr
T}:\Delta^\pm(\R)\to\Delta(\R)$ is defined as follows:
\begin{equation}
\label{te} {\mathscr T}(\la,\pi)=\begin{cases}
(\frac{{\mathcal A}(a,\,\pi)^{-1}\la}{|{\mathcal A}(a,\,\pi)^{-1}\la|},a\pi), &\text{if
$\la\in\Delta_\pi^+$,}\\
(\frac{{\mathcal A}(b,\,\pi)^{-1}\la}{|{\mathcal A}(b,\,\pi)^{-1}\la|},\,b\pi), &\text{if
$\la\in\Delta_\pi^-$} .
\end{cases}
\end{equation}

One can check that ${\mathscr T}(\la,\pi)$ is the interval exchange map
induced by $(\la,\pi)$ on the interval $J=[0,1-\gamma]$, where
$\gamma=\min(\la_m,\la_{\pi^{-1}m})$; the interval $J$ is then stretched to
unit length.

Denote
\begin{equation}
\label{delta+-} \Delta^\infty(\mathcal R)=\bigcap_{n\ge0}\mathscr
T^{-n} \Delta^\pm(\mathcal R).
\end{equation}
Every ${\mathscr T}$-invariant probability measure is concentrated on
$\Delta^\infty(\mathcal R)$. On the other hand, a natural Lebesgue
measure defined on $\Delta(\mathcal R)$, which is finite, but
non-invariant, is also concentrated on $\Delta^\infty(\mathcal R)$.
Veech \cite{veech} showed that $\mathscr T$ has an absolutely
continuous ergodic invariant measure on $\Delta(\R)$, which is,
however, infinite.

We have two matrix cocycles ${\mathcal A}^t$, ${\mathcal A}^{-1}$over $\mathscr T$ defined by
$$
{\mathcal A}^t(n,(\lambda, \pi))={\mathcal A}^t({\mathscr T}^n(\lambda, \pi))\cdot \ldots \cdot {\mathcal A}^t(\lambda,\pi),
$$
$$
{\mathcal A}^{-1}(n,(\lambda, \pi))={\mathcal A}^{-1}({\mathscr T}^n(\lambda, \pi))\cdot \ldots \cdot {\mathcal A}^{-1}(\lambda,\pi).
$$

We introduce the corresponding skew-product transformations
${\mathscr T}^{{\cal A}^t}: \Delta(\R)\times {\mathbb R}^m\to\Delta(\R)\times {\mathbb R}^m$,
${\mathscr T}^{{\cal A}^{-1}}: \Delta(\R)\times {\mathbb R}^m\to\Delta(\R)\times {\mathbb R}^m$,
$$
{\mathscr T}^{{\cal A}^t}((\la,\pi), v)=({\mathscr T}(\la,\pi), {\cal A}^t(\la,\pi)v);
$$
$$
{\mathscr T}^{{\cal A}^{-1}}((\la,\pi), v)=({\mathscr T}(\la,\pi), {\cal A}^{-1}(\la,\pi)v).
$$

\subsubsection{The construction of zippered rectangles}
\label{zipper}

Here we briefly recall the construction of the Veech space of
zippered rectangles. We use the notation of \cite{bufetov}.

{\it Zippered rectangles} associated to the Rauzy class $\R$ are
triples $(\la,\pi,\delta)$, where
$\la=(\la_1,\dots,\la_m)\in{\mathbb R}^m$, $\la_i>0$, $\pi\in{\cal
R}$, $\delta=(\delta_1,\dots,\delta_m)\in{\mathbb R}^m$, and the
vector $\delta$ satisfies the following inequalities:
\begin{equation}
\label{deltaone} \delta_1+\dots+\delta_i\leq 0,\ \ i=1,\dots,m-1,
\end{equation}
\begin{equation}
\label{deltatwo} \delta_{\pi^{-1}\,1}+\dots+\delta_{\pi^{-1}\,i}\geq
0, \ \ i=1, \dots, m-1.
\end{equation}
The set of all vectors $\delta$ satisfying (\ref{deltaone}),
(\ref{deltatwo}) is a cone in ${\mathbb R}^m$; we denote it by
$K(\pi)$.

For any $i=1, \dots, m$, set
\begin{equation}
\label{adelta}
a_j=a_j(\delta)=-\delta_1-\dots-\delta_j, \
h_{j}=h_j(\pi,\delta)=-\sum_{i=1}^{j-1} \delta_i+\sum_{l=1}^{\pi(j)-1}\delta_{\pi^{-1}l}.
\end{equation}

\subsubsection{Zippered rectangles and abelian differentials.}

Given a zippered rectangle $(\la,\pi,\delta)$, Veech \cite{veech} takes $m$ rectangles
$\Pi_i=\Pi_i(\la,\pi,\delta)$ of girth $\la_i$ and height $h_i$, $i=1, \dots, m$, and glues them together according to
a rule determined by the permutation $\pi$. This procedure yields a Riemann surface $M$ endowed with a holomorphic
$1$-form $\omega$ which, in restriction to each $\Pi_i$, is simply the form $dz=dx+idy$.
The union of the bases of the rectangles is an interval $I^{(0)}(\la,\pi, \delta)$ of length $|\la|$ on $M$; the
first return map of the vertical flow of the form $\omega$ is precisely the interval exchange ${\bf T}_{(\la,\pi)}$.

The {\it area} of a zippered rectangle $(\la,\pi,\delta)$ is given
by the expression
\begin{equation}
\label{area} Area\,(\la,\pi,\delta):=\sum_{r=1}^m\la_rh_r=
\sum_{r=1}^m\la_r(-\sum_{i=1}^{r-1}\delta_i+\sum_{i=1}^{\pi
r-1}\delta_{\pi^{-1}\,i}).
\end{equation}
(Our convention is $\sum_{i=u}^v...=0$ when $u>v$.)

Furthermore, to each rectangle $\Pi_i$ Veech \cite{veechamj} assigns a cycle $\gamma_i(\la,\pi, \delta)$ in the homology
group $H_1(M, {\mathbb Z})$: namely, if $P_i$ is the left bottom corner of $\Pi_i$ and $Q_i$ the left top corner, then the cycle is
the union of the vertical interval $P_iQ_i$ and the horizontal subinterval of $I^{(0)}(\la,\pi, \delta)$ joining $Q_i$ to $P_i$. It is
clear that the cycles $\gamma_i(\la,\pi, \delta)$ span $H_1(M, {\mathbb Z})$.

\subsubsection{The space of zippered rectangles.}
Denote by ${\cal V}(\R)$ the space of all zippered rectangles
corresponding to the Rauzy class $\R$, i.e.,
$$
{\mathcal V}(\R)=\{(\la,\pi,\delta):\la\in{\mathbb
R}^m_+,\,\pi\in\R,\,\delta\in K(\pi)\}.
$$
Let also
$$
{\mathcal V}^+(\R)=\{(\la,\pi,\delta)\in{\mathcal
V}(\R):\la_{\pi^{-1}m}>\la_m\},
$$
$$
{\mathcal V}^-(\R)=\{(\la,\pi,\delta)\in{\mathcal
V}(\R):\la_{\pi^{-1}m}<\la_m\},
$$
$$
{\cal V}^\pm(\R)={\cal V}^+(\R)\cup{\cal V}^-(\R).
$$
Veech \cite{veech} introduced the flow $\{P^t\}$ acting on ${\cal
V}(\R)$ by the formula
$$
P^t(\la,\pi,\delta)=(e^{t}\la,\pi,e^{-t}\delta),
$$
and the map $\U:{\cal V}^\pm(\R)\to{\cal V}(\R)$, where
$$
{\U}(\la,\pi,\delta)=\begin{cases}({{\mathcal A}}(\pi,a)^{-1}\la,a\pi,{{\mathcal A}}(\pi,
a)^{-1}\delta),
&\text{if $\la_{\pi^{-1}m}>\la_m$,}\\
({{\mathcal A}}(\pi,b)^{-1}\la,b\pi,{{\mathcal A}}(\pi,b)^{-1}\delta), &\text{if
$\la_{\pi^{-1}m}<\la_m$.}
\end{cases}
$$
(The inclusion $\U{\cal V}^\pm(\R)\subset{\cal V}(\R)$ is proved in \cite{veech}.) The
map $\U$ and the flow $\{P^t\}$ commute on ${\cal V}^\pm(\R)$ and
both preserve the measure determined on ${\cal V}(\R)$ by the volume
form $Vol=d\la_1\dots d\la_md\delta_1\dots d\delta_m$. They also
preserve the area of a zippered rectangle (see (\ref{area})) and
hence can be restricted to the set
$$
{\cal V}^{1,\pm}(\R):=\{(\la,\pi,\delta)\in{\cal V}^\pm(\R):
Area(\la,\pi,\delta)=1\}.
$$
The restriction of the volume form $Vol$ to ${\cal V}^{1,\pm}(\R)$
induces on this set a measure $\mu_\R$ which is invariant under
$\cal U$ and $\{P^t\}$.

For $(\la,\pi)\in\Delta(\R)$, denote
\begin{equation}
\label{taulambda}
\tau^0(\la,\pi)=:-\log(|\la|-\min(\la_m,\la_{\pi^{-1}m})).
\end{equation}
From (\ref{mat_a}), (\ref{mat_b}) it follows that if
$\la\in\Delta_\pi^+\cup\Delta_\pi^-$, then
\begin{equation}
\label{tau1} \tau^0(\la,\pi)=-\log|{{\mathcal A}}^{-1}(c,\pi)\la|,
\end{equation}
where $c=a$ when $\la\in\Delta_\pi^+$, and $c=b$ when
$\la\in\Delta_\pi^-$.

Next denote
$$\Y_1(\R):=\{x=(\la,\pi,\delta)\in{\cal
V}(\R):|\la|=1,\ Area(\la,\pi,\delta)=1\},
$$
\begin{equation*}
\label{tau3}\tau(x):=\tau^0(\la,\pi)\text{ for
}x=(\la,\pi,\delta)\in\Y_1(\R),
\end{equation*}
\begin{equation}
\label{phase} {\cal V}_{1,\tau}(\R):=\bigcup_{x\in\Y_1(\R),\ 0\leq
t\leq \tau(x)}P^tx.
\end{equation}
 Let
$$
{\cal V}^{1,\pm}_{\ne}(\R):=\{(\la,\pi,\delta)\in {\cal
V}^{1,\pm}(\R):a_m(\delta)\ne 0\},
$$
$$
{\cal V}_\infty(\R):=\bigcap_{n\in\mathbb Z}\U^n{\cal
V}^{1,\pm}_{\ne}(\R).
$$
Clearly
$\U^n$ is well-defined on ${\cal
V}_\infty(\R)$ for all $n\in\mathbb Z$.

We now set
$$
{\cal Y}^{\prime}(\R):={\cal Y}_1(\R)\cap\mathcal V_\infty(\R),\ \ {\tilde{\cal V}}(\R):=
{\cal V}_{1,\tau}(\R)\cap{\cal V}_\infty(\R).
$$
The above identification enables us to define on $\tilde{\cal
V}(\R)$ a natural flow, for which we retain the notation $\{P^t\}$.
(Although the bounded positive function $\tau$ is not separated from
zero, the flow $\{P^t\}$ is well defined.)

Note that for any $s\in {\mathbb R}$ we have a natural ``tautological" map
$$
{\mathfrak t}_s: M({\mathscr X})\to M(P^s{\mathscr X})
$$
which on each rectangle $\Pi_i$ is simply expansion by $e^s$ in the horizontal direction and contraction
by $e^s$ in the vertical direction. By definition, the map ${\mathfrak t}_s$ sends the vertical and the
horizontal foliations of ${\mathscr X}$ to those of $P^s{\mathscr X}$.

Introduce the space
$$
{\mathfrak X}{\tilde{\cal V}}(\R)=\{({\mathscr X}, x): {\mathscr X}\in {\tilde{\cal V}}(\R),  x\in M({\mathscr X}) \}
$$
and endow the space ${\mathfrak X}{\tilde{\cal V}}(\R)$ with the flow $P^{s, {\mathfrak X}}$ given by the formula
$$
P^{s, {\mathfrak X}}({\mathscr X}, x)=(P^s{\mathscr X}, {\mathfrak t}_sx).
$$

The flow $P^s$ induces on the transversal ${\cal Y}(\R)$ the first-return map ${\overline {\mathscr T}}$ given by the
formula
\begin{equation}
{\overline {\mathscr T}}(\la,\pi, \delta)={\cal U}P^{\tau^0(\la,\pi)}(\la,\pi, \delta).
\end{equation}

Observe that, by definition, if ${\overline {\mathscr T}}(\la,\pi,\delta)=(\la^{\prime}, \pi^{\prime}, \delta^{\prime})$,
then $(\la^{\prime}, \pi^{\prime})={\mathscr T}(\la,\pi)$.

For $(\la,\pi, \delta)\in {\tilde{\cal V}}(\R)$, $s\in {\mathbb R}$, let ${\tilde n}(\la,\pi,\delta, s)$ be defined by the formula
$$
{\cal U}^{{\tilde n}(\la,\pi,\delta, s)}(e^s\la,\pi,e^{-s}\delta)\in {\cal V}_{1,\tau}(\R).
$$

Endow the space ${\tilde{\cal V}}(\R)$  with a matrix cocycle ${\overline {\mathscr {{\mathcal A}}}}^t$ over the flow $P^s$ given by the formula
$$
{\overline {\mathcal {A}}}^t(s, (\la,\pi,\delta))={{\mathcal A}}^t({\tilde n}(\la,\pi,\delta, s), (\la,\pi)).
$$

\subsubsection{The correspondence between cocycles.}

To a connected component $\HH$ of the space $\modk$ there corresponds a unique Rauzy
class $\R$ in such a way that the following is true \cite{veech, kz}.

\begin{theorem}[\rm Veech]
\label{zipmodule} There exists a finite-to-one measurable map
$\pi_{\R}:\tilde{\cal V}(\R)\to\HH$ such that $\pi_{\R}\circ P^t=g_t\circ \pi_{\R}$.
The image of $\pi_{\R}$ contains all abelian differentials whose vertical and
horizontal foliations are both minimal.
\end{theorem}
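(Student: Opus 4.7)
The plan is to build $\pi_{\mathcal R}$ by the explicit gluing recalled in Section 2.2.3 and then verify the three assertions (well-definedness into $\HH$, equivariance, finite-to-one surjectivity onto minimal foliations) one at a time.

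First I would fix $\mathscr X = (\lambda,\pi,\delta)\in\tilde{\mathcal V}(\mathcal R)$ and explicitly glue the $m$ rectangles $\Pi_i$ of girth $\lambda_i$ and height $h_i(\pi,\delta)$ using the data $(\pi,\delta)$ as in \cite{veech}: the bottoms of the $\Pi_i$ are placed in order $1,\dots,m$ along $I^{(0)}$, the tops are reattached in the order $\pi^{-1}1,\dots,\pi^{-1}m$, and the vertical sides are "zipped" according to the prescribed heights $a_j(\delta)$. The inequalities (\ref{deltaone}), (\ref{deltatwo}) defining $K(\pi)$ are precisely what makes this zipping consistent. The resulting space carries a canonical holomorphic $1$-form $\omega$ that restricts to $dz$ on each $\Pi_i$; irreducibility of $\pi$ together with the data of $(\lambda,\pi,\delta)$ determines the singularity pattern, so the pair $(M,\omega)$ lies in a fixed stratum $\mathcal M_\kappa$. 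The connected component $\HH$ it hits is determined by the Rauzy class, which is the content of the theorem of Veech-Kontsevich-Zorich. Setting $\pi_{\mathcal R}(\mathscr X)=(M,\omega)$ defines the map at the level of marked differentials; passing to $\HH$ amounts to quotienting by the mapping class group action, a measurable operation.

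Equivariance $\pi_{\mathcal R}\circ P^t=g_t\circ \pi_{\mathcal R}$ is immediate from the definitions: $P^t$ multiplies $\lambda$ by $e^t$ and $\delta$ by $e^{-t}$, which rescales the girths of the $\Pi_i$ by $e^t$ and the heights by $e^{-t}$. Under this rescaling the local form $dz=dx+i\,dy$ becomes $e^t\,dx+i e^{-t}\,dy$, which is exactly the action of the Teichm\"uller flow on $(M,\omega)$. One has to be careful that $P^t$ is defined on $\tilde{\mathcal V}(\mathcal R)$ only via the identification through $\mathcal U$, but the map $\mathcal U$ on zippered rectangles corresponds to changing the horizontal transversal for the vertical flow (inducing Rauzy--Veech induction on the interval exchange transformation), and this does not alter the isomorphism class of $(M,\omega)$. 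So $\pi_{\mathcal R}$ descends to the quotient $\tilde{\mathcal V}(\mathcal R)$ and intertwines the two flows.

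For the finite-to-one property I would use the fact that $(M,\omega)$ together with a choice of an admissible horizontal interval $I$ with left endpoint at a singularity and a choice of a "zipping vector" $\delta$ encoding how vertical separatrices close up determines the zippered rectangle; these choices form a finite set (bounded by a quantity depending only on $\kappa$ and $\mathcal R$), so each fibre of $\pi_{\mathcal R}$ is finite. Surjectivity onto the set of abelian differentials with both foliations minimal is the real obstacle; this will be the main technical step. Given such $(M,\omega)$, one picks a horizontal segment $I$ starting at a zero of $\omega$ whose first-return map under the vertical flow is an interval exchange of $m$ intervals with permutation in $\mathcal R$; the existence of such $I$ uses minimality of the horizontal flow (to guarantee that the ``Poincar\'e section'' closes up combinatorially into $m$ rather than more intervals) together with the Keane-type argument that under both foliations minimal one can choose $I$ so that the bottom endpoint is a singularity and the combinatorics match $\pi\in\mathcal R$. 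The vector $\delta$ is then recovered from the heights of the Veech cycles $\gamma_i(\lambda,\pi,\delta)$, which span $H_1(M,\mathbb Z)$. The inequalities (\ref{deltaone})-(\ref{deltatwo}) are the geometric constraint that the chosen horizontal segment does not cross any vertical separatrix emanating downward from a zero before time $|a_j|$; this is guaranteed by minimality and a suitable choice of $I$. Packaging these steps proves $(\lambda,\pi,\delta)\in\tilde{\mathcal V}(\mathcal R)$ with $\pi_{\mathcal R}(\lambda,\pi,\delta)=(M,\omega)$, completing the proof.
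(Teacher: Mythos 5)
The paper does not prove this statement: the theorem is explicitly attributed to Veech (the bracketed label reads ``Veech''), and the sentence introducing it cites \cite{veech} and \cite{kz}. It is quoted as an input, not established in this paper, so there is no paper-internal proof to compare your sketch against; the comparison you should have in mind is with Veech's original argument in \cite{veech}, refined by the Kontsevich--Zorich classification of components in \cite{kz}.

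On its own terms your outline is faithful to that argument: the gluing of the rectangles $\Pi_i$, the local identification of the form with $dz$, the scaling computation showing that $P^t$ acting by $\la\mapsto e^t\la$, $\delta\mapsto e^{-t}\delta$ intertwines with $g_t$, and the observation that $\U$ is a change of transversal leaving the underlying $(M,\omega)$ unchanged are exactly the right ingredients, and you correctly note where the $\U$-quotient must be handled. Where the sketch is genuinely thin is the surjectivity claim. Saying ``one picks a horizontal segment $I$ starting at a zero whose first-return map is an IET with permutation in $\R$'' conceals the real content of Veech's theorem: one must prove that \emph{some} admissible transversal exists whose first-return combinatorics lands in the prescribed Rauzy class $\R$ (not merely in some Rauzy class belonging to the stratum) and that the resulting $\delta$ lies in the cone $K(\pi)$ defined by (\ref{deltaone})--(\ref{deltatwo}). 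In Veech's paper this is a substantial argument involving openness and connectedness over the Rauzy diagram together with the Keane property (which minimality of both foliations supplies), and it cannot be waved away by invoking ``a Keane-type argument''. The finite-to-one claim is likewise plausible but unargued: one needs that the constraint $\mathscr{X}\in{\cal V}_{1,\tau}(\R)$ fixes the scale of the transversal, so that the remaining freedom is just the finitely many choices of singularity and separatrix, and this should be spelled out. Neither point is an error of approach, but they are precisely the steps that would have to be supplied to turn your sketch into a proof of the theorem the paper is quoting.
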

Following Veech \cite{veechamj},
we now describe the correspondence between the cocycle ${\cal A}^t$ and the Kontsevich-Zorich cocycle
${\bf A}_{KZ}$.

As before, let ${\mathbb H}^1(\HH)$ be the fibre bundle over $\HH$ whose fibre at a point $(M, \omega)$ is the cohomology group
$H^1(M, {\mathbb R})$. The Kontsevich-Zorich cocycle ${\bf A}_{KZ}$ induces
a skew-product flow $g_s^{{\bf A}_{KZ}}$ on ${\mathbb H}^1(\HH)$ given by the formula
$$
g_s^{{\bf A}_{KZ}}({\bf X}, v)=({\bf g}_s{\bf X}, {\bf A}_{KZ}v), \ {\bf X}\in\HH, v\in H^1(M, {\mathbb R}).
$$

Following Veech \cite{veech}, we now explain the connection between the Kontsevich-Zorich cocycle
${\bf A}_{KZ}$ and the cocycle  ${\overline {\mathcal A}}^t$.

For any irreducible permutation $\pi$ Veech \cite{veechamj} defines an alternating matrix $L^{\pi}$ by
setting $L_{ij}^{\pi}=0$ if $i=j$ or if $i<j, \pi i<\pi j$, $L_{ij}^{\pi}=1$ if $i<j, \pi i>\pi j$,
$L^{\pi}_{ij}=-1$ if $i>j, \pi i<\pi j$ and denotes by $N(\pi)$ the kernel of $L^{\pi}$ and by $H(\pi)=L^{\pi}({\mathbb R}^m)$ the image of $L^{\pi}$. The dimensions of $N(\pi)$ and $H(\pi)$ do not change as $\pi$ varies in $\R$, and, furthermore,
Veech \cite{veechamj} establishes the following properties of the spaces $N(\pi)$, $H(\pi)$.
\begin{proposition}
\label{veechtokz}
Let $c=a$ or $b$. Then
\begin{enumerate}
\item $H(c\pi)={\mathcal A}^t(c,\pi)H(\pi)$,
$N(c\pi)={\mathcal A}^{-1}(c,\pi)N(\pi)$;
\item the diagram
$$
\begin{CD}
{\mathbb R}^m/N(\pi)@ >L^{\pi}>> H(\pi) \\
@ VV {\mathcal A}^{-1}(\pi, c)V          @VV {\mathcal A}^t(\pi, c) V   \\
{\mathbb R}^m/N(c\pi)@ >L^{c\pi}>> H(c\pi) \\
\end{CD}
$$
is commutative and each arrow is an isomorphism.

\item For each $\pi$ there exists a basis ${\bf v}_{\pi}$ in $N(\pi)$ such that
the map ${\mathcal A}^{-1}(\pi, c)$ sends every element of ${\bf v}_{\pi}$ to an element of ${\bf v}_{c\pi}$.
\end{enumerate}
\end{proposition}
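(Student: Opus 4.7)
The plan is to reduce all three statements to a single explicit matrix identity, verify that identity by direct computation, and then handle the combinatorial part~(3) separately using the cycle structure of~$N(\pi)$.

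\textbf{Step 1: the key identity.} First I would prove the matrix identity
\begin{equation}
\label{keyveechidentity}
L^{c\pi} \;=\; {\mathcal A}^t(c,\pi)\,L^{\pi}\,{\mathcal A}(c,\pi), \qquad c\in\{a,b\},
\end{equation}
by a direct entrywise comparison. For $c=b$, the matrix ${\mathcal A}(b,\pi)=E+E^{m,\pi^{-1}m}$ differs from the identity by a single rank-one correction, so the conjugation on the right changes exactly the $m$-th row and the $(\pi^{-1}m)$-th column of $L^{\pi}$; one reads off from the definitions of $L^{\pi}$ and $L^{b\pi}$ that the resulting entries match (the combinatorial change is that the letter $\pi^{-1}m$ now precedes the letter $m$ in the image ordering). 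For $c=a$, the matrix ${\mathcal A}(a,\pi)$ encodes the cyclic renaming of the letters to the right of position $\pi^{-1}m$, and the verification reduces to checking how inversions $\{(i,j)\colon i<j,\;\pi i>\pi j\}$ are transported under the operation $\pi\mapsto a\pi$. This is tedious but entirely mechanical, as it involves only the four cases defining $L^{\pi}$.

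\textbf{Step 2: deducing parts~(1) and~(2).} Once \eqref{keyveechidentity} is established, parts~(1) and~(2) are formal. Since ${\mathcal A}(c,\pi)$ is unimodular, hence invertible, taking images gives
\[
H(c\pi)=\mathrm{Im}\,L^{c\pi}=\mathrm{Im}\bigl({\mathcal A}^t(c,\pi)L^{\pi}{\mathcal A}(c,\pi)\bigr)={\mathcal A}^t(c,\pi)H(\pi),
\]
and taking kernels gives $N(c\pi)={\mathcal A}^{-1}(c,\pi)N(\pi)$, which is the first claim. The commutativity of the diagram in~(2) is precisely a rewriting of~\eqref{keyveechidentity}: one has $L^{c\pi}\circ{\mathcal A}^{-1}(c,\pi)={\mathcal A}^t(c,\pi)\circ L^{\pi}$ as maps ${\mathbb R}^m/N(\pi)\to H(c\pi)$. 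The horizontal arrows are isomorphisms by construction (each $L^{\pi}$ is the canonical isomorphism from ${\mathbb R}^m/\ker L^{\pi}$ onto $\mathrm{Im}\,L^{\pi}$), and the vertical arrows are isomorphisms because they are restrictions of invertible linear maps to subspaces mapped onto each other by~(1).

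\textbf{Step 3: the canonical basis of $N(\pi)$.} For part~(3) the plan is to exhibit $N(\pi)$ as the span of a combinatorially defined family of integer vectors indexed by the singularities of the associated translation surface, and to track this family under the Rauzy operations. Concretely, for each equivalence class $\sigma$ of indices under the relation generated by $i\sim j$ whenever the endpoints $\beta_i$ and $\beta_{\pi j}^{\pi}$ coincide (that is, the relation that collapses the union of bases of adjacent intervals on top and bottom to a single point of the surface), one defines a vector $v_{\sigma}\in\mathbb{R}^m$ whose $i$-th coordinate encodes the signed incidence of the class $\sigma$ with the $i$-th discontinuity. These vectors lie in $\ker L^{\pi}$ by a direct computation and span it because $\dim N(\pi)$ equals the number of such classes (this is Veech's formula for the dimension of $N(\pi)$ in terms of the stratum). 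The action of the Rauzy operations on the discontinuities is transparent: a single pair of identified endpoints is swapped, producing a bijection between the classes for $\pi$ and for $c\pi$, and under this bijection one checks using the formulas~(\ref{mat_a}),~(\ref{mat_b}) that ${\mathcal A}^{-1}(c,\pi)$ sends $v_{\sigma}$ to $v_{\sigma'}$, where $\sigma'$ is the image class.

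\textbf{The main obstacle.} The identity~\eqref{keyveechidentity}, although conceptually a single line, requires a careful case analysis for $c=a$ because the matrix ${\mathcal A}(a,\pi)$ acts by a nontrivial cyclic permutation on a block of indices determined by $\pi^{-1}m$, so the bookkeeping of which inversions are created or destroyed is delicate. The combinatorial verification in Step~3 that the vectors $v_{\sigma}$ are permuted correctly is similarly the most error-prone part, since one must match the abstract description of $N(\pi)$ with the explicit action of the Rauzy matrices on coordinates.
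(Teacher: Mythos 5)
The paper itself does not prove Proposition \ref{veechtokz}; it attributes the result to Veech (\cite{veechamj}), so you are being asked to supply the missing argument rather than to reproduce one from the text. That said, your overall architecture is correct: reducing parts (1) and (2) to the conjugation identity $L^{c\pi}={\mathcal A}^t(c,\pi)\,L^{\pi}\,{\mathcal A}(c,\pi)$ is exactly the right move, and once that identity is in hand the rest of (1) and (2) is the linear-algebraic bookkeeping you describe.

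Your Step 1, however, proposes the hardest available route. You do not need an entrywise comparison for the conjugation identity, because the paper has already handed you a conceptual proof. Formula (\ref{adelta}) shows that the height vector is $h=L^{\pi}\delta$; the definition of the Veech map $\U$ gives $\delta'={\mathcal A}(c,\pi)^{-1}\delta$ and $\la'={\mathcal A}(c,\pi)^{-1}\la$; and the invariance of the area $\sum_r\la_rh_r$ under $\U$ (formula (\ref{area}) and the statement that $\U$ preserves area) forces $h'={\mathcal A}^t(c,\pi)h$. Writing $h'=L^{c\pi}\delta'$ yields $L^{c\pi}{\mathcal A}(c,\pi)^{-1}\delta={\mathcal A}^t(c,\pi)L^{\pi}\delta$ for every $\delta$ in the cone $K(\pi)$, and since this cone spans ${\mathbb R}^m$ the matrix identity follows without any case analysis on the Rauzy moves $a$, $b$. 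This is the argument you should lead with; the entrywise check is a fallback, not the proof.

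Step 3 is where the real content lies, and your sketch, while pointing in the right direction, is too thin to count as a proof. Saying that $N(\pi)$ has a basis of incidence vectors indexed by the separatrix classes and that the Rauzy move permutes those classes is the correct picture, but you need to (i) write down those vectors explicitly and verify they lie in $\ker L^{\pi}$, (ii) prove they span, which requires Veech's dimension count $\dim N(\pi)=\sigma-1$ (where $\sigma$ is the number of singularities) or an equivalent genus computation, and (iii) actually compute the action of ${\mathcal A}^{-1}(c,\pi)$ on each basis vector, not merely assert that the combinatorics works out. As it stands, the words 'by a direct computation' and 'one checks' are covering for most of the proof. This is the part of Veech's argument that cannot be made to disappear, and any write-up should own the length of that verification rather than compress it into a sentence.
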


Each space $H^{\pi}$ is thus endowed with a natural anti-symmetric bilinear form ${\mathcal L}_{\pi}$
defined, for $v_1, v_2\in H(\pi)$, by the formula
\begin{equation}
\label{bilpi}
{\mathcal L}_{\pi}(v_1, v_2)=\langle v_1, (L^{\pi})^{-1}v_2\rangle.
\end{equation}

(The vector $(L^{\pi})^{-1}v_2$ lies in ${\mathbb R}^m/N(\pi)$; since for all $v_1\in H(\pi)$, $v_2\in N(\pi)$
by definition we have $\langle v_1, v_2\rangle=0$, the right-hand side is well-defined.)

Consider the ${\mathscr T}^{{\cal A}^t}$-invariant subbundle
${\mathscr H}(\Delta(\R))\subset \Delta(\R)\times {\mathbb R}^m$ given by the formula
$$
{\mathscr H}(\Delta(\R))=\{((\la,\pi), v), (\la,\pi)\in\Delta(\R), v\in H(\pi)\}.
$$
as well as a quotient bundle
$$
{\mathscr N}(\Delta(\R))=\{((\la,\pi), v), (\la,\pi)\in\Delta(\R), v\in {\mathbb R}^m/N(\pi)\}.
$$

The bundle map ${\mathscr L}_{\R}: {\mathscr H}(\Delta(\R))\to {\mathscr N}(\Delta(\R))$ given by
${\mathscr L}_{\R}((\la,\pi), v)=((\la,\pi), L^{\pi}v)$ induces a bundle isomorphism between
${\mathscr H}(\Delta(\R))$ and ${\mathscr N}(\Delta(\R))$.

Both bundles can be naturally lifted to bundles ${\mathscr H}({\tilde {\cal V}}(\R))$, ${\mathscr N}({\tilde {\cal  V}}(\R))$
over the space ${\tilde {\cal V}}(\R)$ of zippered rectangles; they are naturally invariant
under the corresponding skew-product flows  $P^{s, \overline {\mathcal A}^t}$, $P^{s, \overline {\mathcal A}^{-1}}$,
and the map ${\mathscr L}_{\R}$ lifts to a bundle isomorphism between
${\mathscr H}({\tilde {\cal V}}(\R))$ and ${\mathscr N}({\tilde {\cal V}}(\R))$.

Take ${\mathscr X}\in {\tilde {\mathcal V}}(\R)$ and write  $\pi_{\R}({\mathscr X})=(M({\mathscr X}), \omega({\mathscr X}))$.
Veech \cite{veechamj2} has shown that the map $\pi_{\R}$ lifts to a bundle epimorphism
 ${\tilde \pi}_{\R}$
from ${\mathscr H}({\tilde {\mathcal V}}(\R))$ onto ${\mathbb H}^1(\HH)$ that intertwines
the cocycle ${\overline {\mathcal A}}^t$ and the Kontsevich-Zorich cocycle ${\bf A}_{KZ}$:

\begin{proposition}[Veech]
For almost every ${\mathscr X}\in {\tilde {\cal V}}(\R)$, ${\mathscr X}=(\la,\pi,\delta)$,
there exists an isomorphism ${\mathcal I}_{\mathscr X}: H(\pi)\to H^1(M({\mathscr X}), {\mathbb R})$
such that
\begin{enumerate}
\item
the map
${\tilde \pi}_{\R}: {\mathscr H}(\Delta(\R)) \to {\mathbb H}^1(\HH)$ given by
$$
{\tilde \pi}_{\R}({\mathscr X}, v)=(\pi_{\R}({\mathscr X}), {\mathcal I}_Xv)
$$
induces a measurable bundle epimorphism from ${\mathscr H}(\Delta(\R))$ onto ${\mathbb H}^1(\HH)$;
\item the diagram
$$
\begin{CD}
{\mathscr H}({\tilde {\mathcal V}}(\R))@ > {\tilde \pi}_{\R}  >> {\mathbb H}^1(\HH) \\
@ VVP^{s, \overline {\mathcal A}^t} V          @VV g_s^{{\bf A}_{KZ}} V   \\
 {\mathscr H}({\tilde {\mathcal V}}(\R))@ > {\tilde \pi}_{\R}  >> {\mathbb H}^1(\HH)\\
\end{CD}
$$
is commutative;
\item for ${\mathscr X}=(\la,\pi,\delta)$, the isomorphism ${\mathcal I}_X$ takes  the bilinear form ${\mathcal L}_{\pi}$ on $H(\pi)$,
defined by (\ref{bilpi}), to the cup-product on  $H^1(M({\mathscr X}), {\mathbb R})$.
\end{enumerate}
\end{proposition}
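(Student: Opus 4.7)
The plan is to construct ${\mathcal I}_{\mathscr X}$ explicitly from Veech's cycles $\gamma_i({\mathscr X})$ spanning $H_1(M({\mathscr X}), {\mathbb Z})$, and then verify each property in turn by tracing what happens under a single Rauzy--Veech step and under the $P^s$-flow. First, for ${\mathscr X}=(\lambda,\pi,\delta)$ I would introduce the surjection
$$
J_{\mathscr X}: {\mathbb R}^m \to H_1(M({\mathscr X}),{\mathbb R}), \qquad J_{\mathscr X}(e_i)=\gamma_i({\mathscr X}),
$$
and check, as Veech does in \cite{veechamj}, that $\ker J_{\mathscr X}=N(\pi)$; this uses only that the $\gamma_i$ span homology and that the relations between them read off from the combinatorics of the gluing are exactly the columns of $L^\pi$. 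Dualizing (via the standard inner product on ${\mathbb R}^m$ and Poincar\'e duality on $M({\mathscr X})$) and composing with $L^\pi: {\mathbb R}^m/N(\pi)\to H(\pi)$ gives the sought isomorphism ${\mathcal I}_{\mathscr X}: H(\pi)\to H^1(M({\mathscr X}),{\mathbb R})$, and measurability in ${\mathscr X}$ is immediate since ${\mathcal I}_{\mathscr X}$ depends only on $\pi$ and on the continuous geometric data $(\lambda,\delta)$.

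Next I would verify commutativity of the diagram by reducing to two elementary moves: the single Rauzy--Veech step ${\mathcal U}$ and the flow $P^s$ restricted to a piece where no step occurs. For the flow piece there is nothing to prove: $P^s$ does not alter the combinatorics $\pi$ and the corresponding abelian differential is genuinely obtained from $(M,\omega)$ by the Teichm\"uller deformation $e^s\Re\omega+ie^{-s}\Im\omega$, for which parallel transport with respect to the Gauss--Manin connection is the identity on cohomology classes. For the discrete step, I would argue that ${\mathcal U}(\lambda,\pi,\delta)$ defines \emph{the same} translation surface $M({\mathscr X})$, only cut and re-glued into a new system of rectangles, so there is a canonical identification of the underlying Riemann surfaces and of $H^1(M,{\mathbb R})$ with itself; on the other hand the new cycles $\gamma_i'$ are expressed in terms of the old ones by the rows of ${\mathcal A}^{-1}(c,\pi)$, which is exactly the content of statement 1--2 of Proposition \ref{veechtokz}. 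Dualizing this change-of-basis yields precisely the action of ${\mathcal A}^t(c,\pi)$ on $H(\pi)\to H(c\pi)$, and combining with the already verified flow-part gives the full commutativity of the diagram with ${\overline {\mathcal A}}^t$ and $g_s^{{\bf A}_{KZ}}$.

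For statement 3 I would compute the cup product on $H^1(M({\mathscr X}),{\mathbb R})$ in the basis dual to $\gamma_1,\ldots,\gamma_m$: by Poincar\'e duality this equals the intersection form on $H_1$ evaluated on $\gamma_i,\gamma_j$, and a direct inspection of the two cycles $\gamma_i,\gamma_j$ (each made of a vertical segment in a rectangle and a horizontal arc along the base $I^{(0)}$) shows that their algebraic intersection number is $+1, -1$, or $0$ according as $(i,j)$ contributes $+1,-1,0$ to $L^\pi_{ij}$ in Veech's definition. The definition (\ref{bilpi}) of ${\mathcal L}_\pi$ is then just the pushforward of the intersection form through $L^\pi$, which matches the cup product through ${\mathcal I}_{\mathscr X}$ by construction.

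The main obstacle will be the discrete step: one must carefully check that the identification of $M({\mathscr X})$ with $M({\mathcal U}{\mathscr X})$ really does act on the basis $\{\gamma_i\}$ by the transposes of the Rauzy matrices ${\mathcal A}(a,\pi)^{-1}$ or ${\mathcal A}(b,\pi)^{-1}$, separately in the cases $\lambda_{\pi^{-1}m}>\lambda_m$ and $\lambda_{\pi^{-1}m}<\lambda_m$, and in particular to verify that the classes in $N(\pi)$, which have no geometric meaning as cycles, transform consistently so that passage to the quotient ${\mathbb R}^m/N(\pi)$ and then to $H(\pi)$ via $L^\pi$ is well-defined and intertwines the two cocycles. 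Once this geometric bookkeeping is done, the epimorphism property of ${\tilde \pi}_{\mathcal R}$ follows from the fact, already recalled after Theorem \ref{zipmodule}, that $\pi_{\mathcal R}$ is finite-to-one and covers the set of abelian differentials with minimal vertical and horizontal foliations, a set of full measure in $\mathcal H$.
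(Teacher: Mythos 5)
Your construction via Veech's cycles $\gamma_i({\mathscr X})$, the identification of $N(\pi)$ with the kernel of $(t_1,\dots,t_m)\mapsto\sum t_i\gamma_i$, and the identification of $H(\pi)\subset{\mathbb R}^m$ with $H^1(M({\mathscr X}),{\mathbb R})$ via evaluation of cohomology classes on the $\gamma_i$, is exactly the paper's approach. The paper dispatches the bilinear-form statement by citing Veech's Proposition 4.19 in \cite{V3} and leaves the intertwining of the diagram to the Rauzy--Veech formalism already recalled in Proposition \ref{veechtokz}, whereas you sketch both verifications directly (the two-move reduction to $P^s$ and one $\mathcal U$-step, and the intersection-number computation), but the underlying route is the same.
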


Proof: Recall that to each rectangle $\Pi_i$ Veech \cite{veechamj} assigns a cycle $\gamma_i(\la,\pi, \delta)$ in the homology
group $H_1(M, {\mathbb Z})$: if $P_i$ is the left bottom corner of $\Pi_i$ and $Q_i$ the left top corner, then the cycle is
the union of the vertical interval $P_iQ_i$ and the horizontal subinterval of $I^{(0)}(\la,\pi, \delta)$ joining $Q_i$ to $P_i$. It is
clear that the cycles $\gamma_i(\la,\pi, \delta)$ span $H_1(M, {\mathbb Z})$; furthermore, Veech shows that the cycle
$t_1\gamma_1+\dots +t_m\gamma_m$ is homologous to $0$ if and only if $(t_1, \dots, t_m)\in N(\pi)$.
We thus obtain an identification of ${\mathbb R}^m/N(\pi)$ and $H_1(M, {\mathbb R})$.
Similarly, the subspace of ${\mathbb R}^m$ spanned  by the vectors $(f(\gamma_1), \dots, f(\gamma_m))$, $f\in H^1(M, {\mathbb R})$,
is precisely $H(\pi)$.
The  identification of the bilinear form ${\mathcal L}_{\pi}$ with the cup-product is established in Proposition 4.19 in \cite{V3}.

The third statement of Proposition \ref{veechtokz} has the following important
\begin{corollary}
\label{hypzip}
Let ${\Prob}_{\cal V}$ be an ergodic $P^s$-invariant probability measure for the flow $P^s$ on ${\cal V}(\R)$
and let $\Prob_{\HH}=(\pi_{\R})_*{\Prob}_{\cal V}$ be the corresponding ${\bf g}_s$-invariant measure on $\HH$.
If the Kontsevich-Zorich cocycle acts isometrically on its neutral subspace with respect to $\Prob_{\HH}$, then the
cocycle ${\overline {\cal A}}^t$  also acts isometrically on its neutral subspace with respect to $\Prob_{\cal V}$.
\end{corollary}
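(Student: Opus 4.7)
The plan is to transfer the hypothesised isometric action of $A_{KZ}$ on its neutral subspace to $\overline{\mathcal A}^t$ using the dictionary of Proposition \ref{veechtokz}: the restriction $\overline{\mathcal A}^t|_{H(\pi)}$ is conjugate, via the fibrewise isomorphism $\mathcal I_{\mathscr X}$ and the finite-to-one factor map $\pi_{\R}$, to the Kontsevich--Zorich cocycle, while on the complementary quotient $\mathbb R^m/H(\pi)$ the cocycle $\overline{\mathcal A}^t$ acts by permutations of a canonical basis arising from part 3 of Proposition \ref{veechtokz}. Combining the two corresponding invariant inner products will give the desired one on the neutral subspace of $\overline{\mathcal A}^t$.

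First, since $\pi_{\R}\circ P^s=g_s\circ\pi_{\R}$ and $\mathcal I$ intertwines $\overline{\mathcal A}^t|_{H(\pi)}$ with $A_{KZ}$, the hypothesised $A_{KZ}$-invariant measurable inner product on $E^{0,KZ}_{\mathbf X}$ pulls back to a measurable $\overline{\mathcal A}^t$-invariant inner product on the subspace $E^{0,H}_{\mathscr X}:=\mathcal I_{\mathscr X}^{-1}(E^{0,KZ}_{\pi_{\R}(\mathscr X)})\subset H(\pi(\mathscr X))$. Next, part 3 of Proposition \ref{veechtokz} says that $\mathcal A^{-1}$ permutes a basis $\mathbf v_{\pi}$ of $N(\pi)$, and since $L^{\pi}$ is antisymmetric we have $H(\pi)=N(\pi)^{\perp}$ with respect to the standard inner product; dualising gives that $\mathcal A^t$ permutes the dual basis of $\mathbb R^m/H(\pi)\cong N(\pi)^{*}$, hence is isometric on all of $\mathbb R^m/H(\pi)$ with respect to the inner product making $\{\mathbf v_{\pi}^{*}\}$ orthonormal. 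In particular all Lyapunov exponents of $\overline{\mathcal A}^t$ on the quotient are zero, so the strictly expanding and contracting Oseledets subspaces of $\overline{\mathcal A}^t$ on $\mathbb R^m$ are contained in $H(\pi)$, and the zero Lyapunov subspace $E^0_{\mathscr X}$ fits into the equivariant short exact sequence
\begin{equation*}
0\longrightarrow E^{0,H}_{\mathscr X}\longrightarrow E^0_{\mathscr X}\stackrel{q}{\longrightarrow}\mathbb R^m/H(\pi(\mathscr X))\longrightarrow 0.
\end{equation*}

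Given a measurable $\overline{\mathcal A}^t$-invariant splitting $E^0_{\mathscr X}=E^{0,H}_{\mathscr X}\oplus W_{\mathscr X}$, the required inner product on $\mathbb R^m$ is the orthogonal sum of the pulled-back KZ inner product on $E^{0,H}$ and the permutation-invariant inner product transported along $q|_{W_{\mathscr X}}$ onto $W_{\mathscr X}$, extended arbitrarily off $E^0$; invariance on $E^0$ is then immediate from invariance on the two summands. The principal obstacle is thus the construction of this measurable invariant splitting. I would construct $W_{\mathscr X}$ by first taking the orthogonal lift $\mathbb R^m/H(\pi)\to N(\pi)\hookrightarrow\mathbb R^m$ of $\xi$, then projecting onto $E^0_{\mathscr X}$ along the Oseledets subspaces $E^{\pm}\subset H(\pi)$; the resulting measurable section $s$ fails to be equivariant by a coboundary $\delta_{\mathscr X}:=\overline{\mathcal A}^t\,s_{\mathscr X}-s_{P^s\mathscr X}\circ D$ with values in $E^{0,H}$ that is uniformly bounded, because $D$ is isometric on the quotient and the Oseledets projections are bounded on the neutral subspaces. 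Since $\overline{\mathcal A}^t$ acts isometrically on $E^{0,H}$ in the pulled-back inner product from the first step, a Gottschalk--Hedlund style argument solves the cohomological equation $\phi\circ D-\overline{\mathcal A}^t\phi=\delta$ for a measurable $\phi$, and the corrected section $s+\phi$ defines the sought invariant complement $W_{\mathscr X}$. Equivalently, the restriction of $\overline{\mathcal A}^t$ to $E^0_{\mathscr X}$ takes values in a compact group of operators of some measurable bundle metric, whence semisimplicity forces the extension to split.
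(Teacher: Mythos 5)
Your dissection of $\overline{\mathcal{A}}^t$ --- the Kontsevich--Zorich (KZ) cocycle on the invariant subbundle $H(\pi)$, and a permutation cocycle on $\mathbb{R}^m/H(\pi)$ obtained by dualising the basis-permutation of $\mathcal{A}^{-1}|_{N(\pi)}$ in part 3 of Proposition \ref{veechtokz} --- is correct, and is what the paper (which gives no argument beyond citing that proposition) is implicitly exploiting. The gap is in producing the invariant complement $W$. Your claim that the coboundary $\delta$ is uniformly bounded ``because \dots the Oseledets projections are bounded on the neutral subspaces'' is false in general: the operator norm of the projection onto $E^0$ along $E^+\oplus E^-$ is controlled by the angle between the Oseledets subspaces, and the Oseledets--Pesin theory yields only a subexponential (tempered) bound on these angles, not a uniform one, so $\delta$ need not lie in $L^\infty$. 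Moreover, even if $\delta$ were bounded, a Gottschalk--Hedlund-type conclusion requires the (twisted) Birkhoff sums of $\delta$ to be recurrent, a strictly stronger property than boundedness of $\delta$ itself: the shear $\begin{pmatrix}1&\delta\\0&1\end{pmatrix}$ over any ergodic base always has zero Lyapunov exponents, yet it is conjugate to an isometric cocycle only when the Birkhoff sums of $\delta$ are recurrent, and zero mean (Atkinson's theorem) alone does not guarantee this. Your closing appeal to ``semisimplicity'' because the restriction ``takes values in a compact group of operators of some measurable bundle metric'' is circular, since compactness of that operator group is exactly the invariant-metric assertion you are trying to prove.

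For the paper's main application the gap is invisible: for the Masur--Veech smooth measure the KZ cocycle has no zero exponent (Forni; Avila--Viana), so $E^{0,H}=0$, the projection $E^0\to\mathbb{R}^m/H$ is an isomorphism, and the permutation-invariant metric pulls back directly. It is precisely in the regime of a nontrivial KZ neutral subspace --- the regime the corollary is nominally addressing --- that the extension must genuinely be shown to split, and your argument as written does not establish that splitting.
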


Note that the hypothesis of Corollary \ref{hypzip} is satisifed, in particular, for the Masur-Veech
smooth measure on the moduli space of abelian differentials.

\subsection{Zippered rectangles and Markov Compacta.}
\subsubsection{The main lemmas}

We now present the precise connection between Veech's zippered rectangles and Markov compacta.

Given a finite set ${\mathfrak G}_{0}\subset {\mathfrak G}$, denote
$$
\Omega_{{\mathfrak G}_0}=\{\omega\in\Omega: \omega_n\in {\mathfrak G}_{0}, n\in {\mathbb Z}\};
$$
$$
{\overline \Omega}_{{\mathfrak G}_0}=\{\oomega=(\omega, r): \omega\in\Omega_{{\mathfrak G}_{0}}\}.
$$

Our first lemma gives a map from the Veech space of zippered rectangles to the space $\Oomega$ of measured Markov compacta.
\begin{lemma}
\label{zipmarkovlemmaone}
Let $\R$ be a Rauzy class of irreducible permutations.
There exists a finite set ${\mathfrak G}_{\R}\subset {\mathfrak G}$
and a Vershik's ordering $\oo$ as well as a reverse Vershik's ordering ${\tilde \oo}$ on each $\Gamma\in{\mathfrak G}_{\R}$
such that the following is true.
There exists a map
$$
{\overline {\mathfrak Z}}_{\R}: {\tilde {\cal V}}_{ue}(\R)\to {\overline \Omega}_{{\mathfrak G}_{\R}},
$$
such that the diagram
$$
\begin{CD}
{\tilde {\cal V}}_{ue}(\R)@ > {\overline {\mathfrak Z}}_{\R}  >> {\overline \Omega} \\
@ VVP^s V          @VV g_s V   \\
{\tilde {\cal V}}_{ue}(\R)@ > {\overline {\mathfrak Z}}_{\R}  >> {\overline \Omega} \\
\end{CD}
$$
is commutative and if $\Prob_{{\cal V}}$ is an ergodic probability $P^s$-invariant measure, then we have
\begin{equation}
\label{zrpinu}
({\overline {\mathfrak Z}}_{\R})_*\Prob_{{\cal V}}\in {\mathscr P}^+.
\end{equation}
\end{lemma}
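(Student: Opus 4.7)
The plan is to construct $\overline{\mathfrak{Z}}_{\mathcal R}$ from the bi-infinite Rauzy--Veech expansion of a zippered rectangle. For each pair $(c,\pi)$ with $c\in\{a,b\}$ and $\pi\in\mathcal R$, I would attach a graph $\Gamma(c,\pi)\in\mathfrak G$ on $m$ vertices whose adjacency matrix is the Rauzy--Veech matrix $\mathcal A(c,\pi)$ of (\ref{mat_a})--(\ref{mat_b}); the edges of $\Gamma(c,\pi)$ are labelled by the subintervals that arise in the refinement. Setting $\mathfrak G_{\mathcal R}:=\{\Gamma(c,\pi):c\in\{a,b\},\,\pi\in\mathcal R\}$, which is finite, I would then define $\overline{\mathfrak Z}_{\mathcal R}({\mathscr X})=(\omega,r)$ for ${\mathscr X}\in\tilde{\mathcal V}_{ue}(\mathcal R)$ as follows: the bi-infinite sequence $\mathcal U^n{\mathscr X}$ (well-defined on $\mathcal V_\infty(\mathcal R)$) produces a sequence $(c_n,\pi_n)_{n\in\mathbb Z}$, and we put $\omega_n:=\Gamma(c_n,\pi_n)$; the parameter $r$ is read off by writing ${\mathscr X}=P^s x$ with $x\in\mathcal Y'(\mathcal R)$, $0\le s<\tau(x)$, and setting $r=e^s$.

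I would then equip each $\Gamma(c,\pi)$ with a Vershik ordering $\mathfrak o$ and a reverse Vershik ordering $\tilde{\mathfrak o}$ coming from the natural left-to-right order of the subintervals of the Rauzy--Veech refinement (before and after the permutation, respectively). Because each $\omega_n$ carries an intrinsic ordering depending only on the label $(c_n,\pi_n)$, shift-invariance of the assignment $\omega\mapsto\mathfrak o(\omega)$ is automatic. The resulting flows $h_t^{+}$ on the Markov compactum $X(\omega)$ then coincide, under the natural identification of the base of the zippered rectangle with the transversal of $X(\omega)$, with the vertical translation flow on $\pi_{\mathcal R}({\mathscr X})$.

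The commutation $\overline{\mathfrak Z}_{\mathcal R}\circ P^s=g_s\circ\overline{\mathfrak Z}_{\mathcal R}$ follows from the identity (\ref{tau1}), which gives $\tau(\mathscr X)=-\log|\mathcal A^{-1}(c,\pi)\lambda|$: under our labelling this is precisely the roof $\tau^1(\omega)=-\log|\lambda^{(1)}(\omega)|$ of (\ref{rooffunction}), since the normalized length vectors of successive Rauzy--Veech iterates form the positive reverse-equivariant sequence of the Markov compactum whose adjacency matrices are $\mathcal A(c_n,\pi_n)$. Thus $P^s$ is a suspension flow over $\mathcal U$ with the same roof, under the same identification of base, as $g_s$ is over $\sigma$, so the two suspension flows are intertwined by $\overline{\mathfrak Z}_{\mathcal R}$.

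It remains to verify (\ref{zrpinu}): $\mu:=(\overline{\mathfrak Z}_{\mathcal R})_*\mathbb P_{\mathcal V}$ lies in $\mathscr P^+$. Invariance and ergodicity of $\mu$ under $g_s$ are immediate from the commutative diagram together with ergodicity of $\mathbb P_{\mathcal V}$; the normalization $\int\tau^1\,d\mu=1$ is forced by the matching of roofs and the $P^s$-invariance of $\mathbb P_{\mathcal V}$. The one substantive step, which I expect to be the main obstacle, is the verification of the positivity requirement: one must produce $\Gamma_+\in\mathfrak G_+$ and a finite word $w\in W(\mathfrak G_{\mathcal R})$ with $\Gamma(w)=\Gamma_+$ such that the cylinder $\{\omega_0=w_0,\dots,\omega_n=w_n\}$ has positive $\mu$-measure. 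This rests on a classical theorem of Veech on Rauzy classes, to the effect that within any Rauzy class some finite composition of Rauzy--Veech operations yields a matrix with strictly positive entries; the corresponding concatenation $\Gamma_+$ of graphs in $\mathfrak G_{\mathcal R}$ lies in $\mathfrak G_+$, and positivity of $\mu$ on the associated cylinder follows because the preimage of that cylinder under $\overline{\mathfrak Z}_{\mathcal R}$ is an open non-empty subset of $\tilde{\mathcal V}_{ue}(\mathcal R)$ and hence meets the support of any $P^s$-invariant probability with the required genericity.
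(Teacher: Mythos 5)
Your plan follows the paper's proof: construct ${\overline {\mathfrak Z}}_{\R}$ from the bi-infinite Rauzy--Veech expansion, assigning one graph per Rauzy move $(c,\pi)$, equipping these graphs with Vershik and reverse Vershik orderings read off from the subinterval order, and intertwining $P^s$ with $g_s$ by matching the roof $\tau^0(\lambda,\pi)=-\log|\mathcal A^{-1}(c,\pi)\lambda|$ with $\tau^1(\omega)=-\log|\lambda^{(1)}(\omega)|$. One bookkeeping correction: the adjacency matrix of $\Gamma(c,\pi)$ must be $\mathcal A^t(c,\pi)$, not $\mathcal A(c,\pi)$. With adjacency matrices $A_l$, the positive reverse-equivariant sequence satisfies $\lambda^{(l)}=A_l^t\lambda^{(l+1)}$, whereas the Rauzy--Veech length vectors satisfy $\lambda^{(l)}\propto\mathcal A(c_l,\pi_l)\lambda^{(l+1)}$; hence $A_l=\mathcal A^t(c_l,\pi_l)$ is forced, and your identification of the reverse-equivariant sequence with the lengths has a transpose mismatch as written.

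The genuine gap is the justification of (\ref{zrpinu}). You correctly isolate positivity of the measure of a cylinder carrying a positive Rauzy product as the substantive step, but the argument you give --- that the preimage of such a cylinder is open and nonempty and hence ``meets the support of any $P^s$-invariant probability with the required genericity'' --- does not establish positive measure for an arbitrary ergodic $P^s$-invariant probability. Such a measure can be supported on a proper closed invariant subset that avoids a given open cylinder; two ergodic measures may even have disjoint supports, so openness and nonemptiness alone are not enough. What is needed, and what the paper cites, is a stronger lemma of Veech \cite{veech}: \emph{every} finite $P^s$-invariant measure assigns positive probability to a cylinder on which the product of Rauzy--Veech matrices is strictly positive. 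This rests on a combinatorial fact about Rauzy classes (from every $\pi\in\R$ one reaches a positive product after a bounded number of Rauzy moves), combined with Poincar{\'e} recurrence forcing the associated cylinders to carry positive mass; it is not a soft consequence of the bare existence of one positive word.
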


Our second lemma establishes a correspondence between a zippered rectangle ${\mathscr X}$ and the corresponding Markov compactum ${\overline {\mathfrak Z}}_{\R}({\mathscr X})$.

\begin{lemma}
\label{zipmarkovlemmatwo}
For any ${\mathscr X}\in {\tilde {\cal V}}_{ue}(\R)$, an
${\bf m}_{{\mathscr X}}$-almost surely defined map
$$
{\overline {\mathscr J}}_{\mathscr X}: M({\mathscr X})\to X(\oomega_{{\mathscr X}})
$$
such that the  map
$$
{\overline {\mathfrak Z}}^{{\mathfrak X}}_{\R}: {\mathfrak X}{\tilde {\cal V}}_{ue}(\R)\to {\mathfrak X}{\overline \Omega}
$$
is given by the formula
$$
{\overline {\mathfrak Z}}^{{\mathfrak X}}_{\R}({\mathscr X}, x)=({\overline {\mathfrak Z}}_{\R}{\mathscr X}, {\overline {\mathscr J}}_{\mathscr X}x),
$$
then the diagram
$$
\begin{CD}
 {\mathfrak X}{\tilde {\cal V}}_{ue}(\R)  @ > {\overline {\mathfrak Z}}_{\R}^{{\mathfrak X}}  >> {\mathfrak X}{\overline \Omega} \\
@ VVP^{s, {\mathfrak X}} V          @VV g_s^{{\mathfrak X}} V   \\
{\mathfrak X}{\tilde {\cal V}}_{ue}(\R)  @ > {\overline {\mathfrak Z}}_{\R}^{{\mathfrak X}}  >> {\mathfrak X}{\overline \Omega} \\
\end{CD}
$$
is commutative. The map ${\overline {\mathscr J}}_{\mathscr X}$ sends the vertical flow $h_t^+$ on ${\mathscr X}$ to the flow
$h_t^{+,\oo}$ on $X({\overline {\mathfrak Z}}_{\R}({\mathscr X}))$;   the horizontal flow $h_t^-$ on ${\mathscr X}$ to the flow
$h_t^{-,{\tilde \oo}}$ on $X({\overline {\mathfrak Z}}_{\R}({\mathscr X}))$ and induces isomorphisms between the space $\B^+_{{\mathscr X}}$ and the space
$\B^+_{({\overline {\mathfrak Z}}_{\R}({\mathscr X}))}$; the space $\B^-_{{\mathscr X}}$ and the space
$\B^-_{({\overline {\mathfrak Z}}_{\R}({\mathscr X}))}$.
\end{lemma}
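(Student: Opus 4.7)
The plan is to use the bi-infinite Rauzy-Veech expansion of the zippered rectangle $\mathscr{X}$, combined with the symbolic coding described in Section 1.8.2, to construct the map $\overline{\mathscr{J}}_{\mathscr{X}}$ explicitly. By construction in Lemma \ref{zipmarkovlemmaone}, the graph $\omega_n$ of $\overline{\omega}_{\mathscr{X}}=(\omega,r)$ records the incidence data of the partition $M(\mathscr{X})=\Pi_1^{(n)}\sqcup\dots\sqcup\Pi_m^{(n)}$ into weakly admissible rectangles obtained at the $n$-th level of the bi-infinite Rauzy-Veech sequence, the edges of $\omega_n$ corresponding to connected components of $\Pi_i^{(n)}\cap\Pi_j^{(n-1)}$. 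For $x\in M(\mathscr{X})$ lying outside the (countable) union of $h_t^{\pm}$-orbits through zeros of $\omega$, I would set $(\overline{\mathscr{J}}_{\mathscr{X}}(x))_n=e$ where $e$ is the unique such connected component containing $x$. The Markov condition on the sequence of partitions, verified in Section 1.8.2, guarantees $F((\overline{\mathscr{J}}_{\mathscr{X}}(x))_n)=I((\overline{\mathscr{J}}_{\mathscr{X}}(x))_{n-1})$, so the image is indeed a path in $X(\overline{\omega}_{\mathscr{X}})$; the minimality condition (\ref{minimality}) then ensures almost-sure bijectivity onto $X_{\mathfrak{o}}$.

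Next, for the commutativity of the diagram, I would observe that $P^s$ acts on the flat structure by expanding horizontally and contracting vertically by $e^s$, keeping the combinatorial partition invariant but rescaling widths and heights by $e^{\pm s}$; this is exactly compensated by the renormalization flow $g_s$ on $\overline{\Omega}$ (which, via (\ref{nur}), rescales $\nu^{\pm}_{\omega}$ by $e^{\mp s}$) combined with the aggregation $\mathfrak{t}_s$ whenever the rescaling crosses the fundamental domain $\Omega_0$ (\ref{funddomain}). Since Lemma \ref{zipmarkovlemmaone} already intertwines $P^s$ with $g_s$, and $\mathfrak{t}_s$ is defined precisely to identify the combinatorially equivalent partitions, the commutativity of the extended diagram for $P^{s,\mathfrak{X}}$ and $g_s^{\mathfrak{X}}$ follows by transport.

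For the conjugacy of flows, the Vershik's ordering $\mathfrak{o}$ produced by Lemma \ref{zipmarkovlemmaone} orders the edges starting at a given vertex $i$ by their vertical position inside $\Pi_i^{(n)}$; the induced linear ordering on each leaf of $\mathcal{F}^+(X(\overline{\omega}_{\mathscr{X}}))$ therefore matches the order of intersections of an $h_t^+$-orbit with the rectangles. Since the positive equivariant sequence $h^{(n)}$ records precisely the heights of the rectangles (so that $\nu^+_{X(\overline{\omega}_{\mathscr{X}})}$ transports to vertical Lebesgue measure under $\overline{\mathscr{J}}_{\mathscr{X}}$), Propositions \ref{maxt}-\ref{mint} imply that $\overline{\mathscr{J}}_{\mathscr{X}}$ conjugates $h_t^+$ to $h_t^{+,\mathfrak{o}}$; the symmetric argument using the reverse Vershik's ordering $\tilde{\mathfrak{o}}$ handles $h_t^-$. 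The isomorphisms $\mathfrak{B}^{\pm}_{\mathscr{X}}\cong \mathfrak{B}^{\pm}_{\overline{\mathfrak{Z}}_{\mathcal{R}}(\mathscr{X})}$ then follow by pushforward/pullback of finitely-additive measures: horizontal holonomy-invariance of $\Phi^+\in\mathfrak{B}^+_{\mathscr{X}}$ corresponds to the defining invariance $\Phi^+(\gamma_n^+(x))=\Phi^+(\gamma_n^+(x'))$ whenever $F(x_n)=F(x'_n)$, and the H\"older estimate translates, via the identification of $\nu^+$ with vertical Lebesgue measure, into the exponential decay condition defining $\mathfrak{B}^+(X(\overline{\omega}_{\mathscr{X}}))$.

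The main obstacle will be the careful bookkeeping on the exceptional measure-zero set: one must verify that the set of points in $M(\mathscr{X})$ where $\overline{\mathscr{J}}_{\mathscr{X}}$ is either undefined or multi-valued consists precisely of orbits through singularities of $\omega$ (countably many leaves), and that these collapse correctly under the equivalence relation $\sim_{\mathfrak{o}}$ described in Proposition \ref{eqrel} so that $\overline{\mathscr{J}}_{\mathscr{X}}$ descends to an almost-everywhere bijection with $X_{\mathfrak{o}}$. A secondary technical point is checking that the transported H\"older cocycles really land in $\mathfrak{B}^{\pm}$ rather than merely in the continuous class $\mathfrak{B}^{\pm}_c$; this requires comparing the H\"older exponent of $\Phi^{\pm}$ in Assumption \ref{bplusx} with the Lyapunov exponents of the renormalization cocycle $\overline{\mathbb{A}}$, but this follows from (\ref{zrpinu}) together with Propositions \ref{lyapidentif} and \ref{hoeldersimple}.
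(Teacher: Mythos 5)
Your proposal follows essentially the same approach as the paper: the coding $\overline{\mathscr{J}}_{\mathscr{X}}$ is built from the bi-infinite Rauzy--Veech sequence of rectangle partitions, the diagram commutes because $\mathfrak{t}_s$ and $g_s$ are defined precisely to track the rescaling induced by $P^s$, and the flow conjugacy and cocycle isomorphisms follow by transport of structure under the (almost-surely bijective) coding. You are in fact more careful than the paper's own proof, which is largely assertion-based; in particular, you rightly flag the need to reconcile the geometric H\"older condition of Assumption~\ref{bplusx} with the exponential-decay definition of $\mathfrak{B}^+(X)$, a translation the paper treats as immediate ``by definition'' but which in fact rests on Assumption~\ref{lowexpbnd} (and hence holds in the almost-sure rather than the pointwise-uniform sense).
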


Our third lemma shows that the map ${\overline {\mathfrak Z}}_{\R}$ intertwines the Kontsevich-Zorich cocycle and the  renormalization cocycle.
\begin{lemma}
\label{zipmarkovlemmathree}
If the map
$$
{\overline {\mathfrak Z}}^{\prime}_{\R}: {\tilde {\cal V}}_{ue}(\R)\times {\mathbb R}^m\to {\overline \Omega}\times {\mathbb R}^m
$$
is given by the formula
$$
{\overline {\mathfrak Z}}^{\prime}_{\R}({\mathscr X}, v)=({\overline {\mathfrak Z}}_{\R}{\mathscr X}, v),
$$
then  the diagram
$$
\begin{CD}
{\tilde {\cal V}}_{ue}(\R)\times {\mathbb R}^m  @ > {\overline {\mathfrak Z}}_{\R}^{{\prime}}  >> {\overline \Omega}\times {\mathbb R}^m \\
@ VVP^{s, {\mathcal A}^t} V          @VV g_s^{{\mathbb A}} V   \\
{\tilde {\cal V}}_{ue}(\R)\times {\mathbb R}^m  @ > {\overline {\mathfrak Z}}_{\R}^{{\prime}}  >> {\overline \Omega}\times {\mathbb R}^m \\
\end{CD}
$$
is commutative.
\end{lemma}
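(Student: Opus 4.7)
The plan is to reduce the claim to a single return time of the renormalization flow and then propagate via the cocycle property. Both $P^{s,{\mathcal A}^t}$ and $g_s^{{\mathbb A}}$ are skew-products with multiplicative matrix cocycles (namely $\overline{\mathcal A}^t(s,{\mathscr X})$ over $P^s$ and $\overline{\mathbb A}(s,\oomega)$ over $g_s$), and the base diagram
$$
\begin{CD}
{\tilde {\cal V}}_{ue}(\R) @ > {\overline {\mathfrak Z}}_{\R}  >> {\overline \Omega} \\
@ VVP^s V          @VV g_s V   \\
{\tilde {\cal V}}_{ue}(\R) @ > {\overline {\mathfrak Z}}_{\R}  >> {\overline \Omega}
\end{CD}
$$
already commutes by Lemma \ref{zipmarkovlemmaone}. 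Hence it is enough to verify that for almost every ${\mathscr X}\in\tilde{\cal V}_{ue}({\R})$,
$$
\overline{\mathcal A}^t(s,{\mathscr X})\ =\ \overline{\mathbb A}\bigl(s,\overline{\mathfrak Z}_{\R}({\mathscr X})\bigr)
$$
for all $s\in{\mathbb R}$, and, by the cocycle property of both sides combined with the commutativity above, this will follow once it is known for $s$ spanning a single return time $\tau^0(\la,\pi)=\tau^1(\omega)$.

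First, I would analyze a single step. For ${\mathscr X}=(\la,\pi,\delta)$ with first-return type $c\in\{a,b\}$, during one return time of $P^s$ to $\Y'(\R)$ the cocycle $\overline{\mathcal A}^t$ acquires exactly the factor ${\mathcal A}^t(c,\pi)$ by the definition of $\overline{\mathcal A}^t$ via $\tilde n(\la,\pi,\delta,s)$. Correspondingly, during the matching return time of $g_s$ to the fundamental domain $\Omega_0$ the cocycle $\overline{\mathbb A}$ acquires the factor $A(\omega_0)$, where $\omega_0$ is the zeroth graph of $\omega=\overline{\mathfrak Z}_{\R}({\mathscr X})$. Thus the whole lemma reduces to the single-step identity
$$
A(\omega_0)\ =\ {\mathcal A}^t(c,\pi).
$$

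Second, I would verify this identity directly from the construction in Lemma \ref{zipmarkovlemmaone}. The Markov compactum $X(\omega)$ encodes the sequence of refinements of the Markov partition of $M({\mathscr X})$ by weakly admissible rectangles induced by Rauzy--Veech induction, with edges of $\omega_0$ in bijection with the rectangles produced after one step. The incidence matrix $A(\omega_0)$ is determined by declaring that the height vector is the positive equivariant sequence for the Markov compactum; that is, $h^{(1)}=A(\omega_0)h^{(0)}$, where $h^{(0)}=h(\pi,\delta)$ is given by \eqref{adelta}. On the zippered-rectangles side, a Rauzy--Veech step transforms widths as $\la\mapsto{\mathcal A}^{-1}(c,\pi)\la$; area preservation then forces heights to transform by the transpose, $h\mapsto{\mathcal A}^t(c,\pi)h$. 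Comparing the two transformation laws gives $A(\omega_0)={\mathcal A}^t(c,\pi)$, as required.

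Third, once the single-step identity holds, one writes $s$ as a sum over an integer number of return times (plus a fractional part during which both cocycles are locally constant equal to the identity by their respective definitions), and multiplies the factors in the same order on both sides to obtain the full cocycle identity. The identity map on ${\mathbb R}^m$ intertwines each factor, so the diagram commutes.

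The main obstacle is the bookkeeping in the second step: one must be careful that the transpose ${\mathcal A}^t$, rather than ${\mathcal A}$ or ${\mathcal A}^{-1}$, appears on the Markov-compactum side. This is dictated by the choice to identify the equivariant sequence of the Markov compactum with the height vector (as opposed to the width vector), which is unavoidable since the heights govern the vertical flow $h_t^+$ whose symbolic image is the flow along ${\mathcal F}^+$ built from the equivariant sequence. Everything else is formal manipulation of the definitions of $\tilde n(\la,\pi,\delta,s)$, $\tau^1(\omega)$, and the two cocycles.
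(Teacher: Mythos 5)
Your outline --- reduce to a single-step identity $A(\omega_0)={\mathcal A}^t(c,\pi)$, then propagate via the cocycle property on both sides, matching return times using the commuting base diagram --- is the right unpacking of what the paper compresses into ``by definition.'' However, the paper does not derive the single-step identity the way you do; it \emph{builds it into the construction}. In Section 4.3.3 the graphs $\Gamma(\pi,a)$ and $\Gamma(\pi,b)$ are defined by explicitly listing their edges $e_{ij}$, and the paper then records that the incidence matrix of $\Gamma({\bf p})$ is literally the transpose of the Rauzy matrix ${\mathcal A}({\bf p})$ --- a statement one can check directly by comparing the edge lists with the formulas (\ref{mat_a}), (\ref{mat_b}). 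So the commutativity of the diagram is indeed immediate from the construction of ${\overline{\mathfrak Z}}_{\R}$; no supplementary argument is needed.

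Your second step contains a soft gap worth flagging: you assert that area preservation ``forces'' the heights to transform by ${\mathcal A}^t(c,\pi)$, but preservation of the pairing $\langle\la,h\rangle$ under $\la\mapsto{\mathcal A}^{-1}\la$ only says that the height transformation $B$ must satisfy $({\mathcal A}^{-1})^tB=\mathrm{Id}$ \emph{when tested against} the particular pair $(\la,h)$; it does not by itself single out $B={\mathcal A}^t$ among all matrices that fix that one bilinear value. The correct statement --- that Rauzy--Veech induction transforms the height vector $h(\pi,\delta)$ of (\ref{adelta}) by ${\mathcal A}^t(c,\pi)$ --- is a genuine computation with the explicit formula for $h$ in terms of $\delta$ (Veech's analysis), not a one-line consequence of area invariance. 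In the present lemma you do not need that computation at all: the graph construction already hands you $A(\omega_0)={\mathcal A}^t(c,\pi)$, and your steps one and three then finish the argument exactly as the paper intends.
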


Informally, the map ${\overline {\mathfrak Z}}_{\R}$ is constructed as follows.
First, to a zippered rectangle one assigns its {\it Rauzy-Veech expansion},
the bi-infinite sequence of pairs $(\pi, c)$, $\pi\in\R$, $c=a$ or $b$.
To each pair $(\pi, c)$  we have assigned a unimodular matrix ${\cal A}(\pi, c)$.
Now to each such matrix we assign a graph $\Gamma$ in ${\mathfrak G}$.
The resulting sequence of graphs yields the desired Markov compactum.

{\bf Remark.} In \cite{veechmmj}  Veech has shown that for a minimal interval exchange $(\la,\pi)$
the sequence of its Rauzy-Veech renormalization matrices ${\cal A}({\mathscr T}^n(\la,\pi))$ uniquely determines
the permutation $\pi$. In particular, if $(\la,\pi)$ is uniquely ergodic, then the sequence of Rauzy-Veech
renormalization
matrices uniquely determines the interval exchange transformation.
The result of Veech implies that the map ${\overline {\mathfrak Z}}_{\R}$ is in fact {\it injective}.

\subsubsection {Rauzy-Veech expansions of zippered rectangles.}
Given a Rauzy class $\mathcal R$ of irreducible permutations, introduce an alphabet
$$
{{\mathfrak A}}_{\mathcal R}=\{ (\pi,c), c= a \text{ or } b,\pi\in\mathcal R\}.
$$

To each letter ${\bf p}_1\in {\mathfrak A}_{\mathcal R}$ assign a set
$\Delta_{{\bf p}_1}\subset \Delta(\R)$ given by
$$
\Delta_{{\bf p}_1}=\Delta_{\pi}^+ \ {\rm if}\ {\bf p}_1=(\pi,a); \ \Delta_{{\bf p}_1}=\Delta_{\pi}^- \ {\rm if}\ {\bf p}_1=(\pi,b).
$$

Take a zippered rectangle ${\mathscr X}\in {\cal Y}^{\prime}(\R)$, ${\mathscr X}=(\la, \pi, \delta)$,
$|\la|=1$.
For $n\in {\mathbb Z}$ write  ${\overline {\mathscr T}}{\mathscr X}=(\la^{(n)}, \pi^{(n)}, \delta^{(n)})$
and assign to ${\mathscr X}$ a sequence ${\bf p}_n({\mathscr X})_{n\in {\mathbb Z}}$ given by
\begin{equation}
(\la^{(n)}, \pi^{(n)})\in \Delta_{{\bf p}_n}.
\end{equation}

The sequence ${\bf p}_n({\mathscr X})_{n\in {\mathbb Z}}$ is the {\it  Rauzy-Veech expansion} of the zippered rectangle ${\mathscr X}$.

\subsubsection{A Markov compactum corresponding to a zippered rectangle.}

To each letter ${\bf p}_1\in {\mathcal A}_{\mathcal R}$, we assign
an oriented graph $\Gamma({\bf p}_1)$ on $m$ vertices in the following way.

\begin{paragraph}{Case 1.} Assume ${\bf p}_1=(\pi,a)$. Then the graph $\Gamma({\bf p}_1)$ has $m+1$ edges
$$
e_{ii},\  1\leq i\leq \pi^{-1}m; \
 e_{\pi^{-1}m+1,m};\
 e_{i,i-1},\  \pi^{-1}m+1\leq i\leq m.
 $$
We set $I(e_{ij})=i, F(e_{ij})=j$.
\end{paragraph}
\begin{paragraph}{Case 2.} $\bf p=(\pi,b)$. The graph $\Gamma(\pi, b)$ has $m+1$ edges
$$e_{ii}, i=1,\ldots, m;  \ e_{\pi^{-1}m,m}.$$
\end{paragraph}
Again we set $I(e_{ij})=i, F(e_{ij})=j$.

The incidence matrix of a graph $\Gamma({\bf p}_1)$, ${\bf p}_1=(\pi,c)$, is the transpose of the Rauzy
matrix assigned to $(\pi,c)$.

A canonical Vershik's ordering on the graphs $\Gamma({\bf p}_1)$ is given by the rule: $e_{ij}<e_{ik}$ if and only if $j<k$.

A word $\mathbf p$ in the alphabet  $\mathfrak A_{\mathcal R}$,
$$
\mathbf p=(\mathbf p_1,\ldots,\mathbf p_l),\  \mathbf p_i=(\pi_i,c_i),
$$
will be called {\it admissible} if $\pi_{i+1}=c_i\pi_i$. The set of all admissible words will
be denoted ${\mathscr W}_{\mathcal R}$. Similarly, an infinite sequence will be called admissible if its every finite
subsequence is admissible. The set of all admissible bi-infinite sequences will be denoted ${\Sigma}_{\R}$.
We have a natural map $Gr_{\R}: \Sigma_{\R}\to\Omega$ given by
$$
Gr_{\R}: ({\bf p}_n)_{n\in {\mathbb Z}}\to \Gamma(({\bf p}_n))_{n\in {\mathbb Z}}.
$$

There is a natural map $Code^+_{\R}:\Delta(\R) \to{\Sigma}^+_{\R}$ which sends $(\la,\pi)$
to a sequence ${\bf p}_n$, $n\in {\mathbb N}$
given by
$$
{\mathscr T}^n(\la,\pi)\in \Delta_{{\bf p}_n}.
$$

This map is extended to a map $Code_{\R}:{\cal Y}^{\prime}(\R) \to{\Sigma}_{\R}$ which sends $(\la,\pi, \delta)$
to a sequence ${\bf p}_n$, $n\in {\mathbb Z}$
given by
$$
{\overline {\mathscr T}}^n(\la,\pi, \delta)=(\la^{(n)},\pi^{(n)}, \delta^{(n)}), (\la^{(n)}, \pi^{(n)})\in \Delta_{{\bf p}_n}.
$$

We thus obtain the desired composition map:
$$
{\mathfrak Z}_{\R}=Gr_{\R}\circ Code_{\R}:{\cal Y}^{\prime}(\R)\to \Omega.
$$

\subsubsection{Properties of the symbolic coding.}

We have thus constructed a measurable coding mapping
$
{\mathfrak Z}_{\R}: {\cal Y}^{\prime}(\R)\to \Omega.
$
The diagram
$$
\begin{CD}
{\mathcal Y^{\prime}}(\R)@ > {\mathfrak Z}_{\R}  >> \Omega \\
@ VV{\overline {\mathscr T}} V          @VV \sigma V   \\
{\mathcal Y^{\prime}}(\R)@ > {\mathfrak Z}_{\R}  >> \Omega \\
\end{CD}
$$
is commutative by construction.

For a zippered rectangle ${\mathscr X}\in {\cal Y}^{\prime}(\R)$, consider the corresponding
abelian differential ${\bf X}=\pi_{\R}({\mathscr X})$ with underlying surface $M({\mathscr X}$
and let ${\bf m}_{{\mathscr X}}$ be the Lebesgue measure on $M({\mathscr X})$.
Write $\omega_{{\mathscr X}}={\mathfrak Z}_{\R}({\mathscr X})$.
We then have a ``tautological'' coding  mapping from  the Markov compactum  $X(\omega_{{\mathscr X}})$ to $M({\mathscr X})$. The foliaitons $\F^+_{X(\omega_{{\mathscr X}})}$ and $\F^-_{X(\omega_{{\mathscr X}})}$ are taken, respectively, to the vertical and the horizontal foliations on  $M({\mathscr X})$; unique ergodicity of the Markov compactum $X(\omega_{{\mathscr X}})$ is equivalent to the unique ergodicity of both the horizontal and the vertical flows on $M({\mathscr X})$.

Now assume that the Markov compactum $X(\omega_{{\mathscr X}})$ is indeed uniquely ergodic.  Then
the coding mapping is $\nu_{\omega_{{\mathscr X}}}$-almost surely invertible, and we obtain a  ${\bf m}_{{\mathscr X}}$-almost surely defined map
$$
{\mathscr J}_{\mathscr X}: M({\mathscr X})\to X(\omega_{{\mathscr X}}).
$$

Recall that ${\bf X}=\pi_{\R}({\mathscr X})$.
By definition, the mapping ${\mathscr J}_{\mathscr X}$ induces a linear isomorphism
between the space $\BB^+_{{\bf X}}$ and the space $\BB^+_{X(\omega_{{\mathscr X}})}$;
and similarly between the space $\BB^-_{{\bf X}}$ and the space $\BB^-_{X(\omega_{{\mathscr X}})}$.
We have $\left({\mathscr J}_{\mathscr X}\right)_*{\bf m}_{{\mathscr X}}=\nu_{\omega_{{\mathscr X}}}$.
The mapping ${\mathscr J}_{\mathscr X}$ takes the space of weakly Lipschitz functions on $M({\mathscr X})$
to the space of weakly Lipschitz functions on $X(\omega_{{\mathscr X}})$.

The map ${\mathfrak Z}_{\R}$ lifts to a natural map
$$
{\overline {\mathfrak Z}}_{\R}: {\tilde {\cal V}}_{ue}(\R)\to {\overline \Omega},
$$
and, again, the diagram
$$
\begin{CD}
{\tilde {\cal V}}_{ue}(\R)@ > {\overline {\mathfrak Z}}_{\R}  >> {\overline \Omega} \\
@ VVP^s V          @VV g_s V   \\
{\tilde {\cal V}}_{ue}(\R)@ > {\overline {\mathfrak Z}}_{\R}  >> {\overline \Omega} \\
\end{CD}
$$
is commutative.

If $\Prob_{{\cal V}}$ is an ergodic probability $P^s$-invariant measure, then we have
\begin{equation}
\label{zrpinpplus}
({\overline {\mathfrak Z}}_{\R})_*\Prob_{{\cal V}}\in {\mathscr P}^+.
\end{equation}
Indeed, (\ref{zrpinu}) is a reformulation of a Lemma due to Veech \cite{veech} which states that
every finite $P^s$-invariant measure assigns positive probability to a Rauzy matrix
with positive entries.

For any ${\mathscr X}\in {\tilde {\cal V}}_{ue}(\R)$ we again obtain
${\bf m}_{{\mathscr X}}$-almost surely defined map
$$
{\overline {\mathscr J}}_{\mathscr X}: M({\mathscr X})\to X(\oomega_{{\mathscr X}}).
$$

Introduce a map
$$
{\overline {\mathfrak Z}}^{{\mathfrak X}}_{\R}: {\mathfrak X}{\tilde {\cal V}}_{ue}(\R)\to {\mathfrak X}{\overline \Omega}
$$
by the formula
$$
{\overline {\mathfrak Z}}^{{\mathfrak X}}_{\R}({\mathscr X}, x)=({\overline {\mathfrak Z}}_{\R}{\mathscr X}, {\overline {\mathscr J}}_{\mathscr X}x).
$$
The diagram
$$
\begin{CD}
 {\mathfrak X}{\tilde {\cal V}}_{ue}(\R)  @ > {\overline {\mathfrak Z}}_{\R}^{{\mathfrak X}}  >> {\mathfrak X}{\overline \Omega} \\
@ VVP^{s, {\mathfrak X}} V          @VV g_s^{{\mathfrak X}} V   \\
{\mathfrak X}{\tilde {\cal V}}_{ue}(\R)  @ > {\overline {\mathfrak Z}}_{\R}^{{\mathfrak X}}  >> {\mathfrak X}{\overline \Omega} \\
\end{CD}
$$
is commutative.

The map ${\overline {\mathfrak Z}}_{\R}$  intertwines the cocycles ${\cal A}^t$ and ${\mathbb A}$ in the following sense.
Take ${\mathscr X}\in {\tilde {\cal V}}_{ue}(\R)$, $v\in {\mathbb R}^m$ and write
$$
{\overline {\mathfrak Z}}^{\prime}_{\R}({\mathscr X}, v)=({\overline {\mathfrak Z}}_{\R}{\mathscr X}, v).
$$
The resulting map
$$
{\overline {\mathfrak Z}}^{\prime}_{\R}: {\tilde {\cal V}}_{ue}(\R)\times {\mathbb R}^m\to {\overline \Omega}\times {\mathbb R}^m
$$
intertwines the cocycles ${\cal A}^t$ and ${\mathbb A}$: indeed, by definition, the diagram
$$
\begin{CD}
{\tilde {\cal V}}_{ue}(\R)\times {\mathbb R}^m  @ > {\overline {\mathfrak Z}}_{\R}^{{\prime}}  >> {\overline \Omega}\times {\mathbb R}^m \\
@ VVP^{s, {\mathcal A}^t} V          @VV g_s^{{\mathbb A}} V   \\
{\tilde {\cal V}}_{ue}(\R)\times {\mathbb R}^m  @ > {\overline {\mathfrak Z}}_{\R}^{{\prime}}  >> {\overline \Omega}\times {\mathbb R}^m \\
\end{CD}
$$
is commutative.

Denote ${\mathfrak G}_{\R}=\{\Gamma({\mathbf p}_1), {\bf p}_1\in {\mathfrak A}_{\R}\}$ and set
$$
\Omega_{{\mathfrak G}_{\R}}=\{\omega\in\Omega: \omega_n\in {\mathfrak G}_{\R}, n\in {\mathbb Z}\};
$$
$$
{\overline \Omega}_{{\mathfrak G}_{\R}}=\{\oomega=(\omega, r): \omega\in\Omega_{{\mathfrak G}_{\R}}\}.
$$

By construction, ${\mathfrak Z}_{\R}({\cal Y}^{\prime}_{\R})\subset \Omega_{{\mathfrak G}_{\R}}$.
Every graph $\Gamma\in {\mathfrak G}_{\R}$ is endowed with a Vershik's ordering constructed in the previous subsection,
and we obtain a $\sigma$-equivariant Vershik's ordering ${\mathfrak o}_{\R}$ on ${\mathfrak Z}_{\R}({\cal Y}^{\prime}_{\R})$.

By definition, the mapping ${\overline {\mathscr J}}_{\mathscr X}$ sends the vertical flow $h_t^+$ on $M({\mathscr X})$
to the flow $h_t^{+, \omega_{{\mathscr X}}}$ and the horizontal flow $h_t^-$ on $M({\mathscr X})$ to the flow
$h_t^{-, \omega_{{\mathscr X}}}$.

Lemmas \ref{zipmarkovlemmaone}, \ref{zipmarkovlemmatwo}, \ref{zipmarkovlemmathree} are proved.
Theorems \ref{multiplicmoduli}, \ref{limthmmoduli} follow now
from their symbolic counterparts, Corollary \ref{symbmultiplic} and Theorem \ref{limthmmarkcomp}.

Theorems \ref{multiplicmoduli}, \ref{limthmmoduli} are proved completely.

\section{Appendix A: On the Oseledets Multiplicative Ergodic Theorem.}

\subsection{The Oseledets-Pesin Reduction Theorem.}
For the reader's convenience, we recall here several refinements of the Oseledets Multiplicative Ergodic Theorem.
The first statement we need is an immediate corollary of the Oseledets-Pesin Reduction Theorem \cite{pesinbarreira}.
We restrict ourselves to discrete time and  to the invertible case; the case of flows is completely similar.

Let $(Y,\mathcal{B},\mu)$ be a probability space, let $T:\:Y\rightarrow Y$ be an invertible $\mu$-preserving transformation, let $m\in\mathbb{N}$ and let
$$\mathbb{A}:\:Y\longrightarrow GL(m,\mathbb{R})$$
be a measurable map.

Denote
$$\mathbb{A}(n,y)=\begin{cases}\mathbb{A}\left(T^{n-1}y\right)\cdot\ldots\cdot\mathbb{A}(y), & n>0; \\ Id, & n=0; \\ \mathbb{A}^{-1}\!\left(T^{-n}y\right)\cdot\ldots\cdot\mathbb{A}^{-1}\!\left(T^{-1} y\right), & n<0.\end{cases}$$

The family of maps $\mathbb{A}(n,y),\:n\in\mathbb{Z}$, is called a {\it measurable cocycle} over $T$. Together with the cocycle $\mathbb{A}$ over the automorphism $T$, we consider the transpose cocycle $\mathbb{A}^t$ over the transformation $T^{-1}$ which is defined by the formula:
$$\mathbb{A}^t(n,y)=\begin{cases}\mathbb{A}^t\!\left(T^{1-n}y\right)\cdot\ldots\cdot\mathbb{A}^t(y), & n>0;\\ Id, & n=0;\\ (\mathbb{A}^t)^{-1}\!\left(T^{-n}y\right)\cdot\ldots\cdot(\mathbb{A}^t)^{-1} \left(T^{-1}y\right) ,& n<0.\end{cases}$$

We also let $\|A\|$ stand for the usual Euclidean norm of the matrix $A$, and, as above, for $v\in\mathbb{R}^m,\,v=(v_1,\ldots,v_m)$, we set
$$|v|=\sum\limits^m_{i=1}|v_i|\,.$$

\begin{theorem}
Let $(Y,\mathcal{B},\mu)$ be a probability space, let $T:\:Y\rightarrow Y$ be an invertible ergodic $\mu$-preserving transformation, let $m\in\mathbb{N}$ and let $\mathbb{A}:\:Y\longrightarrow GL(m,\mathbb{R})$\; be a measurable map such that both functions $\log\left(1+\|A(y)\|\right)$,\: $\log\left(1+\|A^{-1}(y)\|\right)$ belong to the class $L_1(Y,\mu)$. Then there exist numbers $\theta_1>\theta_2>\ldots>\theta_r$\, and, for $\mu$-almost every $y\in Y$, direct-sum decompositions

\begin{equation} \label{osdec}
\mathbb{R}^m=E^1_y\oplus\ldots\oplus E^r_y
\end{equation}

\begin{equation} \label{osdectran}
\mathbb{R}^m=\widetilde{E}^1_y\oplus\ldots\oplus \widetilde{E}^r_y
\end{equation}

that depend measurably on $y\in Y$ and satisfy the following.

\quad1) for $\mu$-almost any $y\in Y$, $n\in {\mathbb Z}$ and any $i=1,\ldots,r$, we have
$$\mathbb{A}(n,y)E^i_y=E^i_{T^ny};$$
$$\mathbb{A}^t(n,y){\tilde E}^i_y={\tilde E}^i_{T^{-n}y}.$$

\quad2) for any $v\in E^i_y$, $v\neq0$, we have
$$\lim\limits_{|n|\rightarrow\infty}\frac{\log|\mathbb{A}(n,y)v|}{n}=\theta_i\,,$$
and the convergence is uniform on the sphere $\left\{v\in E^i_y,\,|v|=1\right\}$.

\quad3) for any $v\in\widetilde{E}^i_y$, $v\neq0$, we have
$$\lim\limits_{|n|\rightarrow\infty}\frac{\log|\mathbb{A}^t(n,y)v|}{n}=\theta_i\,,$$
and the convergence is uniform on the sphere $\left\{v\in \widetilde{E}^i_y,\,|v|=1\right\}$.

\quad4) For any $\varepsilon>0$ there exist positive measurable functions
$$C^{(i)}_{\varepsilon, 1},\,C^{(i)}_{\varepsilon, 2},\,\widetilde{C}^{(i)}_{\varepsilon, 1},\, \widetilde{C}^{(i)}_{\varepsilon, 2}\,:\:Y\longrightarrow\mathbb{R}_{>0}$$
such that for $\mu$-almost every $y\in Y$ the inequalities

$$
C^{(i)}_{\varepsilon, 1}(y) e^{(\theta_i-\varepsilon)(n-|k|)}\leqslant\left\|\left.\mathbb{A}(n,T^ky) \right|_{E^i_{T^ky}}\right\|\leqslant C^{(i)}_{\varepsilon, 2}(y) e^{(\theta_i+\varepsilon)(n+|k|)}$$
$$\widetilde{C}^{(i)}_{\varepsilon, 1}(y)e^{(\theta_i-\varepsilon)(n-|k|)}\leqslant \left\|\left.\mathbb{A}^t(n,T^ky) \right|_{\widetilde{E}^i_{T^ky}}\right\|\leqslant \widetilde{C}^{(i)}_{\varepsilon, 2}(y) e^{(\theta_i+\varepsilon)(n+|k|)}$$
hold for all $k\in\mathbb{Z}$, $n\in\mathbb{N}$.

\quad5) $$\mathrm{dim} E^i_y = \mathrm{dim} \widetilde{E}^i_y\,,\;i=1,\ldots,r\,,$$

and if $i\neq j$, $v\in E^i_y$, $\widetilde{v}\in\widetilde{E}^j_y$, then
$$\langle v,\widetilde{v}\rangle=0.$$
\end{theorem}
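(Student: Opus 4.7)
The plan is to derive the five conclusions in two stages: first, statements (1)--(4) for $\mathbb{A}$ and then for $\mathbb{A}^t$ by two separate applications of the standard Oseledets--Pesin Reduction Theorem, and second, the orthogonality relation (5), which is the one genuinely bilateral statement and requires a duality argument to tie together the two filtrations.

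First I would apply the Oseledets--Pesin Reduction Theorem \cite{pesinbarreira} to the cocycle $\mathbb{A}$ over $(Y,\mathcal B,\mu,T)$, whose integrability hypotheses on $\log(1+\|\mathbb{A}^{\pm1}\|)$ are granted. This produces the distinct Lyapunov exponents $\theta_1>\ldots>\theta_r$, the measurable invariant splitting (\ref{osdec}), the conclusions (1) and (2), and the exponential bounds in (4) with subexponentially-growing constants $C^{(i)}_{\varepsilon,1}$, $C^{(i)}_{\varepsilon,2}$. The uniformity of the convergence on the unit sphere of $E^i_y$ is built into the Reduction Theorem: after Pesin's change of coordinates the cocycle becomes block-diagonal with each block acting quasi-isometrically up to errors of order $e^{\varepsilon|k|}$, and the equivalence of norms then transfers uniform convergence from the transformed sphere to the original. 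Next I would apply the same theorem to the pair $(\mathbb{A}^t,T^{-1})$. Here the hypotheses transfer automatically because $\|A^t\|=\|A\|$ and $\|(A^t)^{-1}\|=\|A^{-1}\|$, and because the statement holds for invertible ergodic base transformations. This yields the splitting (\ref{osdectran}), the invariance and uniform growth (1)--(3), and the second chain of estimates in (4) with its own constants $\widetilde C^{(i)}_{\varepsilon,1}$, $\widetilde C^{(i)}_{\varepsilon,2}$. The set of Lyapunov exponents of $\mathbb{A}^t$ agrees with that of $\mathbb{A}$ (with multiplicities), since the singular values of a matrix and its transpose coincide, so in particular the exponents of $\mathbb{A}^t$ are $\{\theta_1,\ldots,\theta_r\}$; matching them by value allows me to label the $\widetilde E^i_y$ so that $\mathbb{A}^t$ grows with rate $\theta_i$ on $\widetilde E^i_y$, and the multiplicity argument already forces $\dim \widetilde E^i_y=\dim E^i_y$.

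For (5) I would argue by identifying $\widetilde E^i_y$ with the orthogonal complement $F^i_y:=\bigl(\bigoplus_{k\neq i}E^k_y\bigr)^\perp$ in the standard Euclidean structure on $\mathbb R^m$. By the rank--nullity count $\dim F^i_y=\dim E^i_y$. To prove $\mathbb{A}^t$-invariance of $F^i_y$, I would use the adjoint identity $\langle A^tu,v\rangle=\langle u,Av\rangle$ applied to $A=\mathbb{A}(T^{-1}y)$: if $\tilde v\in F^i_y$ and $v\in E^k_{T^{-1}y}$ with $k\neq i$, then $\mathbb{A}(T^{-1}y)v\in E^k_y\subset\bigoplus_{k\neq i}E^k_y$, so $\langle\mathbb{A}^t(1,y)\tilde v,v\rangle=\langle\tilde v,\mathbb{A}(T^{-1}y)v\rangle=0$, which puts $\mathbb{A}^t(1,y)\tilde v$ in $F^i_{T^{-1}y}$. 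Iterating shows $F^i_y$ is an $\mathbb{A}^t$-invariant measurable subbundle of the right dimension, and the direct-sum decomposition $\mathbb R^m=\bigoplus_i F^i_y$ is a splitting by $\mathbb{A}^t$-invariant subbundles. To identify $F^i_y=\widetilde E^i_y$, I would check that $\mathbb{A}^t$ grows on $F^i_y$ with exponent exactly $\theta_i$: from $\langle\mathbb{A}^t(n,y)\tilde v,v\rangle=\langle\tilde v,\mathbb{A}(n,T^{-n}y)v\rangle$ for $v\in E^i_{T^{-n}y}$, the matrix element along the $i$-th Oseledets direction grows precisely at rate $\theta_i$ (by conclusion (2) applied to $\mathbb{A}$), while the matrix elements along the other directions grow strictly slower in absolute value; hence the top growth rate of $\mathbb{A}^t(n,y)\tilde v$ is $\theta_i$, no more and no less. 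By uniqueness of the Oseledets splitting for $\mathbb{A}^t$, $F^i_y=\widetilde E^i_y$, and the orthogonality assertion in (5) is nothing but the definition of $F^i_y$.

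The bulk of the work is bookkeeping: the two applications of the Reduction Theorem are essentially citations, and the equality of exponent multiplicities for $\mathbb A$ and $\mathbb A^t$ is a standard singular-value argument. The main obstacle I anticipate is the identification $\widetilde E^i_y=F^i_y$, and more precisely the verification that the growth rate of $\mathbb{A}^t$ on $F^i_y$ is at least $\theta_i$ (the upper bound being automatic). This requires not merely the upper bounds in (4), which would only give inequalities, but the uniform convergence on spheres (the full strength of the Reduction Theorem) to exhibit a test vector $v\in E^i_{T^{-n}y}$ along which the pairing attains the exponential rate $\theta_i$; without the \emph{uniform} lower bound furnished by Oseledets--Pesin, one cannot rule out that the supremum of the matrix entries drifts to lower exponents along the orbit.
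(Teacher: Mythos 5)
The paper offers no written proof of this statement; it is given as an immediate consequence of the Oseledets--Pesin Reduction Theorem (Theorem 3.5.5 in \cite{pesinbarreira}) and left to the reader. Your proposal is a correct and reasonable unpacking of that citation, and it follows the route that the citation implicitly presupposes: two applications of Oseledets--Pesin, to $\mathbb{A}$ over $T$ and to $\mathbb{A}^{t}$ over $T^{-1}$, give (1)--(4); the matching of exponents and multiplicities via singular values is standard; and (5) is proved by identifying $\widetilde{E}^{i}_{y}$ with the annihilator $F^{i}_{y}=\bigl(\bigoplus_{k\neq i}E^{k}_{y}\bigr)^{\perp}$, checking $\mathbb{A}^{t}$-invariance of $F^{i}_{y}$ by the adjoint identity, and then pinning down the growth rate on $F^{i}_{y}$ by pairing against test vectors in $E^{i}$, using the \emph{uniform} two-sided bounds from the Reduction Theorem to get a lower as well as an upper exponential estimate. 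You correctly flag that the uniformity on spheres is the crucial input for the lower bound; without it the pairing argument would only give one-sided inequalities. One small caveat worth raising: with the displayed formula $\mathbb{A}^{t}(n,y)=\mathbb{A}^{t}(T^{1-n}y)\cdots\mathbb{A}^{t}(y)$ one has $\mathbb{A}^{t}(1,y)=A(y)^{t}$, under which $A(y)^{t}F^{i}_{y}=\bigl(A(y)^{-1}\bigoplus_{k\neq i}E^{k}_{y}\bigr)^{\perp}$, and since $A(y)^{-1}E^{k}_{y}=E^{k}_{T^{-1}y}$ would require $A(y)$ in place of $A(T^{-1}y)$ in the invariance relation, the indexing as printed is off by one shift of $T$; the identification that is actually compatible with the invariance is $\widetilde{E}^{i}_{y}=\bigl(\bigoplus_{k\neq i}E^{k}_{Ty}\bigr)^{\perp}$, unless one replaces the formula by $\mathbb{A}^{t}(1,y)=A(T^{-1}y)^{t}$. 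Your computation, which explicitly pairs $\mathbb{A}^{t}(1,y)\tilde{v}$ against $E^{k}_{T^{-1}y}$ using $A=\mathbb{A}(T^{-1}y)$, is the version consistent with (5); the discrepancy lies in the paper's displayed definition of $\mathbb{A}^{t}$, not in your argument.
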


This is immediate from the Oseledets-Pesin Reduction Theorem, see Theorem 3.5.5 on p.77 in \cite{pesinbarreira}.

\subsection{ Viana's Lemma  on the Simplicity of the Top Lyapunov Exponent.}

\begin{lemma}[Viana]
\label{vianalemma}
Let $(Y,\mathcal{B},\mu)$ be a probability space, let $T:\:Y\rightarrow Y$ be an invertible ergodic $\mu$-preserving transformation, let $m\in\mathbb{N}$ and let $\mathbb{A}:\:Y\longrightarrow GL(m,\mathbb{R})$\; be a measurable map such that both functions $\log\left(1+\|A(y)\|\right)$,\: $\log\left(1+\|A^{-1}(y)\|\right)$ belong to the class $L_1(Y,\mu)$.

Assume that for $\mu$-almost all $y\in Y$ all entries of the matrices ${\mathbb A}$
are positive, and, moreover, that
there exists a constant $C>0$ such that for $\mu$-almost all $y\in Y$ and all
$i,j,k\in\{1,\dots,m\}$ we have
$$
C^{-1}<\frac{{\mathbb A}_{ij}(y)}{{\mathbb A}_{{kj}}(y)}<C.
$$
Then the top Lyapunov exponent of the cocycle ${\mathbb A}$ is positive and simple.
\end{lemma}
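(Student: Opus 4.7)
The plan is to deduce Viana's lemma from Birkhoff's theorem on the contraction of the Hilbert projective metric by positive matrices. The hypothesis $C^{-1}<A_{ij}(y)/A_{kj}(y)<C$ is exactly what is needed to control the Hilbert diameter: for any strictly positive $v$, the vector $A(y)v$ has all coordinates in $[C^{-1}, C]$ times a common factor, so $A(y)$ maps the positive cone $\mathbb{R}^m_{>0}$ into the subcone $K_C=\{v>0:\max_i v_i\le C\min_i v_i\}$, whose Hilbert projective diameter is bounded by $D_C=2\log C$. By Birkhoff's theorem, $A(y)$ therefore acts as a strict contraction, with uniform factor $\tau=\tanh(D_C/4)<1$, of the Hilbert metric on the positive cone; since the bounded-ratio condition is preserved under matrix products, the same holds for $\mathbb{A}(n,y)$ with factor $\tau^n$.

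From this uniform contraction, the nested cones $\mathcal{C}_n(y):=\mathbb{A}(n,T^{-n}y)\mathbb{R}^m_{>0}$ have Hilbert diameter going to $0$ exponentially, so their intersection is a single ray $\mathbb{R}_{>0}v^u(y)$, yielding a measurable equivariant line bundle $E^u_y=\mathbb{R}v^u(y)$ with $A(y)v^u(y)=\rho(y)v^u(Ty)$, $\rho(y)>0$. Birkhoff's ergodic theorem gives
\[
\theta_1:=\lim_{n\to\infty}\frac1n\log|\mathbb{A}(n,y)v^u(y)|=\int_Y\log\rho(y)\,d\mu(y);
\]
since any positive vector's normalized iterates converge (in the Hilbert metric, hence in direction) to $v^u$, the quantity $\theta_1$ is the growth rate of $|\mathbb{A}(n,y)v|$ for every $v\in\mathbb{R}^m_{>0}$, and as every $v\in\mathbb{R}^m$ can be written as $v_+-v_-$ with $v_\pm\in\mathbb{R}^m_{>0}$, $\theta_1$ is the top Lyapunov exponent of $\mathbb{A}$.

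To obtain simplicity, I apply the same construction to the transpose cocycle $\mathbb{A}^t$ over $T^{-1}$, which satisfies the same hypotheses, producing a positive equivariant ray $\mathbb{R}_{>0}\tilde v^u(y)$ with $A(Ty)^t\tilde v^u(Ty)=\tilde\rho(y)\tilde v^u(y)$. Define $E^{cs}_y:=\tilde v^u(y)^\perp$; the eigen-relation shows $A(y)E^{cs}_y\subset E^{cs}_{Ty}$, and since both $v^u(y)$ and $\tilde v^u(y)$ have strictly positive entries one has $\langle v^u(y),\tilde v^u(y)\rangle>0$, giving the direct-sum decomposition $\mathbb{R}^m=E^u_y\oplus E^{cs}_y$. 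The key quantitative step is to combine the exponential Hilbert contraction toward $v^u(Ty)$ (for forward iterates of positive vectors) with its dual version (for $\tilde v^u$), in order to prove the uniform estimate
\[
\frac{|\mathbb{A}(n,y)w|}{|\mathbb{A}(n,y)v^u(y)|}\le\mathrm{const}\cdot\tau^n\quad\text{for every }w\in E^{cs}_y,\;|w|=|v^u(y)|=1.
\]
This estimate is precisely where the main technical work lies: one must translate the projective contraction (a statement about directions) into a norm-wise domination, which is done by writing $v^u(y)+sw\in\mathbb{R}^m_{>0}$ for small $s>0$, comparing the Hilbert distance between the iterates of $v^u(y)$ and $v^u(y)+sw$, and letting $s\to 0$. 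Once the domination is established, every Oseledets exponent in $E^{cs}_y$ is at most $\theta_1-|\log\tau|<\theta_1$, so $\theta_1$ is simple; and the multiplicative bound $\rho(y)\ge c_0>0$ coming from the uniform positivity of the ratios between entries of $A(y)v^u(y)$ and those of $v^u(Ty)$ in the contracted cone $K_C$ yields $\theta_1\ge\int\log c_0\,d\mu>-\infty$, with strict positivity following from the fact that the contraction of $K_C$ into itself forces $|A(y)v|\ge(\text{min entry of }A(y))\cdot|v|$ on $K_C$, and hence $\theta_1>0$ once one observes that $\mathbb{A}(n,y)$ must expand at least along one coordinate axis given the bounded-ratio assumption propagates to preserve total mass up to a bounded factor.
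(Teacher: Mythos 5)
The paper does not actually prove this lemma: it simply cites Lemma 5.7 of Viana's IMPA preprint, adding the remark that the cocycle here is the inverse of Viana's. Your Birkhoff--Hilbert-cone argument is therefore not ``the same as the paper's''; it is a genuine reconstruction, and the contraction-of-projective-metric route is indeed the standard machinery behind Viana's lemma. The core of your sketch is sound: the bounded column-ratio hypothesis bounds the Hilbert diameter of $A(y)\mathbb{R}^m_{>0}$ by $2\log C$, Birkhoff's inequality gives a uniform contraction factor $\tau=\tanh(\tfrac12\log C)<1$, the nested backward images pin down an equivariant positive line $E^u_y$, and the orthogonality pairing with the positive line of the transpose cocycle produces a complementary invariant hyperplane $E^{cs}_y$ on which the growth is dominated by $\tau^n\,|\mathbb{A}(n,y)v^u(y)|$, giving simplicity.

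Two points deserve attention. First, your assertion that the transpose cocycle ``satisfies the same hypotheses'' is not literally correct: the bounded-ratio condition $C^{-1}<A_{ij}/A_{kj}<C$ concerns ratios \emph{within a column}, and for $A^t$ the corresponding quantity is $A_{ji}/A_{jk}$, a \emph{row} ratio of $A$, which is not controlled by the hypothesis. What saves you is that the quantity that actually matters for Birkhoff's theorem is the projective diameter $\Delta(M)=\max_{i,j,k,l}\log\bigl(M_{ij}M_{kl}/(M_{il}M_{kj})\bigr)$, and this is invariant under transposition, so $\Delta(A^t)=\Delta(A)\le 2\log C$ anyway. You should argue via $\Delta(A^t)=\Delta(A)$ rather than claiming $A^t$ satisfies the ratio bound.

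Second, and more seriously, your argument for \emph{positivity} of $\theta_1$ is not correct and cannot be made correct under the hypotheses as you have stated them. Replacing $A(y)$ by $\varepsilon A(y)$ for a small constant $\varepsilon>0$ leaves all column ratios unchanged, preserves invertibility, positivity, and log-integrability, yet shifts every Lyapunov exponent down by $\log\varepsilon$, so $\theta_1$ can be made negative. The phrase ``the bounded-ratio assumption propagates to preserve total mass up to a bounded factor'' does not correspond to any valid mechanism: the column-ratio condition is scale-invariant and therefore carries no information about absolute sizes. Positivity requires an additional normalization that your proof never invokes. In the paper's actual application the adjacency matrices are nonnegative integer matrices of determinant $\pm1$, for which $\theta_1>0$ follows either from the zero-sum of the Lyapunov spectrum together with simplicity, or from the fact that entries are $\ge 1$; your proof should either import such a hypothesis explicitly or restrict the conclusion to simplicity alone, matching what the Hilbert-metric argument actually yields.
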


This is a reformulation of Lemma 5.7 in M. Viana \cite{Viana-IMPA}.
Note that our cocycle ${\mathbb A}$ is the inverse of the cocycle considered by M. Viana.
Lemma \ref{vianalemma} immediately implies
\begin{corollary}
\begin{enumerate}
\item Let $\mu$ be a $\sigma$-invariant ergodic probability measure on the space $\Omega$ of Markov compacta.
If $\mu$ satisfies Assumption \ref{asos}, then the top Lyapunov exponent of the renormalization cocycle ${\mathbb A}$ with respect to $\mu$ is positive and simple.
\item Let $\Prob$ be an ergodic probability $g_s$-invariant measure on the space
${\overline \Omega}$ of measured Markov compacta. If $\Prob\in {\mathscr P}^+$, then  the top Lyapunov exponent of
the renormalization cocycle ${\overline {\mathbb A}}$ with respect to $\Prob$ is positive and simple.
\end{enumerate}
\end{corollary}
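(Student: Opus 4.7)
My plan is to reduce both parts to a direct application of Viana's Lemma \ref{vianalemma} via an inducing construction. For Part 1, let $\Gamma_0 \in \mathfrak G$ be the positive graph of Assumption \ref{asos} and set $B = \{\omega \in \Omega : \omega_0 = \Gamma_0\}$, so $\mu(B) > 0$. I will induce the cocycle $\mathbb A$ on $B$: since the first return time $n_B(\omega)$ satisfies $\omega_{n_B(\omega)} = \Gamma_0$, the induced generator factors as
$$\tilde{\mathbb A}_B(\omega) \;=\; \mathbb A(n_B(\omega), \omega) \;=\; A(\omega_{n_B(\omega)}) \cdots A(\omega_1) \;=\; A(\Gamma_0) \cdot M(\omega),$$
with $M(\omega) = A(\omega_{n_B(\omega) - 1}) \cdots A(\omega_1)$ (taken to be the identity if $n_B(\omega)=1$). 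The graph axiom built into $\mathfrak G$ guarantees that each $A(\omega_k)$ has positive column sums, a property preserved under products; hence every column of $M(\omega)$ is nonzero.

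Setting $c = \min_{i,k} A(\Gamma_0)_{ik} > 0$ and $C = \max_{i,k} A(\Gamma_0)_{ik}$, the identity $(\tilde{\mathbb A}_B(\omega))_{ij} = \sum_k A(\Gamma_0)_{ik} M(\omega)_{kj}$ yields, for every $\omega \in B$ and all indices $i,j,l$, the sandwich bound
$$c \sum_k M(\omega)_{kj} \;\leq\; (\tilde{\mathbb A}_B(\omega))_{ij} \;\leq\; C \sum_k M(\omega)_{kj},$$
which forces $\tilde{\mathbb A}_B(\omega)$ to have all entries strictly positive and to satisfy the uniform two-sided bound $(\tilde{\mathbb A}_B(\omega))_{ij}/(\tilde{\mathbb A}_B(\omega))_{lj} \in [c/C,\, C/c]$. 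Integrability of $\log(1+\|\tilde{\mathbb A}_B^{\pm 1}\|)$ with respect to the induced probability measure $\mu_B$ follows from the integrability hypothesis in Assumption \ref{asos} via the Abramov--Kac formula applied to inducing. Viana's Lemma \ref{vianalemma} therefore applies to $\tilde{\mathbb A}_B$, giving that its top Lyapunov exponent is positive and simple. Since inducing on a set of positive measure scales Lyapunov exponents by $1/\mu(B)>0$ and puts the Oseledets subspaces of $\mathbb A$ and $\tilde{\mathbb A}_B$ in bijection, the top Lyapunov exponent of $\mathbb A$ over $(\Omega, \mu, \sigma)$ is likewise positive and simple.

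For Part 2, I will reduce to Part 1 via the suspension and, when necessary, aggregation. By construction, $(\Oomega, \Prob_\mu, g_s)$ is the suspension of $(\Omega, \mu, \sigma)$ with roof function $\tau^1$ normalized so that $\int \tau^1\, d\mu = 1$; the Lyapunov spectrum of $\overline{\mathbb A}$ with respect to $\Prob_\mu$ therefore coincides with that of $\mathbb A$ with respect to $\mu$. If $\mu \in \mathscr M(w,\infty)$ is finite with $\Gamma(w) = \Gamma \in \mathfrak G_+$, the measure $\mu$ satisfies Assumption \ref{asos} (the distinguished graph $\Gamma$ occurs in the aggregated alphabet $\mathfrak G_\Gamma$, and each symbol in $\mathfrak G_\Gamma$ has incidence matrix of the form $A(\mathrm{tail}) \cdot A(\Gamma)$, which is strictly positive because $A(\Gamma)$ is positive and $A(\mathrm{tail})$ has positive row sums), so Part 1 applies. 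When $\mu$ is infinite, the paper has already shown that $(\Oomega, \Prob_\mu, g_s)$ is measurably isomorphic to $(\Oomega_\Gamma, \Prob_{\mu^w}, g_s)$ with $\mu^w$ finite and ergodic on $\Omega_\Gamma$; applying Part 1 to $\mu^w$ on $\Omega_\Gamma$ and transferring simplicity through the isomorphism concludes the argument.

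The only step that requires genuine work is the bounded-ratio estimate for $\tilde{\mathbb A}_B(\omega)$. It succeeds precisely because, under the paper's convention $\mathbb A(n,\omega) = A(\omega_n)\cdots A(\omega_1)$, the equality $\omega_{n_B(\omega)} = \Gamma_0$ places the positive matrix $A(\Gamma_0)$ on the \emph{left} of the nonnegative product $M(\omega)$. This placement is exactly what makes the upper and lower bounds on $(\tilde{\mathbb A}_B(\omega))_{ij}$ depend only on the column sums of $M(\omega)$, which then cancel in the ratio. Had $A(\Gamma_0)$ appeared on the right, no such cancellation would occur and one would be forced to induce on a thinner set (such as $\{\omega_0 = \omega_1 = \Gamma_0\}$, which need not have positive measure) to manufacture a positive factor in the required position.
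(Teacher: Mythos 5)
Your overall strategy — induce to manufacture a positive generator with uniformly bounded column ratios, feed it to Lemma~\ref{vianalemma}, then transfer the conclusion back via inducing/suspension — is exactly what the paper has in mind when it writes ``Lemma~\ref{vianalemma} immediately implies'' the corollary, and your Part~1 supplies the detail cleanly. The observation that the paper's convention $\mathbb A(n,\omega)=A(\omega_n)\cdots A(\omega_1)$ places $A(\Gamma_0)$ on the \emph{left} of the induced product, so the sandwich $c\sum_k M(\omega)_{kj}\le (\tilde{\mathbb A}_B)_{ij}\le C\sum_k M(\omega)_{kj}$ cancels the column sum in the ratio, is the crux, and your verification that column sums stay positive under products (using the ``edge into every vertex'' axiom on $\mathfrak G$) is correct.

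There is a small misstatement in the finite case of Part~2: from $\mu\in\mathscr M(w,\infty)$ with $\Gamma(w)\in\mathfrak G_+$ one only knows that a \emph{word} of length $n+1$ has positive $\mu$-measure with positive concatenated matrix; no single symbol $\Gamma_0\in\mathfrak G$ with $A(\Gamma_0)>0$ need have positive $\mu$-measure, so $\mu$ itself need not satisfy Assumption~\ref{asos}. Your own parenthetical already points to the fix: the object that satisfies Assumption~\ref{asos} is the aggregated measure $\mu^w$ on $\Omega_\Gamma$ (every symbol of $\mathfrak G_\Gamma$ has incidence matrix $A(\Gamma'')A(\Gamma)$, which is positive), so Part~1 should be applied to $\mu^w$ rather than to $\mu$ — exactly as you do in the infinite case. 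Equivalently, one can induce directly on the cylinder $\{\omega:\omega_{-n}=w_0,\dots,\omega_0=w_n\}$, which places the positive block $A(\Gamma(w))=A(w_n)\cdots A(w_0)$ on the left of the return product and lets the Part~1 argument run verbatim with $\Gamma_0$ replaced by $\Gamma(w)$. Finally, your phrase ``the Lyapunov spectrum of $\overline{\mathbb A}$ coincides with that of $\mathbb A$'' is off by a normalization: the paper's Proposition gives $\overline\theta_i=\theta_i/\theta_1$ (the total mass of $\mu$ under the constraint $\int\tau^1\,d\mu=1$ is $1/\theta_1$, not $1$), but since proportionality preserves both positivity and simplicity of the top exponent, this slip does not affect the conclusion.
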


\section{Appendix B: Metrics on the Space of Probability  Measures.}
\subsection{The Weak Topology.}

In this Appendix, we collect some standard facts about the weak
topology on the space of probability measures. For a detailed
treatment, see, e.g., \cite{bogachev}.

Let $(\emph{X}, {d})$ be a complete separable metric
space, and let  $\mathfrak{M}(\emph{X})$ be the space of
Borel probability measures on \emph{X}. The {\it weak topology}
on $\mathfrak{M}(\emph{X})$ is defined as follows. Let
$\varepsilon>0$, $\nu_0\in \mathfrak{M}(\emph{X})$, and let
$f_1,..,f_k: \emph{X} \rightarrow \mathbb{R} $ be bounded
continuous functions. Introduce the set
$$U(\nu_0, \varepsilon,f_1,..,f_k)=\{\nu\in\mathfrak{M}(\emph{X}):
 |  \int\limits_X f_i d\nu-\int\limits_X f_i d\nu_0|<\varepsilon,
 i=1,..,k\}.$$

The basis of neighbourhoods for the weak topology is given
precisely by sets of the form $U(\nu_0, \varepsilon,f_1,..,f_k)$, for
all $\varepsilon>0,$ $\nu_0\in \mathfrak{M}(\emph{X}),$
$f_1,..,f_k$ continuous and bounded.

The weak topology is metrizable and there are several natural
metrics on $\mathfrak{M}(\emph{X})$ inducing the weak topology.

\subsection{The Kantorovich-Rubinstein metric}

Let $$Lip_1^1=\{f:\emph{X}\rightarrow \mathbb{R}\ \ :\
\sup_X|f|\leqslant 1,\  |f(x_1)-f(x_2)|\leqslant d(x_1,x_2)\  \mathrm{for}\
\mathrm{all} \ x_1,x_2\in\emph{X}\}.$$

The Kantorovich-Rubinstein metric is defined, for
$\nu_1,\nu_2\in\mathfrak{M}(\emph{X}),$ by the formula
$$d_{KR}(\nu_1,\nu_2)=\sup_{f\in Lip_1^1(\emph{X})} |
\int\limits_{\emph{X}} f d\nu_1-\int\limits_{\emph{X}} f
d\nu_2|.$$

The Kantorovich-Rubinstein metric induces the weak topology on
$\mathfrak{M}(\emph{X}).$ By the Kantorovich-Rubinstein
Theorem, for bounded metric spaces, the Kantorovich-Rubinstein metric admits the following
equivalent dual description. Given
$\nu_1,\nu_2\in\mathfrak{M}(\emph{X}),$ let $\rm
Join(\nu_1,\nu_2)\in \mathfrak{M}(\emph{X}\times\emph{X})$ be the
set of probability measures $\eta$ on $\emph{X}\times\emph{X}$
such that projection of $\eta$ on the first coordinate is equal to
$\nu_1,$ the projection of $\eta$ on the second coordinate is
equal to $\nu_2.$ The Kantorovich-Rubinstein Theorem claims that
$$d_{KR}(\nu_1,\nu_2)=\inf_{\eta\in\rm
Join(\nu_1,\nu_2)}\int\limits_{\emph{X}\times\emph{X}}d(x_1,x_2)d\eta.$$

\subsection {The L{\'e}vy-Prohorov metric.}

Let $\mathcal{B}_\emph{X}$ be the $\sigma$-algebra of Borel
subsets of $\emph{X}.$ For $B\in\mathcal{B}_\emph{X},
\varepsilon>0,$ set $$B^\varepsilon=\{x\in\emph{X}:\ \inf_{y\in
B}d(x,y)\leqslant\varepsilon\}.$$

Given $\nu_1,\nu_2\in \mathfrak{M}(\emph{X})$, introduce the
L{\'e}vy-Prohorov distance between them by the formula
$$d_{LP}(\nu_1,\nu_2)=\inf\{\varepsilon>0: \nu_1(B)\leqslant\nu_2(B^\varepsilon)+\varepsilon,
\nu_2(B)\leqslant\nu_1(B^\varepsilon)+\varepsilon \ \mathrm{for}\ \mathrm{any}\ B\in\mathcal{B}\}.$$

The L{\'e}vy-Prohorov metric also induces the weak topology on
$\mathfrak {M}(\emph{X}).$

\subsection{ An estimate on the distance between images of
measures.}

Let $(\Omega,\mathfrak{B}_\Omega,\mathbb{P})$ be a probability
space, and let $\xi_1,\xi_2:\Omega\rightarrow\emph{X}$ be two
measurable maps.

In the proof of the limit theorems, we use the following
simple estimate on the L{\'e}vy-Prohorov and the Kantorovich-Rubinstein
distance between the push-forwards
$(\xi_1)_\ast\mathbb{P},(\xi_2)_\ast\mathbb{P}$ of the measure
$\mathbb{P}$ under the mappings $\xi_1,\xi_2.$

\begin{lemma}\label{dist-images} Let $\varepsilon>0$ and assume
that for $\mathbb{P}-almost$ all $\omega\in\Omega$ we have
$d(\xi_1(\omega),\xi_2(\omega))\leqslant\varepsilon.$

Then we have
$$d_{KR}((\xi_1)_\ast\mathbb{P},(\xi_2)_\ast\mathbb{P})\leqslant\varepsilon,$$

$$d_{LP}((\xi_1)_\ast\mathbb{P},(\xi_2)_\ast\mathbb{P})\leqslant\varepsilon.$$
\end{lemma}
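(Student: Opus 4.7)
The plan is to treat the two inequalities separately, since each reduces to a direct application of the definitions plus the hypothesis that the two random variables almost surely take values within distance $\varepsilon$ of one another. I would first fix a full-measure set $\Omega_0 \subset \Omega$ on which $d(\xi_1(\omega),\xi_2(\omega)) \leq \varepsilon$ pointwise; the whole argument then takes place on $\Omega_0$.

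For the Kantorovich--Rubinstein bound I would use the functional definition given just above the statement. Take any $f \in Lip_1^1(X)$; then for every $\omega \in \Omega_0$ we have $|f(\xi_1(\omega))-f(\xi_2(\omega))| \leq d(\xi_1(\omega),\xi_2(\omega)) \leq \varepsilon$, and integrating against $\mathbb{P}$ gives
$$\left|\int_X f\, d(\xi_1)_*\mathbb{P} - \int_X f\, d(\xi_2)_*\mathbb{P}\right| = \left|\int_\Omega \bigl(f\circ \xi_1 - f\circ \xi_2\bigr)\, d\mathbb{P}\right| \leq \varepsilon.$$
Taking the supremum over $f \in Lip_1^1(X)$ yields $d_{KR}((\xi_1)_*\mathbb{P},(\xi_2)_*\mathbb{P}) \leq \varepsilon$.

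For the L\'evy--Prohorov bound I would argue set-theoretically. Let $B \in \mathcal{B}_X$ be arbitrary. If $\omega \in \Omega_0$ and $\xi_1(\omega) \in B$, then $\xi_2(\omega)$ lies within distance $\varepsilon$ of a point of $B$, hence $\xi_2(\omega) \in B^\varepsilon$. Thus $\xi_1^{-1}(B) \cap \Omega_0 \subset \xi_2^{-1}(B^\varepsilon)$, and since $\mathbb{P}(\Omega \setminus \Omega_0) = 0$ we obtain
$$(\xi_1)_*\mathbb{P}(B) = \mathbb{P}(\xi_1^{-1}(B)\cap\Omega_0) \leq \mathbb{P}(\xi_2^{-1}(B^\varepsilon)) = (\xi_2)_*\mathbb{P}(B^\varepsilon) \leq (\xi_2)_*\mathbb{P}(B^\varepsilon) + \varepsilon,$$
and by symmetry the inequality with the roles of $\xi_1,\xi_2$ reversed holds as well. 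The definition of $d_{LP}$ then gives $d_{LP}((\xi_1)_*\mathbb{P},(\xi_2)_*\mathbb{P}) \leq \varepsilon$.

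Honestly there is no main obstacle here: the lemma is a textbook observation and the only thing to be careful about is handling the exceptional null set where $d(\xi_1,\xi_2) > \varepsilon$ may fail, which is done by restricting to $\Omega_0$ before passing to the push-forwards. No deeper input (such as Kantorovich--Rubinstein duality or tightness) is needed.
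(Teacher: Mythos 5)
Your proof is correct, and it is exactly the argument the paper has in mind: the paper simply writes ``the proof of the lemma is immediate from the definitions of the Kantorovich--Rubinstein and the L\'evy--Prohorov metric,'' and your two paragraphs are the standard way to unwind those two definitions. Nothing further to add.
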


The proof of the lemma is immediate from the definitions of the
Kantorovich-Rubinstein and the L{\'e}vy-Prohorov metric.

\section{Appendix C: Correspondence Between the Symbolic and the Geometric Language}
\begin{center}
\begin{tabular}{|p{5.2cm}|p{5.2cm}|}
\hline
\begin{center}\textbf{Symbolic language}\end{center}&\begin{center}\textbf{Geometric language}\end{center}\\
\hline
Markov Compactum {X}&Abelian Differential {\bf X}\\
\hline
Asymptotic foliation $\mathcal{F}^+$\qquad

corresponding to the future
&
Vertical foliation $\mathcal{F}^+$\\
\hline
Asymptotic foliation $\mathcal{F}^-$\qquad

corresponding to the past
&
Horizontal foliation $\mathcal{F}^-$\\
\hline
The spaces $\mathfrak{B}^+$ and $\mathfrak{B}^-$ of finitely-additive H\"{o}lder measures&The spaces $\mathfrak{B}^+$ and $\mathfrak{B}^-$ of H\"{o}lder cocycles\\
\hline
Finitely-additive measures $m_{\Phi^-}$, $\Phi^-\in\mathfrak{B}^-$&Forni's Invariant Distributions of the Vertical Flow\\
\hline
Vershik's Automorphisms&Interval Exchange Transformations\\
\hline
Symbolic Flows $h_t^+$ &Translation Flows $h_t^+$\\
\hline
Adjacency Matrices $A(\Gamma_n)$ &Rauzy-Veech Matrices $A(\pi,c)$\\
\hline

The space  $\Oomega$ of measured Markov compacta&The moduli space $\HH$ of abelian differentials\\
\hline
The renormalization flow $g_s$ &The Teichm\"{u}ller flow ${\bf g}_s$\\
\hline
The renormalization cocycle ${\overline {\mathbb A}}$ &The Kontsevich-Zorich cocycle ${\mathbf {A}}$\\
\hline
Products of measures and duality between ${\mathfrak B}^+$, ${\mathfrak B}^-$& The Kolmogorov-Alexander product and the Poincar{\'e} duality in cohomology \\
\hline
\end{tabular}
\end{center}

\medskip

\end{document}